\newdimen\theight
\def \Column{%
             \vadjust{\setbox0=\hbox{\sevenrm\quad\quad tcol}%
             \theight=\ht0
             \advance\theight by \dp0    \advance\theight by \lineskip
             \kern -\theight \vbox to \theight{\rightline{\rlap{\box0}}%
             \vss}%
             }}%
\def\qed{\ifhmode\unskip\nobreak\fi\ifmmode\ifinner\else\hskip5\p@\fi\fi
 \hbox{\hskip5\p@\vrule width4\p@ height6\p@ depth1.5\p@\hskip\p@}}
\def\clearnotenumber{\notenumber=0}
\def\note{\global\advance\notenumber by 1
 \footnote{$^{\the\notenumber}$}}
\let\shlhetal\relax\fi
\let\mathbb\mathbb
\let\mathbf\mathbf
\let\mathscr\cal
\let\mathfrak\mathfrak
\begin{document}

  \input markers.sty
% To: "Saharon Shelah's Office" <shlhetal@math.huji.ac.il>
% Subject: ch 2
% Date: Tue, 1 Jul 2003 19:43:48 +0300
% "Atalya W.S." <Atalyah@pob.huji.ac.il>

%% Revisions: December 1991,
%% Retyped Sept. 1993
%% Latest corrections introduced August 2002

\title[Kaplansky test problems]{Kaplansky Test Problems for $R$-Modules in ZFC}

\author[M. Asgharzadeh]{Mohsen Asgharzadeh}

\address{Mohsen Asgharzadeh, Hakimiyeh, Tehran, Iran.}

\email{mohsenasgharzadeh@gmail.com}

\author[M.  Golshani]{Mohammad Golshani}

\address{Mohammad Golshani, School of Mathematics, Institute for Research in Fundamental Sciences (IPM), P.O.\ Box:
19395--5746, Tehran, Iran.}

\email{golshani.m@gmail.com}

\author[S. Shelah]{Saharon Shelah}

\address{Saharon Shelah, Einstein Institute of Mathematics, The Hebrew University of Jerusalem, Jerusalem,
91904, Israel, and Department of Mathematics, Rutgers University, New Brunswick, NJ
08854, USA.}

\email{shelah@math.huji.ac.il}

\thanks{The second author’s research has been supported by a grant from IPM (No. 1400030417). The
	third author’s research has been partially supported by Israel Science Foundation (ISF) grant no:
	1838/19. This is publication 421 of third author.}

\subjclass[2010]{ Primary:  20K30; 03C60; 16D20; 03C75; Secondary: 03C45; 13L05}
\keywords{Black box; bimodules, decomposition theory of modules; endomorphism algebra; infinitary logic; Kaplansky test problems; set theoretic methods in algebra; pure semisimple rings, superstable theories. }
%\author{Mohsen Asgharzadeh and Mohammad Golshani and Saharon Shelah}
%\thanks{This is publication 421 of the third author.
%Was supposed to be Chapter II  to the book
%``Non-structure" and probably will be if it materialize.
%}
\begin{abstract}
Fix a ring $ R $ and look at
the class of
 left $ R $-modules
and naturally we restrict ourselves to the
case of
rings such that this class is not too similar to the case $ R $
is a field.

 We shall solve Kaplansky test problems, all three of
which say that
we do not have decomposition theory,
e.g.,  if
the square of one  module
is isomorphic to the cube
of another it does not follows that they are isomorphic.
Our results are in the ordinary
set theory, ZFC.
For this we look at bimodules,
i.e., the structures
 which are simultaneously left $ R $-modules and
right $ S $-modules with reasonable associativity,
over a
commutative ring
$ T $   included in the centers of $ R $ and of $ S $.

Eventually
we shall
choose $ S $ to help solve each of the test questions.
But first
we analyze what can be the smallest endomorphism ring of an $
(R,S)$-bimodule. We construct such bimodules by using a simple version of  the black
box.
\end{abstract}
\maketitle
\setcounter{section}{-1}
\medskip

\tableofcontents
	
\section{ Introduction}
Throughout this paper $\ringR$ is an   associative ring  with $1=1_{\ringR}$, which is not necessarily commutative nor Noetherian. An
$\ringR$-module is a left $\ringR$-module, unless otherwise specified.    We shall prove here
for example (see   Theorem \ref{3.8}):

\begin{Theorem}
	\label{0.1}
	For every ring $\ringR$, one of the following conditions hold:
	\begin{enumerate}
		\item[(a)] all $\ringR$-modules are direct sum of countably generated
		indecomposable $\ringR$-modules (such rings are called left pure semisimple
		rings), moreover such a representation is unique up to isomorphism,
	\end{enumerate}
	or
	\begin{enumerate}
		\item[(b)] for every cardinal $\lambda>|\ringR|$ satisfying
		$\lambda=\left(\mu^{\aleph_0}
		\right)^+$, and $0<m(\ast),\ell(\ast)<\omega$,
		there is an $\ringR$-module  $ \modM $
		of cardinality
		$\lambda$
		such that $$M \cong M^n\ \Leftrightarrow\
		n\in \{m(\ast)k+\ell(\ast)
		:k<\omega\}.$$
	\end{enumerate}
\end{Theorem}

We shall prove this theorem in $\mbox{ZFC}$,  the Zermelo--Fraenkel set theory with the axiom of choice.
Theorem \ref{0.1} is new even if we assume Jensen's combinatorial principle
\textit{diamond}.

\begin{corollary}
	Let $0<m_1<m_2-1$ and assume that $\ringR$  is not pure semisimple. Then there is an $\ringR$-module of cardinality $\lambda$ such
	that:
	$$
	\modM^{n^1}\cong \modM^{n^2}\mbox{  iff } \quad m_1<n^1\ \&\ m_1\leq n^2\ \&\ [(m_2-m_1)|
	(n^1-n^2)]
	$$
	( $|$ means divides).
\end{corollary}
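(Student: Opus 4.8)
The plan is to deduce this from the dichotomy of Theorem~\ref{0.1}. Since $\ringR$ is not left pure semisimple, clause (a) there fails, so clause (b) holds; I would apply it with period $m(\ast):=m_2-m_1$ (an integer $\geq 2$, since $m_1<m_2-1$) and with the translation parameter chosen to record the threshold, $\ell(\ast):=m_1+1$, at any $\lambda$ permitted in Theorem~\ref{0.1}(b) (that is, $\lambda>|\ringR|$ with $\lambda=(\mu^{\aleph_0})^+$). This produces an $\ringR$-module $\modM$ of cardinality $\lambda$ with $\modM\cong\modM^n\Leftrightarrow n\in S$, where $S:=\{m(\ast)k+\ell(\ast):k<\omega\}$. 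In fact the black box gives, and the statement behind Theorem~\ref{0.1} (namely Theorem~\ref{3.8}) records, control of the whole isomorphism pattern among the powers of $\modM$, not merely of the set $\{n:\modM\cong\modM^n\}$, and the corollary is a reparametrisation of that finer output; below I indicate how the pattern is pinned down.

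The relation $E:=\{(n^1,n^2):\modM^{n^1}\cong\modM^{n^2}\}$ is automatically a congruence of the additive monoid of positive integers, because $\modM^{n^1}\cong\modM^{n^2}$ implies $\modM^{n^1+k}\cong\modM^{n^2+k}$ upon adjoining a summand $\modM^k$. Hence $E$ is one of the standard threshold--period congruences: there are $t\geq 1$ and $d\geq 1$ with $n^1\mathrel{E}n^2$ exactly when $n^1=n^2$, or $n^1,n^2\geq t$ and $d\mid(n^1-n^2)$. So it suffices to show $d=m_2-m_1$ and $t=m_1+1$, which is precisely the displayed bi-implication.

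For ``$\Leftarrow$'' this is a direct-sum manipulation: if $n^1,n^2>m_1$ and $(m_2-m_1)\mid(n^1-n^2)$, say $n^1\geq n^2$ with $n^1=n^2+(m_2-m_1)t$, then $\ell(\ast)=m_1+1\leq n^2$ and $\ell(\ast),\ell(\ast)+(m_2-m_1)\in S$, so $\modM^{\ell(\ast)}\cong\modM\cong\modM^{\ell(\ast)+(m_2-m_1)}$; adjoining $\modM^{\,n^2-\ell(\ast)}$ gives $\modM^{n^2}\cong\modM^{n^2+(m_2-m_1)}$, and iterating $t$ times yields $\modM^{n^2}\cong\modM^{n^1}$. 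The direction ``$\Rightarrow$'' is where the content lies. Put $D:=\{j\in\mathbb{Z}:\modM^n\cong\modM^{n+j}\text{ for all large }n\}$; a short check shows $D$ is a subgroup of $\mathbb{Z}$, say $D=d'\mathbb{Z}$. From $\modM^{\ell(\ast)}\cong\modM\cong\modM^{\ell(\ast)+(m_2-m_1)}$ one gets $\modM^n\cong\modM^{n+(m_2-m_1)}$ for all $n\geq\ell(\ast)$, so $m_2-m_1\in D$ and $d'\mid(m_2-m_1)$. If $d'$ were a proper divisor, pick $n\in S$ large enough that also $\modM^n\cong\modM^{n+d'}$; then $\modM\cong\modM^n\cong\modM^{n+d'}$ while $n+d'\notin S$ (its residue mod $m_2-m_1$ is wrong), contradicting the ``only if'' half of Theorem~\ref{0.1}(b). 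Hence $d'=m_2-m_1$. Finally, if $\modM^{n^1}\cong\modM^{n^2}$ with $n^1>n^2$ then $\modM^n\cong\modM^{n+(n^1-n^2)}$ for all $n\geq n^2$, so $n^1-n^2\in D=(m_2-m_1)\mathbb{Z}$; and the bound $n^1,n^2>m_1$ (i.e.\ $\geq\ell(\ast)$) is the assertion that no isomorphism among low powers of $\modM$ is introduced.

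I expect the main obstacle to be this last point, together with the very fact that the construction realises the prescribed threshold and period and nothing coarser. The bare statement of Theorem~\ref{0.1}(b) determines only $\{n:\modM\cong\modM^n\}$, whereas the corollary needs the full relation $E$ — in particular the threshold, and the absence of ``unwanted'' isomorphisms among small powers. Supplying those must go through the properties of the endomorphism $(\ringR,S)$-bimodule delivered by the black box (its smallness, and that it realises exactly the intended monoid of powers); the subgroup-of-$\mathbb{Z}$ argument only propagates forbidden isomorphisms and cannot produce the first one.
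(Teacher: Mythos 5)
Your reduction to Theorem~\ref{0.1}(b) cannot work, and the obstruction is already visible in your own setup. You correctly observe that $E=\{(n^1,n^2):\modM^{n^1}\cong\modM^{n^2}\}$ is a congruence on $(\mathbb{N}_{>0},+)$, hence of threshold--period type $(t,d)$. But then an isomorphism $\modM\cong\modM^{m_1+1}$ with $m_1+1\geq 2$ --- which is exactly what invoking Theorem~\ref{0.1}(b) with $\ell(\ast)=m_1+1$ would hand you --- forces $t=1$, not $t=m_1+1$: once $\modM^{1}\cong\modM^{m_1+1}$, adjoining $\modM^{n-1}$ gives $\modM^{n}\cong\modM^{n+m_1}$ for every $n\geq 1$, so there is no threshold left to establish. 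Worse, $\modM\cong\modM^{m_1+1}$ directly contradicts the corollary you are proving, which for $m_1\geq 1$ asserts $\modM^{1}\not\cong\modM^{m_1+1}$ (the clause $m_1<n^1$ fails for $n^1=1$). The module of the corollary in fact satisfies $\modM\cong\modM^{n}$ only for $n=1$, so its power spectrum $\{n:\modM\cong\modM^{n}\}$ is $\{1\}$ and carries no trace of $m_1$ or $m_2$; since Theorem~\ref{0.1}(b) controls only this spectrum, no choice of $m(\ast),\ell(\ast)$ can yield the corollary by your route. Your closing paragraph senses this gap but does not close it. (A side remark: read literally, Theorem~\ref{0.1}(b) with $\ell(\ast)\geq 2$ is vacuous, since $1\notin\{m(\ast)k+\ell(\ast):k<\omega\}$ while $\modM\cong\modM^{1}$ always; the honest content is the case $\ell(\ast)=1$, i.e.\ threshold one.)

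What the paper actually does (Corollary~\ref{3.8}) is a new construction, not a formal consequence of the threshold-one case. It takes the ring $\ringS_1$ and bimodule $\modM^{\ast}$ from the proof of Theorem~\ref{3.7} for $m(\ast)=m_2-m_1$, forms $\modM^{\otimes}=\bigoplus_{\eta\in I}\modM^{\otimes}_{\eta}$ where $I$ is a tree of sequences of length $\leq m_1$ with strictly decreasing branching cardinals $\lambda_{m_1}>\dots>\lambda_0$, and replaces each generator $\mathcal{Y}$ of $\ringS_1$ by $\mathcal{Y}^{\otimes}$, acting as $\mathcal{Y}$ composed with the shift $\modM^{\otimes}_{\eta}\to\modM^{\otimes}_{f(\eta)}$ along the truncation map $f$. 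The depth-$m_1$ nilpotence of this shift is what manufactures the threshold, and the distinct cardinals $\lambda_{\ell}$ are what exclude isomorphisms among the low powers; the resulting ring $\ringS^{\otimes}$ is then fed into the strongly semi-nice construction as in Theorem~\ref{3.3}. Your congruence observation is correct and would cleanly organize the verification once such a module is in hand, but it cannot substitute for this construction: as you yourself note, the subgroup-of-$\mathbb{Z}$ argument only propagates isomorphisms and prohibitions, it cannot create the prescribed pattern.
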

This result extends several known statements starting with Corner \cite{Cor63}, and  a special case of it, when $\ringR$ is  the ring of integers, was obtained recently
by G\"{o}bel-Herden-Shelah
\cite[Corollary 9.1(iii)]{gobel-shelah}.

Given an algebraic theory, one of the major objectives is to find some structure theorems for the objects under consideration under
various invariants, say for example up to isomorphism.
In 1954, Kaplansky formulated three test problems (see \cite[page 12]{Kap54}), where in his opinion a structure theory can be satisfactory
only if it can solve these problems. He formulated his questions
in the context of Abelian groups as follows:
	\begin{enumerate}
	\item[(I)]  If ${\bf G}$ is isomorphic to a direct summand of ${\bf H}$ and ${\bf H}$ is
	isomorphic to a direct summand of ${\bf G}$, are ${\bf G}$ and ${\bf H}$ necessarily
	isomorphic?
	\item[(II)] If ${\bf G}\oplus {\bf G}$ and ${\bf H}\oplus {\bf H}$ are isomorphic, are ${\bf G}$ and ${\bf H}$
	isomorphic?
	\item[(III)] If ${\bf F}$ is finitely generated and $ {\bf F} \oplus {\bf G}$ is isomorphic to
	${\bf F} \oplus {\bf H}$, are ${\bf G}$ and ${\bf H}$ isomorphic?
\end{enumerate}
He also noted that
 these problems can be formulated for very general mathematical
systems, and  he also
mentioned that problem (I) has an affirmative answer in set theory, namely the
Cantor-Schr\"{o}der-Bernstein theorem.
It may be worth to note that the first two problems were asked earlier, in 1948, independently by Sikorski \cite{Si} and Tarski \cite{T}  in the context of Boolean algebras, where they answered these problems affirmatively
for countably complete Boolean algebras.
Also, Sierpinski asked
the \textit{cube problem} (see below for its statement)  in the context  of linear orders, see \cite{S}.

There is a great deal of work on Kaplansky's test problems over different algebraic structures.
In the sequel we summarize some of them.

For Boolean algebras, the first negative result was proved by Kinoshita \cite{kinoshita}, who answered test question (I)
in negative for countable Boolean algebras. Hanf \cite{hanf} answered question (II) in negative for the same class of structures
and
Ketonen \cite{K} solved, among many other interesting things, the   Tarski cube problem by producing a
countable Boolean algebra isomorphic to its cube but not its square.
A solution to the Schroeder-Bernstein problem for Banach spaces
 is the subject of \cite{gow} by  Gowers.
Very recently,  Garrett \cite{G} proved that  every linear order which is isomorphic to its cube,  is isomorphic to its square as well,
thus answering Sierpinski's cube problem for linear orders.

It is known that the problems have positive answer for many classes of Abelian groups, like finitely generated groups, free groups, divisible groups and so on, see \cite{Fuc73}. J\'{o}nsson \cite{jonsson}
gave negative answers to test problems (I) and (II) for countable centerless non-commutative groups
and then in \cite{jonsson2}, he gave a negative answer to problem (II)
for the class of torsion-free Abelian groups of rank 2. In 1961 Sasiada \cite{sasiada} answered the first problem in negative for the class
of torsion-free groups of rank $2^{\aleph_0}$.
Corner, in his breakthrough work \cite{Cor63}, proved that  any countable torsion-free
reduced  ring can be realized as an endomorphism ring of a torsion-free Abelian
group and deduced from this a negative answer to the Kaplansky's test problems (I) and (II) for countable torsion-free
reduced  groups.
Later, Corner \cite{Cor64} constructed a countable  torsion-free Abelian
group ${\bf G}$, which is isomorphic to ${\bf G}^3$ but not to ${\bf G}^2$, thereby giving a negative answer to the cube problem, which asks  if ${\bf G}$ is isomorphic to ${\bf G}^3$, does it follows that ${\bf G}$ is isomorphic to ${\bf G}^2$?
As another contribution, Corner proved that for any positive integer $r$ there exists a countable torsion-free Abelian
group ${\bf G}$ such that  ${\bf G}^m\cong  {\bf G}^n$ if and only if $m \equiv_r n$. This property of ${\bf G}$ is called the
\textit{Corner pathology.} In particular, the ring of integers has a pathological object.
For countable separable $p$-groups,  Elm's theorem provides an affirmative solution to Kaplansky  problems (I) and (II), and
Crawley \cite{crawley}, in 1965, showed that this does not extend to the uncountable case by giving a negative answer to these problems for the case of uncountable separable $p$-groups.
Given any $n \geq 2,$ Eklof   and  Shelah \cite{ESH1} have constructed  a locally free  Abelian group $G$ of cardinality $2^{\aleph_0}$ such that $G\oplus \mathbb{Z}^n\cong G$.   Very recently, Richard \cite{Rich} has presented more friendly examples of  Abelian groups
	equipped with the pathological property.
Corner's ideas were used by a number of mathematicians, to give negative answers to the first two test questions for some  classes of Abelian groups. For instance, see
see \cite{DG82}, \cite{DG82b}, \cite{DG82c} \cite{ESH}, \cite{Rich}, and \cite{eklof-mekler}.

Shelah's work \cite{Sh:381}
deals with the first test problem of Kaplansky in the category
of modules over a  general  ring,
however his results were obtained under set theoretic assumptions beyond ZFC.
In fact, Shelah applied
Jensen's diamond principle, a kind of prediction principle whose truth is independent of ZFC,  to present  rigid-like modules which violate Kaplansky test problem.

 Thom\'{e} \cite{tom} and Eklof-Shelah
\cite{ESH} constructed an $\aleph_1$-separable   Abelian
group  $\modM$ in  ZFC  such that the Corner’s ring  is  algebraically
closed in $\End(\modM)$. Consequently $\modM$ breaks down the Kaplansky's test problem,
i.e., it is isomorphic to its cube but not to its square.

 In sum, Kaplansky test problems remained widely open for the category of modules over general rings.

 In this paper, we are interested in the category
of modules over a general ring which is not necessarily commutative nor countable.
Working in ZFC,  we answer all three
test problems.
This continues \cite{Sh:381}, but can be read independently. Let us stress that all of our
results are in ZFC (without any extra set theoretic axioms), and that we  obtain even stronger results than those of \cite{Sh:381}. For example, see   Theorem \ref{0.1}.
We will do this  by using a simple case of  the ``\textit{Shelah's black
box}'', see Lemma \ref{shelah black box} and  Theorem \ref{nice construction lemma}.

The  black boxes were introduced  by Shelah in \cite{Sh:172} and in a more general form in \cite{Sh:227},
where he showed,
following some ideas from \cite[Ch.VII]{Sh:a}, that they follow from ZFC.
We can consider black boxes
as  a general method to generate a class of diamond-like principles provable in ZFC.

In \cite{Sh54}  Shelah proved that any ring $\ringR$ satisfies one of the following two possibilities:
	\begin{enumerate}
	\item[(i)] All modules are direct sums of countably generated modules, or
	\item[(ii)] 	For any cardinal $\lambda>|\ringR|$ there is an $\ringR$-module of cardinality $\lambda$ which is not the direct sum of $\ringR$-modules of cardinality $\leq \mu$ for some $\mu<\lambda$.
\end{enumerate}
Shelah's work in \cite{Sh:381}  extends  (ii) in terms of endomorphism algebras. From this and in ${\bf V}={\bf L}$, he constructed an $\ringR$-module $\modM$ with prescribed endomorphisms modulo an ideal of  small endomorphisms.
Then Shelah found a connection from (ii) to the Kaplansky's test problems.

Here, we remove the extra assumption ${\bf V}={\bf L}$ and show that it is possible to get clause $(b)$ of  Theorem \ref{0.1} from the condition (ii).

Here, we lose the $\lambda$-freeness (this is unavoidable, even for
Abelian groups- see Magidor and Shelah \cite{MgSh:204}).
In particular we prove here
that for each $m(\ast)$, there is an $\ringR$-module $\modM$,
such that $\modM \cong \modM^n$ if and only if
$n$  divides $m(\ast)$ and we also answer the other Kaplansky's test problems which were
promised
in \cite{Sh:381}. Also here we prove explicitly that the theorems apply to
elementary (= first order) classes of modules which are not superstable.

In the course of the proof of Theorem \ref{0.1},  we develop general  methods that allow us to prove the following
results in ZFC. These results provide negative solutions to the  Kaplansky's test problems (I) and (II).

\begin{theorem}
\label{0.3}
  Let $\ringR$ be a ring which is not pure semisimple and let $\lambda>|\ringR|$ be a regular cardinal of the form $\left(\mu^{\aleph_0}
		\right)^+$.
	 Then there are $\ringR$-modules $\modM_1$
		and $\modM_2$ of cardinality $\lambda$ such that:
		\begin{enumerate}
			\item[i)]  	$\modM_1$, $\modM_2$ are not isomorphic,

			\item[ii)] 	$\modM_1$ is isomorphic to a direct summand of $\modM_2$,
			\item[iii)] 	$\modM_2$ is isomorphic to a direct summand of $\modM_1$.	
		\end{enumerate}
\end{theorem}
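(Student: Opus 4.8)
The plan is to reduce the statement to the pathology of finite powers supplied by Theorem~\ref{0.1}, by means of the classical ``Corner trick''; the reduction is short, so almost all of the work will have been done already in proving Theorem~\ref{0.1}. First I would observe that, since $\ringR$ is not pure semisimple, clause~(a) of Theorem~\ref{0.1} fails, hence clause~(b) holds; and since $\lambda=(\mu^{\aleph_0})^{+}>|\ringR|$ is a successor (so regular) cardinal, it meets the hypothesis of clause~(b). I would then apply clause~(b) with $m(\ast)=2$ and $\ell(\ast)=1$ to get an $\ringR$-module $\modM$ of cardinality $\lambda$ with
$$
\modM\cong\modM^{n}\iff n\in\{2k+1:k<\omega\},
$$
i.e.\ $\modM\cong\modM^{n}$ exactly when $n$ is odd; in particular $\modM\cong\modM^{3}$ but $\modM\not\cong\modM^{2}$.

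Next I would set $\modM_{1}:=\modM$ and $\modM_{2}:=\modM\oplus\modM=\modM^{2}$, both of cardinality $\lambda$. Clause~(i) follows because an isomorphism $\modM_{1}\cong\modM_{2}$ would mean $\modM\cong\modM^{2}$, against the displayed equivalence. Clause~(ii) is immediate: $\modM_{1}=\modM$ is a direct summand of $\modM^{2}=\modM_{2}$. For clause~(iii) I would use the isomorphism $\modM\cong\modM^{3}\cong\modM^{2}\oplus\modM$, which exhibits $\modM_{2}=\modM^{2}$ as isomorphic to a direct summand of $\modM=\modM_{1}$. Together these give the negative answers to Kaplansky's test problems (I) and (II).

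The main obstacle is not in this reduction, which is purely formal once $\modM$ is in hand, but in Theorem~\ref{0.1} itself: one must produce, in ZFC and in the prescribed cardinality $\lambda$, a module whose finite powers realise exactly the progression $\{2k+1:k<\omega\}$ under isomorphism. That is where the real effort goes, and it is handled via the simple black box of Lemma~\ref{shelah black box} together with the endomorphism-algebra realisation of Theorem~\ref{nice construction lemma} --- building $\modM$ as an $(R,S)$-bimodule with a prescribed small ring of endomorphisms, from which the isomorphism type of $\modM^{n}$ can be read off. (As a variant, one could instead extract from the corollary following Theorem~\ref{0.1} a module $\modM$ with $\modM^{2}\not\cong\modM^{3}$ each isomorphic to a direct summand of the other, and take $\modM_{1}=\modM^{2}$, $\modM_{2}=\modM^{3}$.)
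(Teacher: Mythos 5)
Your reduction is correct, and it is not the route the paper takes. You deduce the statement from the finite-power pathology: a module $M$ of cardinality $\lambda$ with $M\cong M^{3}$ but $M\not\cong M^{2}$ (Theorem \ref{0.1}(b) with $m(\ast)=2$, $\ell(\ast)=1$, which in the body of the paper is exactly Theorem \ref{3.7} with its parameter $m(\ast)=3$), and then set $M_1=M$, $M_2=M^{2}$; the three clauses follow formally from $M\cong M^{2}\oplus M$. This is the classical Corner-style argument, and it is not circular, since Theorem \ref{3.7} is proved in the paper independently of the present statement.

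The paper instead proves the statement (as Theorem \ref{4.7}) by a self-contained direct construction: it introduces a ring $S$ on six generators ${\cal X}_{\pm1},{\cal W}_{\pm1},{\cal Z}_{\pm1}$ modelled on shift operators acting on a $\mathbb Z$-indexed basis, runs the strongly semi-nice construction to get a bimodule $P$, and takes $M_1=P{\cal X}_1$, $M_{-1}=P{\cal X}_{-1}$; the mutual-summand clauses are read off from the ring relations, and non-isomorphism is obtained by showing that a hypothetical isomorphism would force a solution ${\cal Y}$ of certain equations in ${\bf D}\otimes_{T}S_0$ (via Corollary \ref{3.5}), which a dimension count in the test bimodule $M^{\ast}_{\bf D}$ rules out. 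The trade-off is clear: your route is much shorter but leans on the heavier machinery of Theorem \ref{3.7} (the ring $S_0$ with $m(\ast)$ idempotents, the tree-indexed bimodule, and the invariants ${\bf n}_\ell$), whereas the paper's Theorem \ref{4.7} uses a simpler auxiliary ring and a more elementary counting argument, at the cost of repeating the construction template. Both ultimately rest on the same core: the semi-nice construction of Theorem \ref{nice construction lemma} and the equation-solvability transfer of Corollaries \ref{3.5}--\ref{3.6}. Your proposal is acceptable as a proof, provided Theorem \ref{3.7} (or the relevant instance of Theorem \ref{0.1}(b)) is established first.
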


\begin{theorem}
\label{0.4}
Let $\ringR$ be a ring which is not pure semisimple and let
$\lambda=\left(\mu^{\aleph_0}
		\right)^+> |\ringR|$ be a regular cardinal.
 Then there are
		$\ringR$-modules $\modM$,
		$\modM_1$, $\modM_2$ of \power\ $\lambda$ such that
		\begin{enumerate}
			\item[i)]	$\modM\oplus \modM_1\cong \modM\oplus \modM_2$,
			\item[ii)] 	$\modM_1\not\cong \modM_2$.
	\end{enumerate}
\end{theorem}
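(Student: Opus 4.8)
The plan is to transport a known failure of cancellation for finitely generated projective modules over an auxiliary ring $E$ into the category of $\ringR$-modules, using the realization result Theorem~\ref{nice construction lemma}. The guiding idea: once $E$ has been realized as the endomorphism ring modulo small endomorphisms of some $\ringR$-module $\modM^\ast$ of \power\ $\lambda$, the finitely generated projective right $E$-modules — realized as direct summands of the powers $E^k$, hence of the powers $(\modM^\ast)^k$ — embed ``faithfully'' into the direct-summand structure of the $(\modM^\ast)^k$, so that any non-cancellation phenomenon over $E$ yields the required $\modM,\modM_1,\modM_2$.

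First I would fix the algebraic input. Let $T$ be a commutative subring of the centre of $\ringR$ to which Theorem~\ref{nice construction lemma} applies (for instance the prime subring). The goal is a $T$-algebra $E$ of cardinality $<\lambda$, satisfying whatever mild freeness hypothesis the construction demands, together with nonzero finitely generated projective right $E$-modules $P_0,P_1,P_2$ such that $P_0\oplus P_1\cong P_0\oplus P_2$ but $P_1\not\cong P_2$. When $T=\mathbb{Z}$ one can take $E:=\mathbb{Z}[x,y,z]/(x^2+y^2+z^2-1)$, which is free of countable rank over $\mathbb{Z}$, together with $P_0:=E$, $P_2:=E^2$, and $P_1$ the kernel of the split epimorphism $E^3\to E$, $(a,b,c)\mapsto xa+yb+zc$ (split by $v\mapsto(xv,yv,zv)$): then $P_0\oplus P_1\cong E^3\cong P_0\oplus P_2$, while $P_1\not\cong P_2$ because base change to $\mathbb{R}[x,y,z]/(x^2+y^2+z^2-1)$ turns $P_1$ into the tangent module of the real $2$-sphere, which is not free by the hairy-ball theorem. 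For a general base $T$ I would instead invoke a universal (Bergman-type) construction forcing $[P_0]+[P_1]=[P_0]+[P_2]$ in the monoid of finitely generated projectives while keeping $[P_1]\neq[P_2]$; this produces a $T$-free algebra of cardinality $|T|+\aleph_0<\lambda$.

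Next I would apply Theorem~\ref{nice construction lemma} to $\ringR$ (not pure semisimple), $T$, the cardinal $\lambda=(\mu^{\aleph_0})^+>|\ringR|$ and $E$, obtaining an $\ringR$-module $\modM^\ast$ of \power\ $\lambda$ with $\End_{\ringR}(\modM^\ast)=E\oplus\mathrm{Sm}(\modM^\ast)$, where $\mathrm{Sm}(\modM^\ast)$ is the two-sided ideal of small endomorphisms and the projection onto $E$ is a ring homomorphism. For a finitely generated projective right $E$-module $P$, write $P\cong\varepsilon_P E^k$ with $\varepsilon_P\in\mathrm{M}_k(E)$ idempotent; since $\End_{\ringR}((\modM^\ast)^k)=\mathrm{M}_k(E)\oplus\mathrm{Sm}((\modM^\ast)^k)$ in the same way (small endomorphisms of a finite power being precisely the matrices with small entries), $\varepsilon_P$ is an idempotent endomorphism of $(\modM^\ast)^k$, and I set $\modM_P:=\varepsilon_P\,(\modM^\ast)^k$, a direct summand. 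The ``dictionary'' to verify is: (i) $\modM_{P\oplus Q}\cong\modM_P\oplus\modM_Q$; (ii) $P\cong Q$ implies $\modM_P\cong\modM_Q$, by realizing the $E$-isomorphism inside some $\mathrm{M}_k(E)$; and conversely (iii) if $g\colon\modM_P\to\modM_Q$ is an $\ringR$-isomorphism, extend $g$ and $g^{-1}$ to endomorphisms $\tilde g,\tilde h$ of $(\modM^\ast)^k$ that vanish on the respective complementary summands; then $\tilde h\tilde g=\varepsilon_P$ and $\tilde g\tilde h=\varepsilon_Q$ lie in $\mathrm{M}_k(E)$, so applying the ring projection $\End_{\ringR}((\modM^\ast)^k)\to\mathrm{M}_k(E)$ shows the matrix parts of $\tilde g$ and $\tilde h$ are mutually inverse $E$-isomorphisms $\varepsilon_P E^k\cong\varepsilon_Q E^k$, whence $P\cong Q$. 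Thus $\modM_P$ is well defined up to $\ringR$-isomorphism, independently of the chosen presentation of $P$, and $P\mapsto[\modM_P]$ is additive and injective on isomorphism classes. Finally $|\modM_P|=\lambda$ whenever $P\neq0$, since a direct summand of cardinality $<\lambda$ would be the image of a small idempotent, while a nonzero element of $\mathrm{M}_k(E)$ is not small.

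Finally I would put $\modM:=\modM_{P_0}$ (which is $\modM^\ast$ itself when $P_0$ is free), $\modM_1:=\modM_{P_1}$, $\modM_2:=\modM_{P_2}$; all three have \power\ $\lambda$. By (i) and (ii), $\modM\oplus\modM_1\cong\modM_{P_0\oplus P_1}\cong\modM_{P_0\oplus P_2}\cong\modM\oplus\modM_2$, whereas by the injectivity in (iii) an isomorphism $\modM_1\cong\modM_2$ would force $P_1\cong P_2$, which is false. As $\lambda$ is a successor cardinal it is regular, so all clauses of Theorem~\ref{0.4} hold. Granting Theorem~\ref{nice construction lemma}, the two ingredients requiring real work are the existence of a ring $E$ with the stated non-cancellation inside the cardinality and freeness constraints, and step (iii) — that isomorphism modulo the ideal of small endomorphisms detects genuine isomorphism of the summands $\modM_P$. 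The latter, which I expect to be the main technical obstacle, is exactly what the splitting $\End_{\ringR}(\modM^\ast)=E\oplus\mathrm{Sm}(\modM^\ast)$ with $\mathrm{Sm}(\modM^\ast)$ a two-sided ideal is there to guarantee, so it hinges on extracting that precise form from Theorem~\ref{nice construction lemma}.
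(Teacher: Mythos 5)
Your reduction to non-cancellation of finitely generated projectives over an auxiliary algebra $E$ rests on a realization statement that Theorem \ref{nice construction lemma} does not provide and that the paper never proves: you assume $\End_{\ringR}(\modM^\ast)$ splits as a ring direct sum of $E$ and the ideal of small endomorphisms, with the projection onto $E$ a ring homomorphism. What a (strongly) semi-nice construction actually yields (Lemmas \ref{prnalphafe}, \ref{prnalphafez}, \ref{2.11A}, \ref{2.16}) is far weaker: an arbitrary $\ringR$-endomorphism $\fucF$ is identified, for each ${\mathfrak e}$ and all large enough $n$, only on the subquotient $\varphi^{\mathfrak e}_n(\modM)/\varphi^{\mathfrak e}_\omega(\modM)$, only modulo an error term $\modM_\alpha$ of cardinality $<\lambda$ (Remark \ref{2.5} stresses that this error cannot simply be removed), and with values in the ring $\ringdE^{\mathfrak e}$, which contains the image of $\ringS$ but is in general strictly larger (Lemma \ref{solfree}(4): $\ringdE^{\mathfrak e}_n=\ringde^{\mathfrak e}_n\otimes_{\ringT}\ringS$). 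Consequently the ``ring projection'' $\End_{\ringR}((\modM^\ast)^k)\to {\rm M}_k(E)$ on which your step (iii) hinges does not exist, and your criterion for a nonzero summand $\modM_P$ to have cardinality $\lambda$ is likewise unsupported. The only transfer the paper establishes is one-directional solvability of equation systems: if a system over $\ringS$ is solvable in $\End_{\ringR}(\modM)$, it is solvable in ${\bf D}\otimes_{\ringT}\ringS$ for \emph{some} field ${\bf D}$ (Corollary \ref{3.5}).

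Even granting that weaker transfer, your algebraic input is too fragile: ruling out $\modM_{P_1}\cong\modM_{P_2}$ would require $P_1\otimes{\bf D}\not\cong P_2\otimes{\bf D}$ over ${\bf D}\otimes E$ for \emph{every} admissible field ${\bf D}$, whereas a $K$-theoretic obstruction such as the tangent module of the real $2$-sphere does not survive arbitrary base change (over $\mathbb{C}$ the stably free rank-$2$ module on the quadric becomes free). This is precisely why the paper's proof of the theorem proceeds differently: it builds a specific ring $\ringS$ with idempotents ${\cal X},{\cal Y}$ and partial isomorphisms ${\cal W}_1,{\cal W}_2$ acting on the bimodule $\modP$ of a strongly semi-nice construction, so that clause (i) is immediate from $\modP=\modP{\cal X}\oplus\modP(1-{\cal X})=\modP{\cal Y}\oplus\modP(1-{\cal Y})$ and $\modP{\cal X}\cong\modP{\cal Y}$; and it refutes $\modP(1-{\cal X})\cong\modP(1-{\cal Y})$ by converting a hypothetical isomorphism into equations that must then be satisfied by maps ${\bf h}^{{\mathfrak e},n}_{\modM^\otimes,z_i}$ on $\varphi^{\mathfrak e}_n(\modM^\otimes)/\varphi^{\mathfrak e}_\omega(\modM^\otimes)$ for a test bimodule $\modM^\otimes=\bigoplus_{i<\gamma}\modM^\otimes_i$ indexed by ordinals, where they would force an isomorphism between abelian groups of cardinalities $|\beta|>|\alpha|$ --- a cardinality obstruction that survives tensoring with any field. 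To repair your argument you would need both to replace the splitting of the endomorphism ring by the equation-solvability transfer and to replace your $K_0$-obstruction by one of this base-change-robust kind.
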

In particular,  theorems \ref{0.3} and \ref{0.4} improve the results from \cite{Sh:381}, by removing the
use of diamond principle.
%In section 4 we draw specific consequences of our representation theorem to
%Kaplansky test problems. This requires use of specific $\ringS$ and
%$\ringT$, and
%detailed analysis (starting with the result above).

As Kaplansky test problems and also our results are related to the indecomposability of modules, let us also
 review some background in this direction.
Fuchs  \cite{Fuc73}, with some help of Corner, proved, by induction on $\lambda$, the existence of an indecomposable
Abelian group in many cardinals $\lambda$ (e.g., up to the first strongly
inaccessible cardinal), and even of a rigid system of $2^\lambda$ Abelian
groups of power $\lambda$.
It was
conjectured at that time  that for some ``large cardinal''
(e.g., supercompact) this may fail.
Corner \cite{Cor69}
reduced the number of primes to five and
later, G\"obel and May \cite{GoMa90}
to four, in the following sense.
Suppose $R$ is an algebra over a commutative ring $A$,
$\lambda$ is an infinite cardinal, and assume that  $ R$, as an  $A$-algebra,  can be generated by no more than $\lambda$
elements. Let $M =\oplus_{\lambda} R$ and suppose in addition that there is an embedding $R\hookrightarrow \End_A(M)$ by scalar multiplication. G\"obel and May introduced
four  $A$-submodules $M_0, M_1, M_2, M_3$ of $M$ such that $$R = \{\varphi\in \End_{A}(M): \varphi (M_i) \subset
	M_i \emph { for all } 0 \leq  i \leq 3\}.$$
Shelah \cite{Sh:44} then proved the existence in every $\lambda$ (using
stationary sets) of an indecomposable Abelian groups of cardinality $\lambda$
(and even a rigid system of $2^\lambda$ ones).
Lately, G\"obel and Ziegler \cite{gobel-ziegler} generalized this to
$\ringR$--modules for ``$\ringR$ with five ideals''.
Answering a
question of Pierce, Shelah \cite{Sh:45} constructed
essentially decomposable\footnote{${\bf G}$ is essentially decomposable  if ${\bf G}={\bf G}_1 \oplus {\bf G}_2$ implies ${\bf G}_1$
or ${\bf G}_2$ is bounded.}
 Abelian
$p$-groups for any cardinal  $\lambda$, which is strong
limit of uncountable cofinality.

Eklof and Mekler \cite{EM77}, using diamond  on some
non-reflecting stationary set of ordinals $<\lambda$ of cofinality $\aleph_0$,
got a $\lambda$-free indecomposable Abelian
group of power $\lambda$. Shelah \cite{Sh:140} continued this  where he showed that the diamond can be
replaced by weak diamond on a non-reflecting stationary subset of
$$
S^\lambda_{\aleph_0}:=\{\delta<\lambda:\cf(\delta)=\aleph_0\}
$$
(so for $\lambda=\aleph_1$, $2^{\aleph_0}<2^{\aleph_1}$ suffices).

Dugas \cite{Dug81} continuing \cite{EM77} proved, under the assumption ${\bf V}={\bf L}$, the existence of a strongly
$\kappa$-free Abelian group with endomorphism ring ${\mathbb Z}$  and then, by using $p$-adic rings, G\"obel \cite{G80} realized to a larger family of
rings.

Dugas and G\"obel \cite {DG82}, continuing \cite{Dug81}, \cite{G80} and
\cite{Sh:140}, proved that for a Dedekind domain $\ringR$ which is not a field,
there are  arbitrary large indecomposable $\ringR$-modules, there are arbitrarily large $\ringR$-modules which do not satisfying the Krull-Schmidt cancellation property and also related their work to the Kaplansky's test problems,
by showing that  $\ringR$ is not a complete discrete valuation ring if and only if there are
$\ringR$-modules of arbitrary high rank which do not satisfy Kaplansky's test problems.
They also showed that every cotorsion-free ring is the endomorphism ring of some Abelian group.
 Then in \cite{DG82b}, they
characterized
the rings which can be represented as ${\rm End}(\modM)$ modulo ``the small
endomorphism'' for some Abelian $p$-group, but as it continues
\cite{Sh:45}
(which dealt with the case when we want the smallest such ring) the
representation of a ring $\ringR$ is by an Abelian group $\modM$ of a
power strong
limit cardinal of cofinality $>|\ringR|$. The situation is similar in Dugas and
G\"obel \cite{DG82c} where the results of \cite{DG82} and more are obtained in
such cardinals.

Black box enables us to get the results of \cite{DG82},
\cite{DG82b} in more and smaller cardinals, e.g., $\lambda=\left(
|\ringR|^{\aleph_0}\right)^+$.
Corner and G\"obel \cite{CG85}  continued this, and using ideas of Shelah have presented a detailed treatment of construction of Abelian groups (or modules) having preassigned endomorphism rings, and satisfying additional constraints.

For more information,
see
the books Eklof-Mekler \cite{EM02}  and G\"{o}bel-Trlifaj
\cite{GT}. The books of Prest \cite{P} and \cite{P2} talk about
model theory of modules, and in particular on pure semisimple rings.

\section{Outline of the proof of Theorem \ref{0.1}}
\label{outlineofproof}
In this section we give an overview of the proof of Theorem \ref{0.1}.
To this end, we deal with the problem of choosing a ring $\ringS$
(essentially
the ring of
endomorphisms
we would like),
and try to build an
$\ringR$-module which ``has the endomorphisms for
$s\in \ringS$ but not many more''.

In Section \ref{Developing the framework of the construction}, we develop  the setting of our construction and to explain it, let us
first present a simple case. Let $\ringR$ be a ring (with unit
$1_{\ringR}$) which is
not left purely semisimple (i.e., as in  Theorem \ref{0.1}(b)).
Then we consider formulas $\varphi(x)$ of the form
$$( % 11.05.04
\exists y_0,\ldots,y_{k_m} ) % 11.05.04
[\bigwedge\limits_{m<m(\ast )} a_mx=
\sum\limits_{i<k_m} b_{m,i}y_i]$$ where $\{a_m,
b_{m,i}\}$
are elements of the ring $\ringR$. For
any  left $\ringR$--module $\modM$ the set $\varphi(\modM)=\{x:
\modM\models\varphi(x)\}$ is not
necessarily a sub-module (as $\ringR$ is not
necessarily commutative), but it is an additive subgroup of $\modM$.
Now, our assumption implies that for some \sq\
$\bar{\varphi}=\langle \varphi_n(x):n<\omega\rangle$, each $\varphi_n$ is as
above. For every $n$ and $\modM$ we have
$\varphi_{n+1}(\modM)\subseteq\varphi_n(\modM)$, and for some
$\modM$, the sequence
$\langle\varphi_m(\modM):m<\omega\rangle$
is strictly increasing.  \Wlog
 $~ k_n < k_{n+1}$ and:
\[\varphi_n(x)=(
\exists\; y_0,\ldots y_{k_{n-1}-1})
(\bigwedge\limits_{m<n}
a_mx=\sum\limits_{i<k_m} b_{m,i}y_i).\]
Let $\modN_n$ be the $\ringR$--module generated by $x^n,y^n_i$ ($i<k_m$) freely,
except the following relations $$a_m x^n=\sum\limits_{i<k_n} b_{m,i} y^n_i \quad\forall m<n.$$ Let $g_n$ be the homomorphism from $\modN_n$ into
$\modN_{n+1}$ mapping
$x^n$ to $x^{n+1}$ % 11.05.04  n+1
and $y^n_i$ to  $ y^{n+1}_i $ % 11.05.04  $y^n_i$
(for $i<k_n$).
Let $${\mathfrak e}=\langle \modN_n,
x_n,g_n:n<\omega\rangle=
\langle \modN^{\mathfrak e}_n , x^{ {\mathfrak e}}_n, g^ {\mathfrak e}_n % 11.05.04
:n<\omega\rangle.$$ Next, we define the following subgroups of $\modM$:
\[\varphi_n(\modM)=\varphi^{\mathfrak e}_n (\modM):=
\{f(x): f\mbox{ is a homomorphism from }\modN_n
\mbox{ to }
\modM\}.\]
In Definition
\ref{nice construction modified} we will  define the notion of ``semi-nice construction'', which in particular gives a sequence $\bar{\modM}$ of $\ringR$-modules, and in Theorem
\ref{nice construction lemma}
we show that the existence of a semi-nice construction follows from
 a suitable version of
black boxs.  It gives us
an $\ringR$--module $\modM=\bigcup\limits_{\alpha}
\modM_\alpha$
such that every endomorphism ${\fucF}$
of $\modM$ is in a suitable sense trivial: on
general
grounds, ${\fucF}$
maps $\varphi_n(\modM)$ into itself, hence
it maps $\varphi^{\mathfrak
	e}_\omega(\modM)=:
\bigcap\limits_{n<\omega}\varphi_n(\modM)$ into itself. So it
induces an additive endomorphism
$\hat{\fucF}_n={\fucF}
\restriction (\varphi_n(\modM)/\varphi^{\mathfrak
	e}_\omega(\modM))$ of the Abelian group
$\varphi_n(\modM)/\varphi^{\mathfrak
	e}_\omega(\modM)$.
By the construction, for some $n=n(\fucF)$,
the mapping
$\hat{\fucF}_n$
is of the ``unavoidable'' kind: multiplication by some
$a$ from the center of $\ringR$.

The point is that for undesirable ${\fucF}$,
we will be able to find $x_n$ and $x$
such that $x-x_n\in\varphi_n(\modM)$ but for no $y\in \modM$ do we have:
\[n<\omega\quad\Rightarrow\quad y-{\fucF} % 11.05.06
( x_n)\in\varphi_n(\modM).\]
So, omitting countable types helps.
But, we would like to have more: not
to generalize ``the Abelian group
$\modM$ has no
endomorphism
except multiplication by $a\in {\mathbb Z}$'', but to generalize
``the Abelian group $\modM$ has a
prescribed endomorphism  ring $\ringS$''. For this, we consider a fixed pair
$(\ringR,\ringS)$ of
rings and a commutative subring $\ringT$ of the
center of $\ringR$ and
the center of $\ringS$, and work with
$(\ringR,\ringS)$-bimodules, where by an $(\ringR,\ringS)$-bimodule, we mean a left $\ringR$-module  $\modM$ which  is
a right $\ringS$-module  and satisfies  the following equations:
$\forall r\in \ringR$, $\forall s\in \ringS$,~$(rx)s=r(xs)$
 and  $\forall t\in \ringT$,~$tx=xt$. We
would like to
build a bimodule $\modM$ such that all of  its
$\ringR$--endomorphisms (i.e., endomorphisms as an $\ringR$--module) are,
in a sense,
equal to left multiplication by a member of $\ringS$. To be able to
construct such an $\modM$, we
need to have ${\mathfrak e}=\langle
\modN^{\mathfrak e}_n, x^{\mathfrak e}_n,
g^{\mathfrak e}_n: n<\omega\rangle$, but now
$\modN^{\mathfrak e}_n$ is a
bimodule, $x^{\mathfrak e}_n\in \modN^{\mathfrak e}_n$ and $g^{\mathfrak
	e}_n$ is a
 bimodule  homomorphism from $\modN^{\mathfrak e}_n$ to
$\modN^{\mathfrak e}_{n+1}$
mapping $x^{\mathfrak e}_n$ to $x^{\mathfrak e}_{n+1}$. As a  first
approximation, let $\varphi^{\mathfrak e}_n(\modM)$ be
\[\{x:\mbox{ there is an $\ringR$-homomorphism from }\modN^{\mathfrak
	e}_n\mbox{ to
} \modM\mbox{ mapping }x^{\mathfrak e}_n\mbox{ to }x\}.\]
Of course,  $\varphi^{\mathfrak e}_n(\modM)$ is an additive subgroup of
$\modM$. We define
$\psi^{\mathfrak e}_n (\modM)$ similarly but using bimodule homomorphism.
In fact, we consider a set $\calE$ of such
${\mathfrak e}$'s.

However, $\modN^{\mathfrak e}_n$ is not necessarily
finitely generated as an
$\ringR$-module. So let us restrict ourselves to bimodules such that, locally
they look like direct sum of $\ringR$-modules from some class ${\mathcal K}$. This property is
denoted by $0\leq_{\aleph_0} \modM$ (essentially it can be represented
as being
${\mathbb L}_{\infty,\aleph_0}$-equivalent to such a sum; more fully $0
\leq^{{\rm ads}}_{{\mathcal K},\aleph_0}\modM$, see Definition \ref{ads definition}).
Now, if $0\leq_{\aleph_0} \modM$, we let
\[\begin{array}{r}
\varphi^{\mathfrak e}_n(\modM):=\{x\in \modM:\mbox{ there is an
	$\ringR$--homomorphism from $\modN^{\mathfrak e}_n$ into $\modN$ }\quad\\
\mbox{mapping $x^{\mathfrak e}_n$ to $x$}\},
\end{array}\]
and
\[\begin{array}{r}
\psi^{\mathfrak e}_n(\modM):=\{x:\mbox{ there is a bimodule homomorphism from
	$\modN^{\mathfrak e}_n$ into $\modN$ }\quad\\
\mbox{mapping $x^{\mathfrak e}_n$ to $x$}\}.
\end{array}\]
Also our complicated set ${\mathfrak e}$ makes us to define the set
$\modL^{\mathfrak e}_n$ of elements of $\modN^{\mathfrak e}_n$ whose
image under
bimodule homomorphism is determined by the image of $x^{\mathfrak e}_n$.

In addition, we
would like to include in our framework
the class of $\ringR$-modules of
a fix
first order complete theory $T$;
this is fine for $T$ not superstable, but
we need to replace the requirement
$0\leq_{\aleph_0} \modM$ by ``$\modM^\ast
\leq_{\aleph_0} \modM$'' and
choose ${\mathcal K}$ and $\modM^ \ast $
appropriate for $T$. For example, $\modM^ \ast $
can be any $\aleph_1$-saturated model of
$T$ and
$${\mathcal K}:=\{\modN: \modN
\mbox{ is an $\ringR$--module such that
} \modM^\ast % 11.05.06 *
\prec_{{\mathcal L} (\tau_\ringR)}
\modM^\ast  % 11.05.06 *
\oplus
\modN\},$$
where ${\mathcal L} (\tau_\ringR)$ is the language of $\ringR$-modules (see Definition \ref{rmodlanguage}).
Also,   $ \modN $ is not too large,
	e.g.  it has size at most $\Vert \ringR \Vert + \Vert \ringS \Vert.$

Let   $\bar \modM=\langle \modM_\alpha: \alpha \leq \kappa \rangle$ be a semi-nice construction
and let $\modM_\kappa$ be the module defined by $\bar \modM$.
 For any ${\bf f}:\modM_\kappa\to\modM_\kappa$, $\alpha < \kappa$
and $n<\omega$, we  consider the following principle:
 For any   bimodule homomorphism
${\bf h}$	from
	$\modN^{\mathfrak e}_{n}$ into $\modM_\kappa$, and for
 every $\ell<\omega$ we have
$$
{\bf f}({\bf h}(x_n^{\mathfrak e}))\in \modM_\alpha+\varphi^{\mathfrak e}_\ell(\modM_\kappa)+\Rang({\bf h}).
$$
	
We refer to this property by saying that the statement
	$(\Pr)^{n}_\alpha{[}\fucF,{\mathfrak e}{]}$ holds.
In Lemma \ref{prnalphafe} we prove that every
$\ringR$-endomorphism ${\fucF} $
of the module
$\modM_\kappa$ constructed in Section \ref{Developing the framework of the construction} is ``somewhat definable'' and
specifically satisfies
$(\Pr)^{n(\ast)}_{\alpha} [{\fucF},
{\mathfrak e}]$
(for some
$\alpha<\kappa$, $n(\ast)<\omega$ and for all the $\mathfrak e$'s we have taken care
of). We show that $(\Pr)^{n(\ast)}_{\alpha} [{\fucF},
{\mathfrak e}]$ implies a  stronger version, denoted by
$(\Pr 1)^{n(\ast
	)}_{\alpha,z} [ {\fucF},
{\mathfrak e}   ] $
where
$ z \in \modL^{\mathfrak e}_n$.
More explicitly, if $h$ is a bimodule homomorphism
from $\modN^{\mathfrak e}_{n}$ into $\modM_\kappa$, $(\Pr 1)^{n(\ast
	)}_{\alpha,z} [ {\fucF},
{\mathfrak e}   ] $ implies that
$${\bf f}(h(x^{\mathfrak e}_{n}))-h(z) \in \modM_\alpha+\varphi^{\mathfrak
	e}_\omega(\modM_\kappa).$$
This is  the subject of  Lemma \ref{prnalphafez}.

In Section \ref{More specific rings and families}
 we try to say more: in $\modM_\kappa$ every endomorphism is in some suitable
sense equal to one in a ring $\ringdE$, whose definition is given   in 	Lemma \ref{pr rings}(iii). Here, we
remark that
the ``in some sense equal'' means: for
each $n$ we restrict ${\fucF} $
to an additive sub-group
$\varphi^{\mathfrak e}_n(\modM_{\kappa})$
(closed under ${\fucF}
$),
divide by another
$ \varphi^{\mathfrak e}_\omega(\modM_{\kappa})$
and take direct
limit;
on the top of this we have an ``error term'': we have to   divide by a
``small'' sub-module of $\modM_\kappa$, which means of cardinality
$<\lambda=\| \modM_{\kappa} \|$. A better result is: % 11.05.04 An alternative presentation is:
we divide the ring of such
endomorphisms by the ideal of those with ``small'' range and then even
``compact ones'' which are essentially of cardinality
$\leq\sup\{\|\modM\|:\modM\in  {\cal K}\}$.
We close
Section \ref{More specific rings and families} by proving the following result.
\begin{corollary}\label{1.2}
	Suppose $\ringS$ is a ring extending $\mathbb Z$ such that $(\ringS,+)$ is free
	and let $\ringR$ be a ring which is not pure semisimple. Let   ${\bf D}$ be a
	field such that  $p:=\Char({\bf D})|\Char(\ringR)$  and set
	${\mathbb Z}_p:={\mathbb Z}/ p{\mathbb Z}$ if $p>0$
	and  ${\mathbb Z}_p:={\mathbb Z}$ otherwise.
Let $\Sigma$ be a set of
equations over $\ringS$ not solvable in $\fieldD\mathop{{\otimes}}
_{{\mathbb Z}_p}(\ringS/ p\ringS)$.	Then for $\modM$ which is strongly nicely constructed,
	$\Sigma$ is not
	solvable in $\End(\modM)$.
\end{corollary}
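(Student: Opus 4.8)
The plan is to argue by contraposition: from a hypothetical solution of $\Sigma$ in $\End(\modM)$ (endomorphisms as an $\ringR$-module) I would read off a solution of $\Sigma$ in $\fieldD\otimes_{{\mathbb Z}_p}(\ringS/p\ringS)$, which is ruled out by hypothesis. So suppose $\modM=\modM_\kappa$ is strongly nicely constructed, say via $\bar\modM=\langle\modM_\alpha:\alpha\leq\kappa\rangle$, at a suitable regular $\lambda=\|\modM\|$ of the form $(\mu^{\aleph_0})^+$ with $\mu\geq|\ringR|+|\ringS|$, and suppose $\bar{\bf f}=\langle{\bf f}_t:t\in J\rangle$ solves $\Sigma$ inside $\End(\modM)$, with $J$ indexing the variables of $\Sigma$.

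The first thing I would arrange --- before building $\modM$ --- is the choice of the auxiliary data of the construction so that the ring $\ringdE$ furnished by Lemma \ref{pr rings}(iii) is literally $\fieldD\otimes_{{\mathbb Z}_p}(\ringS/p\ringS)$. Concretely, I would take the commutative base ring to be $\ringT:={\mathbb Z}_p$ --- the role of the hypothesis $p\mid\Char(\ringR)$ being precisely to make this choice of $\ringT$, sitting inside the centers of $\ringR$ and of $\ringS$, and the passage to $\ringS/p\ringS$, legitimate --- and choose the class $\mathcal K$, the distinguished module $\modM^\ast$ and the family $\calE$ of systems $\mathfrak e$ so that every quotient group $\varphi^{\mathfrak e}_n(\modM)/\varphi^{\mathfrak e}_\omega(\modM)$ is a $\fieldD$-vector space carrying a compatible $\ringS/p\ringS$-action, and so that the only endomorphisms of such a quotient that arise ``unavoidably'' (in the sense of the discussion preceding Lemma \ref{prnalphafe}) are scalar multiplications by elements of $\fieldD\otimes_{{\mathbb Z}_p}(\ringS/p\ringS)$. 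Here freeness of $(\ringS,+)$ is what permits the $\modN^{\mathfrak e}_n$ to be endowed with the bimodule structure required, and the assumption that $\ringR$ is not pure semisimple supplies the strictly increasing chains $\langle\varphi^{\mathfrak e}_n(\modM):n<\omega\rangle$ on which the black box of Theorem \ref{nice construction lemma} acts; one must then check that a strongly nice construction with these parameters does exist, i.e. that all the demands of Definition \ref{nice construction modified} (and its strengthening) and the hypotheses of Theorem \ref{nice construction lemma} are met.

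Granting that, the transfer of solutions is formal. Since each equation of $\Sigma$ is a finite word over $\ringS$, we have $|J|\leq|\Sigma|\leq|\ringS|+\aleph_0<\lambda$. By Lemma \ref{prnalphafe} each ${\bf f}_t$ satisfies $(\Pr)^{n_t}_{\alpha_t}[{\bf f}_t,\mathfrak e]$ for every $\mathfrak e\in\calE$; since $\lambda$ is regular with $|J|<\lambda$, I may fix a single $\alpha<\lambda$ and a single $n(\ast)<\omega$ with $(\Pr)^{n(\ast)}_{\alpha}[{\bf f}_t,\mathfrak e]$ for all $t\in J$ and all $\mathfrak e\in\calE$, and Lemma \ref{prnalphafez} then upgrades this to $(\Pr 1)^{n(\ast)}_{\alpha,z}[{\bf f}_t,\mathfrak e]$. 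Now Lemma \ref{pr rings}(iii) provides a ring homomorphism $\Phi\colon\End(\modM)\to\ringdE=\fieldD\otimes_{{\mathbb Z}_p}(\ringS/p\ringS)$ --- obtained by restricting an endomorphism to the subgroups $\varphi^{\mathfrak e}_n(\modM)$, dividing by $\varphi^{\mathfrak e}_\omega(\modM)$, passing to the direct limit, and factoring out the ideal of endomorphisms of small range and then of the compact ones --- and by construction $\Phi$ respects the $\ringS$-structures on source and target, sending the endomorphism of $\modM$ induced by $s\in\ringS$ to $1\otimes(s+p\ringS)$. Applying $\Phi$ to $\bar{\bf f}$ and using that ring homomorphisms send solutions of systems of equations to solutions, I obtain a solution of $\Sigma$ in $\fieldD\otimes_{{\mathbb Z}_p}(\ringS/p\ringS)$, contrary to hypothesis. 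Hence $\Sigma$ is not solvable in $\End(\modM)$.

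The genuine obstacle lies entirely in the second paragraph: arranging $(\ringT,\mathcal K,\modM^\ast,\calE)$ so that $\ringdE$ comes out exactly equal to $\fieldD\otimes_{{\mathbb Z}_p}(\ringS/p\ringS)$, and verifying that a strongly nice construction realizing this choice exists --- in particular the characteristic bookkeeping that makes the reduction modulo $p$ harmless. Once $\ringdE$ has been pinned down, the rest follows mechanically from Lemmas \ref{prnalphafe}, \ref{prnalphafez} and \ref{pr rings}.
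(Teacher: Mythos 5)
Your high-level shape (contraposition plus transfer along the endomorphism analysis of Sections 4--5, uniformizing $n(\ast)$ and $\alpha$ over the finitely many variables, and using the embedding of $\End^{\mathfrak e,\omega}(\modM)/\End^{\mathfrak e,\omega}_{<\lambda}(\modM)$ from Lemma \ref{2.16}) matches the paper's. But the step you defer to your second paragraph and candidly call ``the genuine obstacle'' is precisely where the proof lives, and the way you propose to discharge it is not viable. You want to choose the context in advance so that $\ringdE$ is \emph{literally} $\fieldD\otimes_{{\mathbb Z}_p}(\ringS/p\ringS)$ for the given field $\fieldD$. The paper does not (and cannot, in general) do this: the field is an \emph{output} of the construction, not an input. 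The actual mechanism is Lemma \ref{solfree}: freeness of $(\ringS,+)$ over $\ringT=\langle 1\rangle_{\ringR}$ splits each $\modN^{\mathfrak e}_n$ as $\bigoplus_\beta \modN_{n,\beta}$, which yields $\ringdE^{\mathfrak e}_n\cong \ringde^{\mathfrak e}_n\otimes_\ringT\ringS$ with $\ringde^{\mathfrak e}_n$ a \emph{commutative} ring (Lemma \ref{pr rings}(2)); one then quotients $\ringde^{\mathfrak e}_n$ by a maximal ideal $\mathcal I_n$ to get a field ${\bf D}_n=\ringde^{\mathfrak e}_n/\mathcal I_n$, and pushes the solution of $\Sigma$ through $\ringdE^{\mathfrak e}_n\twoheadrightarrow {\bf D}_n\otimes_{\ringT'}\ringS'$. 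The divisibility $\Char({\bf D}_n)\mid\Char(\ringR)$ falls out because $\ringT\cong{\mathbb Z}/\Char(\ringR){\mathbb Z}$ maps into the field ${\bf D}_n$. Without this decomposition-and-quotient argument you have no identification of the target ring, so your ``formal transfer'' paragraph has nothing to land in.

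Two further concrete problems. First, your choice $\ringT={\mathbb Z}_p$ is illegitimate when $0<p<\Char(\ringR)$: the ring ${\mathbb Z}/p{\mathbb Z}$ is then a quotient, not a subring, of $\langle 1\rangle_{\ringR}$, so it cannot serve as the common central subring $\ringT\subseteq\Cent(\ringR)\cap\Cent(\ringS)$; the reduction mod $p$ must happen at the end (via $\mathcal I_n\cap\ringT$), not at the level of the context. Second, even granting the corollary's loose quantification over $\fieldD$, what the argument delivers (and what Corollary \ref{3.5}(e) records, and what the applications in Section 6 use) is solvability in ${\bf D}\otimes(\ringS/p\ringS)$ for \emph{some} field ${\bf D}$ arising as $\ringde^{\mathfrak e}_n/\mathcal I_n$ --- not for an arbitrary prescribed ${\bf D}$. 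Your plan to pin $\ringdE$ to a prescribed $\fieldD$ is therefore aiming at a stronger statement than is provable by these means, and you have not supplied (nor does the paper contain) a construction achieving it.
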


Let us now explain how the above constructions can be applied to prove Theorem \ref{0.1}
(for more details see Section \ref{Kaplansky test problems}). We first introduce
a ring	  $\ringS_0$,  it is incredibly easy compared to  $\ringS$.
To this end, let $\ringT$ be the subring of
$\ringR$ which 1
generates. Let $\ringS_0$ be the ring extending $\ringT$
 generated by $\{{\cal X}_0,\ldots, {\cal X}_{m(\ast)-1},{\cal W},
{\cal Z}\}$ freely except:
\begin{description}
	\item[$(\ast)_1$]
 ${\cal X}^2_\ell={\cal X}_\ell$,\\
	 ${\cal X}_\ell {\cal X}_m=0$ ($\ell\neq m$),\\
	$1={\cal X}_0+\ldots+ {\cal X}_{m(\ast)-1}$,\\
	${\cal X}_\ell {\cal W}{\cal X}_m=0$ for $\ell+1\neq m\ \mod\ m(\ast)$,\\
	${\cal W}^{m(\ast)}=1$,\\
	${\cal Z}^2=1$,\\
	${\cal X}_0{\cal Z}(1-X_0)={\cal X}_0{\cal Z}$,\\
	$(1-{\cal X}_0){\cal Z}{\cal X}_0=(1-{\cal X}_0){\cal Z}$.
\end{description}

For an integer $m$ let $[-\infty,m):=\{n:n \mbox{ is an integer and }~n<m\}$. To each
$\eta\in {}^{[-\infty,m)}\omega$ we assign $\eta\restriction k:=\eta
\restriction [-\infty,\Min\{m,k\})$.
We look at
\[\begin{array}{lll}
W_0:=&\big\{\eta:&\eta\mbox{ is a function with domain of the form }[-\infty,
n)\\
& &\mbox{and range a subset of }\{1,\ldots,m(\ast)-1\}\mbox{ such that}\\
& &\mbox{for every small enough }m\in {\mathbb Z},\ \eta(m)=1\big\}
\end{array}\]
and $W_1:=W_0\times \{0, \ldots ,m(\ast)-1\}$.
Let $\bf D$ be a field  \st\
\[\ringT\mbox{ is finite }\quad \Rightarrow\quad \Char({\bf D}) \mbox{
 divides } |\ringT|.\]
So, ${\bf D} \otimes \ringS$ is the ring extending $\bf D$ by adding
${\bf D},{\cal X}_0,\ldots,{\cal X}_{m(\ast)-1}, {\cal W}, {\cal Z}$
as non commuting variables freely except satisfying the equations in $(\ast)_1$.
Let $\modM^\ast={}_{\bf D}\modM^\ast$ be the left
${\bf D}$-module freely generated by
$\{x_{\eta,\ell}: \eta\in W_0,\ \ell<m(\ast)\}.$
We make ${}_{\bf D}\modM^\ast$ to a right
$({\bf D}\mathop{{\otimes}} \ringS_0)$-module by defining $x z$ for $x\in
{}_{{\bf D}}\modM^*$ and $z\in\ringS_0$, so it is enough to deal with
$z\in
\{{\cal X}_m:m<m(*)\}\cup\{{\cal Z},{\cal W}\}$. Let $x=
\sum\limits_{\eta,\ell} a_{\eta,\ell} x_{\eta,\ell}$ where
$(\eta,\ell)$ vary on
$W_0$ and $a_{\eta,\ell}\in{\bf D}$ and $\{(\eta,\ell):a_{\eta,\ell}\neq 0\}$
is finite and we let $\sum\limits_{\eta,\ell} a_{\eta,\ell}x_{\eta,\ell})z=\sum\limits_{\eta,
	\ell}a_{\eta,\ell}(x_{\eta,\ell}z),$ where
\[\begin{array}{rcl}
x_{\eta,\ell}{\cal X}_m&:=&\left\{\begin{array}{ll}
x_{\eta,\ell}&\mbox{if }\ell=m,\\
0 &\mbox{if }\ell\neq m,
\end{array}\right.\\
\\
x_{\eta,\ell}{\cal Z} &:=&\left\{\begin{array}{ll}
x_{\eta\conc\langle\ell\rangle,0} &\mbox{if }\ell>0,\\
x_{\eta\restriction [-\infty,n-1),\eta(n-1)}&\mbox{if }\ell=0,\mbox{
	and } (-\infty,n)=\Dom(\eta),
\end{array}\right.\\

 and\\

x_{\eta,\ell}{\cal W} &:=&x_{\eta,m}\ \mbox{ when }m=\ell+1\ \mod\ m(\ast).
\end{array}\]
We get a $({\bf D},\ringS)$-bimodule. Let ${}_{\bf D}\modM^\ast_\ell=\{\sum\limits_\eta d_{\eta,\ell} x_{\eta, \ell}: \
\eta\in W_0$ and $d_{\eta, \ell}\in {\bf D}\}$, so ${}_{\bf D}\modM^\ast=
\mathop{{\bigoplus}}\limits_{\ell=0}^{m(\ast)-1} {}_{\bf D} \modM^\ast_\ell$.

Now we are ready to  define $\ringS$, but in addition $\sigma=0$
when $\modM^\ast_{{\bf D}}\sigma=0$
for every ${\bf D}$ and every $({\bf D},\ringS_0)$-bimodule ${}_{\bf D}\modM^*$  as
defined above.

 We will prove that   $\ringS$
is a free $\ringT$-module. This allow us to apply Corollary \ref{1.2}.
 Look at $\modP$, the bimodule induced by the semi-nice construction equip with the strong property presented in above. Let $\modP_\ell=\modP{\cal X}_\ell$.  It follows easily that    $
\mathop{{\bigoplus}}\limits^{m(\ast)-1}_{\ell=1}{}_\ringR\modP_\ell\cong
({}_\ringR\modP_0)^{m(\ast)-1}.
$ It is enough to show $  {}_\ringR
\modP^k_0\not\cong {}_{\ringR}\modP_0$ for all $1<k<m(\ast)-1$.
Assume $k$ is a counterexample.
We apply Corollary \ref{1.2} to find
a field ${\bf D}$ and  ${\cal Y}\in
{\bf D}\mathop{{\otimes}}\limits_\ringT \ringS$ satisfying the
following equations:
\begin{description}
	\item[$(*)_2$ ] ${\cal Y}\restriction {}_{\bf D}\modM^\ast_0$ is an
	isomorphism from
	${}_{\bf D}\modM^\ast_0$ onto $\mathop{{\bigoplus}}\limits_{\ell=1}^k
	{}_{\bf D}\modM^\ast_\ell$,
	
	${\cal Y}\restriction \mathop{{\bigoplus}}\limits_{\ell=1}^k
	{}_{\bf D}\modM^\ast_\ell$ is an
	isomorphism from $\mathop{{\bigoplus}}\limits_{\ell=1}^k
	{}_{\bf D}\modM^\ast_\ell$ onto ${}_{\bf D}\modM^\ast_0$,
	
	${\cal Y}\restriction\mathop{{\bigoplus}}\limits^{m(\ast)}_{\ell=k+1}
	{}_{\bf D}\modM^\ast_\ell$ is the identity
	
	${\cal Y}^2=1$.
\end{description}
Then, we look at the following sets:

	\[\begin{array}{ll}
	w_{\eta,\ell}&:=\{(\nu,m)|(\nu,m)=(\eta,\ell)\mbox{ or }
	\eta^\frown\langle\ell\rangle\trianglelefteq\nu~ \mbox{and}~ m<m(*)\}\\u_\ell
	&:=\{(\eta,m)| m=\ell,\ \eta\in w_0\}\\w^m_{\eta,\ell}
	&:=\ w_{\eta,\ell}\cap u_m\\w^{[1,n]}_{\eta,\ell}
	&:=w_{\eta,\ell}\cap\bigcup\limits_{m\in [1,n]}u_m.
	\end{array}\]
For any $u\subseteq \{(\eta,\ell):\eta\in W_0,\ \ell<m(\ast)\}$, we let $\modN_u$ be
the sub ${\bf D}$-module of ${}_{\bf D}\modM^\ast$ generated by
$\{x_{\eta,\ell}:(\eta,\ell)
\in u\}$. For every large enough finite subset $v\subseteq w_{\eta,\ell}$,
 we show the following is well defined:
$$
{\bf n}_{\eta,\ell}:=\dim\left(\frac{\modN_{w^0_{\eta,\ell}}}{\modN_{w^0_{\eta,\ell}\setminus v}}\right)-
\dim\left(\frac{\modN_{w^{[1,
			k]}_{\eta,\ell}}}{\modN_{w^0_{\eta,\ell}\setminus v} {\cal Y} }\right).
$$
These integers are independent from the first factor: Suppose $\eta$, $\nu\in w_0$ and $\ell\in
\{0,1,\ldots,m(\ast)-
1\}$ then ${\bf n}_{\eta,\ell}={\bf n}_{\nu,\ell}$ so we shall write
${\bf n}_\ell$.
These numbers are such that they satisfy the following equations:
$$
\ideallI_\ell =\left\{\begin{array}{ll}
0 &\mbox{if }  \ell\in [1,k]  \\
\ideallI_0+\ideallI_1+\ldots+\ideallI_{m(\ast)-1} &\mbox{if }  \ell\in[k+1,m(\ast))\emph{  or  }\ell=0
\end{array}\right.
$$
Finally, we use this equation to derive the following  contradiction $$\sum^{m(\ast)-1}_{\ell=1} \ideallI_\ell= \frac{k}{m(\ast)-1}.$$

\section{Preliminaries from algebra and logic}
\label{Preliminaries from algebra and logic}

In this section we provide some preliminaries from algebra and
logic that are needed for the rest of the paper. Let us start
 by fixing some notations  that we will use through the paper.
Recall that $\ringR$ is a ring, not necessary commutative, with $1=1_\ringR$.
\begin{convention}
By $\Cent(-)$ we mean the center of a ring $(-)$.
The rings
$\ringS$ and $\ringR$ are with $1$, and we always assume that $\ringT:=\Cent(\ringR)\cap\Cent(\ringS)$ is a commutative
ring.\footnote{Indeed, in our applications  we have $\ringT=\ringR \cap \ringS$ (see Section 6, below).}\end{convention}

\begin{Definition}
 An $(\ringR,\ringS)$-bimodule $\modM$ is a left $\ringR$-module and right
$\ringS$-module such that for all $r \in \ringR, x \in \modM$ and $~s \in \ringS$ we have
$(rx)s=r(xs)$ and that $tx=xt$ for all $t \in \ringT$. When the pair $(\ringR,\ringS)$ is clear from the context, we refer to  $\modM$ as a bimodule.
\end{Definition}
\begin{convention}
We use $\modK$, $\modM$,
$\modN$ and $\modP$
to denote bimodules (or left
$\ringR$-modules).\end{convention}

\begin{Definition}Let ${\bf f}: \modM \rightarrow \modN$ be a bimodule
	homomorphism.
\begin{enumerate} 
\item  The kernel,
 $\Ker({\bf f}):=\{x\in \modM: {\bf f}(x)=0\}$
 is a  sub-bimodule of $\modM$.
  \item  The  image,
  $\Rang({\bf f}):=\{{\bf f}(x):x\in
\modM\}$
 is a sub-bimodule of $\modN$.

\item  If $\modN$  is  a sub-bimodule of $ \modM$ then
$
\modM/\modN:=\{x+\modN:x\in \modM\}$
is a homomorphic image of $\modM$. The mapping $x\mapsto
x+\modN$ is a
homomorphism with  kernel $\modN$.

\item  For a bimodule $\modM, {\rm End}_\ringR (\modM)$
is the endomorphism ring of
$\modM$ as  a
left $\ringR$-module.
\end{enumerate}

\end{Definition}
In this paper we also consider modules and bimodules as logical structures, so let us fix  a language for them.  We only sketch the  basic
 definitions and results which are needed here, and refer to \cite{B} and \cite{D} for further information.
\begin{Definition}
\label{rmodlanguage}
\begin{enumerate}
\item  An $\ringR$-module $\modM$ is considered as a $\tau_\ringR$-structure
where $\tau_\ringR=\{0,+,-\}\cup \{{}_{r}H:r\in \ringR\},$ where the universe
is the set
of elements of $\modM$, and $0,+,-$
are interpreted naturally and ${}_{r}H$ is
interpreted as (left) multiplication by $r$ (i.e., ${}_{r}H(x):=rx$).
Then
\begin{enumerate}
  \item Terms of $\tau_\ringR$ are expressions of the form $\sum_{i<m}r_ix_i$, where $m< \omega,$ $r_i$ is in $\ringR$
  and $x_i$ is a variable.
  \item Atomic formulas of $\tau_\ringR$ are equations of the form $t_1=t_2$, where $t_1, t_2$ are terms.
\end{enumerate}
\item
An $(\ringR,\ringS)$-bimodule
is similarly interpreted as a $\tau_{(\ringR,\ringS)}$- structure
where
$\tau_{(\ringR,\ringS)}=\{0,+,-\}\cup
\{{}_{r}H:r\in \ringR\}\cup
\{ H_s:s\in \ringS\}  $
and $ {}_r H, H_s$ for $ r \in \ringR , s \in  \ringS $
are unary function symbol, which will be interpreted as
follows: $ {}_{r}H $ as left multiplication by $r$ (i.e., ${}_{r}H(x):=rx$)
and $ H_s$ as
right multiplication by $s$ (i.e., $H_s(x)=xs$).
Then
\begin{enumerate}
  \item Terms of $\tau_{(\ringR,\ringS)}$ are expressions of the form $$\sum_{i<d}\sum_{h<h_d}r_{i,h}x_is_{i,h}$$where $d,h_d< \omega,$  $r_{i,h}\in\ringR$
  $s_{i,h}\in\ringS$
  and $x_i$ is a variable.
  \item Atomic formulas of $\tau_{(\ringR,\ringS)}$ are equations of the form $t_1=t_2$, where $t_1, t_2$ are terms.
\end{enumerate}
\end{enumerate}
\end{Definition}
The next lemma is trivial.
\begin{lemma}
\label{bimodules are variety}
The class of $(\ringR, \ringS)$-bimodules is a variety, i.e., there are equations in the language of $\tau_{(\ringR, \ringS)}$ such that a
$\tau_{(\ringR, \ringS)}$-structure is an $(\ringR, \ringS)$-bimodule iff it satisfies these equations.
\end{lemma}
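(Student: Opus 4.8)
\emph{Plan.} The plan is to exhibit an explicit set $\Sigma$ of identities in the language $\tau_{(\ringR,\ringS)}$ and to check that a $\tau_{(\ringR,\ringS)}$-structure is an $(\ringR,\ringS)$-bimodule exactly when it is a model of $\Sigma$. This is all that is required: a class axiomatized by a set of identities is, by Birkhoff's theorem, automatically closed under subalgebras, products and homomorphic images, hence a variety; and in any case the statement as phrased only asks for such a set of equations.

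First I would record the abelian-group part of $\Sigma$, namely the identities $x+(y+z)=(x+y)+z$, $\ x+y=y+x$, $\ x+0=x$ and $x+(-x)=0$, which force the $\{0,+,-\}$-reduct to be an abelian group. Next, for the left $\ringR$-action I add, for every $r\in\ringR$, the identity ${}_{r}H(x+y)={}_{r}H(x)+{}_{r}H(y)$, and for every pair $r_1,r_2\in\ringR$ the identities ${}_{r_1+r_2}H(x)={}_{r_1}H(x)+{}_{r_2}H(x)$ and ${}_{r_1r_2}H(x)={}_{r_1}H({}_{r_2}H(x))$, together with ${}_{1_{\ringR}}H(x)=x$. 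Unwinding the notation ${}_{r}H(x)=rx$, a structure satisfying these is precisely a left $\ringR$-module. Symmetrically, for the right $\ringS$-action I add, for every $s\in\ringS$, $\ H_s(x+y)=H_s(x)+H_s(y)$, for every pair $s_1,s_2\in\ringS$ the identities $H_{s_1+s_2}(x)=H_{s_1}(x)+H_{s_2}(x)$ and $H_{s_1s_2}(x)=H_{s_2}(H_{s_1}(x))$ (the order reflecting that $H_s$ is right multiplication), together with $H_{1_{\ringS}}(x)=x$.

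Finally, to capture the two bimodule compatibility conditions I add, for every $r\in\ringR$ and $s\in\ringS$, the identity $H_s({}_{r}H(x))={}_{r}H(H_s(x))$ --- which reads $(rx)s=r(xs)$ --- and, for every $t\in\ringT$, the identity ${}_{t}H(x)=H_t(x)$ --- which reads $tx=xt$ (both symbols being present in $\tau_{(\ringR,\ringS)}$ since $\ringT\subseteq\Cent(\ringR)$ and $\ringT\subseteq\Cent(\ringS)$). Call the resulting set of identities $\Sigma$. A $\tau_{(\ringR,\ringS)}$-structure $\modM\models\Sigma$ is, by the previous paragraph, simultaneously a left $\ringR$-module and a right $\ringS$-module, and the last two families of identities are literally the defining equations of an $(\ringR,\ringS)$-bimodule in the sense of the preceding Definition; conversely every $(\ringR,\ringS)$-bimodule satisfies each member of $\Sigma$. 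Hence the class of $(\ringR,\ringS)$-bimodules equals $\mathrm{Mod}(\Sigma)$, an equational class, i.e., a variety.

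There is essentially no obstacle. The only points deserving a word of care are: (i) $\Sigma$ may be an infinite set of identities, one for each ring element or pair of ring elements, which is permitted in the notion of a variety (if $\ringR,\ringS$ are finite one may take $\Sigma$ finite); and (ii) the general terms $\sum_{i<d}\sum_{h<h_d} r_{i,h}x_i s_{i,h}$ of Definition \ref{rmodlanguage} are, modulo $\Sigma$, provably equal to expressions built from $+$ and iterated applications of the unary symbols ${}_{r}H$ and $H_s$, so writing the identities using only the unary operation symbols loses no generality.
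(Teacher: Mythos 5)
Your proof is correct and is exactly the standard argument the paper has in mind; the paper simply declares the lemma trivial and omits any proof. The explicit list of identities (abelian group axioms, the left $\ringR$-module and right $\ringS$-module schemes indexed by ring elements, the compatibility identity $H_s({}_{r}H(x))={}_{r}H(H_s(x))$, and ${}_{t}H(x)=H_t(x)$ for $t\in\ringT$) is precisely what is needed, and your remarks about the set being infinite and about the general terms reducing to compositions of the unary symbols are both apt.
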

Let us now introduce the infinitary languages for modules and bimodules. They will play essential roles in the sequel.
\begin{Definition}
 Suppose $\kappa$ and $\mu$
are infinite cardinals, which we allow to be $\infty$, and let $\tau$
be one of $\tau_{\ringR}$ or $\tau_{(\ringR,\ringS)}$. The infinitary language $\mathcal{L}_{\mu, \kappa}(\tau)$
is defined so as its vocabulary is the same as $\tau,$ it has the same terms and atomic formulas as in $\tau,$ but we also allow conjunction and disjunction of length less than $\mu$ (i.e., if $\phi_j,$ for $j<\beta < \mu$ are formulas, then so are $\bigvee_{j<\beta}\phi_j$ and $\bigwedge_{j<\beta}\phi_j$) and quantification over less than $\kappa$ many variables (i.e., if $\phi=\phi((v_i)_{i<\alpha})$, where $\alpha < \kappa$, is a formula, then so are $\forall_{i<\alpha}v_i \phi$ and $\exists_{i<\alpha}v_i\phi$).
\end{Definition}
Note that $\mathcal{L}_{\omega, \omega}(\tau)$ is just the first order logic with vocabulary $\tau.$
Given $\kappa$, $\mu$ and $\tau$ as above, we are sometimes interested in some special formulas from $\mathcal{L}_{\mu, \kappa}(\tau)$.
\begin{Definition}
\begin{enumerate}
  \item $\mathcal{L}^{\text{cpe}}_{\mu, \kappa}(\tau)$, the class of conjunctive positive existential formulas, consists of those formulas of $\mathcal{L}_{\mu, \kappa}(\tau)$ which in their formulation
      only $\wedge, \bigwedge_{j<\beta} , \exists x$ and  $\exists_{i<\alpha}v_i$ are used (where $\beta < \mu$ and $\alpha < \kappa$).
  \item $\mathcal{L}^{\text{pe}}_{\mu, \kappa}(\tau)$, the class of positive existential formulas, is defined similarly where we also allow $\vee$ and $\bigvee_{j<\beta}$
  \item $\mathcal{L}^{\text{p}}_{\mu, \kappa}(\tau)$, the class of positive formulas, is defined similarly where we allow $\vee$, $\bigvee_{j<\beta}$  and also
  the universal quantifiers $\forall x$ and $\forall_{i<\alpha}v_i$.

  \item By a simple formula of $\mathcal{L}_{\mu, \kappa}(\tau)$, we mean a formula of the form
\[
\phi=\exists_{i<\alpha}v_i [\bigwedge_{j<\beta}\phi_j],
\]
where each $\phi_j=\phi_j((v_i)_{i<\alpha})$ is an atomic formula.
\end{enumerate}
\end{Definition}

\begin{lemma}
\label{preserving pe formulas}The following assertions are valid:
\begin{enumerate}
	\item  Suppose ${\bf f}: \modM \to \modN$ is an $(\ringR, \ringS)$-bimodule homomorphism. Then ${\bf f}$ preserves $\mathcal{L}_{\infty, \infty}^{\text{pe}}(\tau_{(\ringR, \ringS)})$-formulas.
	\item  Suppose in addition to 1) that $f$ is surjective. Then ${\bf f}$ preserves $\mathcal{L}_{\infty, \infty}^{\text{p}}(\tau_{(\ringR, \ringS)})$-formulas.
\end{enumerate}
\end{lemma}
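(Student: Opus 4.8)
The plan is to prove the two clauses by induction on the complexity of the formula, treating first the positive existential case and then noting the one extra inductive step (the universal quantifier) that requires surjectivity. For clause (1), let $\mathbf f\colon\modM\to\modN$ be a bimodule homomorphism and let $\bar a$ be a tuple from $\modM$ with $\modM\models\varphi(\bar a)$, where $\varphi\in\mathcal L^{\mathrm{pe}}_{\infty,\infty}(\tau_{(\ringR,\ringS)})$; we want $\modN\models\varphi(\mathbf f(\bar a))$. The base case is atomic formulas $t_1=t_2$: here we use that $\mathbf f$ commutes with $0,+,-$ and with each ${}_rH$ and each $H_s$, hence $\mathbf f$ sends the value of any $\tau_{(\ringR,\ringS)}$-term $t(\bar x)$ evaluated at $\bar a$ to the value of $t$ evaluated at $\mathbf f(\bar a)$; so $t_1(\bar a)=t_2(\bar a)$ in $\modM$ forces $t_1(\mathbf f(\bar a))=t_2(\mathbf f(\bar a))$ in $\modN$. (This is just the defining property of a bimodule homomorphism, applied termwise.)

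For the inductive step, a formula in $\mathcal L^{\mathrm{pe}}_{\infty,\infty}$ is built from atomics using only $\bigwedge_{j<\beta}$, $\bigvee_{j<\beta}$, and existential quantifiers $\exists_{i<\alpha}v_i$. The conjunction and disjunction cases are immediate from the induction hypothesis: if $\modM\models\bigwedge_j\psi_j(\bar a)$ then $\modN\models\psi_j(\mathbf f(\bar a))$ for every $j$, hence $\modN\models\bigwedge_j\psi_j(\mathbf f(\bar a))$, and similarly a witnessing disjunct is preserved. For the existential case $\varphi(\bar x)=\exists_{i<\alpha}v_i\,\psi(\bar x,\bar v)$: if $\modM\models\varphi(\bar a)$, pick a witnessing tuple $\bar b$ in $\modM$ with $\modM\models\psi(\bar a,\bar b)$; by the induction hypothesis $\modN\models\psi(\mathbf f(\bar a),\mathbf f(\bar b))$, so $\mathbf f(\bar b)$ witnesses $\modN\models\varphi(\mathbf f(\bar a))$. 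This completes clause (1).

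For clause (2), assume in addition $\mathbf f$ is surjective and allow the universal quantifiers $\forall x$, $\forall_{i<\alpha}v_i$ in the formation of $\varphi$. Everything in the induction above goes through unchanged except for the new universal step $\varphi(\bar x)=\forall_{i<\alpha}v_i\,\psi(\bar x,\bar v)$. Suppose $\modM\models\varphi(\bar a)$ and let $\bar c$ be an arbitrary tuple from $\modN$; by surjectivity choose $\bar b$ in $\modM$ with $\mathbf f(\bar b)=\bar c$. Since $\modM\models\psi(\bar a,\bar b)$, the induction hypothesis gives $\modN\models\psi(\mathbf f(\bar a),\bar c)$, and as $\bar c$ was arbitrary, $\modN\models\forall_{i<\alpha}v_i\,\psi(\mathbf f(\bar a),\bar v)$. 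The main (and only real) point to watch is exactly this step: without surjectivity one cannot lift an arbitrary evaluation tuple in $\modN$ back to $\modM$, which is why negation/universal quantification is not preserved by general homomorphisms; this is the reason clause (2) needs the extra hypothesis. All other steps are routine bookkeeping, and the induction over formulas of arbitrary (possibly infinite) length is unproblematic since each connective reduces the relevant well-founded rank.
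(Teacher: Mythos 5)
Your proof is correct and follows essentially the same route as the paper: the paper likewise argues by induction on formula complexity, leaves the routine cases to the reader, and spells out only the universal-quantifier step, where surjectivity is used exactly as you use it to lift an arbitrary tuple of $\modN$ back to $\modM$.
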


\begin{proof}
(1). Suppose $\phi((v_i)_{i<\alpha}) \in \mathcal{L}_{\infty, \infty}^{\text{pe}}(\tau_{(\ringR, \ringS)})$ and let $a_i \in \modM,$ where $i<\alpha.$ We need to show:
	\[
	\modM \models \phi((a_i)_{i<\alpha}) \Rightarrow \modN \models \phi(({\bf f}(a_i))_{i<\alpha}).
	\]
This can be proved by induction on the complexity of the formulas, and we leave its routine check to the reader.

(2). This can be proved in a similar way; let us only consider the case of the universal formula to show how the surjectivity of the function $\bf f$
is used. Thus suppose that $\phi((v_i)_{i<\alpha})=\forall x \psi((v_i)_{i<\alpha}, x)$ is in  $\mathcal{L}_{\infty, \infty}^{\text{pe}}(\tau_{(\ringR, \ringS)})$ and let $a_i \in \modM,$ where $i<\alpha.$ We may assume that the lemma holds for $\psi.$ Suppose $\modM \models \phi((a_i)_{i<\alpha})$. We show that $\modN \models \phi(({\bf f}(a_i))_{i<\alpha})$. Thus let $b \in \modN$. As $\bf f$ is surjective, there exists $a \in \modM$ such that ${\bf f}(a)=b$ and by the assumption, $\modM \models \psi((a_i)_{i<\alpha}, a)$. By the induction hypothesis, $\modN \models \psi(({\bf f}(a_i))_{i<\alpha}, {\bf f}(a))$, i.e., $\modN \models \psi(({\bf f}(a_i))_{i<\alpha}, b)$. Since $b$ was arbitrary we have $\modN \models \phi(({\bf f}(a_i))_{i<\alpha})$.
\end{proof}

\begin{remark}
Note that there is no need for ${\bf f}$ to preserve $\neg$ formulas. But if ${\bf f}$ is an isomorphism, then it preserves all formulas.
\end{remark}
The next lemma shows that under suitable conditions, we can replace $\rm cpe$-formulas by simple formulas.
\begin{lemma}\label{reducing to simple formula}	
	 Let  $\tau$ be either $\tau_{\ringR}$ or $\tau_{(\ringR, \ringS)}$,  and suppose $\mu_1\geq \mu,\kappa$  is
	regular.
 If $\varphi(\bar{x}) \in
\mathcal{L}^{\rm cpe}_{\mu,\kappa}(\tau)$, then
we can find as equivalent simple formula in
$\mathcal{L}_{\mu_1,\mu_1}(\tau)$.
\end{lemma}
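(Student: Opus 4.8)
The plan is to prove this by induction on the structure of the $\mathrm{cpe}$-formula $\varphi(\bar x)$, turning every conjunction into a single (possibly infinitary) conjunction of atomic formulas guarded by a single block of existential quantifiers, using the regularity of $\kappa$ and the hypothesis $\mu_1\ge\mu,\kappa$ to keep the index sets small enough. First I would make precise what ``simple formula'' requires: a formula of the shape $\exists_{i<\alpha}v_i\,\bigl[\bigwedge_{j<\beta}\phi_j\bigr]$ with each $\phi_j$ atomic, $\alpha<\mu_1$ and $\beta<\mu_1$. The base case is immediate: an atomic formula is already simple (take $\alpha=0$, $\beta=1$). For the induction I would handle the two constructors allowed in $\mathcal L^{\mathrm{cpe}}_{\mu,\kappa}(\tau)$, namely (a) taking a conjunction $\bigwedge_{s<\gamma}\psi_s$ of length $\gamma<\mu$ of already-simplified formulas, and (b) prefixing a block $\exists_{i<\delta}v_i$ of fewer than $\kappa$ existential quantifiers.

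For constructor (b) the key move is to \emph{pull existential quantifiers to the front}. If $\psi\equiv\exists_{i<\alpha}v_i\,[\bigwedge_{j<\beta}\phi_j]$ is simple and we form $\exists_{i<\delta}w_i\,\psi$, then after renaming the bound variables so the two blocks are disjoint, this is equivalent to $\exists_{i<\delta+\alpha}u_i\,[\bigwedge_{j<\beta}\phi_j]$; since $\delta<\kappa\le\mu_1$ and $\alpha<\mu_1$ and $\mu_1$ is (at least as large as, hence we may take it) regular, $\delta+\alpha<\mu_1$, so the result is again simple. For constructor (a), given simple $\psi_s\equiv\exists_{i<\alpha_s}v^s_i\,[\bigwedge_{j<\beta_s}\phi^s_j]$ for $s<\gamma$, rename all the blocks $\{v^s_i\}$ to be pairwise disjoint and push the existential quantifiers outward (legitimate because none of the fresh $v^s_i$ occur in $\psi_{s'}$ for $s'\ne s$): this yields $\exists_{i<\alpha}u_i\,\bigl[\bigwedge_{s<\gamma}\bigwedge_{j<\beta_s}\phi^s_j\bigr]$ where $\alpha=\sum_{s<\gamma}\alpha_s$ and the total conjunction has length $\beta=\sum_{s<\gamma}\beta_s$. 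Both $\alpha$ and $\beta$ are sums of fewer than $\mu\le\mu_1$ ordinals each below $\mu_1$; by regularity of $\mu_1$ these sums are again below $\mu_1$, so the formula is simple in $\mathcal L_{\mu_1,\mu_1}(\tau)$. One should also note that the quantifier block $\exists_{i<\alpha}$ with $\alpha<\mu_1$ is allowed in $\mathcal L_{\mu_1,\mu_1}(\tau)$, which is exactly why we weaken from $\kappa$ to $\mu_1$ in the conclusion.

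The only genuinely delicate points — and where I expect the bookkeeping to be the main obstacle rather than any real mathematics — are (i) the variable-renaming: one must be careful that pulling $\exists v^s_i$ out past the other conjuncts is sound, which is ensured by choosing the renamed bound variables to be completely fresh, and (ii) verifying that the cardinal arithmetic stays below $\mu_1$ at each step, which is exactly what regularity of $\mu_1$ (equivalently, the hypothesis that $\kappa$ is regular together with $\mu_1\ge\mu,\kappa$) buys us, since a $\mathrm{cpe}$-formula is built from fewer than $\mu$-ary conjunctions and fewer than $\kappa$-ary quantifier blocks at each node and has, a priori, arbitrary depth — so the final count is a supremum/sum over a well-founded tree whose branching is controlled by $\mu$ and $\kappa$. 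Since only $\wedge,\bigwedge,\exists x,\exists_{i<\alpha}$ appear in a $\mathrm{cpe}$-formula, there is nothing else to reduce, and the induction closes. The routine verification of the equivalence (both directions follow immediately from the semantics of $\exists$ and $\wedge$, since renaming bound variables and reassociating conjunctions preserve truth) I would leave to the reader.
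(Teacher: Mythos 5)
Your proof is correct and is exactly the standard prenexing argument that the paper itself omits (the lemma is stated without proof): structural induction, disjoint renaming of bound variables, pulling existential blocks to the front, and merging conjunctions, with regularity of $\mu_1$ guaranteeing that the accumulated quantifier-block and conjunction lengths stay below $\mu_1$ at each node. Nothing further is needed.
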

We are also interested in definable subsets of (bi)modules.
\begin{Definition}
Let $\tau$
be one of $\tau_{\ringR}$ or $\tau_{(\ringR,\ringS)}$. Given a  $\tau$-structure $\modM$ and a formula $\phi(x_0, \cdots, x_{n-1})$ in $\mathcal{L}_{\infty, \infty}(\tau)$, let
\[
\phi(\modM) = \{\langle a_0, \cdots, a_{n-1}\rangle \in~^n{\modM}: M \models \phi(a_0, \cdots, a_{n-1})    \}.
\]
\end{Definition}
Here, by $\lg(-)$ we mean the length function.
\begin{lemma}
\label{formulas and direct sum}
	 Let  $\tau$ be either $\tau_{\ringR}$ or $\tau_{(\ringR, \ringS)}$, and let
  $\varphi(\bar{x})\in \mathcal{L}_{\mu,\kappa}^{\rm p}(\tau)$.
Suppose $\bar{z}_\ell \in {}^{\lg(\bar{x})} \modM_\ell$, for $\ell=1, 2$ and
$\bar{z}_\ell= \langle z^\ell_i :i < \lg (\bar{x}) \rangle$,
$\modM=\modM_1\oplus \modM_2$,
and $\bar{z}= \langle z_i:i < \lg (\bar{x} )
 \rangle$ where
$\modM \models z_i=z^1_i+z^2_i$.
Then
\[\modM \models\varphi(\bar{z})\quad\Leftrightarrow\quad
\bigwedge\limits_{\ell=1}^{2} \modM_\ell \models\varphi(\bar{z}_\ell).\]
Furthermore, if
  for $\ell=1,2$ and $i < \lg (\bar{x}), z^\ell_i= 0_{M_\ell}$, then $\modM_\ell \models \varphi
(\bar{z}_\ell)$
 and
$\bar{z}=\bar{0}_{\lg (\bar{z})}$.
\end{lemma}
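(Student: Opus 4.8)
The plan is a structural induction on the positive formula $\varphi(\bar x)\in\mathcal{L}_{\mu,\kappa}^{\rm p}(\tau)$, after first disposing of one implication cheaply. For $\modM\models\varphi(\bar z)\Rightarrow\bigwedge_{\ell=1,2}\modM_\ell\models\varphi(\bar z_\ell)$, observe that each coordinate projection $\pi_\ell\colon\modM=\modM_1\oplus\modM_2\to\modM_\ell$ is a \emph{surjective} bimodule homomorphism with $\pi_\ell(\bar z)=\bar z_\ell$, so this direction is immediate from Lemma~\ref{preserving pe formulas}(2) (and its evident $\tau_\ringR$-analogue). The substantive content is then the converse $\bigwedge_{\ell}\modM_\ell\models\varphi(\bar z_\ell)\Rightarrow\modM\models\varphi(\bar z)$, which I would prove by the same induction, re-deriving the easy direction along the way so that the induction closes on the biconditional.

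The base case --- atomic $\varphi$ --- is the one genuine computation. Every $\tau$-term $t(\bar x)$ is a homogeneous linear expression, of the form $\sum_i r_i x_i$ (resp.\ $\sum_{i,h}r_{i,h}x_is_{i,h}$), and since the operations $+,-,{}_{r}H,H_s$ act coordinatewise on a direct sum, one has $t^{\modM}(\bar z)=t^{\modM_1}(\bar z_1)+t^{\modM_2}(\bar z_2)$ for every tuple $\bar z$ with $z_i=z^1_i+z^2_i$. Hence for an atomic formula $t_1=t_2$ we get $t_1^{\modM}(\bar z)=t_2^{\modM}(\bar z)$ iff $t_1^{\modM_\ell}(\bar z_\ell)=t_2^{\modM_\ell}(\bar z_\ell)$ for $\ell=1,2$, because an element of $\modM_1\oplus\modM_2$ is $0$ precisely when both of its components are. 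This gives the biconditional for atomic $\varphi$.

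For the inductive step I would go through the connectives and quantifiers. Conjunction $\varphi=\bigwedge_{j<\beta}\varphi_j$ is immediate from the induction hypothesis applied entry by entry, and disjunction is analogous (in the $\Leftarrow$ direction one needs the two summands to witness the \emph{same} disjunct, which is automatic when $\varphi$ has none --- e.g.\ when $\varphi$ is a simple formula in the sense of Lemma~\ref{reducing to simple formula}, to which every cpe-formula reduces). For an existential quantifier $\varphi=\exists_{i<\alpha}v_i\,\psi$, the place where the \emph{direct} sum is used (rather than a bare epimorphism) is the $\Leftarrow$ direction: from witnessing tuples $\bar a_\ell$ in $\modM_\ell$ with $\modM_\ell\models\psi(\bar a_\ell,\bar z_\ell)$ I form $\bar a=\bar a_1+\bar a_2$ in $\modM$ and apply the induction hypothesis to conclude $\modM\models\psi(\bar a,\bar z)$, hence $\modM\models\varphi(\bar z)$; the $\Rightarrow$ direction, and the dual universal case, instead split a witness of $\modM$ into its two $\modM_\ell$-components. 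I expect this assembling/splitting of witnesses compatibly with $\modM=\modM_1\oplus\modM_2$ to be the only step needing any care; the rest is formal.

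The ``furthermore'' clause then follows at once: since no $\tau$-term has a constant summand, $t^{\modM}(\bar 0)=0$ in every module, so every atomic formula holds at $\bar 0$, and by the same induction so does every positive existential formula; and if $z^\ell_i=0_{\modM_\ell}$ for $\ell=1,2$ then $z_i=0_{\modM_1}+0_{\modM_2}=0_{\modM}$, whence $\bar z=\bar 0_{\lg(\bar z)}$.
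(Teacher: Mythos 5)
Your proof follows exactly the route the paper intends: the paper's own proof of Lemma \ref{formulas and direct sum} is a single sentence deferring an ``easy induction on the complexity of the formula'' to the reader, and your treatment of the atomic case (terms are homogeneous linear, operations act coordinatewise, and an element of a direct sum vanishes iff both components do), of conjunction, of both quantifiers (assembling a witness $\bar a=\bar a_1+\bar a_2$ for $\exists$ in the $\Leftarrow$ direction, splitting an arbitrary element for $\forall$), and of the ``furthermore'' clause are all correct; likewise your use of Lemma \ref{preserving pe formulas}(2) with the surjective projections for the $\Rightarrow$ direction.

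The one step that does not close is the one you tucked into a parenthesis: disjunction is \emph{not} ``analogous''. For $\vee$ the $\Leftarrow$ direction of the biconditional is genuinely false, not merely in need of care. Take $\tau=\tau_{\mathbb Z}$, $\varphi(x)=\exists y\,(x=2y)\ \vee\ \exists y\,(x=3y)$, $\modM_1=\modM_2=\mathbb Z$, $z^1=2$, $z^2=3$: each $\modM_\ell\models\varphi(z^\ell)$, but $(2,3)\in\mathbb Z\oplus\mathbb Z$ is divisible by neither $2$ nor $3$, so $\modM\not\models\varphi(z^1+z^2)$. Hence the stated equivalence holds only for the disjunction-free fragment of $\mathcal L^{\rm p}_{\mu,\kappa}(\tau)$ (in particular for all cpe and simple formulas, which is the only case the paper ever uses, cf.\ the remark following the lemma and Claim \ref{pr properties}(5)); for genuinely disjunctive positive formulas only the implication $\Rightarrow$ survives, via preservation under the surjective projections. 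You clearly noticed this, since your parenthetical quietly restricts to formulas with no disjuncts, but you should say outright that the inductive biconditional fails at $\vee$ and that the lemma must be read with that restriction, rather than assert that the case is ``analogous''. This is a defect of the lemma's statement rather than of your argument, but a proof has to own it.
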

\begin{proof}
The desired claim follows by an easy induction on the complexity of the formula
$\varphi(\bar{x})$.
\end{proof}
The above lemma implies if $\varphi(x)\in \mathcal{L}_{\mu,\kappa}^{\rm cpe}(\tau),$ where
 $\tau$ is $\tau_{\ringR}$ or $\tau_{(\ringR, \ringS)}$ and if $\modM=\modM_1\oplus \modM_2$, then
 $\varphi(\modM)=\varphi(\modM_1)\oplus \varphi(\modM_2)$.
\begin{lemma}
\label{definable subgroups by formulas}
For each  bimodule $\modM$, the following assertions hold:
\begin{enumerate}
  \item If $\varphi(x)\in \mathcal{L}^{{\rm cpe}}_{\infty,
\infty}(\tau_{(\ringR, \ringS)})$, then $\varphi(\modM)$
is a subgroup of
$\modM$.
  \item If
$\varphi(x)\in \mathcal{L}^{{\rm cpe}}_{\infty,
\infty}(\tau_{\ringR})$,
then $\varphi(\modM)$ is a sub-right
$\ringS$-module, but not
necessarily
a  sub-$\ringR$-module. Furthermore, if $\ringR$ is commutative, then
 it is a sub-$ \ringR $-module as well.
\end{enumerate}
\end{lemma}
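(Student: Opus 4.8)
The plan is to derive everything from two facts already at hand: a $\mathrm{cpe}$-formula lies in $\mathcal{L}^{\mathrm{pe}}_{\infty,\infty}$, so by Lemma \ref{preserving pe formulas}(1) it is preserved by (bi)module homomorphisms, and by Lemma \ref{formulas and direct sum} it behaves well under finite direct sums. The strategy is to realize each closure property of $\varphi(\modM)$ as the image of a homomorphism. First note $0\in\varphi(\modM)$: writing $\varphi(x)$ in the form $\exists(v_i)_{i<\alpha}\bigwedge_{j<\beta}(t_j^1=t_j^2)$, the assignment sending $x$ and all $v_i$ to $0$ satisfies every atomic formula, since each $\tau_{(\ringR,\ringS)}$-term vanishes when all its variables are $0$.

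For part (1), to get closure under addition take $a,b\in\varphi(\modM)$. By Lemma \ref{formulas and direct sum} applied with $\modM_1=\modM_2=\modM$ we get $\modM\oplus\modM\models\varphi((a,b))$; now the map $\sigma:\modM\oplus\modM\to\modM$, $\sigma(x,y)=x+y$, is an $(\ringR,\ringS)$-bimodule homomorphism (additivity is clear, and $\sigma(r(x,y))=r\sigma(x,y)$, $\sigma((x,y)s)=\sigma(x,y)s$ are immediate), so Lemma \ref{preserving pe formulas}(1) yields $a+b=\sigma(a,b)\in\varphi(\modM)$. Closure under negation is the same argument with the bimodule automorphism $x\mapsto -x$. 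Hence $\varphi(\modM)$ is a subgroup.

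For part (2), since $\tau_\ringR\subseteq\tau_{(\ringR,\ringS)}$ a $\tau_\ringR$-$\mathrm{cpe}$-formula is in particular a $\tau_{(\ringR,\ringS)}$-$\mathrm{cpe}$-formula, so part (1) already gives that $\varphi(\modM)$ is an additive subgroup. To see it is a right $\ringS$-submodule, fix $s\in\ringS$ and consider right multiplication $\rho_s:\modM\to\modM$, $\rho_s(x)=xs$. The bimodule identity $(rx)s=r(xs)$ says precisely that $\rho_s$ is a \emph{left $\ringR$-module} homomorphism; since the proof of Lemma \ref{preserving pe formulas}(1) goes through verbatim for $\ringR$-module homomorphisms and $\mathcal{L}^{\mathrm{pe}}_{\infty,\infty}(\tau_\ringR)$-formulas, we get $a\in\varphi(\modM)\Rightarrow as\in\varphi(\modM)$. (Equivalently: replace $\varphi$ by an equivalent simple formula via Lemma \ref{reducing to simple formula} and chase witnesses, using only distributivity and $(rx)s=r(xs)$.) That $\varphi(\modM)$ need not be a sub-$\ringR$-module is witnessed by a concrete example, e.g.\ $\modM=\ringR=\ringS=M_2(k)$ as a bimodule over itself and $\varphi(x):=({}_{E_{11}}H(x)=0)$, whose solution set consists of the matrices with zero first row, which is a right ideal but not a left ideal. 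Finally, when $\ringR$ is commutative, left multiplication $\lambda_r:x\mapsto rx$ is itself a left $\ringR$-module homomorphism — this is exactly where commutativity is used, via $\lambda_r(r'x)=r(r'x)=r'(rx)=r'\lambda_r(x)$ — so the same preservation argument gives $r\,\varphi(\modM)\subseteq\varphi(\modM)$, upgrading the subgroup to a sub-$\ringR$-module.

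The only mildly delicate point is the bookkeeping in part (2): one must not quote Lemma \ref{preserving pe formulas} for $\tau_{(\ringR,\ringS)}$ naively, because $\rho_s$ is in general \emph{not} a bimodule homomorphism; the resolution is that $\varphi$ mentions only the $\tau_\ringR$-fragment, on which $\rho_s$ does act as a homomorphism. Everything else is a routine induction on formula complexity of the kind already performed in the excerpt, so I do not expect a genuine obstacle.
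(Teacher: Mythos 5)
Your proof is correct. The paper itself disposes of this lemma with a one-line "induction on the complexity of $\varphi(x)$, details left to the reader," whereas you avoid running a fresh induction by realizing each closure operation as a homomorphic image: addition via the sum map $\modM\oplus\modM\to\modM$ combined with Lemma \ref{formulas and direct sum}, negation via $x\mapsto -x$, right $\ringS$-action via $\rho_s$, and (in the commutative case) left $\ringR$-action via $\lambda_r$, each time invoking preservation of pe-formulas under the relevant homomorphisms. This is a legitimate and arguably cleaner organization — all the induction on formula complexity is quarantined inside Lemmas \ref{preserving pe formulas} and \ref{formulas and direct sum}, which are already proved — at the mild cost of having to observe (as you correctly do) that the $\tau_\ringR$-version of Lemma \ref{preserving pe formulas} holds for $\ringR$-module homomorphisms that need not respect the $\ringS$-action; this is exactly where the asymmetry between left and right multiplication enters, and your $M_2(k)$ example with $\varphi(x)\equiv({}_{E_{11}}H(x)=0)$ correctly witnesses the failure of left $\ringR$-closure in the noncommutative case. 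The only cosmetic remark is that the zero-element step tacitly uses that every $\tau_{(\ringR,\ringS)}$-term has no constant part, which is true by the paper's Definition \ref{rmodlanguage} and worth saying in one clause.
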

\begin{proof}
This follows  by induction on the complexity
of $\varphi(x).$ The straightforward details are leave to  the reader.
\end{proof}
Another notion from model theory which is of importance for us is the notion of omitting types:\begin{discussion}Suppose $\tau$ is a countable languages
 and suppose $\cal M$ is a $\tau$-structure.
 \begin{enumerate}
 	\item
 Given $A \subseteq \cal M,$ by an $n$-type over $A$ we mean a set
$p(v_1, \cdots, v_n)$  of formulas, whose free variables are $v_1, \cdots, v_n$ such that every finite $p_0 \subseteq p$
is realized, i.e., there are $x_1, \cdots, x_n$ in $\cal M$ such that for all $\varphi \in p_0,~\cal M \models \varphi(x_1, \cdots, x_n)$.
\item  A type is complete if it is maximal under inclusion. By the axiom of choice each type can be extended into a complete type.

\item The type $p$ is isolated by some formula $\psi(v_1, \cdots, v_n) \in p$ if for every $\varphi \in p,$
\[
\cal M \models \forall x_1 \cdots \forall x_n (\psi(x_1, \cdots x_n) \rightarrow \varphi(x_1, \cdots, x_n)).
\] \end{enumerate}\end{discussion}
It is clear that if $p$ is  isolated by some formula $\psi(v_1, \cdots, v_n) \in p$, then it is realized. The omitting types theorem says that the converse is also true.
\begin{lemma}
\label{omitting types theorem}
Let  $\tau$ be a first order countable vocabulary and let $T$  be a complete $\tau$-theory.
If $p$ is a complete type which is not isolated, then there is a countable $\tau$-structure $\cal M\models T$  which omits (i.e., does not realize) $p$.
\end{lemma}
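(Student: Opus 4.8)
The plan is a Henkin-style construction tailored so as to kill all potential realizations of $p$. Write $p=p(v_1,\ldots,v_n)$, fix a countable set $C=\{c_k:k<\omega\}$ of new constant symbols, and put $\tau^*=\tau\cup C$. Enumerate all $\tau^*$-sentences as $\langle\sigma_k:k<\omega\rangle$ and all $n$-tuples of members of $C$ as $\langle\bar c^{\,k}:k<\omega\rangle$. First I would build an increasing chain $T=T_0\subseteq T_1\subseteq\cdots$ of \emph{consistent} $\tau^*$-theories, with each $T_{k+1}$ obtained from $T_k$ by adjoining finitely many sentences, so that, going through the requirements in a round-robin fashion:
\begin{enumerate}
\item[(i)] \emph{(decide $\sigma_k$)} $\sigma_k\in T_{k+1}$ or $\neg\sigma_k\in T_{k+1}$;
\item[(ii)] \emph{(Henkin witness)} if $\sigma_k$ has the form $\exists x\,\theta(x)$ and lies in $T_{k+1}$, then $\theta(c)\in T_{k+1}$ for some $c\in C$ not used so far;
\item[(iii)] \emph{(omit $\bar c^{\,k}$)} there is a formula $\psi=\psi_k(\bar v)\in p$ with $\neg\psi_k(\bar c^{\,k})\in T_{k+1}$.
\end{enumerate}
Having this, set $T^*=\bigcup_k T_k$; it is a complete Henkin theory extending $T$. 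Its canonical term model $\modM$ --- universe the constants of $C$ modulo provable equality, with the obvious interpretations --- is countable, satisfies $T^*$ and hence $T$, and omits $p$: every $n$-tuple of elements of $\modM$ is the interpretation of some $\bar c^{\,k}$, and by (iii) that tuple satisfies $\neg\psi_k$ with $\psi_k\in p$, so it does not realize $p$.

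Clauses (i) and (ii) are handled exactly as in the usual Henkin construction, using consistency of $T_k$ at each step; the heart of the matter is (iii), which is the only place the non-isolation of $p$ and the completeness of $T$ are used. At the stage where (iii) is to be met, $T_k$ is finitely axiomatized over $T$, say $T_k=T\cup\{\theta(\bar c,\bar d)\}$ where $\bar c=\bar c^{\,k}$ and $\bar d$ lists the remaining new constants occurring. If for every $\psi\in p$ the theory $T_k\cup\{\neg\psi(\bar c)\}$ were inconsistent, then $T_k\vdash\psi(\bar c)$ for every $\psi\in p$; since $\bar c,\bar d$ do not occur in $T$, this yields $T\vdash\forall\bar v\,(\chi(\bar v)\to\psi(\bar v))$ for every $\psi\in p$, where $\chi(\bar v):=\exists\bar y\,\theta(\bar v,\bar y)$. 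Moreover $\chi$ is consistent with $T$ because $T_k$ is consistent, and then --- using that $T$ is complete and $p$ is a complete type --- one checks that $\chi\in p$; so $\chi$ isolates $p$, contradicting the hypothesis. Hence some $\neg\psi(\bar c^{\,k})$ may be added while preserving consistency, as required.

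I expect step (iii), together with the bookkeeping needed to interleave the three families of requirements and to existentially quantify away the auxiliary constants $\bar d$ correctly, to be the only genuine (and still quite routine) obstacle; everything else is the standard Henkin argument, so I would present it briskly.
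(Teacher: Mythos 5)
Your proposal is correct: it is the standard Henkin-style proof of the classical Omitting Types Theorem, and the key step (iii) — deriving from the failure of consistency that $\chi(\bar v)=\exists\bar y\,\theta(\bar v,\bar y)$ lies in $p$ (by completeness of $T$ and maximality of $p$) and isolates it — is exactly the right use of the non-isolation hypothesis, matching the paper's definition of ``isolated by a formula in $p$.'' The paper states this lemma as a known background fact without proof, so there is nothing to compare against beyond noting that your argument is the canonical one (the only unmentioned wrinkle, handling repeated constants in $\bar c^{\,k}$ when quantifying away, is routine).
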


\begin{Definition}
		Suppose  that $\modM_0,\modM_1$ and $\modM_2$ are
	(bi)modules and $\bold{g}_i : \modM_0 \to \modM_i$  are (bi)module homomorphisms
	for $i = 1, 2$. The amalgamation of $\modM_1$ and $\modM_2$ along  $\modM_0$
	is $\modM:=\frac{\modM_1\oplus \modM_2}{(\bold{g}_1(m),-\bold{g}_2(m):m\in \modM_0)}$.
	There are natural maps $\bold{f}_i:\modM_i\to \modM$ such that the following diagram is commutative:
	
	\begin{picture}(300,150)
	\put(40,60){$ $}
	\put(120,0){\begin{picture}(150,150)
		\put(0,25){$\modM_0$}
		\put(5,45){\vector(0,1){35}}
		\put(-0,85){$\modM_1$}
		\put(30,90){\vector(1,0){45}}
		\put(30,90){\rm  }
		\put(90,85){$\modM$}
		\put(95,45){\vector(0,1){35}}
		\put(90,25){$\modM_2$}
		\put(45,10){$ $}
		\put(110,60){$ $}
		\put(30,28){\vector(1,0){45}}
		\end{picture}}
\end{picture}

It may be nice to note that the common notation for this is  $\modM_1+_{\modM_0} \modM_2$.
\end{Definition}

\begin{Definition}
	Let $\modM$ be an  $\ringR$-module and  $\ringS$
	be any subring of $\End_\ringR(\modM)$. We say that $\ringS$ is algebraically
	closed in $\End_\ringR(\modM)$ if every finite system of  equations  in several variables over  $\ringS$
	which has a solution in $\End_\ringR(\modM)$,  also
	has a solution in $\ringS$
\end{Definition}

%The following lemma gives a criteria for being algebraically closed.

%\begin{lemma}(Eklof--Mekler)
%	Let $\ringR$ be a commutative ring and let $\ringS$ be a $\ringR$-algebra.
%	Suppose that $\kappa$ is a regular cardinal $> | \ringR| $ and $\modM = \bigcup\limits_{\alpha<\kappa} M_{\alpha}$
%	is an $\ringR$-filtration of a faithful $\ringR$-algebra. Suppose also that there is
%	a stationary subset $E$ of $\kappa$ such that for every $\alpha\in E$ and every
%	$\beta> \alpha$, $\ringS$ acts faithfully on $\modM_{\beta}/\modM_{\alpha}$ and is algebraically closed in
%	$\End_\ringR(\modM_{\beta}/\modM_{\alpha})$. Then $\ringS$ is algebraically closed in $\End_\ringR(\modM)$.
%\end{lemma}
Let us recall the definition of pure semisimple rings.
\begin{Definition}
	A ring $\ringR$ is left pure semisimple if every  left $\ringR$-module is pure-injective.
\end{Definition}
The next theorem gives several equivalent formulations for the above notion.
\begin{theorem}(See \cite[Theorem 4.5.7]{P2})
	The following conditions on a ring $\ringR$ are equivalent:
	\begin{enumerate}
		\item[(a)] $\ringR$ is  left pure semisimple;	
		\item[(b)] every    left $\ringR$-module is a direct sum of indecomposable modules;
		\item[(c)] there is a cardinal number $\kappa$ such that every  left $\ringR$-module is a direct sum
		of modules, each of which is of cardinality less than $\kappa$;
		\item[(d)] there is a cardinal number $\kappa$ such that every  left $\ringR$-module is a pure
		submodule of a direct sum of modules each of cardinality less than $\kappa$.
	\end{enumerate}
\end{theorem}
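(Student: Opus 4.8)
The statement is the classical characterization of left pure semisimple rings; the plan is to establish the cycle $(a)\Rightarrow(b)\Rightarrow(c)\Rightarrow(d)\Rightarrow(a)$, citing the deep structural inputs and spelling out the formal steps. For $(a)\Rightarrow(b)$: if every left $\ringR$-module is pure-injective, then for a fixed module $\modM$ and every cardinal $\kappa$ the copower $\modM^{(\kappa)}$ is itself a left $\ringR$-module and hence pure-injective, which is one of the standard equivalent forms of $\modM$ being $\Sigma$-pure-injective (algebraically compact in the strong sense). By the decomposition theorem for $\Sigma$-pure-injective modules (Zimmermann; see \cite{P2}), every such module is a direct sum of indecomposable modules, each with a local endomorphism ring; this yields $(b)$. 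The implication $(c)\Rightarrow(d)$ is immediate, since a direct sum $\bigoplus_i \modM_i$ sits inside itself as a direct summand, in particular as a pure submodule.

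For $(d)\Rightarrow(a)$ one argues through the model theory of modules. Up to isomorphism there is only a \emph{set} $\{\modN_j : j\in J\}$ of left $\ringR$-modules of cardinality $<\kappa$, so by $(d)$ every left $\ringR$-module is a pure submodule of a direct sum of copies of the $\modN_j$. A pure embedding both preserves and reflects positive-primitive formulas, and such formulas commute with direct sums (in the spirit of Lemma~\ref{formulas and direct sum}); hence a strictly descending $\omega$-chain of positive-primitive definable subgroups, witnessed strict in some module, would already be witnessed strict in one of the fixed modules $\modN_j$. A Cantor--Bendixson / cardinality estimate over the fixed set $\{\modN_j : j\in J\}$ then shows the first order theory of left $\ringR$-modules is totally transcendental, and a totally transcendental theory of modules forces every module to be $\Sigma$-pure-injective, in particular pure-injective, which is $(a)$.

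The heart of the matter is $(b)\Rightarrow(c)$: extracting from ``every module decomposes into indecomposables'' a single cardinal $\kappa$ bounding the sizes of the indecomposable summands. The plan is to argue by contradiction: if indecomposable left $\ringR$-modules occurred in unboundedly large cardinalities, build a left $\ringR$-module whose decomposition guaranteed by $(b)$ is forced, by a counting and reflection argument, to contain an indecomposable direct summand that nonetheless splits off a proper nonzero summand --- contradicting indecomposability. This is exactly the flavour of Shelah's dichotomy in \cite{Sh54} and the source of the ``no gap'' phenomenon for such decompositions; once some bound $\kappa$ exists, a downward L\"owenheim--Skolem reflection even pushes it to $\aleph_0$, though the statement as phrased only needs the bare existence of $\kappa$. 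I expect this passage --- upgrading ``decomposes into indecomposables'' to ``decomposes into uniformly small pieces'' --- to be the only genuinely hard point; everything else is either formal or a direct appeal to the structure theory of algebraically compact modules.
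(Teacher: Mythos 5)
First, a point of comparison: the paper does not prove this theorem at all --- it is quoted from Prest \cite[Theorem 4.5.7]{P2} as background, so there is no in-paper argument to measure yours against. Judged on its own terms, your cycle $(a)\Rightarrow(b)\Rightarrow(c)\Rightarrow(d)\Rightarrow(a)$ is the right architecture, and the implications $(a)\Rightarrow(b)$ (copowers of modules are modules, hence every module is $\Sigma$-pure-injective, hence a direct sum of indecomposables with local endomorphism rings by the Zimmermann/Gruson--Jensen structure theorem) and $(c)\Rightarrow(d)$ are correct as written.

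The two remaining implications carry all the content, and neither is established. For $(b)\Rightarrow(c)$ you offer only a declared plan (``build a module whose decomposition is forced \dots to contain an indecomposable summand that splits''); this is the Zimmermann-Huisgen--Zimmermann theorem, whose proof does not run through unbounded cardinalities of indecomposables but through a specific construction forcing the descending chain condition on pp-definable (matrix) subgroups, and it cannot be waved through as a counting-and-reflection formality. For $(d)\Rightarrow(a)$ there is a concrete gap in the counting step: purity plus the commutation of pp-formulas with direct sums does give that each \emph{single} strict inclusion $\varphi_{n+1}(\modM)\subsetneq\varphi_n(\modM)$ is witnessed in some module $\modN_j$ from the fixed set, but this does not yield total transcendence. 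The witnessing $\modN_j$ may vary with $n$, and the direct sum of countably many of them is again one module carrying an infinite strictly descending chain of pp-definable subgroups --- a configuration your argument never excludes under $(d)$. What is actually needed is the converse construction: from a strictly descending chain of pp-formulas one must manufacture, for every $\lambda$, a module of cardinality $\lambda$ that is not pure in any direct sum of modules of smaller cardinality. That is the omitting-types argument of \cite{Sh54}, and it is precisely the mechanism the present paper isolates in Theorem \ref{shelahpure} and Lemma \ref{Pe bimodules defined} (the module $\modP^{\mathfrak e}$ in which $\sum_{n}\Rang({\bf h}^{\mathfrak e}_n)$ fails to be a direct summand). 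Without that construction, or an explicit appeal to Gruson--Jensen/Simson, the implication $(d)\Rightarrow(a)$ is not proved.
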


The following result of Shelah gives a model theoretic criteria for rings which are not pure semisimple,
and plays an important role in this paper.

\begin{theorem}(Shelah, \cite[8.7]{Sh54})
\label{shelahpure}
	Suppose $\ringR$  is not left pure semisimple. Then  for some bimodule $\modM$ and a sequence $\bar{\varphi}=\langle \varphi_n(x):n<\omega\rangle$, each $\varphi_n$ is
a conjunctive positive existential formula and the sequence $\langle \varphi_n(\modM) : n <\omega\rangle$
	is strictly decreasing.
\end{theorem}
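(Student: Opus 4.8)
\emph{Strategy.} The plan is to reduce the statement to a failure of the descending chain condition on cpe-definable subgroups, using the structure theory of $\Sigma$-pure-injective modules. Put $\kappa:=(|\ringR|+\aleph_0)^{+}$. Since $\ringR$ is not left pure semisimple, clause (c) of \cite[Theorem 4.5.7]{P2} fails for $\kappa$, so I would begin by fixing a left $\ringR$-module $\modM$ which is \emph{not} a direct sum of modules of cardinality $\leq|\ringR|+\aleph_0$. The goal is to show that this $\modM$, regarded as a bimodule in the obvious way, together with a suitable sequence $\bar\varphi$, already witnesses the conclusion.

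\emph{Main step.} I would show that such an $\modM$ cannot satisfy the descending chain condition on subgroups of the form $\varphi(\modM)$ with $\varphi(x)\in\mathcal{L}^{\text{cpe}}_{\omega,\omega}(\tau_{\ringR})$ in one free variable. Indeed, if it did, then by the classical criterion for $\Sigma$-pure-injectivity (Zimmermann; Gruson--Jensen; see \cite{P}, and \cite[\S 8]{Sh54} for the model-theoretic packaging) $\modM$ would be $\Sigma$-pure-injective, hence pure-injective, hence a direct sum of indecomposable pure-injective modules. Any such indecomposable summand $\modN$ is again $\Sigma$-pure-injective, so by downward L\"owenheim--Skolem it has an elementary submodule $\modN'$ with $|\modN'|\leq|\ringR|+\aleph_0$; since all inclusions among cpe-definable subgroups are expressed by first-order sentences, the lattice of these subgroups is an elementary invariant, so $\modN'$ again satisfies the descending chain condition and is therefore pure-injective; being pure in $\modN$ it is a direct summand, whence $\modN=\modN'$ by indecomposability and $|\modN|\leq|\ringR|+\aleph_0$. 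But then $\modM$ would be a direct sum of modules of cardinality $\leq|\ringR|+\aleph_0$, contradicting the choice of $\modM$. Hence there are cpe formulas $\psi_n(x)$, $n<\omega$, in one free variable with $\psi_0(\modM)\supsetneq\psi_1(\modM)\supsetneq\cdots$ (pass to a subsequence if needed).

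\emph{Finish, and the main obstacle.} To get a sequence with the internal implications of the statement I would put $\varphi_n:=\psi_0\wedge\cdots\wedge\psi_n$, which is again cpe; then $\varphi_{n+1}(\modK)\subseteq\varphi_n(\modK)$ for every bimodule $\modK$, while $\varphi_n(\modM)=\bigcap_{k\leq n}\psi_k(\modM)=\psi_n(\modM)$ because the $\psi_k(\modM)$ are already nested, so $\langle\varphi_n(\modM):n<\omega\rangle$ is strictly decreasing. Viewing $\modM$ as an $(\ringR,\ringS)$-bimodule trivially --- e.g.\ with $\ringS$ the commutative central subring of $\ringR$ generated by $1_{\ringR}$, acting on the right through $\ringT$ --- does not alter the $\varphi_n(\modM)$, and by Lemma \ref{reducing to simple formula} each $\varphi_n$ may be replaced by an equivalent simple formula of the shape displayed in Section \ref{outlineofproof}; this produces the bimodule $\modM$ and the sequence $\bar\varphi=\langle\varphi_n(x):n<\omega\rangle$ required. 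The genuinely nontrivial ingredient is the main step, namely the equivalence tying the descending chain condition on cpe-definable subgroups of a module to its $\Sigma$-pure-injectivity and thence to its decomposability into summands of bounded size (together with the fact that the relevant chain condition may be taken in one free variable, as in the statement); the conjunction trick, the choice of bimodule structure, and the reduction to simple formulas are all routine bookkeeping with the lemmas of Section \ref{Preliminaries from algebra and logic}.
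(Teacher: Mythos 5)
The paper does not prove this statement at all --- it is quoted verbatim from \cite[8.7]{Sh54} --- so there is no internal argument to compare yours against; I can only judge your proof on its own terms, and it is correct. Your main step is the standard chain of equivalences from the model theory of modules: DCC on pp-definable (equivalently, cpe-definable, by Lemma \ref{reducing to simple formula}) subgroups in one variable is equivalent to $\Sigma$-pure-injectivity (Zimmermann, Gruson--Jensen), $\Sigma$-pure-injective modules decompose into indecomposables, and your L\"owenheim--Skolem/purity argument correctly bounds each indecomposable $\Sigma$-pure-injective summand by $|\ringR|+\aleph_0$ (elementary equivalence does preserve the DCC on pp-definable subgroups, since each strict inclusion in a chain is expressed by a first-order sentence, and an elementary submodule is pure, hence splits off once it is pure-injective). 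The conjunction trick and the trivial bimodule structure over $\ringS=\ringT=\langle 1\rangle_{\ringR}$ are fine and match the paper's later conventions (cf.\ Lemma \ref{3.2}). One remark: you could shorten the argument by using the paper's own definition of pure semisimplicity directly --- some $\modM$ fails to be pure-injective, hence fails to be $\Sigma$-pure-injective, hence fails the DCC --- rather than routing through clause (c) of the Prest characterization and the decomposition into small indecomposables; what your longer route buys is independence from the equivalence (a)$\Leftrightarrow$(c), at the cost of invoking the decomposition theorem for $\Sigma$-pure-injectives. Either way, the nontrivial content is carried by the classical Zimmermann criterion, which you are entitled to cite since the theorem itself is presented in the paper as an imported classical fact.
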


\section{Developing the framework of the construction}
\label{Developing the framework of the construction}

In this section we develop some part of the theory that we need for our construction. The main result of this section is
Theorem \ref{nice construction lemma}, which gives, in $\text{ZFC}$, a semi-nice construction, that plays an essential role in the next sections.
\begin{Definition}
\label{classcale}
\begin{enumerate}
\item Let $ \calE_{(\ringR,\ringS)}=
\calE{(\ringR,\ringS)}$
be the class of all
$${\mathfrak e}=\langle \modN_n,x_n,g_n:n <\omega\rangle$$
 such that:
\begin{enumerate}
\item $\modN_{n}$ is an $(\ringR,\ringS)$-bimodule,
\item $x_{n}\in \modN_{n}$,
\item $g_{n}$ is a bimodule homomorphism from $\modN_{n}$ to
$\modN_{n+1}$ mapping $x_{n}$ to $x_{n+1}$.
\end{enumerate}
Given ${\mathfrak e} \in \calE_{(\ringR,\ringS)}$ we denote it by ${\mathfrak e}=\langle \modN_n^{{\mathfrak e}},x_n^{{\mathfrak e}},g_n^{{\mathfrak e}}: n <\omega\rangle$.

\item We call ${\mathfrak e}\in \callE_{(\ringR,\ringS)}$
non-trivial
if for every $n$ there is no homomorphism from
$\modN^{\mathfrak e}_{n+1}$ to $\modN^{\mathfrak e}_n$ as $\ringR$-modules
mapping $x^{\mathfrak e}_{n+1}$ to $x^{\mathfrak e}_n$.

\item Let ${\mathfrak e}=\langle \modN_n,x_n,g_n:n <\omega\rangle \in \calE_{(\ringR,\ringS)}$. For each $n \leq m$, we define: $$g_{n,m}:=g_n\circ g_{n+1}\circ\ldots\circ g_{m-1}.$$ We set  $g_{n,n}:=
{\rm id}_{\modN_n}$ and note that
$n_0 \leq n_1 \leq n_2$ implies $g_{n_1,n_2} \circ
g_{n_0,n_1} = g_{n_0,n_2}$.
We denote $g_{n, m}$ by $g^{\mathfrak
e}_{n,m}.$

\item For ${\mathfrak e} \in \calE_{(\ringR,\ringS)}$ and an infinite
${\mathcal U}\subseteq \omega$, let ${\mathfrak e}'=:{\mathfrak e}
\restriction {\mathcal U}$,
the restriction of $ {\mathfrak e}$ to  ${\mathscr U },$  be defined by
\[\modN^{{\mathfrak e}'}_\ell=:\modN^{\mathfrak e}_{m(\ell)},\quad
x^{{\mathfrak
e}'}_\ell=: x^{\mathfrak e}_{m(\ell)},\quad g^{{\mathfrak e}'}_\ell=
g^{\mathfrak e}_{m(\ell),m(\ell+1)},\]
where $m(\ell)$ is the $\ell$--th member of $\mathcal U$. Clearly ${\mathfrak e}'$ is in
$\calE_{(\ringR,\ringS)}$.
\end{enumerate}
\end{Definition}
Let us define a special sub-class of $\callE_{(\ringR,\ringS)}$.
\begin{Definition}
Let  ${\callE}_{\mu,\kappa}$ be the class
of $\mathfrak e \in\callE_{(\ringR,\ringS)}$ such
that for each $n<\omega$, $\modN^{\mathfrak e}_n$, as a bimodule, is  generated
 by
$<\kappa$
elements freely except satisfying $<\mu$ equations.
This means, there are $(x_i)_{i< \alpha<\kappa}$ and atomic formulas $t_j=0,$ for $j< \beta<\mu$, such that
$\modN^{\mathfrak e}_n = (\bigoplus_{i<\alpha}\ringR x_i \ringS) /\modK$, where $\modK$ is the bimodule generated by $\langle  t_j: j<\beta  \rangle$.
\end{Definition}
We now define what it means for a sequence of formulas to be adequate with respect to an element of $\calE_{(\ringR,\ringS)}$:
\begin{Definition}
\label{adequate definition}Let  ${\mathfrak e}:= \langle \modN_n,x_n,g_n:n
<\omega\rangle$ and  $\bar{\varphi}:=\langle\varphi_n:n<\omega\rangle$.
\begin{enumerate}
\item  The sequence $\bar{\varphi}$ is called $(\mu,
\kappa)$-adequate with respect to ${\mathfrak e}$
if the following conditions satisfied:
\begin{enumerate}
\item[$(\alpha)$] $\varphi_n=\varphi_n(x)$ is a formula from $
\mathcal{L}^{\rm cpe}_{\mu,\kappa}(\tau_{\ringR})$,
\item[$(\beta )$] $\varphi_{n+1}(x)\vdash\varphi_{n}(x)$ (for the class of
$\ringR$-modules)\note{we can restrict  ourselves
to those we actually use, i.e.
the ones from $\cl({\cal K})$.},
\item[$(\gamma )$] $\modN_n\models\varphi_n(x_n)\ \&\
\neg\varphi_{n+1}(x_n)$.
\end{enumerate}
Also, we say ${\mathfrak e}$ is $(\mu,\kappa)$-adequate  with respect to ${\mathfrak e}$.

%\item If in part (1) we omit clause $(\gamma)$ we add
%``weakly'', hence we can omit the ``for ${\mathfrak e}$''.

\item We say $\mathfrak e$ is explicitly $(\mu,\kappa$)-adequate with respect to ${\mathfrak e}$ if
$ \mathfrak e$ is $ (\mu,\kappa)$-adequate   with respect to ${\mathfrak e}$ and
$\mathfrak e \in \callE_{\mu,\kappa}$.

\item For simplicity, $\kappa$-adequate means $(\infty,\kappa)$-adequate  with respect to ${\mathfrak e}$. Also,   adequate is referred to
$\aleph_0$-adequate  with respect to ${\mathfrak e}$.

\item We say that $\bar{\varphi}$ is $(\mu,\kappa)$-adequate
if $\bar{\varphi}$ is $(\mu,\kappa)$-adequate with respect to some ${\mathfrak e} \in \calE_{(\ringR,\ringS)}$.

\item We say $\bar{\varphi}$ is adequate
 with respect to $\callE \subseteq \calE_{(\ringR,\ringS)}$ if it is adequate with respect to some
${\mathfrak e}\in \calE$.
\end{enumerate}
\end{Definition}
It follows from Lemma \ref{preserving pe formulas} that any $(\mu,\kappa)$-adequate ${\mathfrak e} \in \calE_{(\ringR,\ringS)}$ is non-trivial.

\begin{Definition} \label{fisi}Let ${\mathfrak e}\in {\callE}$, $n<\omega$ and $\kappa$ be given. We define
\begin{enumerate}
\item $\varphi^{{\mathfrak e},\kappa}_n:=\bigwedge\left\lbrace\varphi(x):\varphi\in
\mathcal{L}^{{\rm cpe}}_{\infty,\kappa}
(\tau_{\ringR}) ~{\rm and~  }\modN^{\mathfrak
	e}_n\models\varphi (x^{\mathfrak e}_n) \right\rbrace\footnote{This is not
	a formula being a conjunction on a class, not a set, but
	when we deal with an $ \ringR $-module $ \modM $
	it is enough to
	restrict ourselves to $\mathcal{L}^{{\rm cpe}}_{\lambda,\kappa}$ for
	$\lambda=(||\modM
	||^{<\kappa})^+$.}$,
 \item $\psi^{{\mathfrak e},\kappa}_n:=\bigwedge\left\lbrace\psi(x):\psi\in
\mathcal{L}^{{\rm cpe}}_{\infty,\kappa}
(\tau_{(\ringR,\ringS)}) ~{\rm and~ }\modN^{\mathfrak e}_n\models\psi(x^{\mathfrak e}_n)
\right\rbrace$,
\item
$\bar{\varphi}^{{\mathfrak e},\kappa}:=\langle \varphi^{{\mathfrak e},\kappa}_n:
n<\omega\rangle$,
\item$ \bar{\psi}^{{\mathfrak e},\kappa}:=\langle
\psi^{{\mathfrak e},\kappa}_n:n<\omega\rangle,$
\item
$\varphi^{{\mathfrak e},\kappa}_\omega
(x):=\bigwedge\limits_{n<\omega}\varphi^{{\mathfrak e},\kappa}_n(x)$,
\item
$\psi^{{\mathfrak e},\kappa}_\omega(x):=\bigwedge\limits_{n<\omega}\psi^{{\mathfrak e},\kappa}_n(x)$.
\end{enumerate}
\end{Definition}

\begin{remark} In Definition \ref{fisi}, we omit the index  $\kappa$, when it is $\kappa(\mathfrak e)$ for $\psi$ or $\kappa_{\ringR}(\mathfrak e)$ for $\varphi$, where $\kappa(\mathfrak e)$ and $\kappa_{\ringR}(\mathfrak e)$ are defined in Definition \ref{sehaft}, see below.
\end{remark}

The next lemma shows the relation between different $\varphi^{{\mathfrak e},\kappa}_n$'s and  $\psi^{{\mathfrak e},\kappa}_n$'s.
\begin{lemma}
\label{related to n}
For $n\geq m$, $\varphi^{{\mathfrak e},\kappa}_n
(x)\vdash \varphi^{{\mathfrak e},
\kappa}_m (x)$,  $\psi^{{\mathfrak e},\kappa}_n
(x)\vdash \psi^{{\mathfrak e},
\kappa}_m (x)$ and  $\psi^{{\mathfrak e},\kappa}_n
(x)\vdash \varphi^{{\mathfrak e},
\kappa}_n (x)$.
\end{lemma}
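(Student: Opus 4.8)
The plan is to derive all three entailments from two facts already established: Lemma \ref{preserving pe formulas}(1) on the preservation of positive existential formulas under (bi)module homomorphisms, and the purely syntactic inclusion $\mathcal{L}^{\mathrm{cpe}}_{\infty,\kappa}(\tau_{\ringR}) \subseteq \mathcal{L}^{\mathrm{cpe}}_{\infty,\kappa}(\tau_{(\ringR,\ringS)})$ (every $\tau_\ringR$-formula is a $\tau_{(\ringR,\ringS)}$-formula, and a $\tau_\ringR$-reduct of a bimodule satisfies a $\tau_\ringR$-formula at a point iff the bimodule itself does). No nontrivial argument is needed beyond this; the statement is essentially bookkeeping, and throughout I read the (class-sized) conjunctions $\varphi^{\mathfrak e,\kappa}_n$, $\psi^{\mathfrak e,\kappa}_n$ as set-sized ones when evaluated in a fixed (bi)module, as in the footnote to Definition \ref{fisi}.

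First I would record the one structural observation used for the first two entailments: for $m \le n$ the map $g^{\mathfrak e}_{m,n} = g^{\mathfrak e}_m \circ \cdots \circ g^{\mathfrak e}_{n-1}$ is a bimodule homomorphism from $\modN^{\mathfrak e}_m$ to $\modN^{\mathfrak e}_n$ (Definition \ref{classcale}(1)(c),(3)), and by induction on $n-m$ it sends $x^{\mathfrak e}_m$ to $x^{\mathfrak e}_n$, since each $g^{\mathfrak e}_i$ sends $x^{\mathfrak e}_i$ to $x^{\mathfrak e}_{i+1}$. Now fix $n \ge m$. If $\varphi(x)$ is any conjunct of $\varphi^{\mathfrak e,\kappa}_m$, i.e.\ $\varphi \in \mathcal{L}^{\mathrm{cpe}}_{\infty,\kappa}(\tau_\ringR)$ with $\modN^{\mathfrak e}_m \models \varphi(x^{\mathfrak e}_m)$, then since $\mathrm{cpe}$-formulas are in particular positive existential, Lemma \ref{preserving pe formulas}(1) applied to $g^{\mathfrak e}_{m,n}$ gives $\modN^{\mathfrak e}_n \models \varphi(x^{\mathfrak e}_n)$, so $\varphi$ is also a conjunct of $\varphi^{\mathfrak e,\kappa}_n$. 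Hence every conjunct of $\varphi^{\mathfrak e,\kappa}_m$ occurs among those of $\varphi^{\mathfrak e,\kappa}_n$, which gives $\varphi^{\mathfrak e,\kappa}_n(x) \vdash \varphi^{\mathfrak e,\kappa}_m(x)$ for the class of $\ringR$-modules. The second entailment is proved verbatim with $\tau_\ringR$ replaced by $\tau_{(\ringR,\ringS)}$ and $\varphi$ by $\psi$: again $g^{\mathfrak e}_{m,n}$ is a bimodule homomorphism, so Lemma \ref{preserving pe formulas}(1) propagates $\modN^{\mathfrak e}_m \models \psi(x^{\mathfrak e}_m)$ to $\modN^{\mathfrak e}_n \models \psi(x^{\mathfrak e}_n)$, and thus $\psi^{\mathfrak e,\kappa}_n(x) \vdash \psi^{\mathfrak e,\kappa}_m(x)$ for the class of bimodules.

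For the third entailment, I would argue directly from the language inclusion. Every conjunct $\varphi(x)$ of $\varphi^{\mathfrak e,\kappa}_n$ lies in $\mathcal{L}^{\mathrm{cpe}}_{\infty,\kappa}(\tau_\ringR) \subseteq \mathcal{L}^{\mathrm{cpe}}_{\infty,\kappa}(\tau_{(\ringR,\ringS)})$, and since $\modN^{\mathfrak e}_n$, regarded as a bimodule, has the same satisfaction for $\tau_\ringR$-formulas as its $\tau_\ringR$-reduct, we get $\modN^{\mathfrak e}_n \models \varphi(x^{\mathfrak e}_n)$ in the bimodule sense as well; thus $\varphi$ is a conjunct of $\psi^{\mathfrak e,\kappa}_n$. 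Consequently any bimodule $\modM$ with $\modM \models \psi^{\mathfrak e,\kappa}_n(a)$ satisfies all conjuncts of $\varphi^{\mathfrak e,\kappa}_n$ at $a$, i.e.\ $\modM \models \varphi^{\mathfrak e,\kappa}_n(a)$, so $\psi^{\mathfrak e,\kappa}_n(x) \vdash \varphi^{\mathfrak e,\kappa}_n(x)$. The only points requiring a little care — and the closest thing to an obstacle — are keeping the index order in $g^{\mathfrak e}_{m,n}$ straight, checking that a $\mathrm{cpe}$-formula counts as a $\mathrm{pe}$-formula for the purposes of Lemma \ref{preserving pe formulas}, and the harmless set/class issue for the conjunctions.
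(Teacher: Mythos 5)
Your proof is correct and follows the paper's own (one-line) argument exactly: the paper also derives the first two entailments by applying Lemma \ref{preserving pe formulas} to $g^{\mathfrak e}_{m,n}$, and the third is the same syntactic language-inclusion observation. You have merely filled in the routine details the paper leaves implicit.
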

\begin{proof}
The lemma follows easily by applying Lemma \ref{preserving pe formulas} to $g^{\mathfrak e}_{m, n}$.
\end{proof}

\begin{Definition}\label{sehaft}
\begin{enumerate}
\item For ${\mathfrak e}\in \calE_{\ringR,\ringS}$ we define:
\begin{enumerate}
\item[(a)]
$\kappa({\mathfrak e})$ as the first infinite cardinal $\kappa$ \st\ for each
$n<\omega$, the bimodule $\modN^{{\mathfrak e}}_n$ is generated by a
 set of $<\kappa$ elements.
\item[(b)]
$\kappa_{\ringR }
({\mathfrak e})$ be the first infinite cardinal $\kappa$ \st\ for each
$n<\omega$, the bimodule $\modN^{{\mathfrak e}}_n$
as an $ \ringR $-module % 11.05.04
is generated by a
 set of $<\kappa$ elements.
\end{enumerate}
\item For each  $\calE\subseteq \calE_{(\ringR,\ringS)}$, we let
\begin{enumerate}
\item[(a)] $\kappa(\calE):=\sup \{\kappa ({\mathfrak e}):{\mathfrak e}\in \calE\}$
and
\item[(b)] $\kappa(\calE)_{\ringR}:=\sup\{\kappa_{\ringR} ({\mathfrak e}):
{\mathfrak e}\in \calE\}$.\end{enumerate}
\end{enumerate}
\end{Definition}
We frequently use the following lemma.
\begin{lemma}
\label{formula vs hom}
\begin{enumerate}
\item Assume $\modN$ is a bimodule (resp. $\ringR$-module)  generated by $<\kappa$
members freely except satisfying $<\mu$ equations and $x\in \modN$.
Then there is a simple formula \space
$\varphi
\in \mathcal{L}_{\mu,\kappa}(\tau_{(\ringR, \ringS)})$ (resp. $\varphi\in
\mathcal{L}_{\mu,\kappa}(\tau_{\ringR})$)
such that for any bimodule
(resp. $\ringR$-module) $\modM$ and $y\in \modM$, the following are
equivalent:
\begin{enumerate}
\item   $\modM \models \varphi(y)$,
\item  for some homomorphism ${\bf f}$ from  $ \modN $
into $\modM$,  ${\bf f}(x)= y$
(if $\modN$ is an $\ringR$-module this means ${\bf f}$ is an
$\ringR$-homomorphism).
\item for some $\modN'$ extending $\modN$ and a homomorphism
${\bf f}$ from
$\modN'$ into $\modM$ we have ${\bf f}(x)=y$.
\end{enumerate}
\item If $\modN=\modN' \oplus \modN''$
and $x \in N'$,
then it suffices that
$\modN'$ is generated by $< \kappa$ members freely except satisfying
 $<\mu$ equations.
\item Clause (1) applies if $\modN=\modN^{\mathfrak e}_n$, $x=x_n^{\mathfrak e}$ and
$\varphi= \psi^{{\mathfrak e},\kappa}_n$, when $\modN^{\mathfrak e}_n$ is generated
 by
$<\kappa$ members freely except satisfying $<\mu$ equations (as a bimodule). Similarly for
$\varphi=\varphi^{{\mathfrak e},\kappa}_n$ for $\ringR$-modules.
\end{enumerate}
\end{lemma}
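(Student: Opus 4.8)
The plan is to prove clause (1) first, since clauses (2) and (3) are corollaries of its proof. For clause (1), suppose $\modN$ is a bimodule generated by $\langle x_i : i < \alpha \rangle$ (with $\alpha < \kappa$) freely except satisfying the atomic relations $t_j(\bar x) = 0$ for $j < \beta$ (with $\beta < \mu$), and fix $x \in \modN$. Since $x$ lies in the bimodule generated by the $x_i$, there is a term $\sigma(\bar x)$ in the language $\tau_{(\ringR,\ringS)}$ with $x = \sigma^{\modN}(\bar x)$. I would then define
\[
\varphi(y) := \exists_{i<\alpha} v_i \,\Bigl[\, y = \sigma(\bar v) \,\wedge\, \bigwedge_{j<\beta} t_j(\bar v) = 0 \,\Bigr],
\]
which is a simple formula in $\mathcal{L}_{\mu,\kappa}(\tau_{(\ringR,\ringS)})$ (and in $\mathcal{L}_{\mu,\kappa}(\tau_{\ringR})$ in the module case, replacing bimodule terms by $\ringR$-module terms). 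The equivalence of (a) and (b) is then the universal property of a presentation: a homomorphism ${\bf f} : \modN \to \modM$ is determined by its values ${\bf f}(x_i) =: a_i$, these values can be chosen arbitrarily subject only to the constraints $t_j(\bar a) = 0$ (this is exactly what "freely except satisfying those equations" means, and it uses that the class of bimodules is a variety, Lemma \ref{bimodules are variety}), and then ${\bf f}(x) = \sigma^{\modM}(\bar a)$. So $\modM \models \varphi(y)$ via witnesses $\bar a$ iff the assignment $x_i \mapsto a_i$ extends to a homomorphism sending $x$ to $y$.

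For the equivalence with (c): (b) trivially implies (c) by taking $\modN' = \modN$; conversely, if $\modN'$ extends $\modN$ and ${\bf g} : \modN' \to \modM$ has ${\bf g}(x) = y$, then restricting ${\bf g}$ to $\modN$ gives a homomorphism witnessing (b). Clause (2) follows by the same argument applied to the direct summand $\modN'$: if $\modN = \modN' \oplus \modN''$ with $x \in \modN'$, any homomorphism from $\modN'$ into $\modM$ extends to one from $\modN$ (send $\modN''$ to $0$), and conversely restricts, so only a presentation of $\modN'$ is needed; concretely one uses Lemma \ref{formulas and direct sum} to see that the formula built from a presentation of $\modN'$ does the job. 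Clause (3) is the special case $\modN = \modN^{\mathfrak e}_n$, $x = x^{\mathfrak e}_n$: by definition of $\callE_{\mu,\kappa}$ this $\modN$ has a presentation with $<\kappa$ generators and $<\mu$ relations, so clause (1) gives a simple formula; that this formula is (equivalent to) $\psi^{{\mathfrak e},\kappa}_n$ — respectively $\varphi^{{\mathfrak e},\kappa}_n$ — follows because, by Definition \ref{fisi}, $\psi^{{\mathfrak e},\kappa}_n$ is satisfied by exactly those $y$ for which all cpe-formulas true of $x^{\mathfrak e}_n$ hold, which by the first part of this lemma (and Lemma \ref{preserving pe formulas}, giving preservation under the structure homomorphism $\modN^{\mathfrak e}_n \to \modM$ built from ${\bf f}$) is exactly the set of $y$ that are homomorphic images of $x^{\mathfrak e}_n$.

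I expect the only real subtlety to be bookkeeping about what "generated freely except satisfying $<\mu$ equations" means for bimodules and checking that the cardinality bounds on quantifiers and conjunctions are respected — i.e. that $\varphi$ genuinely lies in $\mathcal{L}_{\mu,\kappa}$ — together with the verification in clause (3) that the single simple formula coming from the presentation is logically equivalent to the (proper-class-sized) conjunction defining $\psi^{{\mathfrak e},\kappa}_n$. Neither is deep: the first is unwinding definitions, and the second is the standard fact that a finitely (here: $<\mu$-, $<\kappa$-) presented object's positive existential type is isolated by the "presentation formula." The main obstacle, such as it is, is simply being careful that the term $\sigma$ and the relations $t_j$ are expressed in the correct language ($\tau_{(\ringR,\ringS)}$ versus $\tau_{\ringR}$) in each of the bimodule and module cases.
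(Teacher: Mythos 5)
Your proposal is correct and follows essentially the same route as the paper: you write down the same "presentation formula" $\exists_{i<\alpha}\bar v\,[\,y=\sigma(\bar v)\wedge\bigwedge_{j<\beta}t_j(\bar v)=0\,]$, prove (a)$\Leftrightarrow$(b) via the universal property of the presentation, and obtain clause (3) by noting that $\varphi$ is itself a cpe conjunct of $\psi^{{\mathfrak e},\kappa}_n$ while any homomorphic image of $x^{\mathfrak e}_n$ satisfies all of $\psi^{{\mathfrak e},\kappa}_n$ by Lemma \ref{preserving pe formulas}. The only difference is that you spell out the (b)$\Leftrightarrow$(c) equivalence and clause (2) (which the paper leaves as "clear"), and your treatments of those are fine.
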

\begin{proof}
(1). We prove the lemma for the case of bimodules, exactly the same proof works for $\ringR$-modules. Suppose $\modN$ is a bimodule    generated by $<\kappa$
members  freely except  satisfying $<\mu$ equations and $x\in \modN$. So it has the form $\modN=\bigoplus_{i<\alpha}\ringR x_i \ringS / \modK$,
where $\modK$ is the bimodule generated by $\langle  t_j: j<\beta  \rangle$, where each $t_j=0$ is an atomic formula, $\alpha < \kappa$ and $\beta < \mu$. Then
\begin{itemize}
\item
$x= \sum_{i<d}\sum_{h<h_d}r_{i,h}x_is_{i,h}$, where $d,h_d< \omega,$  $r_{i,h}\in\ringR$ and
$s_{i,h}\in\ringS$. We should remark that  for all but finitely many of them we have $ r_{i,h}x_is_{i,h}=0.$
\item $t_j =\sum_{i<d}\sum_{h<h_d}r^j_{i,h} x_i s^j_{i,h},$ where $r^j_{i,h} \in \ringR$ and $ s^j_{i,h} \in \ringS$. Again, for all but finitely many of them  we have $ r^j_{i,h}x_i s^j_{i,h}=0.$
\end{itemize}
Now consider the formula
\[
\varphi(v)=\exists_{i<\alpha}v_i [\bigwedge_{j<\beta} \sum_{i<\alpha}\sum_{h<h_d}r^j_{i,h}  v_i s^j_{i,h}=0 \wedge v=\sum_{i<\alpha}\sum_{h<h_d}r_{i,h}  v_i s_{i,h} ].
\]
It is clearly a simple formula in $\mathcal{L}_{\mu,\kappa}(\tau_{(\ringR, \ringS)})$. Now suppose  $\modM \models \varphi(y)$. Thus we can find
$y_i,$ for $i<\alpha$, such that in $M$, $$\sum_{i<\alpha}\sum_{h<h_d}r^j_{i,h}  y_i s^j_{i,h}=0,$$ and $$y=\sum_{i<\alpha}\sum_{h<h_d}r_{i,h}  y_i s_{i,h}.$$ Define ${\bf f}: \modN \to \modM$ so that for each $i<\alpha, f(x_i)=y_i.$ Then $f$ is as required.

(2). This is clear and (3) follows from the fact that $\psi^{{\mathfrak e},\kappa}_n \vdash \varphi$ (because $\modN^{\mathfrak e}_n \models \varphi(x_n^{\mathfrak{e}})$).
\end{proof}
The next lemma shows that each non-trivial ${\mathfrak e}\in {\callE}_{\mu,\kappa}$ has a canonical adequate
sequence.
\begin{lemma}
If ${\mathfrak e}\in {\callE}_{\mu,\kappa}$ is non-trivial and
$\mu\geq\kappa\geq\aleph_0$
then $\bar{\varphi}^{{\mathfrak e},\kappa}$ is $(\mu,
\kappa)$-adequate with respect to $\mathfrak e$.
\end{lemma}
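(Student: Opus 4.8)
The plan is to verify, one at a time, the three clauses $(\alpha)$, $(\beta)$, $(\gamma)$ of Definition~\ref{adequate definition}(1) for the sequence $\bar{\varphi}^{\mathfrak{e},\kappa}=\langle \varphi^{\mathfrak{e},\kappa}_n:n<\omega\rangle$. Clause $(\beta)$ — that $\varphi^{\mathfrak{e},\kappa}_{n+1}(x)\vdash\varphi^{\mathfrak{e},\kappa}_n(x)$ over the class of $\ringR$-modules — is immediate: it is the instance $m=n$ of Lemma~\ref{related to n}. So the real content is $(\alpha)$ and $(\gamma)$, and the common tool is Lemma~\ref{formula vs hom}, which lets one replace the a priori class-sized conjunction $\varphi^{\mathfrak{e},\kappa}_n$ by an honest formula whose meaning is ``$x$ is the image of $x^{\mathfrak{e}}_n$ under a homomorphism out of $\modN^{\mathfrak{e}}_n$''.

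For $(\alpha)$ I would argue as follows. Since $\mathfrak{e}\in\callE_{\mu,\kappa}$, for each $n$ the bimodule $\modN^{\mathfrak{e}}_n$ is generated by $<\kappa$ elements freely except $<\mu$ equations, and — using $\mu\geq\kappa$ for the arithmetic of combining presentations — the same holds of its $\ringR$-module reduct. Lemma~\ref{formula vs hom}(1) and (3) then say that $\varphi^{\mathfrak{e},\kappa}_n$ is equivalent to a simple formula $\varphi\in\mathcal{L}_{\mu,\kappa}(\tau_\ringR)$ — in particular a formula of $\mathcal{L}^{\mathrm{cpe}}_{\mu,\kappa}(\tau_\ringR)$ — with the further property that for every $\ringR$-module $\modM$ and every $y\in\modM$,
\[
\modM\models\varphi^{\mathfrak{e},\kappa}_n(y)\iff \text{there is an $\ringR$-homomorphism $h:\modN^{\mathfrak{e}}_n\to\modM$ with $h(x^{\mathfrak{e}}_n)=y$.}
\]
(Briefly: the simple $\varphi$ is one of the conjuncts defining $\varphi^{\mathfrak{e},\kappa}_n$, since the identity of $\modN^{\mathfrak{e}}_n$ witnesses $\modN^{\mathfrak{e}}_n\models\varphi(x^{\mathfrak{e}}_n)$; conversely each conjunct of $\varphi^{\mathfrak{e},\kappa}_n$ is a positive existential $\tau_\ringR$-formula true of $x^{\mathfrak{e}}_n$ in $\modN^{\mathfrak{e}}_n$, hence is transported along any such $h$ by the evident $\ringR$-module version of Lemma~\ref{preserving pe formulas}(1).) Identifying $\varphi^{\mathfrak{e},\kappa}_n$ with this simple formula gives $(\alpha)$.

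For $(\gamma)$ I would invoke non-triviality of $\mathfrak{e}$. That $\modN^{\mathfrak{e}}_n\models\varphi^{\mathfrak{e},\kappa}_n(x^{\mathfrak{e}}_n)$ is automatic from the definition of $\varphi^{\mathfrak{e},\kappa}_n$ as a conjunction of $\tau_\ringR$-formulas each of which holds of $x^{\mathfrak{e}}_n$ in $\modN^{\mathfrak{e}}_n$ (or: apply the displayed equivalence with $\modM=\modN^{\mathfrak{e}}_n$, $y=x^{\mathfrak{e}}_n$ and $h=\mathrm{id}$). For the negative half, suppose toward a contradiction that $\modN^{\mathfrak{e}}_n\models\varphi^{\mathfrak{e},\kappa}_{n+1}(x^{\mathfrak{e}}_n)$. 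Applying the equivalence from $(\alpha)$ at level $n+1$, with $\modM=\modN^{\mathfrak{e}}_n$ and $y=x^{\mathfrak{e}}_n$, we obtain an $\ringR$-homomorphism $h:\modN^{\mathfrak{e}}_{n+1}\to\modN^{\mathfrak{e}}_n$ with $h(x^{\mathfrak{e}}_{n+1})=x^{\mathfrak{e}}_n$, contradicting the non-triviality of $\mathfrak{e}$ (Definition~\ref{classcale}(2)). Hence $\modN^{\mathfrak{e}}_n\models\neg\varphi^{\mathfrak{e},\kappa}_{n+1}(x^{\mathfrak{e}}_n)$, which completes $(\gamma)$, and with it the proof.

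The step I expect to be the only real obstacle is the bookkeeping hidden in $(\alpha)$: passing from a bimodule presentation of $\modN^{\mathfrak{e}}_n$ to an $\ringR$-module presentation while keeping the number of generators below $\kappa$ and the number of relations below $\mu$. This is precisely where $\mu\geq\kappa$ and $\mathfrak{e}\in\callE_{\mu,\kappa}$ are used together, and it is the content of the ``similarly for $\varphi^{\mathfrak{e},\kappa}_n$ for $\ringR$-modules'' clause of Lemma~\ref{formula vs hom}(3); once that is granted, $(\beta)$ is a citation, $(\gamma)$ is the one-line clash with non-triviality, and $(\alpha)$ reduces to the routine ``push positive existential formulas forward along a homomorphism'' argument.
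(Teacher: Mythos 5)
Your proof is correct and follows essentially the same route as the paper's: clause $(\beta)$ is the $m=n$ case of Lemma~\ref{related to n}, clause $(\alpha)$ and the positive half of $(\gamma)$ come from identifying $\varphi^{\mathfrak e,\kappa}_n$ with the simple formula of Lemma~\ref{formula vs hom} attached to a presentation of $\modN^{\mathfrak e}_n$, and the negative half of $(\gamma)$ is the clash with non-triviality via the homomorphism extracted from that same lemma. If anything you are slightly more careful than the paper, which at the final step produces a ``bimodule homomorphism'' where what Definition~\ref{classcale}(2) actually forbids (and what your argument correctly produces) is an $\ringR$-module homomorphism $\modN^{\mathfrak e}_{n+1}\to\modN^{\mathfrak e}_n$ sending $x^{\mathfrak e}_{n+1}$ to $x^{\mathfrak e}_n$.
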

\begin{proof}
It is easily seen, by the structure of each $\modN_n^{\mathfrak e}$ that $\varphi^{\mathfrak e, \kappa}_{n}$ is equivalent to
a formula of $\mathcal{L}^{{\rm cpe}}_{\infty,\kappa}
(\tau_{\ringR})$.  According to  Lemma \ref{related to n}   $\varphi^{\mathfrak e, \kappa}_{n+1}  \vdash \varphi^{\mathfrak e, \kappa}_{n}.$
By the definition of  $\varphi^{\mathfrak e, \kappa}_{n}$, we have $\modN_n^{\mathfrak e} \models \varphi^{\mathfrak e, \kappa}_{n}(x_n^{\mathfrak e})$. It remains to show that
$\modN_n^{\mathfrak e} \models \neg\varphi^{\mathfrak e, \kappa}_{n+1}(x_n^{\mathfrak e})$. Suppose not. It follows from  Lemma
\ref{formula vs hom}(3) that there is a bimodule homomorphism ${\bf h}: \modN_{n+1}^{\mathfrak e} \to \modN_n^{\mathfrak e}$
with ${\bf h}(x_{n+1}^{\mathfrak e})=x_n^{\mathfrak e},$ contradicting the non-triviality of
$\mathfrak e$.
\end{proof}

\begin{Definition}
\label{context definition}
A $\lambda$-\emph{context} is a tuple
$$\frakss:=({\mathcal K},\modM_\ast ,
\calE, \ringR, \ringS, \ringT)=({\mathcal K}^\frakss,\modM_\ast
^\frakss,\calE^\frakss,
\ringR^\frakss, \ringS^\frakss, \ringT^\frakss)$$
where
\begin{enumerate}
	\item  $\ringR, \ringS$ and $\ringT$ are as usual.
\item ${\cal K}$ is a  set
of $(\ringR, \ringS)$-bimodules.

\item ${\callE}$ is a subset of
${\callE}_{\infty,\lambda}$
closed under restrictions
(i.e., if ${\mathfrak e} \in {\callE}$
 and ${\cal U}  \subseteq \omega $ is
infinite then
${\mathfrak e} \restriction {\cal U} \in {\callE}$).

\item If ${\mathfrak e}\in {\callE}$, then
$\ \modN^{\mathfrak e}_n\in {\cal K}$
for each $n<\omega$.
\item
$\modM_\ast $
is a bimodule.
If $\modM_\ast $
is omitted we mean the zero bimodule.
\end{enumerate}
Let   $\chi({\cal K})$ be
the minimal cardinal $ \chi
\geq\aleph_0$ such that
every $\modN\in {\cal K}$ is generated, as a bimodule, by a set of $<\chi$
members and set
$$||{\mathfrak m}||=\sum   \{ \Vert \modN  \Vert : \modN \in {\mathscr K} \}
+\Vert {\mathfrak E}   \Vert + ||\modM_\ast || ||\ringR||+||\ringS||+\aleph_0.$$
\end{Definition}

\begin{remark}
	We omit $\lambda$ from the $\lambda$-context, if it is clear from the context and then we mean for the minimal such $\lambda$.
	\end{remark}

\begin{convention}
From now on suppose that
${\frakss}=({\cal K}, \modM_\ast ,
{\callE}, \ringR, \ringS,\ringT)$
is a
context.\end{convention}

\begin{definition}
\begin{enumerate}
	\item Given a context $\frakss=({\mathcal K},\modM_\ast ,
\calE, \ringR, \ringS, \ringT)$, we set:	\begin{itemize}
		\item  $c\ell_{\rm is}({\cal K})$ be the
		class of bimodules isomorphic to
		some $\modK\in {\cal K}$.

		\item $c\ell^\kappa_{\rm is}({\cal K})$ be the
		class of bimodules of the form
		$\modM=\bigoplus\limits_{i<j}\modM_i$, where
		$j<\kappa$, and for $i<j, \modM_i\in c\ell_{\rm is}({\cal K})$.
		
		\item $c\ell({\cal K})=c\ell_{\rm ds}({\cal K})$ be the class of
		bimodules isomorphic to a direct sum of bimodules from
		$c\ell_{\rm is}({\cal K})$
		(so $c\ell_{\rm is}(c\ell({\cal K}))=c\ell({\cal K})$).

		\item $\kappa(\frakss)=\kappa (\calE^\frakss)$.
		\item $ \kappa_\ringR(\frakss)=
		\kappa_\ringR (\calE^\frakss)$.
	\end{itemize}

\item Following Definition \ref{adequate definition}, let us say that
$\bar{\varphi}$
is adequate
for a context $\frakss$ if it is adequate with respect to $\calE^\frakss$.

\end{enumerate}
\end{definition}

\begin{Definition}By a ${\cal
K}$-bimodule we mean a bimodule from $c\ell_{\rm is}({\cal K})$. Also, we say $\modM_1$ is a ${\cal K}$-direct summand of
$\modM_2$ if
$\modM_2=\modM_1\oplus \modK$
for some
$\modK\in c\ell_{\rm ds} ({\cal K})$.\end{Definition}

\begin{Definition}
\label{ads definition}
We say $\modM_1$ is an almost direct ${\cal K}$-summand of
$\modM_2$ with respect to $\kappa$
denoted $\modM_1\leq^{\rm ads}_{{\cal K},\kappa} \modM_2$,
if player II has a winning strategy in the following game $\Game_{{\cal K},\kappa}^{\modM_1, \modM_2}$ of length $\omega$:
in the $n^{\rm th}$ move player I chooses a subset $A_n\subseteq \modM_2$ of
cardinality $<\kappa$, and then player II chooses a sub-bimodule $\modK_n\subseteq
\modM_2.$
Player II wins iff:
\begin{enumerate}
\item[(a)] $A_n\subseteq \modM_1+\sum\limits_{\ell\leq n}\modK_\ell$,
\item[(b)] $\modK_n$ is in $c\ell^\kappa_{\rm is}({\cal K})$,
\item[(c)]
$\modM_1+\sum\limits_{\ell<\omega}\modK_{\ell}=\modM_1\oplus
\bigoplus\limits_{\ell<\omega} \modK_{\ell}$.
\end{enumerate}
We usually write $\leq_{\kappa}$ or $\leq^{\rm ads}_\kappa$
 instead of $\leq^{\rm ads}_{{\cal K},\kappa}$ if
 ${\mathcal K}$ is clear. We also
may write $\leq_{{\frakss},\kappa}$
or $\leq^{\rm ads}_{{\frakss},\kappa}$, when $\cal K = \cal K^{\frakss}$.
\end{Definition}
We now define another order between bimodules and then connect it to the above defined notion.
\begin{Definition}
\label{pr order relations} Let $\modM_1$ and $\modM_2$ be two bimodules.
\begin{enumerate}
\item $\modM_1\leq^{\rm pr}_\varphi \modM_2$ means that
$\modM_1\subseteq \modM_2$ and
if $\psi=\psi(\bar{x})$ is a sub-formula of $\varphi$ and
$\bar{z}\in {}^{\lg
 ( \bar{x} ) }
\modM_1$ then
$$\modM_1\models\psi(\bar z) \iff
\modM_2\models\psi(\bar z).$$

\item By $\modM_1 \leq^{\rm qr}_\varphi \modM_2$ we mean that
$\modM_1 \subseteq \modM_2$
and if $\psi=\psi(\bar{x})$ is a sub-formula of
$\varphi$ and $\bar{z} \in
{}^{ (  \lg \bar{x} )   }\modM_1$, then
$$\modM_2 \models \varphi (\bar{z})
\Rightarrow
\modM_1 \models \psi (\bar{z}).$$

\item Assume $\mathcal{F}$ is a set of formulas. The  notation  $\modM_1\leq^{\rm pr}_{\mathcal{F}} \modM_2$ (resp. $\modM_1\leq^{\rm qr}_{\mathcal{F}} \modM_2$) means
$\modM_1\leq^{\rm pr}_{\varphi}\modM_2$  (resp. $\modM_1\leq^{\rm qr}_{\varphi}\modM_2$) for all $\varphi \in \mathcal{F}.$ In the special case
$\bar\varphi:=\langle \varphi_n: n<\omega \rangle$, the property
$\modM_1\leq^{\rm pr}_{\bar\varphi} \modM_2$  holds iff
$\modM_1\leq^{\rm pr}_{\varphi_n}\modM_2$ for each $n$.
\end{enumerate}
\end{Definition}
For instance,  let $\varphi$ be the following   simple formula
\[\varphi=\varphi(x)=(\exists y_0,\ldots,y_i \ldots)_{i<\alpha}
\bigwedge\limits_{j<\beta} [a_j x=\sum\limits_{i<\alpha} b_{j,i}
y_i]\]
where $a_j,b_{j,i}\in \ringR$. Then
$\modM_1\leq^{\rm pr}_\varphi \modM_2$ means that
if for some $x\in \modM_1$ and  $y_i\in \modM_2$
 we have
\[a_j x-\sum_{i<\alpha} b_{j,i} y_i=0\quad \forall j<\beta\]
then there are $y'_i\in \modM_1$
such that
\[a_j x-\sum\limits_{i< \alpha} b_{j,i}y'_i=0\quad \forall j<\beta.\]
\begin{remark}
Note that if $\varphi$ is existential (e.g. a simple formula), then
$\leq^{\rm qr}_\varphi$
 is equal to $\leq^{\pr}_\varphi$.
\end{remark}

\iffalse
 \begin{definition}\label{newk-nice}
We say  $\modM_1$ is  ${\frakss}$-nice (or ${\cal K}$-nice), when some   $\modM_0$ witnesses it which means the following items are satisfied:
\begin{enumerate}
	\item  $\modM_0$ is a bimodule,
	\item $\modM_0\leq_{{\cal K}} \modM_1$,
	
		\item if $\langle(A_{\ell},\modK_{\ell}):\ell<n\rangle$ is a winning position of player II in the game  $\Game_{{\mathcal K},\kappa}^{\modM_0, \modM_1}$, $A_n=\emptyset$ and  $\modK \in {\cal K}$, then for some  ${\modK}_n\in {\cal K}_{\frakss}$ we have
		\begin{enumerate}
			\item  ${\modK}_n\cong {\modK}$,
			
			\item  $\langle(A_{\ell},\modK_{\ell}):\ell \leq n\rangle$ is a winning  position of player II in the game.
		\end{enumerate}
\end{enumerate}
 	\end{definition}
\fi

 The next lemma connects the above defined relations
 to each other.
 \begin{lemma}Let $\modM_1$ and $\modM_2$ be two bimodules such that $\modM_1$ is  ${\cal K}$-nice. The following assertions are true:
 \label{comparing two orders}
 \begin{enumerate}
\item  $\modM_1 \leq_\kappa \modM_2$ iff
$\modM_1 \leq^{\rm pr}_{\mathcal{L}_{\infty,\kappa}(\tau_{(\ringR, \ringS)})}\modM_2$.

%\item If $\modM_1\leq_\kappa \modM_2$ and
%$\varphi \in \mathcal{L}^{\rm cpe}_{\infty,\kappa}(\tau_{(\ringR, \ringS)})$ is
%$\varphi^{{\mathfrak e},
%\kappa}_n$ or $\psi^{{\mathfrak e},\kappa}_n$,  then
%$\modM_1\leq^{\rm pr}_\varphi \modM_2$.
\item If $\modM_1\leq_\kappa \modM_2$ and $\varphi(x) \in \mathcal{L}^{\rm cpe}_{\infty,\kappa}(\tau_{(\ringR, \ringS)})$,
then $\varphi(\modM_1)=\modM_1\cap\varphi(\modM_2)$.

\end{enumerate}
 \end{lemma}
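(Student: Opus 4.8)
\noindent\emph{Plan of proof.} The starting point is that $\modM_1\leq^{\rm pr}_{\mathcal{L}_{\infty,\kappa}(\tau_{(\ringR,\ringS)})}\modM_2$ is simply a paraphrase of ``$\modM_1$ is an $\mathcal{L}_{\infty,\kappa}(\tau_{(\ringR,\ringS)})$-elementary substructure of $\modM_2$'': in Definition \ref{pr order relations} every formula is a subformula of itself, so $\modM_1\leq^{\rm pr}_{\varphi}\modM_2$ for all $\varphi\in\mathcal{L}_{\infty,\kappa}(\tau_{(\ringR,\ringS)})$ just says $\modM_1\models\psi(\bar z)\iff\modM_2\models\psi(\bar z)$ for all such $\psi$ and all tuples $\bar z$ from $\modM_1$. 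Granting (1), assertion (2) follows at once: for $\varphi(x)\in\mathcal{L}^{\rm cpe}_{\infty,\kappa}(\tau_{(\ringR,\ringS)})\subseteq\mathcal{L}_{\infty,\kappa}(\tau_{(\ringR,\ringS)})$ and $a\in\modM_1$ we get $\modM_1\models\varphi(a)\iff\modM_2\models\varphi(a)$, so $\varphi(\modM_1)=\{a\in\modM_1:\modM_1\models\varphi(a)\}=\{a\in\modM_1:\modM_2\models\varphi(a)\}=\modM_1\cap\varphi(\modM_2)$; here the inclusion $\varphi(\modM_1)\subseteq\modM_1\cap\varphi(\modM_2)$ is free from Lemma \ref{preserving pe formulas}(1) for the inclusion $\modM_1\hookrightarrow\modM_2$, and the real content is the reverse inclusion. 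So everything reduces to (1), which is a translation between the game $\Game^{\modM_1,\modM_2}_{{\cal K},\kappa}$ of Definition \ref{ads definition} and the back-and-forth (Karp) characterisation of $\mathcal{L}_{\infty,\kappa}$-elementarity (see \cite{B}); throughout, the hypothesis that $\modM_1$ be ${\cal K}$-nice is used to guarantee that $\modM_1$ carries enough ${\cal K}$-structure — a core sub-bimodule $\modM_0$ with $\modM_1=\modM_0\oplus\modN$ for some $\modN\in c\ell_{\rm ds}({\cal K})$, together with the possibility of realising any prescribed ${\cal K}$-bimodule as a further summand without spoiling a winning position — that bimodules of the form $\modM_1\oplus(\text{a direct sum of }{\cal K}\text{-bimodules})$ are ``$\kappa$-homogeneous over $\modM_1$''.

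For $\modM_1\leq_\kappa\modM_2\Rightarrow\modM_1\leq^{\rm pr}_{\mathcal{L}_{\infty,\kappa}(\tau_{(\ringR,\ringS)})}\modM_2$, fix a winning strategy for player~II and build a nonempty family $\mathcal I$ of partial isomorphisms $\modM_1\to\modM_2$, each of domain of cardinality $<\kappa$, such that the restriction of the inclusion $\modM_1\hookrightarrow\modM_2$ to every $<\kappa$-subset of $\modM_1$ lies in $\mathcal I$ and $\mathcal I$ has the back-and-forth extension property for sets of size $<\kappa$; by Karp's criterion this gives $\modM_1\prec_{\mathcal{L}_{\infty,\kappa}(\tau_{(\ringR,\ringS)})}\modM_2$. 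The members of $\mathcal I$ are, informally, the small partial isomorphisms respecting the core $\modM_0$ and compatible with the ${\cal K}$-decomposition of $\modN$ on the source side and with some ${\cal K}$-decomposition of a sub-bimodule of $\modM_2$ on the target side. To verify the extension property for a given $p\in\mathcal I$ and a new element of $\modM_1$ or $\modM_2$, one runs $\Game^{\modM_1,\modM_2}_{{\cal K},\kappa}$ with player~I enumerating $\Dom(p)\cup\Rang(p)$ together with the new element over the $\omega$ rounds and player~II following the fixed strategy; since II wins, $\modK^{*}:=\bigoplus_n\modK_n\in c\ell_{\rm ds}({\cal K})$ is a sub-bimodule of $\modM_2$ with $\modM_1\oplus\modK^{*}$ direct and containing all the enumerated elements, and inside $\modM_1\oplus\modK^{*}=\modM_0\oplus(\modN\oplus\modK^{*})$ — with $\modN\oplus\modK^{*}\in c\ell_{\rm ds}({\cal K})$, so that $\modM_1\oplus\modK^{*}$ has over $\modM_0$ the same shape as $\modM_1$ — the extension of $p$ is now the purely algebraic step that ${\cal K}$-niceness is designed to supply.

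For the converse, assume $\modM_1\prec_{\mathcal{L}_{\infty,\kappa}(\tau_{(\ringR,\ringS)})}\modM_2$ and describe player~II's winning strategy by recursion on the round. Player~II keeps, alongside her moves $\langle\modK_\ell:\ell<n\rangle$, an auxiliary small partial isomorphism from a sub-bimodule of $\modM_2$ (containing $A_0,\dots,A_{n-1}$ and the sub-bimodules already used to explain them) onto a sub-bimodule of $\modM_1$; when player~I plays $A_n$, she first uses the back-and-forth system provided by $\modM_1\prec_{\mathcal{L}_{\infty,\kappa}}\modM_2$ to extend this auxiliary map to include $A_n$ in its domain, then uses the ${\cal K}$-niceness of $\modM_1$ to choose fresh ${\cal K}$-summands in the range absorbing the image of $A_n$ independently of what was chosen before, and pulls them back along (an extension of) the auxiliary map to obtain $\modK_n\subseteq\modM_2$. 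Clauses (a) and (b) of Definition \ref{ads definition} then hold by construction, and clause (c) — directness of $\modM_1+\sum_{\ell<\omega}\modK_\ell$ — holds because independence from $\modM_1$ and from the earlier $\modK_\ell$'s was an $\mathcal{L}_{\infty,\kappa}$-expressible feature built in at each stage and carried along the elementarity.

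The step I expect to be the main obstacle — shared by both directions — is the algebraic lemma that small partial isomorphisms between bimodules of the form $\modM_1\oplus(\text{a direct sum of }{\cal K}\text{-bimodules})$ extend over the core, i.e.\ the precise formulation via ${\cal K}$-niceness of the ``$\kappa$-homogeneity over $\modM_1$'' of such bimodules, together with the accompanying bookkeeping in the converse direction needed to keep $\modM_1+\sum_{\ell<\omega}\modK_\ell$ direct while still catching every $A_n$.
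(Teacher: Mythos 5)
Your handling of (2) agrees with the paper, which likewise disposes of it in one line from (1) and the definition of $\leq^{\rm pr}$. For (1), however, you take a genuinely different and heavier route, and the step you yourself flag as ``the main obstacle'' is exactly the part the paper's argument is designed to avoid, so as written the proposal has a real gap there. For $\modM_1\leq_\kappa\modM_2\Rightarrow\modM_1\leq^{\rm pr}_{\mathcal{L}_{\infty,\kappa}(\tau_{(\ringR,\ringS)})}\modM_2$ the paper builds no Karp system and needs no homogeneity of $\modM_1\oplus(\text{direct sums of }{\cal K}\text{-bimodules})$: it inducts on the complexity of the formula, observes that the only case where the hypothesis enters is $\varphi=\exists_{j<\beta}w_j\,\psi$, and there, given witnesses $(y_j)_{j<\beta}\subseteq\modM_2$, has player I play $A_n=\{y_j:j<\beta\}$ in $\Game_{{\cal K},\kappa}^{\modM_1,\modM_2}$. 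Player II's winning strategy yields $\modK=\bigoplus_{\ell<\omega}\modK_\ell$ with $\modM_1\oplus\modK$ containing the witnesses, and Lemma \ref{formulas and direct sum} (satisfaction splits across direct summands) then pushes $\varphi$ from $\modM_1\oplus\modK$ down to $\modM_1$. Your plan instead rests on an unproved algebraic lemma that small partial isomorphisms extend over the core via ${\cal K}$-niceness; that lemma is nowhere established in the paper, and without it your back-and-forth family $\mathcal I$ has no verified extension property, so Karp's criterion cannot be invoked.

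The converse direction is also done more directly in the paper than in your proposal. Rather than maintaining an auxiliary partial isomorphism and transporting ${\cal K}$-summands along it, the paper extracts from $\modM_1\leq^{\rm pr}_{\mathcal{L}_{\infty,\kappa}(\tau_{(\ringR,\ringS)})}\modM_2$ the fact that every $(<\kappa)$-tuple enumerating $A_n$ admits an $\mathcal{L}_{\infty,\kappa}$-equivalent tuple in $\modM_1$, coherently extending the tuples chosen at earlier rounds, and then simply defines $\modK_n$ to be the submodule of $\modM_2$ generated by the coordinatewise differences of the two tuples. Equivalence of the tuples gives both $A_n\subseteq\modM_1+\sum_{\ell\leq n}\modK_\ell$ and the directness $\modM_1+\modK_n=\modM_1\oplus\modK_n$, and a limit observation handles clause (c) of Definition \ref{ads definition} over the $\omega$ rounds. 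If you wish to keep your architecture, you must actually prove the extension property for $\mathcal I$ (and, in the converse, verify clause (b) for your pulled-back summands); otherwise I recommend switching to the formula induction together with Lemma \ref{formulas and direct sum} and the difference-generated $\modK_n$.
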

 \begin{proof}
Clause (2) follows from (1) and the definition of $\leq^{\rm pr}$, so let us prove (1). Let $\modM_0$
witness $\modM_1$ is $\mathcal K$-nice.

Suppose  $\modM_1 \leq_\kappa \modM_2$. We prove, by induction on the complexity of the formula $\varphi \in \mathcal{L}_{\infty,\kappa}(\tau_{(\ringR, \ringS)}),$ that $\modM_1 \leq^{\rm pr}_{\varphi}\modM_2$.

 The only non-trivial case where the assumption $\modM_1 \leq_\kappa \modM_2$ is used is the case of existential quantifier\footnote{The case of universal quantifier then follows easily as $\forall \equiv \neg \exists \neg.$}, so let us suppose $\varphi(v_i)_{i<\alpha}$ is of the form
 $$\exists_{j<\beta}w_j[\psi((w_j)_{j<\beta}, (v_i)_{i<\alpha})],$$ the claim holds for $\psi=\psi((w_j)_{j<\beta}, (v_i)_{i<\alpha})$ and suppose $(x_i)_{i<\alpha} \in \modM_1$. We want to show that
 \[
 \modM_1 \models \varphi((x_i)_{i<\alpha}) \iff \modM_2 \models \varphi((x_i)_{i<\alpha}).
 \]
 As $\modM_1$ is a submodule of $\modM_2,$ by the induction hypothesis, if $\modM_1 \models \varphi((x_i)_{i<\alpha})$, then $\modM_2 \models \varphi((x_i)_{i<\alpha}).$
 Conversely suppose that $\modM_2 \models \varphi((x_i)_{i<\alpha}).$ Then we can find $(y_j)_{j<\beta} \in \modM_2$ such that
 $\modM_2 \models \psi((y_j)_{j<\beta}, (x_i)_{i<\alpha}).$

 Now consider the game $\Game_{\mathcal K, \kappa}^{\modM_1, \modM_2}$,
 in which player I plays $A_n=\{y_j: j<\beta \}$ at each step  $n$. At the end, we have bimodules  $\{\modK_n: n<\omega\}$
 such that:
 \begin{itemize}
\item $A_n\subseteq \modM_1+\sum\limits_{\ell\leq n}\modK_\ell$,
\item $\modK_n$ is in $c\ell^\kappa_{\rm is}({\cal K})$,
\item
$\modM_1+\sum\limits_{\ell<\omega}\modK_{\ell}=\modM_1\oplus
\bigoplus\limits_{\ell<\omega} \modK_{\ell}$.
\end{itemize}
Set $\modK=\bigoplus\limits_{\ell<\omega} \modK_{\ell}.$ It then follows that $\modM_1\oplus
\modK \models \varphi((x_i)_{i<\alpha})$. In view of Lemma \ref{formulas and direct sum},
 $\modM_1\models \varphi((x_i)_{i<\alpha}).$

 Conversely, suppose that $\modM_1 \leq^{\rm pr}_{\mathcal{L}_{\infty,\kappa}(\tau_{(\ringR, \ringS)})}\modM_2$. Let us first state a few
 simple facts:
  \begin{enumerate}
\item[(i)]  Suppose $\alpha < \kappa$
 and $\bar{b} \in$$^{\alpha}\modM_2$. Then there exists
  $\bar{a} \in$$^{\alpha}\modM_1$ such that
  \[
  (\modM_1, \bar{a}) \equiv_{\mathcal{L}_{\infty,\kappa}(\tau_{(\ringR, \ringS)})} (\modM_2, \bar{b}).
  \]
 To see this, let  $\alpha < \kappa$ and
 $\bar{b} \in$$^{\alpha}\modM_2$. Suppose by contradiction that  there is no $\bar{a} \in {}^{\alpha}\modM_1$ such that $(\modM_1, \bar{a}) \equiv_{\mathcal{L}_{\infty,\kappa}(\tau_{(\ringR, \ringS)})} (\modM_2, \bar{b}).$  Thus for each $\bar{a} \in {}^{\alpha}\modM_1$ we can find
 $\phi_{\bar{a}}(\bar{\nu}) \in \mathcal{L}_{\infty,\kappa}(\tau_{(\ringR, \ringS)})$ such that
 \[
 \modM_1 \models \neg \phi_{\bar{a}}(\bar{a}) ~~ \& ~~ \modM_2 \models  \phi_{\bar{a}}(\bar{b}).
 \]
Let $\phi(\bar{\nu})= \bigwedge\{ \phi_{\bar{a}}(\bar{\nu}):   \bar{a} \in {}^{\alpha}\modM_1      \} \in \mathcal{L}_{\infty,\kappa}(\tau_{(\ringR, \ringS)}).$ Then $\modM_1 \models~\forall \bar{\nu} \neg \phi(\bar{\nu})$,
while $\modM_2 \models \phi(\bar{b})$, a contradiction to our assumption.

\item[(ii)] Suppose  $\bar{a}_1 \in {}^{\lg
 ( \bar{a}_1 )}\modM_1, \bar{a}_2 \in {}^{\lg
 ( \bar{a}_2) }\modM_1$ with $\bar{a}_1 \unlhd \bar{a}_2$ and $\bar{b}_1 \in {}^{\lg
 ( \bar{a}_1 ) }\modM_2$ are such that $ (\modM_1, \bar{a}_1) \equiv_{\mathcal{L}_{\infty,\kappa}(\tau_{(\ringR, \ringS)})} (\modM_2, \bar{b}_1).$
 There there is  some $\bar{b}_2 \in {}^{\lg
 ( \bar{a}_2 ) }\modM_2$ such that $\bar{b}_1 \unlhd \bar{b}_2$ and $ (\modM_1, \bar{a}_2) \equiv_{\mathcal{L}_{\infty,\kappa}(\tau_{(\ringR, \ringS)})} (\modM_2, \bar{b}_2).$

 \item[(iii)] Suppose $\alpha < \kappa, \bar{a} \in {}^{\alpha}\modM_1$, $\bar{b} \in {}^{\alpha}\modM_2$ and
 $(\modM_1, \bar{a}) \equiv_{\mathcal{L}_{\infty,\kappa}(\tau_{(\ringR, \ringS)})} (\modM_2, \bar{b})$. Let $\bar{c}= \langle b_i - a_i: i<\alpha     \rangle$ and let $\modK$ be the submodule of $\modM_2$ generated by $\bar{c}$. Then $\bar{b} \in \modM_1 + \modK$ and
 $\modM_1+ \modK = \modM_1 \oplus \modK.$

 \item[(iv)] Suppose $\langle K_\ell: \ell< \omega \rangle$ is an increasing sequence of submodules of $\modM_2$ such that for
 each $\ell<\omega, \modM_1 + \modK_\ell = \modM_1 \oplus \modK_\ell$. Then $\modM_1+\sum\limits_{\ell<\omega}\modK_{\ell}=\modM_1\oplus
\bigoplus\limits_{\ell<\omega} \modK_{\ell}$.
 \end{enumerate}
 We are now ready to define a winning strategy for the game $\Game_{{\cal K},\kappa}^{\modM_1, \modM_2}$. Fix a well-ordering $<^*$ of $\modM_2$.
  To start set $A_{-1}=\emptyset$ and $\bar{b}_{-1}=\langle \rangle.$ At stage $n$, suppose
 player I has chosen $A_n \subseteq \modM_2$ of size $< \kappa.$ We may assume that $A_n \supseteq A_{n-1}.$  Let $\bar{a}_n= \langle a_n(i): i< \lg(\bar{a}_n)      \rangle$ enumerate $A_n$ in the $<^*$-order in such a way that $A_{n-1}$ is an initial segment of the enumeration.
 By (i) and (ii), choose $\bar{b}_n= \langle b_n(i): i< \lg(\bar{a}_n)      \rangle \in \modM_2$ such that  $\bar{b}_n \unrhd \bar{b}_{n-1}$ and   $ (\modM_1, \bar{a}_n) \equiv_{\mathcal{L}_{\infty,\kappa}(\tau_{(\ringR, \ringS)})} (\modM_2, \bar{b}_n).$ Let $\modK_n$ be the submodule of $\modM_2$ generated by $\{b_n(i)-a_n(i): i< \lg(\bar{a}_n)   \}.$ Now (iii) and (iv) guarantee that items (a)
 and (c) of Definition \ref{ads definition} are satisfied. This completes the proof.
 \end{proof}
% \begin{proof}
% We just sketch the proof. First suppose that $\modM_1 \leq_\kappa \modM_2$. Then, we can show, by induction on the complexity of the formula $\varphi \in \mathcal{L}_{\infty,\kappa}(\tau_{(\ringR, \ringS)}),$ that $\modM_1 \leq^{\rm pr}_{\varphi}\modM_2$.

% Conversely suppose that $\modM_1 \leq^{\rm pr}_{\mathcal{L}_{\infty,\kappa}(\tau_{(\ringR, \ringS)})}\modM_2$ holds but  $\modM_1 \nleq_\kappa \modM_2$. Thus there exists a game of length $\omega$ between players I and II as in Definition \ref{ads definition} , such that
% in the $n^{\rm th}$ move player I chooses a subset $A_n\subseteq \modM_2$ of
%cardinality $<\kappa$, player II chooses a sub-bimodule $\modK_n\subseteq
%\modM_2$
%and such that
%\begin{enumerate}
%\item[(a)] $A_n\subseteq \modM_1+\sum_{\ell\leq n}\modK_\ell$,
%\item[(b)] $\modK_n$ is in $c\ell^\kappa_{\rm is}({\cal K})$,
%\item[(c)]
%$\modM_1+\sum_{\ell<\omega}\modK_{\ell} \neq \modM_1\oplus
%\bigoplus_{\ell<\omega} \modK_{\ell}$.
%\end{enumerate}
%By (c), one can find a formula $\varphi
%(\bar x)\in \mathcal{L}_{\infty,\kappa}(\tau_{(\ringR, \ringS)})$ such that
%$\varphi(\modM_1) \neq \modM_1 \cap \varphi(\modM_2)$, a contradiction.
% \end{proof}
\begin{Claim}
\label{pr properties}
Let $\varphi $ be  in
$\mathcal{L}^{\rm cpe}_{\infty,\infty}(\tau_{(\ringR, \ringS)})$. The following holds:
\begin{enumerate}
\item Assuming $\varphi=\varphi(\bar x)$
we can find a simple formula $\varphi_\ast
(\bar z)\in
\mathcal{L}_{\infty,\infty}(\tau_{(\ringR, \ringS)})$ satisfying:
\begin{enumerate}
\item if
$\modM_1\leq^{\rm \pr}_{\varphi_\ast }
\modM_2$ and $\bar x\in \modM_2$ then
$\langle x_i+\modM_1: i< \lg(\bar x)\rangle \in\varphi(\modM_2/\modM_1)$
iff $\bar x\in\varphi(\modM_2)+\modM_1$.
\item if $\varphi(\bar x)\in \mathcal{L}^{\rm cpe}_{\mu,\mu}(\tau_{(\ringR, \ringS)})$
 and $ \mu $ is regular,
then
$\varphi_\ast
(\bar z)\in \mathcal{L}_{\mu,\mu}(\tau_{(\ringR, \ringS)})$.
\end{enumerate}

\item Assume  $\modM_2\in c\ell_{\rm ds}({\cal K})$ and $\modM_1$ is  ${\cal K}$-nice. If $\modM_1\oplus \modM_2=\modM_3$, then
$\modM_1\leq^{\rm pr}_\varphi \modM_3$.
\item If $\langle \modM_i:i<\delta\rangle$ is increasing,
$\modM_i\leq^{\rm pr}_\varphi \modM$ for all $i<\delta$  and $\varphi\in
\mathcal{L}_{\infty,
\cf(\delta)}(\tau_{(\ringR, \ringS)})$, then $\bigcup\limits_{i<\delta} \modM_i
\leq^{\rm pr}_{\varphi} \modM$ and $ \modM_j \leq^{\rm pr}_\varphi
\bigcup\limits_{i<\delta} \modM_i$  for all $j<\delta$.
\item Assume $\varphi$ is a simple formula of
$\mathcal{L}_{\kappa,\kappa}(\tau_{(\ringR, \ringS)})$.
Then
$\modM_1 <^{\rm pr}_\varphi \modM_2$  iff  for every $<\kappa$-generated submodule
$\modN\subseteq \modM_2$ we have
$\modM_1\leq^{\rm pr}_{\varphi }
\modM_1+\modN$.

\item If $\modM=\bigoplus\limits_{t\in I}\modM_t$ and
$\varphi (x) \in \mathcal{L}_{\infty,\infty}^p(\tau_{(\ringR, \ringS)})$,  then
$\varphi(\modM)= \bigoplus\limits_{t\in I} \varphi(\modM_t)$.
\end{enumerate}
\end{Claim}
\begin{proof}
(1). For simplicity, suppose that $\varphi=\varphi(x)$. By Lemma \ref{reducing to simple formula}, we can assume the formula
$\varphi$ is simple, so
let it be of the form
$$\varphi=\exists_{i<\alpha}y_i \bigwedge_{j<\beta} \varphi_j
(\bar{y}, x)$$
 where each
$\varphi_j (\bar{y}, x)$ is an atomic formula.
But the only atomic relation is equality, so by moving to one side,
\wolog\ $\varphi_j(\bar{y}, x)$ is of the form
$$\sigma_j (\bar{y}, x)=0$$
 for some term $\sigma_i$.
Let $\bar{z}=\langle z_j:j<\beta\rangle$ and
$$\varphi_\ast
(\bar{z})=(\exists x,\bar{y})
[\bigwedge_{j<\beta}\sigma_j (\bar{y}, x)=z_j].$$
We show that $\varphi_\ast
(\bar{z})$ is as required.
Clause (b) clearly holds. In order to prove clause  (a), take $\modM_1\leq^{\rm \pr}_{\varphi_\ast }
\modM_2$ and $x\in \modM_2$.
First,
assume that $x+\modM_1\in\varphi(\modM_2/\modM_1)$. Then
\[
\modM_2/\modM_1 \models \exists\bar y \bigwedge_{j<\beta} \sigma_j (\bar y, x+\modM_1)=0.
\]
We can find $\bar b=(b_i)_{i<\alpha} \in \modM_2$ such that
 $\modM_2/\modM_1 \models~\sigma_j(\bar b+\modM_1, x+\modM_1)=0 $ for all $j<\beta$.
 \footnote{By $\bar b+\modM_1$ we mean $\langle b_i+\modM_1: i<\alpha  \rangle$.}
By definition, $\sigma_j(\bar b+\modM_1, x+\modM_1)=c_j+\modM_1 $  for some $c_j \in M_1$.
We set  $\bar c:= \langle c_j: j<\beta  \rangle$. Then
$\modM_2 \models \varphi_*(\bar c)$. Hence,
\[
M_2 \models \sigma_j(\bar b, x)=c_j
\]  for all $j<\beta$.  We apply our   assumption to see
$\modM_1 \models \varphi_*(\bar c)$, e.g., there are $\bar b'=(b'_i)_{i<\alpha}$ and $x'$ in $\modM_1$
such that
\[
M_1 \models \sigma_j(\bar b', x')=c_j
\] for all $j<\beta$.
Then $\modM_2$ satisfies the same formulas.  In particular,
\[
M_2 \models \sigma_j((b_i-b'_i)_{i<\alpha}, x-x')=0,
\]
which implies $M_2 \models \varphi(x-x').$ Thus $x \in \varphi(\modM_2)+\modM_1$.

Conversely, suppose that $x\in\varphi(\modM_2)+\modM_1.$
Let $y \in \modM_1$ be such that $x -y\in \varphi(\modM_2)$. Then
$\modM_2 \models \varphi(x -y)$, and consequently there is $\bar b=((b_i)_{i<\alpha}) \in \modM_2$ such that
\[
M_2 \models \sigma_j(\bar b, x-y)=0
\] for all $j<\beta$.
Set $\bar z :=\bar 0.$ Then $\modM_2 \models \varphi_*(\bar z)$, and hence by our assumption $\modM_1 \models \varphi_*(\bar z)$.
It follows that for some $\bar b'=(b'_i)_{i<\alpha}$ and $x'$ in $\modM_1$ and
\[
M_1 \models \sigma_j(\bar b', x')=0
\] for all $j<\beta$.
Thus, $\modM_2$ satisfies the same formulas. Therefore,
\[
M_2 \models \sigma_j((b_i - b'_i)_{i<\alpha}, x-y-x')=0
\] for all $j<\beta.$
Since $\bar b', y+x'$ are in $\modM_1$, we have
\[
\modM_2/\modM_1 \models \sigma_j(\bar b+\modM_1, x+\modM_1)=0
\]  for all $j<\beta$.
This implies $\modM_2 / \modM_1 \models \varphi(x+\modM_1)$. Hence, $x+\modM_1 \in \varphi(\modM_2 / \modM_1 )$.

(2). This is in  Lemma \ref{formulas and direct sum}.

(3). Let us first  show that $\bigcup_{i<\delta} \modM_i
\leq^{\rm pr}_{\varphi} \modM$. To see this, let $\psi(\bar x)$ be a subformula of $\varphi$
and $\bar z \in \bigcup_{i<\delta} \modM_i.$ There is  $i_*<\delta$ such that  $ \bar z \in \modM_{i_*}$. Then for all $i_* \leq i < \delta$,
$\modM_i \models \psi(\bar z)$ iff $\modM \models \psi(\bar z)$.
It follows that
$$\bigcup_{i<\delta }\modM_i \models \psi(\bar z) \iff \modM \models \psi(\bar z).$$

Let $j<\delta$, $\psi(\bar x)$ a  subformula  of $\varphi$ and   $\bar z \in \modM_j$. We conclude from the above argument
that
\[ \modM_j \models \psi(\bar z) \iff \modM \models \psi(\bar z)  \iff
\bigcup_{i<\delta} \modM_i \models \psi(\bar z).
\]
Thus $\modM_j \leq^{\rm pr}_\varphi
\bigcup_{i<\delta} \modM_i$.

(4) It is easily seen that if $\modM_1 \leq^{\rm pr}_{\varphi} \modM_2$ then $\modM_1\leq^{\rm pr}_{\varphi }
\modM_1+\modN$  for each $<\kappa$-generated submodule
$\modN$ of
$\modM_2$.

Conversely, suppose that the above condition holds.
 Let $\psi(\bar x)$ be a subformula of $\varphi$. Then it is a simple formula as well, thus it is of the form
\[
\exists_{i<\alpha}v_i \bigwedge_{j<\beta} [\sigma_j(\bar x, \bar v)=0],
\]
for some terms $\sigma_j.$ Let $\bar x \in \modM_1$. It is obvious that
if $\modM_1 \models \psi(\bar x)$, then  $\modM_2 \models \psi(\bar x)$.
Now let  $\modM_2 \models \psi(\bar x)$. Thus we can find $\bar b \in \modM_2$ such
that
$$\modM_2 \models \bigwedge_{j<\beta} [\sigma_j(\bar x, \bar b)=0].$$
Let $\modN$ be the submodule generated by $\bar b$. Then
$\modM_1+\modN \models \bigwedge_{j<\beta} [\sigma_j(\bar x, \bar b)=0]$. We combine this
along with
 our assumption to conclude that $\modM_1 \models \psi(\bar x)$.

(5). We proceed by  induction on the complexity of the formula $\varphi$ (see also Lemma \ref{formulas and direct sum}), and we leave the routine check to the reader.
\end{proof}

%\begin{remark}
%\label{1.7A}
%\begin{enumerate}
%\item Below we shall  see the point of ``each ${\mathfrak e}\in {\callE}$ is
%non-trivial'':  % 11.05.05
% $x^{\mathfrak e}_n\notin\varphi^{\mathfrak e}_{n+1}(\modN^{\mathfrak
%e}_n)$ is
%preserved by the construction (see \ref{1.11} below) so
%$\varphi_n(\modM_\lambda)/\varphi_\omega(\modM_\lambda)$ does not
%become trivial.

%Note that in our construction such truth values are determined when appearing,
%see \ref{1.13}.
%\item Note that if $\modN^{\mathfrak e}_n$ is free as an
%$\ringS$--module ee??? below % 11.05.05 [i.e., as
% 11.05.05 an $\ringR$-module it is $\mathop{{\bigoplus}}\limits_{i<\alpha}
% 11.05.05 \modN_i$ such that:
% 11.05.05 if $x_n\in \modN_0$ and $\modM$ is a bimodule,
% 11.05.05 $g:\modN_0\longrightarrow \modM$ is an $\ringR$-homomorphism
% 11.05.05  \then\ there is one and only one
% 11.05.05 bimodule homomorphism $h:\modN\longrightarrow \modM$
% 11.05.05 extending  $g$]  % 11.05.05
%\then\
%the difference between $\varphi^{{\mathfrak e},\infty}_n$, $\psi^{{\mathfrak e},
%\infty}_n$ disappear (hence also between $\varphi^{\mathfrak e}_n$,
%$\psi^{\mathfrak e}_n$, see Definition \ref{1.9}(1) below)
%\item Of course it is enough to restrict ourselves to the relevant
%$\modM$-s.
%\end{enumerate}
%\end{remark}

Let $\kappa$ be an infinite cardinal and let $\frakss=({\cal K}, \modM_\ast ,
{\callE}, \ringR, \ringS,\ringT)$ be a context.
Set  $ {\rm \Mod}_{\frakss, \kappa} := \{ \modM:
\modM_{\ast }
\leq^{\rm ads}_{{\cal K},\kappa} \modM \} $.

\begin{lemma}
\label{modm is AEC}
Let $\frakss$ be a context as above and let $\kappa$ be an infinite cardinal.
\begin{enumerate}
\item If $\modM_1 \leq_\kappa \modM_2,$ then $\modM_1$ is a sub-module of $\modM_2.$

\item $\leq_\kappa$ is a partial order on ${\rm \Mod}_{\frakss, \kappa}.$

\item If $\kappa\leq\theta$ and $\modM\leq_\theta
\modN$ then
$\modM \leq_\kappa \modN$.

\item If $\langle \modM_i: i<\delta \rangle$ is  a $\leq_\kappa$-increasing and continuous in ${\rm \Mod}_{\frakss, \kappa}$, then
\begin{enumerate}
  \item $\bigcup\limits_{i<\delta}\modM_i \in  {\rm \Mod}_{\frakss, \kappa},$
  \item For each $j<\delta, \modM_j \leq_\kappa \bigcup\limits_{i<\delta}\modM_i$.
%  \item If $\modM_i \leq_\kappa \modM$, for each $i<\delta,$ then $\bigcup\limits_{i<\delta}\modM_i %\leq_\kappa \modM$.
\end{enumerate}
\item If $\modM_1, \modM_2, \modM_3 \in {\rm \Mod}_{\frakss, \kappa}$ are such that
$\modM_1, \modM_2 \leq_\kappa \modM_3$ and $\modM_1 \subseteq \modM_2$, then $\modM_1 \leq_\kappa \modM_2$.

\item If $\modM_1\leq_\kappa \modM_2$ and $A\subseteq \modM_2$
has cardinality
$<\kappa$ then for some $\modN\in c\ell^\kappa_{is}({\cal
K})$, we
have $A\subseteq \modM_1+\modN=\modM_1\oplus \modN\leq_\kappa
\modM_2$.
\item The pair
 $ ( {\rm Mod}_ {\frakss, \kappa} , \leq_\kappa ) $
 has the amalgamation property.

\item There is a cardinal $\chi$ such that if $\modM_1 \subseteq \modN$ are in ${\rm \Mod}_{\frakss, \kappa}$, then there is $\modM_2 \in {\rm \Mod}_{\frakss, \kappa}$ such that $\modM_1 \subseteq \modM_2 \leq_\kappa \modN$ and $|\modM_2| \leq |\modM_1|+\chi$.

%\item If $\kappa(\frakss)=\aleph_0$ then $({\rm \Mod}_{\frakss}, \leq^{\rm ads}_{{\cal K},\aleph_0})$ is an abstract elementary class.
\end{enumerate}
\end{lemma}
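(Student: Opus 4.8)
The plan is to verify that $({\rm Mod}_{\frakss,\kappa},\leq_\kappa)$ satisfies the axioms of an abstract elementary class together with amalgamation and a L\"owenheim--Skolem bound, using the two interchangeable descriptions of $\leq_\kappa$ available to us: the game-theoretic definition via winning strategies in $\Game_{{\cal K},\kappa}$ (Definition \ref{ads definition}), which is the right tool whenever strategies must be \emph{combined, restricted, or transported along maps}, and the reformulation $\leq_\kappa\,=\,\leq^{\rm pr}_{\mathcal{L}_{\infty,\kappa}(\tau_{(\ringR,\ringS)})}$ of Lemma \ref{comparing two orders}, which disposes of the ``coherence'' clauses cheaply via Claim \ref{pr properties} and Lemma \ref{formulas and direct sum}. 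I would prove the clauses in the order $(1),(2),(3),(6),(5),(4),(7),(8)$.

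Clause $(1)$ is built into the definition of the game, in which $\modM_1$ is already a sub-bimodule of $\modM_2$. For $(2)$: reflexivity is witnessed by the strategy of player II that always answers $\modK_n=0$ (a legal move, as the empty direct sum lies in $c\ell^\kappa_{\rm is}({\cal K})$), and antisymmetry follows from $(1)$. Transitivity is the first substantive point: given winning strategies for II in $\Game^{\modM_1,\modM_2}$ and $\Game^{\modM_2,\modM_3}$, build one in $\Game^{\modM_1,\modM_3}$ by feeding each of I's moves $A_n$ to the $(\modM_2,\modM_3)$-game to obtain $\modK''_n\subseteq\modM_3$, choosing $B_n\subseteq\modM_2$ with $|B_n|<\kappa$ and $A_n\subseteq B_n+\sum_{\ell\le n}\modK''_\ell$, feeding $B_n$ to the $(\modM_1,\modM_2)$-game to obtain $\modK'_n\subseteq\modM_2$, and answering $\modK_n:=\modK'_n+\modK''_n$; since $\modK'_n\subseteq\modM_2$ and $\modM_2\cap\bigoplus_\ell\modK''_\ell=0$ by the winning condition of the $(\modM_2,\modM_3)$-game, the sum is direct, $\modK_n\in c\ell^\kappa_{\rm is}({\cal K})$, and conditions (a)--(c) transfer. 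For $(3)$: running the $\theta$-game with the same ($<\kappa$-sized) moves gives responses that are direct sums of $<\theta$ simple ${\cal K}$-modules, of which I's cumulative moves involve only $<\kappa$; thinning each response to the summands actually used and repackaging yields legal responses for the $\kappa$-game. (Equivalently, $(3)$ follows from Lemma \ref{comparing two orders} and $\mathcal{L}_{\infty,\kappa}\subseteq\mathcal{L}_{\infty,\theta}$.)

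For $(6)$, play the game with I's moves $A,\emptyset,\emptyset,\dots$, take $\modN:=\modK_0$ to be the first response; then $A\subseteq\modM_1\oplus\modN$ by (a),(c), and shifting the same play one step gives a winning strategy for II in $\Game^{\modM_1\oplus\modN,\modM_2}$, so $\modM_1\oplus\modN\leq_\kappa\modM_2$. For $(5)$, by Lemma \ref{comparing two orders} both $\modM_1\leq^{\rm pr}_{\mathcal{L}_{\infty,\kappa}}\modM_3$ and $\modM_2\leq^{\rm pr}_{\mathcal{L}_{\infty,\kappa}}\modM_3$, hence for any subformula $\psi$ of an $\mathcal{L}_{\infty,\kappa}$-formula and any $\bar z\in\modM_1\subseteq\modM_2$ one gets $\modM_1\models\psi(\bar z)\Leftrightarrow\modM_3\models\psi(\bar z)\Leftrightarrow\modM_2\models\psi(\bar z)$, so $\modM_1\leq^{\rm pr}_{\mathcal{L}_{\infty,\kappa}}\modM_2$, i.e.\ $\modM_1\leq_\kappa\modM_2$. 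Clause $(4)$ reduces, via transitivity, to: if $\langle P_i:i<\delta\rangle$ is continuous and $\leq_\kappa$-increasing with $N\subseteq P_i$ and $N\leq_\kappa P_i$ for all $i$, then $N\leq_\kappa\bigcup_i P_i$ (apply with $N=\modM_\ast$ for (4a), with $N=\modM_j$ for (4b)). When $\cf(\delta)\ge\kappa$ each $<\kappa$-subset of $\bigcup_i P_i$ lies in a single $P_i$, so the game is won round by round -- cover I's move inside some $P_{i_n}$ using $(6)$ and carry the resulting direct summand up the chain using transitivity, or simply invoke Claim \ref{pr properties}(3). When $\cf(\delta)<\kappa$ one passes to a continuous cofinal $\langle P_{i(\xi)}:\xi<\cf(\delta)\rangle$ and runs in parallel the $\cf(\delta)$ winning strategies for the successor steps $\Game^{P_{i(\xi)},P_{i(\xi+1)}}$ (continuity handles limit $\xi$), answering in a round with the direct sum of the $\xi$-th sub-responses; this sum belongs to $c\ell^\kappa_{\rm is}({\cal K})$ since $\cf(\delta)<\kappa$ and, w.l.o.g.\ by $(3)$, $\kappa$ is regular, and the direct-sum bookkeeping works because the $\xi$-th sub-response sits in $P_{i(\xi+1)}$ in direct sum over $P_{i(\xi)}$.

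For amalgamation $(7)$, take $\modM_3:=\modM_1+_{\modM_0}\modM_2$ with the canonical maps $f_i:\modM_i\to\modM_3$, which are injective (pushout of monomorphisms), satisfy $f_1(\modM_1)+f_2(\modM_2)=\modM_3$ and $f_1(\modM_1)\cap f_2(\modM_2)=f_2(\modM_0)$, and make the required triangle commute. Transporting a winning strategy for II in $\Game^{\modM_0,\modM_2}$ along $f_2$ -- when I plays $B_n\subseteq\modM_3$, decompose $b=f_1(m^b_1)+f_2(m^b_2)$, feed $\{m^b_2:b\in B_n\}$ to the $(\modM_0,\modM_2)$-game, and answer with the $f_2$-image of the response -- gives $\modM_1\leq_\kappa\modM_3$, by injectivity of $f_2$ and the intersection formula; symmetrically $\modM_2\leq_\kappa\modM_3$, and then $\modM_\ast\leq_\kappa\modM_3$ by transitivity. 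For $(8)$, put $\chi:=\|{\mathfrak m}\|^{<\kappa}+2^{<\kappa}$ and, given $\modM_1\subseteq\modN$, take $\modM_2:=\modN\cap\mathfrak{B}$ for a $<\kappa$-closed $\mathfrak{B}\prec(H(\theta),\in,<^{*})$ of size $|\modM_1|+\chi$ containing $\modM_1,\modM_\ast,\modN$ and a fixed winning strategy for $\modM_\ast\leq_\kappa\modN$: the $<\kappa$-closure lets that strategy be simulated inside $\modM_2$ (so $\modM_\ast\leq_\kappa\modM_2$), and closure of $\modM_2$ under witnesses of the simple cpe-formulas of $\mathcal{L}_{\kappa,\kappa}(\tau_{(\ringR,\ringS)})$ (Lemma \ref{reducing to simple formula}, Claim \ref{pr properties}(4)) together with the elementarity of $\mathfrak{B}$ yields the back-and-forth system certifying $\modM_2\leq^{\rm pr}_{\mathcal{L}_{\infty,\kappa}}\modN$, hence $\modM_2\leq_\kappa\modN$. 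The genuinely delicate parts are clause $(4)$ for $\cf(\delta)<\kappa$, where one must combine many strategies within a single round while controlling the resulting direct sum, and clause $(8)$, where the point is that $\leq_\kappa$ unwinds to a condition about the \emph{full} $\mathcal{L}_{\infty,\kappa}$, not accessible by a naive Skolem hull, so one is forced through the reduction to simple formulas and a sufficiently closed elementary submodel.
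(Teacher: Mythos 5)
Your division of labour is the right one and is considerably more explicit than the paper's own proof, which consists of the single sentence that the lemma ``follows easily from Lemmas \ref{comparing two orders} and \ref{pr properties}'': you use the game of Definition \ref{ads definition} where strategies must be combined or transported (transitivity, clauses (6) and (7)) and the $\leq^{\rm pr}_{\mathcal{L}_{\infty,\kappa}}$ characterization where a purely logical argument suffices (clauses (3) and (5)). Clauses (1), (2), (3), (5), (6) and (7) are correct as you argue them. In clause (4), however, the reduction ``w.l.o.g.\ by (3), $\kappa$ is regular'' points the wrong way: clause (3) lets you descend from $\leq_\theta$ to $\leq_\kappa$ for $\theta\geq\kappa$, so invoking it would require the chain to be $\leq_\theta$-increasing for some regular $\theta\geq\kappa$, which your hypotheses do not give. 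When $\kappa$ is singular and $\cf(\kappa)\leq\cf(\delta)<\kappa$, the round-$n$ response you propose --- the direct sum of all $\cf(\delta)$-many sub-responses --- may have $\kappa$-many ${\cal K}$-summands and so violate clause (b) of Definition \ref{ads definition}. This is repairable by thinning: each of the $<\kappa$ elements of $A_n$ decomposes, by a finite descent through the sub-games, into an element of $\modM_j$ plus finitely many components of finitely many sub-responses, so player II need only include the $<\kappa$-many summands actually used in round $n$ and defer the rest; directness survives because one passes to direct summands of the original responses.

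The step that genuinely fails as written is clause (8). A $<\kappa$-closed elementary submodel $\mathfrak{B}$ containing a set of size $\mu$ has size at least $\mu^{<\kappa}$, and no fixed $\chi$ forces $(|\modM_1|+\chi)^{<\kappa}\leq|\modM_1|+\chi$ (take $\kappa=\aleph_1$ and $|\modM_1|=\aleph_\omega$ under suitable cardinal arithmetic). So your construction yields $|\modM_2|\leq(|\modM_1|+\chi)^{<\kappa}$, not the stated bound $|\modM_1|+\chi$; and, as you yourself observe, full $\mathcal{L}_{\infty,\kappa}$-elementarity really does demand closure under $<\kappa$-tuples of witnesses, so the hull cannot simply be taken smaller. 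To obtain the bound as stated one must exploit the special structure rather than a logical hull: iterate clause (6) along a filtration of $\modM_1$ to absorb it into a module of the form $\modM_\ast\oplus\bigoplus_{t\in I_0}\modK_t$ with each $\modK_t\in c\ell_{\rm is}({\cal K})$ and $|I_0|\leq|\modM_1|+\aleph_0$, using Claim \ref{pr properties}(2) to see that such a module sits $\leq_\kappa$ inside $\modN$ and handling limit stages of the iteration by a union argument of the type needed in clause (4); this gives $\chi=\Vert\frakss\Vert$. Since the paper gives no proof, I cannot point you to its intended route, but the elementary-submodel route as you set it up cannot deliver the claimed cardinality.
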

\begin{proof}
 The lemma follows easily from Lemmas \ref{comparing two orders}  and \ref{pr properties}.
\end{proof}

\begin{definition}
\label{free bimodule}
We say the bimodule
 $\modN$ is free as an
$\ringS$--module
if
\begin{enumerate}
  \item [(a)] As an $ \ringR $-module, it can be written as
 $\mathop{{\bigoplus}}\limits_{i<\alpha}
\modN_i$ where each $ \modN_i $ is a sub-$ \ringR $-module
of $\modN$.
  \item [(b)] If  $\modM$ is a bimodule, $i<\alpha$ and
$g:\modN_i\longrightarrow \modM$ is an $\ringR$-homomorphism,
 then there is a unique
bimodule homomorphism $h:\modN\longrightarrow \modM$
extending  $g$.
\end{enumerate}
\end{definition}

The next lemma can be proved as in Lemma \ref{formula vs hom}.
\begin{lemma}
\label{aleph0 formula vs hom}
Let ${\mathfrak e}\in {\callE}$.
\begin{enumerate}
\item   There exists  a formula
$\varphi^{\mathfrak e}_n \in \mathcal{L}_{\infty, \infty}^{cpe}(\tau_{\ringR})$ such that
for an $\ringR$-module
$\modM$,
$\modM\models \varphi^{\mathfrak
e}_n(x)$ iff
 for some $\modM'$ with
$\modM\leq_{\aleph_0}\modM'$,
$\modM'\models \varphi^{{\mathfrak e},\infty}_n(x)$;
equivalently there is an $ \ringR $-module
homomorphism from $ \modN^{\mathfrak e} _n $
into $ \modM' $  mapping
$ x^ {\mathfrak e}   _ n $ to $ x $.

\item There exists $\psi^{\mathfrak e}_n \in \mathcal{L}_{\infty, \infty}(\tau_{\ringR})$ such that
for a bimodule
$\modM$,
$\modM\models \psi^{\mathfrak
e}_n(x)$ iff
 for some $\modM'$ with
$\modM\leq_{\aleph_0}\modM'$,
$\modM'\models \psi^{{\mathfrak e},\infty}_n(x)$.
\end{enumerate}
\end{lemma}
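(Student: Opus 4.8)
The plan is to follow the recipe of Lemma~\ref{formula vs hom}: write down an explicit formula ``saying there is a homomorphism of the right kind'' and then verify the equivalence in both directions, the only genuinely new point being that the target of the homomorphism is now allowed to be an $\leq_{\aleph_0}$-extension $\modM'$ of $\modM$ rather than $\modM$ itself. I describe item~(1); item~(2) is parallel, with bimodule homomorphisms and $\tau_{(\ringR,\ringS)}$ in place of $\ringR$-homomorphisms and $\tau_\ringR$.

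Since ${\mathfrak e}\in{\callE}$, as an $\ringR$-module $\modN:=\modN^{\mathfrak e}_n$ is presented by generators $\langle z_i:i<\alpha\rangle$ modulo $\tau_\ringR$-atomic relations $\langle t_j=0:j<\beta\rangle$, with $x^{\mathfrak e}_n$ a fixed finite $\ringR$-combination $\sigma$ of the $z_i$; Lemma~\ref{formula vs hom}(1),(3) identifies $\varphi^{{\mathfrak e},\infty}_n$ (up to equivalence) with the simple formula characterized by: $\modM'\models\varphi^{{\mathfrak e},\infty}_n(y)$ iff there is an $\ringR$-homomorphism $\modN\to\modM'$ sending $x^{\mathfrak e}_n$ to $y$ --- this is the ``equivalently'' clause. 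I then propose to take $\varphi^{\mathfrak e}_n\in\mathcal{L}^{\rm cpe}_{\infty,\infty}(\tau_\ringR)$ to be the conjunction, over all \emph{finite} $F\subseteq\alpha$ supporting $\sigma$ and all finite sets $E$ of relations supported on $F$, of the simple formulas $\chi_{F,E}(v):=\exists(v_i)_{i\in F}[\bigwedge_{j\in E}t_j(\bar v)=0\wedge v=\sigma(\bar v)]$.

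For ``$\exists\modM'$'' $\Rightarrow$ ``$\modM\models\varphi^{\mathfrak e}_n(x)$'': given $\modM\leq_{\aleph_0}\modM'$ and $f\colon\modN\to\modM'$ with $f(x^{\mathfrak e}_n)=x\in\modM$, fix $(F,E)$; the finitely many elements $\{f(z_i):i\in F\}$ (with $x$) witness $\modM'\models\chi_{F,E}(x)$, and letting player~I play this finite set in $\Game^{\modM,\modM'}_{{\cal K},\aleph_0}$ against a winning strategy of player~II yields $\modK$, a countable direct sum of ${\cal K}$-bimodules, with $\modM+\modK=\modM\oplus\modK$ inside $\modM'$ and $\{f(z_i):i\in F\}\subseteq\modM\oplus\modK$. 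Since $\chi_{F,E}$ is cpe it distributes over this direct sum (Lemma~\ref{formulas and direct sum} and the remark following it, also Claim~\ref{pr properties}(5)) and $0$ satisfies every cpe formula, so $\modM\models\chi_{F,E}(x)$; as $(F,E)$ was arbitrary, $\modM\models\varphi^{\mathfrak e}_n(x)$. Conversely, if $\modM\models\varphi^{\mathfrak e}_n(x)$, then since any $\tau_\ringR$-relation holding of $x^{\mathfrak e}_n$ in $\modN$ is a finite consequence of finitely many $t_j$'s, hence recorded by one $\chi_{F,E}$, the element $x$ satisfies in $\modM$ every relation $x^{\mathfrak e}_n$ satisfies in the cyclic submodule $\ringR x^{\mathfrak e}_n$ of $\modN$, so there is an $\ringR$-homomorphism $g\colon\ringR x^{\mathfrak e}_n\to\modM$ with $g(x^{\mathfrak e}_n)=x$; putting $\modM':=\modN+_{\ringR x^{\mathfrak e}_n}\modM$, the pushout of the monomorphism $\ringR x^{\mathfrak e}_n\hookrightarrow\modN$ keeps $\modM$ embedded, the amalgam map $f\colon\modN\to\modM'$ sends $x^{\mathfrak e}_n$ to $x$, so $\modM'\models\varphi^{{\mathfrak e},\infty}_n(x)$, and it remains only to check $\modM\leq_{\aleph_0}\modM'$.

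That last check is the step I expect to be the real obstacle. Because $\modN^{\mathfrak e}_n$ need only be $<\lambda$-generated, the formula $\varphi^{{\mathfrak e},\infty}_n$ may carry an uncountable existential block, so no single ${\cal K}$-direct summand absorbs the whole homomorphism at once: one has to decompose $\modN$ (using closure of ${\callE}$ under restrictions) into a countable increasing chain of finitely-generated-over-${\cal K}$ pieces, feed these through the game one at a time while keeping the running sum direct, and use that $c\ell^{\aleph_0}_{\rm is}({\cal K})$ is closed under finite direct sums together with $\modN^{\mathfrak e}_n\in{\cal K}$ --- or, when $\modM$ is ${\cal K}$-nice, bypass the game via Lemma~\ref{comparing two orders}, verifying $\modM\leq^{\rm pr}_{\mathcal{L}_{\infty,\aleph_0}(\tau_{(\ringR,\ringS)})}\modM'$ formula by formula. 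Item~(2) runs identically with bimodule homomorphisms, $\psi^{\mathfrak e}_n$ being the analogous conjunction of finite-piece formulas (not necessarily positive, since pinning down the right-$\ringS$-action may require negations, which is why item~(2) does not ask for a cpe formula).
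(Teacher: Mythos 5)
The paper offers no written proof of this lemma beyond the remark that it ``can be proved as in Lemma \ref{formula vs hom}'', so the comparison can only be against that hint; but measured either against the hint or on its own terms, your converse direction does not work. First, the pushout $\modM':=\modN+_{\ringR x^{\mathfrak e}_n}\modM$ is not an $\leq_{\aleph_0}$-extension of $\modM$, and this is not a technicality you can repair by ``feeding pieces through the game'': inside $\modM'$ the images of $\modM$ and of $\modN^{\mathfrak e}_n$ meet in $g(\ringR x^{\mathfrak e}_n)\ni x$, the quotient $\modM'/\modM\cong\modN^{\mathfrak e}_n/\ringR x^{\mathfrak e}_n$ has no reason to be built from members of $c\ell^{\aleph_0}_{\rm is}({\cal K})$, and $\ringR x^{\mathfrak e}_n$ need not be a direct summand of $\modN^{\mathfrak e}_n$, so $\modM$ is not even locally complemented in $\modM'$ by ${\cal K}$-summands. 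The game of Definition \ref{ads definition} is a device for exploiting a given $\leq_{\aleph_0}$-extension, not for certifying one you have just built, and Lemma \ref{comparing two orders} would require you to verify $\leq^{\rm pr}$ for all of $\mathcal{L}_{\infty,\aleph_0}$, which the pushout does not satisfy.

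Second, and more fundamentally, your candidate formula $\bigwedge_{F,E}\chi_{F,E}$ is too weak, so the equivalence you are trying to prove is false, not merely unproved. In the situation the paper actually uses ($\kappa(\callE)=\aleph_0$, so $\modN^{\mathfrak e}_n$ is countably generated as an $\ringR$-module), any homomorphism $f:\modN^{\mathfrak e}_n\to\modM'$ with $\modM\leq_{\aleph_0}\modM'$ and $f(x^{\mathfrak e}_n)=x\in\modM$ can be pushed back into $\modM$: play the game with player I enumerating a countable generating set of $\Rang(f)$ one finite piece at a time, obtain $\Rang(f)\subseteq\modM\oplus\bigoplus_{\ell<\omega}\modK_\ell$ by clauses (a) and (c) of Definition \ref{ads definition}, and compose $f$ with the projection onto $\modM$. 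Hence the right-hand side of the lemma is equivalent to the existence of a single coherent homomorphism $\modN^{\mathfrak e}_n\to\modM$ sending $x^{\mathfrak e}_n$ to $x$, i.e.\ to the one simple formula of Lemma \ref{formula vs hom} for a presentation of $\modN^{\mathfrak e}_n$ as an $\ringR$-module. Your conjunction of finite truncations only asserts that each finite sub-presentation is separately solvable in $\modM$, which does not yield a coherent solution (finite solvability of an infinitely presented system is strictly weaker than solvability). Your forward direction (homomorphism into $\modM'$ implies every $\chi_{F,E}$ holds in $\modM$, via the game plus Lemma \ref{formulas and direct sum}) is correct, but the intended $\varphi^{\mathfrak e}_n$ is the undivided simple formula itself, exactly as in Lemma \ref{formula vs hom}, not its family of finite approximations.
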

The above lemma suggests the following: \begin{definition}
By $\varphi^{\mathfrak e}_\omega(x)$ we mean $
\bigwedge\limits_{n<\omega}\varphi^{\mathfrak
e}_n(x)$ and  $\overline{\varphi}^{\mathfrak e}:=
\langle \varphi^{\mathfrak
e}_n: n<\omega\rangle$. Similarly, we define
$\psi^{\mathfrak e}_\omega(x):=
\bigwedge\limits_{n<\omega}\psi^{\mathfrak
e}_n(x)$ and let $\overline{\psi}^{\mathfrak e}:=
\langle \psi^{\mathfrak
e}_n: n<\omega\rangle$.\end{definition}
\begin{Definition}
\label{closed and m-nontrivial}
\begin{enumerate}
\item A bimodule $\modM$ is called ${\callE}$-closed if for
every ${\mathfrak e}\in {\callE}$,  $n<\omega$ and every
$x\in \modM$ we have
$$\modM\models\psi^{\mathfrak
e}_n(x) \iff  \modM\models \psi^{{\mathfrak e},\infty}_n(x).$$
\item We say that $\mathfrak e$ is non-trivial for ${\frakss}$
if for every $n<\omega$ there exists  $\modM\in {\cal K}^\frakss$ such that
$\modM_*^\frakss \oplus \modM\models(\exists x)
[\varphi^{\mathfrak e}_n(x)
\wedge\neg\varphi^{\mathfrak e}_\omega(x)]$.
\end{enumerate}
\end{Definition}
\begin{fact}It is easily seen that if ${\mathfrak e} \in {\mathfrak E}^{\frakss}$
 is adequate and each
$\modN^{\mathfrak e}_n$ is
finitely generated (as
 an $  \ringR $-module), then ${\mathfrak e}$ is
 non-trivial for ${\frakss}$.\end{fact}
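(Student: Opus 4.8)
The plan is: for each fixed $n<\omega$ take the witnessing module in Definition~\ref{closed and m-nontrivial}(2) to be $\modM:=\modN^{\mathfrak e}_n$ itself. This is legitimate because $\mathfrak e\in\callE^{\frakss}$, so clause~(4) of Definition~\ref{context definition} gives $\modN^{\mathfrak e}_m\in{\cal K}^{\frakss}$ for every $m$, in particular $\modM\in{\cal K}^{\frakss}$. Since each $\modN^{\mathfrak e}_m$ is finitely generated as an $\ringR$-module we have $\kappa_{\ringR}(\mathfrak e)=\aleph_0$, so by Lemma~\ref{formula vs hom}(1),(3) each $\varphi^{\mathfrak e}_m=\varphi^{{\mathfrak e},\aleph_0}_m$ may be read as the simple $\mathcal{L}_{\infty,\aleph_0}(\tau_{\ringR})$-formula which holds of an element $y$ in an $\ringR$-module $\modK$ exactly when there is an $\ringR$-module homomorphism $\modN^{\mathfrak e}_m\to\modK$ carrying $x^{\mathfrak e}_m$ to $y$. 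Let $x$ be the image of $x^{\mathfrak e}_n$ under the second-summand injection $\iota:\modN^{\mathfrak e}_n\hookrightarrow\modM_*^{\frakss}\oplus\modN^{\mathfrak e}_n$. I will show $\modM_*^{\frakss}\oplus\modM\models\varphi^{\mathfrak e}_n(x)\wedge\neg\varphi^{\mathfrak e}_\omega(x)$, which is all that Definition~\ref{closed and m-nontrivial}(2) demands.

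The positive conjunct is immediate: $\iota$ is a bimodule, hence $\ringR$-module, homomorphism with $\iota(x^{\mathfrak e}_n)=x$, so $\modM_*^{\frakss}\oplus\modM\models\varphi^{\mathfrak e}_n(x)$ by the homomorphism reading above. For the negative conjunct, since $\varphi^{\mathfrak e}_\omega=\bigwedge_{m<\omega}\varphi^{\mathfrak e}_m$, it suffices to establish $\modM_*^{\frakss}\oplus\modM\models\neg\varphi^{\mathfrak e}_{n+1}(x)$. Suppose to the contrary that $\varphi^{\mathfrak e}_{n+1}(x)$ holds there; then there is an $\ringR$-module homomorphism $h:\modN^{\mathfrak e}_{n+1}\to\modM_*^{\frakss}\oplus\modN^{\mathfrak e}_n$ with $h(x^{\mathfrak e}_{n+1})=x$. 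Composing with the second-summand projection $\pi:\modM_*^{\frakss}\oplus\modN^{\mathfrak e}_n\to\modN^{\mathfrak e}_n$, an $\ringR$-module homomorphism, produces an $\ringR$-module homomorphism $\pi\circ h:\modN^{\mathfrak e}_{n+1}\to\modN^{\mathfrak e}_n$ with $(\pi\circ h)(x^{\mathfrak e}_{n+1})=\pi(x)=x^{\mathfrak e}_n$. But $\mathfrak e$ is non-trivial in the sense of Definition~\ref{classcale}(2) — this holds because $\mathfrak e$ is adequate, any adequate element of $\calE_{(\ringR,\ringS)}$ being non-trivial, as follows from Lemma~\ref{preserving pe formulas} — so no such homomorphism exists, a contradiction. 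Hence $\modM_*^{\frakss}\oplus\modM\models\neg\varphi^{\mathfrak e}_{n+1}(x)$, whence $\neg\varphi^{\mathfrak e}_\omega(x)$, and since $n$ was arbitrary we are done.

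The single point deserving care, and the only place the hypothesis that the $\modN^{\mathfrak e}_m$ are finitely generated over $\ringR$ is used, is the homomorphism reading of the formulas $\varphi^{\mathfrak e}_m$: finite generation is what makes Lemma~\ref{formula vs hom}(1),(3) applicable, and, more importantly, what guarantees that the homomorphism $h$ witnessing $\varphi^{\mathfrak e}_{n+1}(x)$ lands inside $\modM_*^{\frakss}\oplus\modN^{\mathfrak e}_n$ itself rather than merely inside some proper $\leq_{\aleph_0}$-extension, onto which the projection $\pi$ would not be available. If one insists on reading $\varphi^{\mathfrak e}_{n+1}$ via Lemma~\ref{aleph0 formula vs hom}(1) instead, one first checks that under finite generation this formula agrees with $\varphi^{{\mathfrak e},\aleph_0}_{n+1}$ — pulling the witnessing homomorphism back from the $\leq_{\aleph_0}$-extension using that $\varphi^{{\mathfrak e},\aleph_0}_{n+1}$ is simple — after which the argument runs exactly as above. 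Everything else (the choice of $\modM$, the injection realizing $\varphi^{\mathfrak e}_n$, the projection killing $\varphi^{\mathfrak e}_{n+1}$) is mechanical, which is why the statement is flagged as ``easily seen''.
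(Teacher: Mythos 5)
The paper states this Fact without proof, and your argument is precisely the natural intended one: take $\modM=\modN^{\mathfrak e}_n\in{\cal K}^{\frakss}$, realize $\varphi^{\mathfrak e}_n$ at $x=\iota(x^{\mathfrak e}_n)$ via the second-summand inclusion, and refute $\varphi^{\mathfrak e}_{n+1}(x)$ by projecting any witnessing homomorphism onto the summand $\modN^{\mathfrak e}_n$ and appealing to non-triviality in the sense of Definition~\ref{classcale}(2), which adequacy supplies via Lemma~\ref{preserving pe formulas}. Your closing paragraph correctly isolates the only non-mechanical point — that finite generation of $\modN^{\mathfrak e}_{n+1}$ over $\ringR$ is what allows the homomorphism witnessing $\varphi^{\mathfrak e}_{n+1}(x)$ in a $\leq_{\aleph_0}$-extension to be reflected back into $\modM_*^{\frakss}\oplus\modN^{\mathfrak e}_n$ itself — and your sketch of that reflection (via the simple-formula reading of $\varphi^{{\mathfrak e},\aleph_0}_{n+1}$ together with the $\leq^{\rm pr}$ property of $\leq_{\aleph_0}$) is sound.
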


\begin{convention}
	From now on
we fix a
context $\frakss=({\mathcal K},\modM_*,
\calE, \ringR,\ringS,\ringT)$
which
is non-trivial which means:
\begin{enumerate}
\item[(a)] ${\mathcal K}\neq\emptyset$,
\item[(b)] $\calE\neq\emptyset$,
\item[(c)] every ${\mathfrak e}\in \calE_{\infty,\kappa}$ is non-trivial for $\frakss$.
\end{enumerate}
\end{convention}

\begin{Definition}
\label{subgroups Ln}
Let
${\mathfrak e}\in {\callE}$ and $\bar\varphi$ is adequate with respect to
$\mathfrak e$.
\begin{enumerate}
\item Suppose
 $\modM$ is a bimodule and ${\bf h_1}, {\bf h_2}:\modN^{\mathfrak
 	e}_n\to\modM$ are bimodule homomorphisms.  We define
\[\modL^{{\mathfrak
e},\bar{\varphi},{\bf h_1},{\bf h_2}}_n(\modM):=\{z\in\varphi_n(\modN^{\mathfrak e}_n): {\bf h_1}(z)={\bf h_2}(z)\ \mod \
\varphi_\omega(\modM)\}.\] For simplicity, and if there is no danger of confusion, we set $$\modL^{{\mathfrak
		e},\bar{\varphi},{\bf h_1},{\bf h_2}}_n:=\modL^{{\mathfrak
		e},\bar{\varphi},{\bf h_1},{\bf h_2}}_n(\modM).$$
If $\bar{\varphi}=\bar{\varphi}^{{\mathfrak e},\kappa}$ we may write
$\modL^{{\mathfrak e},
\kappa, {\bf h_1},{\bf h_2}}_n$ and if $\kappa=\aleph_0$ we may omit it.

\item Let
$\modL^{\mathfrak e}_n [\frakss]:=
\bigcap\big\{\modL^{{\mathfrak e},{\bf h_1}, {\bf h_2}}_n:\text{~for
some~}
\modM\in {\cal K}\cup \{\modM_\ast\}, {\bf h_1}, {\bf h_2}:\modN^{\mathfrak
	e}_n\to\modM  \text{~are  bimodule }$
 $\text{homomorphisms and~}
{\bf h_1}(x_n^{\mathfrak e})={\bf h_2}(x_n^{\mathfrak e})\big\}.$
We may write $\modL^{\mathfrak e}_n:=\modL^{\mathfrak e}_n [\frakss]$, when $\frakss$
is clear from the context.

\item $\modL^{\mathfrak e}_n [\cal K]$, is defined similarly but
we only require $\modM\in {\cal K}$.
\end{enumerate}
\end{Definition}

\begin{lemma}
	\label{Pe bimodules defined}
Let ${\mathfrak e}\in {\callE}$. Then, there are bimodules $\modP^{\mathfrak e}$,
	$\modP^{\mathfrak e}_n$ and $\modK_n$, embeddings ${\bf h}^{\mathfrak e}_n:\modN^{\mathfrak
		e}_n\longrightarrow
	\modP^{\mathfrak e}$, an element $x=x_{\mathfrak e}$, an
	embedding ${\bf f}^ {\mathfrak e}:\modN^{\mathfrak e}_0\longrightarrow
	\modP^{\mathfrak e}$
	and
	$x_n:=x_{{\mathfrak e},n} \in \modP^{\mathfrak e}$   such that:
	\begin{enumerate}
		\item[(a)] $\Rang({\bf h}^{\mathfrak e}_n)\cap\sum_{m\neq n}\Rang({\bf h}^{\mathfrak e}_m)
		=\{0\}$,
		\item[(b)] For each $\modK_n$ we have:
		$$\modP^{\mathfrak e}=\left(\sum_{\ell<n}\Rang({\bf h}^{\mathfrak
			e}_\ell)\right)+\modK_n=
		{\bigoplus}_{\ell<n}\Rang({\bf h}^{\mathfrak e}_\ell)\oplus
		\modK_n,$$
		and $\modK_n$ is a direct sum of copies of $\modN^{\mathfrak e}_m$'s.
		\item[(c)] $\sum_{n<\omega}\Rang({\bf h}^{\mathfrak e}_n)$ is not a direct
		summand of $\modP^{\mathfrak e}$; moreover, for some
		embeddings ${\bf f}^{\mathfrak e}_n:\modN^{\mathfrak e}_n\longrightarrow
		\modP^{\mathfrak e}$
		satisfying:
		\begin{enumerate}
			\item[$(\ast)_1$] $x_n=
			\sum_{\ell<n} {\bf h}^{\mathfrak e}_\ell\big(x^{\mathfrak e}_\ell\big) \in\sum_{\ell <n}\Rang(h^{\mathfrak e}_\ell)$,
			\item[$(\ast)_2$] $x % 11.05.05 _{\frak e}
			-x_n\in\varphi^{\mathfrak e}_n(\modP^{\mathfrak e})$,
			\item[$(\ast)_3$]  ${\bf f}^{\mathfrak e}={\bf f}^{\mathfrak e}_0$,
			
			\item[$(\ast)_4$] ${\bf f}^{\mathfrak e}\big(x^{\mathfrak e}_{0}\big)=x$,
			\item[$(\ast)_5$] $x\notin\sum_{n<\omega}\Rang({\bf h}^{\mathfrak e}_n)+
			\varphi^{\mathfrak e}_\omega (\modP^{\mathfrak e})$.
		\end{enumerate}
		\item[(d)] $\modP^{\mathfrak e}$ is the direct sum of the copies
		$\Rang({\bf f}^{\mathfrak e}_n)$ of $\modN_n^{\mathfrak e}$.
	\end{enumerate}
\end{lemma}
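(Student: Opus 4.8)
The plan is to construct $\modP^{\mathfrak e}$ by an explicit amalgamation/direct limit procedure built directly from the data $\mathfrak e = \langle \modN^{\mathfrak e}_n, x^{\mathfrak e}_n, g^{\mathfrak e}_n : n<\omega\rangle$, and then verify the five clauses. First I would set up two ``parallel'' copies of the telescope: one copy $\langle \Rang({\bf h}^{\mathfrak e}_n) : n<\omega\rangle$ that is placed as an \emph{internal direct sum} inside $\modP^{\mathfrak e}$ (so that (a) and (b) hold by fiat), and a second ``diagonal'' copy given by the embeddings ${\bf f}^{\mathfrak e}_n$, glued together along the maps $g^{\mathfrak e}_n$ so that the images $x_n = \sum_{\ell<n}{\bf h}^{\mathfrak e}_\ell(x^{\mathfrak e}_\ell)$ converge, in the sense of $(\ast)_2$, to a single element $x$. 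Concretely, I would take $\modP^{\mathfrak e}$ to be the colimit (pushout chain) of the system whose $n$-th stage is $\bigoplus_{\ell<n}\modN^{\mathfrak e}_\ell \oplus \modN^{\mathfrak e}_n$, where the transition map from stage $n$ to stage $n+1$ is the identity on the first $n$ summands and sends the last copy of $\modN^{\mathfrak e}_n$ into $\Rang({\bf h}^{\mathfrak e}_n) \oplus \modN^{\mathfrak e}_{n+1}$ via $z \mapsto ({\bf h}^{\mathfrak e}_n$-image of $z,\ g^{\mathfrak e}_n(z))$; the ${\bf h}^{\mathfrak e}_n$ are the obvious coordinate inclusions, ${\bf f}^{\mathfrak e}_n$ is the inclusion of the ``last'' copy at stage $n$ followed by the colimit map, and $x := {\bf f}^{\mathfrak e}_0(x^{\mathfrak e}_0)$.

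Next I would check the clauses in order. Clauses (a), (b) and (d) are immediate from the construction: each stage is literally a finite direct sum of $\modN^{\mathfrak e}_m$'s, the ranges of the ${\bf h}^{\mathfrak e}_n$ sit in distinct coordinate positions, and the complementary summand $\modK_n$ at stage $n$ (and in the colimit) is the direct sum of the remaining copies. For $(\ast)_1$, $(\ast)_3$, $(\ast)_4$ one just reads off the definitions. The content is in $(\ast)_2$ and $(\ast)_5$. For $(\ast)_2$: by construction $x - x_n$ is (the colimit image of) $x^{\mathfrak e}_n$ minus its ${\bf h}$-shadow, and this element lies in $\varphi^{\mathfrak e}_n(\modP^{\mathfrak e})$ because there is a bimodule homomorphism $\modN^{\mathfrak e}_n \to \modP^{\mathfrak e}$ — namely the tail of the telescope, assembled from $g^{\mathfrak e}_n, g^{\mathfrak e}_{n+1},\ldots$ — carrying $x^{\mathfrak e}_n$ to $x - x_n$; here I would invoke Lemma~\ref{formula vs hom}(3) (and the fact that $\bar\varphi$ is adequate with respect to $\mathfrak e$) to translate ``there is such a homomorphism'' into ``$\modP^{\mathfrak e} \models \varphi^{\mathfrak e}_n(x-x_n)$''.

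The main obstacle is $(\ast)_5$: showing $x \notin \sum_{n<\omega}\Rang({\bf h}^{\mathfrak e}_n) + \varphi^{\mathfrak e}_\omega(\modP^{\mathfrak e})$. This is exactly where non-triviality of $\mathfrak e$ (for $\frakss$, via the standing convention) is needed, and where clause (c)'s assertion that $\sum_n \Rang({\bf h}^{\mathfrak e}_n)$ is not a direct summand gets its teeth. The strategy is a direct-limit / omitting-types argument: suppose $x = w + u$ with $w \in \sum_{n<\omega}\Rang({\bf h}^{\mathfrak e}_n)$ and $u \in \varphi^{\mathfrak e}_\omega(\modP^{\mathfrak e})$. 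Since $w$ involves only finitely many of the ${\bf h}$-coordinates, $w$ and $x$ already live at some finite stage $N$, so $u = x - w$ does too. But $u \in \varphi^{\mathfrak e}_m(\modP^{\mathfrak e})$ for every $m$; pushing this back along the colimit and using that each $\varphi^{\mathfrak e}_m$ is preserved (Lemma~\ref{preserving pe formulas}), together with the explicit structure of stage $N$ as $\bigoplus_{\ell\le N}\modN^{\mathfrak e}_\ell$ and Lemma~\ref{formulas and direct sum} (which distributes $\varphi^{\mathfrak e}_m$ over the direct sum), forces the ``last-copy component'' of $u$ to satisfy $\varphi^{\mathfrak e}_{N+1}(\text{that component})$ inside $\modN^{\mathfrak e}_N$. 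Chasing what this says about $x^{\mathfrak e}_N$ yields a bimodule homomorphism $\modN^{\mathfrak e}_{N+1}\to\modN^{\mathfrak e}_N$ sending $x^{\mathfrak e}_{N+1}$ to $x^{\mathfrak e}_N$, contradicting non-triviality of $\mathfrak e$. I would also remark that clause (c)'s first assertion (``not a direct summand'') follows formally from $(\ast)_5$ together with $(\ast)_2$: if it were a summand with projection $\pi$, then $\pi(x) \in \sum_n\Rang({\bf h}^{\mathfrak e}_n)$ and $x - \pi(x)$ would have to lie in $\bigcap_n \varphi^{\mathfrak e}_n(\modP^{\mathfrak e}) = \varphi^{\mathfrak e}_\omega(\modP^{\mathfrak e})$, violating $(\ast)_5$ — so this costs nothing extra once the hard step is done.
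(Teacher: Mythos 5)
Your proposal is correct and follows essentially the same route as the paper: the colimit you describe is exactly the paper's $\modP^{\mathfrak e}=\bigoplus_{n<\omega}\modN^{\mathfrak e}_n$ with ${\bf h}^{\mathfrak e}_n(y)={\bf f}^{\mathfrak e}_n(y)-{\bf f}^{\mathfrak e}_{n+1}(g^{\mathfrak e}_{n,n+1}(y))$, the same iterated change of basis gives (a), (b), (d), and the same telescoping identity $x=x_n+{\bf f}^{\mathfrak e}_n(x^{\mathfrak e}_n)$ gives $(\ast)_1$--$(\ast)_4$. The only (harmless) difference is in $(\ast)_5$: you isolate the $\Rang({\bf f}^{\mathfrak e}_N)$-component via Lemma~\ref{formulas and direct sum} and the coordinate projection, whereas the paper builds an auxiliary endomorphism ${\bf f}_n$ collapsing the first $n$ summands onto the $n$-th via the $g^{\mathfrak e}_{\ell,n}$ — both yield $x^{\mathfrak e}_N\in\varphi^{\mathfrak e}_{N+1}(\modN^{\mathfrak e}_N)$ and the same contradiction with non-triviality.
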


\begin{proof} Set $\modP^{\mathfrak e}:=\mathop{{\bigoplus}}\limits_{n< \omega}
\modN^{\mathfrak e}_n$ and denote the natural embedding from $ \modN^{\mathfrak e}_n$ into 	$\modP^{\mathfrak e}$
by 	${\bf f}^{\mathfrak e}_n$.
In particular, 	
	 $$\modP^{\mathfrak e}=\mathop{{\bigoplus}}\limits_{n< \omega}
	\Rang({\bf f}^{\mathfrak e}_n),$$
i.e.,  (d) holds. We define ${\bf h}^{\mathfrak
		e}_n:\modN^{\mathfrak e}_n\longrightarrow \modM$,
	for all $n<\omega$, as follows,  where $g^{\mathfrak e}_{n,n+1}$ is defined as in
Definition \ref{classcale}(3):
	\[
	{\bf h}^{\mathfrak e}_n(y)= {\bf f}^{\mathfrak e}_n(y)-{\bf f}^{\mathfrak e}_{n+
		1}(g^{\mathfrak e}_{n,n+1}(y)) \mbox{ for every } y\in \modN^{\mathfrak e}_n.\]
 As $\modP^{\mathfrak e}=\Rang({\bf f}^{\mathfrak
		e}_n)\oplus\mathop{{\bigoplus}}\limits_{\ell\neq n}\Rang({\bf f}^{\mathfrak e}_\ell)$,
	we observe that ${\bf h}^{\mathfrak e}_n$ is a bimodule embedding. Set also
\[
\modP^{\mathfrak e}_n: =\sum\limits_{\ell<n}\Rang({\bf h}^{\mathfrak e}_\ell).
\]
The next claim can be proved easily.
\begin{claim}\label{this}Adopt the above notation. Then,
for  each $n$,
		$$\Rang({\bf f}^{\mathfrak e}_n)\oplus\Rang({\bf f}^{\mathfrak e}_{n+1})=
		\Rang({\bf h}^{\mathfrak e}_n)\oplus
		\Rang({\bf f}^{\mathfrak e}_{n+1}).$$
In particular, $\modP^{\mathfrak e}=\mathop{{\bigoplus}}\limits_{\ell<n}\Rang({\bf h}^{\mathfrak e}_n)\oplus
		\mathop{{\bigoplus}}\limits_{\ell\geq n}\Rang({\bf f}^{\mathfrak e}_\ell)$ for all $n$.
\end{claim}
	
We set $\modK_n:=\mathop{{\bigoplus}}\limits_{\ell\geq n}\Rang({\bf f}^{\mathfrak e}_\ell)$.	
In view of Claim \ref{this}, the items $(a)$, $(b)$ and $(d)$ of  Lemma \ref{Pe bimodules defined} are hold.
	Next we shall show that $x:=
	{\bf f}^{\mathfrak e}_0((x^{\mathfrak e}_0))$ and
	$x_n:=\sum\limits_{\ell<n}  {\bf h}^{\mathfrak
		e}_\ell(x^{\mathfrak e}_\ell)$ are as required in Lemma \ref{Pe bimodules defined}(c).
	This implies the first
	statement of $(*)_1$ in item
	(c).
	Trivially $(*)_3$ and $(\ast)_4$ are true.
	Now,
	
	\[\begin{array}{ll}
	x  &= {\bf f}^{\mathfrak e}_0(x^{\mathfrak e}_0)\\
	&={\bf h}^{\mathfrak e}_0(x^{\mathfrak e}_0) +
	{\bf f}^{\mathfrak e}_1(g^{\mathfrak e}_{0,1}(x^{\mathfrak e}_0)) \\
	&={\bf h}^{\mathfrak e}_0(x^{\mathfrak e}_0)+
	{\bf f}^{\mathfrak e}_1(x^{\mathfrak e}_1)\\
	&={\bf h}^{\mathfrak e}_0(x^{\mathfrak e}_0)
	+{\bf h}^{\mathfrak e}_1 (x^{\mathfrak e}_1)
	+
	 {\bf f}^{\mathfrak e}_2(x^{\mathfrak e}_2).
	\end{array}\]
	
	By induction on
	$n$ we have
	$$x=\sum\limits_{\ell<n}{\bf h}^{\mathfrak
		e}_\ell(x^{\mathfrak e}_\ell) +{\bf f}^{\mathfrak e}_n(x^{\mathfrak e}_n)
	=x_n+{\bf f}^{\mathfrak e}_n(x^{\mathfrak e}_n).$$
	Clearly, $x_n\in\mathop{{\bigoplus}}\limits_{\ell<n + 1}
	\Rang ({\bf h}^{\mathfrak e}_\ell) \subseteq
	\mathop{{\bigoplus}}\limits_{\ell<\omega}
	\Rang({\bf h}^{\mathfrak e}_\ell)$ and by the choice of ${ \bf f}^{\mathfrak e}_n$
	the second term is in $\varphi^{\mathfrak
		e}_n(\modP^{\mathfrak
		e})$,
	so $(*)_1+(*)_2$ of clause (c)
	holds.
	We shall now show that
	$x\notin\sum\limits_{\ell<
		\omega }\Rang({\bf h}^{\mathfrak e}_\ell)+
	\varphi^{\mathfrak e}_\omega(\modP^{\mathfrak e})$,
	i.e.,~$(*)_5$ holds.

	Suppose by contradiction that there is an
	$ n \geq 1 $
	such that $x\in y+
	\sum\limits_{\ell<n}\Rang({\bf h}^{\mathfrak e}_\ell)$,
	where $y\in\varphi^{\mathfrak
		e}_\omega (\modP^{\mathfrak e})$.

We now define  a bimodule endomorphism ${\bf f}_n\in\End(\modP^{\mathfrak e})$. As $\modP^{\mathfrak
	e}=\mathop{{\bigoplus}}
\limits_{ \ell <\omega }
\Rang({\bf f}^{\mathfrak e}_\ell)$, it
	 is clearly
	enough to define each ${\bf f}_n\restriction
	\Rang(f^{\mathfrak e}_\ell)$, for $n \geq 1,$ separately.
	Recall that ${\bf f}^{\mathfrak e}_\ell$ is
	one-to-one.  Let $z\in \modN^{\mathfrak e}_\ell$. We define
	$$
	 {\bf f}_n({\bf f}^{\mathfrak e}_\ell(z))=\left\{\begin{array}{ll}
	{\bf f}^{\mathfrak e}_{n}(g^{\mathfrak e}_{\ell,n}(z)) &\mbox{if } \ell\leq n,\\
	0 &\mbox{otherwise }
	\end{array}\right.
	$$Now, we bring the following claim:
	
\begin{claim}
			\begin{enumerate}
		\item[1)]  If   $\ell\not= n$, then	
		${\bf f}_n\restriction\Rang({\bf h}^{\mathfrak e}_\ell)$
		is identically zero.
		\item[2)]  If   $\ell= n$, then	
	${\bf f}_n(x)
=
	{\bf f}^{\mathfrak e}_n(x^{\mathfrak e}_n).$
		\end{enumerate}
	\end{claim}	

\begin{proof}
1): For $\ell>n$ this is trivial, so suppose that $\ell<n$. Let
	$z\in \modN^{\mathfrak e}_\ell$. Then ${\bf h}^{\mathfrak e}_\ell(z)= {\bf f}^{\mathfrak
		e}_\ell(z)-
	{\bf f}^{\mathfrak e}_ { {\ell} + 1 } (g^{\mathfrak e}_ { {\ell} , {\ell} + 1 }(z))
	$. So,

	\[\begin{array}{ll}
	{\bf f}_n({\bf h}^{\mathfrak e}_\ell(z)) &= {\bf f}_n \left({\bf f}^{\mathfrak
		e}_\ell(z)-
	{\bf f}^{\mathfrak e}_ { {\ell} + 1 } (g^{\mathfrak e}_ { {\ell} , {\ell} + 1 }(z))\right)\\
	&={\bf f}_n({\bf f}^{\mathfrak
		e}_\ell(z))-
	{\bf f}_n({\bf f}^{\mathfrak e}_ { {\ell} + 1 } (g^{\mathfrak e}_ { {\ell} , {\ell} + 1 }(z)))  \\ &= {\bf f}^{\mathfrak e}_{n}(g^{\mathfrak e}_{\ell,n}(z))-
	 {\bf f}^{\mathfrak e}_n(g^{\mathfrak e}_{\ell,\ell+1}(
	g^{\mathfrak e}_{\ell+1,n}(z))) \\
	&={\bf f}^{\mathfrak e}_{n}(g^{\mathfrak e}_{\ell,n}(z))- {\bf f}^{\mathfrak e}_{n}(g^{\mathfrak e}_{\ell,n}(z))  \\
	&= 0.
	\end{array}\]
The third equation holds because of clause (e) of Lemma \ref{Pe bimodules defined}.

2):	It is enough to recall ${\bf f}_n(x)
=
{\bf f}_n({\bf f}^{\mathfrak e}_0(x^{\mathfrak e}_0))
=
{\bf f}^{\mathfrak e}_n(g^{\mathfrak e}_{0,n } ({\bf f}^{\mathfrak e}_0(x^ {\mathfrak e}_0))
=
{\bf f}^{\mathfrak e}_n(x^{\mathfrak e}_n).$
\end{proof}

Since	$x^{\mathfrak e}_n \notin
	\varphi_\omega (\modN^{\mathfrak e}_n)$, we have ${\bf f}_n(x) =
	{\bf f}^{\mathfrak e}_n(x^{\mathfrak e}_n )
	\neq 0$. Indeed, we have
 $$x^{\mathfrak e}_n\notin
	\varphi^{\mathfrak e}_\omega(\Rang({\bf f}^{\mathfrak e}_n))=
	\varphi^{\mathfrak e}_\omega(\modP^{\mathfrak e})\cap\Rang({\bf f}^{\mathfrak e}_n).$$
	But $x-y\in \sum\limits_{\ell<n}\Rang({\bf h}^{\mathfrak e}_\ell)$ and
	${\bf f}_n\rest \Rang (h^{\mathfrak e}_n)$ is zero, so ${\bf f}_n(x-y)=0$. Hence
	${\bf f}_n(x)={\bf f}_n(y)$. However ${\bf f}_n(x)\notin \varphi^{\mathfrak e}_\omega
	(\modP^{\mathfrak e})$. Thus,
	${\bf f}_n(y)\notin\varphi^{\mathfrak e}_\omega (\modP^{\mathfrak e})$.
	Recall that $y\in \varphi^{\mathfrak e}_\omega
	(\modP^{\mathfrak e})$. Therefore, ${\bf f}_n(y)\in \varphi^{\mathfrak e}_\omega
	(\modP^{\mathfrak e})$. This   contradiction completes the proof of Lemma \ref{Pe bimodules defined}.
\end{proof}

\begin{lemma}
\label{1.12}
\begin{enumerate}
\item Let $x\in \modM_1$,
$ {\mathfrak e}   \in {\mathfrak E}$ and
$\varphi\in\{\tilde\varphi^{\mathfrak e}_\alpha,
\tilde\psi^{\mathfrak e}_\alpha:\alpha\leq
\omega\}$.   If  $\modM_1\leq_{\aleph_0}\modM_2$,     then
$$\modM_1\models\varphi(x) \iff
\modM_2\models\varphi(x).$$
\item If $\modM_\ast\leq_{\aleph_0}
\modM_1$ and ${\callE}'\subseteq
{\callE}$,
then there is an $\modM_2$ with the following properties:
\begin{enumerate}
\item $\modM_1\leq_{\aleph_0}\modM_2$,
\item $\|\modM_2\|\leq\|\modM_1\|+
\Vert \frakss \Vert
% 11.05.05 \sum\limits_{\scriptstyle \;
% 11.05.05 {\frak e}\in {\callE}', \atop\scriptstyle n<\omega}\|\modN^{\frak
% 11.05.05 e}_{n}\|
+|{\callE}'|$,
\item if $x\in \modM_1$, $n<\omega $ and ${\mathfrak e}\in {\callE}'$ then
 $$\modM_2 \models\varphi^{{\mathfrak e},\infty}_n(x)
\iff \modM_1\models\varphi^{\mathfrak e}_n[x],$$
\item $\modM_2$ is the free sum of $\{\modM_{2,i}:i<i^\ast\} \cup
\{\modM_1\}$ where each
$\modM_{2,i}$ isomorphic to some member of ${\cal K}$,
\item For each $\modN \in \cal K,$ there are
$||\modM_2||$-bimodules from  $\{\modM_{2,i}:i<i^\ast\}$
each of them being
% 11.05.05 if $\|\modM_2\| \geq ||\ringR||+||\ringS||+\sum \{||\modN||
% 11.05.05 :\modN\in {\cal K}\}$ we can add:
isomorphic to $\modN$.
\end{enumerate}
\end{enumerate}
\end{lemma}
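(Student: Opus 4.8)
\emph{The plan.} Part~(1) will be an abstract argument: the formulas $\varphi^{{\mathfrak e}}_n$ and $\psi^{{\mathfrak e}}_n$ were built in Lemma~\ref{aleph0 formula vs hom} so that their truth at $x$ in a (bi)module $\modM$ is equivalent to the truth of $\varphi^{{\mathfrak e},\infty}_n(x)$, respectively $\psi^{{\mathfrak e},\infty}_n(x)$, in \emph{some} $\leq_{\aleph_0}$-extension of $\modM$, and this condition transfers along $\leq_{\aleph_0}$ using amalgamation, transitivity, and preservation of positive existential formulas. Part~(2) will be an explicit construction: $\modM_2$ is $\modM_1$ plus a large direct sum of copies of the members of ${\cal K}$, and the only substantive point is the right-to-left half of clause~(c), which we obtain by pulling a witnessing homomorphism into $\modM_2$ via Lemma~\ref{modm is AEC}(6).

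\emph{Part (1).} Because $\varphi^{{\mathfrak e}}_\omega=\bigwedge_{n<\omega}\varphi^{{\mathfrak e}}_n$ and $\psi^{{\mathfrak e}}_\omega=\bigwedge_{n<\omega}\psi^{{\mathfrak e}}_n$, it is enough to take $\alpha=n<\omega$; we argue for $\varphi^{{\mathfrak e}}_n$, the case of $\psi^{{\mathfrak e}}_n$ being identical with $\psi^{{\mathfrak e},\infty}_n$ replacing $\varphi^{{\mathfrak e},\infty}_n$. Fix $\modM_1\leq_{\aleph_0}\modM_2$ and $x\in\modM_1$. If $\modM_1\models\varphi^{{\mathfrak e}}_n(x)$, use Lemma~\ref{aleph0 formula vs hom} to get $\modM_1'$ with $\modM_1\leq_{\aleph_0}\modM_1'$ and $\modM_1'\models\varphi^{{\mathfrak e},\infty}_n(x)$, and amalgamate $\modM_1\leq_{\aleph_0}\modM_1'$ with $\modM_1\leq_{\aleph_0}\modM_2$ over $\modM_1$ (Lemma~\ref{modm is AEC}(7)) to obtain $\modM'$ with $\modM_2\leq_{\aleph_0}\modM'$ receiving an embedding of $\modM_1'$ over $\modM_1$; since $\varphi^{{\mathfrak e},\infty}_n$ is conjunctive positive existential, that embedding preserves it (Lemma~\ref{preserving pe formulas}), so $\modM'\models\varphi^{{\mathfrak e},\infty}_n(x)$ and hence $\modM_2\models\varphi^{{\mathfrak e}}_n(x)$ by Lemma~\ref{aleph0 formula vs hom} again. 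Conversely, if $\modM_2\models\varphi^{{\mathfrak e}}_n(x)$, take $\modM_2'$ with $\modM_2\leq_{\aleph_0}\modM_2'$ and $\modM_2'\models\varphi^{{\mathfrak e},\infty}_n(x)$; by transitivity of $\leq_{\aleph_0}$ (Lemma~\ref{modm is AEC}(2)) we have $\modM_1\leq_{\aleph_0}\modM_2'$, so $\modM_1\models\varphi^{{\mathfrak e}}_n(x)$.

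\emph{Part (2).} Put $\mu:=\|\modM_1\|+\|\frakss\|+|\callE'|$, so $|{\cal K}|\leq\|\frakss\|\leq\mu$ and $\|\modN\|\leq\|\frakss\|\leq\mu$ for all $\modN\in{\cal K}$. Choose pairwise disjoint bimodules $\{\modM_{2,i}:i<i^\ast\}$ with $i^\ast=\mu$, disjoint from $\modM_1$, each isomorphic to a member of ${\cal K}$ and with every member of ${\cal K}$ occurring $\mu$ times up to isomorphism, and set $\modM_2:=\modM_1\oplus\bigoplus_{i<i^\ast}\modM_{2,i}$. Then $\|\modM_2\|\leq\mu$, giving (b), while (d) and (e) are immediate (with $\mu\geq\|\modM_2\|$ copies of each member of ${\cal K}$). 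For (a), in $\Game_{{\cal K},\aleph_0}^{\modM_1,\modM_2}$ player~II meets a finite move $A_n$ of player~I --- which lies in $\modM_1\oplus\bigoplus_{i\in\sigma}\modM_{2,i}$ for a finite $\sigma$ disjoint from the indices used before --- by playing $\modK_n:=\bigoplus_{i\in\sigma}\modM_{2,i}\in c\ell^{\aleph_0}_{\rm is}({\cal K})$; clauses (a)--(c) of Definition~\ref{ads definition} hold because $\modM_1$ and all the $\modM_{2,i}$ are in direct sum. For (c), ``$\Rightarrow$'' is immediate from (a) and Lemma~\ref{aleph0 formula vs hom} (take the extension to be $\modM_2$ itself). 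For ``$\Leftarrow$'', let $\modM_1\models\varphi^{{\mathfrak e}}_n[x]$; by Lemma~\ref{aleph0 formula vs hom} there are $\modM_1'$ with $\modM_1\leq_{\aleph_0}\modM_1'$ and an $\ringR$-homomorphism $g:\modN^{{\mathfrak e}}_n\to\modM_1'$ with $g(x^{{\mathfrak e}}_n)=x$. Well-order an $\ringR$-module generating set of $\Rang(g)$ of size $\leq\|\modN^{{\mathfrak e}}_n\|\leq\|\frakss\|$ that contains $x$, and iterate Lemma~\ref{modm is AEC}(6) along it, taking unions at limits (legitimate by Lemma~\ref{modm is AEC}(4),(5)); this produces a direct sum $\modK$ of at most $\|\frakss\|$ members of $c\ell_{\rm is}({\cal K})$ with $\Rang(g)\subseteq\modM_1\oplus\modK\subseteq\modM_1'$. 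Thus $g$ maps $\modN^{{\mathfrak e}}_n$ into $\modM_1\oplus\modK$, and since $\|\frakss\|\leq\mu$ and $\modM_2$ contains $\mu$ disjoint copies of every member of ${\cal K}$, there is an embedding $\modK\hookrightarrow\bigoplus_{i<i^\ast}\modM_{2,i}$, which together with ${\rm id}_{\modM_1}$ gives an embedding $e:\modM_1\oplus\modK\hookrightarrow\modM_2$; then $e\circ g:\modN^{{\mathfrak e}}_n\to\modM_2$ sends $x^{{\mathfrak e}}_n$ to $e(x)=x$, so $\modM_2\models\varphi^{{\mathfrak e},\infty}_n(x)$.

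\emph{The main obstacle.} The one point requiring care is the ``$\Leftarrow$'' direction of clause~(c): transporting the homomorphism $g$ out of the anonymous $\leq_{\aleph_0}$-extension $\modM_1'$ and into the concretely constructed $\modM_2$. This rests on the L\"owenheim--Skolem-type statement Lemma~\ref{modm is AEC}(6) (and its transfinite iteration) to see that $\Rang(g)$ already sits inside $\modM_1$ together with a direct sum of only $\leq\|\frakss\|$ members of $c\ell_{\rm is}({\cal K})$, and on having set up $\modM_2$ with $\mu\geq\|\frakss\|$ copies of every member of ${\cal K}$ so as to absorb that direct sum over $\modM_1$. The remaining verifications are routine bookkeeping with direct sums, together with the closure properties collected in Lemma~\ref{modm is AEC} and the preservation of positive existential formulas from Lemma~\ref{preserving pe formulas}.
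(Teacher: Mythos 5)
Your proposal is correct and takes essentially the same route as the paper: the paper's own proof simply declares clause (1) easy and, for clause (2), defines $\modM_2:=\modM_1\oplus\bigoplus_{\modN\in\mathcal K}\bigoplus_{i<\kappa}\modM_i^{\modN}$ with $\kappa=\|\modM_1\|+\|\frakss\|+|\callE'|$ and asserts the verification. You have supplied exactly that construction together with the omitted details (the amalgamation argument for (1) and the L\"owenheim--Skolem/absorption argument for the right-to-left half of (c)), and these check out.
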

\begin{proof}
  Clause (1) can be proved easily. To prove (2), let $\kappa:=|\modM_1\|+
\Vert \frakss \Vert
+|{\callE}'|$ and set
\[
\modM_2:= \modM_1 \oplus \bigoplus_{\modN \in \cal K} \bigoplus_{i<\kappa} \modM_i^{\modN},
\]
where each $\modM_i^{\modN}$ is isomorphic to $\modN.$ It is easily seen that $\modM_2$
is as required.
\end{proof}
We now introduce the notion of (semi) nice construction. This concept plays an important role for the
proof of our main results.
\begin{Definition}
\label{w.s. nice definition}
Suppose $\frakss=(\mathcal K, \modM_*, \calE, \ringR, \ringS, \ringT)$ is a context,
$\lambda \geq\kappa=\cf(\kappa)  > \kappa(\calE)$. Also, $ \lambda=\lambda^{\kappa}$ and for all $\alpha<\lambda$,
$ |\alpha|^{\aleph_0}<\lambda$. Here,
$\bar{\gamma}^*=\langle \gamma^*_\alpha:
\alpha\leq \kappa\rangle$
is increasing continuous
and $S\subseteq S^\kappa_{\aleph_0}$\footnote{where $S^\kappa_{\aleph_0}=\{\alpha < \kappa: \cf(\alpha)=\aleph_0     \}$.} is stationary such that $S^\kappa_{\aleph_0}\setminus S$ is stationary as well. Let also $\langle \mathfrak e_\alpha: \alpha < |\calE|    \rangle$ be a fixed enumeration of $\calE$ and $\emph{rep}(\cal K)=\langle   \modN^\beta: \beta < |\mathcal K/\cong|       \rangle$
be an enumeration of $\cal K$ up to isomorphism.

A \emph{weakly semi-nice construction}
$\mathscr A$ for $(\lambda, {\frakss},S,\bar \gamma^*)$
is a sequence $\bar \modM=\langle
\modM_\alpha:\alpha\leq\kappa \rangle$ together with the
other objects mentioned below satisfying the following
conditions
\begin{enumerate}
\item[(A)] $\modM_{\alpha} $ is a bimodule whose universe is the ordinal
$\gamma_\alpha^*$ for $\alpha\leq\kappa$.

\item[(B)] $\alpha<\beta\ \Rightarrow\ \modM_\alpha\subseteq
\modM_\beta$.
\item[(C)] if $\delta\leq\kappa$ is a limit ordinal, then
$\modM_\delta=\bigcup\limits_{
\alpha<\delta} \modM_\alpha$.
\item[(D)] $\modM_0=\modM_\ast$.
\item[(E)] for every $\alpha\in\kappa
\setminus S$, $\modM_{\alpha+1}=\modM_\alpha\oplus
\mathop{\bigoplus}\limits_{t\in J_\alpha} \modN^\alpha_t$
where
\begin{itemize}
  \item [(a)] $|J_\alpha|\leq\lambda$,
  \item [(b)] $J_\alpha \cap (\bigcup\limits_{\beta< \alpha}J_\beta) =\emptyset,$
  \item [(c)] $\cf (\gamma^*_{\alpha+1})<\lambda
\Leftrightarrow |J_\alpha|<\lambda$,
\item [(d)]  for every $t \in J_\alpha$ there is $\beta < |\mathcal K/\cong|$
such that $\modN_t^\alpha \cong \modN^\beta$, i.e., every $\modN_t^\alpha$ is isomorphic to some member of $\cal K,$

\item [(e)] for every $\beta < |\mathcal K/\cong|$ there are $||\modM_\alpha||$-many $t \in J_\alpha$ such that  $\modN_t^\alpha \cong \modN^\beta$.
\end{itemize}
By the assumption (b), for each $t \in \bigcup\limits_{\beta \leq \alpha}J_\beta$ there is a unique
$\beta \leq \alpha$ such that $t \in J_\beta$; so we may replace $\modN^\beta_t$ by $\modN_t.$ Also given
$t \in J_\alpha$, there is a unique $\modN_t^{\rm so} \in \emph{rep}(\cal K)$ such that $\modN_t^{\rm so} \cong \modN_t^\alpha$. Also, we use the notations
$h_t$ and  $h_{\alpha, t}: \modN_t^{\rm so} \stackrel{\cong}\longrightarrow \modN_t^\alpha$ for the mentioned isomorphism.

\item[(F)]
for $\delta\in S$, either the demand in
clause\footnote{
we can ignore this
first possibility as we can
just shrink $S$ if the result is stationary.}
 (E) holds
or else there are $\gamma^{**}_\delta<\delta$ and
\[
\langle ({\mathfrak e}_s, {\mathbb P}_s, \bar{\alpha}_s,
\bar{t}_s, \bar{g}_s, h_s, q_s) :s\in J_\delta\rangle
= \langle {\mathfrak e}^\delta_s,{\mathbb P}^\delta_s,
\bar{\alpha}^\delta_s,\bar{t}^\delta_s,\bar{g}^\delta_s,
h^\delta_s,q^\delta_s):s\in J_\delta\rangle
\]
%\[\begin{array}{ll}
%\langle ({\mathfrak e}_s, {\mathbb P}_s, \bar{\alpha}_s,
%\bar{t}_s, \bar{g}_s, h_s, q_s) :s\in J_\delta\rangle \\
%= \langle {\mathfrak e}^\delta_s,{\mathbb P}^\delta_s,
%\bar{\alpha}^\delta_s,\bar{t}^\delta_s,\bar{g}^\delta_s,
%h^\delta_s,q^\delta_s):s\in J_\delta\rangle
%\end{array}\]
such that  $J_\delta \cap (\bigcup_{\alpha < \delta}J_\alpha)=\emptyset$ and
\begin{enumerate}
\item[(a)]
 ${\mathfrak e}_s\in \calE$ and in fact
${\mathfrak e}_s\in \{{\mathfrak e}_\beta:\beta<\gamma^{**}_\delta$
and $\beta<|\calE|\}$,
\item[(b)]
$\bar{\alpha}_s=\langle \alpha_{s,n}:n<\omega\rangle$,
\item[(c)]
$\bar{t}=\langle t_{s,n}:n<\omega\rangle$,
\item[(d)] $\langle \alpha_{s,n}:n<\omega\rangle$ is an increasing sequence of ordinals bigger
than  $\gamma^{**}_\delta$ such that $\alpha_{s, n} \notin S$,

\item[(e)]
$\delta=\sup\{\alpha_{s,n}:n<\omega\}$,

\item[(f)]
$t_{s,n}\in J_{\alpha_{s,n}}$,

\item[(g)]
$\modN^{so}_{t_{s,n}}= \modN^{{\mathfrak e}_s}_n$,

\item[(h)]
if $s_1\not= s_2$ are in  $J_\delta$ then the
sets
$$
\{t_{s_1,n}:n<\omega\}, \{t_{s_2,n}:n<\omega\}
$$
are tree-like, i.e.,
\begin{itemize}
\item $\{t_{s_1,n}:n<\omega\} \cap \{t_{s_2,n}:n<\omega\}$
is finite,

\item
if $t_{s_1,n_1}=t_{s_2,n_2}$ then $n_1=n_2$ and
$\bigwedge\limits_{n<n_1} t_{s_1,n}=t_{s_2,n}$,
\end{itemize}
\item[(i)]
$\bar{g}_s=\langle g_{s,n}:n<\omega\rangle$,
where each $g_{s,n}$ is a (bimodule) \homo\ from
$\modN ^{{\mathfrak e}_s}_n$
into $\modM_{\gamma^{**}_\delta}$,

\item[(j)]
for $s\in J_\delta$, $ h_s$ is a bimodule homomorphism
from $\modP^{{\mathfrak e}_{s}}$\footnote{where $\modP^{{\mathfrak e}_{s}}$   is as in Lemma \ref{Pe bimodules defined}.} onto $\modP^\delta_s$, where $\modP^\delta_s$ is a
sub-bimodule of $\modM_{\delta+1}$,

\item[(k)] the following diagram commutes for $n< \omega$, $s\in
J_\delta$ and ${\mathfrak e}={\mathfrak e}_{s}$
(where ${\mathbb P}^{\mathfrak e}, {\bf h}^{\mathfrak e}_n$ are from Lemma \ref{Pe bimodules defined}
and the $ h_{t_{s,n } } $ are from clause (E)):

\begin{picture}(300,150)
\put(40,60){$(h_{t_{s,n}}+g^\delta_{s,n})$}
\put(120,0){\begin{picture}(150,150)
\put(0,25){$\modN^{\mathfrak e}_{n}$}
\put(5,45){\vector(0,1){35}}
\put(-15,85){$\modM_\delta\cap \modP^\delta_s$}
\put(30,90){\vector(1,0){45}}
\put(50,100){\rm id}
\put(90,85){$\modP^\delta_s$}
\put(95,45){\vector(0,1){35}}
\put(90,25){$\modP^{\mathfrak e}$}
\put(45,10){${\bf h}^{\mathfrak e}_n$}
\put(110,60){$h^\delta_s$}
\put(30,28){\vector(1,0){45}}
\end{picture}}
\end{picture}

\item[(l)] for $s \in J_\delta$ we have
\begin{itemize}
\item[(a)] $\modP^\delta_s\subseteq \modM_{\delta+1}$,

\item[(b)] $\modM_{\delta+1}$ is generated by
$\modM_\delta\cup\bigcup\limits_{s\in J_\delta}
\modP^\delta_s$,

\item[(c)] $\modP^\delta_s$, $\langle \modM_\delta\cup
\bigcup\limits_{s'\neq s}
\modP^\delta_{s'}\rangle_{\modM_{\delta+1}}$
are amalgamated freely over
$$\sum\limits_{n<\omega}\Rang(h^\delta_s  \circ
{\bf h}^{{\mathfrak e}_s}_n),$$
\end{itemize}

\item[(m)] for $s\in J_\delta$ the type $ q_s$ with the free variable $y$ has the form
$\{\varphi^{\mathfrak e}_n (y-z_{s,n}):n<\omega\}$ where
${\mathfrak e}\in \callE$ and
$z_{s,n}\in \modM_\delta$,
\item[(n)] $q_s$ is omitted by
$\modM_\alpha$ for $\alpha\in [\delta,\kappa]$,
i.e. there is no $ y \in \modM_\alpha$
such that for all $n<\omega,~\modM_\alpha \models \varphi^{\mathfrak e}_n (y-z_{s,n}).$
\end{enumerate}
\end{enumerate}
\end{Definition}
In abuse of notation we sometimes use
$\langle \modM_\alpha:
\alpha\leq\kappa\rangle$ as being
$ {\mathscr A} $,
we of course may write
$\modM_\alpha^ {\mathscr A}
, J^  {\mathscr A}
_\alpha$
etc. and even $\kappa=\kappa^ {\mathscr A},
\gamma_\alpha=
\gamma_\alpha^{\mathscr A},
\lambda=\lambda^{\mathscr A}$.
We now define various refinements  of the above concept:
\begin{Definition}
\label{nice construction modified}
Using the notation of Definition \ref{w.s. nice definition},
\begin{enumerate}
\item
\begin{enumerate}
\item[(a)] By a weakly semi-nice construction for $(\lambda,
{\mathfrak m},S,\kappa)$ we mean a weakly semi-nice construction for
$(\lambda,{\mathfrak m},S,\bar \gamma^*)$
where
one of the following occurs:

$ \bullet $   $\kappa=\cf(\lambda),~
(\forall \alpha<\kappa)
(\gamma^*_  \alpha
<\lambda)$ and if $ \alpha < \beta < \kappa$ then $\gamma^*_{\alpha +1 } - \gamma^*_\alpha
\le \gamma^*_{\beta  +1 } - \gamma^*_\beta$,

$ \bullet $  $\kappa\not= \cf(\lambda)$ and
$(\forall \alpha<\lambda)$ we have
$\gamma^*_\alpha
+\lambda\leq \gamma^*_{\alpha+1}<(\lambda^{\aleph_0})^+$.

\item[(b)]
if we omit $\kappa$, i.e., write $(\lambda,{\mathfrak m},
S)$ we mean $\kappa=\cf(\lambda)$.
\end{enumerate}
\item
We omit ``semi'' from ``weakly semi-nice construction
${ \mathscr A} $'' if whenever $\delta \in S$ and $s_1$, $s_2\in
J_\delta$  then $\bar g^\delta_{s_1}=
\bar g^\delta_{s_2}$ and $
{\mathfrak e}_{s_1}={\mathfrak e}_{s_2}$.

\item
We omit ``weakly'' from ``weakly semi-nice construction
${ \mathscr A} $'' if
we add
\begin{enumerate}
\item[$(G)_1$] if ${\bf f}$ is an endomorphism of $\modM_\kappa$ as an
$\ringR$-module,
${\mathfrak e}\in \callE^\frakss, \gamma
<\kappa$ and
${\bf g_n}$ is a
(bimodule)
homomorphism from $\modN^{\mathfrak e}_n$
into $\modM_\gamma$ for each $n=1,2,\ldots$, then there are
$y\in \modM_\kappa$, $z\in \modP^{\mathfrak e}$ and
$\{z_{n, i, \ell}
:n, \ell<\omega, i<2\}$ with $z_{n,i, \ell} \in \dom ({\bf g_\ell})=\modN^{\mathfrak e}_\ell$
such that for each $n$ and all large enough $\ell, z_{n, i, \ell}=0$ and
for  all large enough $n$,
\begin{enumerate}
\item[(a)]
$z \in \sum\limits_{\ell < \omega}{\bf h}_\ell^{\mathfrak e}(z_{n, 1,\ell}-z_{n, 0, \ell})+\varphi_n^{\mathfrak e}(\modP^{\mathfrak e}).$

\item[(b)]
$\sum\limits_{\ell < n}{\bf f}({\bf g}_\ell(x_\ell^{\mathfrak e})) \in y +\sum\limits_{\ell<\omega} {\bf g}_{\ell}(z_{n,1, \ell}) +\varphi_n^{\mathfrak e}(\modM_{\kappa}).$

\end{enumerate}

\item[$(G)_2$] if ${\mathfrak e}\in \calE^\frakss$ and
for $n<\omega$ and $\alpha\in \kappa\smallsetminus S$,
${\bf h}_{\alpha,n}$
is an embedding of $\modN^{\mathfrak e}_n$ into
$\modM_\kappa$
\st\ $\modM_\alpha+\Rang ({\bf h}_{\alpha,n})
=\modM_\alpha \oplus \Rang
({\bf h}_{\alpha,n})\leq_{\aleph_0} \modM_\kappa$,
then there are an embedding ${\bf h}$ of
$\modP^{\mathfrak e}$ into $\modM_\kappa$ and ordinals $\alpha_n
\in \kappa\smallsetminus S$ for $n=1,2,\ldots$ \st\ $${\bf h}_{\alpha_n, n}={\bf h} \circ {\bf h}^{\mathfrak e}_n$$ for each such
$n$.
% 11.05.05 \item We say that $\bar{\modM}$
% 11.05.05 (or $\mathscr A$) a (weakly semi-nice)
% 11.05.05% 11.05.05  construction for $(\lambda,S,\frakss,\kappa)$), is
% 11.05.05 -fat when
% 11.05.05
% 11.05.05 if ${\frak e}\in \calE^\frakss$ and $g_n$ is
% 11.05.05 ) \homo\ of $\modN^{\frak e}_n$ into
% 11.05.05 M_\kappa$ for $n\in [1,\omega]$ and ${\bf f}$ is
% 11.05.05 ism of $\modM_\kappa$ as an $\ringR$-module
% 11.05.05 some $y\in \modM_\kappa$ we have $\models
% 11.05.05 rak e}_n (y-y_n {\bf f})$ for $n<\omega$
% 11.05.05
% 11.05.05
% 11.05.05 \item $\calE$-fat mean ${\frak e}$-fat for every ${\frak e}
% 11.05.05 \in {\cal E}$ and fat mean $\calE^\frakss$-fat
% 11.05.05
% 11.05.05 (note: if $\lambda=\kappa=\cf (\lambda),$ those are properties of
% 11.05.05 $\modM_\lambda$ not of $\bar{\modM}$ or $\mathscr A$)
% 11.05.05 \end{enumerate}
% 11.05.05
\end{enumerate}
\item $\bar\modM = \langle\modM_\alpha: \alpha \leq \kappa\rangle$ is strongly semi-nice construction for $(\lambda, \frakss, S, \kappa)$ if ${\bf f}$ is an $\ringR$-endomorphism of $\modM_\kappa$
  and $\mathfrak e \in {\mathfrak E}   ^{{\frakss}},$ then there are $n(\ast) < \omega, \alpha < \lambda$ and $z \in \modN_{n(\ast)}^{\mathfrak e}$ such that for every bimodule homomorphism $h: \modN_{n(\ast)}^{\mathfrak e} \to \modM_\kappa,$
  $${\bf f}(h(x_n^{\mathfrak e})) - h(z) \in \modM_\alpha+\varphi_\omega^{\mathfrak e}(\modM_\kappa).$$

\item We say
 $\modM$ is   weakly semi-nice (resp. semi-nice) for
$(\lambda,S,\frakss,\kappa)$ if  for some weakly semi-nice (resp. semi-nice) construction
$\bar \modM= \langle \modM_\alpha: \alpha \leq \kappa \rangle$ for $(\lambda,S,\frakss,\kappa)$
we have
$\modM=\modM_\kappa$. If we omit $S$ we mean for some $S \subseteq S^\kappa_{\aleph_0}$,
similarly for $\lambda$ and $\kappa.$
\end{enumerate}
\end{Definition}

\begin{lemma}
\label{properties of w.s.n.c}
Let $\langle \modM_\alpha:\alpha\leq\kappa\rangle$ be a semi-nice
construction for
$(\lambda,{\frakss},S,\bar \gamma^*)$
and $ \kappa ( {\mathfrak E}
 ^ {{\frakss}} ) = {\aleph_0}$,
(not used in part (1)).
\begin{enumerate}
\item if $\alpha\notin S$ and $\alpha
\leq \beta\leq\kappa$, then
$\modM_\alpha\leq_{\aleph_0}
\modM_\beta$, i.e.,
$\modM_\alpha$ is an almost direct ${\cal K}$-summand of
$\modM$ with respect to $\aleph_0$.

\item  If $\alpha\leq \beta\leq\kappa$ and $n<\omega$ then
$\varphi^{\mathfrak e}_n (\modM_\alpha)=\modM_\alpha\cap
\varphi^{\mathfrak e}_n (\modM_\beta)$.
\item Assume
$ {\mathfrak e}   \in {\mathfrak E}   ^{{\frakss}}$,
$\delta\in S$ and clause (F) of Definition \ref{w.s. nice definition} holds for
$\delta$. Suppose $s_0,\ldots, s_{k(*)-1}$ are distinct members of
$J_\delta$. Then
\begin{enumerate}
\item[(a)]
for each $n$, the set
 $\varphi^{\mathfrak e}_n (\langle \modM_\delta
\cup \bigcup
\{\modP_{s_k}^{\mathfrak e}:k<k(*)i  \}
\rangle_{\modM_{\delta+1}})$ is equal to $$
\langle \modM_\delta\cup \bigcup\{\modP^{\mathfrak e}_{s_k}:k<k(*)\}
\rangle_{\modM_{\delta+1}}
)
\cap \varphi^{\mathfrak e}_n (\modM_\kappa)\rangle.$$

\item[(b)]
 Suppose $z_\ell \in \modP^{\mathfrak e}$ for $\ell<n, z \in \modM_\delta$,
and $z+\sum\{h^\delta_{s_k}(z_k):k<k(*)\}\in \varphi_n
(\modM_{\delta+1})$. Then, there is  $z'_k\in \sum
\{\rRang(h^{\mathfrak e}_\ell):\ell<\omega\}$ \st\
the following two properties hold:
\begin{enumerate}
	\item[ $(b.1)$]:
$z_k-z'_k\in \varphi_n (\modP^{\mathfrak e})$ and
\item[$(b.2)$ ]: $z+\sum \{
h^\delta_{s_k}(z'_k):k<k(*)\}\in \varphi_n (\modM_{\delta+1})$.
\end{enumerate}
\item[(c)]
 $\langle \modM_\delta
\cup\{ \modP^{\mathfrak e}_{s_k}:k
<k(*)\}\rangle_{\modM_{\delta+1}} \leq^{\rm pr} _{\varphi_n^{\mathfrak e}}
 \modM_{\delta+1}$.
\item[(d)]
$\varphi^{\mathfrak e}_n (\modM_{\delta+1})=
\varphi^{\mathfrak e}_n (\modM_\delta)+\sum \{\varphi^{\mathfrak e}_n
(\modP^{\mathfrak e}_s):s\in J_\delta\}$.
\end{enumerate}
\end{enumerate}
\end{lemma}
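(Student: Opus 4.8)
The idea is to reduce all four items to the behaviour of the simple $\mathrm{cpe}$-formulas $\varphi^{\mathfrak e}_n$ of Lemma~\ref{aleph0 formula vs hom} (which by Lemmas~\ref{reducing to simple formula} and~\ref{formula vs hom} we may take to be simple) under the two kinds of one-step extension occurring in Definition~\ref{w.s. nice definition}: the ``direct sum'' step of clause (E) and the ``free amalgamation / type omission'' step of clause (F). Before starting I would record, for use throughout, that $\varphi^{\mathfrak e}_n$ is preserved by arbitrary $\ringR$-homomorphisms (Lemma~\ref{preserving pe formulas}), that it distributes over direct sums, $\varphi^{\mathfrak e}_n(\bigoplus_t\modM_t)=\bigoplus_t\varphi^{\mathfrak e}_n(\modM_t)$ (Claim~\ref{pr properties}(5), Lemma~\ref{formulas and direct sum}), and that, since $\kappa(\mathfrak E^\frakss)=\aleph_0$, one has enough control of its existential witnesses that $\varphi^{\mathfrak e}_n$ commutes with the increasing unions in the construction, i.e.\ $\varphi^{\mathfrak e}_n(\bigcup_{\gamma<\delta}\modM_\gamma)=\bigcup_{\gamma<\delta}\varphi^{\mathfrak e}_n(\modM_\gamma)$.

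For part (1) I would induct on $\beta\ge\alpha$. The base case $\beta=\alpha$ is trivial, and at a limit $\beta$ one applies Lemma~\ref{modm is AEC}(4) to the $\leq_{\aleph_0}$-increasing continuous chain $\langle\modM_\gamma:\alpha\le\gamma<\beta\rangle$ supplied by the induction hypothesis together with transitivity (Lemma~\ref{modm is AEC}(2),(5)). For a successor $\beta=\gamma+1$ with $\gamma\notin S$, clause (E) gives $\modM_{\gamma+1}=\modM_\gamma\oplus\bigoplus_{t\in J_\gamma}\modN_t$ with each $\modN_t\in c\ell_{\rm is}({\cal K})$, so $\modM_\gamma\leq_{\aleph_0}\modM_{\gamma+1}$ (in the game $\Game^{\modM_\gamma,\modM_{\gamma+1}}_{{\cal K},\aleph_0}$ player II returns the finitely many summands touched by player I), and composing with $\modM_\alpha\leq_{\aleph_0}\modM_\gamma$ finishes. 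For $\gamma\in S$ with clause (F), $\modM_{\gamma+1}$ is generated over $\modM_\gamma$ by the $\modP^\delta_s$, each a homomorphic image of the bimodule $\modP^{\mathfrak e_s}$ of Lemma~\ref{Pe bimodules defined} and hence, up to $\leq_{\aleph_0}$, a direct sum of ${\cal K}$-bimodules, with the amalgamation free over $\sum_n\Rang(h^\delta_s\circ{\bf h}^{\mathfrak e_s}_n)$ (clause (F)(l)); player II's winning strategy is then her strategy for $\modM_\alpha\leq_{\aleph_0}\modM_\gamma$ combined with local decomposition of the finitely many $\modP^\delta_s$'s mentioned by player I, the freeness of the amalgam guaranteeing clause (c) of Definition~\ref{ads definition}. (The hypothesis $\alpha\notin S$ is what lets $\modM_\alpha$ enter the chain at a ``direct-summand'' stage.)

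Part (3) is the technical heart, and I would prove it in the order (b)$\,\Rightarrow\,$(c), then (d), then (a); part (2) is obtained alongside, by induction on $\beta$. Write $\varphi_n=\varphi^{\mathfrak e}_n$ as $\exists\bar y\,\bigwedge_j[\sigma_j(\bar y,x)=0]$. For (b), assume $z\in\modM_\delta$, $z_k\in\modP^{\mathfrak e}$ for $k<k(\ast)$, and $z+\sum_k h^\delta_{s_k}(z_k)\in\varphi_n(\modM_{\delta+1})$, witnessed by a tuple $\bar y$ in $\modM_{\delta+1}$. Because $\modM_{\delta+1}$ is the free amalgam of the $\modP^\delta_s$'s with $\modM_\delta$ along the overlaps $\sum_m\Rang(h^\delta_s\circ{\bf h}^{\mathfrak e_s}_m)$ (clause (F)(l)) and the guessing patterns $\{t_{s,m}:m<\omega\}$ are tree-like (clause (F)(h)), one decomposes each witness $y_i$ into an $\modM_\delta$-component plus components lying inside the individual $\modP^\delta_{s_k}$; feeding this into Lemma~\ref{formulas and direct sum} (read along the free amalgam, after pulling the $\modP^\delta_{s_k}$-parts back through $h^\delta_{s_k}$) separates the system $\sigma_j=0$ into a part solved over $\modM_\delta$ and, for each $k$, a part solved in $\modP^{\mathfrak e}$ near $z_k$. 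Using $\modP^{\mathfrak e}=\bigoplus_{\ell<m}\Rang({\bf h}^{\mathfrak e}_\ell)\oplus\modK_m$ from Lemma~\ref{Pe bimodules defined}(b),(d) for $m$ large relative to $n$, the $\varphi_n$-relevant part of each $z_k$ is pushed into $\sum_\ell\Rang({\bf h}^{\mathfrak e}_\ell)$ modulo $\varphi_n(\modP^{\mathfrak e})$, yielding $z'_k$ with $z_k-z'_k\in\varphi_n(\modP^{\mathfrak e})$ and $z+\sum_k h^\delta_{s_k}(z'_k)\in\varphi_n(\modM_{\delta+1})$. Item (c) is then immediate: any $\bar w$ in $\mathbf Q:=\langle\modM_\delta\cup\bigcup_k\modP^{\mathfrak e}_{s_k}\rangle_{\modM_{\delta+1}}$ with $\modM_{\delta+1}\models\varphi_n(\bar w)$ is written through $\modM_\delta$ and the $\modP^\delta_{s_k}$, and (b) relocates all witnesses inside $\mathbf Q$. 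Item (d) follows from (c) by taking $\{s_0,\dots,s_{k(\ast)-1}\}$ to contain the finitely many $s\in J_\delta$ that a given element of $\varphi_n(\modM_{\delta+1})$ involves (only finitely many occur, $\modM_{\delta+1}$ being a free amalgam), plus Lemma~\ref{formulas and direct sum}. For part (2): at successors the $\gamma\notin S$ case is the direct-sum identity $\varphi^{\mathfrak e}_n(\modM_\gamma\oplus\bigoplus\modN_t)=\varphi^{\mathfrak e}_n(\modM_\gamma)\oplus\bigoplus\varphi^{\mathfrak e}_n(\modN_t)$, the $\gamma\in S$ case is (c) with empty $\modP$-list (equivalently, (b) directly), and the limit case is the ``finitely many witnesses'' remark; item (a) is then $\varphi_n(\mathbf Q)=\mathbf Q\cap\varphi_n(\modM_{\delta+1})=\mathbf Q\cap\varphi_n(\modM_\kappa)$, using (c) and part (2) applied on $[\delta+1,\kappa]$ (note $\delta+1\notin S$).

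The step I expect to be the main obstacle is the ``separation of witnesses'' inside (3)(b): showing that a solution of the finite system $\sigma_j(\bar y,\cdot)=0$ in the free amalgam $\modM_{\delta+1}$ can be replaced by one in which each $y_i$ splits as an $\modM_\delta$-part plus parts purely in the individual $\modP^\delta_{s_k}$, and that the tree-likeness of clause (F)(h) together with the commuting squares of clauses (F)(k),(l) genuinely forbids ``cross terms'' between distinct $s_k$'s. This is exactly where Lemma~\ref{formulas and direct sum} and the fine structure of the construction must be combined, and getting the bookkeeping right --- in particular the interplay between the $\modP^{\mathfrak e}$-decomposition of Lemma~\ref{Pe bimodules defined} and the amalgam --- is the delicate part.
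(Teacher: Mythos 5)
Your proposal is correct in substance and rests on the same two mechanisms as the paper's proof: the free amalgamation of clause (F)(l) to separate contributions from $\modM_\delta$ and the individual $\modP^\delta_{s_k}$, and the decomposition $\modP^{\mathfrak e}=\bigoplus_{\ell<h}\Rang({\bf h}^{\mathfrak e}_\ell)\oplus\Rang({\bf f}^{\mathfrak e}_h)$ of Lemma~\ref{Pe bimodules defined} to push the $\varphi_n$-relevant part of each $z_k$ into $\sum_\ell\Rang({\bf h}^{\mathfrak e}_\ell)$. Your treatment of (1) and (2) matches the paper's (the paper derives (2) for $\alpha\notin S$ from (1) plus Lemma~\ref{comparing two orders}, and for $\alpha\in S$ reduces to the single step $\beta=\alpha+1$ and then intersects down the chain --- essentially your induction). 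The one genuine difference is the logical order inside part (3): you take (b) as primitive and claim (c), (d), (a) follow, whereas the paper declares (a), (c), (d) to be direct consequences of Definition~\ref{w.s. nice definition}(F) and then \emph{uses} them to prove (b) --- first writing $z+\sum_k h^\delta_{s_k}(z_k)$ as $y+\sum_k y_k$ with $y\in\varphi_n(\modM_\delta)$, $y_k\in\varphi_n(\modP^{\mathfrak e}_{s_k})$, pulling $y_k$ back through $h^\delta_{s_k}$, and using distinctness of the $s_k$ to conclude $z_k-x_k\in\varphi_n(\modP^{\mathfrak e})$ before invoking the $\modP^{\mathfrak e}$-decomposition. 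Be aware that your stated implication ``(b) $\Rightarrow$ (c)'' does not go through as written: (b) relocates the \emph{element} $z_k$ modulo $\varphi_n(\modP^{\mathfrak e})$ but its conclusion (b.2) only asserts membership in $\varphi_n(\modM_{\delta+1})$, not that the existential witnesses lie in the subalgebra $\mathbf Q$, which is what (c) requires. However, the ``separation of witnesses'' argument you identify as the main obstacle is precisely what establishes (c) (and (a), (d)) directly from the free amalgam, so once that is carried out your route and the paper's coincide; the paper simply hides that step behind the word ``easily''.
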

\begin{proof}
  (1). We proceed  by induction on $\beta \geq \alpha$. The straightforward details leave to  reader.

 (2). If $\alpha \notin S,$ then the conclusion follows from (1) and Lemma \ref{comparing two orders}. Now suppose that $\alpha \in S.$ By Definition \ref{w.s. nice definition}(F), the result holds if $\beta=\alpha+1.$ But then for any $\beta> \alpha$ (and since $\alpha+1 \notin S$),
 we have
 \[
 \modM_\alpha\cap
 \varphi^{\mathfrak e}_n (\modM_\beta)= \modM_\alpha \cap (\varphi^{\mathfrak e}_n (\modM_\beta) \cap \modM_{\alpha+1})=\varphi^{\mathfrak e}_n (\modM_{\alpha+1}) \cap \modM_\alpha = \varphi^{\mathfrak e}_n (\modM_\alpha).
 \]
 (3). Items (a), (c) and (d) can be proved easily by Definition \ref{w.s. nice definition}(F). Let us prove (b). To this end, take $z_\ell \in \modP^{\mathfrak e}$ for $\ell<k(*), z \in \modM_\delta$,
 and $z+\sum\{h^\delta_{s_k}(z_k):k<k(*)\}\in \varphi_n
 (\modM_{\delta+1})$.

In the light  of (c) we observe that $$z+\sum\{h^\delta_{s_k}(z_k):k<k(*)\}\in \varphi^{\mathfrak e}_n (\langle \modM_\delta
 \cup\{ \modP^{\mathfrak e}_{s_k}:k
 <k(*)\}\rangle_{\modM_{\delta+1}}).$$ Let $k< k(*)$. We now apply the items $(a)$, $(c)$ and $(d)$
 to   find $y \in \varphi^{\mathfrak e}_n (\modM_\delta)$
 and $y_k \in   \varphi^{\mathfrak e}_n(\modP^{\mathfrak e}_{s_k})$   such that
 \[
 z+\sum\{h^\delta_{s_k}(z_k):k<k(*)\}=y+\sum\{y_k:k<k(*)\}.
 \]
 Since $ \varphi^{\mathfrak e}_n(\modP^{\mathfrak e}_{s_k})$ is the image of $\varphi^{\mathfrak e}_n(\modP^{\mathfrak e})$ under $h_{s_k}^{\delta}$, we have
 $y_k= h_{s_k}^\delta(x_k)$ for some $x_k \in \varphi^{\mathfrak e}_n(\modP^{\mathfrak e}).$
 Then
 \[
 \sum\{h^\delta_{s_k}(z_k-x_k):k<k(*)\}=y-z \in \varphi^{\mathfrak e}_n(\modM_\delta).
 \]
 As $s_0, \cdots, s_{k(*)-1}$ are chosen distinct, we deduce that
 \[
 h^\delta_{s_k}(z_k-x_k) \in \Rang(h^\delta_{s_k}) \cap \varphi^{\mathfrak e}_n(\modM_\delta)= \varphi^{\mathfrak e}_n(\Rang(h^\delta_{s_k})),
 \] where $k<k(*).$
Hence, $z_k-x_k \in \varphi^{\mathfrak e}_n(\modP^{\mathfrak e})$.

Let $h<\omega$ be such that for each $k<k(*)$,
 \[\begin{array}{ll}
 x_k &\in \varphi^{\mathfrak e}_n\left(\bigoplus\limits_{\ell\leq h}(\Rang({\bf f}^{\mathfrak e}_n))\right)\\
 &= \bigoplus\limits_{\ell\leq h} \varphi^{\mathfrak e}_n(\Rang({\bf f}^{\mathfrak e}_\ell))\\
 &=\bigoplus\limits_{\ell < h} \varphi^{\mathfrak e}_n(\Rang({\bf h}^{\mathfrak e}_\ell))
 \oplus \varphi^{\mathfrak e}_n(\Rang({\bf f}^{\mathfrak e}_h)).
 \end{array}\]
 Thus, for some $z'_k \in \bigoplus\limits_{\ell < h} \varphi^{\mathfrak e}_n(\Rang({\bf h}^{\mathfrak e}_\ell))$, we have
 \[
 x_k - z'_k \in \varphi^{\mathfrak e}_n(\Rang({\bf f}^{\mathfrak e}_h)) \subseteq \varphi^{\mathfrak e}_n(\modP^{\mathfrak e}).
 \]
 It then follows that
 \[
 z_k-z'_k=(z_k- x_k)+ (x_k - z'_k) \in \varphi^{\mathfrak e}_n(\modP^{\mathfrak e}) +\varphi^{\mathfrak e}_n(\modP^{\mathfrak e})=\varphi^{\mathfrak e}_n(\modP^{\mathfrak e}).
 \]
 We now show that $w:=z+\sum\{h^\delta_{s_k}(z'_k):k<k(*)\} \in \varphi_n
 (\modM_{\delta+1}).$ Indeed,
 \[\begin{array}{ll}
 w&=z+\sum\{h^\delta_{s_k}(z'_k-z_k):k<k(*)\}+\sum\{h^\delta_{s_k}(z_k):k<k(*)\}\\
 &=(z+   \sum\{h^\delta_{s_k}(z_k):k<k(*)\})+\sum\{h^\delta_{s_k}(z'_k-z_k):k<k(*)\}\\
 &\in \varphi_n
 (\modM_{\delta+1})+ \sum\{ \varphi^{\mathfrak e}_n(\modP^{\delta}_{s_k}): k < k(*)  \}\\
 &=\varphi_n
 (\modM_{\delta+1}).
 \end{array}\]
 This completes the proof of (b) and hence of the lemma.
\end{proof}
The main result of this section is that under suitable  assumptions on  $(\lambda, {\frakss},S,\bar \gamma^*)$,
there is, in $\text{ZFC},$ a  semi-nice construction for it. Before, we state and prove our result, let us show that
under extra set theoretic assumptions we can get a stronger result.

\begin{Definition}
\label{3.3bis}
Suppose $\lambda=\cf(\lambda)>|\ringR|+|\ringS|+\aleph_0$, $S\subseteq S^\lambda_{\aleph_0}$ is stationary and non-reflecting\footnote{$S$ is non-reflecting if for all limit ordinals $\xi < \lambda$ of uncountable cofinality, $S \cap \xi$ is non-stationary.}
such that $S^\lambda_{\aleph_0} \setminus S$ is stationary as well.
Then $\bar\modM=\langle \modM_\alpha:\alpha\leq\lambda\rangle$ is called very nicely
constructed for $(\lambda,\frakss, S)$ if:
\begin{enumerate}
\item Clauses (A)-(E) of Definition \ref{w.s. nice definition} holds,

\item Each $J_\alpha$
is singleton,
\item For each $\modN\in {\cal K}$, for stationary many
$\alpha\in \lambda\setminus S$, $\modN$ appears as one of the summands of $\modM_{\alpha+1}$,

\item Replace clause (F) of Definition \ref{w.s. nice definition} by the following modification:\\
For $\delta\in S$, $\modM_{\delta+1}$
is defined either as in
(E), or else
there are  an infinite $\cal U \subseteq \omega$ and the sequences $\langle\alpha_n: n<
\omega\rangle$ and $\langle \beta_m(\alpha_n): m \in \cal U \rangle$ and $\langle  h_{\alpha_n, n}: n \in \cal U    \rangle$  such that
\begin{enumerate}
\item each  $\alpha_n\in \lambda \setminus S$,

\item $\langle\alpha_n: n<
\omega\rangle$ is increasing and cofinal in $\delta,$

 \item ${\mathfrak e}_\delta={\mathfrak e}$,

\item for each $n<\omega$ and $m \in \cal U$, $\alpha_n <\beta_m(\alpha_n) < \alpha_{n+1}$ is in $\lambda \setminus S$,

 \item for $n \in \cal U$, $h_{\alpha_n, n}: \modN_n^{\mathfrak e} \to \modM_{\beta_n(\alpha_n)}$ is a bimodule homomorphism and
 $\modM_{\alpha_n}+\Rang(h_{\alpha_n, n})=\modM_{\alpha_n}\oplus \Rang(h_{\alpha_n, n}) \leq_{\aleph_0}\modM_{\beta_m(\alpha_n)}$.

\item Set $\modP_{\cal U}: = \bigoplus_{n \in \cal U}\Rang({\bf f}_n^{\mathfrak e})$ and $\modP_{{\cal U}, n}:=\sum\limits_{\ell \in \cal U \cap n}\Rang({\bf h}^{\mathfrak e}_\ell)$. Recall from  Lemma \ref{Pe bimodules defined} the bimodule homomorphisms
 ${\bf h}^{\mathfrak e}_n:\modN^{\mathfrak
	e}_n\longrightarrow
\modP^{\mathfrak e}$ and ${\bf f}^ {\mathfrak e}:\modN^{\mathfrak e}_0\longrightarrow
\modP^{\mathfrak e}$.
We define $\modN^\ast_\delta:=\sum\limits_{n\in {\cal U}} h_{\alpha_n, n}(\modN_n)$.
Then
 $\modP^{\mathfrak e}$ is isomorphic to $\modP_{\cal U}$ by
an isomorphism $h_\delta$ such that the following  diagram  commutes (here, $n=i(m)=m$-th member of
$\cal U$):
\[
\begin{picture}(150,150)
\put(0,60){${\bf h}^{\mathfrak e}_m$}
\put(10,0){\begin{picture}(150,150)
\put(0,25){$\modP_{\cal U}$}
\put(5,75){\vector(0,-1){35}}
\put(0,85){$\modN_n$}
\put(30,90){\vector(1,0){35}}
\put(40,100){$h_{\alpha_n, n}$}
\put(80,85){$h_{\alpha_n, n}(\modN_n)$}
\put(95,75){\vector(0,-1){35}}
\put(90,25){$\modP^{\mathfrak e}$}
\put(45,10){$h_\delta$}
\put(110,60){$\mbox{ id }$}
\put(75,28){\vector(-1,0){45}}
\end{picture}}
\end{picture}
\]
\item $\modP^{\mathfrak e}_{n}=h_\delta'' (\modP_{{\cal U}, n})$.
So in $\modM_{\delta+1}$, $\modP^{\mathfrak e}\cap \modM_\delta= \modN^\ast_\delta$.

\end{enumerate}
\end{enumerate}
\end{Definition}
Recall that for a stationary set $S \subseteq \lambda,$ Jensen's diamond $\diamondsuit_\lambda(S)$ asserts the existence
of a sequence $\langle  S_\alpha: \alpha \in S   \rangle$
such that for every $X \subseteq \lambda$ the set $\{\alpha \in S: X \cap \alpha=S_\alpha    \}$
is stationary. It is easily seen that $\diamondsuit_\lambda(S)$ implies $2^{<\lambda}=\lambda$. One the one hand, due to a well-known
theorem of Jensen, $\diamondsuit_\lambda(S)$ holds in G\"{o}del's constructible universe ${\text L}$ for all uncountable regular cardinals
$\lambda$ and all  stationary sets $S \subseteq \lambda$. On the other hand, if $2^{<\lambda}$ is forced to be above $\lambda,$
then  $\diamondsuit_\lambda(S)$ fails for all stationary sets $S \subseteq \lambda$.
The next theorem of Shelah shows that under diamond principle, we can get very nicely
$\langle \modM_\alpha: \alpha\leq\lambda\rangle$.

\begin{lemma}
\label{3.5x}
Suppose $\frakss$ is a non-trivial context, $\lambda=\cf(\lambda)>|\ringR|+|\ringS|+\kappa(\frakss)+||\frakss||$, $S \subseteq S^\lambda_{\aleph_0}$ is a stationary non-reflecting subset of $\lambda$ and $\diamondsuit_\lambda(S)$ holds.
Then
there is a very nicely
construction for $(\lambda, \frakss, S)$  such that
\begin{enumerate}
  \item $(\lambda, \frakss, S)$ is strongly very nice.
  \item If $\cf(\alpha)\in \kappa\smallsetminus
S$ and $\alpha<\beta\leq\lambda$, then $\modM_\alpha$ is a ${\cal K}$-
directed summand of $\modM_\beta$.
\end{enumerate}
\end{lemma}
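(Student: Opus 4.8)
The plan is to carry out the standard transfinite construction of length $\lambda$ steered by the diamond sequence, and then verify the ``strongly very nice'' clause by the usual guessing argument. First I would fix a $\diamondsuit_\lambda(S)$-sequence $\langle S_\delta:\delta\in S\rangle$, and — using that $\diamondsuit_\lambda(S)$ yields $2^{<\lambda}=\lambda$ and a fixed pairing/coding of hereditarily-$<\lambda$-sized objects by subsets of $\lambda$ — arrange that for each $\delta\in S$ the set $S_\delta$ codes a quadruple $({\bf f}_\delta,i(\delta),\langle\alpha^\delta_n,\beta^\delta_n:n<\omega\rangle,\langle h^\delta_n:n<\omega\rangle)$ consisting of a function ${\bf f}_\delta:\gamma^*_\delta\to\gamma^*_\delta$, an index $i(\delta)<|\calE|$ (naming ${\mathfrak e}_\delta:={\mathfrak e}_{i(\delta)}$), ordinals below $\delta$, and bimodule homomorphisms. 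In parallel I fix a bookkeeping function on $\lambda\setminus S$ onto the set of isomorphism types in $\cal K$ so that each type is hit on a stationary subset of $\lambda\setminus S$; this will give clause (3) of Definition \ref{3.3bis} and will supply the ``resources'' needed for the guessing step below.

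Next I would define $\bar\modM=\langle\modM_\alpha:\alpha\le\lambda\rangle$ by recursion, maintaining that the universe of $\modM_\alpha$ is an ordinal $\gamma^*_\alpha<\lambda$ (using $\lambda$ regular and $\lambda>|\cal K|+\|\frakss\|+|\alpha|$ to keep $\|\modM_\alpha\|<\lambda$). Put $\modM_0=\modM_\ast$; at limits $\delta\notin S$ take unions (clause (C)); at successors $\alpha+1$ with $\alpha\notin S$ set $\modM_{\alpha+1}=\modM_\alpha\oplus\modN$ with $\modN\in\cal K$ of the isomorphism type named by the bookkeeping at $\alpha$, so clause (E) holds with $|J_\alpha|=1$. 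Clause (2) of the present lemma then follows by a parallel bookkeeping of \emph{genuine} direct summands: since $S$ is non-reflecting, any $\alpha$ with $\cf(\alpha)\notin S$ is the limit of a club disjoint from $S$, along which $\modM_\alpha$ is built purely by $\oplus$'s, so one telescopes; combined with Lemma \ref{properties of w.s.n.c}(1) this gives the $\cal K$-directed summand statement for all $\beta\ge\alpha$.

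The delicate step is $\delta\in S$. Call the decoded data \emph{legitimate} if ${\bf f}_\delta$ is an $\ringR$-endomorphism of $\modM_\delta$, the $\alpha^\delta_n\in\lambda\setminus S$ are strictly increasing with supremum $\delta$, $\alpha^\delta_n<\beta^\delta_n<\alpha^\delta_{n+1}$ with $\beta^\delta_n\notin S$, each $h^\delta_n$ is a bimodule homomorphism from $\modN^{{\mathfrak e}_\delta}_n$ into $\modM_{\beta^\delta_n}$, and $\modM_{\alpha^\delta_n}+\Rang(h^\delta_n)=\modM_{\alpha^\delta_n}\oplus\Rang(h^\delta_n)\le_{\aleph_0}\modM_{\beta^\delta_n}$. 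If it is not legitimate, do an (E)-step. If it is, pass (after thinning $\omega$ to an infinite $\cal U\subseteq\omega$ so that the relevant sums of ranges become direct) to the bimodule $\modP^{{\mathfrak e}_\delta}$ and the maps ${\bf h}^{\mathfrak e}_n,{\bf f}^{\mathfrak e}_n$ of Lemma \ref{Pe bimodules defined}, set $\modN^\ast_\delta:=\sum_{n\in\cal U}h^\delta_n(\modN^{{\mathfrak e}_\delta}_n)\subseteq\modM_\delta$, identify $\modN^\ast_\delta$ with $\modP_{\cal U}$ via the isomorphism $h_\delta$ of Definition \ref{3.3bis}(4)(f), and let $\modM_{\delta+1}$ be the amalgamation of $\modM_\delta$ and $\modP^{{\mathfrak e}_\delta}$ over $\modN^\ast_\delta$ (which exists since the class of bimodules is a variety, cf. Lemma \ref{bimodules are variety}, equivalently Lemma \ref{modm is AEC}(7), and the amalgamation is free over $\sum_n\Rang(h_\delta\circ{\bf h}^{{\mathfrak e}_\delta}_n)$ by construction). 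Finally, let $z_{\delta,n}\in\modM_\delta$ be read off from ${\bf f}_\delta\circ h^\delta_n$ applied to the canonical generators, and record $q_\delta=\{\varphi^{{\mathfrak e}_\delta}_n(y-z_{\delta,n}):n<\omega\}$; using clause $(\ast)_5$ of Lemma \ref{Pe bimodules defined}, the freeness of the amalgamation, and Lemma \ref{properties of w.s.n.c}(3) (which propagates $\le^{\rm pr}_{\varphi^{\mathfrak e}_n}$ through the construction) one checks that $q_\delta$ is omitted in $\modM_\alpha$ for all $\alpha\in[\delta,\lambda]$. \emph{This is the main obstacle:} one must verify that neither the later (E)-steps nor the later amalgamations at other ordinals of $S$ create a realization of $q_\delta$, which is exactly where the $\le^{\rm pr}$-calculus of Lemma \ref{pr properties}(3) and Lemma \ref{properties of w.s.n.c}(3) does the work, and one must check that the thinning to $\cal U$ and the choice of the $z_{\delta,n}$ respect the commuting diagram of Definition \ref{3.3bis}(4).

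It remains to verify clause (1), that the construction is strongly very nice (in the sense of Definition \ref{nice construction modified}(4), adapted to very nice constructions). Given an $\ringR$-endomorphism ${\bf f}$ of $\modM_\lambda$ and ${\mathfrak e}\in\calE$, suppose toward a contradiction that for all $n<\omega$, $\alpha<\lambda$ and $z\in\modN^{\mathfrak e}_n$ there is a bimodule homomorphism $h:\modN^{\mathfrak e}_n\to\modM_\lambda$ with ${\bf f}(h(x^{\mathfrak e}_n))-h(z)\notin\modM_\alpha+\varphi^{\mathfrak e}_\omega(\modM_\lambda)$. Form the club $C\subseteq\lambda$ of $\delta$ with $\gamma^*_\delta=\delta$, ${\bf f}[\modM_\delta]\subseteq\modM_\delta$, and $\modM_\delta$ closed under the bookkeeping. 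For $\delta\in C$, using that copies of each $\modN^{\mathfrak e}_n\in\cal K$ were added cofinally (clause (E)) and that such a copy $\modN\le\modM_{\alpha+1}$ satisfies $\modM_\alpha\oplus\Rang\le_{\aleph_0}\modM_{\beta}$ by Lemma \ref{properties of w.s.n.c}(1), together with Lemma \ref{1.12}, build ordinals $\alpha_n<\beta_n<\alpha_{n+1}$ below $\delta$ in $\lambda\setminus S$ and embeddings $h_{\alpha_n,n}$ of $\modN^{\mathfrak e}_n$ with the direct-sum property of legitimacy. By $\diamondsuit_\lambda(S)$ the set of $\delta\in S\cap C$ at which $S_\delta$ codes precisely $({\bf f}\restriction\modM_\delta,{\mathfrak e},\langle\alpha_n,\beta_n\rangle,\langle h_{\alpha_n,n}\rangle)$ is stationary; fix such a $\delta$. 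Then the construction performed the amalgamation at $\delta$, and the failure assumption forces ${\bf f}$ applied to the image under $h_\delta\circ{\bf f}^{\mathfrak e}_0$ of $x^{\mathfrak e}_0$ to realize $q_\delta$ in $\modM_\lambda$ — via the relations $x-x_n\in\varphi^{\mathfrak e}_n(\modP^{\mathfrak e})$ and $x\notin\sum_n\Rang({\bf h}^{\mathfrak e}_n)+\varphi^{\mathfrak e}_\omega(\modP^{\mathfrak e})$ of Lemma \ref{Pe bimodules defined} — contradicting that $q_\delta$ is omitted in $\modM_\lambda$. Hence the required $(n(\ast),\alpha,z)$ exist, proving (1), and the lemma follows.
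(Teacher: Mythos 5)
You should first note that the paper does not actually prove Lemma \ref{3.5x}: its ``proof'' is the single line ``See \cite[Section 2]{Sh:381}'', so your reconstruction is being measured against the standard diamond argument rather than against any text in this paper. Your overall architecture — diamond-guided recursion, bookkeeping on $\lambda\setminus S$ to hit every isomorphism type of $\mathcal K$ stationarily often, free amalgamation with $\modP^{\mathfrak e_\delta}$ over $\modN^\ast_\delta$ at guessed stages, and an omitting-types commitment — is the right one.

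There is, however, a genuine gap in your verification of clause (1). At a stage $\delta\in S$ with legitimate data you \emph{always} perform the amalgamation and \emph{always} commit to omitting $q_\delta=\{\varphi^{\mathfrak e}_n(y-z_{\delta,n}):n<\omega\}$ with $z_{\delta,n}$ read off from ${\bf f}_\delta$. But the new element $x=h_\delta({\bf f}^{\mathfrak e}_0(x^{\mathfrak e}_0))$ satisfies $x-x_n\in\varphi^{\mathfrak e}_n(\modM_{\delta+1})$ by $(\ast)_2$ of Lemma \ref{Pe bimodules defined}, so \emph{any} total endomorphism ${\bf f}$ of $\modM_\lambda$ with ${\bf f}\restriction\modM_\delta={\bf f}_\delta$ makes ${\bf f}(x)$ realize $q_\delta$ — whether or not ${\bf f}$ is a counterexample to ``strongly''. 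Taking ${\bf f}_\delta={\rm id}_{\modM_\delta}$ (which diamond guesses on a stationary set), $q_\delta$ is already realized by $x$ itself inside $\modM_{\delta+1}$, so the omitting commitment as you state it is unsatisfiable; correspondingly, your closing contradiction never actually uses the failure hypothesis, which is the tell-tale symptom. What is missing is the dichotomy that drives the real argument: at stage $\delta$ one asks whether $q_\delta$ \emph{can} be omitted in the canonical amalgam. If yes, one omits it, and then no extension of ${\bf f}_\delta$ to $\modM_\lambda$ exists. If no, the forced realization $y_0+h_\delta(z_0)$ (with $y_0\in\modM_\delta$, $z_0\in\modP^{\mathfrak e}$) is decomposed using $(\ast)_5$ of Lemma \ref{Pe bimodules defined} and Lemma \ref{properties of w.s.n.c}(3)(b) to extract $z\in\modN^{\mathfrak e}_{n(\ast)}$ with ${\bf f}(h(x^{\mathfrak e}_{n(\ast)}))-h(z)\in\modM_\alpha+\varphi^{\mathfrak e}_\omega(\modM_\lambda)$, which is exactly the witness required by ``strongly''; the failure hypothesis is what excludes this second branch for a putative counterexample. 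This is the same mechanism as the Case 1/Case 2 split (and the $i<2$ doubling) in the paper's ZFC proof of Theorem \ref{nice construction lemma}, and without it your clause (1) does not go through.
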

\begin{proof}
See \cite[Section 2]{Sh:381}.
\end{proof}
We would like to prove a similar result in $\text{ZFC}$, thus we have to avoid the use of diamond
in the above lemma.  To this end, we use Shelah's Black Box. This leads us to get a weaker conclusion than Lemma \ref{3.5x},
but as we will see later, it is sufficient to get our desired results.

\begin{notation} For the rest of this section we use the following:
\begin{itemize}
  \item $\lambda >\kappa$ be infinite cardinals.
  \item $\mathcal H_{<\theta}(X)$ be the least set $Y$ such that $Y \supseteq X$ and if $y \subseteq Y$
  and $|y|< \theta$,then $y \in Y.$
  \item $\langle \tau_n: n<\omega \rangle$ be an increasing sequence of vocabularies, each of size $\leq \kappa$ such that for each $n<\omega$,
  there exists some unary predicate $P_n$ in $\tau_{n+1} \setminus \tau_n$.
\end{itemize}\end{notation}
\begin{Definition}
\label{family fn}
For $n<\omega$ let $\cal F_n$ be the family of sets of the form
\[
\{(\cal  N_\ell, f_\ell): \ell \leq n        \}
\]
satisfying the following conditions:
\begin{enumerate}
  \item[(a)] $f_\ell: \kappa^{\leq \ell} \to \lambda^{\leq \ell}$ is a tree embedding, i.e.,
  \begin{enumerate}
    \item[(1)] for each $\eta \in \kappa^{\leq \ell},~\eta$ and $f_\ell(\eta)$
    have the same length,
    \item[(2)] for $\eta \lhd \nu$ in $\kappa^{\leq \ell},~f_{\ell}(\eta) \lhd f_{\ell}(\nu)$,
     \item[(3)]   $~f_{\ell}$ is one-to-one,
  \end{enumerate}
  \item[(b)] for $\ell+1 \leq n,~f_{\ell+1}$ extends $f_\ell,$
  \item[(c)] for some $\tau'_\ell \subseteq \tau_\ell,~\cal  N_\ell$ is a $\tau'_\ell$-structure
  of size $\leq \kappa$ and the universe of $\cal  N_\ell$, denoted by $N_\ell$, is a subset of
  $\cal H_{<\kappa^+}(\lambda)$.

  \item[(d)] $\tau'_{\ell+1} \cap \tau_\ell = \tau'_\ell$ and $\cal  N_{\ell+1} \restriction \tau'_\ell$
  extends $\cal N_\ell$.

  \item[(e)] if $P_m \in \tau'_{m+1}$, then $P_m^{ \cal N_\ell} = N_\ell.$

  \item[(f)] if $x, y \in N_\ell,$ then $\{x, y  \} \in N_\ell$ and $\emptyset \in N_\ell.$

  \item[(g)] $\Rang(f_\ell) \subseteq \cal N_\ell$.
\end{enumerate}
\end{Definition}
\begin{Definition}
\label{family fomega}
Let $\langle  \cal F_n: n< \omega \rangle $ be as in Definition \ref{family fn}. Then $\cal F_\omega$ is the family consisting of all
pairs $(\cal N, f)$ such that for some sequence $\langle (\cal N_\ell, f_\ell): \ell < \omega    \rangle$, we have
\begin{enumerate}
  \item[(a)] for each $n<\omega$, $\{ (\cal N_\ell, f_\ell): \ell \leq n  \}$ belongs to $\cal F_n,$
  \item[(b)] $f= \bigcup\limits_{\ell < \omega}f_\ell$ and $\cal N = \bigcup\limits_{\ell < \omega} \cal N_\ell.$
\end{enumerate}
\end{Definition}
We may note that if for each $m<\omega, P_m \in \tau'_{m+1}$, then for each
$(\cal N, f) \in \cal F_\omega$, there is a unique sequence $\langle (\cal N_\ell, f_\ell): \ell < \omega    \rangle$
witnessing this.
\begin{Definition}
\label{branch of fomega}
Let $(\cal N, f) \in \cal F_\omega$. A branch of $f$ is any $\eta \in \lambda^\omega$
such that for each $n<\omega,~\eta \restriction n \in \Rang(f)$. We use $\lim(f)$ for the set of all branches of $f$.
\end{Definition}
Given $W \subseteq \mathcal{F}_\omega$, we define two games  $\Game_W$ and  $\Game'_W$ as follows.
\begin{Definition}
\label{game for fomega}
Suppose $W \subseteq \mathcal{F}_\omega$.
\begin{enumerate}
\item The game $\Game_W$ of length $\omega$ is defined as follows.
In the $n^{\rm th}$ move, player I chooses $\cal N_n$ such that $\{(\cal N_\ell, f_\ell): \ell \leq n    \} \in \cal F_n$ (noting that $f_0$ is determined),
and player II chooses a tree embedding $f_n: \kappa^{\leq n} \to \lambda^{\leq n}$ extending
$\bigcup\limits_{\ell < n}f_\ell$ such that $\Rang(f_n) \setminus (\bigcup\limits_{\ell<n}\Rang(f_\ell))$
is disjoint to $\bigcup\limits_{\ell< n}N_\ell.$
Player II wins  if $(\bigcup\limits_{\ell<\omega}\cal N_\ell, \bigcup\limits_{\ell<\omega}f_\ell) \in W.$

\item  The game $\Game'_W$ of length $\omega$ is defined as follows.
In the zero move, player I chooses $k
<\omega$,   $\{(\cal N_\ell, f_\ell): \ell \leq k    \} \in \cal F_k$, and $X_0 \subseteq \lambda^{<\omega}$ of size less than $\lambda$.
For $n>0,$
in the $n^{\rm th}$ move, player I chooses $\cal N_{k+n}$ and $X_n$ such that $\{(\cal N_\ell, f_\ell): \ell \leq k+n    \} \in \cal F_{k+n}$
and $X_n \subseteq \lambda^{<\omega}$ is of size less than $\lambda$.
Then player II chooses a tree embedding $f_{k+n}: \kappa^{\leq k+n} \to \lambda^{\leq k+ n}$ extending
$\bigcup\limits_{\ell < k+ n}f_\ell$ such that $\Rang(f_{k+n}) \setminus (\bigcup\limits_{\ell<k+n}\Rang(f_\ell))$
is disjoint to $\bigcup\limits_{\ell< n}N_\ell \cup \bigcup\limits_{\ell<n}X_\ell.$
Player II wins  if $(\bigcup\limits_{\ell<\omega}\cal N_\ell, \bigcup\limits_{\ell<\omega}f_\ell) \in W.$
\end{enumerate}
\end{Definition}

\begin{Definition}
\label{barrier}
Suppose $W \subseteq \mathcal{F}_\omega$. Recall that
\begin{enumerate}
  \item $W$ is called a barrier if player I does not win $\Game_W$ or even $\Game'_W$.
  \item $W$ is  called a strong barrier if player II wins $\Game_W$ and even $\Game'_W$.
  \item $W$ is  called disjoint if for distinct $(\cal N^1, f^1), (\cal N^2, f^2)$ in $W$, $f^1$ and $f^2$ have
  no common branch.
\end{enumerate}
\end{Definition}
We are now ready to state the version of Shelah's black box theorem that is  needed
in this paper.
\begin{lemma}[The Black Box theorem]
\label{shelah black box}
Suppose $\lambda> \kappa$ are infinite cardinals, $\cf(\lambda)> \aleph_0,~\lambda^{\aleph_0}=\lambda^\kappa$
and $S \subseteq S^\lambda_{\aleph_0}$ is stationary. Then there is a sequence
\[
W=\{ (\cal N^\alpha, f^\alpha): \alpha < \alpha_*          \} \subseteq \cal F_\omega
\]
and a non-decreasing function
\[
\zeta: \alpha_* \to S
\]
such that the following properties are satisfied:
\begin{enumerate}
  \item $W$ is a disjoint strong barrier,

  \item every branch of $f^\alpha$ is an increasing sequence converging to $\zeta(\alpha) \in S$,

  \item  each $\cal N^\alpha$ is transitive,

  \item if $\zeta(\beta)=\zeta(\alpha), \beta+\kappa^{\aleph_0} \leq \alpha < \alpha_*$
  and $\eta$ is a branch of $f^\alpha$, then for some $k<\omega, \eta \restriction k \notin \cal N^\beta,$

  \item if $\lambda=\lambda^\kappa,$ we can also demand that if $\eta$ is a branch of $f^\alpha$
  and $\eta \restriction k \in \cal N^\beta$ for all $k<\omega$, then $\cal N^\alpha \subseteq \cal N^\beta$,
  and even for all $n<\omega, \cal N^\alpha_n \in \cal N^\beta.$
\end{enumerate}
\end{lemma}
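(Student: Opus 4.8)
The plan is to follow the construction of Shelah's black box from \cite{Sh:172,Sh:227} and \cite[Ch.~VII]{Sh:a}, specializing the parameters to the cofinality-$\aleph_0$ situation and reorganizing the output into the trap format $(\mathcal N^\alpha,f^\alpha)$ demanded in the statement. First I would fix, for each $\delta\in S$, a strictly increasing sequence $\langle\delta_n:n<\omega\rangle$ cofinal in $\delta$, and observe that a tree embedding $f\colon\kappa^{\le n}\to\lambda^{\le n}$ as in Definition \ref{family fn} is, modulo the bookkeeping there, a copy of an initial segment of the tree ${}^{<\omega}\kappa$ sitting inside ${}^{<\omega}\lambda$. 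The hypothesis $\lambda^{\aleph_0}=\lambda^\kappa$ guarantees that the collection of all pairs $(\mathcal N,f)$ with $\mathcal N$ transitive of size $\le\kappa$, universe contained in $\mathcal H_{<\kappa^+}(\lambda)$, and $f$ a full tree embedding of ${}^{<\omega}\kappa$ whose branches converge to a single point of $S$, has cardinality exactly $\lambda^{\aleph_0}$; fix an enumeration of it of that length.

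Next I would build $W=\{(\mathcal N^\alpha,f^\alpha):\alpha<\alpha_*\}$ together with the non-decreasing $\zeta\colon\alpha_*\to S$ by transfinite recursion of length $\alpha_*\le\lambda^{\aleph_0}$. At step $\alpha$, using that each $\delta\in S$ has cofinality $\aleph_0$ and that $S$ is stationary in the regular cardinal $\lambda$, I would pick $\zeta(\alpha)\in S$ and then a tree embedding $f^\alpha$ all of whose branches are increasing sequences converging to $\zeta(\alpha)$, chosen so that for every $\beta<\alpha$ with $\zeta(\beta)=\zeta(\alpha)$ and $\beta+\kappa^{\aleph_0}\le\alpha$ each branch of $f^\alpha$ leaves $\mathcal N^\beta$ at some finite level; there is room to do this because only $\le\kappa^{\aleph_0}$ earlier indices $\beta$ are relevant and each contributes $\le\kappa$ forbidden nodes per level. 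For $\mathcal N^\alpha$ I would take the transitive collapse of an elementary submodel of some $(\mathcal H_{<\kappa^+}(\chi),\in,\dots)$ that contains $f^\alpha$ and is closed under $<\kappa$-sequences; when moreover $\lambda=\lambda^\kappa$ one can further arrange that $\mathcal N^\alpha$, and each of its finite approximations $\mathcal N^\alpha_n$, is absorbed by any $\mathcal N^\beta$ all of whose levels contain the branches of $f^\alpha$, yielding clause (5). Clauses (2), (3), (4) are then built in, and disjointness of $W$ (distinct traps share no common branch) follows from the escaping requirement together with the choice of $\zeta$.

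Finally I would check that $W$ is a strong barrier, i.e.\ that player II wins both $\Game_W$ and $\Game'_W$. In a run, player I reveals over $\omega$ moves an increasing chain $\langle\mathcal N_\ell\rangle$ of structures of size $\le\kappa$ and, in $\Game'_W$, also sets $X_\ell\subseteq\lambda^{<\omega}$ with $|X_\ell|<\lambda$; player II must produce $\langle f_\ell\rangle$ with each $f_\ell$ a tree embedding of $\kappa^{\le\ell}$ avoiding all previously used nodes and all of $\bigcup_\ell X_\ell$ so that the limit pair lies in $W$. Player II's strategy is to commit in advance to a single $\alpha$: since $\bigcup_\ell\mathcal N_\ell$ has size $\le\kappa$ and $\bigcup_\ell X_\ell$ has size $<\lambda$, while the recursion placed, for each $\delta\in S$, many indices $\alpha$ with $\zeta(\alpha)=\delta$ and with $f^\alpha$ avoiding any prescribed size-$\le\kappa$ structure and any prescribed set of $<\lambda$ nodes, player II may fix such an $\alpha$ and play $f_\ell=f^\alpha\restriction\kappa^{\le\ell}$ at move $\ell$; the escaping clause (4) is exactly what keeps this commitment consistent over all $\omega$ moves and forces $(\bigcup_\ell\mathcal N_\ell,\bigcup_\ell f_\ell)$ to be literally $(\mathcal N^\alpha,f^\alpha)\in W$ rather than a mere approximation. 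The main obstacle is this last coordination --- reconciling the disjointness of clause (1) with the density needed for the barrier property --- which is the heart of the black box and is handled by the $\kappa^{\aleph_0}$-gap bookkeeping of clause (4) exactly as in \cite[Ch.~VII]{Sh:a}; the remaining verifications are routine cardinal arithmetic using $\cf(\lambda)>\aleph_0$ and $\lambda^{\aleph_0}=\lambda^\kappa$.
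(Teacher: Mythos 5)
The paper itself does not prove this lemma; it simply cites \cite[2.8]{Sh:227}, so there is no in-paper argument to compare yours against. Your outline follows the right general shape of Shelah's construction (enumerate candidate traps using $\lambda^{\aleph_0}=\lambda^{\kappa}$, build $W$ by recursion, take transitive collapses of elementary submodels, use the $\kappa^{\aleph_0}$-gaps for separation), but the step where you verify that $W$ is a strong barrier contains a genuine error.

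You propose that player II ``commit in advance to a single $\alpha$'' and play $f_\ell=f^\alpha\restriction\kappa^{\le\ell}$, concluding that the limit of the play is literally $(\mathcal{N}^\alpha,f^\alpha)\in W$. This cannot work: in $\Game_W$ player I chooses the structures $\mathcal{N}_\ell$ adaptively, after seeing player II's previous moves, while player II chooses only the tree embeddings $f_\ell$. The limit structure $\bigcup_\ell\mathcal{N}_\ell$ is therefore entirely under player I's control, and player I will simply play some $\mathcal{N}_1$ different from $\mathcal{N}^\alpha_1$, so the resulting pair is not the pre-chosen trap and need not lie in $W$ at all. The actual content of the black box is that $W$ must be built to contain the limit of \emph{every} play in which player II follows a fixed canonical rule for choosing nodes; this is a prediction argument over the $\le(\lambda^{\kappa})^{\aleph_0}=\lambda^{\aleph_0}$ many $\omega$-sequences of possible player-I moves, and it is precisely here that the hypothesis $\lambda^{\aleph_0}=\lambda^{\kappa}$ is consumed. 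Relatedly, your justification of clause (4) by direct diagonalization (``only $\le\kappa^{\aleph_0}$ earlier indices $\beta$ are relevant'') undercounts: for fixed $\delta$ the set of $\beta<\alpha$ with $\zeta(\beta)=\zeta(\alpha)$ and $\beta+\kappa^{\aleph_0}\le\alpha$ can have cardinality up to $\lambda^{\aleph_0}$, so one cannot avoid $\bigcup_\beta\mathcal{N}^\beta$ node by node; in Shelah's argument the separation comes from the coding of plays by branches (traps that are $\kappa^{\aleph_0}$ apart in the enumeration arise from plays that already diverge at some finite stage), not from brute-force avoidance. As it stands the proposal does not establish clause (1), and repairing it amounts to reproducing the argument of \cite[2.8]{Sh:227} rather than a routine fix.
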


\begin{proof}
 	This is in \cite[2.8]{Sh:227}.
\end{proof}

We now state and prove the main result of this section.
\begin{theorem}
\label{nice construction lemma}
Assume ${\frakss}$ is a non-trivial
context. Suppose $\lambda > ||\frakss||$ is such that $\cf(\lambda)\geq \aleph_1+ \kappa(\frakss)$, for $\mu<\lambda, \mu^{\aleph_0} < \lambda$,
$\kappa=\cf(\lambda) > \kappa(\frakss)$ and $S \subseteq S^\kappa_{\aleph_0}$ is such that $S$ and $S^\kappa_{\aleph_0} \setminus S$
are stationary in $\kappa.$ Then
\begin{enumerate}
\item
There is a semi-nice construction
for $(\lambda,\frakss, S, \kappa)$.

\item If in addition $\lambda$  is regular,  then there is a  nice construction
for $(\lambda,\frakss, S, \kappa)$.

\item In part (1) if we  omit the assumption $\cf(\lambda)=\kappa$, and
 let  $\bar \gamma^*=\langle \gamma^*_\alpha: \alpha \leq \kappa   \rangle$
be such that  $\gamma^*_0=||\modM_*||$ and for all $\alpha < \kappa$
$\gamma^*_{1 + \alpha}=\gamma^*_0  +
\lambda\cdot \alpha$. Then there is a
semi-nice
construction for $(\lambda,\frakss,S,\bar \gamma^*).$

\item In part (3) if $\kappa=\lambda$, then there is a
nice
construction for $(\lambda,\frakss,S,\bar \gamma^*).$
\end{enumerate}
\end{theorem}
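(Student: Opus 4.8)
The plan is a recursion of length $\kappa$ in which the Black Box of Lemma~\ref{shelah black box} is used to foresee, at each $\delta\in S$, every ``threat'' that must be destroyed in order to secure the omitting–types clause (F)(n) of Definition~\ref{w.s. nice definition} and, for part (2), the prediction clauses $(G)_1$, $(G)_2$. First I would fix the increasing continuous sequence $\bar\gamma^*=\langle\gamma^*_\alpha:\alpha\le\kappa\rangle$ of ordinals below $(\lambda^{\aleph_0})^+$ as prescribed (for parts (1)--(2): $\gamma^*_0=\|\modM_\ast\|$, the gaps $\gamma^*_{\alpha+1}-\gamma^*_\alpha$ non-decreasing and cofinal in $\lambda$, and $|\gamma^*_\alpha|<\lambda$ for $\alpha<\kappa$; for (3)--(4): $\gamma^*_{1+\alpha}=\gamma^*_0+\lambda\cdot\alpha$, so that the universe of $\modM_\kappa$ stays below $(\lambda^{\aleph_0})^+$), fix the enumerations $\langle\mathfrak e_\alpha:\alpha<|\calE|\rangle$ of $\calE$ and $\emph{rep}({\cal K})$ of ${\cal K}/{\cong}$, and then invoke a suitable instance of the Black Box of Lemma~\ref{shelah black box}, organized so that its branches converge to points of $S$ inside the length–$\kappa$ tower (for part (2) the arithmetic hypotheses plus the regularity of $\lambda$ are what make clause (5) available). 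This yields a disjoint strong barrier $W=\{(\mathcal N^\alpha,f^\alpha):\alpha<\alpha_*\}\subseteq\mathcal F_\omega$ and a non-decreasing $\zeta:\alpha_*\to S$, with every branch of $f^\alpha$ increasing to $\zeta(\alpha)$.

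Next I would build $\langle\modM_\alpha:\alpha\le\kappa\rangle$, with $\modM_\alpha$ a bimodule whose universe is the ordinal $\gamma^*_\alpha$, together with the bookkeeping objects of Definition~\ref{w.s. nice definition}: put $\modM_0=\modM_\ast$; take unions at limits (forced by clause (C)); and at $\alpha\notin S$ set $\modM_{\alpha+1}=\modM_\alpha\oplus\bigoplus_{t\in J_\alpha}\modN^\alpha_t$ with a fresh index set $J_\alpha$ meeting the cofinality requirement (E)(c) and containing $\|\modM_\alpha\|$-many isomorphic copies of each $\modN^\beta\in\emph{rep}({\cal K})$ — here the stationarity of $S^\kappa_{\aleph_0}\setminus S$ guarantees there are enough non-$S$ stages to carry this out, which will feed clause $(G)_2$. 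At $\delta\in S$, decode each pair $(\mathcal N^\alpha,f^\alpha)$ with $\zeta(\alpha)=\delta$ against the portion of the construction already built, read along the branches of $f^\alpha$; whenever this produces a genuine threat — an $\mathfrak e=\mathfrak e_s\in\calE\cap\callE_{\infty,\kappa}$, an ordinal $\gamma^{**}_\delta<\delta$, an increasing $\bar\alpha_s\to\delta$, homomorphisms $g_{s,n}:\modN^{\mathfrak e}_n\to\modM_{\gamma^{**}_\delta}$ and elements $z_{s,n}\in\modM_\delta$ — I put $s$ into $J_\delta$, take the bimodule $\modP^{\mathfrak e}$ of Lemma~\ref{Pe bimodules defined} with its embeddings ${\bf h}^{\mathfrak e}_n,{\bf f}^{\mathfrak e}_n$ and its non-splitting property, and amalgamate $\modP^\delta_s:=h''_s\modP^{\mathfrak e}$ with $\langle\modM_\delta\cup\bigcup_{s'\ne s}\modP^\delta_{s'}\rangle_{\modM_{\delta+1}}$ freely over $\sum_{n<\omega}\Rang(h^\delta_s\circ{\bf h}^{\mathfrak e}_n)$, exactly as in clauses (F)(k)--(l). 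The tree-likeness (F)(h) for distinct $s_1,s_2\in J_\delta$ is forced by the disjointness of $W$. Finally $\modM_\kappa=\bigcup_{\alpha<\kappa}\modM_\alpha$.

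For the verification, clauses (A)--(E) are immediate. The point of clause (F)(n) is that each $q_s=\{\varphi^{\mathfrak e}_n(y-z_{s,n}):n<\omega\}$ is omitted in every $\modM_\alpha$ with $\delta\le\alpha\le\kappa$: a hypothetical realization $y$, using Lemma~\ref{modm is AEC}, Claim~\ref{pr properties} and Lemma~\ref{properties of w.s.n.c}(3), can be reflected back through the subsequent free sums and free amalgamations into $\modM_{\delta+1}$, and then — by freeness of the amalgamation over $\sum_n\Rang(h^\delta_s\circ{\bf h}^{\mathfrak e}_n)$ — into $\modP^{\mathfrak e}$, producing there a realization of $\{\varphi^{\mathfrak e}_n(y-x_n):n<\omega\}$ with $x_n=\sum_{\ell<n}{\bf h}^{\mathfrak e}_\ell(x^{\mathfrak e}_\ell)$, contradicting clause $(\ast)_5$ of Lemma~\ref{Pe bimodules defined}. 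That \emph{every} relevant threat is caught at some $\delta\in S$ is where the strong-barrier property of $W$ (player II wins $\Game'_W$, so player I's moves — which encode the running construction together with any finite set of forbidden points — cannot avoid landing in $W$) is used; for part (2) this is combined with a pressing-down argument and clause (5) of Lemma~\ref{shelah black box} to see that an arbitrary $\ringR$-endomorphism $\bf f$ of the \emph{final} module $\modM_\kappa$ (together with the homomorphisms ${\bf g}_n$) is already foreseen in some $\mathcal N^\alpha$ with $\zeta(\alpha)=\delta$ for stationarily many $\delta$, which, with the omitted types and the freeness, yields $(G)_1$; $(G)_2$ follows from the abundance of ${\cal K}$-summands built in via (E)(e) together with the same predictions. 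Hence when $\lambda$ is regular (so $\kappa=\lambda$) we obtain a nice construction, proving (2), and dropping the $(G)$-clauses gives (1).

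Parts (3) and (4) are the case $\kappa\ne\cf(\lambda)$ of Definition~\ref{nice construction modified}(1)(a): the construction is verbatim the same, now run with the sequence $\bar\gamma^*$ given by $\gamma^*_{1+\alpha}=\gamma^*_0+\lambda\cdot\alpha$ and the Black Box applied to $\kappa$ as before; the only points needing a second look are the size bookkeeping (E)(a)--(c), which becomes automatic since every $\gamma^*_{\alpha+1}-\gamma^*_\alpha=\lambda$, and, in part (4), the observation that $\kappa=\lambda$ again supplies the full clauses $(G)_1$, $(G)_2$ exactly as in part (2). The main obstacle throughout is the balancing act in the verification of (F)(n)/$(G)_1$: reflecting a putative realization of an omitted type down the tower of free amalgamations to contradict Lemma~\ref{Pe bimodules defined}$(\ast)_5$ without disturbing the other, tree-like, amalgamations performed at the same $\delta$, and, for part (2), the reflection argument showing that endomorphisms of $\modM_\kappa$ are predicted at stationarily many $\delta\in S$ — this last is where clause (5) of Lemma~\ref{shelah black box} and the game $\Game'_W$ do the essential work.
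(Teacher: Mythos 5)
Your overall strategy is the paper's: fix $\bar\gamma^*$ and the enumerations, invoke the Black Box of Lemma~\ref{shelah black box} with $\zeta$ landing in (the image of) $S$, build the tower by free direct sums of $\cal K$-copies off $S$ and by free amalgamations of homomorphic images of $\modP^{\mathfrak e}$ from Lemma~\ref{Pe bimodules defined} at predicted stages $\delta\in S$, use $(\ast)_5$ of that lemma to kill realizations of the types $q_s$, and use the strong-barrier property to catch every endomorphism. However, there is a concrete gap in how you match the construction to the parts of the theorem, and it leaves part (2) unproved. By Definition~\ref{nice construction modified}, \emph{semi-nice} already means ``weakly semi-nice \emph{plus} $(G)_1$ and $(G)_2$'' (one omits ``weakly'', not ``semi'', to add the $(G)$-clauses), whereas \emph{nice} additionally requires the uniformity condition of Definition~\ref{nice construction modified}(2): for $\delta\in S$ and $s_1,s_2\in J_\delta$ one must have $\mathfrak e_{s_1}=\mathfrak e_{s_2}$ and $\bar g^\delta_{s_1}=\bar g^\delta_{s_2}$. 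So your closing sentence ``dropping the $(G)$-clauses gives (1)'' is false --- part (1) needs them --- and your argument for part (2) in fact only establishes part (1): you verify $(G)_1,(G)_2$ but never address the uniformization of $\mathfrak e_s$ and $\bar g^\delta_s$ across $J_\delta$, which is the entire content of passing from semi-nice to nice. In the paper this is where the regularity of $\lambda$ is actually used: press down $\delta\mapsto\gamma^{**}_\delta$ with Fodor to stabilize $\gamma^{**}$ on a stationary $S_1\subseteq S$, note that for $\delta\in S_1$ the data $\mathfrak e_s$ and $\bar g^\delta_s$ range over a fixed set of size $<\lambda$ of bimodule homomorphisms into $\modM_{\gamma^{**}}$, and apply Fodor again to get $S_2\subseteq S_1$ on which these are constant. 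Your ``pressing-down argument'' is invoked instead inside the $(G)_1$ verification, where it plays no role; the same relabeling affects (3) versus (4).

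A secondary point: you put $s$ into $J_\delta$ ``whenever decoding produces a genuine threat'' and then claim clause (F)(n) follows by reflecting any realization back to $\modP^{\mathfrak e}$ and contradicting $(\ast)_5$. But the type $q_s=\{\varphi^{\mathfrak e}_n(y-z_{s,n})\}$ has parameters $z_{s,n}\in\modM_\delta$, not in $\modP^{\mathfrak e}$, so its omission is not an automatic consequence of $(\ast)_5$ plus freeness of the amalgamation. The paper handles this by \emph{conditioning} membership in $J_\delta$ on the type actually being omitted (Case 1 of the inner induction) and discarding $\beta$ otherwise (Case 2); the point of the $(G)_1$ verification is then that in Case 2 the realizations of $q_{\delta,\beta}^{\eta,i}$ for $i=0,1$ themselves supply the witnesses $y,z,z_{n,i,\ell}$ required by $(G)_1$. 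Your version, which admits every threat and hopes to verify omission afterwards, would need an additional argument at exactly this point.
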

\begin{proof}
We prove  (1) and (2).  Clauses (3) and (4)
can be proved in a similar way. We start by fixing some notation and facts:
\begin{itemize}

\item Without loss of generality for ${\mathfrak e}\in \calE$ and $n<\omega$, the universe of
$\modN^{\mathfrak e}_n$ is a cardinal.

\item As $\cf(\lambda)=\kappa,$ let $\bar{\gamma}=\langle \gamma_\alpha^0:\alpha<\kappa\rangle$ be
an \incr\ and \cont\ sequence
cofinal in $\lambda$.

\item Let $\bar{ \gamma }^\ast=\langle  \gamma^\ast_\alpha: \alpha \leq \kappa \rangle $
 be an increasing and continuous sequence of ordinals with limit $\lambda$ such that
$\gamma^*_0>||
\frakss||$ and for each $\alpha < \kappa, \gamma^*_{\alpha+1}-\gamma^*_\alpha$ is $>0$ and divisible by
$||\frakss||^{\aleph_0}+ \gamma^\ast_\alpha$~. We further assume that for each $\alpha<\kappa, \gamma^*_\alpha \geq \gamma_\alpha.$

\item Let $\langle \modN^*_\alpha: \alpha < |\cal K|  \rangle$
be an enumeration of elements of $\cal K.$
\end{itemize}
  Set
  \[
  \bar S := \{   \gamma^\ast_\alpha: \alpha \in S\}.
  \]
Then $\bar S$ is  a stationary subset of $\lambda$. For each $n<\omega$, let $\tau_n$ be the countable vocabulary
$\tau_n:=\{\in, F, G, P_0, \cdots, P_{n-1}, c_0, c_1, \cdots, c_i, \cdots     \}$, where $F, G$ are unary function symbols, $P_0, \cdots, P_{n-1}$
are unary predicate symbols and $c_i$'s, for $i<\omega$ are constant symbols. We now apply the black box theorem (see Lemma \ref{shelah black box})
to get a sequence
\[
W=\{ (\cal N^\alpha, f^\alpha): \alpha < \alpha_*          \} \subseteq \cal F_\omega
\]
and a non-decreasing function
\[
\zeta: \alpha_* \to \bar S.
\]
By induction on $\epsilon\leq\kappa$ we will construct
${{\mathscr A} }_\epsilon$ which is going to be a semi-nice construction up to
$\epsilon$ in the following sense:

\begin{enumerate}
\item[(a)]
${\mathscr A} _\epsilon$ consists of
\begin{enumerate}
  \item[(a-1)] $\langle
\modM_\xi:\xi\leq\epsilon\rangle$,

  \item[(a-2)] $\langle \modN_t, h_t:t\in J_\xi,
\xi \in \epsilon \setminus S_{\epsilon}\rangle$,

  \item[(a-3)] $S_\epsilon \subseteq S \cap \epsilon,$

  \item[(a-4)] $\cal T_\epsilon,$

  \item[(a-5)]  $\langle({\mathfrak e}_s,{\mathbb P}_s,
\bar{\alpha}_s,\bar{t}_s,
\bar{g}_s,h_s,q_s):s
\in J_\delta$ and  $\delta\in S_\epsilon\rangle$,

  \item[(a-6)] $\langle \gamma^*_\xi:\xi\leq\epsilon\rangle$.
\end{enumerate}

\item[(b)]
$ {\mathscr A} $
satisfies all the  relevant parts of Definition \ref{w.s. nice definition}.
\item[(c)] we have
\begin{enumerate}
  \item[(c-1)] if
$\xi<\epsilon$, then $S_\xi=S_\epsilon
\cap \xi$,

\item[(c-2)] for $\delta\in S \cap\epsilon$,
$$J_\delta\subseteq\{\beta<
\alpha_*: \zeta(\beta)=\gamma_\delta\},$$

\item[(c-3)] if $ \epsilon \notin S$, then
 $\epsilon\notin S_{\epsilon+1}$ and
 $$ J_\epsilon=\{ \beta: \exists \eta \in \cal T_\epsilon,~\eta^{\frown}\langle\beta \rangle \in {\cal T}_{\epsilon+1}\}.$$
\end{enumerate}
% 11.05.05 % 11.05.05 **************
% 11.05.05    \item[(e)] % 11.05.05 \item[(d)]
% 11.05.05 ${\cal T}_\epsilon$ is \incr\
% 11.05.05 \cont\ with % 11.05.05 in
% 11.05.05 $\epsilon,
% 11.05.05 {\mathscr T }_0 % 11.05.05 J_0
% 11.05.05 =\{<>\}$ and if ${\cal T}_\epsilon
% 11.05.05 \not= {\cal T}_{\epsilon+1}$
% 11.05.05 then ${\cal T}_{\epsilon+1}
% 11.05.05 \setminus {\cal T}_{\epsilon}
% 11.05.05 \subseteq  % 11.05.05 =
% 11.05.05 \{\eta ^ \conc % 11.05.05 1
% 11.05.05 \langle\beta\rangle:\eta\in {\cal T}_\epsilon$,
% 11.05.05 $\eta^ \conc % 11.05.05 1
% 11.05.05 \langle\beta\rangle
% 11.05.05 \notin {\cal T}_{\epsilon+1}\setminus {\cal T}_\epsilon\}$
% 11.05.05 and for every $\eta\in J_\epsilon$,
% 11.05.05 the set $\{\alpha:\eta^ \conc % 11.05.05 1
% 11.05.05 \langle\alpha\rangle \in
% 11.05.05 {\cal T}_{\epsilon+1}\}$ is equal to
% 11.05.05 $ [ \gamma'_\epsilon , \gamma'_{\epsilon + 1 } ) $
\item[(d)]
Let $\langle \gamma'_\epsilon: \epsilon < \kappa  \rangle$ be an increasing and continuous sequence of ordinals cofinal in $\lambda$
with $ \gamma'_0= \Vert
  \modM^{{\frakss}}_ * \Vert  $. The sequence is such that
  \begin{enumerate}
 \item[(d-1)]  $\langle ( n_ \gamma , {\mathfrak e}   _ \gamma,f_\gamma ):
\gamma<
\gamma'_\epsilon\rangle$ is a collection  of triples $(n,{\mathfrak e},f)$ where
$n<\omega,{\mathfrak e}\in \callE$ and $f$ is a bimodule  homomorphism from
$\modN^{\mathfrak e}_n$ into $\modM_\epsilon$,

  \item[(d-2)] every triple  $(n,{\mathfrak e},f)$ as above appears as some $( n_ \gamma , {\mathfrak e}   _ \gamma,f_\gamma ),$
  for some $\gamma < \gamma'_\epsilon$, for some $\epsilon < \kappa$ large enough.
  \end{enumerate}

   \item[(e)] we have
   \begin{enumerate}
\item[(e-1)] $\langle {\cal T}_\epsilon: \epsilon < \kappa \rangle$ is increasing and
continuous,

\item[(e-2)] ${\cal T}_0
=\{<>\}$,

\item[(e-3)]  if $
 {\cal T}_{\epsilon+1} \setminus {\cal T}_\epsilon \neq \emptyset$,
then
$${\cal T}_{\epsilon+1}
= \{\eta^{\frown} \langle \beta \rangle: \eta \in {\cal T}_{\epsilon} \text{~and~} \beta \in  [ \gamma'_\epsilon , \gamma'_{\epsilon + 1 } ) \}.$$
\end{enumerate}
\end{enumerate}
We now start defining $\cal A_\epsilon$ for $\epsilon \leq \kappa.$

{\underline{$\epsilon=0$}.} Set $\modM_0=\modM_*$ and let $\gamma^*_0$ be the universe of $\modM_\ast$.

{\underline{$\epsilon$ is a limit ordinal.}}  Then set $\modM_\epsilon=\bigcup\limits_{\xi<\epsilon}\modM_\xi$, and for everything else, just take
the union of the previous ones.

{\underline{$\epsilon=\xi+1$ and $\xi \notin S$}.} In the same vein as of Definition \ref{w.s. nice definition}(E), we define $\modM_\epsilon$, so
\[
\modM_\epsilon = \modM_\xi \oplus \bigoplus\limits_{t \in J_\xi}\modN_t,
\]
where $J_\xi$ and $\modN_t$, for $t \in J_\xi$ are defined similar to  Definition \ref{w.s. nice definition}(E). Everything else in the definition
of $\cal A_\epsilon$ is clear how to define.

{\underline{$\epsilon=\delta+1$, $\delta \in S$ and $\gamma^*_\delta \neq \gamma^0_\delta$}.} Then define $\modM_\epsilon$
as in the previous case (i.e., as in Definition \ref{w.s. nice definition}(E)).

Note that in all of the above cases, if $\delta < \epsilon$ is in $S$, $s \in J_\delta$ and in stage $\delta$ the type $q_s=q^\delta_s$
is defined,  then in view of Lemma \ref{formulas and direct sum} (and its natural generalization to arbitrary direct sums), $q_s$
continues to be omitted at $\epsilon$.

{\underline{$\epsilon=\delta+1$, $\delta \in S$ and $\gamma^*_\delta = \gamma^0_\delta$}.}
This is the most important part of the induction which corresponds to the case (F) of Definition \ref{w.s. nice definition}. Set
\[
\alpha^*_\delta=\sup\{\alpha+1: \zeta(\alpha) \leq \gamma^0_\delta  \}.
\]
By induction on $\alpha \leq \alpha^*_\delta$ we define
\begin{itemize}
  \item $\modM_{\delta,\alpha}$,
  \item $w_{\delta,\alpha},$
  \item $\eta_\beta$ for $\beta \in w_{\delta, \alpha}$,
  \item ${\mathfrak x}_\beta=({\mathfrak e}_\beta,\modP_\beta,\bar \alpha_\beta,\bar
t_\beta,\bar g_\beta,h_\beta,
q_\beta)$ and $  \eta_\beta$ for
$\beta\in w_{\delta,\alpha}$,
\end{itemize}
such that:
\begin{enumerate}
\item[$(\alpha)$] $w_{\delta,\alpha}\subseteq \{\beta<\alpha:
\zeta(\beta)=\gamma^0_\delta\}$ and for $\beta<\alpha$, $w_{\delta,\alpha}
\cap \beta=w_{\delta, \beta}.$
\item[$(\beta)$] $\eta_\beta\in \rlim (f^\beta)$ for $\beta\in w_{\delta,\alpha}$.
\item[$(\gamma)$] clause (F) of Definition \ref{w.s. nice definition} is satisfied when we consider
\[
(M_{\delta, \alpha}, w_{\delta, \alpha}, \langle {\mathfrak x}_\beta: \beta \in w_{\delta, \alpha} \rangle )
\]
instead of
\[
(\modM_{\delta+1}, J_\delta, \langle  ({\mathfrak e}_s,{\mathbb P}_s,
\bar{\alpha}_s,\bar{t}_s,
\bar{g}_s,h_s,q_s)       : s \in J_\delta  \rangle).
\]
\end{enumerate}
{\underline{$\alpha=0$}.} Then set $\modM_{\delta, 0}=\modM_\delta$ and $w_{\delta, 0}=\emptyset$. There is nothing else to define.

\underline{$\alpha$ is a limit ordinal.} Then set $M_{\delta, \alpha}=\bigcup\limits_{\xi<\alpha}M_{\delta, \xi}$,
$w_{\delta, \alpha}=\bigcup\limits_{\xi<\alpha}w_{\delta, \xi}$ and for $\beta \in w_{\delta, \alpha}$ pick some $\xi < \alpha$
such that $\beta \in w_{\delta, \xi}$ and then define $\eta_\beta$ and ${\mathfrak x}_\beta$ as defined at step
$\xi$ of the induction.

\underline{$\alpha=\beta+1$ and $\zeta(\beta) \neq \gamma^0_\delta$.} Then set $M_{\delta, \alpha}=M_{\delta, \beta}$
and $w_{\delta, \alpha}=w_{\delta, \beta}$.

\underline{$\alpha=\beta+1$ and $\zeta(\beta) = \gamma^0_\delta$.} In this case we decide if we add $\beta$ to our final $J_\delta$
or not, where $J_\delta$ is going to be $w_{\delta, \alpha^*_\delta}$. We consider two cases.

\par\noindent
\underline{Case 1.} There are $i<2$
and $\eta\in \rlim(f^\beta)$
\st\ $(\modM_{\delta, \beta}^{\eta, i}, w_{\delta, \beta+1}, \langle {\mathfrak x}_\nu: \nu \in w_{\delta, \beta+1} \rangle )$
satisfies the conclusion of clause (F) of Definition \ref{w.s. nice definition}, where $(\modM_{\delta, \beta}^{\eta, i}, w_{\delta, \beta+1}, \langle {\mathfrak x}_\nu: \nu \in w_{\delta, \beta+1} \rangle )$
is defined as follows:
\begin{enumerate}
  \item[($\delta$)] $w_{\delta, \alpha}:=w_{\delta, \beta} \cup \{ \beta \},$
  \item[($\epsilon$)] $\eta_\beta:=\eta,$
  \item[($\varepsilon$)] ${\mathfrak x}_\beta:=({\mathfrak e}_\beta,\modP_\beta,\bar \alpha_\beta,\bar
t_\beta,\bar g_\beta,h_\beta,
q_\beta)$, where this sequence is defined as follows:
\begin{enumerate}
  \item[($\varepsilon$-1)] $\bar t_\beta := \langle t_n: n < \omega \rangle,$ where for each $n<\omega, t_n = G^{\cal N^\beta}(\eta(n))$,
  \item[($\varepsilon$-2)] for some $\alpha_n < \delta, t_n \in J_{\alpha_n}$ (note that such an $\alpha_n$ is unique if it exists),
  \item[($\varepsilon$-3)] $\bar\alpha_\beta:=\langle \alpha_n: n< \omega  \rangle$ is an increasing sequence of ordinals in $\delta \setminus S$ cofinal in $\delta$,
  \item[($\varepsilon$-4)] for each $n, m<\omega, \mathfrak e_{t_n}=\mathfrak e_{t_m}.$ Set $\mathfrak e := \mathfrak e_{t_0}$,

  \item[($\varepsilon$-5)] $\{ x_n^{\mathfrak e}, h_{t_n}, t_n, \alpha_n    \} \in N_{n+1}^\beta,$

  \item[($\varepsilon$-6)] $\bar g_\beta :=\bar g_{\beta}^{\eta, i}:= \langle  g_{\beta, n}^{\eta, i}: n< \omega    \rangle$ where $g_n=g_{\beta, n}^{\eta, i}: \modN_n^{\mathfrak e} \to \modM_{\gamma_\delta^{**}},$ for some $\gamma^{**}_\delta$, is defined by $g_n = i \cdot f_{c_n^{\cal N^\beta}}$,

  \item[($\varepsilon$-7)] $z_{\beta, n}^{\eta, i}:=F^{\cal N^\beta} \left(\sum\limits_{\ell < n} (g_\ell+h_{t_\ell})(x_\ell^{\mathfrak e})\right)$,

  \item[($\varepsilon$-8)] $q_\beta:=q_{\delta, \beta}^{\eta, i} := \{\varphi_n^{\mathfrak e}(y- z_{\beta, n}^{\eta, i}): n< \omega   \}$,

  \item[($\varepsilon$-9)] $h_\beta:=h_{\delta, \beta}^{\eta, i}: \modP^{\mathfrak e} \to \modP_\beta$ is onto,

  \item[($\varepsilon$-10)]  $\modM_{\delta, \beta}^{\eta, i}$ is generated by $\modM_{\delta, \beta} \cup \modP_\beta$,
\end{enumerate}
\end{enumerate}
In this case we set
\begin{itemize}
  \item $\modM_{\delta, \beta+1}= \modM_{\delta, \beta}^{\eta, i}$,
  \item $w_{\delta, \beta+1}=w_{\delta, \beta} \cup  \{ \beta \}$,

  \item $\eta_\beta=\eta,$

   \item for  $\nu \in w_{\delta, \beta+1}$ we let $\mathfrak x_\nu$ be as above,
\end{itemize}
\par\noindent
\underline{Case 2.} The above situation does not hold. Then we set $\modM_{\delta, \beta+1}:=\modM_{\delta, \beta}$
and $w_{\delta, \beta+1}:=w_{\delta, \beta}$.

Having considered the above cases, we set
\begin{itemize}
  \item $\modM_{\delta+1}:=\modM_{\delta, \alpha^*_\delta}$,
  \item $J_\delta:=w_{\delta, \alpha^*_\delta}$,
  \item for $s \in J_\delta$, $\langle  ({\mathfrak e}_s,{\mathbb P}_s,
\bar{\alpha}_s,\bar{t}_s,
\bar{g}_s,h_s,q_s)       : s \in J_\delta  \rangle$ is defined to be $\mathfrak x_s.$
\end{itemize}
This completes our inductive construction of $\cal A_\epsilon$'s for $\epsilon \leq \kappa.$
Finally, we set $\cal A :=\cal A_\kappa$ and $\modM:=\modM_\kappa.$
We are going to show that $\cal A$ is as required.

Clearly we have gotten a weakly semi-nice construction. We now show that it is indeed a semi-nice construction.
Let us prove the property presented in  Definition \ref{nice construction modified}(3)$(G)_1$. Thus suppose that
${\mathfrak e}\in \callE$, ${\bf f}$  is an endomorphism of $\modM$ as
an $\ringR$-module, $\gamma<\kappa$ and for each $n<\omega$ suppose ${\bf g_n}: \modN_n^{\mathfrak e} \to \modM_\gamma$
is a bimodule homomorphism. Suppose by contradiction that the property presented in Definition \ref{nice construction modified}(3)$(G)_1$ fails. Let
\[
{\bf g}:  \cal T_\kappa \to \cal T_\kappa,
\]
where $\cal T_\kappa=\lambda^{<\omega}$, be a one-to-one order preserving (i.e, $\eta \lhd \nu$ iff ${\bf g}(\eta) \lhd {\bf g}(\nu)$) embedding such that for each $\eta \in \cal T_\kappa \setminus \cal T_0, \mathfrak e_{{\bf g}(\eta)} =\mathfrak e$.
Let
\[
\mathcal{B} = \langle B, \in^{\cal B},  F^{\cal B}, G^{\cal B}, (c_n^{\cal B})_{n<\omega}, (P_n^{\cal B})_{n < \omega}          \rangle
\]
be a $\tau = \bigcup\limits_{n<\omega}\tau_n$-structure satisfying the following properties:
\begin{itemize}
%\item $\mathcal B$ is countably saturated,
  \item $(\cal B, \in^{\cal B})$ expands $(\cal H_{<\aleph_1}(\modM), \in)$,
  \item $F^{\cal B} \restriction \modM={\bf f},$
  \item $G^{\cal B}={\bf g},$
  \item for $n<\omega, ~c_n^{\cal B}=\gamma_n$, where $\gamma_n$ is such that $(n, \mathfrak e, {\bf g}_n)=(n_{\gamma_n}, \mathfrak e_{\gamma_n}, f_{\gamma_n}).$
\end{itemize}
Pick $\gamma^{**} \in \kappa \setminus S$ such that $\gamma^{**}> \gamma$ and for each
$n<\omega, ~{\bf f}({\bf g}_n(x_n^{\mathfrak e})) \in \modM_{\gamma^{**}}$. Recall that $\Rang({\bf g}_n) \subseteq \modM_\gamma \subseteq \modM_{\gamma^{**}}.$
Note that the set
\[
\{ \delta \in S: \gamma^*_\delta=\gamma_\delta   \}
\]
is a stationary set. We are going to use  the black box theorem.
In the light of  Lemma \ref{shelah black box} we observe that  there are $\delta \in S$ and $\beta\in J_{\delta }$ such that
\begin{enumerate}
  \item $\zeta(\beta)=\gamma_\delta,$
  \item $ \gamma^*_\delta=\gamma_\delta,$
  \item $\cal N^\beta \prec \cal B,$ in particular,
  \begin{enumerate}
    \item $f^\beta = {\bf f} \restriction \cal N^\beta,$
    \item for all $n<\omega, c_n^{\cal N^\beta} = c_n^{\cal B}$,
    \item for $n<\omega, {\bf g_n}=f_{c_n^{\mathcal N^\beta}}$.
  \end{enumerate}
\end{enumerate}
Let $\epsilon =\delta+1$. Note that as $\delta \in S$
and $\gamma^*_\delta=\gamma_\delta$, at step $\epsilon$ of the construction we are
at one of the cases (1) or (2). The nontrivial part is to check the property presented in Definition \ref{w.s. nice definition}(n).  If case (1) occurs we are immediately done. So suppose that we are in case (2), and hence, by the construction, we are in one of the following situations:
\begin{enumerate}
  \item[$(\star)_1$] for all $\eta \in \lim(f^\beta)$ and all $i<2$  the type $q_{\delta, \beta}^{\eta, i}$ is realized in $\modM_{\delta, \beta}^{\eta, i}.$

  \item[$(\star)_2$] there is some $\delta' \in S_\delta$ and some $\beta' \in J_{\delta'}$ such that for some
  $\eta \in \lim(f^{\beta'})$ and some $i<2, q_{\delta', \beta'}^{\eta, i}$ is well-defined and is omitted in $\modM_{\delta', \beta'}^{\eta, i},$  but it is realized in $\modM_{\delta, \beta}^{\eta, i}.$

  \item[$(\star)_3$] there is some $\beta'\in w_{\delta, \beta}$,  some
  $\eta \in \lim(f^{\beta'})$ and some $i<2$ such that $\beta' \leq 2^{\aleph_0}+\beta$ and $q_{\delta, \beta'}^{\eta, i}$ is well-defined and is omitted in $\modM_{\delta, \beta'}^{\eta, i},$  but it is realized in $\modM_{\delta, \beta}^{\eta, i}.$
\end{enumerate}
We only consider the case $(\star)_1$ and leave the other cases to the reader, as they are easier to prove. Pick some
$\eta \in \lim(f^\beta)$. For $i<2$ the type
\[
q_{\delta, \beta}^{\eta, i}=\{ \varphi^{\mathfrak e}_{n}(y - z_{\beta, n}^{\eta, i}): n<\omega                \}
\]
is realized in  $\modM_{\delta, \beta}^{\eta, i}$. By our construction, $\modM_{\delta, \beta}^{\eta, i}$
is generated by $\modM_{\delta, \beta} \cup \modP_\beta$ and $h_{\delta, \beta}^{\eta, i}: \modP^{\mathfrak e} \to \modP_\beta$
is onto, thus we can find $y_i \in \modM_{\delta, \beta}$ and $z_i \in \modP^{\mathfrak e}$
such that $q_{\delta, \beta}^{\eta, i}$ is realized by
$y_i+h_{\delta, \beta}^{\eta, i}(z_i)$. Hence for all $n<\omega$,
\[
\modM_{\delta, \beta}^{\eta, i} \models \varphi^{\mathfrak e}_{n}( y_i+h_{\delta, \beta}^{\eta, i}(z_i) - z_{\beta, n}^{\eta, i}).
\]
%As we can change $y_0$ and $y_1,$ without loss of generality we can assume that $z_0=z_1$. Let $z$ denote this common value.

Recall from  ($\varepsilon$-6) that:
$$ 
g_{\beta, n}^{\eta, i}=i\cdot f_{c_n^{\mathcal N^\beta}}= i \cdot {\bf g_n} \quad \quad\quad (\ast)$$

Hence
\[
 z_{\beta, n}^{\eta, i}=F^{\cal N^\beta} \left(\sum\limits_{\ell < n}(h_{t_\ell}+i\cdot {\bf g}_\ell)(x_\ell^{\mathfrak e}) \right)=
 \sum\limits_{\ell < n} {\bf f}(h_{t_{\ell}}(x_\ell^{\mathfrak e})) + i \cdot \sum\limits_{\ell < n}{\bf f}({\bf g}_\ell(x_\ell^{\mathfrak e})).
\]
It follows for each $i<2 $ that

\[\begin{array}{ll}
 y_i+h_{\delta, \beta}^{\eta, i}(z_i) - z_{\beta, n}^{\eta, i}
&=  y_i+h_{\delta, \beta}^{\eta, i}(z_i) - \sum\limits_{\ell < n} {\bf f}(h_{t_{\ell}}(x_\ell^{\mathfrak e})) - i \cdot \sum\limits_{\ell < n}{\bf f}({\bf g}_\ell(x_\ell^{\mathfrak e})) \\
&\in \varphi_n^{\mathfrak e}(\modM_{\delta, \beta}^{\eta, i}).   \\
%& \subseteq \varphi_n^{\mathfrak e}(\modM_{\kappa}) \\
\end{array}\]

%Now, we subtract the above equations for $i=0, 1$ and we get
%\begin{enumerate}
%\item[(4)] $(y_1-y_0) + \left( h_{\delta, \beta}^{\eta, 1}(z_1)-   h_{\delta, \beta}^{\eta, 0}(z_0)  \right)-   %\sum\limits_{\ell < n}{\bf f}({\bf g}_\ell(x_\ell^{\mathfrak e}))  \in \varphi_n^{\mathfrak e}(\modM_{\kappa}).$
%\end{enumerate}
%Let $n(\star)< \omega$ be such that $z_0, z_1 \in \sum\limits_{\ell < n(\star)}\Rang({\bf h_n}^{\mathfrak e})=\bigoplus\limits_{\ell < n(\star)}\Rang({\bf h_n}^{\mathfrak e})$.
\iffalse
On the other hand, $\sum\limits_{\ell < n}{\bf f}({\bf g}_\ell(x_\ell^{\mathfrak e})) \in \modM_{\gamma^{**}}$,
 $y_1-y_0 \in \modM_{\delta, \beta}$ and $h_{\delta, \beta}^{\eta, 1}(z_1),  h_{\delta, \beta}^{\eta, 0}(z_0) \in \modP_\beta,$
so by projecting $\modM_\kappa$ onto $\modP^{\mathfrak e},$ we  conclude that
\begin{enumerate}
\item[(5)]
$h_{\delta, \beta}^{\eta, 1}(z_1)-   h_{\delta, \beta}^{\eta, 0}(z_0) \in \varphi_n^{\mathfrak e}(\modP_{\beta}) \subseteq \varphi_n^{\mathfrak e}(\modM_{\kappa}).$
\end{enumerate}
\fi

By applying Lemma \ref{properties of w.s.n.c}(3) to $z_0 \in \modP^{\mathfrak e}$
and $y_0 - z_{\beta, n}^{\eta, 0} \in \modM_{\delta, \beta}$
we can find some $z'_{n, 0} \in \sum\limits_{\ell < \omega}\Rang({\bf h}_\ell^{\mathfrak e})$ equipped
with the following two properties:
\begin{enumerate}
  \item[(4)] $z_0 - z_{n, 0}' \in \varphi_n^{\mathfrak e}(\modP^{\mathfrak e})$,
  \item[(5)] $y_0 - z_{\beta, n}^{\eta, 0}+h_{\delta, \beta}^{\eta, 0}(z_{n, 0}') \in \varphi_n^{\mathfrak e}(\modM_{\delta, \beta})$.
\end{enumerate}
In the same vein, there is
$z_{n, 1}' \in \sum\limits_{\ell < \omega}\Rang({\bf h}_\ell^{\mathfrak e})$ such that
\begin{enumerate}
  \item[(6)] $z_1 - z_{n, 1}' \in \varphi_n^{\mathfrak e}(\modP^{\mathfrak e})$,
  \item[(7)] $y_1 - z_{\beta, n}^{\eta, 1}+h_{\delta, \beta}^{\eta, 1}(z_{n, 1}') \in \varphi_n^{\mathfrak e}(\modM_{\delta, \beta})$.
\end{enumerate}
Given $n<\omega$, pick $k_n < \omega$ such that
\[
z_{n, 0}', z_{n, 1}' \in \sum\limits_{\ell < k_n}\Rang({\bf h}_\ell^{\mathfrak e}).
\]
So we can find $\{z'_{n, 0, \ell}: \ell  < k_n  \}$ and $\{z'_{n, 1, \ell}: \ell  < k_n  \}$ such that for $i<2,~z'_{n, i, \ell} \in \Rang({\bf h}_\ell^{\mathfrak e})$
and $z_{n, i}' = \sum\limits_{\ell < k_n} z'_{n, i, \ell}.$ For $\ell < k_n$ pick some $z_{n, i, \ell} \in \modN_n^{\mathfrak e}$ with
${\bf h}_\ell^{\mathfrak e}(z_{n, i, \ell})=z'_{n, i, \ell}$.

\iffalse
Let also
\begin{itemize}
  \item $y=y_1-y_0$,
  \item  $z=z_1-z_0$,
  %  \item $z_{n, i}' = \sum\limits_{\ell < k_n} z'_{n, i, \ell}.$
  %\item $z_{n, i}'' = \sum\limits_{\ell < k_n} z''_{n, i, \ell}.$
  \item $z_{n,\ell}=z_{n, 1, \ell} - z_{n, 0, \ell}$ ( where for $\ell \geq k_n$ we assume $z_{n, 0, \ell} = z_{n, 1, \ell}=0$).
\end{itemize}
\fi
Set $z=z_1-z_0.$ In view of  clauses (4) and (6) above, we have
\[
z \in (z_{n,1}'-z_{n,0}')+ \varphi_n^{\mathfrak e}(\modP^{\mathfrak e})= \sum\limits_{\ell < k_n} {\bf h}_\ell^{\mathfrak e} (z_{n, 1,\ell})- \sum\limits_{\ell < k_n} {\bf h}_\ell^{\mathfrak e} (z_{n, 0,\ell})+ \varphi_n^{\mathfrak e}(\modP^{\mathfrak e})
\]
for all $n$.
Thus part (a) of Definition \ref{nice construction modified}(3)$(G)_1$ is satisfied.
%By clause(4), part (b) of Definition \ref{nice construction modified}(3)($G_1$) is satisfied as well.

For part (b) of Definition \ref{nice construction modified}(3)$(G)_1$, by items  (5) and (7) we have
\[
\sum\limits_{\ell < n}{\bf f}({\bf g}_\ell(x_\ell^{\mathfrak e})) \in (y_1-y_0)+ \left(h_{\delta, \beta}^{\eta, 1}(z'_{n, 1}) -h_{\delta, \beta}^{\eta, 0}(z'_{n, 0})\right) + \varphi_n^{\mathfrak e}(\modM_{\delta,\beta})\quad(\dagger)
\]
%\sum\limits_{\ell < k_n}{\bf g_\ell}(z_{n, \ell})
Now let $\modK$ be such that $\modM_{\delta, \beta}=\modM_{\gamma^{**}}\oplus \modK$ and let $\pi: \modM_{\delta, \beta} \to \modK$ be the natural projection. We apply $(\ast)$ for $i=1$ along with the property presented in  Definition \ref{w.s. nice definition}(F)(k) to see:
\begin{enumerate}
\item[(8)] $h_{\delta, \beta}^{\eta, 1}(z_{n,1}') = \sum\limits_{\ell<k_n} h_{\delta, \beta}^{\eta, 1}(z'_{n, 1, \ell})= \sum\limits_{\ell<k_n} h_{\delta, \beta}^{\eta, 1}({\bf h}_\ell^{\mathfrak e}(z_{n, 1, \ell}))= \sum\limits_{\ell<k_n}(h_{t_\ell}+{\bf g}_\ell)(z_{n, 1, \ell})$.
\end{enumerate}
Also, we apply $(\ast)$ for $i=0$ along with the property presented in  Definition \ref{w.s. nice definition}(F)(k) to see:
\begin{enumerate}
\item[(9)] $h_{\delta, \beta}^{\eta, 0}(z_{n,0}') = \sum\limits_{\ell<k_n} h_{\delta, \beta}^{\eta, 0}(z'_{n, 0, \ell})=\sum\limits_{\ell<k_n} h_{\delta, \beta}^{\eta, 1}({\bf h}_\ell^{\mathfrak e}(z_{n, 0, \ell}))=  \sum\limits_{\ell<k_n}h_{t_\ell}(z_{n, 0, \ell}).$\end{enumerate}
 By the choice of $\gamma^{**}$, ${\bf f}({\bf g}_\ell(x_\ell^{\mathfrak e})) \in \modM_{\gamma^{**}}$ and $\rang({\bf g_\ell}) \subseteq \modM_\gamma \subseteq \modM_{\gamma^{**}}$ and hence the following assertions are valid as well:
\begin{enumerate}
\item[(10)] for all $\ell < n, \pi({\bf f}({\bf g}_\ell(x_\ell^{\mathfrak e})))=0$,

\item[(11)] for all $\ell < k_n$ and $i<2,$ $\pi({\bf g}_\ell(z_{n, i, \ell}))=0$,
\end{enumerate}

We may assume without loss  of the generality that $\alpha_{\ell}>\gamma^{**}$ for all $\ell<\omega$, where $\alpha_{\ell}$ is such that $t_{\ell}\in J_{\alpha_\ell}$.
It then follows from Definition \ref{w.s. nice definition}(E) that for each $\ell < \omega,$ $\rang(h_{t_\ell}) \subseteq \modK$
and hence:
\begin{enumerate}
\item[(12)] for all $\ell < k_n, \pi(h_{t_\ell}(z'_{n, 1, \ell}))=h_{t_\ell}(z'_{n, 1, \ell}).$
\end{enumerate}
By setting \begin{enumerate}
	\item[(13)] $y':=y_1-y_0$ and $y:=y'-\pi(y')$
\end{enumerate} in the previous formulas we observe that:
 \[\begin{array}{ll}
\sum\limits_{\ell < n}{\bf f}({\bf g}_\ell(x_\ell^{\mathfrak e}))&\stackrel{(10)}=\sum\limits_{\ell < n}{\bf f}({\bf g}_\ell(x_\ell^{\mathfrak e}))-\pi(\sum\limits_{\ell < n}{\bf f}({\bf g}_\ell(x_\ell^{\mathfrak e})))\\
&\stackrel{(\dagger)}\in (y_1-y_0)+ \left(h_{\delta, \beta}^{\eta, 1}(z'_{n, 1}) -h_{\delta, \beta}^{\eta, 0}(z'_{n, 0})\right) + \varphi_n^{\mathfrak e}(\modM_{\delta,\beta}))\\
& \  \ -\pi (y_1-y_0) -\pi \left(h_{\delta, \beta}^{\eta, 1}(z'_{n, 1}) -h_{\delta, \beta}^{\eta, 0}(z'_{n, 0})\right) + \varphi_n^{\mathfrak e}(\modM_{\delta,\beta}))
\\
&\stackrel{(13)}=y+  \left(h_{\delta, \beta}^{\eta, 1}(z'_{n, 1}) -h_{\delta, \beta}^{\eta, 0}(z'_{n, 0})\right) -\pi \left(h_{\delta, \beta}^{\eta, 1}(z'_{n, 1}) -h_{\delta, \beta}^{\eta, 0}(z'_{n, 0})\right) + \varphi_n^{\mathfrak e}(\modM_{\delta,\beta})\\
&\stackrel{(8+9)}=y+  \sum\limits_{\ell<k_n}(h_{t_\ell}+{\bf g}_\ell)(z_{n, 1, \ell}) - \sum\limits_{\ell<k_n}h_{t_\ell}(z_{n, 0, \ell})
 \\
& \  \  -\pi \sum\limits_{\ell<k_n}(h_{t_\ell}+{\bf g}_\ell)(z_{n, 1, \ell}) +\pi \sum\limits_{\ell<k_n}h_{t_\ell}(z_{n, 0, \ell}) +\varphi_n^{\mathfrak e}(\modM_{\delta,\beta})\\
&\stackrel{(11+12)}=y+ \sum\limits_{\ell<k_n}(h_{t_\ell}+{\bf g}_\ell)(z_{n, 1, \ell}) - \sum\limits_{\ell<k_n}h_{t_\ell}(z_{n, 0, \ell})
 \\
& \  \  - \sum\limits_{\ell<k_n}(h_{t_\ell})(z_{n, 1, \ell}) +\sum\limits_{\ell<k_n}h_{t_\ell}(z_{n, 0, \ell}) +\varphi_n^{\mathfrak e}(\modM_{\delta,\beta})\\
&=y+  \sum\limits_{\ell<k_n}{\bf g}_\ell(z_{n, 1, \ell})+ \varphi_n^{\mathfrak e}(\modM_{\delta,\beta}).
\end{array}\]
This completes the proof of  Definition \ref{nice construction modified}(3)$(G)_1$(b).

We now sketch the proof of property $(G)_2$ of Definition \ref{nice construction modified}(3), as its details are similar to the  above.  Let  ${\mathfrak e}\in \calE^\frakss$ and
for $n<\omega$ and $\alpha\in \kappa\smallsetminus S$.
By ${\bf h}_{\alpha,n}$
we mean an embedding of $\modN^{\mathfrak e}_n$ into
$\modM_\kappa$
\st\ $\modM_\alpha+\Rang ({\bf h}_{\alpha,n})
=\modM_\alpha \oplus \Rang
({\bf h}_{\alpha,n})\leq_{\aleph_0} \modM_\kappa$.

We look at the following set
\[
C:=\{\delta < \kappa: \forall \alpha < \delta~ \forall n<\omega, \Rang({\bf h}_{\alpha, n}) \subseteq \modM_\delta            \},
\]
which $C$ is  a club subset of $\kappa$.\footnote{Club means closed and unbounded.}
Let
\[
{\bf g}:  \cal T_\kappa \to \cal T_\kappa
\]
be such that for all $\eta, \mu \in \lim({\bf g})$ and all $n<\omega, {\bf g}(\eta(n))={\bf g}(\mu(n))$.
Let
\[
\mathcal{B} = \langle B, \in^{\cal B},  F^{\cal B}, G^{\cal B}, (c_n^{\cal B})_{n<\omega}, (P_n^{\cal B})_{n < \omega}          \rangle
\]
be a $\tau = \bigcup\limits_{n<\omega}\tau_n$-structure satisfying the following properties:
\begin{itemize}
%\item $\mathcal B$ is countably saturated,
  \item $(\cal B, \in^{\cal B})$ expands $(\cal H_{<\aleph_1}(\modM), \in)$,
 % \item $F^{\cal B} \restriction \modM={\bf f},$
 \item $G^{\cal B}={\bf g},$
  \item for $n<\omega, ~c_n^{\cal B}=\gamma_n$, where $\gamma_n$ is such that $(n, \mathfrak e, {\bf h}_{\alpha_n, n}-h_{t_n})=(n_{\gamma_n}, \mathfrak e_{\gamma_n}, f_{\gamma_n}),$ where $t_n={\bf g}(\eta(n))$ and $\alpha_n$ is such that $t_n \in J_{\alpha_n}$.
\end{itemize}
In the same vein, we can find $\delta \in S \cap C$, $\beta\in J_{\delta }$ such that
\begin{enumerate}
  \item $\zeta(\beta)=\gamma_\delta,$
  \item $ \gamma^*_\delta=\gamma_\delta,$
  \item $\cal N^\beta \prec \cal B,$ in particular,
  \begin{enumerate}
    \item $f^\beta = {\bf f} \restriction \cal N^\beta,$
    \item for all $n<\omega, c_n^{\cal N^\beta} = c_n^{\cal B}$,
    \item for $n<\omega, {\bf h}_{\alpha_n, n}-h_{t_n}=f_{c_n^{\mathcal N^\beta}}$, where $t_n$ and $\alpha_n$ are defined as above.
  \end{enumerate}
      \item The properties presented in Clause (F) of Definition \ref{w.s. nice definition} are hold.
\end{enumerate}
According to Definition \ref{w.s. nice definition}(F)(k), we have
\[
{\bf h_{\beta}^{\delta}} \circ {\bf h}_n=h_{t_n} + ({\bf h}_{\alpha_n, n}-h_{t_n})={\bf h}_{\alpha_n, n},
\]
as claimed by  $(G)_2$ from Definition \ref{nice construction modified}(3).

In order to prove clause (2) of the theorem,
suppose further that $\lambda$ is a regular cardinal. We show that by shrinking $S$, if necessary, we can assume that the above constructed structure is indeed a nice construction, i.e., we can omit ``semi'' from it. The map $\delta \mapsto \gamma^{**}_\delta$ is a regressive function on $S$. We are going to use the Fodor's lemma. This says that $\delta \mapsto \gamma^{**}_\delta$  is constant on some stationary subset $S_1$ of $S$, thus for some $\gamma^{**}$ and for all $\delta \in S_1, \gamma^{**}_\delta=\gamma^{**}$. Now note that for each $\delta \in S_1$ and each
$s \in J_\delta, \mathfrak e^\delta_s$ is of the form $\mathfrak e_\beta$ for some $\beta < \gamma^{**}$,
so again by Odor's lemma
and for each $s \in J_\delta, \bar g^\delta_s$ is an $\omega$-sequence $\langle g^\delta_{s, n}: n<\omega   \rangle$
where for each $n, g^{\delta}_{s, n}: \modN_n^{\mathfrak{e}_s} \to \modM_{\gamma^{**}}$ is a bimodule homomorphism.
But, the set
\[
\{g: g: \modN_n^{\mathfrak{e}_s} \to \modM_{\gamma^{**}} \text{~is a bimodule homomorphism}\}
\]
has size less than $\lambda$.
%hence may assume that for each $\delta \in S_1, $
Now given an $\delta \in S_1$ for which clause (F) of Definition \ref{w.s. nice definition} holds, the sets
\[
\{ \mathfrak e_s: s \in J_\delta \} ~~\&~~ \{\bar g^\delta_s: s \in J_\delta \}
\]
have size less than $\lambda$. By another application of Fodor's lemma, we can find a stationary subset $S_2$
of $S_1$ such that the sets $\{ \mathfrak e_s: s \in J_\delta \}$ and  $\{\bar g^\delta_s: s \in J_\delta \}
$ are the same for all $\delta\in S_2.$
This concludes the required result.
\end{proof}

\section{Any endomorphism is somewhat definable}

\label{Any endomorphism is somewhat definable}
In this section we shall investigate what ``$\bar \modM=\langle \modM_\alpha: \alpha \leq \kappa \rangle$ is a semi-nice construction''
gives us and look somewhat at various implications.
\begin{Definition}
Given $\alpha < \kappa$,
  $n<\omega$ and $\fucF:\modM_\kappa\to \modM_\kappa  $ a bimodule homomorphism  the notation
$(\Pr)^{n}_\alpha{[}\fucF,{\mathfrak e}{]}$
 stands for  the following  statement: if ${\bf h}:\modN^{\mathfrak e}_{n}\to\modM_\kappa$ is a bimodule homomorphism,
then
$$
{\bf f}({\bf h}(x_n^{\mathfrak e}))\in \modM_\alpha+\varphi^{\mathfrak e}_\ell(\modM_\kappa)+\Rang({\bf h}),
$$ for all $\ell<\omega$.
\end{Definition}

As an application of the results of Section 3, we present the following statement. This plays an essential role in proving some stronger versions of $(\Pr)^{n}_\alpha{[}\fucF,{\mathfrak e}{]}$, see blow.
\begin{lemma}
\label{prnalphafe}
Let $ {\mathscr A}$
be a semi-nice construction for $(\lambda,{\frakss}, S
,\kappa)$ and suppose ${\bf f}:\modM_\kappa\to
\modM_\kappa$ is  an endomorphism of
$\ringR$-modules  and suppose $\kappa\geq\kappa (\calE)$.
Then for every ${\mathfrak e}\in \callE$
there are $\alpha \in \kappa\setminus S$ and
$n(\ast)<\omega$ such that
$(\Pr)^{n(\ast)}_\alpha{[}\fucF,{\mathfrak e}{]}$ holds.
\end{lemma}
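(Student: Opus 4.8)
\emph{Approach.} I would argue by contradiction, the essential tool being property $(G)_1$ of a semi-nice construction (Definition \ref{nice construction modified}(3)); I will also use the elementary behaviour of the operators $\varphi^{\mathfrak e}_n$ and $\psi^{\mathfrak e}_n$ recorded in Lemmas \ref{properties of w.s.n.c} and \ref{formula vs hom}. Two easy reductions come first. The statement $(\Pr)^{n}_\alpha[{\bf f},{\mathfrak e}]$ is monotone in both indices: in $\alpha$ because $\modM_\alpha\subseteq\modM_{\alpha'}$ for $\alpha\le\alpha'$, and in $n$ because any bimodule homomorphism ${\bf h}:\modN^{\mathfrak e}_{n'}\to\modM_\kappa$ yields ${\bf h}\circ g^{\mathfrak e}_{n,n'}:\modN^{\mathfrak e}_{n}\to\modM_\kappa$, which sends $x^{\mathfrak e}_n$ to ${\bf h}(x^{\mathfrak e}_{n'})$ and has range inside $\Rang({\bf h})$. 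Next, since $\varphi^{\mathfrak e}_{\ell+1}\vdash\varphi^{\mathfrak e}_\ell$, a membership $c\in\modM_\alpha+\varphi^{\mathfrak e}_\ell(\modM_\kappa)+\Rang({\bf h})$ that holds for arbitrarily large $\ell$ holds for every $\ell$, and — as $S^\kappa_{\aleph_0}\setminus S$ is stationary, so $\kappa\setminus S$ is unbounded in $\kappa$ — a witnessing ordinal may always be moved up into $\kappa\setminus S$. Thus, fixing ${\mathfrak e}\in\callE$ and assuming toward a contradiction that $(\Pr)^{n}_\alpha[{\bf f},{\mathfrak e}]$ fails for every $\alpha\in\kappa\setminus S$ and every $n<\omega$, it in fact fails for every $\alpha<\kappa$ and every $n<\omega$.

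\emph{The core computation.} Given a bimodule homomorphism ${\bf h}:\modN^{\mathfrak e}_{n}\to\modM_\kappa$, note that $\Rang({\bf h})$ is generated by $<\kappa$ elements, so — $\kappa$ being regular — $\Rang({\bf h})\subseteq\modM_\gamma$ for some $\gamma<\kappa$; I would apply $(G)_1$ to this $\gamma$ and to the sequence $\bar{\bf g}$ defined by ${\bf g}_{n}={\bf h}$ and ${\bf g}_\ell=0$ for $\ell\neq n$. Then ${\bf g}_\ell(x^{\mathfrak e}_\ell)$ equals ${\bf h}(x^{\mathfrak e}_{n})$ for $\ell=n$ and vanishes otherwise, so for all large $m$ the left side $\sum_{\ell<m}{\bf f}({\bf g}_\ell(x^{\mathfrak e}_\ell))$ of clause (b) of $(G)_1$ (its running index written $m$) equals ${\bf f}({\bf h}(x^{\mathfrak e}_{n}))$, while $\sum_\ell{\bf g}_\ell(z_{m,1,\ell})={\bf h}(z_{m,1,n})\in\Rang({\bf h})$; hence clause (b) reads
\[
{\bf f}({\bf h}(x^{\mathfrak e}_{n}))\ \in\ y+\Rang({\bf h})+\varphi^{\mathfrak e}_m(\modM_\kappa)
\]
for a single $y\in\modM_\kappa$. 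As $y\in\modM_{\alpha({\bf h})}$ for some $\alpha({\bf h})\in\kappa\setminus S$, the reduction on $\varphi^{\mathfrak e}_\ell$ gives ${\bf f}({\bf h}(x^{\mathfrak e}_{n}))\in\modM_{\alpha({\bf h})}+\varphi^{\mathfrak e}_\ell(\modM_\kappa)+\Rang({\bf h})$ for every $\ell<\omega$. Consequently, the standing assumption is equivalent to: for each $n$, as ${\bf h}$ ranges over the bimodule homomorphisms $\modN^{\mathfrak e}_{n}\to\modM_\kappa$, the least $\alpha<\kappa$ with ${\bf f}({\bf h}(x^{\mathfrak e}_{n}))\in\modM_{\alpha}+\varphi^{\mathfrak e}_\ell(\modM_\kappa)+\Rang({\bf h})$ for all $\ell$ is unbounded below $\kappa$.

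\emph{Uniformization (the main obstacle).} It remains to contradict this, i.e.\ to find $n(\ast)$ and $\alpha\in\kappa\setminus S$ for which the displayed membership holds with $\alpha$ uniform in ${\bf h}$. Here one inspects the proof of Theorem \ref{nice construction lemma}: the $y$ produced by $(G)_1$ may be taken in $\modM_{\gamma^{**}}$, where $\gamma^{**}\in\kappa\setminus S$ depends on ${\bf h}$ only through $\gamma$ and the single element ${\bf f}({\bf h}(x^{\mathfrak e}_{n}))$, and ${\bf h}(x^{\mathfrak e}_{n})$ ranges over $\psi^{\mathfrak e}_{n}(\modM_\gamma)$ by Lemma \ref{formula vs hom}. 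The plan is then to take $n(\ast)$ large and to close off an ordinal $\alpha\in\kappa\setminus S$ under the resulting assignments, using $\cf(\kappa)=\kappa>\aleph_0$, the arithmetic hypotheses on $\lambda$ and $\kappa$, the connecting maps $g^{\mathfrak e}_{n,n(\ast)}$, and — to guarantee that there are enough homomorphisms and that the relevant types stay omitted — property $(G)_2$ together with the omitting-of-types clauses of the semi-nice construction; this yields $\alpha,n(\ast)$ with $(\Pr)^{n(\ast)}_\alpha[{\bf f},{\mathfrak e}]$, the desired contradiction. I expect precisely this step — the passage from the per-homomorphism bounds $\alpha({\bf h})<\kappa$ to a single $\alpha$ and $n(\ast)$ valid for all ${\bf h}$ — to be where the real work lies; the two reductions, the one-nonzero-term application of $(G)_1$, and the bookkeeping with $\varphi^{\mathfrak e}_\ell$ and Lemma \ref{properties of w.s.n.c} are routine.
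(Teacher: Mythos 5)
There is a genuine gap, and it sits exactly where you say the ``real work lies'': your argument never closes it. Your ``core computation'' applies $(G)_1$ to the degenerate sequence with a single nonzero term ${\bf g}_n={\bf h}$, and the conclusion you extract — ${\bf f}({\bf h}(x^{\mathfrak e}_{n}))\in y+\Rang({\bf h})+\varphi^{\mathfrak e}_m(\modM_\kappa)$ for some $y\in\modM_\kappa$ depending on ${\bf h}$ — is vacuous: one may always take $y={\bf f}({\bf h}(x^{\mathfrak e}_{n}))$ itself, which lies in $\modM_{\alpha({\bf h})}$ for some $\alpha({\bf h})<\kappa$. So after this step you have established nothing beyond the triviality that each value sits somewhere below $\kappa$, and the entire content of the lemma is the uniformization you then only sketch. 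That sketch cannot be completed as described: as ${\bf h}$ ranges over all bimodule homomorphisms $\modN^{\mathfrak e}_{n}\to\modM_\kappa$, the ordinals $\gamma({\bf h})$ with $\Rang({\bf h})\subseteq\modM_{\gamma({\bf h})}$ are unbounded in $\kappa$ (the ranges exhaust $\modM_\kappa$), hence so are the resulting $\gamma^{**}({\bf h})$; no closure argument produces a single $\alpha<\kappa$ dominating them, and $(G)_2$ and the type-omission clauses do not enter the paper's proof of this lemma at all.

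The mechanism you are missing is a diagonal use of $(G)_1$ in which the \emph{counterexamples themselves} form the input sequence $\bar{\bf g}$. Assuming the lemma fails, one picks for each $\epsilon\in\kappa\setminus S$ and $n<\omega$ a homomorphism ${\bf h}_{\epsilon,n}$ witnessing the failure of $(\Pr)^{n}_\epsilon$ at some level $\ell(\epsilon,n)>n$, then chooses $\zeta_0<\zeta_1<\cdots$ in a suitable club intersected with $S^{\kappa}_{\aleph_0}\setminus S$, converging to a point $\zeta$ of $S$, writes $\modM_\zeta=\bigoplus_{\ell<\omega}\modK_\ell$ with $\modM_{\zeta_{\ell+1}}=\modM_{\zeta_\ell}\oplus\modK_\ell$, and applies $(G)_1$ to $\bar{\bf g}=\langle{\bf h}_{\zeta_\ell,\ell}:\ell<\omega\rangle$. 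The point is that clause (b) of $(G)_1$ bounds the \emph{partial sums} $\sum_{\ell<n}{\bf f}({\bf h}_{\zeta_\ell,\ell}(x^{\mathfrak e}_\ell))$ by a \emph{single} $y$ plus correction terms $\sum_\ell{\bf h}_{\zeta_\ell,\ell}(z_{n,1,\ell})$; projecting via ${\bf g}^*_n:\modM_{\zeta_{n+1}}\to\modK_n$ isolates the $n$-th summand and yields ${\bf f}({\bf h}_{\zeta_n,n}(x^{\mathfrak e}_n))\in\modM_{\zeta_n}+\Rang({\bf h}_{\zeta_n,n})+\varphi^{\mathfrak e}_m(\modM_{\zeta_m})$ for a large $m>\ell(\zeta_n,n)$, contradicting the choice of ${\bf h}_{\zeta_n,n}$. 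With only one nonzero ${\bf g}_\ell$ this coupling across levels disappears, which is why your version of the computation carries no force.
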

\begin{proof}
Assume not. Then for every $\epsilon \in \kappa\setminus S$ and $n<\omega$
we can find a bimodule homomorphism ${\bf h}_{\epsilon,n}: \modN^{\mathfrak e}_n \to \modM_{\kappa}$ such that for some $\ell(\epsilon, n)<\omega$,
$${\bf f} ({\bf h}_{\epsilon,n}( x^{\mathfrak e}_
n))  \notin \modM_\epsilon +\varphi^{\mathfrak e}_{\ell(\epsilon, n)}
(\modM_\kappa)
+\Rang({\bf h}_{\epsilon,n}).
$$
Without loss of generality, we may assume that
 $n<\ell(\epsilon, n).$ The following set
 \[
 E:=\{\zeta < \kappa: \Rang({\bf f} \upharpoonright \modM_\zeta )\subseteq \modM_\zeta \text{~and~}\forall \xi < \zeta ~\forall n<\omega, \Rang({\bf h}_{\xi,n}) \subseteq \modM_\zeta  \}
 \]
 is   a club subset of $\kappa$. Since $S$
is stationary, we have
\[
S \cap \lim \left(E \cap (S^\kappa_{\aleph_0}\setminus S)\right)  \neq \emptyset,
\]
where $\lim$ denote the set of limit points. Let $\zeta$ be in this intersection.
 Then $\cf(\zeta)=\aleph_0$ and there exists an increasing sequence $\langle  \zeta_n: 0<n<\omega   \rangle$ of elements
 of $E \cap (S^\kappa_{\aleph_0}\setminus S)$ cofinal in $\zeta$.

For each $0<\ell<\omega,$ as $\zeta_\ell \notin S,$ we can find a bimodule $\modK_\ell$ such that $\modM_{\zeta_{\ell+1}}=\modM_{\zeta_\ell}\oplus \modK_\ell$. Set also
$\modK_0=\modM_{\zeta_0}$. Thus, $\modM_\zeta=\bigcup\limits_{\ell<\omega}\modM_{\zeta_\ell}=\bigoplus\limits_{\ell<\omega}\modK_\ell.$
Let ${\bf g}^*_\ell: \modM_{\zeta_{\ell+1}} \to \modK_\ell$ be the natural projection, this is well-defined, because  $\modK_\ell$ is a direct summand of $\modM_{\zeta_{\ell+1}}$.
Pick also an infinite subset $\mathcal{U} \subseteq \omega$ such that
\[
\forall n, m \in \mathcal{U} \left( n<m \implies \ell(\zeta_n, n) <m \right).
\]

 For each $n<\omega$, we set $t_n := {\bf h}_{\zeta_n, n}(x^{\mathfrak e}_n) \in \modM_\zeta$. By Definition \ref{nice construction modified}(3)$(G)_1$,
 we can find $y \in \modM_\kappa,~z \in \modP^{\mathfrak e}$
 and a sequence $\{ z_{n, \ell}, z'_{n, \ell}: n \in \mathcal{U}, \ell<\omega  \}$ with $z_{n, \ell} \in \dom ({\bf h}_{\zeta_\ell, \ell})$ such that for each $n \in \mathcal{U}$,  $z_{n, \ell}=0$ for all  large enough $\ell$, say for all $\ell \geq k_n$
and for all large enough $n \in \mathcal{U}$ we have
\begin{enumerate}
\item[(a)]
$z \in \sum\limits_{\ell \in \mathcal{U} \cap k_n}{\bf h}_{\ell}^{\mathfrak e}(z'_{n,\ell})+\varphi_n^{\mathfrak e}(\modP^{\mathfrak e}).$

\item[(b)]
$\sum\limits_{\ell \in \mathcal{U} \cap n}{\bf f}({\bf h}_{\zeta_\ell, \ell}(x_\ell^{\mathfrak e})) \in y +\sum\limits_{\ell \in \mathcal{U} \cap k_n} {\bf h}_{\zeta_\ell, \ell}(z_{n, \ell}) +\varphi_n^{\mathfrak e}(\modM_{\kappa}).$

\end{enumerate}
As $\zeta \notin S, \modM_\zeta \leq_{\aleph_0} \modM_\kappa$ and hence for some $\modK$ we have $y \in \modM_\zeta\oplus\modK$
and $\modM_\zeta\oplus \modK \leq_{\aleph_0} \modM_\kappa$.
So, in clause $(b)$ all elements are in  $\modM_{\zeta_n}\oplus \modK$. This allow us to replace
$\varphi_n^{\mathfrak e}(\modM_{\kappa})$ with $\varphi_n^{\mathfrak e}(\modM_{\zeta_n}\oplus \modK)$
to get the following  new clause:
\begin{enumerate}
\item[$(b')_n$]:
	$\sum\limits_{\ell \in \mathcal{U} \cap n}{\bf f}({\bf h}_{\zeta_\ell, \ell}(x_\ell^{\mathfrak e})) \in y +\sum\limits_{\ell \in \mathcal{U} \cap k_n} {\bf h}_{\zeta_\ell, \ell}(z_{n, \ell}) +\varphi_n^{\mathfrak e}(\modM_{\zeta_n}\oplus \modK).$
	\end{enumerate}

Let $\pi:\modM_\zeta\oplus \modK\to\modM_\zeta$ be the natural projection and let $y':=\pi(y)$.  Recall that
$\pi(\varphi_n^{\mathfrak e}(\modM_{\zeta_n}\oplus \modK))=\varphi_n^{\mathfrak e}(\modM_{\zeta_n})$. By applying $\pi$
along with $(b')_n$  we lead to the following equation:
\begin{enumerate}
	\item[$(b'')_n$]:
	$\sum\limits_{\ell \in \mathcal{U} \cap n}{\bf f}({\bf h}_{\zeta_\ell, \ell}(x_\ell^{\mathfrak e})) \in y' +\sum\limits_{\ell \in \mathcal{U} \cap k_n} {\bf h}_{\zeta_\ell, \ell}(z_{n, \ell}) +\varphi_n^{\mathfrak e}(\modM_{\zeta_n}).$
\end{enumerate}
Pick $n(*) <\omega$ such that $y' \in \modM_{\zeta_{n(*)}}$ and choose successive elements $n < m$ in $\mathcal{U}$ bigger than $n(*)$ which are large enough. Pick $w \in \varphi_m^{\mathfrak e}(\modM_{\zeta_m})$  such that
\begin{enumerate}
	\item $\sum\limits_{\ell \in \mathcal{U} \cap m}{\bf f}({\bf h}_{\zeta_\ell, \ell}(x_\ell^{\mathfrak e})) = y' +\sum\limits_{\ell \in \mathcal{U} \cap k_m} {\bf h}_{\zeta_\ell, \ell}(z_{m, \ell}) +w.$
\end{enumerate}

 Then we have:
\begin{enumerate}
%	\item As ${\bf h}_{\zeta_n, n}(x_n^{\mathfrak e})-{\bf g}^*_n({\bf h}_{\zeta_n, n}(x_n^{\mathfrak e})) \in \modM_{\zeta_n}$ and $\bf f$ is %close under $\modM_{\zeta_n}$, we have
%$${\bf f}({\bf h}_{\zeta_n, n}(x_n^{\mathfrak e}))-{\bf f}\left({\bf g}^*_n({\bf h}_{\zeta_n, n}(x_n^{\mathfrak e}))\right)
%= {\bf f}\left({\bf h}_{\zeta_n, n}(x_n^{\mathfrak e})-{\bf g}^*_n({\bf h}_{\zeta_n, n}(x_n^{\mathfrak e}))\right) \in \modM_{\zeta_n}.$$

\item[(2)] As $\sum\limits_{\ell \in \mathcal{U} \cap n}{\bf f}({\bf h}_{\zeta_\ell, \ell}(x_\ell^{\mathfrak e})), y' \in \modM_{\zeta_n}$, by (1) we have
\[\begin{array}{ll}
{\bf g}^*_n({\bf h}_{\zeta_n, n}(x_n^{\mathfrak e})) &={\bf g}^*_n\left( \sum\limits_{\ell \in \mathcal{U} \cap m}{\bf f}({\bf h}_{\zeta_\ell, \ell}(x_\ell^{\mathfrak e}))         \right)\\
&= \sum\limits_{\ell \in \mathcal{U} \cap k_m} {\bf g}^*_n\left({\bf h}_{\zeta_\ell, \ell}(z_{m, \ell})\right)+{\bf g}^*_n(w).
\end{array}\]
\end{enumerate}

In particular, we drive the following:
\begin{enumerate}
\item[(3)] $\sum\limits_{\ell \in \mathcal{U} \cap k_m} {\bf g}^*_n\left({\bf h}_{\zeta_\ell, \ell}(z_{m, \ell})\right) \in \Rang({\bf g}^*_n \circ {\bf h}_{\zeta_n, n}  )+\varphi_m^{\mathfrak e}(\modM_{\zeta_m}).$
\end{enumerate}
%As ${\bf h}_{\zeta_n, n}(x_n^{\mathfrak e})-{\bf g}^*_n({\bf h}_{\zeta_n, n}(x_n^{\mathfrak e})) \in \modM_{\zeta_n}$ and $\bf f$ is close under $\modM_{\zeta_n}$, we have
Due to the definition of ${\bf g}^*_n$ we know:
\begin{enumerate}
\item[(4)]
${\bf f}({\bf h}_{\zeta_n, n}(x_n^{\mathfrak e}))-{\bf g}^*_n\left({\bf f}({\bf h}_{\zeta_n, n}(x_n^{\mathfrak e}))\right) \in \modM_{\zeta_n}.$
\end{enumerate}

Putting all things together we have
\[\begin{array}{ll}
{\bf f}({\bf h}_{\zeta_n, n}(x_n^{\mathfrak e})) &\in {\bf g}_n^*\left({\bf f}({\bf h}_{\zeta_n, n}(x_n^{\mathfrak e}))\right)+\modM_{\zeta_n}\\
&\subseteq \sum\limits_{\ell \in \mathcal{U} \cap k_m} {\bf g}_n^*\left({\bf h}_{\zeta_\ell, \ell}(z_{m, \ell})\right)+\modM_{\zeta_n} +\varphi_m^{\mathfrak e}(\modM_{\zeta_m})\\
& \subseteq \modM_{\zeta_n}+\Rang({\bf g}^*_n \circ {\bf h}_{\zeta_n, n})+\varphi_m^{\mathfrak e}(\modM_{\zeta_m})\\
&=\modM_{\zeta_n}+\Rang( {\bf h}_{\zeta_n, n})+\varphi_m^{\mathfrak e}(\modM_{\zeta_m}).
\end{array}\]
This is a contradiction and the lemma follows.
 \end{proof}

\begin{Definition}
\label{hdsprinciple}
  By $\text{\bf hds}^{\modM_2}_{\modM_1}(h,\modN)$ we mean the following data:
\begin{enumerate}
\item $\modM_1, \modM_2,\modN$ are bimodules,
\item $\modM_1
\subseteq \modM_2$,
\item $h$ is a bimodule homomorphism from $\modN$ into
$\modM_2$,
\item $\modN\in
c\ell_{is}({\cal K})$,
\item $\modM_1+\Rang(h)=\modM_1\oplus\Rang(h)$,
%$\modM_2$ has the $\aleph_0$-almost direct ${\cal K}$-summand
%$\modM_1+\Rang(h)$,
%i.e.,
\item $\modM_1+\Rang(h)\leq_{\aleph_0} \modM_2$.
\end{enumerate}
%\item[(2)] $ids^{\modM_2}_{\modM_1}(h,\modN)$ is defined similarly
%but $h$ is
%one-to-one.\end{enumerate}
\end{Definition}

\begin{Definition} Let  $\fucF:\modM_\kappa\to \modM_\kappa  $ be an $\ringR$-module homomorphism.  By  $(\Pr^-)^{n}_\alpha[\fucF,{\mathfrak e}]$ we mean  the following statement:
%is the following apparent
%weakening of $(\Pr)^{n(\ast)}_{\alpha}
%[\fucF,{\mathfrak e}]$ (a property of  $\langle
%\modM_\alpha:\alpha\leq\kappa\rangle$):
if $\text{\bf hds}^{\modM_\beta}_{\modM_\alpha}
(h,\modN^{\mathfrak e}_{n})$
where
$\alpha<\beta \in \kappa \setminus S$, then $$\fucF(h(x_{n}^{\mathfrak e}))\in
\modM_\alpha+\Rang(h)+
\varphi^{\mathfrak e}_\ell(\modM_\kappa),$$
 for all $\ell<\omega$.
\end{Definition}
The next lemma summarizes the main properties of the above defined statement.
\begin{lemma}
\label{hdsproperties}
\begin{enumerate}
\item If $\text{\bf hds}^{\modM_2}_{\modM_1}(h_1,\modN)$ holds, $h_0$ is a
bimodule homomorphism
from $\modN$ into $\modM_1$ and $h=h_0+h_1$, then
$\text{\bf hds}^{\modM_2}_{\modM_1}(h,\modN)$ holds as well.

\item If $\modM_0\subseteq \modM_1\subseteq \modM_2$ are
bimodules,  $\modM_0\leq_{\aleph_0}\modM_1$
and $\text{\bf hds}^{\modM_2}_{\modM_1}(h,
\modN)$ holds, then so does  $\text{\bf hds}^{\modM_2}_{\modM_0}(h,\modN)$.
\item Assume $\alpha\leq\beta< \kappa$, $\alpha\notin S$ and
$\beta\notin S$.
Then $(\Pr^-)^{n(\ast)}_\alpha[\fucF,{\mathfrak e}]$ implies
$(\Pr^-)^{n(\ast)}_\beta[\fucF,{\mathfrak e}]$.
\item If $(\Pr)^{n(\ast)}_\alpha[{\bf f},{\mathfrak e}]$  holds, then
$(\Pr^-)^{n(\ast)}_{
\alpha}[{\bf f},{\mathfrak e}]$ holds too.
\item Assume $\bar{\modM}=\langle
\modM_\alpha:\alpha\leq\kappa\rangle$ is a weakly semi-nice
construction for $(\lambda,\frakss, S, \kappa)$.  If
$\text{\bf hds}^{\modM_\lambda}_{ \modM_\alpha}(h,\modN)$ holds where
$\alpha<\kappa$, then  for a club of
$\beta\in (\alpha,\kappa)$, also
$\text{\bf hds}^{\modM_\beta}_{\modM_\alpha}(h,\modN)$ holds.
\item If $\text{\bf hds}^{\modM_2}_{\modM_1}(h,\modN)$ holds, then so does
$\text{\bf hds}^{\modM_1+\Rang(h)}_{\modM_1}(h,\modN)$.
\end{enumerate}
\end{lemma}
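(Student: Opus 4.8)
The statement to prove is Lemma \ref{hdsproperties}, which collects six easy consequences of the definition of $\text{\bf hds}$ together with the basic facts about $\leq_{\aleph_0}$ and the $(\Pr)$-type statements established earlier. Since each clause is essentially a direct unwinding of definitions, the plan is to treat each of the six items in turn, using the already-established machinery: Definition \ref{hdsprinciple} of $\text{\bf hds}$, Definition \ref{ads definition} of $\leq^{\rm ads}_{\mathcal K,\kappa}$, Lemma \ref{modm is AEC} (transitivity, continuity, amalgamation properties of $\leq_{\aleph_0}$), Lemma \ref{properties of w.s.n.c}, and the definitions of $(\Pr)^{n}_\alpha$, $(\Pr^-)^{n}_\alpha$.

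\textbf{Plan for the individual clauses.} For (1), I would observe that $h = h_0 + h_1$ with $h_0$ mapping into $\modM_1$; then $\Rang(h) \subseteq \modM_1 + \Rang(h_1)$ and conversely $\Rang(h_1) = \Rang(h - h_0) \subseteq \modM_1 + \Rang(h)$, so $\modM_1 + \Rang(h) = \modM_1 + \Rang(h_1) = \modM_1 \oplus \Rang(h_1)$; the remaining conditions of Definition \ref{hdsprinciple} (that $\modN \in c\ell_{is}(\mathcal K)$ and that $\modM_1 + \Rang(h_1) \leq_{\aleph_0} \modM_2$) carry over verbatim, once I check $\modM_1 + \Rang(h) = \modM_1 \oplus \Rang(h)$, which follows because $\Rang(h) \cap \modM_1$; here the subtle point is that a direct-sum decomposition $\modM_1 \oplus \Rang(h_1)$ need not immediately give $\modM_1 \oplus \Rang(h)$, so I would argue via the projection $\pi\colon \modM_1 \oplus \Rang(h_1) \to \Rang(h_1)$ and note $\pi \circ h = h_1$ is still injective on $\modN$ and $\Ker(h) \cap \ldots$. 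For (2), I would use Lemma \ref{modm is AEC}(5): since $\modM_0 \leq_{\aleph_0} \modM_1$ and $\modM_1 + \Rang(h) \leq_{\aleph_0} \modM_2$, and $\modM_0 \subseteq \modM_0 + \Rang(h) \subseteq \modM_1 + \Rang(h)$, together with $\modM_0 + \Rang(h) = \modM_0 \oplus \Rang(h)$ (immediate from $\modM_1 + \Rang(h) = \modM_1 \oplus \Rang(h)$ and $\modM_0 \subseteq \modM_1$), transitivity of $\leq_{\aleph_0}$ gives $\modM_0 + \Rang(h) \leq_{\aleph_0} \modM_2$, which is the only nontrivial clause of Definition \ref{hdsprinciple}.

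\textbf{Plan for the remaining clauses.} For (3), I would argue directly: given $\text{\bf hds}^{\modM_\gamma}_{\modM_\beta}(h,\modN^{\mathfrak e}_n)$ with $\alpha \leq \beta$, $\alpha,\beta \notin S$, apply clause (2) (since $\modM_\alpha \leq_{\aleph_0} \modM_\beta$ by Lemma \ref{properties of w.s.n.c}(1)) to get $\text{\bf hds}^{\modM_\gamma}_{\modM_\alpha}(h,\modN^{\mathfrak e}_n)$; then $(\Pr^-)^{n(\ast)}_\alpha$ yields $\fucF(h(x^{\mathfrak e}_n)) \in \modM_\alpha + \Rang(h) + \varphi^{\mathfrak e}_\ell(\modM_\kappa) \subseteq \modM_\beta + \Rang(h) + \varphi^{\mathfrak e}_\ell(\modM_\kappa)$, which is exactly $(\Pr^-)^{n(\ast)}_\beta$. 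For (4), I would note that $(\Pr)^{n(\ast)}_\alpha[\fucF,{\mathfrak e}]$ gives the containment $\fucF({\bf h}(x^{\mathfrak e}_n)) \in \modM_\alpha + \varphi^{\mathfrak e}_\ell(\modM_\kappa) + \Rang({\bf h})$ for \emph{any} bimodule homomorphism ${\bf h}$ into $\modM_\kappa$, in particular for $h$ as in the hypothesis of $(\Pr^-)$, so the weaker hypothesis of $(\Pr^-)$ (which only asks for $h$ satisfying $\text{\bf hds}$) gives the same conclusion a fortiori. For (5), I would use Lemma \ref{modm is AEC}(4)(b) and continuity: if $\text{\bf hds}^{\modM_\lambda}_{\modM_\alpha}(h,\modN)$ holds, then $\Rang(h) \subseteq \modM_\lambda = \bigcup_{\beta<\kappa}\modM_\beta$ and $\modN$ is $<\aleph_1$-generated, so $\Rang(h) \subseteq \modM_\beta$ for a club of $\beta$; for such $\beta$, $\modM_\alpha + \Rang(h) \subseteq \modM_\beta$, and since $\modM_\alpha + \Rang(h) \leq_{\aleph_0} \modM_\lambda$ and $\modM_\beta \leq_{\aleph_0} \modM_\lambda$ (for $\beta \notin S$, using Lemma \ref{properties of w.s.n.c}(1)), Lemma \ref{modm is AEC}(5) gives $\modM_\alpha + \Rang(h) \leq_{\aleph_0} \modM_\beta$; intersecting with the club of $\beta \notin S$ (or restricting to the club of closure points of $E \cap (\kappa\setminus S)$) finishes it. For (6), I would simply take $\modM_2' := \modM_1 + \Rang(h)$ and observe that $\modM_1 \subseteq \modM_2'$, $h$ maps into $\modM_2'$, the direct-sum condition is unchanged, and $\modM_2' \leq_{\aleph_0} \modM_2'$ trivially (the identity strategy), so all clauses of Definition \ref{hdsprinciple} for $\text{\bf hds}^{\modM_1+\Rang(h)}_{\modM_1}(h,\modN)$ hold. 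The main obstacle, such as it is, is the bookkeeping in clause (1) to be sure that $\modM_1+\Rang(h) = \modM_1\oplus\Rang(h)$ genuinely persists when we perturb $h_1$ by $h_0$; everything else is routine invocation of the AEC-style closure properties already proved.
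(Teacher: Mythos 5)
Your proposal is correct and follows the paper's own (very terse) proof clause by clause: for (2) the paper writes the same chain $\modM_0 \oplus \Rang(h) \leq_{\aleph_0} \modM_1 \oplus \Rang(h) \leq_{\aleph_0} \modM_2$; (3) is exactly "combine Lemma \ref{properties of w.s.n.c}(1) with clause (2)" plus the inclusion $\modM_\alpha+\Rang(h)+\varphi^{\mathfrak e}_\ell(\modM_\kappa)\subseteq \modM_\beta+\Rang(h)+\varphi^{\mathfrak e}_\ell(\modM_\kappa)$; (4) and (6) are immediate from the definitions; and (5) is precisely the club argument you describe. The one place your write-up is not finished is clause (1), where you correctly identify the only nontrivial point --- whether $\modM_1+\Rang(h)=\modM_1\oplus\Rang(h)$ survives the perturbation of $h_1$ by $h_0$ --- but your proposed repair via the projection onto $\Rang(h_1)$ trails off mid-sentence. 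The direct computation gives: if $h(x)\in\modM_1$ then $h_1(x)=h(x)-h_0(x)\in\modM_1\cap\Rang(h_1)=\{0\}$, hence $h(x)=h_0(x)$; this shows only $\modM_1\cap\Rang(h)\subseteq h_0(\Ker(h_1))$, so literal directness of $\Rang(h)$ against $\modM_1$ can fail when $h_1$ is not injective. The paper's proof sidesteps this by noting the \emph{module identity} $\modM_1+\Rang(h)=\modM_1+\Rang(h_1)$ (both inclusions use $\Rang(h_0)\subseteq\modM_1$) and carrying the conditions of Definition \ref{hdsprinciple} across it; in every later application of clause (1) the map $h_1$ is an embedding, so the directness is unproblematic there. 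You should either complete your projection argument under an injectivity hypothesis on $h_1$, or adopt the paper's identification; as written, your clause (1) is the only incomplete step.
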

\begin{proof}
\begin{enumerate}
\item We only need to show that $\modM_1 + \Rang(h)= \modM_1 \oplus \Rang(h) \leq_{\aleph_0} \modM_2$. But, this is clear as
  $\modM_1 + \Rang(h)=\modM_1 + \Rang(h_1)$ and the property $\text{\bf hds}^{\modM_2}_{\modM_1}(h_1,\modN)$ holds.

\item We have
\[
\modM_0 + \Rang(h)=\modM_0 \oplus \Rang(h) \leq_{\aleph_0} \modM_1 \oplus \Rang(h) \leq_{\aleph_0} \modM_2,
\]
  from which the result follows.

\item  This is a combination of  Lemma \ref{properties of w.s.n.c}(1) and clause (2) above.

\item Clear.

\item By our assumption, $h: \modN \to \modM_\kappa$ is a bimodule homomorphism and $\modM_\alpha + \Rang(h)=\modM_\alpha \oplus \Rang(h) \leq_{\aleph_0} \modM_\kappa.$ But then for a club $C \subseteq (\alpha, \kappa)$ and for all $\beta \in C, \modM_\alpha \oplus \Rang(h) \leq_{\aleph_0} \modM_\beta.$ Then $C$ is as required.

\item Clear from the definition.
  \end{enumerate}
\end{proof}
\begin{Definition}
\label{prnalphafezdefinition}Let  $\fucF:\modM_\kappa\to \modM_\kappa  $   be a bimodule homomorphism,
$\alpha < \kappa,~n < \omega$ and let $z \in \modN_n^{\mathfrak e}$. The notation $(\Pr 1)^{n}_{\alpha,z}{[{\bf f},{\mathfrak e}]}$ stands for the following statement: if $h$ is a bimodule homomorphism
from $\modN^{\mathfrak e}_{n}$ into $\modM_\kappa$, then
$${\bf f}(h(x^{\mathfrak e}_{n}))-h(z) \in \modM_\alpha+\varphi^{\mathfrak
e}_\omega(\modM_\kappa).$$

\end{Definition}
\begin{lemma}
\label{prnalphafez}
Let $\langle \modM_\alpha:\alpha\leq\kappa\rangle$
be a weakly
semi-nice
construction for $(\lambda, {\frakss}, S,\kappa)$ and
${\bf f}:\modM_\kappa\to \modM_\kappa$ be an $\ringR$-endomorphism  and let  $\alpha\notin S$  be such that $(\Pr^-)^{n(\ast)}_\alpha[{\bf f},{\mathfrak e}]$ holds. Then
 the property $(\Pr 1)^{n(\ast)}_{\alpha,z}{[{\bf f},{\mathfrak e}]}$ holds for some
 $z\in \modL^{\mathfrak e}_{n(\ast)}[{\cal K}^\frakss]$.
\end{lemma}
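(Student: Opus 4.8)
The plan is to promote the ``approximate'' statement $(\Pr^-)^{n(\ast)}_\alpha[{\bf f},{\mathfrak e}]$, which only controls ${\bf f}(h(x^{\mathfrak e}_{n(\ast)}))$ modulo $\modM_\alpha+\varphi^{\mathfrak e}_\ell(\modM_\kappa)+\Rang(h)$ for each individual $\ell$, to the sharp statement where the ``error'' in $\Rang(h)$ is replaced by a single well-chosen element $h(z)$ with $z\in\modL^{\mathfrak e}_{n(\ast)}[{\cal K}^\frakss]$ and where the residue lies in $\varphi^{\mathfrak e}_\omega(\modM_\kappa)=\bigcap_n\varphi^{\mathfrak e}_n(\modM_\kappa)$. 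First I would fix a ``test'' homomorphism: by non-triviality of the context one can find $\modN\in{\cal K}^\frakss$ and an embedding $h_0$ of $\modN^{\mathfrak e}_{n(\ast)}$ into (a direct summand of) $\modM_\kappa$ realizing $\text{\bf hds}^{\modM_\beta}_{\modM_\alpha}(h_0,\modN^{\mathfrak e}_{n(\ast)})$ for some $\beta\in\kappa\setminus S$ above $\alpha$; more precisely I would use clause $(G)_2$ / Lemma \ref{3.5x}-style genericity together with Lemma \ref{1.12}(2) to arrange such an $h_0$ whose range is a clean ${\cal K}$-summand. Applying $(\Pr^-)^{n(\ast)}_\alpha[{\bf f},{\mathfrak e}]$ to $h_0$ gives, for each $\ell$, an element $z_\ell\in\modN^{\mathfrak e}_{n(\ast)}$ with ${\bf f}(h_0(x^{\mathfrak e}_{n(\ast)}))-h_0(z_\ell)\in\modM_\alpha+\varphi^{\mathfrak e}_\ell(\modM_\kappa)$.

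The key step is to show the $z_\ell$ can be taken ``coherent'', i.e. that modulo the elements on which every bimodule homomorphism agrees (this is precisely the role of $\modL^{\mathfrak e}_{n(\ast)}$ in Definition \ref{subgroups Ln}) the class $z_\ell\bmod\varphi_{n(\ast)}(\modN^{\mathfrak e}_{n(\ast)})$ stabilizes, yielding a single $z$. For this I would argue as follows: given $\ell'>\ell$, both $z_\ell$ and $z_{\ell'}$ work modulo $\varphi^{\mathfrak e}_\ell(\modM_\kappa)$, so $h_0(z_\ell-z_{\ell'})\in\modM_\alpha+\varphi^{\mathfrak e}_\ell(\modM_\kappa)$; since $\modM_\alpha\oplus\Rang(h_0)$ is a direct sum with $\modM_\alpha\oplus\Rang(h_0)\leq_{\aleph_0}\modM_\kappa$, projecting to $\Rang(h_0)$ and using Lemma \ref{properties of w.s.n.c}(2) (so that $\varphi^{\mathfrak e}_\ell(\modM_\kappa)\cap\Rang(h_0)=\varphi^{\mathfrak e}_\ell(\Rang(h_0))$) forces $z_\ell-z_{\ell'}\in\varphi^{\mathfrak e}_\ell(\modN^{\mathfrak e}_{n(\ast)})$. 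Hence the sequence $\langle z_\ell:\ell<\omega\rangle$ is a Cauchy-type sequence in the filtration by the $\varphi^{\mathfrak e}_\ell$'s; I would then use that for two \emph{different} choices of test homomorphism the resulting limits differ by an element of $\bigcap_m\varphi^{\mathfrak e}_m$, so that the value of $z$ modulo $\modL^{\mathfrak e}_{n(\ast)}[{\cal K}^\frakss]$ is independent of the choice. Picking any representative $z$ of this common class gives the candidate, and $z\in\modL^{\mathfrak e}_{n(\ast)}[{\cal K}^\frakss]$ because its image under any bimodule homomorphism into a member of ${\cal K}^\frakss$ is pinned down by the above argument.

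Finally I would verify $(\Pr 1)^{n(\ast)}_{\alpha,z}[{\bf f},{\mathfrak e}]$ for an \emph{arbitrary} bimodule homomorphism $h:\modN^{\mathfrak e}_{n(\ast)}\to\modM_\kappa$. Given such $h$, decompose it relative to the generic $h_0$ using Lemma \ref{hdsproperties}(1): write $h=h_0'+h_1$ where $h_1$ satisfies $\text{\bf hds}^{\modM_\beta}_{\modM_\alpha}(h_1,\modN^{\mathfrak e}_{n(\ast)})$ for some large $\beta\notin S$ (achievable by first pushing the range of $h$ into some $\modM_\beta$, absorbing the part landing in $\modM_\alpha$, and then enlarging to get $\leq_{\aleph_0}$-directness as in Lemma \ref{1.12}(2)), and $h_0'$ has range inside $\modM_\alpha$. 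Then ${\bf f}(h(x^{\mathfrak e}_{n(\ast)}))-h(z)={\bf f}(h_1(x^{\mathfrak e}_{n(\ast)}))-h_1(z)+\big({\bf f}(h_0'(x^{\mathfrak e}_{n(\ast)}))-h_0'(z)\big)$; the second bracket lies in $\modM_\alpha$ because $\Rang(h_0')\subseteq\modM_\alpha$ and ${\bf f}$ restricts to $\modM_\alpha$ (shrinking $\alpha$ along a club if necessary), and the first term lies in $\modM_\alpha+\varphi^{\mathfrak e}_\omega(\modM_\kappa)$ by the coherence established above applied to $h_1$ in place of $h_0$. Summing, ${\bf f}(h(x^{\mathfrak e}_{n(\ast)}))-h(z)\in\modM_\alpha+\varphi^{\mathfrak e}_\omega(\modM_\kappa)$, as required.

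\textbf{Main obstacle.} The delicate point is the coherence/independence step: showing that the stabilized value $z$ does not depend on the chosen test homomorphism, and in particular that it lies in $\modL^{\mathfrak e}_{n(\ast)}[{\cal K}^\frakss]$ rather than in some larger subgroup. This requires simultaneously controlling all $\modM\in{\cal K}^\frakss$ and all pairs of bimodule homomorphisms into them, and keeping the intersection $\varphi^{\mathfrak e}_\omega$ (an infinitary conjunction) tractable — here the finite-generation hypotheses packaged into $\kappa(\frakss)\leq\aleph_0$ and Lemma \ref{formula vs hom}(3) are what make the argument go through.
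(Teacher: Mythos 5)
Your overall skeleton --- bootstrap $(\Pr^-)$ on a clean test homomorphism to a $\varphi^{\mathfrak e}_\omega$-statement via projections inside an $\leq_{\aleph_0}$-direct decomposition, show the resulting $z$ is independent of the test homomorphism, then extend to arbitrary $h$ --- is the same as the paper's (its Claims \ref{prnclaim1}--\ref{prnclaim4}). However, the step where you pass from clean test homomorphisms to an \emph{arbitrary} bimodule homomorphism $h$ has a genuine gap. You propose to write $h=h_0'+h_1$ with $\Rang(h_0')\subseteq\modM_\alpha$ and $h_1$ satisfying $\text{\bf hds}^{\modM_\beta}_{\modM_\alpha}(h_1,\modN^{\mathfrak e}_{n(\ast)})$. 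Such a decomposition is not available in general: one can place $\Rang(h)\subseteq\modM_\alpha\oplus\modN$ with $\modN\in c\ell^{\aleph_0}_{\rm is}({\cal K})$ and compose $h$ with the two projections, but the resulting $h_1$ has range an arbitrary sub-bimodule of $\modN$, so there is no reason why $\modM_\alpha\oplus\Rang(h_1)\leq_{\aleph_0}\modM_\kappa$ should hold nor why $h_1$ should be an embedding; hence neither the hypothesis $(\Pr^-)$ nor your uniqueness-of-$z$ argument applies to $h_1$. Moreover, even granting the decomposition, your second bracket requires ${\bf f}(h_0'(x^{\mathfrak e}_{n(\ast)}))\in\modM_\alpha$, i.e. that ${\bf f}$ preserves $\modM_\alpha$, which is not a hypothesis and cannot be arranged without changing the $\alpha$ appearing in the conclusion. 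The paper's Claim \ref{prnclaim3} goes the opposite way: given an arbitrary $h_0$, it chooses a \emph{fresh} clean embedding $h_1$ whose range $\modN^\ast_1$ is a new summand above some $\modM_\beta\supseteq\Rang(h_0)$, sets $h_2:=h_1-h_0$, observes $\modM_\beta\oplus\modN^\ast_1=\modM_\beta\oplus\Rang(h_2)\leq_{\aleph_0}\modM_\kappa$ so that $h_1$ and $h_2$ are \emph{both} clean, applies the already-established case to each of them with the \emph{same} $z$ (this is exactly where the independence statement, Claim \ref{prnclaim2}, is needed), and subtracts.

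A second, smaller gap: your ``Cauchy-type sequence'' $\langle z_\ell:\ell<\omega\rangle$ need not have a limit, since $\modN^{\mathfrak e}_{n(\ast)}$ is not complete with respect to the filtration by the $\varphi^{\mathfrak e}_\ell$'s. The correct move (the paper's Claim \ref{prnclaim1}) is to produce $z$ directly rather than as a limit: enlarge to $\modM_\alpha\oplus\Rang(h_0)\oplus\modK\leq_{\aleph_0}\modM_\kappa$ containing ${\bf f}(h_0(x^{\mathfrak e}_{n(\ast)}))$, show that the $\modK$-component of ${\bf f}(h_0(x^{\mathfrak e}_{n(\ast)}))$ lies in $\varphi^{\mathfrak e}_n(\modM_\kappa)$ for every $n$ (because for each $n$ it is the projection of an element of $\varphi^{\mathfrak e}_n(\modM_\alpha\oplus\Rang(h_0)\oplus\modK)$), hence in $\varphi^{\mathfrak e}_\omega(\modM_\kappa)$, and then take $z$ to be the preimage under $h_0$ of the $\Rang(h_0)$-component. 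Relatedly, your assertion that the values of $z$ obtained from two different test homomorphisms agree modulo $\varphi^{\mathfrak e}_\omega$ is not automatic; it is the content of Claim \ref{prnclaim2} and requires the auxiliary third summand $\modN^\ast_3$ and the difference homomorphism $h_4=h_1-h_3$.
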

\begin{proof} Let $x:=x^{\mathfrak e}_{n(*)}$.
We shall prove the lemma in a sequence of  claims, see Claims \ref{prnclaim1}--\ref{prnclaim4}.
\begin{claim}
\label{prnclaim1}
 Let $\beta\in
(\alpha,\lambda)\setminus S$ be such that
$\text{\bf{hds}}^{\modM_\beta}_{\modM_\alpha}(h,
\modN^{\mathfrak e}_{n(\ast)})$ holds.  Then
\[\fucF(h(x))\in \modM_\alpha+\Rang(h)+\varphi^{\mathfrak
e}_\omega(\modM_\kappa).\]
\end{claim}
\begin{proof}
%(compare to $(Pr^-)_\alpha^{n(*)} [\fucF,{\mathfrak e}]$, the gain is the
%use of $\varphi^{\mathfrak e}_\omega)$.
Let $\modN:=\Rang(h)$. We know that
$\modM_\alpha+\modN=\modM_\alpha\oplus \modN
\leq_{\aleph_0} \modM_\beta$ and
$\modM_\beta\leq_{\aleph_0}\modM_\kappa$. Combining these yields that
 $\modM_\alpha\oplus \modN\leq_{\aleph_0} \modM_\kappa$.
In view of Definition \ref{ads definition} and Lemma \ref{modm is AEC}(6), there is $\modK\subseteq
\modM_\kappa$
such that $$(\modM_\alpha\oplus
\modN)+\modK=
\modM_\alpha
\oplus \modN \oplus \modK \leq_{\aleph_0}
\modM_\kappa.$$
Then, we may assume that $\fucF(h(x))\in\modM_\alpha\oplus \modN\oplus \modK$. Since
$\modM_\alpha\oplus \modN\oplus
\modK\leq_{\aleph_0}\modM_\kappa$, and in the light of Lemma \ref{comparing two orders}
we observe that $\modM_\alpha\oplus \modN
\oplus \modK\leq^{pr}_{\varphi^{\mathfrak e}_n}\modM_\kappa$ holds for all $n<\omega$. According to
the property $(\Pr^-)^{n(*)}_\alpha[{\bf f},{\mathfrak e}]$, we know $${\bf f}(h(x))\in \modM_\alpha+\modN+\varphi^{\mathfrak
	e}_n(\modM_\kappa),$$
for all $n<\omega.$ We use these to find elements $x_{1,n}\in \modM_\alpha$ and $
x_{2,n}\in \modN$ such that
$$\fucF(h(x))-x_{1,n}-x_{2,n}\in
\varphi^{\mathfrak e}_n(\modM_\kappa).$$ It follows from
$\modM_\alpha\oplus \modN
\oplus \modK\leq^{pr}_{\varphi^{\mathfrak e}_n}\modM_\kappa$ that
\[\begin{array}{ll}
{\bf f}(h(x))& =x_{1,n}+x_{2,n}+(\fucF(h(x))-x_{1,n}-x_{2,n})\\
&\in\modM_\alpha+\modN+\varphi^{\mathfrak e}_n(\modM_\alpha
\oplus\modN\oplus \modK).
\end{array}\]

Let $g$ be the projection from $\modM_\alpha\oplus \modN\oplus
\modK$ onto
$\modK$. Recall that its kernel is $\modM_\alpha\oplus \modN$.
Let $z^1_n\in \modM_\alpha$,
$z^2_n\in \modN$ and
$z^3_n\in
\varphi_n(\modM_\alpha \oplus \modN \oplus \modK)$ be such that
${\bf f}(h(x))=z^1_n+z^2_n+z^3_n$. Let us evaluate $g$ on both sides of this formula to obtain:
$$
g({\bf f}(h(x)))=g(z^1_n) + g(z^2_n) +g(z^3_n) =z^3_n \in \varphi^{\mathfrak e}_n(\modM_\alpha\oplus \modN\oplus
\modK)\subseteq
\varphi^{\mathfrak e}_n(\modM_\kappa).
$$

As this holds for each $n$, we have
$$
g({\bf f}(h(x))) \in\bigcap\limits_{n<\omega} \varphi^{\mathfrak e}_n(\modM_\kappa).
$$
So,
\[\begin{array}{ll}
{\bf f}(h(x))& =
\left({\bf f}(h(x))-g({\bf f}(h(x)))\right)+(g({\bf f}(h(x)))\\&\in (\modM_\alpha\oplus
\modN)+\bigcap_{n<\omega}\varphi_n (\modM_\kappa)\\
&=\quad \modM_\alpha+\Rang(h)+\varphi^{\mathfrak e}_\omega(\modM_\kappa).
  \end{array}\]
  The claim follows.
  \end{proof}
%\medskip
%\par\noindent
%{\bf Claim 3.6.2}
\begin{claim}
\label{prnclaim2}
For $i=1,2$, let $\modN^\ast_i$ and $\beta_i$ be such that $\modM_\alpha\oplus
\modN^\ast_i\leq_{\aleph_0} \modM_{\beta_i}$ and  $\alpha<
\beta_i<\kappa$ where $\beta_i$
is not in $S$. Let $h_i:\modN^{\mathfrak e}_{n(\ast)}\to\modN^\ast_i$ be an
isomorphism and $z_i\in
\modN^{\mathfrak e}_{n(\ast)}$ be such that
$
{\bf f}(h_i(x))-h_i(z_i)\in \modM_\alpha+\varphi^{\mathfrak e}_\omega
(\modM_\kappa).
$
Then
$
z_1\equiv z_2\ \mod\ \varphi^{\mathfrak e}_\omega(\modN^{\mathfrak e}_{n(\ast)})
$.
\end{claim}
\begin{proof}
Let  $\beta\in \kappa \setminus S$ be large enough \st\
$\beta>\beta_1$ and
$\beta>\beta_2$.
Let $\modN^\ast_3$ be isomorphic to $\modN^{\mathfrak e}_{n(\ast)}$ and
such that for any large enough $\gamma<\kappa$,
$$
\modM_\beta+\modN^\ast_3=\modM_\beta\oplus
\modN^\ast_3\leq_{\aleph_0}
\modM_\gamma.
$$
Let $h_3$ be an isomorphism from $\modN^{\mathfrak e}_{n(\ast)}$ onto
$\modN^\ast_3$. In the light of Claim \ref{prnclaim1} we deduce that
$$
{\bf f}(h_3(x))-h_3(z_3)\in \modM_\alpha+\varphi^{\mathfrak e}_\omega(\modM_\kappa),
$$where
$z_3\in \modN^{\mathfrak e}_{n(\ast)}$.
By the transitivity of the equivalence relation,  it is enough to show that $z_3\equiv z_1$ and $z_3\equiv z_2\ \mod\
\varphi^{\mathfrak e}_\omega (\modN^{\mathfrak e}_{n(\ast)})$
in $\modN^{\mathfrak
e}_{n(\ast)}$; and by the
symmetry it is enough to prove $z_3\equiv z_1$ $\mod\
\varphi^{\mathfrak e}_\omega (\modN^{\mathfrak e}_{n(\ast)})$.

Pick $\gamma > \beta$ large enough. Then
$
\modM_\alpha\oplus \modN^\ast_1\oplus \modN^\ast_3\leq_{\aleph_0}
\modM_\gamma.
$
Let $\modN^\ast_4=\{h_1(z)-h_3(z): z\in \modN^{\mathfrak e}_{n(\ast)}\}$ and
define $h_4:
\modN^{\mathfrak e}_{n(\ast)}\longrightarrow \modM_{\beta+1}$ by
$$
h_4(z)=h_1(z)-h_3(z).
$$
Due to its definition, we know $\modN^\ast_4$ is a sub-bimodule of $\modM_\alpha \oplus
\modN^*_1 \oplus \modN^*_3$
and hence of $\modM_\kappa$. Also,
$h_4$ is an isomorphism from $\modN^{\mathfrak e}_{n(\ast)}$ onto
$\modN^\ast_4$, and $\modM_\alpha
\oplus \modN^\ast_1\oplus \modN^\ast_3=\modM_\alpha\oplus
\modN^\ast_1\oplus \modN^\ast_4$.
Thus, we have
\begin{enumerate}
\item[$(\ast)_1$]\quad $\modM_\alpha\oplus \modN^\ast_4\leq_{\aleph_0}
\modM_\alpha\oplus \modN^\ast_1\oplus
\modN^\ast_3\leq_{\aleph_0}\modM_\gamma$.
\end{enumerate}
Now modulo $\modM_\alpha+\varphi^{\mathfrak e}_\omega(\modM_\kappa)$, we have
\begin{enumerate}
\item[$(\ast)_2$]\quad ${\bf f}(h_4(x))={\bf f}(h_1(x)-h_3(x))={\bf f}(h_1(x))-
{\bf f}(h_3(x))\equiv h_1(z_1)-h_3(z_3)$.
\end{enumerate}
Next, by Claim \ref{prnclaim1} and $(\ast)_1$, we have
\begin{enumerate}
\item[$(\ast)_3$]\quad ${\bf f}(h_4(x))\in\Rang(h_4)+(\modM_\alpha+
\varphi^{\mathfrak e}_\omega(\modM_\kappa))$.
\end{enumerate}
So
\begin{enumerate}
\item[$(\ast)_4$]\quad $h_1(z_1)-h_3(z_3)
\in\Rang(h_4)+(\modM_\alpha+\varphi^{\mathfrak
e}_\omega (\modM_\kappa))$.
\end{enumerate}
We combine $(\ast)_4$ along with the definition of $h_4$ to deduce that
$$(h_1(z_1)-h_3(z_3)- h_4(z)\in \modM_\alpha+
\varphi^{\mathfrak e}_\omega (\modM_\kappa),$$
for some $z\in \modN^{\mathfrak
	e}_{n(\ast)}$. Recall that $h_4(z)= h_1(z)-h_3(z)$. It yields that
\[(h_1(z_1)-h_3(z_3))-(h_1(z)-h_3(z))\in \modM_\alpha+\varphi^{\mathfrak e}_\omega
(\modM_\kappa).\]
In other words,
\[h_1(z_1-z) - h_3(z_3-z)\in \modM_\alpha+\varphi^{\mathfrak
e}_\omega(\modM_\kappa).\]
Therefor, there is $y \in \modM_\alpha$ such that $h_1(z_1-z) - h_3(z_3-z)-y\in
\varphi^{\mathfrak e}_\omega(\modM_\kappa)$. As $\modM_\alpha\oplus
\modN^\ast_1\oplus
\modN^\ast_3 \leq_{\aleph_0}\modM_\kappa$, we deduce that
\[h_1(z_1-z) - h_3(z_3-z)-y\in\varphi^{\mathfrak e}_\omega(\modM_\alpha\oplus
\modN^\ast_1
\oplus \modN^\ast_3).\]

Recall that $h_1(z_1-z)\in \modN_1,h_3(z_3-z)\in \modN^\ast_3$ and  $y\in
\modM_\alpha$. We conclude from Claim
\ref{pr properties}(5) that
 $$\varphi^{\mathfrak e}_\omega(\modM_\alpha)\oplus\varphi^{\mathfrak e}_\omega(
\modN^\ast_1)
\oplus \varphi^{\mathfrak e}_\omega(\modN^\ast_3)\cong\varphi^{\mathfrak e}_\omega(\modM_\alpha\oplus
\modN^\ast_1
\oplus \modN^\ast_3).$$
Hence$$
h_i(z_i-z)\in h_i'' \left(\varphi^{\mathfrak e}_\omega
(\modN^{\mathfrak e}_{n(\ast)})
\right) \quad \mbox{ for } i=1,3,
$$i.e., $z_i-z\in\varphi^{\mathfrak e}_\omega(
\modN^{\mathfrak e}_{n(\ast)}).$
It then follows that
\[z_1-z_3=(z_1-z)-(z_3-z)\in\varphi^{\mathfrak e}_\omega
(\modN^{\mathfrak e}_{n(\ast)}).\]
So $z_1-z_3\in \varphi^{\mathfrak e}_\omega(\modN^{\mathfrak e}_{n(\ast)})$,
which finishes the proof of the claim.
%\medskip
\end{proof}
\par \noindent
\begin{claim}
\label{prnclaim3}
%\underline{Stage C:}\quad
There is $z\in \modN^{\mathfrak e}_{n(\ast)}$ such that if $h:\modN^{\mathfrak e}_{n(\ast)}\to\modM_\kappa$ is a  bimodule homomorphism, then
$
{\bf f}(h(z))-h(z)\in \modM_\alpha+\varphi^{\mathfrak e}_\omega
(\modM^{\mathfrak e}_\kappa).$
\end{claim}
\begin{proof}
Let $\modN_0^\ast$, $\beta_0$ and $h: \modN^{\mathfrak e}_{n(\ast)} \to \modN_0^\ast$ be as  Claim \ref{prnclaim2}. In the light of Claim  \ref{prnclaim1}, there is $z\in \modN^{\mathfrak e}_{n(\ast)}$ which
satisfies the above
requirement for this $h$. We show that $z$ is as required.
Suppose not and let $h_0$ be a counterexample, i.e.,
\[
{\bf f}(h_0(z))-h_0(z)\notin \modM_\alpha+\varphi^{\mathfrak e}_\omega
(\modM^{\mathfrak e}_\kappa).
\]
 Choose $\beta$
such that
$\beta_0 < \beta \in \kappa \setminus  S$ and $\Rang(h_0)\subseteq
\modM_\beta$. Such a $\beta$ exists as  $\kappa=\cf(\kappa)\geq \kappa(\calE)$.
Let $h_1$ be an isomorphism from
 $\modN^{\mathfrak e}_{n(\ast)}$ onto some
$\modN^\ast_1$ such that $\modM_\beta\oplus
\modN^\ast_1\leq_{\aleph_0}\modM_\gamma$ for
some $\gamma\in(\beta,\kappa)\setminus S$. So
$$
{\bf f}(h_1(z))-h_1(z)\in \modM_\alpha+\varphi^{\mathfrak e}_\omega(\modM_\kappa).
$$
Let $h_2: \modN^{\mathfrak e}_{n(\ast)}\longrightarrow
\modM_\kappa$ be defined by
$$
h_2(z)=h_1(z)-h_0(z).
 $$
Easily $h_2$ is a bimodule homomorphism. Set   $\modN^\ast_2=:\Rang(h_2)$.  By the assumptions on
$\modN^\ast_1$ and $h_1$, we deduce that
$$
\modM_\beta\oplus \modN^\ast_1=\modM_\beta\oplus
\modN^\ast_2\leq_{\aleph_0} \modM_\kappa.
$$
In the light of Claim \ref{prnclaim2}, we see that
$$
\fucF(h_2(x))-h_2(z) \in \modM_\alpha+\varphi^{\mathfrak e}_\omega
(\modM_\kappa).
$$
We apply this to conclude that
$$
\begin{array}{ll}
\fucF(h_0(z))& =
\fucF(h_1(z)-h_2(z))\\&=\fucF(h_1(z))-\fucF(h_2(z))\\&\in h_1(z)-h_2(z)+\modM_\alpha+
\varphi^{\mathfrak e}_\omega (\modM_\kappa)\\
&=h_0(z)+\modM_\alpha+\varphi^{\mathfrak e}_\omega(\modM_\kappa),
\end{array}
$$
i.e.,
\[
{\bf f}(h_0(z))-h_0(z)\in \modM_\alpha+\varphi^{\mathfrak e}_\omega
(\modM^{\mathfrak e}_\kappa),
\]
which contradicts our initial assumption on $h_0$. The claim follows.
\end{proof}
%\par \noindent
%\underline{Stage D:}\quad
\begin{claim}
\label{prnclaim4}
Let $z$ be as  Claim \ref{prnclaim3}. Then
 $z\in \modL^{\mathfrak e}_{n(\ast)}[{\cal K}]$.
\end{claim}
\begin{proof}
Let $h_1,h_2:\modN^{\mathfrak e}_{n(\ast)}\to\modN\in {\cal K}$ be such that $h_1(x^{\mathfrak e}_{n(\ast)})-
h_2(x^{\mathfrak e}_{n(\ast)})
\in\varphi^{\mathfrak e}_\omega(\modN)$ and $\fucF(h_i(x^{\mathfrak e}_{n(*)})) =
h_i(z) \mod \modM_\alpha+\varphi^{\mathfrak e}_\omega
(\modM_\lambda)$. By the definition of  $\modL^{\mathfrak e}_{n(\ast)}[{\cal K}]$,
it is sufficient to show $h_1(z)- h_2(z) \in \varphi^{\mathfrak e}_\omega
 (\modN)$.
To this end, choose $\gamma\in
 \kappa\setminus S$ large enough and an embedding $h$
of $\modN$ into $\modM_\kappa$ such that
$\modM_\gamma+\Rang(h)=\modM_\gamma\oplus\Rang(h)\leq_{\aleph_0}
\modM_\kappa$. Let $i=1,2$ and
note that $h \circ h_i$ is a bimodule  \homo\ from
$\modN^{\mathfrak e}_{n(*)}$
into $\modM_\kappa$.
Thanks to Claim \ref{prnclaim3} we observe that
$$\fucF(h(h_i(x^{\mathfrak e}_{n(*)}))) - \fucF(h(h_i(z))) \in \modM_\alpha+
\varphi^{\mathfrak e}_\omega (\modM_\kappa).$$
So by subtracting the above relations for $i=1, 2$ we have
\[\fucF(h(h_1-h_2(x^{\mathfrak e}_{n(\ast)})))-\fucF(h(h_1-h_2(z))) \in
\modM_\alpha + \varphi^{\mathfrak e}_\omega(\modM_\kappa).\]
Also, $h_1(x^{\mathfrak e}_{n(*)})= h_2(x^{\mathfrak e}_{n(*)})
\mod \varphi^{\mathfrak e}_\omega (\modN)$. It follows that $h(h_1-h_2(z)) \in
\modM_\alpha+\varphi^{\mathfrak e}_\omega
(\modM_\kappa)$. We conclude from $\modM_\alpha + \Rang(h) = \modM_\alpha
\oplus \Rang(h)$ that $$h(h_1-h_2(z)) \in \varphi^{\mathfrak e}_\omega (\Rang(h)).$$Since $h$
is an embedding, $(h_1-h_2)(z)\in\varphi^{\mathfrak e}_\omega (\modN).$
 So
$h_1(z)-h_2(z)= (h_1-h_2)(z)\in\varphi^{\mathfrak e}_\omega(\modN)$. By definition, $z\in \modL^{\mathfrak e}_{n(\ast)}[{\cal K}]$ as required.
\end{proof}
In sum, Lemma \ref{prnalphafez} follows.
\end{proof}

\begin{lemma}
\label{2.4a}
Let $\bar{\modM}$ be a semi-nice construction for $(\lambda,
\frakss, S,\kappa)$ and $\fucF$ be an $\ringR$-endomorphism of $\modM_\kappa$. Then for some $\alpha<\kappa,
~n(*)<\omega$ and $z\in\modL^{\mathfrak e}_{n(*)}$ the statement
$(\Pr 1)^{n(*)}_{\alpha,z}[\fucF, \mathfrak e]$ holds.
\end{lemma}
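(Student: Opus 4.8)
The proof is essentially a combination of the two preceding lemmas, so the plan is to chain them together. First I would apply Lemma \ref{prnalphafe} to the given $\ringR$-endomorphism $\fucF$ and the element $\mathfrak e \in \callE^{\frakss}$: since $\kappa = \cf(\kappa) \geq \aleph_1 + \kappa(\frakss) \geq \kappa(\calE)$, the hypothesis of that lemma is satisfied, and we obtain an ordinal $\alpha_0 \in \kappa \setminus S$ and an integer $n(\ast) < \omega$ such that $(\Pr)^{n(\ast)}_{\alpha_0}[\fucF, \mathfrak e]$ holds.

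Next I would pass from $(\Pr)$ to $(\Pr^-)$ using Lemma \ref{hdsproperties}(4), which says precisely that $(\Pr)^{n(\ast)}_{\alpha_0}[\fucF, \mathfrak e]$ implies $(\Pr^-)^{n(\ast)}_{\alpha_0}[\fucF, \mathfrak e]$. Since $\alpha_0 \notin S$, the hypothesis ``$\alpha \notin S$'' of Lemma \ref{prnalphafez} is met (a semi-nice construction is in particular a weakly semi-nice construction, so that lemma applies), and therefore there is some $z \in \modL^{\mathfrak e}_{n(\ast)}[{\cal K}^{\frakss}]$ such that $(\Pr 1)^{n(\ast)}_{\alpha_0, z}[\fucF, \mathfrak e]$ holds.

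Finally I would reconcile the notation: the statement of the present lemma asks for $z \in \modL^{\mathfrak e}_{n(\ast)}$, and by the convention following Definition \ref{subgroups Ln} (Definition \ref{subgroups Ln}(2)) we have $\modL^{\mathfrak e}_{n(\ast)} = \modL^{\mathfrak e}_{n(\ast)}[\frakss]$, which is contained in $\modL^{\mathfrak e}_{n(\ast)}[{\cal K}^{\frakss}]$ by definition; one should check that the $z$ produced actually lies in the smaller set $\modL^{\mathfrak e}_{n(\ast)}[\frakss]$, but inspection of the proof of Claim \ref{prnclaim4} shows the witnesses $h_1, h_2$ there range over $\modN \in {\cal K}$, and the same argument goes through verbatim when we also allow $\modN = \modM_\ast$, since $\modM_\ast \leq_{\aleph_0} \modM_\gamma \oplus \Rang(h)$ is available as well; thus $z \in \modL^{\mathfrak e}_{n(\ast)}[\frakss] = \modL^{\mathfrak e}_{n(\ast)}$. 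Taking $\alpha := \alpha_0$ completes the proof.

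The only mild subtlety — and the step I would expect to require the most care — is this last bookkeeping point about which family of test bimodules defines $\modL^{\mathfrak e}_{n(\ast)}$: Lemma \ref{prnalphafez} is stated with $\modL^{\mathfrak e}_{n(\ast)}[{\cal K}^{\frakss}]$ while the conclusion here uses the unadorned $\modL^{\mathfrak e}_{n(\ast)}$, which by Definition \ref{subgroups Ln} is $\modL^{\mathfrak e}_{n(\ast)}[\frakss]$ and involves also $\modM_\ast$. Everything else is a direct citation, so no genuine obstacle arises; the content of the lemma lies entirely in Lemmas \ref{prnalphafe} and \ref{prnalphafez}, which have already been proved.
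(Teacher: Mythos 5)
Your proof is correct and is essentially identical to the paper's: the paper also simply invokes Lemma \ref{prnalphafe} to obtain $\alpha\in\kappa\setminus S$ and $n(\ast)$ with $(\Pr)^{n(\ast)}_{\alpha}[\fucF,\mathfrak e]$, and then Lemma \ref{prnalphafez} to extract $z$. Your extra care about passing through $(\Pr^-)$ via Lemma \ref{hdsproperties}(4) and reconciling $\modL^{\mathfrak e}_{n(\ast)}[{\cal K}^\frakss]$ with $\modL^{\mathfrak e}_{n(\ast)}$ is a point the paper glosses over, and your resolution of it is sound.
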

\begin{proof}
In the light of Lemma \ref{prnalphafe}, we can find $\alpha \in \kappa\setminus S$ and $n(*) < \omega$ such that $(\Pr)^{n(*)}_{\alpha}[\fucF, \mathfrak e]$
holds. According to Lemma \ref{prnalphafez}, there exists $z\in\modL^{\mathfrak e}_{n(*)}$ such that
$(\Pr 1)^{n(*)}_{\alpha,z}[\fucF, \mathfrak e]$ holds, as claimed.
\end{proof}

\begin{Remark}
\label{2.5}
The property $(\Pr 1)^{n(\ast)}_{\alpha,z}[\fucF,{\mathfrak e}]$
is almost what is required, only the ``error term'' $\modM_\alpha$
 is too large.
 However, before we do this, we note that for the solution of
Kaplansky test problems, as done later in Section \ref{Kaplansky test problems}, this improvement is immaterial as we just
divide by a stronger ideal, i.e., we allow to divide by a submodule of bigger
cardinality.
\end{Remark}

\begin{Definition}
\label{Gnsequence}
Assume $\bar \modM=\langle \modM_\alpha:\alpha\leq\kappa\rangle$
is a weakly semi-nice construction for $(\lambda,\frakss,S,\kappa)$, $\fucF$ is an ${\bf R}$-endomorphism of $\modM_\kappa$,  ${\mathfrak e}\in \callE^\frakss$ and $n(\ast)<\omega$. Let also
 $\bar \groupG:= \langle \groupG_n: n\geq n(*) \rangle$ be a sequence of additive subgroups of $\varphi^{\mathfrak
e}_{n(\ast)}(\modM_\kappa)$.
% satisfying ${\mathbb G}^\ast_n\subseteq \varphi^{\mathfrak e}_n(\modM_\kappa)$
\begin{enumerate}

\item
 We say $\bar \groupG$ is $\bar \varphi$-appropriate for
$\modM$ if   $\groupG_n\subseteq \varphi^{\mathfrak e}_n (\modM_\kappa)$ for
all $n \geq n(*).$
\item
We say $\bar \groupG$ is compact  for $(\bar{\varphi}^{\mathfrak e},n(*))$ in $\modM_\kappa$,  if it is $\bar \varphi$-appropriate  and for any $z^*_\ell\in \groupG_\ell$ with $\ell
\geq n(\ast)$, there is $z^*\in \groupG_{n(*)}$ \st\
$$z^*-\sum\limits_{\ell=n(*)}^{n} z^*_\ell\in \varphi_{n+1} (\modM_\kappa). \footnote{So in appropriate sense, $\sum\limits_{\ell\geq n(*)} z_\ell$
exists; of
course we can increase $n(*)$.} $$

\item We say
 $\groupG_m$ is
$(\cal K, \bar\varphi)$-finitary in $\modM$, if $\groupG_m \subseteq \sum\limits_{\ell < n} \modK_\ell + \varphi_\omega(\modM_\kappa)$ for some finite $n<\omega$
and  $\modK_\ell \in c\ell_{\text{is}}(\cal K)$ such that $\sum\limits_{\ell < n} \modK_\ell \leq_{\aleph_0} \modM_\alpha$ for $\alpha$
large enough in $\kappa \setminus S$.

\item We say $\bar \groupG$ is $(\cal K, \bar\varphi)$-finitary in $\modM_\kappa$ if   $~\groupG_m$ is
$(\cal K, \bar\varphi)$-finitary in $\modM$  for some $m \geq n(\ast)$.

\item We say $\bar \groupG$ is non-trivial if  $\mathbb{G}_m \nsubseteq \varphi_\omega^{\mathfrak e}(\modM_\kappa)$ for all $m\geq n(\ast)$.
\end{enumerate}
\end{Definition}

We need to present another definition:

\begin{Definition}
	\label{strong} Assume $\bar \modM=\langle \modM_\alpha:\alpha\leq\kappa\rangle$
	is a weakly semi-nice construction for $(\lambda,\frakss,S,\kappa)$. Let  $\fucF$ be any ${\bf R}$-endomorphism of $\modM_\kappa$ and ${\mathfrak e}\in \callE^\frakss$.  We replace weakly by strongly
	from Definition
	\ref{w.s. nice definition} provided the following  properties denoted by $(G)^+$, are valid:
\begin{enumerate} 
\item there is  some $\alpha<\kappa$ such that $|\modM_\alpha||<\lambda$;
\item there is  some $n(*)<\omega$ and
 $z\in
\modN^{\mathfrak e}_{n(\ast)}$ such that for every bimodule \homo\ $h:\modN^{\mathfrak e}_{n(*)}
\rightarrow \modM$ we have $$ \fucF h(x^{\mathfrak e}_{n(*)})- h(z)
\in \modM_\alpha+\varphi_\omega (\modM_\kappa).$$
 \end{enumerate}
\end{Definition}

\begin{Definition} In the previous definition, we replace strongly by ${\rm strong}^+$ if
$\modM_\alpha$ is replaced by $$\modM_* \oplus \modK \leq_{\aleph_0}
\modM_\kappa$$ for some $\modK\in c\ell_{\text{is}}(\cal K).$\end{Definition}

\begin{Lemma}
\label{groupsGn}
Assume  $\kappa=\lambda$ and
$\bar\modM=\langle \modM_\alpha:
\alpha\leq\kappa\rangle$ is
a semi-nice construction for
$(\lambda, {\frakss},S, \kappa)$ such that $\alpha<\kappa,~
||\modM_\alpha||<\lambda$.
Suppose ${\mathfrak e}\in {\callE}$ and $\fucF$ is an $\ringR$-endomorphism of
$\modM_\kappa$, for some
$n(\ast)<\omega$, $\alpha(\ast)\in\kappa\setminus S$ and $z\in \modL^{\mathfrak e}_{n(\ast)}[{\cal K}]$ such that the property
%$(\Pr^-)^{n(\ast)}_{\alpha(\ast)}[\fucF,{\mathfrak e}]$
%(see \ref{2.2}(3)), and for
%some
$(\Pr 1)^{n(\ast)}_{\alpha(\ast),z}
[\fucF,{\mathfrak e}]$ holds.\footnote{Such a thing exists by Lemma \ref{2.4a}.}
Then there exists a decreasing sequence $\bar{\mathbb G}^\ast=\langle {\mathbb
G}^\ast_n:n\geq n(\ast)\rangle$ of additive subgroups of $\varphi^{\mathfrak
e}_{n(\ast)}(\modM_\kappa)$, satisfying ${\mathbb G}^\ast_n\subseteq
\varphi^{\mathfrak e}_n(\modM_\kappa)$ and equip with the following properties:
\begin{enumerate}
\item for every $n\geq n(\ast)$ and every bimodule-homomorphism
$h:\modN^{\mathfrak e}_n
\longrightarrow \modM_\kappa$,\begin{enumerate}\item
$\fucF(h(x^{\mathfrak e}_n))-h(z_n)\in {\mathbb G}^\ast_n$ where
$z_n:=g^{\mathfrak e}_{n(\ast),n}(z)$,
 \item ${\mathbb G}_n\subseteq \modM_{\alpha(\ast)}+
\varphi_\omega^{\mathfrak e} (\modM_\kappa)$.
\end{enumerate}

\item $\bar {\mathbb G}^\ast$ is compact for
$\bar{\varphi}^{\mathfrak e}$.
\end{enumerate}
%\item Assume $\kappa=\lambda$ and for $\mu<\lambda,~\mu <\kappa(\frakss)<\lambda$ and $||\modM^{\mathfrak m}_*||<\lambda$.
%The continuous then (1) holds.
\end{Lemma}
\begin{proof}
For every $n\geq n(\ast)$ we define ${\mathbb G}^\ast_n$ by
%\[\begin{array}{lr}
\[
{\mathbb G}^\ast_n:=\big\{ \fucF(h(x^{\mathfrak e}_n))-h(z_n): h: \modN^{\mathfrak e}_n\to \modM_\kappa\mbox{
 is a bimodule homomorphism} \big\}.
\]
%  \end{array}\]
It is easily seen that
${\mathbb G}^\ast_n \subseteq
\modM_{\alpha(\ast)}+\varphi^{\mathfrak e}_\omega
(\modM_\kappa)$. Indeed let $h: \modN^{\mathfrak e}_n\to \modM_\kappa$ be
  a bimodule homomorphism. Then $h'=h \circ g^{\mathfrak e}_{n(\ast),n}: \modN^{\mathfrak e}_{n(\ast)}\to \modM_\kappa$ is a bimodule
   homomorphism. In view of  $(\Pr 1)^{n(\ast)}_{\alpha(\ast),z}[\fucF,{\mathfrak e}]$ we observe that
\[
 \fucF(h(x^{\mathfrak e}_n))-h(z_n)= \fucF(h'(x^{\mathfrak e}_{n(\ast)}))-h'(z) \in \modM_{\alpha(\ast)}+\varphi^{\mathfrak e}_\omega
(\modM_\kappa).
\]
Also as $x^{\mathfrak e}_n, z \in \varphi^{\mathfrak e}(\modN^{\mathfrak e}_n)$,
we have $\fucF(h(x^{\mathfrak e}_n))-h(z_n) \in \varphi^{\mathfrak e}_n(\modM_\kappa)$. Clearly,
${\mathbb G}^\ast_n$ is an additive
subgroup of $\varphi^{\mathfrak e}_n(\modM_\kappa)$ and the sequence $\bar{\mathbb G}$ is decreasing.

Let us prove clause (2). Suppose $z^\ast_\ell\in {\mathbb G}^\ast_\ell$ for
$n(\ast)\leq\ell<\omega$, so for some bimodule homomorphism
$h_\ell:\modN^{\mathfrak e}_\ell\longrightarrow \modM_{\kappa}$ we have
$$z^\ast_\ell=\fucF(h_\ell(x_n^{\mathfrak e}))-h_\ell(z_\ell).$$
Let $\alpha(0)$ with $\alpha(\ast) <\alpha(0)< \kappa$ be such that $\alpha(0)\notin S$,
and for  $n(\ast) \leq \ell,$ $\Rang(h_\ell)
\subseteq \modM_{\alpha(0)}$ and
$\fucF(h_\ell(x^{\mathfrak e}_\ell)) \in \modM_{\alpha(0)}$. Note that such $\alpha(0)$ necessarily exists as $\kappa=
\cf(\kappa)\geq\kappa(\calE)+\aleph_0$. We need the following claim:

\begin{claim}
\label{existencealphaast}
For each $n \geq n(\ast)$ and $\beta>  \alpha (1)$ with $\beta\in\kappa
\setminus S$, there are  $\gamma$  satisfying
$\beta<\gamma\in\kappa
\setminus S$,  some embedding $h_{\beta, n}:\modN^{\mathfrak e}_n
\longrightarrow\modM_\gamma$ and some $\modK_{\beta, n} \in c\ell^{\aleph_0}_{is} ({\cal K})$ such that the following two items hold:
\begin{enumerate}
%\[\begin{array}{l}
\item $\modM_\beta\oplus\Rang(h_{\beta,n})\oplus \modK_{\beta, n} \leq_{\aleph_0}
\modM_\gamma,$
\item $\fucF(h_{\beta, n}(x^{\mathfrak e}_n ))\in \Rang(h_{\beta,n})\oplus \modK_{\beta, n}$.
%\end{array}\]
\end{enumerate}
In particular,
$\fucF(h_{\beta, n}(x^{\mathfrak e}_n ))- h_{\beta, n}(z_n) \in \varphi_\omega^{\mathfrak e}(\modM_\kappa)$.
\end{claim}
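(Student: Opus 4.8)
\textbf{Proof plan for Claim~\ref{existencealphaast}.}
First I would reduce everything to items (1) and (2) by deriving the ``in particular'' clause from them. Apply $(\Pr 1)^{n(\ast)}_{\alpha(\ast),z}[\fucF,{\mathfrak e}]$ to the bimodule homomorphism $h_{\beta,n}\circ g^{\mathfrak e}_{n(\ast),n}\colon\modN^{\mathfrak e}_{n(\ast)}\to\modM_\gamma$, which sends $x^{\mathfrak e}_{n(\ast)}$ to $h_{\beta,n}(x^{\mathfrak e}_n)$ and $z$ to $h_{\beta,n}(z_n)$; this gives $d:=\fucF(h_{\beta,n}(x^{\mathfrak e}_n))-h_{\beta,n}(z_n)\in\modM_{\alpha(\ast)}+\varphi^{\mathfrak e}_\omega(\modM_\kappa)$. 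By (2) and $h_{\beta,n}(z_n)\in\Rang(h_{\beta,n})$ we have $d\in\Rang(h_{\beta,n})\oplus\modK_{\beta,n}$, so $d$ has zero $\modM_\beta$-coordinate inside $\modM':=\modM_\beta\oplus\Rang(h_{\beta,n})\oplus\modK_{\beta,n}$, which by (1) satisfies $\modM'\leq_{\aleph_0}\modM_\gamma\leq_{\aleph_0}\modM_\kappa$ (the last step as $\gamma\notin S$, Lemma~\ref{properties of w.s.n.c}(1)). Writing $d=m+w$ with $m\in\modM_{\alpha(\ast)}\subseteq\modM_\beta$ (using $\alpha(\ast)<\alpha(1)<\beta$) and $w\in\varphi^{\mathfrak e}_\omega(\modM_\kappa)$, we get $w=d-m\in\modM'$; since $\varphi^{\mathfrak e}_\omega$ is computed coordinatewise on the summands of $\modM'$ and its value in $\modM'$ agrees with the trace of $\varphi^{\mathfrak e}_\omega(\modM_\kappa)$ on $\modM'$ (Lemma~\ref{comparing two orders}, Lemma~\ref{properties of w.s.n.c}(2), Claim~\ref{pr properties}(5)), comparing $\modM_\beta$-coordinates in $d=m+w$ forces $m+(\text{the }\modM_\beta\text{-part of }w)=0$ and hence $d\in\varphi^{\mathfrak e}_\omega(\Rang(h_{\beta,n}))\oplus\varphi^{\mathfrak e}_\omega(\modK_{\beta,n})\subseteq\varphi^{\mathfrak e}_\omega(\modM_\kappa)$. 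So it remains to produce $\gamma,h_{\beta,n},\modK_{\beta,n}$ satisfying (1) and (2).

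Next I would construct $h_{\beta,n}$ and the easy half of (1). Since ${\mathfrak e}\in\callE$, each $\modN^{\mathfrak e}_n$ lies in ${\cal K}$, so by Definition~\ref{w.s. nice definition}(E)(e) I may fix some $\alpha'\in(\beta,\kappa)\setminus S$ and some $t\in J_{\alpha'}$ with $\modN^{\alpha'}_t\cong\modN^{\mathfrak e}_n$, and let $h_{\beta,n}$ be the induced isomorphism $\modN^{\mathfrak e}_n\xrightarrow{\;\cong\;}\modN^{\alpha'}_t$. As $\alpha'\notin S$, $\modM_{\alpha'}=\modM_\beta\oplus\modK'$ with $\modK'\in c\ell_{\rm ds}({\cal K})$, so $\modM_{\alpha'+1}=\modM_\beta\oplus\Rang(h_{\beta,n})\oplus\bigl(\modK'\oplus\bigoplus_{s\in J_{\alpha'},\,s\neq t}\modN^{\alpha'}_s\bigr)$ exhibits $\modM_\beta\oplus\Rang(h_{\beta,n})$ as a ${\cal K}$-direct summand of $\modM_{\alpha'+1}$; since $\alpha'+1\notin S$, transitivity of $\leq_{\aleph_0}$ (Lemma~\ref{modm is AEC}, Lemma~\ref{properties of w.s.n.c}(1)) gives $\modM_\beta\oplus\Rang(h_{\beta,n})\leq_{\aleph_0}\modM_\kappa$. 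Applying Lemma~\ref{modm is AEC}(6) to the singleton $\{\fucF(h_{\beta,n}(x^{\mathfrak e}_n))\}$ produces $\modK_{\beta,n}\in c\ell^{\aleph_0}_{is}({\cal K})$ with $\fucF(h_{\beta,n}(x^{\mathfrak e}_n))\in\modM'\leq_{\aleph_0}\modM_\kappa$; as $\|\modM'\|<\lambda$, $\modM'$ lies inside $\modM_\gamma$ for a suitable $\gamma\in(\alpha',\kappa)\setminus S$, and Lemma~\ref{modm is AEC}(5) upgrades this to $\modM'\leq_{\aleph_0}\modM_\gamma$, which is (1).

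The heart of the matter is (2), i.e.\ arranging that the $\modM_\beta$-coordinate of $\fucF(h_{\beta,n}(x^{\mathfrak e}_n))$ in $\modM'$ vanishes. Here $(\Pr 1)$ alone is too weak, as it controls $\fucF(h_{\beta,n}(x^{\mathfrak e}_n))-h_{\beta,n}(z_n)$ only modulo the large error term $\modM_{\alpha(\ast)}$; the real content is that for a carefully chosen copy this error can be absorbed below $\varphi^{\mathfrak e}_\omega$ and away from $\modM_\beta$, and this uses the black-box data built into the construction. I would argue by contradiction: assume that for some $n\geq n(\ast)$ no admissible triple exists. Using Definition~\ref{w.s. nice definition}(E)(e), pick for each $\ell<\omega$ and each $\alpha\in(\beta,\kappa)\setminus S$ a standard embedding ${\bf h}_{\alpha,\ell}\colon\modN^{\mathfrak e}_\ell\to\modM_\kappa$ onto a summand with $\modM_\alpha\oplus\Rang({\bf h}_{\alpha,\ell})\leq_{\aleph_0}\modM_\kappa$, and apply property $(G)_2$ of Definition~\ref{nice construction modified}(3) to obtain a bimodule embedding ${\bf h}\colon\modP^{\mathfrak e}\to\modM_\kappa$ and ordinals $\alpha_\ell\in(\beta,\kappa)\setminus S$ with ${\bf h}\circ{\bf h}^{\mathfrak e}_\ell={\bf h}_{\alpha_\ell,\ell}$. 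Using the distinguished element $x_{\mathfrak e}\in\modP^{\mathfrak e}$ and the relations $x_{\mathfrak e}-x_m\in\varphi^{\mathfrak e}_m(\modP^{\mathfrak e})$, $x_m=\sum_{\ell<m}{\bf h}^{\mathfrak e}_\ell(x^{\mathfrak e}_\ell)$ of Lemma~\ref{Pe bimodules defined}(c), together with preservation of the conjunctive positive existential formulas $\varphi^{\mathfrak e}_m$ under $\fucF$ and ${\bf h}$ (Lemma~\ref{preserving pe formulas}), one computes that $\fucF({\bf h}(x_{\mathfrak e}))-\sum_{\ell<m}\fucF({\bf h}_{\alpha_\ell,\ell}(x^{\mathfrak e}_\ell))\in\varphi^{\mathfrak e}_m(\modM_\kappa)$ for every $m$. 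Combining this with $(\Pr 1)$ applied to each ${\bf h}_{\alpha_\ell,\ell}$ and with the assumed obstruction, one sees that $\fucF({\bf h}(x_{\mathfrak e}))$ realizes a type of the shape $\{\varphi^{\mathfrak e}_m(y-z_m):m<\omega\}$ whose witnesses $z_m$ eventually lie below some $\modM_\delta$ with $\delta\in S$ predicting $(\fucF,\langle{\bf h}_{\alpha,\ell}\rangle)$, contradicting the omitting-types clause Definition~\ref{w.s. nice definition}(F)(n).

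The step I expect to be the main obstacle is making this last contradiction precise: one must locate the predicting stage $\delta$ and index $s\in J_\delta$ so that ${\bf h}$ is $h^\delta_s$ composed with the canonical embedding of clause (F)(k), verify the ``tree-like'' bookkeeping on the $t_{s,m}$'s so that the ${\bf h}_{\alpha_\ell,\ell}$ are exactly the maps fed into clause (F)(i)--(k), and check that the witnessing sequence $\langle z^\delta_{s,m}\rangle$ of the omitted type $q^\delta_s$ agrees with $\langle\sum_{\ell<m}\fucF({\bf h}_{\alpha_\ell,\ell}(x^{\mathfrak e}_\ell))\rangle$ modulo $\varphi^{\mathfrak e}_m(\modM_\kappa)$; this essentially re-runs the portion of the proof of Theorem~\ref{nice construction lemma} that established $(G)_1$. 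Once (2) is in hand, Claim~\ref{existencealphaast} --- and with it the ``in particular'' clause derived in the first paragraph --- follows.
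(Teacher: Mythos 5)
Your first two paragraphs are essentially sound: the derivation of the ``in particular'' clause from items (1) and (2) via $(\Pr 1)^{n(\ast)}_{\alpha(\ast),z}$ and coordinatewise comparison in $\modM_\beta\oplus\Rang(h_{\beta,n})\oplus\modK_{\beta,n}$ is correct, and so is the production of a candidate embedding $h_{\beta,n}$ onto a standard summand together with a $\modK_{\beta,n}$ satisfying (1) and $\fucF(h_{\beta,n}(x^{\mathfrak e}_n))\in\modM_\beta\oplus\Rang(h_{\beta,n})\oplus\modK_{\beta,n}$. The gap is in your third paragraph, i.e.\ precisely in item (2), which you yourself identify as the heart of the matter and then do not actually prove. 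The detour through $(G)_2$, the black-box predictions and the omitting-types clause (F)(n) is both unnecessary and not carried out: you never establish a logical bridge from ``no admissible triple exists for this fixed $n,\beta$'' to ``some omitted type $q^\delta_s$ is realized,'' and the witnesses $z_{s,m}$ of those types are tied to the specific black-box data in the construction, not to an arbitrary failure of (2). As written, the argument for (2) is a plan for re-running the proof of Theorem~\ref{nice construction lemma}, not a proof.

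The paper's argument for (2) is much more elementary and avoids the black box entirely; it is the standard ``subtraction'' trick combined with Fodor's lemma. For \emph{every} $\gamma\in(\beta,\kappa)\setminus S$ one chooses an embedding $h_\gamma\colon\modN^{\mathfrak e}_n\to\modM_{\gamma+1}$ onto a summand and $\modK^0_\gamma\in c\ell_{is}(\mathcal K)$ with $\fucF(h_\gamma(x^{\mathfrak e}_n))\in\modM_\gamma\oplus\Rang(h_\gamma)\oplus\modK^0_\gamma\leq_{\aleph_0}\modM_\kappa$ (your second paragraph, applied uniformly), and decomposes $\fucF(h_\gamma(x^{\mathfrak e}_n))=z^1_\gamma+z^2_\gamma+z^3_\gamma$ along the three summands. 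Since $\kappa=\lambda$ is regular and $\|\modM_\gamma\|<\lambda$ (the standing hypotheses of Lemma~\ref{groupsGn}), the map $\gamma\mapsto z^1_\gamma\in\modM_\gamma$ is regressive in the relevant sense, so Fodor's lemma yields a stationary $T\subseteq\kappa\setminus S$ with $\min(T)>\beta$ on which $z^1_\gamma$ is a fixed element $z$. Picking $\gamma(1)<\gamma(2)$ in $T$ with the ranges sufficiently separated and setting $h_{\beta,n}:=h_{\gamma(2)}-h_{\gamma(1)}$ and $\modK_{\beta,n}:=\modK^0_{\gamma(1)}\oplus\modK^0_{\gamma(2)}\oplus\Rang(h_{\gamma(1)})$, the constant error terms cancel: $\fucF(h_{\beta,n}(x^{\mathfrak e}_n))=(z-z)+(z^2_{\gamma(2)}-z^2_{\gamma(1)})+(z^3_{\gamma(2)}-z^3_{\gamma(1)})\in\Rang(h_{\beta,n})\oplus\modK_{\beta,n}$, which is exactly (2). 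This difference device is the same one used in Claims~\ref{prnclaim2} and~\ref{prnclaim3}; your proposal is missing it, and without it (or a completed version of your omitting-types argument) item (2) is not established.
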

\begin{proof} Fix $n$ and $\beta$ as in the claim. For every $\gamma$
satisfying $\beta <\gamma\in
\kappa\setminus S$, let $h_{\gamma}:\modN^{\mathfrak e}_n
\longrightarrow \modM_{\gamma+1}$ and $\modK^0_{\gamma}\in
c\ell_{is} ({\cal K})$ be such that $h_{\gamma}$ is a bimodule embedding and
\[\fucF(h_{\gamma}(x_n^{\mathfrak e}))\in \modM_\gamma\oplus\Rang(h_{\gamma})
\oplus \modK^0_{\gamma}
\leq_{\aleph_0} \modM_\kappa.
\]
Let $\epsilon_\gamma>\gamma$ be in $\kappa\setminus S$ such that $\fucF$
maps $\modM_{\epsilon_\gamma}$ into $\modM_{\epsilon_\gamma}$
and
\[\modM_\gamma\oplus\Rang(h_{\gamma})\oplus
\modK^0_{\gamma} \leq_{\aleph_0}
\modM_{\epsilon_\gamma}.\]
Let $\fucF(h_{\gamma}(x_n^{\mathfrak e}))=
z^1_{\gamma}+z^2_{\gamma}+z^3_{\gamma}$, where $z^1_{\gamma}\in
\modM_\gamma$, $z^2_{\gamma}\in\Rang(h_\gamma)$ and
$z^3_{\gamma}\in \modK^0_\gamma$. By
Fodor's lemma for some $z$ and for a stationary set $T \subseteq \kappa \setminus S$ we have
\begin{itemize}
  \item $\min(T) > \beta,$
  \item $z^1_\gamma=z$, for all $\gamma \in T.$
\end{itemize}
Let $\gamma(1)$, $\gamma(2) \in T$ be such that  $\epsilon_{\gamma(1)}<\gamma(2)$. Now, the following
\begin{itemize}
	\item [(i)] $\gamma:=\epsilon_{\gamma(2)},$
	\item [(ii)] $h_{\beta,n}: =h_{\gamma(2)}-h_{\gamma(1)}$ and
	\item [(iii)]  $\modK_{\beta, n}:=\modK^0_{\gamma(1)}\oplus
	\modK^0_{\gamma(2)}\oplus\Rang(h_{\gamma(1)})$
\end{itemize}
 are as required.

The particular case follows by the
choose of
$\alpha(\ast)$.
\end{proof}
Let us complete the proof of Lemma \ref{groupsGn}. To this end, we look at the following club of $\kappa$:
\[
A:=\{\beta<\kappa: \beta > \alpha(0) \text{~and~}
\fucF''(\modM_\beta)\subseteq \modM_\beta\}.
\]
Let $n \geq n(\ast)$ and $\beta \in A \cap (\kappa \setminus S).$ According to Claim \ref{existencealphaast}, we can find
some $\gamma_\beta>\beta$ a bimodule embedding
$h_{\beta,n}:\modN^{\mathfrak e}_n\longrightarrow
\modM_{\gamma_\beta}$, and $\modK_{\beta, n} \in c\ell^{\aleph_0}_{is} ({\cal K})$ such that
\begin{itemize}
%\[\begin{array}{l}
\item $\modM_\beta\oplus\Rang(h_{\beta,n})\oplus \modK_{\beta, n} \leq_{\aleph_0}
\modM_{\gamma_\beta},$
\item $\fucF(h_{\beta, n}(x^{\mathfrak e}_n ))\in \Rang(h_{\beta,n})\oplus \modK_{\beta, n}$.
%\end{array}\]
\end{itemize}
Set ${\cal U}=:\{\ell:n(\ast)\leq\ell<\omega
\}$ and ${\mathfrak e}^\ast={\mathfrak e}\restriction {\cal U}$, so ${\mathfrak
e}^*\in{\callE}^{\frakss}$ as well. For $\ell \in {\cal U}$ and $\beta \in A \cap (\kappa \setminus S),$ set
$$h'_{\beta,\ell}=h_{\beta,\ell}+h_\ell.$$
We are going to use the property presented in part $(G)_2$ of Definition \ref{nice construction modified}(3). In this regard,  we can find an embedding ${\bf h}: \modP^{\mathfrak e^{\ast}} \to \modM_\kappa$
and an increasing sequence $\langle \epsilon_n: n <\omega    \rangle$ of elements of $A \cap (\kappa \setminus S)$
such that $\epsilon=\sup\limits_{n<\omega}\epsilon_n \in A \cap (\kappa \setminus S)$ and
for each $n \in {\cal U}, h'_{\epsilon_n, n}={\bf h} \circ h_n^{\mathfrak{e}^\ast}$.
Then
\[
 \fucF(h'_{\epsilon_\ell, \ell}(x^{\mathfrak e}_\ell))-h'_{\epsilon_\ell, \ell}(z_\ell)=\fucF(h_{\epsilon_\ell, \ell}(x^{\mathfrak e}_\ell))-h_{\epsilon_\ell, \ell}(z_\ell) - z^\ast_\ell \in z^\ast_\ell   + \varphi^{\mathfrak e}_{\omega}(\modM_\kappa).
\]
Let $x:=  {\bf f}_{n(\ast)}^{\mathfrak{e}}(x_1^{\mathfrak{e}^{\ast}})$ and $z':= {\bf f}_{n(\ast)}^{\mathfrak{e}}(z)$ where ${\bf f}_{n(\ast)}^{\mathfrak{e}}$
were defined in Lemma \ref{Pe bimodules defined}.
It then follows that\begin{itemize}
	\item $x - \sum\limits_{\ell=n(\ast)}^{\ell=n-1}h_\ell^{\mathfrak e}(x_\ell^{\mathfrak e}) \in \varphi_n^{\mathfrak e}(\modP^{\mathfrak e})$, and
	\item $z' - \sum\limits_{\ell=n(\ast)}^{\ell=n-1}h_\ell^{\mathfrak e}(z_\ell) \in \varphi_n^{\mathfrak e}(\modP^{\mathfrak e}).$
	\end{itemize}
We deduce from these memberships that
\[
{\bf h}(x) - \sum\limits_{\ell=n(\ast)}^{\ell=n-1}{\bf h}(h_\ell^{\mathfrak e}(x_\ell^{\mathfrak e})) \in \varphi_n^{\mathfrak e}(\modM_\kappa)
\]
and
\[
{\bf h}(z') - \sum\limits_{\ell=n(\ast)}^{\ell=n-1}{\bf h}(h_\ell^{\mathfrak e}(z_\ell)) \in \varphi_n^{\mathfrak e}(\modM_\kappa).
\]
As $\bf f$ is an endomorphism, we have
\[
{\bf f}({\bf h}(x)) - \sum\limits_{\ell=n(\ast)}^{\ell=n-1}{\bf f}({\bf h}(h_\ell^{\mathfrak e}(x_\ell^{\mathfrak e}))) \in \varphi_n^{\mathfrak e}(\modM_\kappa).
\]
We plug this in the previous formula and deduce that
\[
 {\bf f}({\bf h}(x)) - {\bf h}(z')- \sum\limits_{\ell=n(\ast)}^{\ell=n-1}({\bf f}({\bf h}(h_\ell^{\mathfrak e}(x_\ell^{\mathfrak e})))- {\bf h}(h_\ell^{\mathfrak e}(z_\ell)) )\in \varphi_n^{\mathfrak e}(\modM_\kappa).
\]
Note that for some $\modK \in c\ell(\cal K), \modM_\epsilon= \modM_{\alpha(0)} \oplus \modP^{\mathfrak e} \oplus \modK$. Let
\[
\pi:  \modM_\epsilon \to \modM_{\alpha(0)}
\]
be the projection map and set
\[
z^\ast:= \pi({\bf f}({\bf h}(x))) - \pi({\bf h}(z')).
\]
We apply $h'_{\epsilon_\ell, \ell}={\bf h} \circ h_\ell^{\mathfrak{e}^\ast}$ along with the previous observation to see:

	\[\begin{array}{ll}
{\bf f}({\bf h}(h_\ell^{\mathfrak e}(x_\ell^{\mathfrak e})))- {\bf h}(h_\ell^{\mathfrak e}(z_\ell))&=\fucF(h'_{\epsilon_\ell, \ell}(x^{\mathfrak e}_\ell))-h'_{\epsilon_\ell, \ell}(z_\ell)\\
&= \fucF(h_{\epsilon_\ell, \ell}(x^{\mathfrak e}_\ell))-h_{\epsilon_\ell, \ell}(z_\ell) - z^\ast_\ell.
\end{array}\]
This yields that
	\[\begin{array}{ll}
\pi({\bf f}({\bf h}(h_\ell^{\mathfrak e}(x_\ell^{\mathfrak e}))))- \pi({\bf h}(h_\ell^{\mathfrak e}(z_\ell)))&=  \pi(\fucF(h_{\epsilon_\ell, \ell}(x^{\mathfrak e}_\ell)))-\pi(h_{\epsilon_\ell, \ell}(z_\ell)) - \pi(z^\ast_\ell)\\
&= z^\ast_\ell.
\end{array}\]
Therefore,

	\[\begin{array}{ll}
z^\ast-\sum\limits^{n-1}_{\ell=n(\ast)}z^*_\ell &=  \pi({\bf f}({\bf h}(x))) - \pi({\bf h}(z'))-\sum\limits_{\ell=n(\ast)}^{\ell=n-1}\pi( {\bf f}({\bf h}(h_\ell^{\mathfrak e}(x_\ell^{\mathfrak e}))))- \pi( {\bf h}(h_\ell^{\mathfrak e}(z_\ell)))\\
&\in \varphi_n^{\mathfrak e}(\modM_\kappa).
\end{array}\]

So, $z^\ast$ is as required.
%(2) Similarly proof (and not really needed).
%(3) By \ref{2.7}(2) below (and part (3) of \ref{2.6}).
% \hfill\qed$_{\ref{2.6}}$
%\smallbox{hykan mispar 1??}
\end{proof}

Let $z\in \modN^{\mathfrak e}_{n(\ast)}$. Following the above lemma, let us define,
 ${\groupG} ^\frakss_{n,z} [\bar \modM]$ as
$\langle \groupG_n: n\geq n(*)
\rangle$ where
$$\groupG_n:=\{ \fucF(h(x^{\mathfrak e}_n))-h(g^{\mathfrak e}_{n(*),n}(z)):  h \in \textmd{Hom}(\modN^{\mathfrak e}_n,\modM_\kappa ) \}.$$
We now show that under extra assumptions we can get a better ${\mathbb G}_n$'s.
\begin{lemma}
\label{moreonGn}
Let  $\bar \modM=\langle \modM_\alpha:\alpha\leq\kappa\rangle$
be a weakly semi-nice construction for $(\lambda,\frakss,S,\kappa)$,
 $\fucF$ be an ${\bf R}$-endomorphism,  ${\mathfrak e}\in \callE^\frakss$ and $n(\ast)<\omega$.
\begin{enumerate}
\item If $\modK=\mathop{{\bigoplus}}\limits_{t\in I} \modK_t$
(as $\ringR$-modules), $\bar \groupG={\groupG} ^\frakss_{n,z} [\bar \modM]$
 is decreasing and
compact for
$(\bar\varphi^{\mathfrak e},n(\ast))$ in $\modK$ over
$\modK_0$, then for
some finite $J\subseteq I$ and $m<\omega$:
$$
{\mathbb G}_m\subseteq\mathop{{\bigoplus}}\limits_{t\in J}
\modK_t+\varphi^{\mathfrak e}_\omega(\modK).
$$

\item Suppose $\modK=\sum\limits_{t\in I}
\modK_t$ (as $\ringR$--modules) and
\begin{enumerate}
\item[] if $J_1$, $J_2$ are finite subsets of $I$, $n<\omega$,
$y_\ell\in\sum\limits_{t\in J_\ell}\modK_t$ for
$\ell=1,2$,

\item[$(\ast):$]and if $y_1-y_2\in
\varphi^{\mathfrak e}_n(\modK)$, then for some
$y_3\in\sum\limits_t
\{\modK_t:t\in J_1\cap J_2\}$

we have
$y_1-y_3\in\varphi^{\mathfrak e}_n(\modK)$ and
$y_3-y_2\in \varphi^{\mathfrak e}_n(\modK)$.
\end{enumerate}
Then for
some finite $J\subseteq I$ and $m<\omega$:
$$
{\mathbb G}_m\subseteq\mathop{{\bigoplus}}\limits_{t\in J}
\modK_t+\varphi^{\mathfrak e}_\omega(\modK).
$$

\item Suppose $\bar {\mathbb G}$ is   compact for
$(\bar\varphi^{\mathfrak e},n(\ast))$ in
$\modK$ (as $\ringR$-modules), $h:\modK\longrightarrow
\modK'$ is an $\ringR$-homomorphism and for
$h(x)\in\varphi^{\mathfrak e}_\ell(\modK')
\setminus\varphi^{\mathfrak e}_{\ell+1}(\modK')$
there exists $y\in\varphi^{\mathfrak e}_\ell(\modK)
\setminus\varphi^{\mathfrak e}_{\ell+1}(\modK)$ such that
$h(y)=h(x)$.
Then ${h''}(\bar{\mathbb G})=\langle {h''}({\mathbb G}_n):n \geq n(\ast)\rangle$ is
compact for $(\bar\varphi^{\mathfrak e},n(\ast))$ in $\modK'$.

\item If ${\mathbb G}\subseteq \modK$, $\modK=
\mathop{{\bigoplus}}\limits^n_{t=1} \modK_t$ and
the projection of ${\mathbb G}$ to each $\modK^t$ is
$({\cal K},\bar\varphi)$-finitary,
then ${\mathbb G}$ in $(\cal K, \bar\varphi)$-finitary.

\item If $\modK_0\subseteq
\modK_1\leq^{ads}_{{\cal K},\aleph_0}
\modK_2$ and $\bar {\mathbb G}$ is $(\bar\varphi^{\mathfrak e},n(*))$-compact in $\modK_2$ over $\modK_0$,
then $\langle {\mathbb G}_n\cap \modK_1:n \geq n(*)\rangle$ is
$(\bar\varphi^{\mathfrak e},n(*))$-compact in $\modK_1$ over $\modK_0$.
\end{enumerate}
\end{lemma}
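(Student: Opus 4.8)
The plan is to treat all five items as closure properties of the notions ``compact'' and ``$(\cal K,\bar\varphi)$-finitary'' from Definitions \ref{Gnsequence} and \ref{strong}, obtained by unwinding those definitions with three recurring facts: (i) each $\varphi^{\mathfrak e}_n$, and hence $\varphi^{\mathfrak e}_\omega=\bigwedge_n\varphi^{\mathfrak e}_n$, is a conjunctive positive existential formula, so it is preserved by ($\ringR$- and bimodule) homomorphisms (Lemma \ref{preserving pe formulas}), defines an additive subgroup, and distributes over direct sums, $\varphi^{\mathfrak e}_n(\bigoplus_t\modK_t)=\bigoplus_t\varphi^{\mathfrak e}_n(\modK_t)$ (Lemma \ref{formulas and direct sum} and Claim \ref{pr properties}(5)); (ii) an element of a direct sum has finite support; (iii) the elementary calculus of $\leq_{\aleph_0}$ from Lemmas \ref{modm is AEC}, \ref{comparing two orders} and \ref{properties of w.s.n.c}, in particular $\varphi^{\mathfrak e}_n(\modM_1)=\modM_1\cap\varphi^{\mathfrak e}_n(\modM_2)$ when $\modM_1\leq_{\aleph_0}\modM_2$, and stability of $\leq_{\aleph_0}$ under finite direct sums. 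Throughout I would use that the sequence $\bar{\mathbb G}$ in question, being $\mathbb{G}^\frakss_{n,z}[\bar\modM]$ (or assumed so), is decreasing.

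For item (1) I would argue by contradiction. Writing $\pi_t$ for the projection $\modK\to\modK_t$, set $T_m:=\{t\in I:\pi_t(g)\notin\varphi^{\mathfrak e}_\omega(\modK_t)\text{ for some }g\in\mathbb{G}_m\}$; monotonicity of $\bar{\mathbb G}$ gives $T_{m+1}\subseteq T_m$. If some $T_m$ is finite then by fact (i) every $g\in\mathbb{G}_m$ lies in $\bigoplus_{t\in T_m}\modK_t+\bigoplus_{t\notin T_m}\varphi^{\mathfrak e}_\omega(\modK_t)\subseteq\bigoplus_{t\in T_m}\modK_t+\varphi^{\mathfrak e}_\omega(\modK)$, and we are done with $J:=T_m$. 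Otherwise every $T_m$ is infinite, and I would build recursively $\ell_0<\ell_1<\cdots$ in $[n(\ast),\omega)$, distinct $t_k\in I$, elements $g_{\ell_k}\in\mathbb{G}_{\ell_k}$, and integers $N_k$: given the data below $k$, with $J_{k-1}:=\{t_0,\dots,t_{k-1}\}\cup\bigcup_{j<k}\mathrm{supp}(g_{\ell_j})$ finite and $\pi_{t_j}(g_{\ell_j})\notin\varphi^{\mathfrak e}_{N_j}(\modK_{t_j})$ for $j<k$, pick $\ell_k>\max(\ell_{k-1},N_0,\dots,N_{k-1})$, then $t_k\in T_{\ell_k}\setminus J_{k-1}$, then $g_{\ell_k}\in\mathbb{G}_{\ell_k}$ with $\pi_{t_k}(g_{\ell_k})\notin\varphi^{\mathfrak e}_\omega(\modK_{t_k})$, and $N_k$ witnessing this. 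Put $g_\ell:=0$ for $\ell\notin\{\ell_k:k\}$. Compactness supplies $z^\ast\in\mathbb{G}_{n(\ast)}$ with $z^\ast-\sum_{\ell=n(\ast)}^{n}g_\ell\in\varphi^{\mathfrak e}_{n+1}(\modK)$ for all $n$; choosing $k$ with $t_k\notin\mathrm{supp}(z^\ast)$ and applying $\pi_{t_k}$, the $j<k$ terms vanish (supports inside $J_{k-1}$), the $j=k$ term is $u_k:=\pi_{t_k}(g_{\ell_k})$, and for $j>k$ one has $\pi_{t_k}(g_{\ell_j})\in\varphi^{\mathfrak e}_{\ell_j}(\modK_{t_k})\subseteq\varphi^{\mathfrak e}_{N_k}(\modK_{t_k})$ since $\ell_j>N_k$. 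Taking $n$ with $n+1\ge N_k$ forces $u_k\in\varphi^{\mathfrak e}_{N_k}(\modK_{t_k})$, a contradiction. Item (2) follows the same scheme, but since $\modK=\sum_t\modK_t$ need not be a direct sum the $\pi_t$ are unavailable; instead, at each step one invokes the separation property $(\ast)$ to replace an element of $\sum_{t\in J_1}\modK_t$ and an element of $\sum_{t\in J_2}\modK_t$ that agree modulo $\varphi^{\mathfrak e}_{n+1}(\modK)$ by a common representative from $\sum_{t\in J_1\cap J_2}\modK_t$, which is exactly what is needed to peel the finitely supported $z^\ast$ off the partial sums and isolate $g_{\ell_k}$.

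Items (3)--(5) are then direct verifications from the definitions. For (3): given $z^\ast_\ell\in h''(\mathbb{G}_\ell)$, pick preimages $w_\ell\in\mathbb{G}_\ell$; compactness in $\modK$ gives $w^\ast\in\mathbb{G}_{n(\ast)}$ with $w^\ast-\sum_{\ell=n(\ast)}^{n}w_\ell\in\varphi^{\mathfrak e}_{n+1}(\modK)$, and since $h$ preserves $\varphi^{\mathfrak e}_{n+1}$ we get $h(w^\ast)-\sum_{\ell=n(\ast)}^{n}z^\ast_\ell\in\varphi^{\mathfrak e}_{n+1}(\modK')$ with $h(w^\ast)\in h''(\mathbb{G}_{n(\ast)})$; appropriateness of $h''(\bar{\mathbb G})$ in $\modK'$ again follows from preservation, and the level-lifting hypothesis on $h$ is what keeps the images sitting at the correct $\varphi$-levels, so $h''(\bar{\mathbb G})$ really is compact. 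For (4): each $\pi_t(\bar{\mathbb G})$, $1\le t\le n$, is $(\cal K,\bar\varphi)$-finitary, witnessed at some level $m_t$ by a finite sum of copies of members of $\cal K$ lying almost directly in some $\modM_{\alpha_t}$; taking $m:=\max_t m_t$ and decomposing $\mathbb{G}_m\subseteq\bigoplus_{t=1}^{n}\pi_t(\mathbb{G}_m)$ exhibits $\mathbb{G}_m$, modulo $\varphi^{\mathfrak e}_\omega(\modM_\kappa)$, inside the direct sum of these $n$ finite families, which is again a finite $\cal K$-sum lying almost directly in $\modM_\alpha$ for $\alpha:=\max_t\alpha_t$ by the $\leq_{\aleph_0}$-calculus (Definition \ref{ads definition} and Lemma \ref{modm is AEC}). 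For (5): appropriateness of $\langle\mathbb{G}_n\cap\modK_1\rangle$ in $\modK_1$ is the identity $\mathbb{G}_n\cap\modK_1\subseteq\modK_1\cap\varphi^{\mathfrak e}_n(\modK_2)=\varphi^{\mathfrak e}_n(\modK_1)$; for the compactness clause, given $z^\ast_\ell\in\mathbb{G}_\ell\cap\modK_1$ take $z^\ast\in\mathbb{G}_{n(\ast)}$ from compactness in $\modK_2$, use $\modK_1\leq_{\aleph_0}\modK_2$ (Lemma \ref{modm is AEC}(6)) to write $z^\ast=z^\ast_1+w$ with $z^\ast_1\in\modK_1$ and $w$ in a $\cal K$-complement $\modN$, and note that projecting $z^\ast-\sum_{\ell=n(\ast)}^{n}z^\ast_\ell\in\varphi^{\mathfrak e}_{n+1}(\modK_1)\oplus\varphi^{\mathfrak e}_{n+1}(\modN)$ onto $\modN$ yields $w\in\varphi^{\mathfrak e}_{n+1}(\modN)$ for every $n$, hence $w\in\varphi^{\mathfrak e}_\omega$; so $z^\ast_1$ serves as the required witness in $\modK_1$ modulo $\varphi^{\mathfrak e}_\omega$.

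The hard part, I expect, is the cancellation control in items (1) and (2): one must ensure that the ``fresh'' component contributed by $g_{\ell_k}$ survives in the compact witness $z^\ast$, i.e.\ is destroyed neither by the later terms $g_{\ell_j}$ ($j>k$) nor by $z^\ast$ itself. The two bookkeeping devices are designed precisely for this --- letting $\ell_k$ outgrow all earlier escape levels $N_j$ confines later interference to a harmlessly high $\varphi$-level, while choosing $t_k$ outside the finite set $J_{k-1}$, and then outside $\mathrm{supp}(z^\ast)$, kills earlier interference --- and in (2) the correct deployment of $(\ast)$ in place of the projections is the only genuinely delicate point. A secondary subtlety is the correction step in (5), where the $\modK_2$-witness must be pushed into $\modK_1$ without leaving $\mathbb{G}_{n(\ast)}$ modulo $\varphi^{\mathfrak e}_\omega$.
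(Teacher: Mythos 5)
Your proof is correct and follows essentially the same compactness\hspace{0pt}-versus\hspace{0pt}-finite\hspace{0pt}-support argument as the paper; the only real variation is in item (1), where your bookkeeping (distinct components $t_k$ with escape levels $N_k$, and choosing $t_k$ outside the finite support of the compact witness $z^\ast$) replaces the paper's device of exhausting $I$ by an increasing chain of finite sets $J_\ell$ and thinning to a sparse infinite set $\mathcal U$, with the minor advantage that your version handles uncountable $I$ directly instead of via the paper's reduction to the countable case. Items (2)--(5) match the paper's arguments (the paper's own treatment of (2) is equally terse, and both your proof and the paper's leave the same small point implicit in (5), namely that the corrected witness lies in ${\mathbb G}_{n(\ast)}$ and not merely in $\modK_1$ modulo $\varphi^{\mathfrak e}_\omega$).
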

\begin{proof}
(1) Let us first suppose that the index set $I$ is countable. Suppose by contradiction that for all finite $J \subseteq I$
and $m<\omega$,
\[
{\mathbb G}_m \nsubseteq \mathop{{\bigoplus}}\limits_{t\in J}
\modK_t+\varphi^{\mathfrak e}_\omega(\modK).
\]
We   argue by induction on   $\ell\geq n(\ast)$ to find $z_\ell$,
$J_\ell$ and $n_\ell$ such that the following properties hold:
\begin{itemize}
\item [i)] $J_\ell$ is a finite subset of $I$,
\item  [ii)] $J_\ell
\subseteq J_{\ell+1}$,

\item  [iii)] $I=\bigcup\limits_{\ell<\omega}J_\ell,$

\item  [iv)] $z_\ell\in {\mathbb G}
_\ell\setminus\left(\mathop{{\bigoplus}}
\limits_{t\in J_\ell}K_t+\varphi^{\mathfrak e}_\omega(\modK)\right)$,

\item [v)] $z_\ell \notin \mathop{{\bigoplus}}
\limits_{t\in J_\ell}K_t+\varphi^{\mathfrak e}_{n_{\ell+1}}(\modK)$,

\item  [vi)] $z_\ell\in \mathop{{\bigoplus}}\limits_{t\in J_{\ell+1}}\modK_t$.
\end{itemize}
Let $\cal U$ be an infinite subset of $\omega$ such that $0 \in \cal U$ and
\[
\ell < m \mbox{ and } m \in \cal U \implies n_{\ell+1} < m.
\]
Define $\langle z^*_\ell: \ell < \omega  \rangle$ by $z^*_\ell=z_\ell$ if $\ell \in \cal U$ and $z^*_\ell=0$ otherwise.
By compactness, we can find $z^\ast \in \groupG_{n(\ast)}$ such that for each
$\ell \geq n(\ast)$,
\[
z^\ast - \sum\limits_{i=n(\ast)}^\ell z^*_i \in \varphi^{\mathfrak e}_{\ell+1}(\modK).
\]
Let $J \subseteq I$ be a finite set such that $z^\ast \in \bigoplus\limits_{t\in J}\modK_t$. Pick $m \in \cal U$
such that $J \subseteq J_m$ and set $n$ be the least element of $\cal U$ above $m$. Then $z_m=z^*_m$ and
\[
z^\ast - \sum\limits_{i=n(\ast)}^m z^*_i = z^\ast - \sum\limits_{i=n(\ast)}^{n-1} z^*_i\in \varphi^{\mathfrak e}_{n}(\modK).
\]
An easy contradiction.
%It follows that
%\[
%z_m \in z^\ast - \sum\limits_{i=n(\ast)}^{n-1} z^*_i  +  \varphi^{\mathfrak e}_{n}(\modK)\subseteq  \mathop{{\bigoplus}}\limits_{t\in J_{m}}\modK_t +  \varphi^{\mathfrak %e}_{n}(\modK) \subseteq  \mathop{{\bigoplus}}\limits_{t\in J_{m}}\modK_t +  \varphi^{\mathfrak e}_{n_{m+1}}(\modK).
%\]
%This is in contradiction with v).

Let us now show that we can get the result for arbitrary $I$. Thus suppose $I$ is uncountable and suppose that the conclusion of the lemma fails
for it. Construct $z_\ell, J_\ell, n_\ell$ as before and set $\bar I=\bigcup\limits_{\ell=n(\ast)}^{\omega}J_\ell.$ Then $\bar I$ is countable. Set
$\bar \modK=\bigoplus\limits_{t\in \bar I}\modK_t$
and $\bar\groupG_n= \groupG_n \cap \bar\modK$. It then follows that the conclusion fails for this case, contradicting the above argument.

(2). This is similar to $(1)$.

(3). Suppose $h(x_\ell) \in h''(\groupG_\ell)$, for $\ell \geq n(\ast)$. By our assumption, we can find a sequence $\langle y_\ell: \ell \geq n(\ast) \rangle$ such that
\begin{itemize}
  \item $y_\ell \in \groupG_\ell$,
  \item $h(y_\ell)=h(x_\ell)$,
  \item $h(x_\ell) \in \varphi^{\mathfrak e}_\ell(\modK')
\setminus\varphi^{\mathfrak e}_{\ell+1}(\modK') \implies y_\ell \in \varphi^{\mathfrak e}_\ell(\modK)
\setminus\varphi^{\mathfrak e}_{\ell+1}(\modK)$.
\end{itemize}
Let $z \in \groupG_{n(\ast)}$ be such that for each $n \geq n(\ast), z - \sum\limits_{\ell=n(\ast)}^n y_\ell \in \varphi^{\mathfrak e}_{n+1}(\modK)$. This implies that
\[
h(z) - \sum\limits_{\ell=n(\ast)}^n h(y_\ell) \in \varphi^{\mathfrak e}_{n+1}(\modK).
\]
Thus $h(z) \in h''(\groupG_{n(\ast)})$ is as required.

(4). For $t=1, \cdots, t$ let $\pi_t: \modK \to \modK_t$ be the projection map to $\modK_t$. We have
\[
\pi_t''(\groupG) \subseteq \sum\limits_{i< m_t} \modK_i^t + \varphi^{\mathfrak e}_\omega(\modK),
\]
where each $\modK_i^t$ is in $c\ell_{\text{is}}(\modK)$. Then
\[
\groupG = \sum\limits_{t=1}^n \pi_t''(\groupG) \subseteq    \sum\limits_{t=1}^n \left( \sum\limits_{i< m_t} \modK_i^t + \varphi^{\mathfrak e}_\omega(\modK) \right) \subseteq \sum\limits_{t=1}^n  \sum\limits_{i< m_t} \modK_i^t + \varphi^{\mathfrak e}_\omega(\modK).
\]
We are done.

(5). For each $\ell \geq n(\ast)$,
we peak
$z_\ell \in \groupG_\ell \cap \modK_1$. Due to our assumption, there exists $z^\ast \in \modK_2$ such that  $ z^\ast - \sum\limits_{\ell=n(\ast)}^n z_\ell \in  \varphi^{\mathfrak e}_{n+1}(\modK_2)$  for all $n \geq n(\ast)$. Consequently,
\[
\modK_2 \models~\exists z^\ast \bigwedge\limits_{n=n(\ast)}^\omega \varphi^{\mathfrak e}_{n+1}(z^\ast - \sum\limits_{\ell=n(\ast)}^n z_\ell ).
\]
As $\modK_1\leq^{ads}_{{\cal K},\aleph_0}
\modK_2$ and in the light of Lemma \ref{comparing two orders}(1) we have
\[
\modK_1 \models~\exists z^\ast \bigwedge\limits_{n=n(\ast)}^\omega \varphi^{\mathfrak e}_{n+1}(z^\ast - \sum\limits_{\ell=n(\ast)}^n z_\ell ).
\]
Let $z^\ast \in \modK_1$ be  witness this. Then, $z^\ast - \sum\limits_{\ell=n(\ast)}^n z_\ell \in  \varphi^{\mathfrak e}_{n+1}(\modK_1)$
 for all $n \geq n(\ast),$ as required.
\end{proof}

\begin{Remark}
\label{2.7A}\begin{itemize}
	\item[i)] In Lemma \ref{moreonGn}(3), we can weaken the assumption on $h$ to:
	for some $\eta\in {}^\omega\omega$ diverging to infinity
	if $\ell\geq n(\ast)$ and $h(x)\in\varphi^{\mathfrak e}_\ell
	(\modK')\setminus\varphi^{\mathfrak e}_{\ell+1}(\modK')$, then
$h(x)=h(y)$ for some $y\in\varphi^{\mathfrak e}_{n(\ast)}
	(\modK) \setminus
	\varphi^{\mathfrak e}_{\eta(\ell)}(\modK).$
	\item [ii)] Note that if $h$ is a projection, then it satisfies in the presented condition from the first item.
%	In particular,   $h(x)=h(y)$ for some $y\in\varphi^{\mathfrak e}_{n(\ast)}
%	(\modK) \setminus
%	\varphi^{\mathfrak e}_{\eta(\ell)}(\modK).$
\end{itemize}
\end{Remark}

\begin{lemma}
\label{2.9}
Let $\ringR$, $\ringS$ and every $\modN \in {\cal K}$
be of  cardinality $<2^{\aleph_0}$ and let
${\mathfrak e}\in {\callE}$.  Then  there is no non-trivial compact
 $\bar {\mathbb G}$ for $\bar \varphi^{\mathfrak e}$
in any ${\cal K}$-bimodule $\modM$.
\end{lemma}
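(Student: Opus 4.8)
The plan is to argue by contradiction and exploit a cardinality count against the cardinality bound $<2^{\aleph_0}$. So suppose $\bar{\mathbb G}=\langle {\mathbb G}_n:n\geq n(\ast)\rangle$ is a non-trivial compact sequence for $\bar\varphi^{\mathfrak e}$ in some ${\cal K}$-bimodule $\modM$. Non-triviality says that for every $m\geq n(\ast)$ we have ${\mathbb G}_m\nsubseteq\varphi^{\mathfrak e}_\omega(\modM)$; since $\bar{\mathbb G}$ is $\bar\varphi$-appropriate, ${\mathbb G}_m\subseteq\varphi^{\mathfrak e}_m(\modM)$ and the quotients $\varphi^{\mathfrak e}_m(\modM)/\varphi^{\mathfrak e}_{m+1}(\modM)$ measure how $\bar{\mathbb G}$ descends. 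First I would use non-triviality to build, by induction on $\ell\geq n(\ast)$, elements $z_\ell\in{\mathbb G}_\ell$ together with strictly increasing levels $n_\ell$ such that $z_\ell\notin\varphi^{\mathfrak e}_{n_{\ell+1}}(\modM)$ (possible because ${\mathbb G}_\ell$ is not contained in $\varphi^{\mathfrak e}_\omega(\modM)=\bigcap_k\varphi^{\mathfrak e}_k(\modM)$, so some element of it escapes arbitrarily high finite levels). This gives a ``witnessing'' element at each stage whose image in the appropriate quotient is non-zero.

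Next, the key point is that for each subset $A\subseteq\{\ell:\ell\geq n(\ast)\}$, one can cook up from $\langle z_\ell\rangle$ a sequence $\langle z^A_\ell\rangle$ with $z^A_\ell=z_\ell$ for $\ell\in A$ and $z^A_\ell=0$ otherwise, and apply compactness to obtain some $z^{\ast}_A\in{\mathbb G}_{n(\ast)}$ with $z^{\ast}_A-\sum_{\ell=n(\ast),\ \ell\in A}^{n}z_\ell\in\varphi^{\mathfrak e}_{n+1}(\modM)$ for all $n$ — i.e.\ $z^\ast_A$ realizes the ``infinite sum $\sum_{\ell\in A}z_\ell$'' modulo $\varphi^{\mathfrak e}_\omega(\modM)$. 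Thinning to an infinite $\mathcal U$ with $n_{\ell+1}<m$ for consecutive $\ell<m$ in $\mathcal U$ (as in the proof of Lemma \ref{moreonGn}(1)) one checks that distinct subsets $A\subseteq\mathcal U$ produce elements $z^\ast_A$ which are pairwise distinct modulo $\varphi^{\mathfrak e}_\omega(\modM)$: if $A\bigtriangleup B$ has least element $\ell_0$, then at the first $\mathcal U$-stage $m$ past $\ell_0$ the partial sums of $z^A$ and $z^B$ differ by $z_{\ell_0}$, which is not in $\varphi^{\mathfrak e}_{n_{\ell_0+1}}(\modM)\supseteq\varphi^{\mathfrak e}_{m}(\modM)$, so $z^\ast_A-z^\ast_B\notin\varphi^{\mathfrak e}_\omega(\modM)$. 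Hence the map $A\mapsto z^\ast_A+\varphi^{\mathfrak e}_\omega(\modM)$ is injective on $\mathcal P(\mathcal U)$, producing $2^{\aleph_0}$ distinct elements of $\modM/\varphi^{\mathfrak e}_\omega(\modM)$, hence $|\modM|\geq 2^{\aleph_0}$.

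This contradicts the hypothesis that every $\modN\in{\cal K}$ has cardinality $<2^{\aleph_0}$, provided we know $\|\modM\|<2^{\aleph_0}$ for a ${\cal K}$-bimodule $\modM$. Here I would use that a ${\cal K}$-bimodule lies in $c\ell_{\rm is}({\cal K})$, so it is \emph{isomorphic} to a single member of ${\cal K}$ and thus has cardinality $<2^{\aleph_0}$ directly; the role of $|\ringR|,|\ringS|<2^{\aleph_0}$ is just to keep the ambient language small enough that the count of quotient-classes is the only relevant bound. The main obstacle I anticipate is the bookkeeping in the ``independence'' step: one must choose the $z_\ell$ and the levels $n_\ell$ together with the thinning $\mathcal U$ so that the partial sums genuinely separate in $\modM/\varphi^{\mathfrak e}_\omega(\modM)$ — i.e.\ that the ``escape level'' $n_{\ell_0+1}$ of $z_{\ell_0}$ dominates the stage $m$ at which we compare, which is exactly why $\mathcal U$ is chosen with the gap condition $\ell<m,\ m\in\mathcal U\Rightarrow n_{\ell+1}<m$, mirroring Lemma \ref{moreonGn}(1). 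Everything else is routine: compactness is applied verbatim, and the final cardinality comparison is immediate.
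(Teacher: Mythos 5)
Your proof is correct and follows essentially the same route as the paper's: argue by contradiction, pick witnesses $z_\ell\in\mathbb{G}_\ell$ escaping some finite level, apply compactness to the sequences supported on subsets of a suitably thinned infinite set, and conclude that $\mathbb{G}_{n(\ast)}$ (equivalently $\modM/\varphi^{\mathfrak e}_\omega(\modM)$) would have $2^{\aleph_0}$ elements, contradicting $\|\modM\|<2^{\aleph_0}$. The only cosmetic difference is that you separate the realizing elements via the least element of the symmetric difference of subsets of one fixed $\mathcal U$, while the paper invokes an almost disjoint family; your writeup is if anything more detailed on the separation step than the paper's.
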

\begin{proof} Let
	$\langle {\mathbb G}_n:n \geq n_0 \rangle$ be $(\bar\varphi^{\mathfrak e},n_0)$-compact
	in
	$\modM$. According to Definition \ref{Gnsequence} (4) we need to show that
	${\mathbb G}_m\subseteq\varphi^{\mathfrak e}_\omega(\modM)$
	for	some $m$.
  Suppose on the contradiction that 	${\mathbb G}_m\nsubseteq\varphi^{\mathfrak e}_\omega(\modM)$     for all $m\geq n_0$.  Hence,  $\groupG_m \nsubseteq \varphi^{\mathfrak e}_{\ell_m }(\modM)$
  for some $\ell_m>n_0$. Pick $z_m \in \groupG_m \setminus \varphi^{\mathfrak e}_{\ell_m}(\modM)$, for $m \geq n_0$.
For any infinite $\cal U \subseteq \omega \setminus n_0$, find $z_{\cal U} \in \groupG_{n_0}$ such that
  \[
 ~z_{\cal U} - \sum\{z_m: m \in \cal U \cap [n_0, n]   \} \in \varphi_{n+1}^{\mathfrak e}(\modM).
  \] for all  $ n \in \cal U$.
 Let  $\cal U_1$ and $ \cal U_2$ be two subsets of  $\omega$ with finite intersection property. Then $z_{\cal U_1} \neq z_{\cal U_2}.$ It follows that
  \[
  2^{\aleph_0} \leq ||\groupG_{n_0}|| \leq ||\modM|| < 2^{\aleph_0},
  \]
  which is impossible.
\end{proof}

\begin{lemma}
\label{gncanbefinitary}
Under the same assumptions as in Lemma \ref{groupsGn},  ${\groupG} ^\frakss_{n,z} [\bar \modM]=\langle \groupG_n: n\geq n(\ast)
\rangle$ is $(\cal K, \bar\varphi^{\mathfrak e})$-finitary in
$\modM_\kappa$. Furthermore, if for $\modN \in \cal K,$ there is no non-trivial $\bar{\mathbb{L}}= \langle \mathbb{L}_n: n \geq n(\ast) \rangle$ compact for $(\bar\varphi^{\mathfrak e}, n(\ast))$
in $\modM_\ast \oplus \modN$, then, by increasing $n(\ast)$, we can take ${\groupG} ^\frakss_{n,z} [\bar \modM]=\bar 0,$ i.e., $\groupG_n=0$ for all $n \geq n(\ast)$.
\end{lemma}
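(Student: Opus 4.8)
The first assertion is that $\bar{\mathbb G}:={\groupG}^\frakss_{n,z}[\bar\modM]$ is $(\cal K,\bar\varphi^{\mathfrak e})$-finitary in $\modM_\kappa$. The plan is to combine Lemma \ref{groupsGn} with the finitariness machinery of Lemma \ref{moreonGn}. By Lemma \ref{groupsGn} we know that $\bar{\mathbb G}$ is a decreasing sequence of additive subgroups with ${\mathbb G}_n\subseteq\varphi^{\mathfrak e}_n(\modM_\kappa)$, that ${\mathbb G}_n\subseteq\modM_{\alpha(\ast)}+\varphi^{\mathfrak e}_\omega(\modM_\kappa)$ for a fixed $\alpha(\ast)\in\kappa\setminus S$ with $\|\modM_{\alpha(\ast)}\|<\lambda$ (using the hypothesis $\kappa=\lambda$ from Lemma \ref{groupsGn}), and that $\bar{\mathbb G}$ is compact for $(\bar\varphi^{\mathfrak e},n(\ast))$. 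Since $\modM_{\alpha(\ast)}$ has a direct sum decomposition $\modM_{\alpha(\ast)}=\modM_\ast\oplus\bigoplus_{t\in I}\modK_t$ with each $\modK_t\in c\ell_{\rm is}(\cal K)$ (this follows by running clauses (E)--(F) of Definition \ref{w.s. nice definition} up to $\alpha(\ast)$; note $\alpha(\ast)\notin S$ so the decomposition is genuinely as a direct sum and $|I|<\lambda$), the restriction $\langle{\mathbb G}_n\cap\modM_{\alpha(\ast)}':n\geq n(\ast)\rangle$, where $\modM_{\alpha(\ast)}'$ is the relevant summand, is compact there as well by Lemma \ref{moreonGn}(5) (applied to $\modM_\ast\subseteq\modM_{\alpha(\ast)}\leq^{\rm ads}_{{\cal K},\aleph_0}\modM_\kappa$), after pushing the $\varphi^{\mathfrak e}_\omega(\modM_\kappa)$-error into the picture. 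Then Lemma \ref{moreonGn}(1) (or (2), using Lemma \ref{properties of w.s.n.c}(3)$(b)$ to verify the amalgamation hypothesis $(\ast)$ of (2)) yields a finite $J\subseteq I$ and $m\geq n(\ast)$ with ${\mathbb G}_m\subseteq\bigoplus_{t\in J}\modK_t+\varphi^{\mathfrak e}_\omega(\modM_\kappa)$, and $\bigoplus_{t\in J}\modK_t\leq_{\aleph_0}\modM_\beta$ for $\beta$ large enough in $\kappa\setminus S$ by Lemma \ref{properties of w.s.n.c}(1); this is exactly $(\cal K,\bar\varphi^{\mathfrak e})$-finitariness of ${\mathbb G}_m$, hence of $\bar{\mathbb G}$ (Definition \ref{Gnsequence}(3),(4)).

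For the ``furthermore'' clause, assume that for every $\modN\in\cal K$ there is no non-trivial $\bar{\mathbb L}$ compact for $(\bar\varphi^{\mathfrak e},n(\ast))$ in $\modM_\ast\oplus\modN$; we must show that, after increasing $n(\ast)$, $\groupG_n=0$ for all $n\geq n(\ast)$. First I would upgrade the previous paragraph: from ${\mathbb G}_m\subseteq\bigoplus_{t\in J}\modK_t+\varphi^{\mathfrak e}_\omega(\modM_\kappa)$ with $|J|$ finite, I want to replace $\bigoplus_{t\in J}\modK_t$ by a single $\modN\in\cal K$. Each $\modK_t\cong\modN_t^{\rm so}$ for some member of $\cal K$, and $\cal K$ need not be closed under finite direct sums, so instead I project: for each $t\in J$ let $\pi_t$ be the projection onto $\modK_t$ within the fixed decomposition. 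By Lemma \ref{moreonGn}(3) (and Remark \ref{2.7A}(ii), since projections satisfy the lifting hypothesis on $\varphi^{\mathfrak e}_\ell$-layers), each $\pi_t''(\bar{\mathbb G})$ is compact for $(\bar\varphi^{\mathfrak e},n(\ast))$ in $\modK_t$, hence (composing with the isomorphism $\modK_t\to\modN_t^{\rm so}$ and the embedding $\modN_t^{\rm so}\hookrightarrow\modM_\ast\oplus\modN_t^{\rm so}$, again via Lemma \ref{moreonGn}(3)) is compact in $\modM_\ast\oplus\modN_t^{\rm so}$. By the standing hypothesis this forces $\pi_t''({\mathbb G}_{m_t})\subseteq\varphi^{\mathfrak e}_\omega(\modK_t)$ for some $m_t\geq n(\ast)$. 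Taking $m^\ast:=\max_{t\in J}m_t$, every projection of ${\mathbb G}_{m^\ast}$ lies in $\varphi^{\mathfrak e}_\omega$ of the corresponding summand, so ${\mathbb G}_{m^\ast}\subseteq\bigoplus_{t\in J}\varphi^{\mathfrak e}_\omega(\modK_t)+\varphi^{\mathfrak e}_\omega(\modM_\kappa)\subseteq\varphi^{\mathfrak e}_\omega(\modM_\kappa)$ by Claim \ref{pr properties}(5). Finally, relabel: replace $n(\ast)$ by $m^\ast$ and restrict $\mathfrak e$ to ${\cal U}=\{m^\ast,m^\ast+1,\dots\}$ (this keeps us in $\callE^\frakss$ since $\callE$ is closed under restrictions); then by definition of ${\groupG}^\frakss_{n,z}$, for the new index the defining set is $\{\fucF(h(x^{\mathfrak e}_n))-h(g^{\mathfrak e}_{m^\ast,n}(z)):h\in\mathrm{Hom}(\modN^{\mathfrak e}_n,\modM_\kappa)\}$, and each such element lies in $\varphi^{\mathfrak e}_n(\modM_\kappa)$ and in ${\mathbb G}_{n}\subseteq\varphi^{\mathfrak e}_\omega(\modM_\kappa)$ for $n\geq m^\ast$; but an element of $\groupG_n$ that is both in $\varphi^{\mathfrak e}_n$ and equal to a value of the ${\mathbb G}$-defining map is literally an element of the new ${\mathbb G}_n$, and by construction these are contained in $\varphi^{\mathfrak e}_\omega(\modM_\kappa)$, so after this final adjustment $\groupG_n=0$ for all $n\geq n(\ast)$, as desired.

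\textbf{Main obstacle.} The delicate point is the bookkeeping in the ``furthermore'' clause: one has to pass from a \emph{finite direct sum} of $\cal K$-members (which is where Lemma \ref{moreonGn}(1) lands us) to a \emph{single} $\cal K$-member, since $\cal K$ is not assumed closed under finite sums, and the hypothesis about non-existence of compact $\bar{\mathbb L}$ is phrased only for $\modM_\ast\oplus\modN$ with $\modN\in\cal K$. The projection argument via Lemma \ref{moreonGn}(3)+(4) and Remark \ref{2.7A} is what makes this go through, but it requires checking carefully that projections satisfy the layer-lifting hypothesis of Lemma \ref{moreonGn}(3) — that is, that for $h$ a projection and $h(x)\in\varphi^{\mathfrak e}_\ell(\modK')\setminus\varphi^{\mathfrak e}_{\ell+1}(\modK')$ one can find $y\in\varphi^{\mathfrak e}_\ell(\modK)\setminus\varphi^{\mathfrak e}_{\ell+1}(\modK)$ with $h(y)=h(x)$; this uses Claim \ref{pr properties}(5) on how $\varphi^{\mathfrak e}_\ell$ distributes over direct sums together with the fact that the decomposition $\modM_{\alpha(\ast)}=\modM_\ast\oplus\bigoplus_t\modK_t$ is a genuine direct sum of bimodules. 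A secondary subtlety is ensuring the error terms $\varphi^{\mathfrak e}_\omega(\modM_\kappa)$ that accumulate through the various reductions can all be absorbed at the end — this is fine because $\varphi^{\mathfrak e}_\omega$ is closed under addition and is respected by the direct-sum decompositions (Claim \ref{pr properties}(5), Lemma \ref{formulas and direct sum}), but it must be tracked. Everything else is routine assembly of the already-proved lemmas.
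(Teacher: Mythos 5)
Your argument for the first assertion is essentially the paper's own: Lemma \ref{groupsGn} supplies the decreasing compact sequence inside $\modM_{\alpha(\ast)}+\varphi^{\mathfrak e}_\omega(\modM_\kappa)$, Lemma \ref{moreonGn}(1) applied to the decomposition of $\modM_{\alpha(\ast)}$ produces the finite sum $\bigoplus_{t\in J}\modK_t$, and Lemma \ref{properties of w.s.n.c}(1) turns this into the definition of $(\cal K,\bar\varphi^{\mathfrak e})$-finitary. For the ``furthermore'' clause your route is genuinely different in organization, though it uses the same toolkit. The paper argues by contradiction with a single projection: writing $\modM_{\alpha(\ast)}=\bigoplus_{\ell<n}\modK_\ell\oplus\modM$ and projecting $\bar{\mathbb G}$ along this splitting, it invokes Lemma \ref{moreonGn}(3) and Remark \ref{2.7A} to claim the image is a non-trivial compact sequence in $\modM_\ast\oplus\modN$ for some $\modN\in\cal K$, without identifying which single $\modN$ carries the non-triviality. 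You instead project onto each summand $\modK_t$, $t\in J$, transfer compactness into $\modM_\ast\oplus\modN^{\rm so}_t$ via Lemma \ref{moreonGn}(3), use the hypothesis to force each image eventually inside $\varphi^{\mathfrak e}_\omega$, and take the maximum of the finitely many thresholds. This makes explicit the reduction from a finite direct sum of $\cal K$-members to a single one --- precisely the step the hypothesis is phrased for and the step the paper elides --- so your version is, if anything, the cleaner direction of the argument; the cost is only the extra bookkeeping of the $\varphi^{\mathfrak e}_\omega$-error terms, which you track correctly via Claim \ref{pr properties}(5).

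Two caveats, neither a gap you introduced. Both proofs rest on $\modM_{\alpha(\ast)}$ decomposing as $\modM_\ast\oplus\bigoplus_t\modK_t$ with $\modK_t\in c\ell_{\rm is}(\cal K)$ (the paper simply declares ``the assumption $\modM_{\alpha(\ast)}\in c\ell(\cal K)$''), which is not automatic if clause (F) was invoked below $\alpha(\ast)$. And your final sentence should not claim a literal equality $\groupG_n=0$: what you have proved is $\groupG_{m^\ast}\subseteq\varphi^{\mathfrak e}_\omega(\modM_\kappa)$, i.e.\ the sequence is trivial and the error term may be replaced by $0$ in the sense of Remark \ref{2.10}(1); this is exactly where the paper's own proof lands, so it is the intended reading of the statement, but it is worth saying so explicitly rather than asserting membership in $\varphi^{\mathfrak e}_\omega$ implies vanishing.
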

\begin{proof}
  Pick $\alpha(\ast) \in \kappa\setminus S$ such that $z\in \modL^{\mathfrak e}_{n(\ast)}[{\cal K}] \subseteq \modM_{\alpha(\ast)}$.
  We use the assumption  $\modM_{\alpha(\ast)} \in c\ell(\cal K)$ along with  Lemma \ref{moreonGn}(1) to
   find $m<\omega$ and a finite subset $\{\modK_0, \cdots, \modK_{n-1}\}$ of $c\ell_{\text{is}}(\cal K)$ such that
  $\modK_0, \cdots, \modK_{n-1}$ are direct summands of $\modM_{\alpha(\ast)}$ and
  \[
  \groupG_m \subseteq \sum\limits_{\ell  < n}\modK_\ell + \varphi_\omega^{\mathfrak e}(\modM_\kappa).
  \]
Recall from  Lemma \ref{properties of w.s.n.c}(1) that   $\sum\limits_{\ell  < n}\modK_\ell \leq_{\aleph_0} \modM_{\alpha(\ast)} \leq_{\aleph_0}\modM_\alpha$, where $\alpha > \alpha(\ast)$. Thus
  ${\groupG} ^\frakss_{n,z} [\bar \modM]=\langle \groupG_n: n\geq n(\ast)
\rangle$ is $(\cal K, \bar\varphi^{\mathfrak e})$-finitary in
$\modM_\kappa$.

 Now suppose that for each  $\modN \in \cal K,$ there is no non-trivial $\bar{\mathbb{L}}= \langle \mathbb{L}_n: n \geq n(\ast) \rangle$ compact for $(\bar\varphi^{\mathfrak e}, n(\ast))$
in $\modM_\ast \oplus \modN$.    Suppose by contradiction that ${\groupG} ^\frakss_{n,z} [\bar \modM]\neq \bar 0.$
Let $\alpha(\ast)$, $\{\modK_0, \cdots, \modK_{n-1}\}\subset c\ell_{\text{is}}(\cal K)$
and  $m$ be as above.   By increasing $n(\ast)$ we may assume that $m=n(\ast)$.
 Then
$\bigoplus_{\ell  < n}\modK_\ell$ is a direct summand of $\modM_{\alpha(\ast)}$.  Suppose $\modM_{\alpha(\ast)} = \bigoplus_{\ell  < n}\modK_\ell\oplus \modM$ and we look at the natural projection map $\pi: \modM_{\alpha(\ast)} \to \modM$. In view of Lemma \ref{moreonGn}(3) (and Remark \ref{2.7A}) we observe that
$\pi''({\groupG} ^\frakss_{n,z} [\bar \modM])$ is non-trivial and compact for $(\bar\varphi^{\mathfrak e}, n(\ast))$
in $\modM_\ast \oplus \modN$,
which is a contradiction.
\end{proof}

\begin{Remark}
\label{2.10}
{\rm
\begin{enumerate}
\item[(1)]  Suppose for every ${\cal K}$-bimodule
$\modM$ and ${\mathfrak e}\in
{\callE}$ and ${\mathbb G}_n\subseteq \modM$ for
$n\geq n_0$, if
$\langle {\mathbb G}_n:n \geq n_0 \rangle$ is $(\bar\varphi^{\mathfrak e},n_0)$-compact
in
$\modM$, then for
some $m$, ${\mathbb G}_m\subseteq\varphi^{\mathfrak e}_\omega(\modM)$. Recall that  Lemma \ref{2.9} presents a situation for which this property holds.
In view of Lemma
\ref{groupsGn}  we
can choose ${\mathbb G}_{n(\ast)}=0$. In particular, the
``error term'' disappears, i.e., for
every endomorphism  $\fucF$
of $\modM_\lambda$ as an $\ringR$--module, for some $m$ we
have $\fucF\lceil \varphi^{\mathfrak e}_m (\modM_\lambda)/
\varphi^{\mathfrak e}_\omega
(\modM_\lambda)$ is equal to
$ {\bf h}^{{\mathfrak e},m}_{\modM_\lambda,z}$ (for its definition, see Definition \ref{hzen} below).
\item[(2)] If $\ringR$, $\ringS$ have cardinality $<2^{\aleph_0}$, we have interesting
such ${\cal K}$'s, for example ${\cal K}$ the family of finitely generated
finitely presented bimodules.
\end{enumerate}
}
\end{Remark}

\section{More specific rings and families ${\callE}$}
\label{More specific rings and families}

In this section we introduce some special rings that play an important role in our solution of Kaplansky test problems. We also
specify some specific elements of ${\callE}$ that we work with them later.
Let us start by extending the notion of pure semisimple for a pair of rings.
\begin{Definition}
	\label{starbar}
	Given a bimodule $\modM$ and a sequence $\bar\varphi$ of formulas, the notation  $(\ast)^{\bar{\varphi},\modM}_{\aleph_0,\aleph_0}$ stands for
 the following two assumptions:
 	\begin{enumerate}
 	\item[(a)] $\varphi_n=\varphi_n(x)$ is in ${\mathcal L}^{cpe}_{\aleph_0,\aleph_0}(\tau_{\ringR})$, and
 	\item[(b)]  the \sq\
 	$\langle\varphi_n(\modM):n<\omega\rangle$ is strictly decreasing.
 \end{enumerate}
\end{Definition}
Note that if  $\bar{\varphi}$ is as above, then it is $(\aleph_0,\aleph_0)$-adequate.
Also for simplicity, we can assume that  $\varphi_{n+1}(x)\vdash\varphi_n(x)$ holds for all $n<\omega$.
\begin{Definition}
	\label{purely semisimpledef}
	The pair $(\ringR,\ringS)$ of rings
	is called purely semisimple if
	for some bimodule $\modM^\ast$
	and a sequence
	$\bar{\varphi}=\langle\varphi_n(x):n<\omega\rangle$, the property $(\ast)^{\bar{\varphi},\modM^\ast}_{\aleph_0,\aleph_0}$ holds.
\end{Definition}
Thanks to
Theorem \ref{shelahpure}
 a ring $\ringR$   is not purely semisimple
if and only if the pair
$(\ringR, \ringR)$ is purely semisimple.
\begin{Definition}
	\label{c.1}
	\begin{enumerate}
		\item The sequence $\bar{\varphi}:=\langle\varphi_n(x):n<\omega\rangle$ is called  very nice if
		it is as in Definition \ref{starbar} and for some
		$m_n,k_\ell<\omega$ and $a_\ell,b_{\ell,i}\in \ringR$ we have
		\begin{enumerate}
			\item[(a)] $\varphi_n(x)=(\exists y_0,\ldots,y_{k_n-1})[\bigwedge\limits_{
				\ell=0}^{m_n-1} a_\ell x_\ell=\sum\limits_{i<k_\ell} b_{\ell,i} y_i]$,
			\item[(b)] $m_n<m_{n+1}$, $k_\ell\leq k_{\ell+1}$.
		\end{enumerate}
		\item Let   $\bar{\varphi}^1:=\langle\varphi_n^1(x):n<\omega\rangle$  and $\bar{\varphi}^2:=\langle\varphi_n^2(x):n<\omega\rangle$  be two sequences of formulas. By   $\bar{\varphi}^1\leq \bar{\varphi}^2$ we mean
		\[(\forall n<\omega)(\exists m<\omega)[\varphi^2_m(x)\vdash
		\varphi^1_n(x)].\]
		\item  Let   $\bar{\varphi}^1$ and $\bar{\varphi}^2$  be two sequences of formulas. We say $\bar{\varphi}^1$ and $\bar{\varphi}^2$ are equivalent if
		$\bar{\varphi}^1\leq \bar{\varphi}^2$ and $\bar{\varphi}^2\leq \bar{\varphi}^1$.
		
		\item  ${\mathfrak e} \in \callE$ is called $\kappa$-simple if for each
		$n$ there are a set $X\subseteq \modN^{\mathfrak e}_n$ of cardinality
		$<\kappa$ and a set $\Sigma$ of
		$<\kappa$ equations
		from $\mathcal L_{\aleph_0, \aleph_0}(\tau_{\ringR})$ with parameters from
		$X$
		\st\ $\modN^{\mathfrak e}_n$ is  generated by $X$ freely except the
		equations in $\Sigma$. We call $X$ a witness.
	\end{enumerate}
\end{Definition}

\begin{lemma}Adopt the above notation. The following assertions are true:
	\label{c.2}
	\begin{enumerate}
		\item Suppose $(*)^{\bar{\varphi},\modM^*}_{\aleph_0,\aleph_0}$ holds. Then  there is a very nice sequence $\bar{\varphi}'$ equivalent to $\bar{\varphi}$.
		\item If the $\varphi_n$'s are from infinitary logic, the same thing holds,
		only $m_n$, $k_\ell$ may be infinite but for each $\ell$ the set $\{i:
		b_{\ell,i}\neq 0\}$ is finite.
	\end{enumerate}
\end{lemma}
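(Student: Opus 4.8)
\textbf{Proof plan for Lemma \ref{c.2}.}

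The plan is to prove part (1) first, then observe that part (2) follows by the same argument with "finite" replaced by the obvious infinitary bookkeeping. So assume $(*)^{\bar\varphi,\modM^*}_{\aleph_0,\aleph_0}$ holds, i.e.\ each $\varphi_n\in\mathcal L^{\mathrm{cpe}}_{\aleph_0,\aleph_0}(\tau_\ringR)$ and the sequence $\langle\varphi_n(\modM^*):n<\omega\rangle$ is strictly decreasing. First I would replace each $\varphi_n$ by an equivalent simple formula: by Lemma \ref{reducing to simple formula} (with $\mu=\kappa=\aleph_0$, which is regular) each $\mathrm{cpe}$-formula over $\tau_\ringR$ in the finitary logic is equivalent to a simple formula $\exists y_0,\dots,y_{k-1}\,\bigwedge_{j<\beta}\psi_j$ with the $\psi_j$ atomic; since the only atomic relation is equality, moving everything to one side lets me write each conjunct as a single $\tau_\ringR$-term equated to $0$, i.e.\ of the form $a x=\sum_{i<k}b_i y_i$ after separating the coefficient of $x$ from those of the $y_i$'s. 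This already gives the shape in Definition \ref{c.1}(1)(a) for each individual $\varphi_n$, with finitely many existential variables and finitely many conjuncts (finitariness of the logic).

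Next I would arrange the monotonicity requirements (b): $m_n<m_{n+1}$ and $k_\ell\le k_{\ell+1}$. For the conjuncts: since $\varphi_{n+1}\vdash\varphi_n$ (which we may assume, as noted right after Definition \ref{starbar}, or arrange by replacing $\varphi_n$ with $\bigwedge_{m\le n}\varphi_m$ — still $\mathrm{cpe}$, still finitary), I can take the list of conjuncts of $\varphi_{n+1}$ to literally extend that of $\varphi_n$; the count $m_n$ of conjuncts then is non-decreasing. To make it \emph{strictly} increasing I pass to a subsequence: because $\langle\varphi_n(\modM^*)\rangle$ is strictly decreasing, infinitely many $n$ have $\varphi_{n+1}(\modM^*)\subsetneq\varphi_n(\modM^*)$, and at such steps at least one genuinely new conjunct must appear (otherwise the formulas would be equivalent over $\modM^*$); thinning to those $n$ and re-indexing yields $m_n<m_{n+1}$ while keeping the sequence of definable subgroups strictly decreasing, hence still satisfying Definition \ref{starbar}. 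For the existential variables: a simple formula remains equivalent to itself after adding dummy existential variables (variables not occurring in any conjunct), so I can pad the $y$-blocks so that $k_\ell\le k_{\ell+1}$, and — using again that the conjunct-lists are nested — arrange that the $i$-th conjunct uses the same variables and coefficients $a_i,b_{i,j}$ across all $n$ for which it appears; this is exactly the uniform indexing demanded by the displayed formula in \ref{c.1}(1)(a). Finally, equivalence of $\bar\varphi'$ with $\bar\varphi$ in the sense of \ref{c.1}(2) is immediate: $\bar\varphi'\le\bar\varphi$ because each $\varphi'_n$ is (equivalent to) some $\varphi_m$, and $\bar\varphi\le\bar\varphi'$ because each $\varphi_n$ is implied by the corresponding member of the thinned, conjunction-closed sequence.

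For part (2), the same construction runs verbatim in $\mathcal L_{\infty,\infty}$: the only changes are that the existential block may be infinite (so $k_\ell$ can be infinite) and the conjunction may be infinite (so $m_n$ can be infinite), but each individual conjunct $a_\ell x_\ell=\sum_{i<k_\ell}b_{\ell,i}y_i$ is still a single atomic equality between $\tau_\ringR$-terms, and a $\tau_\ringR$-term is by Definition \ref{rmodlanguage}(1) a \emph{finite} $\ringR$-linear combination of variables; hence for each fixed $\ell$ only finitely many $b_{\ell,i}$ are nonzero, which is precisely the asserted finiteness of $\{i:b_{\ell,i}\neq0\}$. The monotonicity bookkeeping ($m_n<m_{n+1}$ via the strictly-decreasing-subgroups argument, $k_\ell\le k_{\ell+1}$ via padding with dummy variables) is unchanged. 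I expect the main obstacle to be the careful re-indexing in part (1): one must simultaneously (i) thin the sequence to force $m_n<m_{n+1}$ without destroying the strict decrease of the definable subgroups, and (ii) keep the coefficient data $a_\ell,b_{\ell,i}$ coherent across all $n$ so that the single displayed schema in Definition \ref{c.1}(1)(a) genuinely describes every $\varphi'_n$; both are routine but need to be written out in the right order, doing the conjunction-closure and the passage to a simple formula before the thinning.
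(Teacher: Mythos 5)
Your proposal is correct and follows essentially the same route as the paper: reduce each $\varphi_n$ to a simple formula via Lemma \ref{reducing to simple formula}, replace $\varphi_n$ by $\bigwedge_{m\le n}\varphi_m$ so that the conjunct lists nest and the coefficient data becomes uniform across $n$, and then adjust the indexing for the monotonicity clauses. The only cosmetic difference is that the paper arranges $m_n<m_{n+1}$ and $k_\ell\le k_{\ell+1}$ simply by padding with zero coefficients $b^n_{\ell,i}=0_\ringR$, whereas you thin to a subsequence for the $m_n$'s; both devices work.
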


\begin{proof} We only prove (1), as clause (2) can be proved in a similar way. Thus assume $(\ast)^{\bar\varphi,\modM^\ast}_{\aleph_0,\aleph_0}$ holds. According to Lemma \ref{reducing to simple formula},
	we can assume that each $\varphi_n$ is a simple formula, so it is of the form
	\[\varphi_n(x)=\left(\exists y_0,\ldots,y_{k_n-1}\right)\left(
	\bigwedge\limits_{\ell=0}^{m_n-1} a^n_\ell x=\sum\limits_{i < k_n}
	b^n_{\ell,i} y_i\right),\]
	where $a^n_\ell,b^n_{\ell,i}$ are members of $\ringR$,  $k_n,m_n$ are
	natural numbers.
After replacing $\varphi_n$ with $\bigwedge\limits_{\ell
		\leq n}\varphi_\ell$, if necessary, we may assume that the sequence is decreasing in the sense that  $\varphi_{n+1}(x)\vdash\varphi_n(x)$,
	for each $n$. Also \wolog\, and by taking $b^n_{\ell,i}=0_\ringR$, we can assume that  $k_\ell<k_{\ell+1}$ and $m_n<m_{n+1}$. Finally, note that we can even get a better sequence by taking  $\varphi_0(x)=\exists y_0(x=y_0)$ (so
	$m_0=1$, $a_0=1_\ringR$, $k_0=1$ and $b_{0,0}=1$). This completes the proof.
\end{proof}
\begin{remark}
	\label{veryspecialphin}
Let   $\bar\varphi$ be a   very nice sequence.         According to Lemma \ref{c.2} and its proof, we assume from now on that $\varphi_0$ is of the form  $\varphi_0(x)=\exists y_0(x=y_0)$.
\end{remark}
We now assign to each very nice sequence $\bar\varphi$, an element  ${\mathfrak e}(\bar\varphi) \in {\callE}_{\aleph_0,\aleph_0}$
as follows.
\begin{Definition}
	\label{c.3}
	Suppose $\bar\varphi$ is a very nice sequence.  Then
	$$
	{\mathfrak e}(\bar\varphi)=\langle \modN_n,x_n,g_n:n<\omega\rangle
	$$
	is defined as follows:
	%defined below belongs to ${\callE}_{\aleph_0,\aleph_0}$, see
	%Definition \ref{1.2}(5). We call $\mathfrak e$ of
	%this form simple. Now the definition is:
	\begin{itemize}
		\item $\modN_n$ is the $(\ringR,\ringS)$
		bimodule which is   generated  by
		$$
		\{x_n\}\cup\left\lbrace y_{n,i}: i<k_{m_n-1}\right\rbrace
		$$
		freely except  to the equations
		$$
		\{ a_{\ell} x_n=\sum\limits_{i<k_\ell} b_{\ell,i} y_{n,i}:\ell<m_n
		\}.
		$$
	In other words, $ \modN_n = (\ringR x \ringS \bigoplus\limits_{i< k_{m_n-1}} \ringR y_{n, i} \ringS) / \modK$,
		where $\modK$ is the bimodule generated by $\langle   a_{\ell} x_n-\sum\limits_{i<k_\ell} b_{\ell,i} y_{n,i}:\ell<m_n     \rangle$.
		
		\item  $g_n: \modN_n \to \modN_{n+1}$ is defined so that $g_n(x_n)=
		x_{n+1}$ and $g_n(y_{n,i})=y_{n+1,i}$ for $i<k_{m_n-1}$.
	\end{itemize}
	We call $\mathfrak e$ simple if it is of the form ${\mathfrak e}(\bar\varphi)$ for some very nice $\bar\varphi.$
\end{Definition}
Note that by Remark \ref{veryspecialphin}, for each $n$, $x_n=y_{n, 0}.$
%When confusion may arise, we write $x^{\mathfrak e}_n$, $y^{{\mathfrak e} }_{n, i}$, etc.
%		i}$, $^{\mathfrak e}y''_{n,i}$
Then next lemma shows that ${\mathfrak e}(\bar\varphi) \in {\callE}_{\aleph_0,\aleph_0}$
\begin{lemma}
	\label{verynicegivesE00}
	Let $\bar\varphi$  be very nice and set ${\mathfrak e}:={\mathfrak e}(\bar\varphi)$. The following assertions are valid:
	\begin{enumerate}
		\item $x_n\in\varphi_n(\modN_n)$.
		\item Let $\modM$ be a bimodule. Then $x^\ast\in\varphi_n(\modM)$  if and only if
		for some bimodule homomorphism $h:\modN_n\to\modM$ we have $h(x_n) =x^\ast$.
		\item
		$x_n\notin\varphi_{n+1}(\modN_n)$.
		\item Each $g_n$ is a bimodule homomorphism.
	\end{enumerate}
\end{lemma}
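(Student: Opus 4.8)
The four clauses are all immediate from the construction of $\mathfrak e(\bar\varphi)$ in Definition \ref{c.3} together with the general machinery already set up, so the plan is mostly a matter of citing the right earlier lemma for each clause. The substance lives in clause (3), which is where the properties of a \emph{very nice} sequence $\bar\varphi$ (in particular the strict decrease $\varphi_n(\modM)\subsetneq\varphi_{n-1}(\modM)$ for the witnessing $\modM^\ast$ from $(\ast)^{\bar\varphi,\modM^\ast}_{\aleph_0,\aleph_0}$) actually get used.

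First I would dispatch clause (4): each $g_n$ is defined on generators by $x_n\mapsto x_{n+1}$, $y_{n,i}\mapsto y_{n+1,i}$, and one checks it respects the defining relations, since for $\ell<m_n\le m_{n+1}$ the relation $a_\ell x_{n+1}=\sum_{i<k_\ell}b_{\ell,i}y_{n+1,i}$ holds in $\modN_{n+1}$; hence $g_n$ extends to a well-defined bimodule homomorphism (this is the standard universal-property argument for presentations, so I would just state it). Clause (1) is then trivial: by definition $\modN_n$ satisfies $a_\ell x_n=\sum_{i<k_\ell}b_{\ell,i}y_{n,i}$ for all $\ell<m_n$, and since $\varphi_n(x)=(\exists\,\bar y)\bigwedge_{\ell<m_n}[a_\ell x=\sum_{i<k_\ell}b_{\ell,i}y_i]$, the elements $y_{n,i}$ witness $\modN_n\models\varphi_n(x_n)$. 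Clause (2) is exactly Lemma \ref{formula vs hom}(1) applied to $\modN=\modN_n$, $x=x_n$: $\modN_n$ is a bimodule generated by finitely many elements freely except finitely many equations, and the formula extracted there from that presentation is precisely $\varphi_n$ (up to the harmless rearrangement of $a_\ell x-\sum b_{\ell,i}y_i=0$); so $\modM\models\varphi_n(x^\ast)$ iff some bimodule homomorphism $h\colon\modN_n\to\modM$ has $h(x_n)=x^\ast$.

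The one step requiring an argument is clause (3), $x_n\notin\varphi_{n+1}(\modN_n)$. Suppose to the contrary $\modN_n\models\varphi_{n+1}(x_n)$. By clause (2) (applied with $n+1$ in place of $n$ and $\modM=\modN_n$) there is a bimodule homomorphism $h\colon\modN_{n+1}\to\modN_n$ with $h(x_{n+1})=x_n$. Composing, $h\circ g_n\colon\modN_n\to\modN_n$ fixes $x_n$; more to the point, I would argue this forces $\varphi_{n+1}(\modM^\ast)=\varphi_n(\modM^\ast)$ for the bimodule $\modM^\ast$ witnessing $(\ast)^{\bar\varphi,\modM^\ast}_{\aleph_0,\aleph_0}$, contradicting strict decrease. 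Concretely: for any bimodule $\modM$ and any $a\in\varphi_n(\modM)$, clause (2) gives $h_a\colon\modN_n\to\modM$ with $h_a(x_n)=a$; then $h_a\circ h\colon\modN_{n+1}\to\modM$ sends $x_{n+1}$ to $a$, so again by clause (2) $a\in\varphi_{n+1}(\modM)$. Thus $\varphi_n(\modM)\subseteq\varphi_{n+1}(\modM)$ for \emph{all} bimodules $\modM$; combined with the always-valid $\varphi_{n+1}(\modM)\subseteq\varphi_n(\modM)$ (this is clause ($\beta$) of being very nice, i.e.\ $\varphi_{n+1}\vdash\varphi_n$), we get $\varphi_{n+1}(\modM^\ast)=\varphi_n(\modM^\ast)$, contradicting that $\langle\varphi_m(\modM^\ast):m<\omega\rangle$ is strictly decreasing. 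This is the main (and only real) obstacle, and it is a short contradiction argument; the rest is bookkeeping with Definition \ref{c.3} and Lemma \ref{formula vs hom}.
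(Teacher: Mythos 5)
Your proposal is correct and matches the paper's treatment: the paper dismisses clauses (1), (3), (4) as trivial and cites Lemma \ref{formula vs hom} for clause (2), exactly the references you use. Your contradiction argument for clause (3) — pulling back a hypothetical homomorphism $h\colon\modN_{n+1}\to\modN_n$ with $h(x_{n+1})=x_n$ to force $\varphi_n(\modM)=\varphi_{n+1}(\modM)$ for every bimodule and contradicting strict decrease on the witness $\modM^\ast$ — is the right way to fill in the detail the paper leaves implicit.
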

\begin{proof}
	Clauses (1), (3) and (4) are trivial. Clause (2)
	follows from Lemma \ref{formula vs hom}.
\end{proof}
We will frequently use the following simple observation without any mention of it.
\begin{lemma}
	\label{simplephiequalpsi}
	Let ${\mathfrak e}\in\calE^\frakss$
	be simple. Then for every $n<\omega,$ $\psi^{\mathfrak e}_n$ and $\varphi^{\mathfrak e}_n$
	are equivalent. In other words, if $\modM^\frakss_*\leq_{\aleph_0}\modM$ and $x\in \modM$ then
	$\modM\models$``$\psi^{\mathfrak e}_n(x)\leftrightarrow\varphi^{\mathfrak e}_n(x)$''.
\end{lemma}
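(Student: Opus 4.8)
The plan is to unwind the definitions of $\psi^{\mathfrak e}_n$ and $\varphi^{\mathfrak e}_n$ for a simple ${\mathfrak e}={\mathfrak e}(\bar\varphi)$ and show that the two, though a priori formulas in $\tau_{(\ringR,\ringS)}$ and $\tau_\ringR$ respectively, cut out the same subset of any $\modM$ with $\modM^\frakss_\ast\leq_{\aleph_0}\modM$. The key structural input is that for a simple ${\mathfrak e}$, each $\modN^{\mathfrak e}_n$ is generated by finitely many elements \emph{as a bimodule} using only the relations $a_\ell x_n=\sum_{i<k_\ell}b_{\ell,i}y_{n,i}$ with $a_\ell,b_{\ell,i}\in\ringR$ --- that is, the defining equations involve no elements of $\ringS$ on the coefficient side. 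So even though $\modN^{\mathfrak e}_n$ carries a right $\ringS$-action, it is already generated, \emph{as a left $\ringR$-module}, by the $\ringS$-translates $\{x_n s, y_{n,i}s: s\in\ringS\}$, and these satisfy no relations beyond the $\ringR$-linear ones displayed (together with whatever holds in $\ringS$, which is automatic). This is precisely the situation of Lemma \ref{formula vs hom}(1)--(2): the element $x_n=x^{\mathfrak e}_n$ sits in the sub-$\ringR$-module generated by $x_n$ and the $y_{n,i}$ freely except for $<\aleph_0$ equations over $\ringR$, namely the very nice formula $\varphi_n$ itself.

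First I would record that, by Lemma \ref{aleph0 formula vs hom}, for a bimodule $\modM$ with $\modM^\frakss_\ast\leq_{\aleph_0}\modM$ we have $\modM\models\psi^{\mathfrak e}_n(x)$ iff there is a bimodule homomorphism $h:\modN^{\mathfrak e}_n\to\modM'$ for some $\modM'$ with $\modM\leq_{\aleph_0}\modM'$ and $h(x^{\mathfrak e}_n)=x$, and similarly $\modM\models\varphi^{\mathfrak e}_n(x)$ iff there is an $\ringR$-module homomorphism $h:\modN^{\mathfrak e}_n\to\modM'$ with $h(x^{\mathfrak e}_n)=x$. The implication $\psi^{\mathfrak e}_n(x)\Rightarrow\varphi^{\mathfrak e}_n(x)$ is then immediate, since every bimodule homomorphism is in particular an $\ringR$-homomorphism (this is also Lemma \ref{related to n}). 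For the converse, I would use Lemma \ref{formula vs hom}(1) and (3): since ${\mathfrak e}$ is simple, $\modN^{\mathfrak e}_n$ is generated \emph{as a bimodule} by $<\aleph_0$ elements freely except $<\aleph_0$ equations, so clause (3) of that lemma gives a simple formula, call it $\psi$, in $\mathcal L_{\aleph_0,\aleph_0}(\tau_{(\ringR,\ringS)})$ --- which is exactly $\psi^{\mathfrak e}_n$ up to equivalence --- such that $\modM\models\psi(x)$ iff there is a \emph{bimodule} homomorphism from $\modN^{\mathfrak e}_n$ into $\modM$ (or an extension) sending $x^{\mathfrak e}_n$ to $x$. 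Dually, Lemma \ref{formula vs hom}(1) applied in the category of $\ringR$-modules to the sub-$\ringR$-module structure gives that $\modM\models\varphi^{\mathfrak e}_n(x)$ iff there is an $\ringR$-homomorphism of the same sort. So what remains is: given an $\ringR$-homomorphism $h_0:\modN^{\mathfrak e}_n\to\modM$ (into an $\leq_{\aleph_0}$-extension) with $h_0(x^{\mathfrak e}_n)=x$, produce a \emph{bimodule} homomorphism with the same value at $x^{\mathfrak e}_n$.

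The heart of the argument, and the step I expect to be the main obstacle, is this promotion from an $\ringR$-homomorphism to a bimodule homomorphism. Here the point is that $x^{\mathfrak e}_n=y_{n,0}$ and, more importantly, that the relations defining $\modN^{\mathfrak e}_n$ as a bimodule are $\ringR$-linear in the generators; so the right $\ringS$-action on $\modN^{\mathfrak e}_n$ is determined by freeness: $\modN^{\mathfrak e}_n$ is, as a bimodule, the quotient $(\bigoplus\ringR x\ringS\oplus\bigoplus_i\ringR y_{n,i}\ringS)/\modK$ with $\modK$ generated over $\ringR$-and-$\ringS$ by the $\ringR$-linear relations. I would argue that the formula $\varphi_n$, being a positive existential conjunction of $\ringR$-linear atomic formulas, is already \emph{equivalent} to the corresponding $\tau_{(\ringR,\ringS)}$-formula $\psi^{\mathfrak e}_n$ on any bimodule: if $x\in\modM$ and witnesses $y_i\in\modM$ satisfy $a_\ell x=\sum_i b_{\ell,i}y_i$ for all $\ell<m_n$, then the assignment $x_n\mapsto x$, $y_{n,i}\mapsto y_i$ extends to a bimodule homomorphism precisely because one can extend it first $\ringR$-linearly and then check that it respects the $\ringS$-action on the free part --- and on the relation part there is nothing to check since $\modK$ is the $\ringR$-$\ringS$-bisubmodule generated by elements that are already killed. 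Concretely I would invoke Lemma \ref{formula vs hom}(2): since $x^{\mathfrak e}_n$ lies in the sub-$\ringR$-module generated by $\{x_n,y_{n,i}\}$ freely except the $\ringR$-equations, and the full bimodule is the "free $\ringS$-extension" of this, a choice of $\ringR$-images of the generators extends uniquely to a bimodule homomorphism (this is exactly Definition \ref{free bimodule} applied to the free part, combined with the observation that the relations are preserved automatically). Hence $\varphi^{\mathfrak e}_n(\modM)=\psi^{\mathfrak e}_n(\modM)$, and passing through the $\leq_{\aleph_0}$-extension via Lemma \ref{aleph0 formula vs hom} and Lemma \ref{1.12}(1) completes the equivalence "$\psi^{\mathfrak e}_n(x)\leftrightarrow\varphi^{\mathfrak e}_n(x)$" on $\modM$.
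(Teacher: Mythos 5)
Your argument is correct; note that the paper itself offers no proof of Lemma \ref{simplephiequalpsi} (it is introduced as a ``simple observation''), so there is nothing to compare against except the intended folklore argument, which you have reconstructed. The two essential points are exactly the ones you isolate: (i) $\psi^{\mathfrak e}_n\vdash\varphi^{\mathfrak e}_n$ because a bimodule homomorphism is in particular an $\ringR$-homomorphism; and (ii) conversely, since the very nice formula $\varphi_n$ is one of the conjuncts of $\varphi^{\mathfrak e}_n$, an $x$ with $\modM\models\varphi^{\mathfrak e}_n(x)$ comes with witnesses $y_i\in\modM'$ to the purely $\ringR$-linear system $a_\ell x=\sum_i b_{\ell,i}y_i$, and because $\modN^{\mathfrak e}_n$ is presented (Definition \ref{c.3}) as the free bimodule on $x_n,y_{n,i}$ modulo the \emph{sub-bimodule} generated by exactly these relations, the assignment $x_n\mapsto x$, $y_{n,i}\mapsto y_i$ descends to a bimodule homomorphism into $\modM'$ (this is precisely the bimodule case of Lemma \ref{formula vs hom}(1)), whence $\modM\models\psi^{\mathfrak e}_n(x)$ by Lemma \ref{preserving pe formulas} and Lemma \ref{aleph0 formula vs hom}. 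Two citations in your write-up are superfluous and slightly misapplied: you do not need Lemma \ref{formula vs hom}(2) or an $\ringR$-module presentation of $\modN^{\mathfrak e}_n$ (which as an $\ringR$-module need not be finitely presented, and $\modN_{n,0}$ need not be an $\ringR$-direct summand unless $\ringS$ is free over $\ringT$ as in Lemma \ref{solfree}), and Definition \ref{free bimodule} is not the right tool either --- the universal property of the bimodule presentation already does all the work. These are harmless detours, not gaps.
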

We now define some very special bimodules.
\begin{definition}\label{tr}
	Let $n<\omega$.
	\begin{enumerate}
		\item Let $\modN'_n$ be the bimodule   generated by $x_n$, $y'_{n,i}$ and
		$y''_{n,i}$ for $i<k_{m_n-1}$ freely  subject to the following relations for
		$\ell<m_n$:\begin{enumerate}
			\item $a_\ell x=\sum_{i<k_\ell} b_{\ell,i} y'_{n,i}$
	\item $ a_\ell x=
		\sum_{i<k_n} b_{\ell,i} y''_{n,i}.$\end{enumerate}	
		\item	Let $\modN^\ell_n$ for $\ell=1, 2$, be the sub-bimodule of $\modN'_n$ generated by:
\[\begin{array}{ll}
		\{x_n\}\cup\{y'_{n,i}: i<k_{m_n-1}\}&\qquad\mbox{for}\quad\ell = 1,\\
		\{x_n\}\cup\{y''_{n,i}:i<k_{m_n-1}\}&\qquad\mbox{for}\quad\ell = 2.
		\end{array}\]
		\item	Let $f^\ell_n:\modN_n\longrightarrow \modN^\ell_n$ be the bimodule homomorphisms
		defined by the following assignments:

			\begin{enumerate}
			\item $f^\ell_n(x_n)=x_{n},$
			\item $f^1_n(y_{n,i})=y'_{n,i}$ and
			\item $f^2_n(y_{n,i})=
			y''_{n,i}.$
			\end{enumerate}
		\item  $\modL^{\tr, \bar\varphi}_n:=
		\{z\in\varphi_n(\modN_n):f^1_n(z)-f^2_n(z)\in
		\varphi_\omega (\modN'_n)\}.$ 
		%\modL^{{\mathfrak e},tr}_n:=\modL^{{\mathfrak e},tr,f^1_n,f^2_n}_n:=
	\end{enumerate}	Clearly, it is an Abelian subgroup of $\modN_n$.
\end{definition}
Let $\mathfrak e \in \callE$ and suppose  $\bar\varphi$ is an adequate sequence for $\mathfrak e$.
Also, let $({\bf h_1},{\bf h_2})$  be a pair of bimodule homomorphisms  from  $\modN_n^{\mathfrak e}$ to $\modM$. Recall from Definition \ref{subgroups Ln} that
\[\modL^{{\mathfrak
		e},\bar{\varphi},{\bf h_1},{\bf h_2}}_n=\{z\in\varphi_n(\modN^{\mathfrak e}_n): {\bf h_1}(z)={\bf h_2}(z)\ \mod \
\varphi_\omega(\modM)\}.\]
In the case $\frakss$ is a context, we recall
$$\modL^{\mathfrak e}_n[\frakss]=\bigcap\limits_{\modM \in \cal K\cup \{\modM_\ast\}}\big\{\modL^{\mathfrak e,\bar\varphi^{\mathfrak e},{\bf h_1}, {\bf h_2}}_n:  {\bf h_1}, {\bf h_2} \in  \textmd{Hom}(\modN_n^{\mathfrak e}, \modM) \text{~and~}{\bf h_1}(x_n^{\mathfrak e})={\bf h_2}(x_n^{\mathfrak e})\big\}.$$
In the next lemma we show that the bimodule $\modN'$ and the homomorphisms $f^1_n, f^2_n$ are sufficient to determine
$\modL^{\mathfrak e(\bar\varphi)}_n[\frakss]$ provided $\bar\varphi$ is  very nice.
\begin{lemma}
	\label{c.4}
	Let $\frakss$ be a nice context, $\bar\varphi$ is very nice and
	${\mathfrak e}:={\mathfrak e}(\bar\varphi) \in \callE^{\frakss}$. Then
	$\modL^{{\mathfrak e}(\bar \varphi)}_n[\frakss]=\modL^{\rm tr, \bar\varphi}_n$.
\end{lemma}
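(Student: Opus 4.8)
The plan is to prove the two inclusions separately, after recording two easy preliminaries. \emph{First}, since $\mathfrak e=\mathfrak e(\bar\varphi)$ is simple, the very nice sequence $\bar\varphi$ and the canonical sequence $\bar\varphi^{\mathfrak e}$ are equivalent in every bimodule: by Lemma \ref{verynicegivesE00}(1),(2) the formula $\varphi_n$ holds of $x^{\mathfrak e}_n=x_n$ in $\modN^{\mathfrak e}_n=\modN_n$, so $\varphi^{\mathfrak e}_n\vdash\varphi_n$; conversely, if $\varphi\in\mathcal{L}^{\rm cpe}_{\infty,\infty}(\tau_{\ringR})$ and $\modN_n\models\varphi(x_n)$, then for any bimodule $\modM$ and $y$ with $\modM\models\varphi_n(y)$ there is, by Lemma \ref{verynicegivesE00}(2), a bimodule homomorphism $h\colon\modN_n\to\modM$ with $h(x_n)=y$, and since $\varphi$ is positive existential it is preserved along $h$ (Lemma \ref{preserving pe formulas}(1)), whence $\modM\models\varphi(y)$; thus $\varphi_n\vdash\varphi^{\mathfrak e}_n$, and likewise $\varphi_\omega\equiv\varphi^{\mathfrak e}_\omega$. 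Consequently $\varphi_n(\modM)=\varphi^{\mathfrak e}_n(\modM)$ and $\varphi_\omega(\modM)=\varphi^{\mathfrak e}_\omega(\modM)$ for every bimodule $\modM$, and I shall use the two notations interchangeably. \emph{Second}, I would check (directly from the presentations in Definitions \ref{c.3} and \ref{tr}) that $\modN'_n$ is the amalgamated sum $\modN_n+_{\ringR x_n\ringS}\modN_n$ of two copies of $\modN_n$ over the sub-bimodule generated by $x_n$, with structure maps $f^1_n$ and $f^2_n$; equivalently, for every bimodule $\modM$ and all bimodule homomorphisms $h_1,h_2\colon\modN_n\to\modM$ with $h_1(x_n)=h_2(x_n)$ (so $h_1$ and $h_2$ agree on all of $\ringR x_n\ringS$), the assignment $x_n\mapsto h_1(x_n)$, $y'_{n,i}\mapsto h_1(y_{n,i})$, $y''_{n,i}\mapsto h_2(y_{n,i})$ respects the defining relations of $\modN'_n$ and hence extends to a bimodule homomorphism $g\colon\modN'_n\to\modM$ with $g\circ f^1_n=h_1$ and $g\circ f^2_n=h_2$.

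For the inclusion $\modL^{\rm tr,\bar\varphi}_n\subseteq\modL^{{\mathfrak e}(\bar\varphi)}_n[\frakss]$, take $z\in\modL^{\rm tr,\bar\varphi}_n$, so $z\in\varphi_n(\modN_n)=\varphi^{\mathfrak e}_n(\modN^{\mathfrak e}_n)$ and $f^1_n(z)-f^2_n(z)\in\varphi_\omega(\modN'_n)$. Let $\modM\in{\cal K}\cup\{\modM_\ast\}$ and let $h_1,h_2\colon\modN^{\mathfrak e}_n\to\modM$ be bimodule homomorphisms with $h_1(x^{\mathfrak e}_n)=h_2(x^{\mathfrak e}_n)$, and let $g\colon\modN'_n\to\modM$ be the homomorphism of the preceding paragraph. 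Since $\varphi_\omega$ is a conjunction of ${\rm cpe}$-formulas it lies in $\mathcal{L}^{\rm pe}_{\infty,\infty}(\tau_{(\ringR,\ringS)})$, so Lemma \ref{preserving pe formulas}(1) gives $g\bigl(\varphi_\omega(\modN'_n)\bigr)\subseteq\varphi_\omega(\modM)$; therefore $h_1(z)-h_2(z)=g\bigl(f^1_n(z)-f^2_n(z)\bigr)\in\varphi_\omega(\modM)$. As $(\modM,h_1,h_2)$ was an arbitrary admissible triple, $z$ belongs to $\modL^{{\mathfrak e},h_1,h_2}_n(\modM)$ for all of them, i.e.\ $z\in\modL^{{\mathfrak e}(\bar\varphi)}_n[\frakss]$.

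For the reverse inclusion the point is that the triple $(\modN'_n,f^1_n,f^2_n)$ is itself one of those over which the intersection defining $\modL^{{\mathfrak e}(\bar\varphi)}_n[\frakss]$ is taken. Indeed $f^1_n(x^{\mathfrak e}_n)=x_n=f^2_n(x^{\mathfrak e}_n)$ by Definition \ref{tr}(3), and here the hypothesis that $\frakss$ is a \emph{nice} context is used: since $\modN'_n$ is the amalgamated sum of two copies of $\modN_n\in{\cal K}$ over a cyclic sub-bimodule, niceness guarantees that $\modN'_n$ is isomorphic to a member of ${\cal K}$, so (composing $f^1_n,f^2_n$ with this isomorphism if necessary) the triple is admissible. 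Hence $\modL^{{\mathfrak e}(\bar\varphi)}_n[\frakss]\subseteq\modL^{{\mathfrak e},f^1_n,f^2_n}_n(\modN'_n)=\{z\in\varphi^{\mathfrak e}_n(\modN_n):f^1_n(z)\equiv f^2_n(z)\ \bmod\ \varphi^{\mathfrak e}_\omega(\modN'_n)\}$, which by the coincidence $\bar\varphi\equiv\bar\varphi^{\mathfrak e}$ recorded above is exactly $\modL^{\rm tr,\bar\varphi}_n$.

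The main obstacle I anticipate is the second preliminary together with its use in the reverse inclusion: one must pin down precisely which closure property of a \emph{nice} context is being invoked and verify that the amalgamated-sum construction $\modN_n+_{\ringR x_n\ringS}\modN_n$ stays inside ${\cal K}$ up to isomorphism. A secondary, purely bookkeeping, point is matching $\varphi_n$ with $\varphi^{\mathfrak e}_n$ when the witness set of $\mathfrak e$ is infinite (the case of Lemma \ref{c.2}(2)), where one has to keep track of the cardinal parameter attached to the canonical formulas. Everything else --- the universal property of $\modN'_n$ and the preservation of positive existential formulas under bimodule homomorphisms --- is routine.
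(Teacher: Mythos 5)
Your proof is correct and follows essentially the same route as the paper's: the inclusion $\modL^{\rm tr,\bar\varphi}_n\subseteq\modL^{{\mathfrak e}(\bar\varphi)}_n[\frakss]$ is obtained by factoring an arbitrary admissible pair $h_1,h_2$ through $\modN'_n$ (the paper phrases this as lifting to $h_1',h_2'$ on $\modN^1_n,\modN^2_n$ rather than via the pushout universal property, but it is the same construction) and then applying preservation of pe-formulas, while the reverse inclusion just observes that $(f^1_n,f^2_n)$ is itself an admissible pair. Your explicit worry about whether $\modN'_n$ lies in $c\ell_{\rm is}({\cal K})$ is legitimate and is exactly where the "nice context" hypothesis is consumed; the paper's own proof passes over this point in silence, so your treatment is, if anything, the more careful one.
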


\begin{proof}
	Let $z\in\modL^{{\mathfrak e}(\bar \varphi)}_n[\frakss]$.
	Since $f^1_n,f^2_n:\modN_n\longrightarrow \modN^\ell_n$ satisfy $f^1_n(x_n)=f^2_n(x_n)$, thus  it follows from Definition \ref{tr}
	that
	$z\in\modL^{\rm tr, \bar\varphi}_n$.

	In order to prove $\modL^{\rm tr, \bar\varphi}_n\subseteq\modL^{{\mathfrak e}(\bar \varphi)}_n[\frakss]$,
	let  ${\bf h_1}, {\bf h_2}:
	\modN^{\mathfrak
		e}_n\to\modM \in {\cal K}\cup \{\modM_\ast\}$ be such that ${\bf h_1}(x_n) ={\bf h_2}(x_n)$.
	Note that
	$\modM  \models\varphi^{\mathfrak e}_{n}({\bf h}_1(x_n))$ and $\modM  \models\varphi^{\mathfrak e}_{n}({ \bf h}_2(x_n))$ ,
	thus we can find ${\bf h}_1':\modN^1_n\to\modM$ and ${\bf h}_2':\modN^2_n\to\modM$ such that
	${\bf h}_1=f^1_n  \circ {\bf h}_1'$ and ${\bf h}_2=f^2_n \circ {\bf h}_2'$. Take some $z\in\modL^{\rm tr, \bar\varphi}_n$. By definition,
	$z\in\varphi^{\mathfrak e}_{n}(\modN_n)$ and $f^1_n(z)-f^2_n(z)\in
	\varphi_\omega (\modN'_n)$. But then
	${\bf h}_1(z)-{\bf h}_2(z)\in\varphi^{\mathfrak e}_{\omega}(\modM)$. From this,
	$z\in\modL^{{\mathfrak
			e},\bar{\varphi},{\bf h_1},{\bf h_2}}_n$. Since ${\bf h_1},{\bf h_2}$ were be arbitrary,
	$z\in	\modL^{{\mathfrak e}(\bar \varphi)}_n[\frakss]$.
\end{proof}

So, if $\frakss$ is a context whose $\callE^\frakss$ consists of simple $\mathfrak e$'s  and if $\modM \in \cal K^{\frakss}$, then every $\ringR$-endomorphism is
in some sense definable, i.e., by fixing  ${\mathfrak e}\in
\calE^\frakss$ and restricting ourselves to $\varphi^{\mathfrak e}_{n}
(\modM)$ for large enough $n,$ modulo $\varphi^{\mathfrak e}_n
(\modM)$, it is determined by some $z\in\modL^{\mathfrak e}_{n}
[{\cal K}^\frakss]$. However, not every such $z$ may really occur.
We  try to formalize this in Lemma
\ref{definabilityhzen}.

\begin{notation}From now on we fix a context  $\frakss=(\mathcal K, \modM_*, \calE, \ringR, \ringS, \ringT)$.\end{notation}

\begin{Definition}
	\label{hzen}
	Suppose ${\mathfrak e}\in {\callE}$,
	$n<\omega$, $z\in \modL^{\mathfrak e}_n$
	and let $\modM$ be a bimodule  which $\leq^{ads}_{{\cal K},\aleph_0}$-extends
	$\modM_\ast$.
	We define ${\bf h}_{\modM,z}^{{\mathfrak e},n}$   as a morphism from the additive group $\psi^{\mathfrak e}_n(\modM)/
	\varphi^{\mathfrak
		e}_\omega(\modM)$\footnote{Pedantically we should write $\psi^{\mathfrak e}_n(\modM)/\varphi^{\mathfrak e}_\omega
		(\modM)\cap \psi^{\mathfrak e}_n (\modM).$} to itself so that for every bimodule   homomorphism   $h:\modN_n^{\mathfrak e}\to\modM$
	\[
	{\bf h}_{\modM,z}^{{\mathfrak e},n}\left(h(x_n^{\mathfrak e})+\varphi^{\mathfrak e}_\omega(\modM)\right):=h(z)+
	\varphi_\omega(\modM).
	\]
\end{Definition}
\begin{remark}
	By Lemma \ref{formula vs hom}, every $w \in \psi^{\mathfrak e}_n(\modM)$ is of the form $h(x_n^{\mathfrak e})$, for some $h:\modN_n^{\mathfrak e}\to\modM$ as above. Also note that as  $z \in \modL^{\mathfrak e}_n$, if $h':\modN_n^{\mathfrak e} \to \modM$  is another bimodule
	homomorphism	such that $h'(x_n^{\mathfrak e})=h(x_n^{\mathfrak e})$, then $h'(z) =h(z)$ mod $\varphi^{\mathfrak e}_\omega(\modM)$.
	This shows that ${\bf h}^{\mathfrak e, n}_{\modM, z}$  is well-defined, and does not depend on the choice of ${h}$.	
\end{remark}
\begin{Definition}
	\label{2.11} Let ${\mathfrak e}\in {\callE}$,
	$n<\omega$, $z\in \modL^{\mathfrak e}_n$
	and let $\modM$ be a bimodule  such that
	$\modM_\ast \leq_{\aleph_0}\modM$. Then
	\begin{enumerate}
		%\item $z$ is called ($\frakss,{\mathfrak e},n,m_0,m_1,m_2$)-nice, where $n \leq m_0 \leq m_1 \leq m_2$,
		%when
		%$h:\modN^{\mathfrak e}_n\longrightarrow \modM$ is a bimodule homomorphism,
		%  and   $\modM\models\psi^{\mathfrak e}_{m_2}(h(g_{n, m_0}(x^{\mathfrak e}_{m_0}))$,
		%   then $\modM\models\psi^{\mathfrak e}_{m_1}(h(g_{n, m_0}(z))$,
		%  we  are able to deduce from
		%$\modM\models\psi^{\mathfrak e}_{m_2}(x^{\mathfrak e}_{m_0}h)$
		%that
		%$\modM\models\psi^{\mathfrak e}_{m_1}(zg_{m_0,n}h)$.
		\item $z \in \modL^{\mathfrak e}_n$ is called  $(\frakss,{\mathfrak e},n)$-nice
		if when $h:\modN^{\mathfrak e}_n\longrightarrow \modM$ is a bimodule homomorphism, $m \geq n$
		and   $\modM\models\psi^{\mathfrak e}_{m}(h(x^{\mathfrak e}_{n}))$,
		then $\modM\models\psi^{\mathfrak e}_{m}(h(z))$.
		
		%\item $z \in \modL^{\mathfrak e}_n$ is $(\frakss,{\mathfrak e},m)$--nice
		%\uif\ it is $(\frakss,
		%{\mathfrak e},n,m,m,m)$--nice for $m\in [n,\omega)$
		
		\item $z \in \modL^{\mathfrak e}_n$ is called $(\frakss,
		{\mathfrak e})$-nice if
		it is  $(\frakss,{\mathfrak e},n)$-nice for every $n$.
		
		\item $z \in \modL^{\mathfrak e}_n$ is called weakly
		$(\frakss,{\mathfrak e})$-nice if
		there is an infinite ${\cal U} \subseteq \omega$  such that $z$
		is $(\frakss,{\mathfrak e}, n)$-nice
		for every $n\in{\cal U}$.
		
		%\item $z \in \modL^{\mathfrak e}_n$ weakly
		%$(\frakss,{\mathfrak e})$-nice \uif\
		%for some infinite ${\cal U} \subseteq \omega$
		%for every $m_0<m_1<m_2$ from
		%${\cal U},z$
		%is $(\frakss,{\mathfrak e},n,m_0,m_1,m_2)$-nice.
	\end{enumerate}
	We remove  $(\frakss,
	{\mathfrak e})$, when it is clear from the context.
\end{Definition}
We now define a subgroup of $\modL^{{\mathfrak e},n}_n$.
\begin{Definition}
	For each $\mathfrak e \in \callE$ and $n<\omega$, we define
	$$\modL^{{\mathfrak e},\ast}_n:=\{z \in \modL^{{\mathfrak e}}_n: z \text{~is ~}(\frakss,
	{\mathfrak e})\text{-nice}\}.$$
\end{Definition}

\begin{lemma}
	\label{2.11A}Let ${\mathfrak e}$ be simple and $\modM_\kappa$ be strongly nice
	for $(\lambda,\frakss, S, \kappa)$ and let $\fucF$ be an
	$\ringR $-endomorphism of $\modM_\kappa$. The following assertions hold:
	\begin{enumerate}
		\item Let $z$ be as Lemma \ref{prnalphafez}.   Then
		$z\in\modL^{{\mathfrak e},\ast}_n$.
		\item For some $n<\omega$ and
		$z\in\modL^{{\mathfrak e},*}_n$   there is $\modK'\in
		{\rm cl_{is}} ({\cal K})$ such that
		$\modM':=\modM_*\oplus \modK'\leq_{\aleph_0}\modM_\kappa $ and $$x \in
		\psi^{\mathfrak e}_n (\modM_\kappa) \Rightarrow   {\bf f}(x)+\varphi^{\mathfrak e}_n
		(\modM_\kappa)+\modM'={\bf h}^{{\mathfrak e},n}_{\modM_\kappa,z} (x)+\varphi^{\mathfrak e}_n
		(\modM_\kappa)+\modM'.$$
	\end{enumerate}
\end{lemma}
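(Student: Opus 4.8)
The plan is to derive both clauses from the definabilility results already established, namely Lemmas \ref{prnalphafe}, \ref{prnalphafez}, \ref{2.4a} and the structural Lemma \ref{groupsGn} on the groups $\groupG^\frakss_{n,z}[\bar\modM]$, together with the simplicity of $\mathfrak e$ (which via Lemma \ref{simplephiequalpsi} identifies $\psi^{\mathfrak e}_n$ with $\varphi^{\mathfrak e}_n$).

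First I would prove clause (1). By Lemma \ref{prnalphafe} there are $\alpha\in\kappa\setminus S$ and $n(\ast)<\omega$ with $(\Pr)^{n(\ast)}_\alpha[\fucF,\mathfrak e]$, and then by Lemma \ref{prnalphafez} (using Lemma \ref{hdsproperties}(4) to pass from $(\Pr)$ to $(\Pr^-)$) there is $z\in\modL^{\mathfrak e}_{n(\ast)}[\cal K^\frakss]$ with $(\Pr1)^{n(\ast)}_{\alpha,z}[\fucF,\mathfrak e]$. It remains to verify that such a $z$ is $(\frakss,\mathfrak e)$-nice, i.e.\ lies in $\modL^{\mathfrak e,\ast}_{n(\ast)}$. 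So suppose $h:\modN^{\mathfrak e}_{n(\ast)}\to\modM$ is a bimodule homomorphism, $m\ge n(\ast)$, and $\modM\models\psi^{\mathfrak e}_m(h(x^{\mathfrak e}_{n(\ast)}))$; I must show $\modM\models\psi^{\mathfrak e}_m(h(z))$. The idea is: embed a copy of $\modM$ (or rather of the relevant finitely generated piece, using that $\modM$ can be taken in $\cal K$ and that $\modM_\kappa$ is strongly nice so $\modM_\ast\oplus\modK\le_{\aleph_0}\modM_\kappa$ for $\modK\in c\ell_{\rm is}(\cal K)$) as a direct summand inside $\modM_\kappa$ via some $h^\ast$, so that $h^\ast\circ h$ is a bimodule homomorphism into $\modM_\kappa$ witnessing $\psi^{\mathfrak e}_m$ at $h^\ast(h(x^{\mathfrak e}_{n(\ast)}))$. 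Now apply $(\Pr1)^{n(\ast)}_{\alpha,z}$, but first replace $h$ by $h+h_0$ where $h_0$ is chosen so that the image lands in a summand disjoint from $\modM_\alpha$ (Lemma \ref{hdsproperties}(1) preserves the hypothesis of $(\Pr^-)$, and the strong niceness gives the improved error term $\modM_\ast\oplus\modK$); then the relation $\fucF(h(x))-h(z)\in\modM_\alpha+\varphi^{\mathfrak e}_\omega(\modM_\kappa)$ becomes, after projecting onto the summand, $\fucF(h(x))-h(z)\in\varphi^{\mathfrak e}_\omega(\modM_\kappa)$. Since $\fucF$ is an endomorphism it preserves $\psi^{\mathfrak e}_m$ (Lemma \ref{preserving pe formulas}), so $\fucF(h(x))$ satisfies $\psi^{\mathfrak e}_m$; subtracting an element of $\varphi^{\mathfrak e}_\omega\subseteq\varphi^{\mathfrak e}_m$ (using that $\psi^{\mathfrak e}_m(\modM_\kappa)$ is a subgroup closed under adding $\varphi^{\mathfrak e}_m$-elements, since the latter contains $\varphi^{\mathfrak e}_\omega$ and by simplicity $\varphi^{\mathfrak e}_m=\psi^{\mathfrak e}_m$) we get $h(z)$ satisfies $\psi^{\mathfrak e}_m$ in $\modM_\kappa$, hence in the summand, hence in $\modM$ since the summand is an $\le_{\aleph_0}$-submodule and these formulas are preserved and reflected (Lemma \ref{comparing two orders}, Lemma \ref{1.12}). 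That gives $z\in\modL^{\mathfrak e,\ast}_{n(\ast)}$.

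For clause (2), with $z\in\modL^{\mathfrak e,\ast}_n$ as produced in (1), I would invoke Lemma \ref{groupsGn} to get the compact decreasing sequence $\bar\groupG^\ast=\langle\groupG^\ast_m:m\ge n(\ast)\rangle$ with $\groupG^\ast_m\subseteq\modM_{\alpha(\ast)}+\varphi^{\mathfrak e}_\omega(\modM_\kappa)$ and, for every bimodule homomorphism $h:\modN^{\mathfrak e}_m\to\modM_\kappa$, $\fucF(h(x^{\mathfrak e}_m))-h(z_m)\in\groupG^\ast_m$ where $z_m=g^{\mathfrak e}_{n(\ast),m}(z)$. Then apply Lemma \ref{moreonGn}(1) (using $\modM_{\alpha(\ast)}\in c\ell(\cal K)$, which holds since $\alpha(\ast)\notin S$ and $\|\modM_\alpha\|<\lambda$ forces the relevant summand structure, cf.\ Lemma \ref{properties of w.s.n.c}(1) and the proof of Lemma \ref{gncanbefinitary}) to obtain a finite $J$ and an $m=:n$ with $\groupG^\ast_n\subseteq\bigoplus_{t\in J}\modK_t+\varphi^{\mathfrak e}_\omega(\modM_\kappa)$, where $\modK':=\bigoplus_{t\in J}\modK_t\in c\ell_{\rm is}(\cal K)$ and $\modM':=\modM_\ast\oplus\modK'\le_{\aleph_0}\modM_\kappa$. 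Unwinding: for $x\in\psi^{\mathfrak e}_n(\modM_\kappa)=\varphi^{\mathfrak e}_n(\modM_\kappa)$ (simplicity), write $x=h(x^{\mathfrak e}_n)$ via Lemma \ref{formula vs hom}; then $\fucF(x)-{\bf h}^{\mathfrak e,n}_{\modM_\kappa,z}(x)=\fucF(h(x^{\mathfrak e}_n))-h(z_n)\in\groupG^\ast_n\subseteq\modK'+\varphi^{\mathfrak e}_\omega(\modM_\kappa)\subseteq\modM'+\varphi^{\mathfrak e}_n(\modM_\kappa)$, which is exactly the asserted equality of cosets modulo $\varphi^{\mathfrak e}_n(\modM_\kappa)+\modM'$.

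\textbf{Main obstacle.} The delicate point is clause (1): showing $z\in\modL^{\mathfrak e,\ast}_n$ rather than merely $z\in\modL^{\mathfrak e}_n[\cal K]$. The subtlety is that $(\Pr1)$ only controls $\fucF(h(x))-h(z)$ modulo the possibly large error term $\modM_\alpha$, whereas $(\frakss,\mathfrak e)$-niceness is a statement about satisfaction of $\psi^{\mathfrak e}_m$ that must hold exactly, with no error. The resolution — using strong niceness to shrink the error term to $\modM_\ast\oplus\modK$, then using Lemma \ref{hdsproperties}(1) to translate $h$ by an $h_0$ so that its range is freely disjoint from the error term, and finally projecting away — needs care to ensure the projection does not destroy the $\psi^{\mathfrak e}_m$-witness, which is where the hypotheses that $\mathfrak e$ is simple (so $\psi^{\mathfrak e}_m$ is existential positive and behaves well under direct sums, Lemma \ref{formulas and direct sum}) and that all the relevant submodules are $\le_{\aleph_0}$-embedded (Lemma \ref{comparing two orders}, Lemma \ref{1.12}) do the real work.
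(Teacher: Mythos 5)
Your proposal is correct, and for clause (1) it is essentially the paper's argument in a reorganized form: the paper keeps the error term $y\in\modM_\alpha$ throughout, writes $h(z)+y$ as the value at $x^{\mathfrak e}_m$ of a difference $H_2-H_1$ of two bimodule homomorphisms witnessing $\varphi^{\mathfrak e}_m$ (so that $h(z)+y\in\varphi^{\mathfrak e}_m(\modM_\kappa)$ by the additive-subgroup structure), and only at the very end splits off $y$ via a direct-sum decomposition $\modM\oplus\modM_\alpha\leq_{\aleph_0}\modM_\kappa$ and Lemma \ref{formulas and direct sum}; you instead translate $h$ so its range lies in a complement of $\modM_\alpha$ and project first. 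Both versions rest on the same three facts (simplicity gives $\psi^{\mathfrak e}_m\equiv\varphi^{\mathfrak e}_m$, $\varphi^{\mathfrak e}_m$ is an additive subgroup preserved by the $\ringR$-endomorphism $\fucF$, and $\varphi^{\mathfrak e}_m$ splits over direct sums), and both gloss over the same reduction from an arbitrary $\modM\geq_{\aleph_0}\modM_\ast$ to $\modM_\kappa$, which you at least flag explicitly. Just note that when you say "after projecting, $\fucF(h(x))-h(z)\in\varphi^{\mathfrak e}_\omega$," what you actually get is $\pi(\fucF(h(x)))-h(z)\in\varphi^{\mathfrak e}_\omega$, and it is $\pi(\fucF(h(x)))$, not $\fucF(h(x))$ itself, that you should then certify lies in $\varphi^{\mathfrak e}_m$ (which it does, by the direct-sum decomposition of $\varphi^{\mathfrak e}_m$).

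For clause (2) your route is genuinely different from, and more complete than, the paper's. The paper's own proof of (2) simply quotes $(\Pr 1)^{n}_{\alpha,z}$ and stops with the error term $\modM_\alpha$, leaving unjustified the claim that the error can be taken inside $\modM_\ast\oplus\modK'$ with $\modK'\in c\ell_{\rm is}(\mathcal K)$. You instead pass through Lemma \ref{groupsGn} and Lemma \ref{moreonGn}(1) — i.e.\ essentially re-derive Lemma \ref{gncanbefinitary} — to replace $\modM_{\alpha(\ast)}$ by a finite sum $\bigoplus_{t\in J}\modK_t$ of $\mathcal K$-summands, which is exactly what the stated conclusion requires. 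This buys you the actual assertion of the lemma at the cost of invoking compactness of $\bar{\mathbb G}^\ast$; the only cosmetic gap is that $\bigoplus_{t\in J}\modK_t$ is a finite direct sum, i.e.\ a member of $c\ell^{\aleph_0}_{\rm is}(\mathcal K)$ rather than literally of $c\ell_{\rm is}(\mathcal K)$, an imprecision the paper shares.
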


\begin{proof}
	$(1)$: Since ${\mathfrak e}$ is simple, there is a very nice sequence
	$\overline{\varphi}$ such that  ${\mathfrak e}={\mathfrak e}(\overline{\varphi})$.
	Since $z\in\modL^{{\mathfrak e}}_n$ we know that
	$z\in\varphi_n(\modN^{{\mathfrak e}}_n)$ and $f^1_n(z)-f^2_n(z)\in
	\varphi_\omega (\modN'_n)$ where
	$f^\ell_n:\modN_n\longrightarrow \modN^\ell_n$ are   bimodule homomorphisms defined in Definition \ref{tr}(3).
	%defined by:
	%\[x_n f^\ell_n=x_{n+1},\quad y_{n,i}f^1_n=y'_{n,i},\quad y_{n,i} f^2_n=
	%y''_{n,i}.\]
	Now suppose ${\bf h}:\modN_n\to\modM_\kappa$ is a   bimodule homomorphism, $m\geq n$ and suppose
	$$\modM_\kappa \models\psi^{\mathfrak e}_{m}({\bf h}(x^{\mathfrak e}_n)).$$
	We are going to show that $$\modM_\kappa \models\psi^{\mathfrak e}_{m}({\bf h}(z)).$$
	Recall that ${\bf f}({\bf h}(x_n))-{\bf h}(z)\in \modM_\alpha+\psi^{\mathfrak e}_{\omega}(\modM_\kappa)$.
	This gives an element $y\in\modM_\alpha$ such that $${\bf f}({\bf h}(x_n))-{\bf h}(z)+y\in\psi^{\mathfrak e}_{\omega}(\modM_\kappa).$$
	In particular, $\modM_\kappa \models\psi^{\mathfrak e}_{m}({\bf f}({\bf h}(x^{\mathfrak e}_n))-{\bf h}(z)+y)$.
	As $\overline{\varphi}$ is very simple,
	$$\modM_\kappa \models\varphi^{\mathfrak e}_{m}({\bf f}({\bf h}(x^{\mathfrak e}_n))-{\bf h}(z)+y).$$
	We conclude from this that there is a bimodule homomorphism $H_1:\modN_n^{\mathfrak e}\to\modM_\kappa$ such that
	$H_1(x_m)={\bf f}({\bf h}(x_n))-{\bf h}(z)+y$.  Since $\modM_\kappa \models\psi^{\mathfrak e}_{m}({\bf h}(x^{\mathfrak e}_n))$,
	we have  $\modM_\kappa \models\psi^{\mathfrak e}_{m}({\bf f}({\bf h}(x^{\mathfrak e}_n)))$. This gives a bimodule homomorphism $H_2:\modN^{\mathfrak e}_n\to\modM_\kappa$ such that
	$H_2(x^{\mathfrak e}_m)={\bf f}({\bf h}(x^{\mathfrak e}_n))$.  Define
	$K:\modN^{\mathfrak e}_n\to\modM_\kappa$ by  $K:=H_2-H_1$. Clearly, $K(x^{\mathfrak e}_m)={\bf h}(z)+y$. So,
	$$\modM_\kappa \models\varphi^{\mathfrak e}_{m}({\bf h}(z)+y).$$
	Take $\modM\leq_{\aleph_0}\modM_* $ be such that ${\bf h}(z)\in\modM$ and $\modM \oplus \modM_\alpha\leq_{\aleph_0}\modM_\kappa $. Then
	$$\modM \oplus \modM_\alpha \models\psi^{\mathfrak e}_{m}({\bf h}(z)+y).$$
	So, $\modM   \models\psi^{\mathfrak e}_{m}({\bf h}(z))$ and then $  \modM_\kappa \models\psi^{\mathfrak e}_{m}({\bf h}(z))$, as required.
	
	$(2)$: In view of Lemmas \ref{prnalphafe} and \ref{prnalphafez}, there are $n<\omega$ and $\alpha\in\kappa\setminus S$
	such that the property $(\Pr 1)^{n}_{\alpha,z}[\fucF,{\mathfrak e}]$
	holds. Since ${\mathfrak e}$ is simple, by Lemma \ref{simplephiequalpsi} we have
	$\psi^{\mathfrak e}_n (\modM)\equiv\psi^{\mathfrak e}_n (\modM)$.
	
	Now let $x \in
	\psi^{\mathfrak e}_n (\modM_\kappa)$. Then $x \in
	\varphi^{\mathfrak e}_n (\modM_\kappa) $.  This give us a  bimodule homomorphism ${\bf h}:\modN^{\mathfrak e}_{n}\to\modM_\kappa$
	such that ${\bf h}(x^{\mathfrak e}_n)=x$.
	%Let  $\ell<\omega$.
	Since $(\Pr 1)^{n}_{\alpha,z}[\fucF,{\mathfrak e}]$ holds, thus
	we have
	$$
	{\bf f}(x)-{\bf h}(z)={\bf f}({\bf h}(x^{\mathfrak e}_n))-{\bf h}(z)\in \modM_\alpha+\varphi^{\mathfrak e}_\omega(\modM_\kappa).
	$$
	Also, recall that $${\bf h}^{{\mathfrak e},n}_{\modM,z} (x+\varphi^{\mathfrak e}_\omega(\modM_\kappa))={\bf h}(z)+\varphi^{\mathfrak e}_\omega(\modM_\kappa).$$
	This completes the argument.
\end{proof}
%\par\noindent
\begin{remark}In Lemma \ref{2.11A}, if $||\modM_*||<\lambda$ and $||\modK||<\lambda$
	for every $\modK\in \modK$,
	then $||\modM'||<\lambda= ||\modM_\kappa||$.
\end{remark}
%We can develop the notions below for ${\mathfrak e}$ or for $\bar\varphi$.

\begin{Definition}
	\label{2.12}
	\begin{enumerate}
		\item Let $\modM$ be an  $\ringR$-module, $\mathfrak e \in \callE$ and $n<\omega.$
		\begin{enumerate}
			\item For $\fucF\in \End(\modM)$, we set $\hat{\fucF}_n:=\fucF\lceil\varphi^{\mathfrak e}_n(\modM)/
			\varphi^{\mathfrak e}_\omega (\modM).$
			\item
			Let
			$\End^{{\mathfrak e},n}(\modM):=\ \big\{\hat{\fucF}_n:\fucF\in \End(\modM)\big\}$\footnote{Recall that
				$\End(\modM):=\End_{\ringR}(\modM)$ is the ring of endomorphisms of $\modM$.}.
			
			\item  Let $\Upsilon_{n, \lambda}(\modM)$ be  the family of all $\fucF \in \End (\modM)$
			such that for some $A\subseteq \modM$ of
			cardinality $<\lambda$ we have
			$
			\Rang(\fucF\rest \varphi^{\mathfrak e}_n(\modM))
			\subseteq\{x+\varphi^{\mathfrak e}_\omega
			(\modM):x\in\varphi^{\mathfrak e}_n(\langle A\rangle_\modM)\}$. 	\item
			 $\End^{{\mathfrak e},n}_{<\lambda}(\modM):=\ \big
			\{\hat{\fucF}_n\in \End^{{\mathfrak e},n}
			(\modM):\fucF \in \Upsilon_{n, \lambda}(\modM)\},$
			which is  a two-sided ideal of $\End^{{\mathfrak e},n}(\modM)$.
			\item Let $n\leq  m$. The notation ${\bf h}^{{\mathfrak e},n,m}_{<\lambda}[\modM]$ stands for the
			natural map from
			$
			\End^{{\mathfrak e},n}_{<\lambda}
			(\modM)$ to $\End^{{\mathfrak e},m}_{<\lambda}(\modM)$.
			In particular, $\{
			\End^{{\mathfrak e},n}_{<\lambda}(\modM);{\bf h}^{{\mathfrak e},n,m}_{<\lambda}[\modM]\}$
			is a directed system. \item $\End^{{\mathfrak e},\omega}_{<\lambda}(\modM):=\varinjlim(	\cdots\longrightarrow	 \End^{{\mathfrak e},n}_{<\lambda}(\modM)  \longrightarrow		\End^{{\mathfrak e},n+1}_{<\lambda}(\modM)\longrightarrow\cdots).$
\item We denote the natural maps:
			$${\bf h}^{{\mathfrak e},n, \omega}_{<\lambda}[\modM] :
			\End^{{\mathfrak e},n}_{<\lambda}
			(\modM) \to\End^{{\mathfrak e},\omega}_{<\lambda}(\modM).$$
		\end{enumerate}
		\item  Suppose in addition  that    $\modM$ is an $(\ringR,\ringS)$-bimodule.
		\begin{enumerate}
			\item For any $\fucF\in \End_{\ringR}(\modM)$, we assign $\hat{\fucF}_n:=\fucF\lceil\varphi^{\mathfrak e}_n(\modM)/
			\varphi^{\mathfrak e}_\omega (\modM).$\footnote{There should be no confusion with clause (1)(a) above, as here we are talking about a bimodule $\modM.$}
			\item The ring of $\ringR$-endomorphisms of $\modM$ induces the following ring$$\End^{{\mathfrak e},n,*}(\modM):=\{\hat{\fucF}_n:\fucF\in
			\End_\ringR (\modM)\}.$$
			\item Let $\Upsilon^*_{n, \lambda}(\modM)$ be  the family of all $\fucF \in \End_{\ringR}(\modM)$
			such that for some $A\subseteq \modM$ of
			cardinality $<\lambda$ we have $\Rang(\fucF\rest \psi^{\mathfrak e}_n
			(\modM))\subseteq \{x+\varphi^{\mathfrak e}_\omega (\modM):
			x\in \varphi^{\mathfrak e}_n(\langle A\rangle_\modM)\}$. 	\item
			 $\End_{<\lambda}^{{\mathfrak e},n,*}:= \{\hat{\fucF}_n:\fucF\in\Upsilon^*_{n, \lambda}(\modM)\}.$
		\end{enumerate}
	\end{enumerate}
\end{Definition}
It is easily seen that all the above defined notions are rings.
We now define expansions of $\varphi^{\mathfrak e}_n(\modM)/\varphi^{\mathfrak e}_\omega(\modM)$.
\begin{Definition}
	Let $\modM, \mathfrak e$ and $n$ be as above.
	\begin{enumerate}
		\item The notation
		${\mathfrak B}^{\mathfrak e}_{n}(\modM)$ stands for
		$\varphi^{\mathfrak e}_n(\modM)/\varphi^{\mathfrak e}_\omega(\modM)$
		expanded by the finitary relations definable by formulas in $\cal L_{\infty, \omega}^{\text{pe}}(\tau_{\ringR})$
		% p.e.~formulas
		%(say in ${\mathbb L}={\mathbb L}_{\infty,\omega}$) in ${}_\ringR \modM$
		(so actually even
		if we use this notation for a bimodule $\modM$, it counts only as an
		$\ringR$-module).
		\item Similarly, we define ${}^+{\mathfrak B}^{\mathfrak e}_n(\modM)$, where  p.e. formulas are
		replaced by ``formulas preserved by direct sums''.
		\item For a bimodule  $\modM$ we define ${\mathfrak B}^{\mathfrak e}_n(\modM)$ and $
		{}^+{\mathfrak B}^{\mathfrak e}_n(\modM)$ similarly let restricting ourselves
		to $\psi^{\mathfrak e}_n(\modM)$.
	\end{enumerate}
\end{Definition}
So,
\[
{\mathfrak B}^{\mathfrak e}_{n}(\modM)=(\varphi^{\mathfrak e}_n(\modM)/\varphi^{\mathfrak e}_\omega(\modM), \langle R  \rangle_{R \in \cal R}),
\]
where $\cal R$ consists of all finitary relations defined by a formula from
$\cal L_{\infty, \omega}^{\text{pe}}(\tau_{\ringR})$. Similarly
\[
{}^+{\mathfrak B}^{\mathfrak e}_{n}(\modM)=(\varphi^{\mathfrak e}_n(\modM)/\varphi^{\mathfrak e}_\omega(\modM), \langle R  \rangle_{R \in {}^+\cal R}),
\]
where ${}^+\cal R$ consists of all finitary relations defined which are defined by a formula from
$\cal L_{\infty, \omega}(\tau_{\ringR})$ which is preserved under direct sums.
Since, by Lemma \ref{formulas and direct sum}, pe-formulas are preserved by direct limits, ${}^+{\mathfrak B}^{\mathfrak e}_{n}(\modM)$ expands ${\mathfrak B}^{\mathfrak e}_{n}(\modM)$.

\begin{lemma}
	\label{2.13}The following assertions are hold:
	\begin{enumerate}
		\item Adopt the notation of Definition \ref{2.12}(1). Then  $\End^{{\mathfrak e},n}(\modM)$ is a ring with $1$ and
		$\End^{{\mathfrak e},n}_{<\lambda}(\modM)$ is a  two-sided ideal of $\End^{{\mathfrak e},n}(\modM)$.
		Note that this ideal may be proper, i.e.,  $1\notin \End^{{\mathfrak e},n}_{<\lambda}(\modM)$ or not.
		Suppose in addition $\modM$
		is a bimodule, then $\ringS$ is mapped naturally.
		\item $\End^{{\mathfrak e},n}_{<\lambda}(\modM)$ is a two-sided subideal of $\End^{{\mathfrak
				e},n}_{<\mu}(\modM)$ when $\lambda<\mu$.
		\item
		$\End^{{\mathfrak e},n}_{<\|\modM\|^+}(\modM)=
		\End^{\mathfrak e,n}(\modM)$.
		\item If $\modM_1,\modM_2$ are $\ringR$-modules and ${\bf h}$ is an $\ringR$-homomorphism from $\modM_1$ to
		$\modM_2$, then  ${\bf h}$
		induces a homomorphism  from
		${\mathfrak B}^{\mathfrak e}_n(\modM_1)$ into ${\mathfrak B}^{\mathfrak e}_n(\modM_2)$.
	\end{enumerate}
\end{lemma}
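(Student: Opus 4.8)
The plan is to verify each of the four clauses of Lemma \ref{2.13} separately; all four are routine consequences of the definitions in Definition \ref{2.12} together with the preservation results from Section \ref{Preliminaries from algebra and logic}.

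\textbf{Clause (1).} The fact that $\End^{\mathfrak e, n}(\modM)$ is a ring with unit is immediate: it is the quotient of the ring of those $\ringR$-endomorphisms of $\modM$ which preserve $\varphi_n^{\mathfrak e}(\modM)$ and $\varphi_\omega^{\mathfrak e}(\modM)$, and $\mathrm{id}_{\modM}$ maps to the identity of the quotient. Here one uses Lemma \ref{definable subgroups by formulas}(1) (resp. its p.e.\ analogue via Lemma \ref{preserving pe formulas}) to see that an $\ringR$-endomorphism indeed sends $\varphi_n^{\mathfrak e}(\modM)$ into itself, since $\varphi_n^{\mathfrak e}$ is (equivalent to) a cpe-formula in $\tau_{\ringR}$. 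To see $\End^{\mathfrak e, n}_{<\lambda}(\modM)$ is a two-sided ideal, let $\hat{\fucF}_n \in \End^{\mathfrak e, n}_{<\lambda}(\modM)$ with witness $A$ of size $<\lambda$, and let $\hat{\bf g}_n$ be arbitrary. For $\hat{\bf g}_n \hat{\fucF}_n$ the same set $A$ works after applying ${\bf g}$, since ${\bf g}$ maps $\varphi_n^{\mathfrak e}(\langle A\rangle_{\modM})$ into $\varphi_n^{\mathfrak e}(\langle {\bf g}(A)\rangle_{\modM})$; for $\hat{\fucF}_n \hat{\bf g}_n$ the set $A$ itself still witnesses membership because $\Rang(\fucF \circ {\bf g} \restriction \varphi_n^{\mathfrak e}(\modM)) \subseteq \Rang(\fucF\restriction\varphi_n^{\mathfrak e}(\modM))$. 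Closure under addition is clear by taking $A_1 \cup A_2$. The final sentence --- when $\modM$ is a bimodule, $\ringS$ maps in naturally --- follows because each $s \in \ringS$ acts as an $\ringR$-endomorphism $x \mapsto xs$ of $\modM$, which preserves $\psi_n^{\mathfrak e}(\modM)$ hence $\varphi_n^{\mathfrak e}(\modM)$, giving a ring homomorphism $\ringS \to \End^{\mathfrak e, n}(\modM)$.

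\textbf{Clauses (2) and (3).} Clause (2) is a triviality: if $A$ has size $<\lambda$ it has size $<\mu$ for $\mu > \lambda$, so $\Upsilon_{n,\lambda}(\modM) \subseteq \Upsilon_{n,\mu}(\modM)$ and hence $\End^{\mathfrak e, n}_{<\lambda}(\modM) \subseteq \End^{\mathfrak e, n}_{<\mu}(\modM)$; it is a subideal by clause (1). Clause (3) holds because any $A \subseteq \modM$ with $A = \modM$ has size $<\|\modM\|^+$, and then $\Rang(\fucF\restriction\varphi_n^{\mathfrak e}(\modM)) \subseteq \varphi_n^{\mathfrak e}(\modM)/\varphi_\omega^{\mathfrak e}(\modM) = \{x + \varphi_\omega^{\mathfrak e}(\modM): x \in \varphi_n^{\mathfrak e}(\langle \modM\rangle_{\modM})\}$ for every $\fucF$; thus $\Upsilon_{n,\|\modM\|^+}(\modM) = \End_{\ringR}(\modM)$ and the two rings coincide.

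\textbf{Clause (4).} Given an $\ringR$-homomorphism ${\bf h}: \modM_1 \to \modM_2$, by Lemma \ref{preserving pe formulas}(1) it preserves $\mathcal L_{\infty,\infty}^{\mathrm{pe}}(\tau_{\ringR})$-formulas; in particular it maps $\varphi_n^{\mathfrak e}(\modM_1)$ into $\varphi_n^{\mathfrak e}(\modM_2)$ and $\varphi_\omega^{\mathfrak e}(\modM_1) = \bigcap_m \varphi_m^{\mathfrak e}(\modM_1)$ into $\varphi_\omega^{\mathfrak e}(\modM_2)$, so it induces an additive map on the quotients $\varphi_n^{\mathfrak e}(\modM_1)/\varphi_\omega^{\mathfrak e}(\modM_1) \to \varphi_n^{\mathfrak e}(\modM_2)/\varphi_\omega^{\mathfrak e}(\modM_2)$. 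That this induced map respects all the finitary relations adjoined in forming ${\mathfrak B}_n^{\mathfrak e}(-)$ is exactly the content of preservation of $\mathcal L_{\infty,\omega}^{\mathrm{pe}}(\tau_{\ringR})$-formulas under ${\bf h}$, again Lemma \ref{preserving pe formulas}(1). Hence ${\bf h}$ induces a homomorphism of ${\mathfrak B}_n^{\mathfrak e}$-structures.

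I do not expect any serious obstacle here: the lemma is explicitly flagged as bookkeeping, and every assertion reduces to the elementary preservation facts already recorded. The only point requiring a line of care is the ideal verification in clause (1) --- specifically checking that one and the same witness set $A$ works on the appropriate side for both $\hat{\bf g}_n \hat{\fucF}_n$ and $\hat{\fucF}_n \hat{\bf g}_n$ --- but this is a short containment chase using that $\ringR$-homomorphisms send $\langle A\rangle_{\modM}$ into $\langle {\bf g}(A)\rangle_{\modM}$ and preserve the cpe-formula $\varphi_n^{\mathfrak e}$.
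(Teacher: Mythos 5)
Your proof is correct and follows essentially the same route as the paper, which dismisses clauses (1)--(3) as clear and proves (4) by noting that an $\ringR$-homomorphism preserves $\mathcal L_{\infty,\omega}^{\rm pe}(\tau_{\ringR})$-formulas and hence descends to the quotients and respects the adjoined relations. Your write-up is in fact more careful than the paper's on two small points: you verify the two-sided ideal property explicitly, and in clause (4) you use only the inclusion ${\bf h}(R^1)\subseteq R^2$ (which is all that preservation gives and all that a homomorphism of the expanded structures requires), whereas the paper asserts equality.
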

\begin{proof}
	(1).
	It is clear that all the defined notions are rings (not necessarily with $1$).
	Now suppose $\modM$ is a bimodule. Then one can easily show that for each
	$s\in \ringS$,
	$s$ defines
	an $\ringR$-endomorphism of $\modM$ by
	$x\rightarrow x s.$
	
	Items (2) and (3) are clear. To prove (4), first note that, as ${\bf h}(\varphi_n^{\mathfrak e}(\modM_1)) \subseteq \varphi_n^{\mathfrak e}(\modM_2)$,
	\[
	\hat{\bf h}_n= {\bf h} \restriction \varphi_n^{\mathfrak e}(\modM_1) / \varphi_\omega^{\mathfrak e}(\modM_1):\varphi_n^{\mathfrak e}(\modM_1) / \varphi_\omega^{\mathfrak e}(\modM_1) \to \varphi_n^{\mathfrak e}(\modM_2) / \varphi_\omega^{\mathfrak e}(\modM_2)
	\]
	is well-defined.
	Now let $\varphi(\nu_1, \cdots, \nu_n) \in \cal L_{\infty, \omega}^{\text{pe}}(\tau_{\ringR}),$
	and for $\ell=1, 2$ set
	\[
	R^\ell=\{\langle x_1, \cdots, x_n \rangle \in \modM_\ell: \modM_\ell \models \varphi(x_1,  \cdots, x_n)             \}.
	\]
Clearly ${\bf h}(R^1)=R^2$,
	and hence $\hat{\bf h}_n(R^1+\varphi_\omega(\modM_1))=R^2+\varphi_\omega(\modM_2)$.
	The result follows immediately.
\end{proof}
\begin{remark}
	\label{fixinghathn}
	If ${\bf h}$ is an $\ringR$-homomorphism from $\modM_1$ to
	$\modM_2$ and $n<\omega$, then we use $\hat{\bf h}_n$ for the homomorphism from ${\mathfrak B}^{\mathfrak e}_n(\modM_1)$ into ${\mathfrak B}^{\mathfrak e}_n(\modM_2)$ from Lemma \ref{2.13}(4).
\end{remark}

We now define some rings derived from the ring of
$\ringR$-endomorphism of bimodules. Before doing that we need the following lemma which shows
that the maps ${\bf h}^{{\mathfrak e},n}_{\modM,z}$ from Definition \ref{hzen} are definable.

\begin{lemma}
	\label{definabilityhzen}
	Suppose $\modM$ is a bimodule, $\mathfrak e \in \callE$ is simple and $n<\omega$.
	\begin{enumerate}
		\item Suppose ${\bf h}: \modM_1 \to \modM_2$ is an $\ringR$-homomorphism, $\mathfrak e \in \callE$, $n<\omega$
		and $z \in \modL^{\mathfrak e}_n$. Then ${\bf h}^{\mathfrak e, n}_{\modM_1, z} \circ \hat{\bf h}_n=\hat{\bf h}_n \circ {\bf h}^{\mathfrak e, n}_{\modM_2, z}$.
		
		\item Suppose $z \in \modL^{\mathfrak e}_n$ is $n$-nice and $m>n.$ Then there is $y \in \modN_m^{\mathfrak e}$ such that for every bimodule
		$\modM,$
		\[
		{\bf h}^{\mathfrak e, m}_{\modM, y}={\bf h}^{\mathfrak e, n}_{\modM, z} \restriction (\psi_n(\modM) / \varphi_\omega(\modM)).
		\]
		
		\item Suppose $\psi(x, y) \in \cal L_{\infty, \omega}^{\rm pe}(\tau_{(\ringR, \ringS)})$ is such that
		\begin{enumerate}
			\item[(i)] $\psi_n^{\mathfrak e}(x) \vdash \exists y \psi(x, y)$,
			\item[(ii)] $\psi(x, y) \vdash \psi_n^{\mathfrak e}(x)~ \wedge ~\psi_n^{\mathfrak e}(y)$,
			\item[(iii)] $\psi(x, y_1) \wedge \psi(x, y_2) \vdash \psi_\ell^{\mathfrak e}(y_1-y_2),$ for all $\ell < \omega$.
		\end{enumerate}
		Then there is some  $z \in \modL^{\mathfrak e}_n$ such that
		$(\star)^n_{\psi, z}:$ for every bimodule $\modM$ and  $x, y \in \psi_n(\modM)$,
		\[
		\modM \models \psi(x, y) \iff {\bf h}^{\mathfrak e, n}_{\modM, z}(x+\varphi^{\mathfrak e}_\omega(\modM))= y + \varphi^{\mathfrak e}_\omega(\modM).
		\]
		\item For every  $z \in \modL^{\mathfrak e}_n$, there exists
		$\psi(x, y) \in \cal L_{\infty, \omega}^{\rm pe}(\tau_{(\ringR, \ringS)})$ satisfying the above conditions, such that $(\star)^n_{\psi, z}$ holds.
		
		\item If $z_1, z_2 \in \modL^{\mathfrak e}_n,$ then for some $z_3 \in \modL^{\mathfrak e}_n$ and for all bimodule  $\modM,$
		\[
		{\bf h}^{\mathfrak e, n}_{\modM, z_3}={\bf h}^{\mathfrak e, n}_{\modM, z_1} \circ {\bf h}^{\mathfrak e, n}_{\modM, z_2}.
		\]
		Furthermore,
		\[
		{\bf h}^{\mathfrak e, n}_{\modM, z_1} \pm {\bf h}^{\mathfrak e, n}_{\modM, z_2}= {\bf h}^{\mathfrak e, n}_{\modM, z_1 \pm z_2}.
		\]
		
		\item If $z \in \modL^{\mathfrak e}_n$ and ${\bf h}^{\mathfrak e, n}_{\modM, z}$ is a bijection, then for some $z'\in \modL^{\mathfrak e}_n$
		and for all bimodule  $\modM,$  ${\bf h}^{\mathfrak e, n}_{\modM, z'}$ is the inverse of ${\bf h}^{\mathfrak e, n}_{\modM, z}$.
		
		\item If $\modN^{\mathfrak e}_n$ is finitely presented, then the formula $\psi(x, y)$ is first order.
		Furthermore, if $\modN^{\mathfrak e}_n$ is
		 generated by $\{y_i:i<m_n\}$ freely  except
		equations involving $r\in \ringR$ only
		(no $s\in \ringS$), and $z\in\sum\limits_{i<
			m_n} \ringR y_i$, then
		$\psi(x, y) \in \cal L_{\omega, \omega}^{\rm pe}(\tau_{\ringR})$.
	\end{enumerate}
\end{lemma}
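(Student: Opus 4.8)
Throughout, I would lean on one small toolkit: the dictionary between conjunctive positive existential formulas and bimodule homomorphisms out of $\modN^{\mathfrak e}_n$ (Lemma \ref{formula vs hom}), preservation of positive existential formulas — hence of $\varphi^{\mathfrak e}_\omega=\bigwedge_{n<\omega}\varphi^{\mathfrak e}_n$ — under bimodule homomorphisms (Lemma \ref{preserving pe formulas}), the equivalence $\psi^{\mathfrak e}_n\equiv\varphi^{\mathfrak e}_n$ for simple $\mathfrak e$ (Lemma \ref{simplephiequalpsi}), and the description $\modL^{\mathfrak e}_n[\frakss]=\modL^{{\rm tr},\bar\varphi}_n$ via the amalgam $\modN'_n$ (Definition \ref{tr}, Lemma \ref{c.4}), together with its immediate consequence that $\modL^{\mathfrak e}_n$ is unchanged under adding an element of $\varphi^{\mathfrak e}_\omega(\modN^{\mathfrak e}_n)$. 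For clause (1) I would write a class in $\psi^{\mathfrak e}_n(\modM_1)/\varphi^{\mathfrak e}_\omega(\modM_1)$ as $g(x^{\mathfrak e}_n)+\varphi^{\mathfrak e}_\omega(\modM_1)$ with $g\colon\modN^{\mathfrak e}_n\to\modM_1$ a bimodule homomorphism, note that ${\bf h}\circ g\colon\modN^{\mathfrak e}_n\to\modM_2$ sends $x^{\mathfrak e}_n$ and $z$ to ${\bf h}(g(x^{\mathfrak e}_n))$ and ${\bf h}(g(z))$, and observe both composites evaluate that class to $({\bf h}\circ g)(z)+\varphi^{\mathfrak e}_\omega(\modM_2)$. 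For clause (2) I would take $y:=g^{\mathfrak e}_{n,m}(z)$; for $h\colon\modN^{\mathfrak e}_m\to\modM$ the homomorphism $h\circ g^{\mathfrak e}_{n,m}$ sends $x^{\mathfrak e}_n\mapsto h(x^{\mathfrak e}_m)$ and $z\mapsto h(y)$, which yields the asserted identity; the point to check is $y\in\modL^{\mathfrak e}_m$ — the ``mod $\varphi^{\mathfrak e}_\omega$'' half is automatic by composing the homomorphisms witnessing membership in $\modL^{\mathfrak e}_m$ with $g^{\mathfrak e}_{n,m}$, and $y\in\varphi^{\mathfrak e}_m(\modN^{\mathfrak e}_m)$ is exactly where $n$-niceness of $z$ enters (apply it to the identity map of $\modN^{\mathfrak e}_m$).

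I would handle (4) before (3). Given $z\in\modL^{\mathfrak e}_n$, fix the simple presentation of $\modN^{\mathfrak e}_n$ by generators $\bar u$ (with $u_0=x^{\mathfrak e}_n$) subject to the finite set $\Sigma$ of defining equations, write $z=z(\bar u)$ as a bimodule term, and put
\[
\psi(x,y):=\exists\bar u\,\bigl[\ \Sigma(\bar u)\ \wedge\ x=u_0\ \wedge\ \varphi^{\mathfrak e}_\omega\bigl(y-z(\bar u)\bigr)\ \bigr],
\]
a conjunctive positive existential formula of $\cal L_{\infty,\omega}(\tau_{(\ringR,\ringS)})$ (for simple $\mathfrak e$ each $\varphi^{\mathfrak e}_\ell$ is first order, so $\varphi^{\mathfrak e}_\omega$ is a countable conjunction of first order formulas and the whole formula has finite quantifier blocks). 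A witnessing $\bar u$ defines $h\colon\modN^{\mathfrak e}_n\to\modM$ with $h(x^{\mathfrak e}_n)=x$, $h(z)=z(\bar u)$, so (ii) follows from $z\in\varphi^{\mathfrak e}_n(\modN^{\mathfrak e}_n)$ and preservation, (i) is witnessed by $y=z(\bar u)$, and (iii) says that two such $h_1,h_2$ agreeing on $x^{\mathfrak e}_n$ send $z$ to the same $\varphi^{\mathfrak e}_\omega$-class, which I would get by gluing $h_1,h_2$ into a homomorphism out of $\modN'_n$ and invoking $\modL^{\mathfrak e}_n[\frakss]=\modL^{{\rm tr},\bar\varphi}_n$. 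Finally $(\star)^n_{\psi,z}$ is built in: $\psi(x,y)$ holds exactly when some $h$ realises $x=h(x^{\mathfrak e}_n)$ with $y-h(z)\in\varphi^{\mathfrak e}_\omega(\modM)$, i.e.\ when $y+\varphi^{\mathfrak e}_\omega(\modM)={\bf h}^{\mathfrak e,n}_{\modM,z}(x+\varphi^{\mathfrak e}_\omega(\modM))$.

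For clause (3) I would run this backwards. Given $\psi$ with (i)--(iii), evaluate it in $\modN^{\mathfrak e}_n$ at $x=x^{\mathfrak e}_n$ (legitimate by (i)) to obtain $z\in\modN^{\mathfrak e}_n$ with $\modN^{\mathfrak e}_n\models\psi(x^{\mathfrak e}_n,z)$; by (ii) $z\in\varphi^{\mathfrak e}_n(\modN^{\mathfrak e}_n)$, and pushing $\psi(x^{\mathfrak e}_n,z)$ forward along any two homomorphisms into some $\modM\in{\cal K}\cup\{\modM_\ast\}$ agreeing on $x^{\mathfrak e}_n$ and applying (iii) gives $z\in\modL^{\mathfrak e}_n$. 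The forward half of $(\star)^n_{\psi,z}$ is again preservation plus (iii). The reverse half — that $\psi(x,y)$ persists when $y$ is moved inside its $\varphi^{\mathfrak e}_\omega(\modM)$-coset — is the one delicate point, and the place I expect the main obstacle. The plan is to exploit the behaviour of positive formulas under finite direct sums (Lemma \ref{formulas and direct sum}): from $\psi(x,y')$ one gets $\psi(0,0)$ (image under the zero endomorphism), hence $\modM\oplus\modM\models\psi((x,0),(y',0))$, and after verifying that the second-coordinate fibre $\{\,w : \modM\models\psi(0,w)\,\}$ is an additive subgroup equal to all of $\varphi^{\mathfrak e}_\omega(\modM)$, one transports along the addition map $\modM\oplus\modM\to\modM$ to deduce $\psi(x,y'+w)$ for every $w\in\varphi^{\mathfrak e}_\omega(\modM)$; equivalently one may first pass to the $\varphi^{\mathfrak e}_\omega$-saturation of $\psi$, which carries the same $z$.

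Clauses (5), (6), (7) are then book-keeping on the maps ${\bf h}^{\mathfrak e,n}_{-,z}$. In (5), since $z_2\in\varphi^{\mathfrak e}_n(\modN^{\mathfrak e}_n)$ there is $g\colon\modN^{\mathfrak e}_n\to\modN^{\mathfrak e}_n$ with $g(x^{\mathfrak e}_n)=z_2$; set $z_3:=g(z_1)$, so for any $h$ one computes that ${\bf h}^{\mathfrak e,n}_{\modM,z_1}\circ{\bf h}^{\mathfrak e,n}_{\modM,z_2}$ sends $[h(x^{\mathfrak e}_n)]$ to $[(h\circ g)(z_1)]=[h(z_3)]={\bf h}^{\mathfrak e,n}_{\modM,z_3}([h(x^{\mathfrak e}_n)])$, with $z_3\in\modL^{\mathfrak e}_n$ checked via $\modN'_n$ as above, while ${\bf h}^{\mathfrak e,n}_{\modM,z_1}\pm{\bf h}^{\mathfrak e,n}_{\modM,z_2}={\bf h}^{\mathfrak e,n}_{\modM,z_1\pm z_2}$ is immediate from linearity. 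In (6), bijectivity of ${\bf h}^{\mathfrak e,n}_{\modM,z}$ over $\modN^{\mathfrak e}_n$ produces $w$ with $g(z)\equiv x^{\mathfrak e}_n\ \bmod\ \varphi^{\mathfrak e}_\omega$; one shows ${\bf h}^{\mathfrak e,n}_{-,w}$ is a one-sided inverse of ${\bf h}^{\mathfrak e,n}_{-,z}$ by the same computation, and since in the ring of the ${\bf h}$'s from (5) a one-sided inverse of a monomorphism is two-sided, one takes $z':=w$ (adjusted by a $\varphi^{\mathfrak e}_\omega$-translate so as to lie in $\modL^{\mathfrak e}_n$). Finally, in (7) finite presentation of $\modN^{\mathfrak e}_n$ lets one replace the infinitary clause $\varphi^{\mathfrak e}_\omega(y-z(\bar u))$ in $\psi$ by a finitary one, so that $\psi$ becomes first order; and if moreover all defining equations and the term $z$ involve only scalars from $\ringR$, then none of $\bar u$, the equations, or $z(\cdot)$ involve $\ringS$, so $\psi\in\cal L^{\rm pe}_{\omega,\omega}(\tau_{\ringR})$.
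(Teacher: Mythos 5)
Your clauses (1), (2), (5) and (6) run along essentially the same lines as the paper's proof: (1) is the same two-line computation with a homomorphism presenting the class, (2) takes $y=g^{\mathfrak e}_{n,m}(z)$ exactly as the paper does (you supply the verification the paper omits, and you correctly locate where $n$-niceness enters), and (5)--(6) are the same book-keeping. Where you genuinely diverge is clause (4). The paper writes $z=r^*x_ns^*+\sum_i r^*_iy_{n,i}s^*_i$ in the canonical presentation of $\modN^{\mathfrak e}_n$ from Definition \ref{c.3} and takes the finitary formula $\psi^{\mathfrak e}_n(x)\wedge\psi^{\mathfrak e}_n(y)\wedge y=r^*xs^*$; you take instead the ``tautological'' formula $\exists\bar u[\Sigma(\bar u)\wedge x=u_0\wedge\varphi^{\mathfrak e}_\omega(y-z(\bar u))]$. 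Your choice makes $(\star)^n_{\psi,z}$ immediate, which is a real advantage, but it costs you clause (7): $\varphi^{\mathfrak e}_\omega$ is a genuinely infinitary conjunction --- by adequacy (Definition \ref{adequate definition}) the chain $\varphi^{\mathfrak e}_m(\modM)$ is strictly decreasing on the relevant bimodules, so no finite presentation of $\modN^{\mathfrak e}_n$ converts the clause $\varphi^{\mathfrak e}_\omega(y-z(\bar u))$, which speaks about an element of the ambient $\modM$, into a first order condition. The assertion that finite presentation ``lets one replace the infinitary clause by a finitary one'' is unjustified; to recover (7) you must switch to a formula without the $\varphi^{\mathfrak e}_\omega$ clause, i.e., something like the paper's.

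The second issue concerns clause (3). You are right that the reverse implication of $(\star)^n_{\psi,z}$ --- persistence of $\psi(x,y)$ when $y$ is replaced by $y+w$ with $w\in\varphi^{\mathfrak e}_\omega(\modM)$ --- is the delicate step (the paper's own proof does not address it at all). But your proposed repair does not close it: the direct-sum/addition-map argument needs $\{w:\modM\models\psi(0,w)\}\supseteq\varphi^{\mathfrak e}_\omega(\modM)$, and this does not follow from (i)--(iii). For example $\psi(x,y)=\psi^{\mathfrak e}_n(x)\wedge\psi^{\mathfrak e}_n(y)\wedge y=0$ satisfies (i)--(iii) (with the resulting $z=0$), yet $\{w:\psi(0,w)\}=\{0\}$, and $(\star)$ fails for it in any $\modM$ with $\varphi^{\mathfrak e}_\omega(\modM)\neq 0$. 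Your fallback of ``passing to the $\varphi^{\mathfrak e}_\omega$-saturation of $\psi$'' changes the formula, so it proves $(\star)$ for a different $\psi$ than the one given. As it stands this half of (3) remains open in your write-up (and, to be fair, in the paper); it needs either an added hypothesis such as $\psi(x,y)\wedge\varphi^{\mathfrak e}_\omega(y'-y)\vdash\psi(x,y')$, or a weakening of $(\star)$ to the forward implication, which is in fact all that the later applications (e.g.\ the commuting computations in Lemma \ref{pr rings}) use.
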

\begin{proof}
	(1). Let ${\bf g}: \modN_n^{\mathfrak e} \to \modM_1$ be a bimodule homomorphism and let $x={\bf g}(x_n^{\mathfrak e})$. Then
	\[
	{\bf h}^{\mathfrak e, n}_{\modM_2, z}(\hat{\bf h}_n(x+\varphi_\omega^{\mathfrak e}(\modM_1)))={\bf h}^{\mathfrak e, n}_{\modM_2, z}({\bf h}(x)+\varphi_\omega^{\mathfrak e}(\modM_2)))={\bf h}({\bf g}(z)) + \varphi_\omega^{\mathfrak e}(\modM_2).
	\]
	In a similar way, we have
	\[
	\hat{\bf h}_n ({\bf h}^{\mathfrak e, n}_{\modM_1, z}(x)) = \hat{\bf h}_n ({\bf g}(z)+\varphi_\omega^{\mathfrak e}(\modM_1))=
	{\bf h}({\bf g}(z))+\varphi_\omega^{\mathfrak e}(\modM_2)).
	\]
	The result follows immediately.
	
	(2). Set $y=g_{n, m}(z)$. It is easily seen that $y$ is as required.
	
	(3). Let $z$ be such that $\psi_n^{\mathfrak e}(x_n^{\mathfrak e}) \vdash  \psi(x_n^{\mathfrak e}, z)$. Such a $z$
	exists by (i), and it is unique mod $\varphi_\omega^{\mathfrak e}(\modM)$ by (iii).
	Now define ${\bf h}^{\mathfrak e, n}_{\modM, z}$ as given. By items (i) and (iii), ${\bf h}^{\mathfrak e, n}_{\modM, z}$
	is a well-defined function and by item (ii) we deduce that
	\begin{enumerate}
	\item[ ] $	\dom({\bf h}^{\mathfrak e, n}_{\modM, z})= \psi_n^{\mathfrak e}(\modM)/\varphi_\omega^{\mathfrak e}(\modM),$
	\item[ ] $\Rang({\bf h}^{\mathfrak e, n}_{\modM, z})= \psi_n^{\mathfrak e}(\modM)/\varphi_\omega^{\mathfrak e}(\modM).$
\end{enumerate}
	So, we are done.
	
	(4). Suppose  $z \in \modL^{\mathfrak e}_n$ is given, and let $\modN_n^{\mathfrak e}$
	be as Definition \ref{c.3}. Then for some $r^*, r^*_{i} \in \ringR$ and $s^*, s^*_i \in \ringS,$ for $i< k_{m_n-1}$, we have
	\[
	z=r^* x_n s^* + \sum\limits_{i< k_{m_n-1}} r^*_i y_{n, i} s^*_i\quad(+)
	\]
	Let $\psi(x, y)$ be the formula
	\[
	\psi_n^{\mathfrak e}(x)~ \wedge ~\psi_n^{\mathfrak e}(y) \wedge y=r^* x s^*.
	\]
	We are going to show that $\psi(x, y)$ is as required. Clauses (i)-(iii) are clearly satisfied.
	To prove $(\star)^n_{\psi, z}$, let $\modM$ be a bimodule and $x, y \in \psi_n^{\mathfrak e}(\modM)$.
	Suppose first that $\psi(x, y)$ holds. Let also ${\bf g}: \modN_n^{\mathfrak e} \to \modM$ be a bimodule homomorphism,
	defined on generators $x_n, y_{n, i}$ by
	\[
	{\bf g}(x_n)= x, \qquad {\bf g}(y_{n, i})=0.
	\]
In view of $(+)$ we observe that
	\[
	{\bf g}(z)= r^* x s^*=y.
	\]
	According to th definition
	\[
	{\bf h}^{\mathfrak e, n}_{\modM, z}(x+\varphi^{\mathfrak e}_\omega(\modM))= y = \varphi^{\mathfrak e}_\omega(\modM).
	\]
	
	Conversely, suppose that ${\bf h}^{\mathfrak e, n}_{\modM, z}(x+\varphi^{\mathfrak e}_\omega(\modM))= y = \varphi^{\mathfrak e}_\omega(\modM).$
	Let ${\bf g}$ be defined as above, so that ${\bf g}(x_n)=x$. Now
	it is clear that ${\bf g}(z)= r^* x s^*=y$,
	and thus $\psi(x, y)$ holds.
	
	Clauses (5) and  (6) follows from a combination of (4) and (5). Clause (7) is clear as well.
	The lemma follows.
\end{proof}
The above lemma allows us to define ${\bf h}^{\mathfrak e, n}_{z}$, independent of the choice of the bimodule $\modM$.
%\noindent The rings $\ringdE^n(\ringdE)$
% below are derived from the ring of
%$\ringR$-endomorphism of bimodules which we have not gotten rid of.
%The above lemma allows us to define ${\bf h}^{\mathfrak e, n}_{z}$, independent of the choice of the bimodule $\modM$.
\begin{Definition}
	\label{def rings}
	Suppose $\mathfrak e \in \callE$ is simple as witness by $X$ (see Definition \ref{c.1}(4)) and $n<\omega.$
	\begin{enumerate}
		\item Let $\ringDE^{\mathfrak e}_n$ be the following ring, whose universe is:
		\[
		\{ {\bf h}^{{\mathfrak e},n}_z: z\in \modL^{{\mathfrak e}}_{n} \},
		\]
		such that:
		\begin{enumerate}
			\item ${\bf h}^{{\mathfrak e},n}_{z_1}={\bf h}^{{\mathfrak e},n}_{z_2}$ if and only if
			$z_1-z_2\in\varphi^{\mathfrak e}_\omega(\modN^{\mathfrak e}_n)$,
			\item ${\bf h}^{{\mathfrak e},n}_{z_1}\pm {\bf h}^{{\mathfrak e},n}_{z_2}={\bf h}^{{\mathfrak e},n}_{z_1 \pm z_2}$,
			\item ${\bf h}^{\mathfrak e, n}_{z_1} \circ {\bf h}^{\mathfrak e, n}_{z_2}={\bf h}^{\mathfrak e, n}_{z_3}$,
			where $z_3$ is as in Lemma \ref{definabilityhzen}(5) ( it is
			unique modulo $\varphi_\omega^{\mathfrak e}(\modN^{\mathfrak e}_n)$),
			
			\item the zero is ${\bf h}^{{\mathfrak e},n}_0$, the one is
			${\bf h}_{x^{\mathfrak e}_n}^{{\mathfrak e},n}.$
		\end{enumerate}
		Note that $\ringDE^n$
		is embedded into the endomorphism ring of
		$\psi^{\mathfrak e}_n(\modN^{\mathfrak e}_n)/\varphi^{\mathfrak e}_\omega(\modN^{\mathfrak e}_n)$
		as an Abelian
		group.
		
		\item $\ringDe^{\mathfrak e}_n:=\{{\bf h}_z^{{\mathfrak e},n}\in
		\ringDE^{\mathfrak e}_n:z
		\in\sum\{\ringR x:x\in X\}\}$.

		\item $\ringdE^{\mathfrak e}_n:=\big\{{\bf h}^{{\mathfrak e},n}_z\in
		\ringDE^{\mathfrak e}_n:
		{\bf h}_{\modM,z}^{{\mathfrak e},n}$ is an endomorphism of
		${\mathfrak B}^{\mathfrak e}_n(\modM)$ for every bimodule
		$\modM$ such that $\modM_\ast \leq_{\aleph_0}\modM\big\}$,
		
		\item
		$\ringdE^{\mathfrak e}_{n,\ast}:=
		\{{\bf h}^{{\mathfrak e},n}_z \in \ringdE^{\mathfrak e}_n: z$ is $n$-nice$\}$.
		\item $\ringde^{\mathfrak e}_n=:\ringDe^{\mathfrak e}_n\cap
		\ringdE^{\mathfrak e}_n$,
		$\ringde^{\mathfrak e}_{n,\ast}=:\ringDe^{\mathfrak e}_n\cap
		\ringdE^{\mathfrak e}_{n,\ast}$, so they two depend on $X$ which
		witnesses that ${\mathfrak e}$ is simple.
		\item $\ringDE^{\mathfrak e}_n(\ringR)$ is
		$\ringDE^{\mathfrak e}_n$, when we choose
		$\ringS=\ringT=\Cent(\ringR)$;
		similarly for the others.
		%\item On $\ringDE^{\mathfrak e}, \ringDe^{\mathfrak e}, \ringdE_{\mathfrak e},
		%\ringdE^{\mathfrak e}_*, \ringde^{\mathfrak e}$ see below.
		%In all cases they depend on $\frakss$ of course.
	\end{enumerate}
\end{Definition}

In the following diagram we display these rings. By $A\to B$ we mean $A$ is a subring of $B$:

$$
\begin{tikzpicture}\tiny{
	\matrix (m) [matrix of math nodes, row sep=3em,
	column sep=3em]{
 	& \ringde^{\mathfrak e}_{n}& & \ringDe^{\mathfrak e}_{n}\\
		\ringdE^{\mathfrak e}_{n } & & \ringDE^{\mathfrak e}_{n} & \\
		& \ringde^{\mathfrak e}_{n+1} & & \ringDe^{\mathfrak e}_{n+1} \\
		\ringdE^{\mathfrak e}_{n+1} & & \ringDE^{\mathfrak e}_{n+1} & \\};
	\path[-stealth]
	(m-1-2) edge (m-1-4) edge (m-2-1)
	edge [densely dotted] (m-3-2)
	(m-1-4) edge (m-3-4) edge (m-2-3)
	(m-2-1) edge [-,line width=6pt,draw=white] (m-2-3)
	edge (m-2-3) edge (m-4-1)
	(m-3-2) edge [densely dotted] (m-3-4)
	edge [densely dotted] (m-4-1)
	(m-4-1) edge (m-4-3)
	(m-3-4) edge (m-4-3)
	(m-2-3) edge [-,line width=6pt,draw=white] (m-4-3)
	edge (m-4-3);}
\end{tikzpicture}
$$

\begin{lemma}
	\label{pr rings}The following assertions are hold:
	\begin{enumerate}
		\item $\ringDE^{\mathfrak e}_n$ is a ring. Furthermore if $\mathfrak e$ is simple, then $\ringDe^{\mathfrak e}_n$
		and $\ringdE^{\mathfrak e}_n$ and
		$\ringde^{\mathfrak e}_n$
		are subrings of $\ringDE^{\mathfrak e}_n$ (all have the unit $1={\bf h}^{{\mathfrak e},n}_{x^{\mathfrak e}_n}$
		and zero
		${}^n{\bf h}^{\mathfrak e}_0$, and extend $\ringT$).
		\item $\ringDe^{\mathfrak e}_n$, $\ringdE^{\mathfrak e}_n$ commute. In particular,
		$\ringde^{\mathfrak e}_n$ is
		commutative.
		\item There is a natural homomorphism from
		$\ringDE^{\mathfrak e}_n$ to $\ringDE^{\mathfrak
			e}_{n+1}$ ($n<\omega$), the direct limit is denoted by $\ringDE^{\mathfrak
			e}$. Similarly for $\ringDe^{\mathfrak e}_n, \ringdE^{\mathfrak e}_{n}$ and
		$\ringde^{\mathfrak e}_n$,
		
		\item The ring $\ringS$ is
		naturally mapped into $\ringdE^{\mathfrak e}_n$,
		%which is naturally embedded (i.e., by
		%the identity map) into $\ringdE^{\mathfrak e}_{n+1}$;
		%the diagram commutes. 
		\item The ring
		 $\ringde^{\mathfrak e}_n$ is naturally embedded into
		$\ringde^{\mathfrak e}_{n+1}$ and $\ringDE^{\mathfrak e}_n$ into
		$\ringDE^{\mathfrak e}_{n+1}$.
		\item The Abelian group $\psi^{\mathfrak e}_n(\modM)/\varphi^{\mathfrak e}_\omega
		(\modM)$ is  equipped with a
		module structure over $\ringDE^{\mathfrak e}_n$ and it is naturally a
		$(\ringDe^{\mathfrak e}_n,
		\ringdE^{\mathfrak e}_n)$--bi module (with $\ringde^{\mathfrak e}_n$
		playing the role of $\ringT$).
	\end{enumerate}
\end{lemma}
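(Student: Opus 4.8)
The plan is to verify the six assertions in turn, with Lemma~\ref{definabilityhzen} doing the heavy lifting: it lets us pass back and forth between statements about the maps ${\bf h}^{{\mathfrak e},n}_z$ (well-definedness, sums, composites, definable graphs) and statements about the additive group $\modL^{\mathfrak e}_n$ together with $\mathcal{L}^{\rm pe}$-definable relations, which is what makes the ring axioms routine. For \textbf{(1)}, first note that $z\mapsto{\bf h}^{{\mathfrak e},n}_{\modN^{\mathfrak e}_n,z}$ identifies the universe of $\ringDE^{\mathfrak e}_n$ with a subset of $\End_{\mathbb Z}(\psi^{\mathfrak e}_n(\modN^{\mathfrak e}_n)/\varphi^{\mathfrak e}_\omega(\modN^{\mathfrak e}_n))$; the relation of Definition~\ref{def rings}(1)(a) is consistent because every bimodule homomorphism preserves the cpe formulas $\varphi^{\mathfrak e}_m$, hence carries $\varphi^{\mathfrak e}_\omega$ into $\varphi^{\mathfrak e}_\omega$. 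Since $\modL^{\mathfrak e}_n$ is an additive subgroup (Definition~\ref{subgroups Ln}; or $\modL^{\mathfrak e}_n=\modL^{\mathrm{tr},\bar\varphi}_n$ by Lemma~\ref{c.4}), and since Lemma~\ref{definabilityhzen}(5) supplies ${\bf h}^{{\mathfrak e},n}_{z_1}\pm{\bf h}^{{\mathfrak e},n}_{z_2}={\bf h}^{{\mathfrak e},n}_{z_1\pm z_2}$ and ${\bf h}^{{\mathfrak e},n}_{z_1}\circ{\bf h}^{{\mathfrak e},n}_{z_2}={\bf h}^{{\mathfrak e},n}_{z_3}$ with $z_3\in\modL^{\mathfrak e}_n$, the set is closed under the operations of Definition~\ref{def rings}; associativity and distributivity are inherited from $\End_{\mathbb Z}$, the one is ${\bf h}^{{\mathfrak e},n}_{x^{\mathfrak e}_n}$ (it fixes every $h(x^{\mathfrak e}_n)$) and the zero is ${\bf h}^{{\mathfrak e},n}_0$. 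For the subrings, when $\mathfrak e$ is simple one checks that $\sum\{\ringR x:x\in X\}$, the set of $n$-nice $z$, and the set of $z$ for which ${\bf h}^{{\mathfrak e},n}_{\modM,z}$ is an endomorphism of ${\mathfrak B}^{\mathfrak e}_n(\modM)$ for every bimodule $\modM$ with $\modM_\ast\leq_{\aleph_0}\modM$, are each closed under these operations (closure under $\circ$ is immediate; closure under $+$ uses that the relevant relations, resp. the groups $\psi^{\mathfrak e}_m(\modM)$, are additive subgroups). Finally $\ringT$ embeds via $t\mapsto{\bf h}^{{\mathfrak e},n}_{t\,x^{\mathfrak e}_n}$: $t\,x^{\mathfrak e}_n\in\varphi^{\mathfrak e}_n(\modN^{\mathfrak e}_n)$ (scale a witnessing system by $t$) and $t\,x^{\mathfrak e}_n\in\modL^{\mathfrak e}_n$ (since $h_1(x^{\mathfrak e}_n)=h_2(x^{\mathfrak e}_n)$ forces $h_1(t\,x^{\mathfrak e}_n)=h_2(t\,x^{\mathfrak e}_n)$), and the map is a central ring homomorphism into each of the four rings because $t\in\Cent(\ringR)\cap\Cent(\ringS)$.

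For \textbf{(2) and (4)}: for $z\in\sum\{\ringR x:x\in X\}$ the graph of ${\bf h}^{{\mathfrak e},n}_{\modM,z}$ is definable in ${\mathfrak B}^{\mathfrak e}_n(\modM)$ by a formula of $\mathcal{L}^{\rm pe}_{\omega,\omega}(\tau_{\ringR})$, uniformly in $\modM$ — this is the ``Furthermore'' clause of Lemma~\ref{definabilityhzen}(7), available because a simple $\mathfrak e$ has $\modN^{\mathfrak e}_n$ presented by $\tau_{\ringR}$-equations only. Hence every endomorphism of ${\mathfrak B}^{\mathfrak e}_n(\modM)$ preserves this relation, i.e. commutes with ${\bf h}^{{\mathfrak e},n}_z$, so $\ringDe^{\mathfrak e}_n$ and $\ringdE^{\mathfrak e}_n$ commute; as $\ringde^{\mathfrak e}_n=\ringDe^{\mathfrak e}_n\cap\ringdE^{\mathfrak e}_n$ lies in both, it is commutative. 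For (4) I would send $s\in\ringS$ to ${\bf h}^{{\mathfrak e},n}_{x^{\mathfrak e}_n s}$: one has $x^{\mathfrak e}_n s\in\varphi^{\mathfrak e}_n(\modN^{\mathfrak e}_n)$ (scale a witnessing system on the right by $s$) and $x^{\mathfrak e}_n s\in\modL^{\mathfrak e}_n$ as before, and the induced self-map of $\psi^{\mathfrak e}_n(\modM)/\varphi^{\mathfrak e}_\omega(\modM)$ is right multiplication by $s$, an $\ringR$-module endomorphism of $\modM$, hence by Lemma~\ref{preserving pe formulas} preserving all $\mathcal{L}^{\rm pe}(\tau_{\ringR})$-relations, so ${\bf h}^{{\mathfrak e},n}_{x^{\mathfrak e}_n s}\in\ringdE^{\mathfrak e}_n$; the assignment is additive and multiplicative up to the order reversal built into the right-$\ringS$-module convention, matching Lemma~\ref{2.13}(1).

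For \textbf{(3) and (5)}: I would define the transition map $\ringDE^{\mathfrak e}_n\to\ringDE^{\mathfrak e}_{n+1}$ by ${\bf h}^{{\mathfrak e},n}_z\mapsto{\bf h}^{{\mathfrak e},n+1}_{g^{\mathfrak e}_n(z)}$, and analogously for $\ringDe$, $\ringdE$, $\ringde$, and set $\ringDE^{\mathfrak e}:=\varinjlim_n\ringDE^{\mathfrak e}_n$, etc. Well-definedness requires $g^{\mathfrak e}_n(\modL^{\mathfrak e}_n)\subseteq\modL^{\mathfrak e}_{n+1}$ and that $g^{\mathfrak e}_n$ carries each distinguished sub-structure ($\sum\ringR x$, $n$-niceness, ``endomorphic for ${\mathfrak B}^{\mathfrak e}_n$'') into the next; that the resulting map is then a ring homomorphism follows by intertwining with Lemma~\ref{definabilityhzen}(1), and the composition identities $g^{\mathfrak e}_{n_1,n_2}\circ g^{\mathfrak e}_{n_0,n_1}=g^{\mathfrak e}_{n_0,n_2}$ from Definition~\ref{classcale}(3) let the direct limits be formed. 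For (5), injectivity of $\ringDE^{\mathfrak e}_n\to\ringDE^{\mathfrak e}_{n+1}$ and of $\ringde^{\mathfrak e}_n\to\ringde^{\mathfrak e}_{n+1}$ reduces to showing that $g^{\mathfrak e}_n(z)\in\varphi^{\mathfrak e}_\omega(\modN^{\mathfrak e}_{n+1})$ with $z\in\modL^{\mathfrak e}_n$ forces $z\in\varphi^{\mathfrak e}_\omega(\modN^{\mathfrak e}_n)$, which I would extract from the explicit very-nice presentation of $\modN^{\mathfrak e}_n$ (Definition~\ref{c.3}) together with the characterisation $\modL^{\mathfrak e}_n=\modL^{\mathrm{tr},\bar\varphi}_n$. \emph{This is the main obstacle}: the remainder is formal, but one must carefully verify that the single transition datum $g^{\mathfrak e}_n$ respects $\modL^{\mathfrak e}_n$ and all of its distinguished subgroups and $\omega$-closures, and is injective where claimed.

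For \textbf{(6)}: for a bimodule $\modM$ with $\modM_\ast\leq_{\aleph_0}\modM$, let ${\bf h}^{{\mathfrak e},n}_z\in\ringDE^{\mathfrak e}_n$ act on $\psi^{\mathfrak e}_n(\modM)/\varphi^{\mathfrak e}_\omega(\modM)$ as ${\bf h}^{{\mathfrak e},n}_{\modM,z}$ (for simple $\mathfrak e$, $\psi^{\mathfrak e}_n\equiv\varphi^{\mathfrak e}_n$ by Lemma~\ref{simplephiequalpsi}, consistently with the ${\mathfrak B}^{\mathfrak e}_n(\modM)$-picture); that this is a unital module action is immediate from Lemma~\ref{definabilityhzen}(5) and Definition~\ref{def rings}. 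Restricting to $z\in\sum\ringR x$ gives the $\ringDe^{\mathfrak e}_n$-action and restricting to the $z$ that are endomorphic for ${\mathfrak B}^{\mathfrak e}_n$ gives the $\ringdE^{\mathfrak e}_n$-action; the two families commute by part (2), so $\psi^{\mathfrak e}_n(\modM)/\varphi^{\mathfrak e}_\omega(\modM)$ is a $(\ringDe^{\mathfrak e}_n,\ringdE^{\mathfrak e}_n)$-bimodule in the evident sense, and since an element of $\ringde^{\mathfrak e}_n=\ringDe^{\mathfrak e}_n\cap\ringdE^{\mathfrak e}_n$ acts by the very same map ${\bf h}^{{\mathfrak e},n}_{\modM,z}$ from either side, $\ringde^{\mathfrak e}_n$ plays exactly the role of $\ringT$, completing the proof.
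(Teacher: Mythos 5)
Your proposal is correct and, for parts (1), (3), (4), (5) and (6), follows essentially the same route as the paper: the paper dismisses (1) as clear (you supply the routine verification via Lemma \ref{definabilityhzen}(5) and the fact that $\modL^{\mathfrak e}_n$ is an additive subgroup), builds the transition maps ${\bf h}^{{\mathfrak e},n}_z\mapsto{\bf h}^{{\mathfrak e},n+1}_{g^{\mathfrak e}_n(z)}$ and the direct limits exactly as you do for (3) and (5), sends $s\in\ringS$ to ${\bf h}^{{\mathfrak e},n}_{x^{\mathfrak e}_n s}$ for (4), and defines the two-sided action for (6) just as you describe, invoking commutativity of $\ringde^{\mathfrak e}_n$ to make it a genuine bimodule. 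Where you genuinely diverge is part (2): the paper proves commutation of $\ringDe^{\mathfrak e}_n$ and $\ringdE^{\mathfrak e}_n$ by an explicit two-case computation (splitting $z=rx$ according to whether $x$ is $x_n$ or one of the $y_{n,i}$, writing out the defining formulas $\psi_z$, $\psi_w$ and composing them), whereas you observe that for $z\in\sum\{\ringR x:x\in X\}$ the graph of ${\bf h}^{{\mathfrak e},n}_{\modM,z}$ is one of the $\mathcal{L}^{\rm pe}_{\omega,\omega}(\tau_{\ringR})$-definable relations carried by ${\mathfrak B}^{\mathfrak e}_n(\modM)$ (the ``Furthermore'' clause of Lemma \ref{definabilityhzen}(7), whose hypotheses are met precisely because $\mathfrak e$ is simple), so that every endomorphism of ${\mathfrak B}^{\mathfrak e}_n(\modM)$ — in particular every element of $\ringdE^{\mathfrak e}_n$ — must preserve that graph and hence commute with ${\bf h}^{{\mathfrak e},n}_z$. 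Your argument is shorter, makes transparent exactly why the definition of $\ringdE^{\mathfrak e}_n$ forces the commutation, and avoids the case analysis; the paper's computation buys an explicit description of the composites, which it reuses elsewhere.

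On the one point you flag as ``the main obstacle'' — injectivity of the transition maps in (5), i.e.\ that $g^{\mathfrak e}_n(z)\in\varphi^{\mathfrak e}_\omega(\modN^{\mathfrak e}_{n+1})$ forces ${\bf h}^{{\mathfrak e},n}_z={\bf h}^{{\mathfrak e},n}_0$ — you are right that this is the only non-formal content of the claim that these are embeddings, and you correctly identify the tools (the very nice presentation of Definition \ref{c.3} and $\modL^{\mathfrak e}_n=\modL^{\rm tr,\bar\varphi}_n$) without carrying the verification out. Be aware that the paper does no better: its proof of (3) simply asserts that the assignment ``defines an embedding'' with a citation of Lemma \ref{definabilityhzen}(2) (which gives compatibility of ${\bf h}^{{\mathfrak e},n+1}_{g_n(z)}$ with the restriction of ${\bf h}^{{\mathfrak e},n}_z$, not injectivity), and declares (5) ``similar to (4)''. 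So this is a gap you share with the source rather than one you introduced, and your reduction of it is the correct one.
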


\begin{proof}
	(1). This is clear.
	
	(2).  Suppose $z, w \in \modL^{{\mathfrak e}}_{n}$ are such that $ {\bf h}_{z}^{\mathfrak e, n} \in \ringDe^{\mathfrak e}_n$
	and ${\bf h}_{w}^{\mathfrak e, n} \in \ringdE^{\mathfrak e}_n$. Without loss of generality, $z=rx$ for some $r \in \ringR$ and $x \in X$.
	Furthermore, since  ${\bf h}_{z}^{\mathfrak e, n} \in \ringDe^{\mathfrak e}_n$, it preserves $pe$-definable relations,
	so we can assume without loss of generality that  $X=\{ x_n\} \cup \{ y_{n, i}: i < k_{m_n-1}    \},$
	the canonical sequence is taken from  Definition \ref{c.3}.
	Let also
	$w= r^* x_n s^* + \sum\limits_{i < k_{m_n-1}} r_i y_{n, i} s_i$. We have to show that
	$${\bf h}_{z}^{\mathfrak e, n} \circ {\bf h}_{w}^{\mathfrak e, n}={\bf h}_{w}^{\mathfrak e, n} \circ {\bf h}_{z}^{\mathfrak e, n}.$$
	%Let $\modM \geq_{\aleph_0} \modM_\ast$ be a bimodule and let $g: \modN_n^{\mathfrak e} \to \modM$ be a bimodule homomorphism.
	%We show that
	%\[
	%{\bf h}_{\modM, z}^{\mathfrak e, n} ( {\bf h}_{\modM, w}^{\mathfrak e, n}(g(x_n^{\mathfrak e})+\varphi_\omega^{\mathfrak e}(\modM)))={\bf %h}_{\modM, w}^{\mathfrak e, n}( {\bf h}_{\modM, z}^{\mathfrak e, n}(g(x_n^{\mathfrak e})+\varphi_\omega^{\mathfrak e}(\modM))).
	%\]
	\\
	{\bf {Case 1:}} $z=r y_{n, i}$ for some $i < k_{m_n-1}.$
	
	 In this case, let $\psi_z(x, y)=\psi_n^{\mathfrak e}(x) \wedge \psi_n^{\mathfrak e}(y) \wedge y=0$ and  $\psi_w(x, y)=\psi_n^{\mathfrak e}(x) \wedge \psi_n^{\mathfrak e}(y) \wedge y=r^* x s^*$.
	It follows from the proof of Lemma \ref{definabilityhzen}(4) that, for each $\modM \geq_{\aleph_0} \modM_*$ and $x \in \psi_n^{\mathfrak e}(\modM)$,
	\[
	{\bf h}_{\modM, z}^{\mathfrak e, n}(x+ \varphi_\omega^{\mathfrak e}(\modM))= y+ \varphi_\omega^{\mathfrak e}(\modM) \iff \psi_z(x, y)
	\]
	and
	\[
	{\bf h}_{\modM, w}^{\mathfrak e, n}(x+ \varphi_\omega^{\mathfrak e}(\modM))= y+ \varphi_\omega^{\mathfrak e}(\modM) \iff \psi_w(x, y).
	\]
	
	In particular, we have
	
	$${\bf h}_{\modM, w}^{\mathfrak e, n}({\bf h}_{\modM, z}^{\mathfrak e, n}(x+ \varphi_\omega^{\mathfrak e}(\modM)))=y+ \varphi_\omega^{\mathfrak e}(\modM)$$
	$$\Updownarrow$$
	$$ \exists v \left( \psi_z(x, v) \wedge \psi_w(v, y)              \right) $$
	$$\Updownarrow$$
	$$\exists v \left(   \psi_n^{\mathfrak e}(x) \wedge \psi_n^{\mathfrak e}(v) \wedge \psi_n^{\mathfrak e}(y)  \wedge v=0 \wedge y = r^* v s^*         \right).$$
	Similarly, we have
	$${\bf h}_{\modM, z}^{\mathfrak e, n}({\bf h}_{\modM, w}^{\mathfrak e, n}(x+ \varphi_\omega^{\mathfrak e}(\modM)))=y'+ \varphi_\omega^{\mathfrak e}(\modM)$$
	$$\Updownarrow$$
	$$ \exists v \left( \psi_w(x, v) \wedge \psi_z(v, y')              \right) $$
	$$\Updownarrow$$
	$$\exists v \left(   \psi_n^{\mathfrak e}(x) \wedge \psi_n^{\mathfrak e}(v) \wedge \psi_n^{\mathfrak e}(y')  \wedge v= r^* x s^* \wedge y' =0\right).$$
	It follows from  the above equations that $y=y'=0$, and thus the equality follows.
	\\
	{\bf{Case 2:}} $z=r x_n$.

	In this case, for each $x \in \modM$, ${\bf h}_{\modM, z}^{\mathfrak e, n}(x+ \varphi_\omega^{\mathfrak e}(\modM))=rx+ \varphi_\omega^{\mathfrak e}(\modM)$. Set ${\bf h}_{\modM, w}^{\mathfrak e, n}(x+ \varphi_\omega^{\mathfrak e}(\modM))=y+\varphi_\omega^{\mathfrak e}(\modM).$ Then
	\[
	{\bf h}_{\modM, w}^{\mathfrak e, n}({\bf h}_{\modM, z}^{\mathfrak e, n}(x+ \varphi_\omega^{\mathfrak e}(\modM)))= {\bf h}_{\modM, w}^{\mathfrak e, n}(rx+ \varphi_\omega^{\mathfrak e}(\modM)) = r {\bf h}_{\modM, w}^{\mathfrak e, n}(x+ \varphi_\omega^{\mathfrak e}(\modM))=ry+\varphi_\omega^{\mathfrak e}(\modM),
	\]
	and
	\[
	{\bf h}_{\modM, z}^{\mathfrak e, n}({\bf h}_{\modM, w}^{\mathfrak e, n}(x+ \varphi_\omega^{\mathfrak e}(\modM)))= {\bf h}_{\modM, z}^{\mathfrak e, n}(y+\varphi_\omega^{\mathfrak e}(\modM))=ry+\varphi_\omega^{\mathfrak e}(\modM).
	\]
	The equality follows in this case as well.

	(3).
	Recall from Lemma \ref{definabilityhzen}(2) that the assignment ${\bf h}^{\mathfrak e, n}_{\modM, z}\mapsto{\bf h}^{\mathfrak e, n+1}_{\modM, g_n(z)}$ defines an embedding
	map $F_n:\ringDE^{\mathfrak e}_n\to \ringDE^{\mathfrak e}_{n+1}$. This yields a directed system $\{\ringDE^{\mathfrak e}_n\}_{n\geq 1}$. Now, we define$$\ringDE^{\mathfrak e}:=\varinjlim(\ringDE^{\mathfrak e}_0\stackrel{F_0}\longrightarrow\ringDE^{\mathfrak e}_1\longrightarrow\cdots\longrightarrow\ringDE^{\mathfrak e}_n\stackrel{F_n}\longrightarrow\ringDE^{\mathfrak e}_{n+1}\longrightarrow\cdots).$$
	Similarly, we define the rings $\ringDe^{\mathfrak e}, \ringdE^{\mathfrak e}$
	$\ringdE^{\mathfrak e}_{\ast}$ and $ \ringde^{\mathfrak e}$
	by taking them  as a direct limit of the corresponding directed system.
	In all cases they depend on $\frakss$ of course.

	(4).	To each $s\in \ringS$ we assign ${\bf f}_s\in\End(\modM)$ defined by ${\bf f}_s(x)=xs$.
	The assignment $s\mapsto {\bf f}_s$ defines a  map
	$\ringS\longrightarrow\End(\modM)$ which is an embedding. For each $n<\omega$, this induces a map
	$$(\hat{{\bf f}_s})_n:\frac{\varphi^{\mathfrak e}_n(\modM)}{\varphi^{\mathfrak e}_\omega
		(\modM)}\longrightarrow \frac{\varphi^{\mathfrak e}_n(\modM)}{\varphi^{\mathfrak e}_\omega
		(\modM)}.$$
	We may regard this as an endomorphism of ${\mathfrak B}^{\mathfrak e}_n(\modM)$.
	Now, let $z:=x_ns$. Then $(\hat{{\bf f}_s})_n= {\bf h}^{{\mathfrak e},n}_z$.
	We proved that the assignment $s\mapsto (\hat{{\bf f}_s})_n$ induces a map
	$\rho_n:S\to \ringdE^{\mathfrak e}_n$. Denote the natural map
	$\ringdE^{\mathfrak e}_n\to\ringdE^{\mathfrak e}_{n+1} $ by $H_n$.
	Now we look at the following commutative diagram:
	
	\[
	\begin{CD}
 \ringS @>=>> \ringS @>=>>\ldots@>=>>\ringS	@>=>> \ringS @>=>>  \ldots\\
  @VV\rho_{1} V@V \rho_2VV @V \ldots VV   @VV\rho_{n-1} V@V \rho_nVV   \\
  \ringdE^{\mathfrak e}_{1} @>H_{1}>> \ringdE^{\mathfrak e}_2 @>H_{2}>> \ldots  @> >> \ringdE^{\mathfrak e}_{n-1} @>H_{n-1}>> \ringdE^{\mathfrak e}_n @>H_{n} >> \ldots
	\end{CD}
	\]
	
		\medskip
 Taking direct limits of these directed systems, lead us  to a natural map $$\rho:\ringS=\varinjlim \ringS\to\varinjlim \ringdE^{\mathfrak e}_n=\ringdE^{\mathfrak e},$$ as claimed.

	(5). This is similar to (4).
	
	(6). The assignment
	$$(m+\varphi^{\mathfrak e}_\omega
	(\modM),{\bf h}^{\mathfrak e, n}_{\modM, z})\mapsto{\bf h}^{\mathfrak e, n}_{\modM, z}(m+\varphi^{\mathfrak e}_\omega
	(\modM))$$
	defines the scaler
	multiplication $\frac{\psi^{\mathfrak e}_n(\modM)}{\varphi^{\mathfrak e}_\omega
		(\modM)}\times\ringDE^{\mathfrak e}_n\to\frac{\psi^{\mathfrak e}_n(\modM)}{\varphi^{\mathfrak e}_\omega
		(\modM)}$. Now, let $m+\varphi^{\mathfrak e}_\omega(\modM)\in\frac{\psi^{\mathfrak e}_n(\modM)}{\varphi^{\mathfrak e}_\omega
		(\modM)}, {\bf h}^{\mathfrak e, n}_{\modM, z}\in\ringDe^{\mathfrak e}_n$ and ${\bf h}^{\mathfrak e, n}_{\modM, w}\in\ringdE^{\mathfrak e}_n$.
	The assignment $$({\bf h}^{\mathfrak e, n}_{\modM, z}, m+\varphi^{\mathfrak e}_\omega(\modM), {\bf h}^{\mathfrak e, n}_{\modM, w}) \mapsto{\bf h}^{\mathfrak e, n}_{\modM, w}({\bf h}^{\mathfrak e, n}_{\modM, z}(m+\varphi^{\mathfrak e}_\omega
	(\modM)))$$defines the scaler
	multiplication $$\ringDe^{\mathfrak e}_n\times\frac{\psi^{\mathfrak e}_n(\modM)}{\varphi^{\mathfrak e}_\omega
		(\modM)}\times	\ringdE^{\mathfrak e}_n\to\frac{\psi^{\mathfrak e}_n(\modM)}{\varphi^{\mathfrak e}_\omega
		(\modM)}.$$Since $\ringde^{\mathfrak e}_n=	\ringdE^{\mathfrak e}_n \cap	\ringDe^{\mathfrak e}_n$ is commutative,
	this induces the desired bimodule structure on $\psi^{\mathfrak e}_n(\modM)/\varphi^{\mathfrak e}_\omega
	(\modM)$.
\end{proof}

The following diagram summarizes the relation between the above rings, where
by $A\to B$  we mean $A$ is a subring of $B$:
$$
\begin{tikzpicture}
\node (P) {$\ringde$};
\node (B) [right of=P] {$\ringDe$};
\node (A) [below of=P] {$\ringdE$};
\node (C) [below of=B] {$\ringDE$};
\node (P1) [node distance=1.5cm, left of=P, above of=P] {$\ringde_{\ast}$};
\draw[->] (P) to node { } (B);
\draw[->, ] (P) to node { } (A);
\draw[->, ] (A) to node  { } (C);
\draw[->, ] (B) to node { } (C);
\draw[->, bend right] (P1) to node [ ] {} (A);
\draw[->, bend left] (P1) to node {$ $} (B);
\draw[->, ] (P1) to node {$ $} (P);

\node (P) {$\ringde$};
\node (B) [right of=P] {$\ringDe$};
\node (A) [below of=P] {$\ringdE $};
\node (C) [below of=B] {$\ringDE $};
\node (P1) [node distance=1.5cm, left of=P, above of=P] {$\ringde_{\ast}$};
\draw[->,] (P) to node { } (B);
\draw[->,] (P) to node { } (A);
\draw[->, ] (A) to node  { } (C);
\draw[->, ] (B) to node { } (C);
\draw[->, bend right] (P1) to node [ ] {} (A);
\draw[->, bend left] (P1) to node {$ $} (B);
\draw[->, ] (P1) to node {$ $} (P);

\end{tikzpicture}$$

The following lemma says that for example for strongly
semi-nice construction $\bar \modM$
we have some control over $\End_\ringR(\modM_\kappa)$; note
that it only says it is not too large, but we have the freedom to choose the
ring $\ringS$ in order to make $\End(\modM_\lambda)$ have some elements with desirable
properties.

\begin{Lemma}
	\label{2.16}
	Suppose $\bar \modM=\langle \modM_\alpha:\alpha\leq\kappa\rangle$
	is  a strongly semi-nice
	construction for  $(\lambda,\frakss, S, \kappa)$,
	every ${\mathfrak e}\in {\callE}$
	is simple  and $\modM:=\modM_\kappa$. The following assertions are hold:
	
	\begin{enumerate}
		\item[(i)] If $(\Pr 1)^n_{\alpha,z}[\fucF,{\mathfrak e}]$ holds, then
		${\bf h}^{{\mathfrak e},n}_z$ is an endomorphism of ${\mathfrak B}^{\mathfrak
			e}_n(\modM)$. Furthermore ${\bf h}^{{\mathfrak e},n}_z \in \ringdE^{\mathfrak e}_{n,\ast}$.
		\item[(ii)]
		If $(\Pr 1)^n_{\alpha,z}[\fucF,{\mathfrak e}]$ holds
		and $\fucF$ is an automorphism
		of $\modM$, then ${\bf h}^{{\mathfrak e},n}_{\modM,z}$ is an
		automorphism of ${\mathfrak B}^{\mathfrak
			e}_n(\modM)$ and even of
		${}^+  {\mathfrak B}
		^{\mathfrak e}_n(\modM)$.
		\item[(iii)]
		$\End^{{\mathfrak e},\omega}(\modM)/
		\End^{{\mathfrak e},\omega}_{<
			\lambda}(\modM)$  embedded into
		$\ringdE^{\mathfrak e}$.
	Suppose in addition that $\lambda=\kappa$ and let $\ringS$ be a  subring   of $\End^{{\mathfrak e},\omega}
		(\modM)
		/\End^{{\mathfrak e},\omega}_{<
			\lambda}(\modM)$  of \power\ $<\lambda$. There is a
		club $C$ of $\kappa$ such that for any  $\alpha\in C\setminus S$
	  large enough, the ring
		$\ringS$ is embedded into
		$\End^{{\mathfrak e},\omega}(\modM/\modM_\alpha)$.
		\item[(iv)] Let ${\bf h}^{{\mathfrak e},n}[\modM]$
		denote the natural map
		$\End^{{\mathfrak e},n}
		(\modM) \to\End^{{\mathfrak e},\omega}(\modM) $ and set\footnote{See Definition \ref{2.12} for the definition of $\hat{\bf f}_n.$}
		\[\begin{array}{lr}		
		{\bf E}_n:=\big\{{\bf h}^{{\mathfrak e},n}[\modM](\hat{\bf f}_n): \fucF\in \End
		(\modM)\mbox{ and there are }
		z_n(\fucF)\in \modL^{\mathfrak e}_n \mbox{ and }\\
		\qquad\qquad\qquad\qquad\qquad\alpha_n(\fucF)<\kappa \text{ such that }
		(\Pr 1)^n_{\alpha_n(\fucF),z_n(\fucF)}[\fucF,{\mathfrak e}] \big\}.
		\end{array}\]		
		Then
		\begin{itemize}
			\item	$\End^{{\mathfrak e},\omega}(\modM)
			=\bigcup\limits_{n<
				\omega}{\bf E}_n$,
			\item ${\bf E}_n\subseteq {\bf E}_{n+1}$,
			\item  $z_n(\fucF)$
			is unique modulo $\varphi^{\mathfrak e}_\omega(\modN^{\mathfrak e}_n)$.
		\end{itemize}	
		\item[(v)] ${\bf E}_n$ is a subring of
		$\End^{\mathfrak e,\omega}(\modM)$ and the
		mapping $\hat{\fucF}_n\mapsto {\bf h}^{\mathfrak e, n}_{z_n(\fucF)}$
		is a homomorphism from
		\[\begin{array}{lr}
		\{\hat{\fucF}_n:&\fucF\in
		\End(\modM)\mbox{ and
		}(\Pr 1)^n_{\alpha_n(\fucF),z_n(\fucF)}\mbox{ for some }\ \\
		&\alpha_n(\fucF)<\kappa,\ z_n(\fucF)\in \modL^\tr_n\}
		\end{array}\]
		into $\ringdE^{\mathfrak e}_n$ with kernel
		$\End^{{\mathfrak e},n}_{<\lambda}(\modM)$, i.e.,
		$$
		\{\fucF\in \End^{{\mathfrak e},n}(\modM):z_n
		(\fucF)\in\varphi^{\mathfrak e}_\omega(\modN_n)\}.$$
		\item[(vi)] The ring $\ringS$ is naturally mapped into
		$\End(\modM)$, for
		each $\alpha\leq\omega$, there is a natural
		homomorphism from $\End(
		\modM)$ to $\End^{{\mathfrak e},\alpha}
		(\modM)$, where for
		$\alpha<\omega$ has a natural mapping to $\ringdE$.
		In particular,  $\ringS$ is naturally mapped
		into $dE^{\mathfrak e}$.
	\end{enumerate}
\end{Lemma}

\begin{proof}
	(i). 	 In view of Lemma \ref{2.13}(ii), ${\bf f}$
	induces a homomorphism from
	${\mathfrak B}^{\mathfrak e}_n(\modM_1)\to{\mathfrak B}^{\mathfrak e}_n(\modM_2)$,
and we  conclude from Lemma \ref{2.11A} that $z\in\modL^{{\mathfrak e},\ast}_n$.
	By definition, ${\bf h}^{{\mathfrak e},n}_z \in \ringdE^{\mathfrak e}_{n,\ast}$.
	
	(ii). For some formula $\psi(x,y)\in\mathcal{L}_{\mu,\kappa}^{pe}(\tau_{(\ringR, \ringS)})$, for all $\modM$, and all $x,y\in\psi_{n}(\modM)$ we
	have  $${\bf h}^{{\mathfrak e},n}_{\modM,z}(x+\varphi_{\omega}(\modM))=y+\varphi_{\omega}(\modM)\Leftrightarrow\modM
	\models\psi(x,y).$$
	Now, we define $\psi'(x,y)$ by the following role
	$$\modM \models \psi'(x,y)\Leftrightarrow\modM
	\models\psi(y,x).$$
	Since ${\bf f}$ is an automorphism, $\psi'(x,y)$ satisfies  the assumptions (i)-(iii)  of Lemma \ref{definabilityhzen}(3),
	and hence for some $z'\in\modL^{{\mathfrak e}}_n$ and all $x,y\in\psi_{n}(\modM)$,
	$${\bf h}^{{\mathfrak e},n}_{\modM,z'}(x+\varphi_{\omega}(\modM))=y+\varphi_{\omega}(\modM)\Leftrightarrow\modM
	\models\psi'(x,y).$$It is now clear that
	${\bf h}^{{\mathfrak e},n}_{\modM,z'}$ is the inverse of ${\bf h}^{{\mathfrak e},n}_{\modM,z}$. In particular,  ${\bf h}^{{\mathfrak e},n}_{\modM,z}$ is an
	automorphism of ${\mathfrak B}^{\mathfrak
		e}_n(\modM)$.
	%Due to the first item, ${\bf h}^{{\mathfrak e},n}_{\modM,z}$ is an
	%	automorphism over
	%	${}^+  {\mathbb   B}
	%	^{\mathfrak e}_n(\modM)$.
	
	(iii)$+$(iv)$+$(v). For each $n$, set
	\[\begin{array}{lr}
	{\bf F}_n=\{\hat{\fucF}_n:&\fucF\in
	\End(\modM)\mbox{ and
	}(\Pr 1)^n_{\alpha_n(\fucF),z_n(\fucF)}\mbox{ for some }\ \\
	&\alpha_n(\fucF)<\kappa,\ z_n(\fucF)\in \modL^\tr_n\}.
	\end{array}\]
	
	We will show that:
	\begin{enumerate}
		\item $\End^{{\mathfrak e},\omega}(\modM_\kappa)
		=\bigcup\limits_{n<\omega}{\bf E}_n$.
		\item ${\bf E}_n$ is a subring of
		$\End^{\mathfrak e,\omega}(\modM)$.
		
		\item the
		assignment $\hat{\fucF}_n\mapsto {\bf h}^{\mathfrak e, n}_{z_n(\fucF)}$
		yields a homomorphism
		$\varrho_n:{\bf F}_n\to\ringdE^{\mathfrak e}_n$ with kernel
		$$
		\{\hat{\fucF}_n\in \End^{{\mathfrak e},n}(\modM):z_n
		(\fucF)\in\varphi^{\mathfrak e}_\omega(\modN_n)\}
		.$$
		\item The  assignment ${\bf h}^{\mathfrak e,n}[\modM](\hat{\fucF}_n)\mapsto {\bf h}^{\mathfrak e, n}_{z_n(\fucF)}$ induces a homomorphism ${\bf E}_n\to \ringdE^{\mathfrak e}_n$.
	\end{enumerate}
	To see (1), clearly we have   $\bigcup\limits_{n<
		\omega}{\bf E}_n\subset\End^{{\mathfrak e},\omega}(\modM_\kappa)$. To see the reverse inclusion take ${\bf g}\in\End^{{\mathfrak e},\omega}(\modM_\kappa)$. Then for some $n<
	\omega$ we have ${\bf g}={\bf h}^{\mathfrak e, n}[\modM](\hat{\fucF}_n)$ where $\fucF\in\End^{{\mathfrak e},n}(\modM) $.
	Here, we are going to use the a strongly semi-nice construction. In view of this assumption, we can find
	$ \alpha_n(\fucF)<\lambda$ and $ z_n(\fucF)\in \modL^\tr_n$ such that the property $(\Pr 1)^n_{\alpha_n(\fucF),z_n(\fucF)} $ holds. By definition ${\bf g}\in {\bf E}_n$. This completes the proof of $(1)$.
	
	$(2)$. This is clear and we leave it to the reader.
	
	$(3)$. Note that $z_n(\fucF_1\pm\fucF_2)=z_n(\fucF_1)\pm z_n(\fucF_2)$ modulo $\varphi_\omega(\modN_n)$. This shows that $\varrho_n$ is additive:
	$$\varrho_n(\fucF_1 \pm \fucF_2)={\bf h}^n_{z_n(\fucF_1 \pm \fucF_2)} ={\bf h}^n_{z_n(\fucF_1)}\pm {\bf h}^n_{z_n(\fucF_2)}= \varrho_n(\fucF_1)+\varrho_n(\fucF_2).$$Similarly, for any $r\in R$ we have
	$ \varrho_n(r\fucF )= \varrho_n(r\fucF )$. Here, we compute   $\ker(\varrho_n)$: $$\varrho_n(\fucF )=0\Leftrightarrow {\bf h}^n_{z_n(\fucF)}={\bf h}^n_{0}\Leftrightarrow z_n(\fucF)\in\varphi_\omega(\modN_n),$$
i.e., $\ker(\varrho_n)=\{\hat{\bf f}_n\in {\bf F}_n:z_n(\fucF)\in\varphi_\omega(\modN_n)\}$, as claimed.
	
	$(4)$. This is trivial.
	
	Items (iv) and (v) follow  immediately. The assignment
	\[
	{\bf h}^{\mathfrak e, n}[\modM](\hat{\bf f}_n) \mapsto {\bf h}^{\mathfrak e, n}_{z_n(\fucF)}
	\]
	defines a homomorphism from
	$\End^{{\mathfrak e},\omega}(\modM)$ into
	$\ringdE^{\mathfrak e}$ with kernel
	\[
	\bigcup\limits_{n<\omega} \{{\bf h}^{\mathfrak e, n}[\modM](\hat{\bf f}_n)\in \End^{{\mathfrak e},\omega}(\modM):z_n
	(\fucF)\in\varphi^{\mathfrak e}_\omega(\modN_n)\},
	\] which is included in $\End^{{\mathfrak e},\omega}_{<
		\lambda}(\modM).$
	In sum, we have a natural embedding from the ring
	$\End^{{\mathfrak e},\omega}(\modM)/
	\End^{{\mathfrak e},\omega}_{<
		\lambda}(\modM)$ into
	$\ringdE^{\mathfrak e}$.

	It remains to prove the moreover part of (iii). To this end, let
	$\ringS$ be any subring of $\End^{{\mathfrak e},\omega}
	(\modM)
	/\End^{{\mathfrak e},\omega}_{<
		\lambda}(\modM)$  of \power\ $<\lambda$.
	For each $s \in \ringS$ find $n_s<\omega$ and ${\bf f}_s \in \End(\modM)$ such that
	$s={\bf h}^{\mathfrak e, n}[\modM](\hat{({\bf f}_s)}_{n_s})+\End^{{\mathfrak e},\omega}_{<
		\lambda}(\modM)$. We look at the following club of $\kappa$:
	\[
	C_s:=\{\alpha < \kappa: \Rang({\bf f}_s \restriction \modM_\alpha) \subseteq \modM_\alpha   \}.
	\]
For each
	$\alpha \in C_{s}\setminus S$, we define an  endomorphism  $\theta(s): \modM / \modM_\alpha \to \modM/\modM_\alpha$
	by the following role:
	$$\theta(s)(x+\modM_\alpha)={\bf f}_s(x)+\modM_\alpha.$$
	It is now natural to take $\bigcap\limits_{s \in \ringS}C_s$ as our required club, but we need to do a little more.
	Indeed, it is not clear that if $\theta$ is a homomorphism, as it may not preserve  addition or multiplication.
	To handle this, we shrink the above intersection further as follows.
	Given $s_1, \cdots, s_m \in \ringS$,  we have
	$${\bf h}^{\mathfrak e, n}[\modM]((\hat{{\bf f}}_{s_1 +\cdots+ s_m})_{n_{s_1 +\cdots+ s_m}})- {\bf h}^{\mathfrak e, n}[\modM]((\hat{{\bf f}}_{s_1})_{n_{s_1}})- \cdots -{\bf h}^{\mathfrak e, n}[\modM]((\hat{{\bf f}}_{s_m})_{n_{s_m}}) \in \End^{{\mathfrak e},\omega}_{<
		\lambda}(\modM)$$
	and
	$${\bf h}^{\mathfrak e, n}[\modM]((\hat{{\bf f}}_{s_1 \cdots s_m})_{n_{s_1 \cdots s_m}})- \left({\bf h}^{\mathfrak e, n}[\modM]((\hat{{\bf f}}_{s_1})_{n_{s_1}})\circ \cdots \circ {\bf h}^{\mathfrak e, n}[\modM]((\hat{{\bf f}}_{s_m})_{n_{s_m}})\right) \in \End^{{\mathfrak e},\omega}_{<
		\lambda}(\modM).$$
	Thus we can find $\alpha(s_1, \cdots, s_m) < \kappa$ such that
	\[
	\Rang(\left({\bf f}_{s_1 +\cdots+ s_m}-{\bf f}_{s_1}-\cdots -{\bf f}_{s_m}\right) \restriction \psi^{\mathfrak e}(\modM)) \subseteq \modM_{\alpha(s_1, \cdots, s_m)}
	\]
	and
	\[
	\Rang(\left({\bf f}_{s_1 \cdots s_m}-\left({\bf f}_{s_1}\circ \cdots \circ {\bf f}_{s_m}\right)\right) \restriction \psi^{\mathfrak e}(\modM)) \subseteq \modM_{\alpha(s_1, \cdots, s_m)}.
	\]
	Let $\alpha(\ast)=\sup\{\alpha(s_1, \cdots, s_m): m<\omega,~s_1, \cdots, s_m \in \ringS    \}< \kappa$.
	Then $C=\bigcap\limits_{s \in \ringS}C_{s}\setminus (\alpha(\ast)+1)$ is as required. The key point is that for  $s_1, \cdots, s_m \in S,$
	and $\alpha \in C,$ modulo $\modM_\alpha$, we have
	\[
	{\bf f}_{s_1 +\cdots+ s_m}= {\bf f}_{s_1}+\cdots +{\bf f}_{s_m}
	\]
	and
	\[
	{\bf f}_{s_1 \cdots s_m}={\bf f}_{s_1}\circ \cdots \circ {\bf f}_{s_m}.
	\]
	(vi). Let $s\in S$. The notation ${\bf f}_s$ stands
	for the multiplication map by $s$, e.g., ${\bf f}_s\in\End(\modM)$ and it is defined by ${\bf f}_s(x)=xs$.
	The assignment $s\mapsto {\bf f}_s$ defines an embedding
	$S\longrightarrow\End(\modM)$. Let $\alpha<\omega$.
	The map ${\bf f}\mapsto \hat{{\bf f}}_n$ yields a natural homomorphism $\End(\modM)\to\End^{{\mathfrak e},n}_{}(\modM) $. For $\alpha:=\omega$, as  $\End^{{\mathfrak e},\omega}(\modM)$
	is the direct limit of $\langle
	\End^{{\mathfrak e},n}(\modM):n<\omega\rangle$,
	and by the above argument, we have a natural homomorphism $\End(\modM)\to\End^{{\mathfrak e},\omega}(\modM) $. Finally,
	the assignment ${\bf h}^{\mathfrak e, n}[\modM](\hat{\bf f}_n)\mapsto{\bf h}^n_{z_n(\fucF)}$ defines a homomorphism $\End^{{\mathfrak e},\omega}(\modM)\to\ringdE $.
\end{proof}

\begin{definition} By   $\End^{{\mathfrak e},n}_{cpt}(\modM)$ we mean $$\left\lbrace h\in
\End^{{\mathfrak e},n}:
\mbox{the range of $h$ is compact for }({\mathfrak e},n)\mbox{ in }\modM_\lambda
\right\rbrace.$$
\end{definition}

\begin{remark}
	%i) Adopt the notation of Lemma \ref{2.16}(i).
%In particular,  under the assumption $(\Pr 1)^n_{\alpha,z}[\fucF,{\mathfrak e}]$ we proved that
%${\bf h}^{{\mathfrak e},n}_z$ is an endomorphism of ${\mathfrak B}^{\mathfrak
%	e}_n(\modM)$. Since each $\modN^{\mathfrak e}_n$ is
%isomorphic to a direct summand of
%$\modM_\beta$ complimentary to $\modM_\alpha$ for
%$\alpha<\beta$ in $\kappa\setminus S$, we have ${\bf h}^{{\mathfrak e},n}_z\in \ringdE^n_1$.
%Also, as for example ``every
%$\varphi(\bar x)$, a {p.e.} formula in
%$\modL$ which has a model, has a
%model which is a direct summand of $\modM$'', clearly necessarily
%${\bf h}^{{\mathfrak e},n}_z\in \ringdE^n$.	

%ii)
%Adopting the notation of Lemma \ref{2.16}(iv), the following integer arises naturally:
%	$n(\fucF):={\rm \Min}
%	\{n: \fucF\in E_n\}$.
 In Lemma \ref{2.16} we can replace
	$\End^{{\mathfrak e},n}_{<\lambda}(\modM)$
with
	$\End^{{\mathfrak e},n}_{cpt}(\modM)$ and drive  the analogue statement.
	Since this has no role in this paper, we leave the routine modification to the reader.
\end{remark}

\begin{lemma}
	\label{3.1}
	Let $\frakss:=(\cal K,\modM_\ast,{\callE},\ringR,  \ringS,  \ringT)$ be a $\lambda$-context,
	where $\lambda$ is a regular cardinal such that $\lambda=\lambda^{\aleph_0}>|\ringR|+|\ringS|+\aleph_0
	+||\modM_*||$ and  for all $\alpha<\lambda,~\alpha^{\aleph_0}
	<\lambda$.
	Then  there
	is a bi-module $\modM \geq_{\aleph_0} \modM_\ast$
	satisfying
	$\|\modM\|=\lambda=|\psi^{\mathfrak e}_n (\modM)/\varphi^{\mathfrak
		e}_\omega (\modM)|$ such that $\modM$  has few direct decompositions in the
	following sense:
	\begin{enumerate}
		\item[(i)] if $\modM=\mathop{{\bigoplus}}\limits_{t\in J}
		\modM_t$ and ${\mathfrak e}\in {\callE}$,  then
		%for each ${\mathfrak e}\in\calE$
		for all but finitely many
		$t\in J$ we have  $\bigvee\limits_{n} [\psi^{\mathfrak e}_n (\modM_t)
		\subseteq \varphi^{\mathfrak e}_\omega (\modM_t)]$. In particular,
		\[\bigvee\limits_n [\varphi^{\mathfrak
			e}_n (\modM_t)=\varphi^{\mathfrak e}_\omega(\modM_t)],\]
	provided  ${\mathfrak e}$ is simple.
		\item[(ii)] Assume in addition that $|\ringR|+|\ringS|<2^{\aleph_0}$ and  that  $\modM=\modK_\alpha\oplus
		\modP_\alpha$ for $\alpha<(|\ringR|+|\ringS|+\aleph_0)^+$.   Then
		for some
		$\alpha_0<\alpha_1$ and some $n<\omega$ we have $\psi^{\mathfrak e}_n (\modK_{\alpha_\ell})\subseteq \varphi^{\mathfrak e}_n
		(\modK_{\alpha_1})+\varphi^{\mathfrak e}_\omega (\modM), $ where $\ell<2$.  In particular, if  ${\mathfrak e}$ is simple, then
		$$
		\varphi^{\mathfrak e}_n(\modK_{\alpha_1})+\varphi^{\mathfrak e}_\omega(\modM)=
		\varphi^{\mathfrak e}_n(\modK_{\alpha_{2}})+
		\varphi^{\mathfrak e}_\omega(\modM).
		$$
		
		\item[(iii)] $\End^{\mathfrak e, \omega}(\modM)/\End^{{\mathfrak e},\omega}_{<\lambda}(\modM)$ has
		cardinality $\leq |\ringR|+|\ringS|+\aleph_0$.
	\end{enumerate}
\end{lemma}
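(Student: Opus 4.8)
The plan is to produce $\modM$ as the bimodule $\modM_\lambda$ defined by a \emph{nice} construction $\bar\modM=\langle\modM_\alpha:\alpha\leq\lambda\rangle$ for $(\lambda,\frakss,S,\lambda)$, where $S\subseteq S^\lambda_{\aleph_0}$ is chosen stationary with $S^\lambda_{\aleph_0}\setminus S$ stationary; such $\bar\modM$ exists in ZFC by Theorem \ref{nice construction lemma}(2), since $\lambda$ is regular, $\lambda=\lambda^{\aleph_0}$, $\cf(\lambda)=\lambda$, $|\alpha|^{\aleph_0}<\lambda$ for $\alpha<\lambda$, and $\lambda>\|\frakss\|$. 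By Lemma \ref{2.4a} every $\ringR$-endomorphism of $\modM_\lambda$ satisfies some $(\Pr 1)^{n}_{\alpha,z}$, so $\bar\modM$ is in particular strongly semi-nice and Lemmas \ref{2.16}, \ref{groupsGn}, \ref{gncanbefinitary} apply (note $\|\modM_\alpha\|=|\gamma^*_\alpha|<\lambda$ for $\alpha<\lambda$). That $\modM:=\modM_\lambda\geq_{\aleph_0}\modM_\ast$ and $\|\modM\|=\lambda$ is immediate from clauses (D), (E) of Definition \ref{w.s. nice definition}; and $|\psi^{\mathfrak e}_n(\modM)/\varphi^{\mathfrak e}_\omega(\modM)|=\lambda$ holds because the construction adds cofinally often $\lambda$ mutually ``$\mathfrak e$-independent'' copies of a member of $\cal K$ witnessing non-triviality of $\frakss$ for $\mathfrak e$, each contributing a nonzero class to that quotient, whose cardinality is in turn bounded by $\|\modM\|=\lambda$. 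Part (iii) is then the quickest: by Lemma \ref{2.16}(iii) the ring $\End^{\mathfrak e,\omega}(\modM)/\End^{\mathfrak e,\omega}_{<\lambda}(\modM)$ embeds into $\ringdE^{\mathfrak e}=\varinjlim_n\ringdE^{\mathfrak e}_n$, each $\ringdE^{\mathfrak e}_n$ consisting of the maps ${\bf h}^{\mathfrak e,n}_z$ indexed by $z\in\modL^{\mathfrak e}_n\subseteq\varphi^{\mathfrak e}_n(\modN^{\mathfrak e}_n)\subseteq\modN^{\mathfrak e}_n$ modulo $\varphi^{\mathfrak e}_\omega(\modN^{\mathfrak e}_n)$, so that (recalling the $\mathfrak e\in\callE$ are simple, hence $\modN^{\mathfrak e}_n$ is $\aleph_0$-generated over $(\ringR,\ringS)$) $|\ringdE^{\mathfrak e}|\leq|\ringR|+|\ringS|+\aleph_0$, which is the asserted bound.

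For part (ii), fix $\mathfrak e$, set $\mu:=|\ringR|+|\ringS|+\aleph_0$ and suppose $\modM=\modK_\alpha\oplus\modP_\alpha$ for $\alpha<\mu^+$; let $e_\alpha\in\End_\ringR(\modM)$ be the idempotent with $\Rang(e_\alpha)=\modK_\alpha$, $\Ker(e_\alpha)=\modP_\alpha$. By Lemma \ref{2.4a} pick $n_\alpha<\omega$, $z_\alpha\in\modL^{\mathfrak e}_{n_\alpha}$, $\beta_\alpha<\lambda$ with $(\Pr 1)^{n_\alpha}_{\beta_\alpha,z_\alpha}[e_\alpha,\mathfrak e]$. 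The hypothesis $|\ringR|+|\ringS|<2^{\aleph_0}$ forces every $\modN\in\cal K$ (and $\modM_\ast\oplus\modN$) to have cardinality $<2^{\aleph_0}$, so by Lemma \ref{2.9} there is no non-trivial compact $\bar{\mathbb L}$ for $\bar\varphi^{\mathfrak e}$ there; hence by Lemma \ref{gncanbefinitary} we may increase $n_\alpha$ so that ${\groupG}^{\frakss}_{n_\alpha,z_\alpha}[\bar\modM]=\bar 0$, i.e.\ $e_\alpha(h(x^{\mathfrak e}_{n_\alpha}))=h(z_\alpha)$ for \emph{every} bimodule homomorphism $h:\modN^{\mathfrak e}_{n_\alpha}\to\modM$. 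The pair $(n_\alpha,z_\alpha+\varphi^{\mathfrak e}_\omega(\modN^{\mathfrak e}_{n_\alpha}))$ ranges over a set of size $\leq\mu<\mu^+$, so two of them agree, say for $\alpha_0<\alpha_1$; write $n$ for the common $n_{\alpha_\ell}$, so $z_{\alpha_0}-z_{\alpha_1}\in\varphi^{\mathfrak e}_\omega(\modN^{\mathfrak e}_n)$. Given $w\in\psi^{\mathfrak e}_n(\modK_{\alpha_0})\subseteq\psi^{\mathfrak e}_n(\modM)$ (inclusion by Lemma \ref{formulas and direct sum}), choose by Lemma \ref{formula vs hom}(3) a homomorphism $h:\modN^{\mathfrak e}_n\to\modM$ with $h(x^{\mathfrak e}_n)=w$; then $w=e_{\alpha_0}(w)=h(z_{\alpha_0})\equiv h(z_{\alpha_1})=e_{\alpha_1}(w)\pmod{\varphi^{\mathfrak e}_\omega(\modM)}$ since $h$ preserves the cpe formula $\varphi^{\mathfrak e}_\omega$ (Lemma \ref{preserving pe formulas}), while $e_{\alpha_1}(w)=h(z_{\alpha_1})\in\varphi^{\mathfrak e}_n(\modM)\cap\modK_{\alpha_1}=\varphi^{\mathfrak e}_n(\modK_{\alpha_1})$ (again Lemma \ref{formulas and direct sum}, and $z_{\alpha_1}\in\varphi^{\mathfrak e}_n(\modN^{\mathfrak e}_n)$ by Definition \ref{subgroups Ln}); hence $w\in\varphi^{\mathfrak e}_n(\modK_{\alpha_1})+\varphi^{\mathfrak e}_\omega(\modM)$. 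The case $\ell=1$ is trivial since $\psi^{\mathfrak e}_n\vdash\varphi^{\mathfrak e}_n$ (Lemma \ref{related to n}), and the ``in particular'' clause follows from Lemma \ref{simplephiequalpsi} together with $\varphi^{\mathfrak e}_\omega\vdash\varphi^{\mathfrak e}_n$.

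Part (i) is the main obstacle, because the smallness hypothesis $|\ringR|+|\ringS|<2^{\aleph_0}$ is now absent, so Lemma \ref{2.9} is unavailable and one must instead exploit the type-omitting content of the nice construction. Suppose $\modM=\bigoplus_{t\in J}\modM_t$, $\mathfrak e\in\callE$, and $T:=\{t\in J:(\forall n)\ \psi^{\mathfrak e}_n(\modM_t)\not\subseteq\varphi^{\mathfrak e}_\omega(\modM_t)\}$ is infinite; recall $\psi^{\mathfrak e}_n(\modM)=\bigoplus_t\psi^{\mathfrak e}_n(\modM_t)$ and $\varphi^{\mathfrak e}_\omega(\modM)=\bigoplus_t\varphi^{\mathfrak e}_\omega(\modM_t)$ by Lemmas \ref{formulas and direct sum} and \ref{pr properties}(5). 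Choosing distinct $t_n\in T$ and, via $\psi^{\mathfrak e}_n(\modM_{t_n})\not\subseteq\varphi^{\mathfrak e}_\omega(\modM_{t_n})$ and Lemma \ref{formula vs hom}(3), bimodule homomorphisms $g_n:\modN^{\mathfrak e}_n\to\modM_{t_n}$ with $g_n(x^{\mathfrak e}_n)\notin\varphi^{\mathfrak e}_{\ell(n)}(\modM_{t_n})$ for suitable $\ell(n)>n$, one thins to an infinite $\mathcal U\subseteq\omega$ so that this family is ``tree-like and divergent'' in the sense required by clause (F) of Definition \ref{w.s. nice definition}.

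Feeding this family through property $(G)_2$ of Definition \ref{nice construction modified}(3) — using the bimodule $\modP^{\mathfrak e}$ and the embeddings ${\bf h}^{\mathfrak e}_n$, ${\bf f}^{\mathfrak e}_n$ of Lemma \ref{Pe bimodules defined} and the fact that there $x\notin\sum_n\Rang({\bf h}^{\mathfrak e}_n)+\varphi^{\mathfrak e}_\omega(\modP^{\mathfrak e})$ — should locate some $\delta\in S$ and $s\in J_\delta$ for which the type $q_s=\{\varphi^{\mathfrak e}_n(y-z_{s,n}):n<\omega\}$ of Definition \ref{w.s. nice definition}(m), which the construction omits by clause (n), is nonetheless realized by an element of $\modM$ manufactured from the decomposition $\bigoplus_{t\in J}\modM_t$ — a contradiction; hence $T$ is finite, which is the assertion of (i), and the ``in particular'' for simple $\mathfrak e$ is again Lemma \ref{simplephiequalpsi} plus $\varphi^{\mathfrak e}_\omega\vdash\varphi^{\mathfrak e}_n$. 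I expect the real work to lie precisely in this last step: arranging the $g_n$ into exactly the configuration the black box is guaranteed to ``catch'' and then exhibiting the realizing element and the attendant contradiction with finiteness of supports in a direct sum; the rest, as sketched above, is essentially bookkeeping together with the cited lemmas.
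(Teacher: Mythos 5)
Your setup and parts (ii) and (iii) are essentially the paper's argument and are fine: for (ii) you use the projections $e_\alpha$, the property $(\Pr 1)^{n_\alpha}_{\beta_\alpha,z_\alpha}$, Lemma \ref{2.9} together with Lemma \ref{gncanbefinitary} to kill the error groups, and a pigeonhole on $(n_\alpha,z_\alpha \bmod \varphi^{\mathfrak e}_\omega)$ — this matches the paper (and your bookkeeping with $e_{\alpha_1}(w)=h(z_{\alpha_1})\in\varphi^{\mathfrak e}_n(\modK_{\alpha_1})$ is, if anything, cleaner); for (iii) the embedding into $\ringdE^{\mathfrak e}$ from Lemma \ref{2.16}(iii) plus the bound $|\modN^{\mathfrak e}_n|\leq|\ringR|+|\ringS|+\aleph_0$ is exactly what the paper does.

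Part (i), however, contains a genuine gap, and it is precisely where you say "I expect the real work to lie." You assume that, absent the hypothesis $|\ringR|+|\ringS|<2^{\aleph_0}$, the $\bar{\mathbb G}$-machinery is unavailable and that one must therefore re-run the black box directly against a family of homomorphisms $g_n:\modN^{\mathfrak e}_n\to\modM_{t_n}$ via $(G)_2$. Neither point is right. The missing idea is to convert the decomposition into an \emph{endomorphism}: assuming infinitely many $t$ violate the conclusion, reindex by $\omega$ and let $\fucF$ be the projection of $\modM$ onto $\bigoplus_{n<\omega}\modM_{2n+1}$. Lemma \ref{2.4a} (the packaged consequence of $(G)_1$) gives $(\Pr 1)^{n(\ast)}_{\alpha,z}[\fucF,\mathfrak e]$, Lemma \ref{groupsGn} produces the compact sequence $\bar{\mathbb G}^\ast$, and — crucially — Lemma \ref{moreonGn}(1) needs only compactness and the direct-sum decomposition, not any cardinality bound: it localizes ${\mathbb G}^\ast_m$ into $\bigoplus_{\ell<k}\modM_\ell+\varphi^{\mathfrak e}_\omega(\modM)$ for some finite $k$. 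Taking $n>n(\ast)+m+k$ and a witness $y_n\in\psi^{\mathfrak e}_n(\modM_n)\setminus\varphi^{\mathfrak e}_\omega(\modM_n)$, one plays the identities $\fucF({\bf h}(x^{\mathfrak e}_n))-{\bf h}(z_n)\in{\mathbb G}^\ast_n$ for homomorphisms ${\bf h}$ landing in $\modM_n\oplus\modM_{n+1}$ against the parity of $\fucF$ to force $y_n\in\varphi^{\mathfrak e}_\omega(\modM)$, a contradiction. Your alternative route through $(G)_2$ alone cannot close this: $(G)_2$ only furnishes an embedding of $\modP^{\mathfrak e}$ compatible with prescribed embeddings ${\bf h}_{\alpha,n}$, while the omitted types $q_s$ of Definition \ref{w.s. nice definition}(m),(n) are built from an endomorphism (via $F^{\mathcal N^\beta}$), so without putting a projection into play there is no omitted type to realize and no contradiction to extract.
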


\begin{proof}
	Let	$\langle \modM_\alpha : \alpha \leq \lambda\rangle$ be a strongly semi-nice construction for
	$\frakss$	where $S\subset S^{\lambda}_{\aleph_0}$ is stationary
	such that $S^{\lambda}_{\aleph_0}\setminus S$ is stationary  as well. Let $\modM:=\modM_{\lambda}$. We show $\modM$ is as required.

	(i). Suppose not. Let
	$\modM=\mathop{{\bigoplus}}\limits_{t\in J}
	\modM_t$ and ${\mathfrak e}\in {\callE}$ be a counterexample  to the claim.
	Without loss of the generality,
	$J=\omega$ and that for each $n<\omega,~ \psi^{\mathfrak e}_n (\modM_n)
	\nsubseteq \varphi^{\mathfrak e}_\omega (\modM_n)$.
	Define ${\bf f}:\modM\to \modM$ in such a way that
	$$
	{\bf f}(x)=\left\{\begin{array}{ll}
	0 &x\in \modM_{2n},\\
	x &x\in \modM_{2n+1}
	\end{array}\right.
	$$
In other words, ${\bf f}$ is the natural projection from $\modM$ onto $\bigoplus\limits_{n<\omega}\modM_{2n+1}$.
	Then for some $n(\ast)<\omega$, $\alpha<\lambda$ and  $ z\in\modL^{\mathfrak e}_{n(\ast)}$
	the property $(\Pr 1)^{n(\ast)}_{\alpha, z}[{\bf {\bf f}},{\mathfrak e}]$
	holds.
	Let $\bar{\mathbb G}^\ast=\langle {\mathbb
		G}^\ast_n:n\geq n(\ast)\rangle$ be a decreasing sequence of additive subgroups of $\varphi^{\mathfrak
		e}_{n(\ast)}(\modM)$ as in Lemma \ref{groupsGn}. In particular,
	\begin{enumerate}
		\item  If $n\geq n(\ast)$, ${\bf h}:\modN^{\mathfrak e}_n\to\modM$ and $z_n:=g_{n(\ast),n}(z)$, then
		${\bf f}({\bf h}(x^{\mathfrak e}_n))-{\bf h}(z_n)\in{\mathbb G}_n^\ast$.
		
		\item If $z^*_\ell \in \mathbb{G}^\ast_\ell,$ for $\ell \geq n(\ast),$ then there exists $z^* \in \mathbb{G}^\ast_{n(\ast)}$
		such that $z^* - \sum\limits_{\ell=n(\ast)}^{n}z_\ell^* \in \varphi^{\mathfrak e}_{n+1}(\modM)$.
	\end{enumerate}
Due to Lemma \ref{moreonGn}(1) we know that there are $k,m<\omega$ such that
	\begin{enumerate}
		\item[(3)] $
		{\mathbb G}^\ast_m\subseteq\mathop{{\bigoplus}}\limits_{\ell<k}\modM_\ell+\varphi^{\mathfrak e}_\omega(\modM).
		$
	\end{enumerate}

	For each $n> n(\ast)+m+k$ pick some $y_n \in \psi^{\mathfrak e}_{n} (\modM_{n})\setminus \varphi_\omega^{\mathfrak e}(\modM_n)$.
	Let also
	${\bf h}_n:\modN^{\mathfrak e}_{n}\to\modM_{n}$ be a bimodule homomorphism such that $y_n={\bf h}_n(x_{n}^{\mathfrak e})$.
		We choose $n$ large enough.
	The assignment $$x\mapsto ({\bf h}_n(x),{\bf h}_{n+1}(g_n(x))	\in \modM_n\oplus\modM_{n+1}\subset\modM$$ defines a
	map ${\bf h}:\modN_{n}\to \modM$. This  unifies ${\bf h}_{n+1} $ and ${\bf h}_{n } $.
By symmetry, we may assume that $n$ is even.
	It then follows from clause (3) that
\[
{\bf h} (z_n)=  ({\bf h}_n(z_n),{\bf h}_{n+1}(z_{n+1}))\in \varphi_\omega^{\mathfrak e}(\modM_n)\subset\varphi_\omega^{\mathfrak e}(\modM)
\]
and
\[
 y_n-{\bf h}(z_{n+1}) = y_n-({\bf h}_{n+1}(z_{n+1}),{\bf h}_{n+2}(z_{n+2})) \in \varphi_\omega^{\mathfrak e}(\modM_n)\subset\varphi_\omega^{\mathfrak e}(\modM).
\]
It follows from the first one that  ${\bf h}_n(z_n)$ and ${\bf h}_{n+1}(z_{n+1})$ are in $\varphi_\omega^{\mathfrak e}(\modM)$.
By repetition,  ${\bf h}_{n+2}(z_{n+2})$ is in $\varphi_\omega^{\mathfrak e}(\modM)$.
We plug these in the second containment to see
$$y_n\in\varphi_\omega^{\mathfrak e}(\modM).$$	
This is a   contradiction that we searched for it.
	
Here, we assume that  $\mathfrak e$ is simple. According to Lemma \ref{simplephiequalpsi},  $\psi^{\mathfrak e}_n(\modM_t)=\varphi^{\mathfrak e}_n(\modM_t)$ for all  $n$ and $t\in J$. Hence for the $n$ as chosen above, we have
	\[
	\varphi^{\mathfrak e}_n(\modM_t)=\varphi^{\mathfrak e}_\omega(\modM_t),
	\]
	as required.
	
	(ii). For each $\alpha<(|\ringR|+|\ringS|+\aleph_0)^+$, let ${\bf f}_\alpha$ be the projection onto $\modK_\alpha$, i.e.,
	$$
	{\bf f}_{\alpha}(x)=\left\{\begin{array}{ll}
	x &x\in \modK_{\alpha},\\
	0 &x\in \modP_{\alpha}
	\end{array}\right.
	$$
	Pick $n_\ast(\alpha)<\omega$,  $ z_\ast(\alpha)\in\modL^{\mathfrak e}_{n_\ast(\alpha)}$ and
	$\beta_\ast(\alpha)$ such that  $(\Pr 1)^{n_\ast(\alpha)}_{\beta_\ast(\alpha),z_\ast(\alpha)}[{\bf f}_\alpha, \mathfrak e]$
	holds. We combine  Lemma  \ref{groupsGn} along with Lemma \ref{moreonGn}(1) to  find $m(\alpha)<\omega$ and a compact and decreasing sequence  $\langle \bar{\mathbb G}^\ast_{\ell}: \ell \geq m(\alpha) \rangle$ of additive subgroups of $\varphi^{\mathfrak
		e}_{n(\ast)}(\modM)$. Furthermore, in the light of Lemma \ref{2.9} we can assume that $\mathbb{G}^\ast_{m(\alpha)} \subseteq \varphi^{\mathfrak e}_\omega(\modM).$ There exists  a stationary set $S$ of $\alpha<(|\ringR|+|\ringS|+\aleph_0)^+$
	such that for some $n<\omega$ and some fixed $z \in \modL^{\mathfrak e}_n$,	and for all $\alpha \in S,$ we have $n_\ast(\alpha)=n$ and  $z_\ast(\alpha)=z$. Let
	$\alpha_0 < \alpha_1$ be in $S$ and $\ell< 2$. It follows that:
	
	\[\begin{array}{ll}
	y\in\psi^{\mathfrak e}_n (\modK_{\alpha_{\ell}})&\Rightarrow \exists {\bf h}:\modN^{\mathfrak e}_n\to  \modK_{\alpha_{\ell}},\quad {\bf h}(x^{\mathfrak e}_n)=y\\
	&\Rightarrow
	{\bf f}(y)-{\bf h}(z_n)\in{\mathbb G}^\ast_n\subset\varphi^{\mathfrak e}_\omega(\modM)\\
	&\Rightarrow  y -{\bf h}(z_n)\in\varphi^{\mathfrak e}_\omega(\modM)\\
	&\Rightarrow y \in\varphi^{\mathfrak e}_n(\modK_{\alpha_{\ell}})+ \varphi^{\mathfrak e}_\omega(\modM).
	\end{array}\]
	We proved that $\psi^{\mathfrak e}_n (\modK_{\alpha_{\ell}}) \subseteq \varphi^{\mathfrak e}_n(\modK_{\alpha_{\ell}})+\varphi^{\mathfrak e}_\omega(\modM)$.

	Now suppose that	${\mathfrak e}$ is simple. In view of Lemma \ref{simplephiequalpsi},$
	\psi^{\mathfrak e}_n (\modK_{\alpha_{\ell}})=\varphi^{\mathfrak e}_n (\modK_{\alpha_{\ell}})$.
	It then follows that
	$$
	\varphi^{\mathfrak e}_n(\modK_{\alpha_\ell})+\varphi^{\mathfrak e}_\omega(\modM)=
	\varphi^{\mathfrak e}_n(\modK_{\alpha_{1-\ell}})+
	\varphi^{\mathfrak e}_\omega(\modM).
	$$
	
	(iii). We apply the notation
introduced in Lemma \ref{2.16}. Recall that
	$E_n$ is defined by the natural image of
 \[\begin{array}{lr}F_n:=
 \{\fucF\lceil(\varphi_n^{\mathfrak e} (\modM_\lambda)/
 \varphi^{\mathfrak e}_\omega (\modM_\lambda):&\fucF\in
 \End(\modM)\mbox{ and
 }(\Pr 1)^n_{\alpha_n(\fucF),z_n(\fucF)}\mbox{ for some }\ \\
 &\alpha_n(F)<\lambda,\ z_n(\fucF)\in \modL^\tr_n\}
 \end{array}\]
in 	$\End^{\bar\varphi,\omega}(\modM)$. Also,
 the
mapping $\fucF\mapsto \bf{h}^n_{z_n(\fucF)}$
induces a homomorphism from
$F_n$
into $\ringdE^{\mathfrak e}_n$ with kernel
$\End^{{\mathfrak e},n}_{<\lambda}(\modM)$.
Since 	$\End^{{\mathfrak e},\omega}(\modM)
=\bigcup\limits_{n<
	\omega}{\bf E}_n$, it is enough to show that
$\{ \bf{h}^n_{z_n(\fucF)}:z_n(\fucF)\in \modL^\tr_n\}$ is of cardinality at most
$\leq |\ringR|+|\ringS|+\aleph_0$. Thus it is enough
 to show that the cardinality of $\modN^{{\mathfrak e}}_n$ is  at most
$\leq |\ringR|+|\ringS|+\aleph_0$. This holds, because
 $\modN^{{\mathfrak e}}_n$ as an $(\ringR,\ringS)$ bimodule is finitely generated.	
\end{proof}

\begin{Remark}
	\label{3.1A}
	If we omit ``$|\ringR|+|\ringS|<2^{\aleph_0}$'', we get by the same proof weaker
	conclusions: with an ``error term'' which is included in a finitely
	generated bimodule.
\end{Remark}
The following is supposed to be used together with any of the later lemmas
here as its conclusion is in  their assumptions.
\begin{Lemma}
	\label{3.2}
 Let $\ringR$ be a ring which is not purely semisimple and let $\ringS:=\ringT:=\langle
		1\rangle_\ringR$. Let ${\cal K}={\cal K}[\ringR,\mu]$
		be the family of $(\leq \mu_1)$ generated and $(\leq\theta)$
		presented bimodules where $\mu_1^\theta \leq \mu$ and let $\modM_*$
		be a bimodule of cardinality $<\mu$. Finally, let
		$\callE:=\callE_{\ringR,\mu}$ be the set of $(<\kappa)$-simple
		non trivial ${\mathfrak e}\in \callE_{(\ringR,\ringS)}$ wherever
		$(||\ringR||+\aleph_0)^{<\kappa}\leq\mu$.  The following assertions are true.
		\begin{enumerate}
			\item[(a)] There is ${\mathfrak e}\in {\callE}$ which is very nice.
			\item[(b)] If $(|\ringR|+\aleph_0)^{\aleph_0}\leq\mu$
			then ${\mathfrak e}({\bar\varphi})
			\in {\callE}$ for every very nice $\bar\varphi$.
			\item[(c)] $\frakss=({\cal K},\modM_*,\callE,\ringR,\ringS,
			\ringT)$ is a non trivial context which is simple.
		\end{enumerate}
	\end{Lemma}

\begin{proof}
 Since $\ringR$ is not purely semisimple, and in the light of Theorem \ref{shelahpure}, we can find a sequence $\bar\varphi=\langle \varphi_n(x): n<\omega \rangle$
	as in Definition \ref{purely semisimpledef}.  Then $\mathfrak e=\mathfrak e(\bar\varphi) \in \callE$ is very nice. This confirms (a).
	Items (b) and (c) are clear.
\end{proof}

\begin{Lemma}
	\label{3.2b}
Suppose $\ringR$ is a ring, $T$ is a complete first order theory of
		$\ringR$-modules which is not superstable, $\ringS=\ringT=
		\langle 1\rangle_\ringR$ and $\mu\geq |\ringR|
		+\kappa$.
		Then  there is a family
		${\cal K}\cup\{\modM_\ast\}$ of $\ringR$-modules with $\mu$ members  such that:
		\begin{enumerate}
			\item[(a)] $\modM_\ast\oplus\mathop{{\bigoplus}}\limits_{t\in I}
			\modM_t$ is model
			of $T$ whenever $\modM_t\in {\cal K}$, moreover $$\modM_\ast \prec_{\mathcal{L}({\tau_{\ringR}})}
			\modM_*\oplus\mathop{{\bigoplus}}\limits_{t \in I} \modM_t.$$
			%\item[(b)] if $(|\ringR|+\aleph_0)^{\aleph_0}<\mu$ then every ${\mathfrak e}={\mathfrak e}(\bar\varphi)$ is in $\calE$,
			
			\item[(b)] For any $\modN\in {\cal K}$ we have $ ||\modN||
			\leq \mu$.
			
			\item[(c)] If $2^{\mu_1}\leq\mu$ and $\mu_1\geq ||\ringR||
			+\kappa$, then every model $\modN$ of $T$ satisfying
			$$\modM_* \prec_{\mathcal{L}({\tau_{\ringR}})} \modM_*\oplus \modN$$ belongs to ${\cal K}$.
			
			%\item[(c)] there is a non trivial ${\mathfrak e}\in \callE$ with
			%			$||N^{\mathfrak e}_n||\leq {\cal K}$
			\item[(d)] Let  $2^{\mu_1}\leq \mu$ and $\mu_1\geq ||\ringR||+
			\aleph_0$. Then  every appropriate \sq\ $\langle \modN_n,
			g_n,x_n:n<\omega\rangle$ with $||\modN_n||\leq
			\mu_1$ belongs to ${\cal K}$.
			\item[(e)]  Let  $\calE$ be  the set of $(<\aleph_0)$-simple
			non trivial ${\mathfrak e}\in \callE_{(\ringR,\ringS)}$ such that each $\modN_n^{\mathfrak e}$ is in $\cal K$.
			Then $\frakss=({\cal K},\modM_*,\callE,\ringR,\ringS,
			\ringT)$ is a $\mu$-context; note that a bimodule for $\frakss$ is
			just an $\ringR$-module.
		\end{enumerate}
\end{Lemma}

\begin{proof}
 Let  $\modM_\ast $
	be any $\aleph_1$-saturated model of
	$T$   of size $\leq \mu$ and set
	$${\mathcal K}:=\{\modN: \modN
	\mbox{ is an $\ringR$--module such that
	} \modM_\ast
	\prec_{{\mathcal L} (\tau_\ringR)}
	\modM_\ast
	\oplus
	\modN \mbox{ and } 2^{\| \modN \|} \leq \mu\}.$$
	It is easily seen that $\cal K$ is as required. Since this has no role in the paper, we leave the routine details to the reader.
\end{proof}

The following easy lemma plays an essential role in the sequel:

\begin{lemma}
	\label{solfree}
	Let  $ \frakss$ be a $\lambda$-context and
	$\ringS$ be a free $\ringT$-module with a base $\{c^\ast_\beta:\beta<\alpha\}$.
	Let $\bar{\varphi}$ be very nice,
	$ \varphi_n=
	(\exists y_0,\ldots, y_{k_n-1}) \varphi'_n$ where
	$$\varphi'_n=\bigwedge\limits^{m_n-1}_{\ell=0}
	[a^n_\ell x-\sum\limits_{i=0}^{k_n-1}
	b^n_{\ell,i} y_i].$$
	Let  ${\mathfrak e}={\mathfrak e}({\bar \varphi})
	\in \calE^\frakss$ and $\modN_n=
	\modN^{\mathfrak e}_n$ (see Definition \ref{c.3}). The following assertions hold:
	\begin{enumerate}
		\item Let $\modN_{n,0}$ be the $\ringR$-submodule of
		$\modN_n$  generated by $\{x,y_i: i<
		k_{m_n-1}\}$.
		Then  $\modN_n$ is the direct sum $\sum\limits_{\beta<\alpha}
		\modN_{n,\beta}$ and $\modN_{n,0}\stackrel{h_\beta}{\cong} \modN_{n,\beta}$, as
		$\ringR$-modules, where $\modN_{n,\beta}$
		is the $\ringR$-module   generated  by $\{x c^\ast_\beta\}\cup \{y_ic^\ast_\beta: i<k_{m_n-1}\}$ freely except the
		equations $\varphi'_n$ and  $h_0$ is the identity.
		\item As a $\ringT$-module, $\varphi_n(\modN_n)/\varphi_\omega(\modN_n)=\sum\limits_{\beta<
			\alpha}\varphi_n(\modN_{n,\beta})/\varphi_\omega(\modN_{n,
			\beta})$.
		\item For any $z\in \modL^{{\mathfrak e}}_n$, there
		are $z_\beta\in
		\modN_{n,0} \cap \modL^{{\mathfrak e}}_n\cap \varphi^{\mathfrak e}_n
		(\modN_{n,0})$
		such that $z=\sum\limits_{\beta<\alpha} h_\beta(z_\beta)$. Also,
		${\bf h}_z^{\mathfrak e, n}=\sum\limits_{\beta< \alpha}{\bf h}_{h_\beta(z_\beta)}^{\mathfrak e, n}$. In particular,  $z$ is $n$-nice if and only if each $z_\beta$ is
		$n$-nice.
		\item The rings $\ringde^{\mathfrak e}_n$ and  $\ringS$
		generate $\ringdE^{\mathfrak e}_n$. In fact each element of $\ringdE^{\mathfrak e}_n$ has the form
		$\sum\limits_{\beta<\alpha}x_\beta b_\beta$, where $x_\beta\in \ringde^{\mathfrak e}_n$.  Also we have
		$\ringdE^{\mathfrak e}_n=\ringde^{\mathfrak e}_n\mathop{{\otimes}}\limits_\ringT \ringS$.
		\item Let $\mathcal{I}_n$ be a maximal ideal of $\ringde^{\mathfrak e}_n$.
		Then	${\bf D}_n:=\ringde^{\mathfrak e}_n/\mathcal{I}_n$  is a field.
		\item Let   $\ringT':=\ringT/(\mathcal{I}_n\cap \ringT)$,
		$\ringS':={\bf S}/(\mathcal{I}_n\cap \ringT)$ and let $\modM$   be from a strongly semi-nice construction.
		Then  any set of equations on $\ringS$ which has a solution in
		$\End_\ringR(\modM)$
		has a solution in ${\bf D}_n\mathop{{\otimes}}\limits_{\ringT'} \ringS'$.
	\end{enumerate}
\end{lemma}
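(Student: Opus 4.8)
\textbf{Proof plan for Lemma \ref{solfree}(6).}
The plan is to chain together the machinery developed in Sections \ref{Any endomorphism is somewhat definable} and \ref{More specific rings and families}, transporting a solution in $\End_\ringR(\modM)$ first into the ring $\ringdE^{\mathfrak e}$ (via Lemma \ref{2.16}), then into $\ringdE^{\mathfrak e}_n$ for some fixed finite level $n$, then via the tensor decomposition from clause (4) into $\ringde^{\mathfrak e}_n \otimes_\ringT \ringS$, and finally reduce modulo the maximal ideal $\mathcal I_n$ to land in ${\bf D}_n \otimes_{\ringT'} \ringS'$. Concretely: suppose $\Sigma$ is a finite system of equations over $\ringS$ (in the usual noncommutative polynomial sense, with coefficients written on the appropriate side) with a solution $\bar{\bf f} = ({\bf f}_1, \ldots, {\bf f}_m)$ in $\End_\ringR(\modM)$, where $\modM = \modM_\kappa$ comes from a strongly semi-nice construction. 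Since the scalar action of $\ringS$ on $\modM$ is by $\ringR$-endomorphisms (Lemma \ref{pr rings}(4)), each coefficient $s \in \ringS$ appearing in $\Sigma$ acts as a specific element of $\End_\ringR(\modM)$, so the statement ``$\bar{\bf f}$ solves $\Sigma$'' is a genuine equality in $\End_\ringR(\modM)$; this equality is preserved by any ring homomorphism out of $\End_\ringR(\modM)$ that sends the distinguished image of $\ringS$ to the distinguished image of $\ringS$ in the target.

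The key steps, in order. First I would fix one ${\mathfrak e} = {\mathfrak e}(\bar\varphi) \in \calE^\frakss$ (which exists and is simple by hypothesis, since $\bar\varphi$ is very nice). By Lemma \ref{2.16}(vi) and (iii), there is a natural ring homomorphism $\End_\ringR(\modM) \to \End^{{\mathfrak e},\omega}(\modM) \to \End^{{\mathfrak e},\omega}(\modM)/\End^{{\mathfrak e},\omega}_{<\lambda}(\modM) \hookrightarrow \ringdE^{\mathfrak e}$, and this composite carries the image of $\ringS$ in $\End_\ringR(\modM)$ to the canonical image of $\ringS$ in $\ringdE^{\mathfrak e}$ (Lemma \ref{pr rings}(4), compatibly across the direct limit). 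Applying it to $\bar{\bf f}$ yields a solution $\bar h$ of $\Sigma$ in $\ringdE^{\mathfrak e}$. Second, since $\Sigma$ is finite and $\ringdE^{\mathfrak e} = \varinjlim_n \ringdE^{\mathfrak e}_n$ (Lemma \ref{pr rings}(3)), the finitely many entries of $\bar h$ together with the finitely many coefficients of $\Sigma$ already live in $\ringdE^{\mathfrak e}_n$ for some $n < \omega$, and the equations of $\Sigma$ hold there (possibly after increasing $n$, since only finitely many of them must become valid and the transition maps are ring homomorphisms); moreover the image of $\ringS$ in $\ringdE^{\mathfrak e}_n$ is the canonical one. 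Third, by clause (4) of this very lemma, $\ringdE^{\mathfrak e}_n = \ringde^{\mathfrak e}_n \otimes_\ringT \ringS$ with $\ringS$ embedded in the standard way; thus $\Sigma$ has a solution in $\ringde^{\mathfrak e}_n \otimes_\ringT \ringS$. Fourth, reduce modulo $\mathcal I_n$: the quotient map $\ringde^{\mathfrak e}_n \to \ringde^{\mathfrak e}_n/\mathcal I_n = {\bf D}_n$ induces a ring homomorphism $\ringde^{\mathfrak e}_n \otimes_\ringT \ringS \to {\bf D}_n \otimes_{\ringT'} \ringS'$ (where $\ringT' = \ringT/(\mathcal I_n \cap \ringT)$ and $\ringS' = \ringS/(\mathcal I_n \cap \ringT)\ringS$, and one uses that the base-change is well-defined because $\mathcal I_n \cap \ringT$ acts trivially after reduction); pushing the solution through gives a solution of $\Sigma$ in ${\bf D}_n \otimes_{\ringT'} \ringS'$, as desired.

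The main obstacle I anticipate is bookkeeping about the images of $\ringS$: one must check at every stage that the fixed copy of $\ringS$ (the one the equations of $\Sigma$ are written over) is carried to the next stage's canonical copy of $\ringS$, so that the equations remain the same equations and do not get silently modified by a coefficient shift. This is exactly the content of the compatibility of the maps $\ringS \to \ringde^{\mathfrak e}_n$, $\ringS \to \ringdE^{\mathfrak e}_n$ and $\ringS \to \End_\ringR(\modM)$ established in Lemmas \ref{pr rings}(4), \ref{2.16}(vi) and clause (4) above, but verifying it requires tracing through the direct-limit construction of $\ringdE^{\mathfrak e}$ and the tensor identification carefully. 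A secondary, more routine point is to make precise what ``a set of equations on $\ringS$ has a solution in a ring $A$ containing (a copy of) $\ringS$'' means for noncommutative $\Sigma$ — i.e. that the unknowns range over $A$ and coefficients are interpreted via the fixed embedding $\ringS \hookrightarrow A$ — and then to observe that this notion is visibly preserved by any ring homomorphism $A \to A'$ commuting with the embeddings of $\ringS$; with that convention in place, all four transport steps are instances of the same trivial observation and the lemma follows.
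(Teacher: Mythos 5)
Your proposal is correct and follows essentially the same route as the paper: transport the solution through the ring homomorphism $\End_\ringR(\modM)\to \End^{{\mathfrak e},\omega}(\modM)/\End^{{\mathfrak e},\omega}_{<\lambda}(\modM)\hookrightarrow \ringdE^{\mathfrak e}$ furnished by Lemma \ref{2.16} (which is where the strongly semi-nice hypothesis enters, via $(\Pr 1)^n_{\alpha,z}$), land in $\ringdE^{\mathfrak e}_n$ for a suitable $n$, apply the tensor decomposition of clause (4), and reduce modulo $\mathcal I_n$. The only (cosmetic) difference is that you pass to the direct limit $\ringdE^{\mathfrak e}$ and then descend to a finite level by finiteness of $\Sigma$, whereas the paper fixes the level $n$ from the outset using the $(\Pr 1)$ property attached to the given solution; both come to the same thing since the transition maps are embeddings.
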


\begin{proof}
	$(1)$. Since 	$\ringS$ is free as a $\ringT$-module with the base $\{c^\ast_\beta:\beta<\alpha\}$, we have	$\ringS=\bigoplus_{\beta<\alpha}\ringT c^\ast_\beta$.
	Now, we apply this through the following natural identifications of  $\ringR$-modules:
	
		\[\begin{array}{ll}
	\ringR x \ringS\oplus\bigoplus_{i<k_m-1}\ringR y_i\ringS&=\bigoplus_{\beta<\alpha}\ringR xc^\ast_\beta\oplus\bigoplus_{i<k_m-1}\bigoplus_{\beta<\alpha}\ringR y_ic^\ast_\beta\\
	&=\bigoplus_{\beta<\alpha}\left(\ringR xc^\ast_\beta\oplus\bigoplus_{i<k_m-1}\ringR y_ic^\ast_\beta\right).
	\end{array}\]

It turns out  from the previous displayed identification that  $$\modN_n\cong\bigoplus_{\beta<\alpha}
	\modN_{n,\beta},$$ as an $\ringR$-module.
	Also for any $y\in \modN_{n,0}$, we set $h_\beta(y):=yc_\beta$. This yields an $\ringR$-module isomorphism $\modN_{n,0}\stackrel{h_\beta}{\cong} \modN_{n,\beta}$.
	
	$(2)$. We apply Lemma
	\ref{pr properties}
	along with (1) to conclude that $\varphi_n(\modN_n)=\bigoplus_{\beta<
		\alpha}\varphi_n(\modN_{n,\beta})$. Consequently, $\varphi_n(\modN_n)/\varphi_\omega(\modN_n)=\bigoplus_{\beta<
		\alpha}\varphi_n(\modN_{n,\beta})/\varphi_\omega(\modN_{n,
		\beta})$.
	
	$(3)$.  Let $z\in \modL^{{\mathfrak e}}_n$. Then,
	$$z\in\varphi_n(\modN_n)=\bigoplus_{\beta<
		\alpha}\varphi^{\mathfrak e}_n(\modN_{n,\beta})\cong\bigoplus_{\beta<
		\alpha}h_\beta''(\varphi^{\mathfrak e}_n(\modN_{n,0})).
	$$	Let $z_\beta\in
	\modN_{n,0} $ be such that
	$z=\sum\limits_{\beta<\alpha} h_\beta(z_\beta)$. It is evident that $z_\beta\in
	\modN_{n,0} \cap \modL^{\mathfrak e}_n\cap \varphi^{\mathfrak e}_n
	(\modN_{n,0})$.
	
	Now suppose that ${\bf g}: \modN_n \to \modM$ is a bimodule homomorphism. Then
	\[\begin{array}{ll}
	{\bf h}_z^{\mathfrak e, n}({\bf g}(x_n)+\varphi_\omega(\modM))&={\bf g}(z)+\varphi_\omega(\modM)\\
	&=\left(\sum\limits_{\beta<\alpha}{\bf g}(h_\beta(z_\beta))\right) + \varphi_\omega(\modM)\\
	&=\sum\limits_{\beta<\alpha} \left({\bf g}(h_\beta(z_\beta)) + \varphi_\omega(\modM)\right)\\
	&=\sum\limits_{\beta<\alpha}{\bf h}_{h_\beta(z_\beta)}^{\mathfrak e, n}({\bf g}(x_n)+\varphi_\omega(\modM)).
	\end{array}\]
	It follows that ${\bf h}_z^{\mathfrak e, n}=\sum\limits_{\beta< \alpha}{\bf h}_{h_\beta(z_\beta)}^{\mathfrak e, n}$.

	$(4)$. 	 Let  $x\in \ringde^{\mathfrak e}_n$   and  $s\in \ringS$. Then $xs\in \ringdE^{\mathfrak e}_n$. This implies
	the existence of  a $\ringT$-linear map $\ringde^{\mathfrak e}_n\times\ringS\to \ringdE^{\mathfrak e}_n$.
	By the  universal property of tensor products,
	there is a map  $f:\ringde^{\mathfrak e}_n\otimes_{\ringT}\ringS\to \ringdE^{\mathfrak e}_n$.
	Now, let $z\in\ringdE^{\mathfrak e}_n$. By clause (3), there are
	$z_\beta\in\ringde^{\mathfrak e}_n$ and $s_\beta\in \ringS$ such that
	$z=\sum\limits_\beta z_\beta s_\beta$. Moreover,
	by its proof, we know that such a presentation is unique.
	This shows that $f$ is an isomorphism. Up to this identification,
	$\ringdE^{\mathfrak e}_n=\ringde^{\mathfrak e}_n\mathop{{\otimes}}\limits_\ringT \ringS$.
	Thanks to Lemma \ref{pr rings} we know the rings $\ringde^{\mathfrak e}_n$ and  $\ringS$   commute with each other.

	$(5)$. As  $\mathcal{I}_n$ is a maximal ideal, it is clear that  ${\bf D}_n$  is a division ring. Since the ring $\ringde^{\mathfrak e}_n$ is commutative (see Lemma \ref{pr rings}(2)) we deduce that ${\bf D}_n$ is a field.

	$(6)$. Let
	\begin{enumerate}
		\item[$(\ast)_1$] \qquad\qquad\qquad \qquad\qquad\qquad $\sum_js_{ij}X_i^j=0,$
	\end{enumerate} be a system of polynomial
	equations with parameters $s_{ij}\in S$ and indeterminates $\{X_i\}$. Suppose
	these equations have a solution $\fucF\in\End(\modM)$. This means that
	\begin{enumerate}
		\item[$(\ast)_2$] \qquad\qquad\qquad \qquad\qquad\qquad $\sum_js_{ij}\fucF^{j}=0.$
	\end{enumerate}
	There are
	$z_n(\fucF)\in \modL^{\mathfrak e}_n$ and $ \alpha_n(\fucF)<\lambda$ such that
	$(\Pr 1)^n_{\alpha_n(\fucF),z_n(\fucF)}[\fucF,{\mathfrak e}]$.
	Let
	$f_n:=\hat{{\bf f}}_n$.
	Then 	
	\begin{enumerate}
		\item[$(\ast)_3$] \qquad\qquad\qquad \qquad\qquad\qquad $\sum_js_{ij}f^{j}_n=0.$
	\end{enumerate}
	Recall from Lemma \ref{2.16}(v) that the natural
	mapping $\varrho_n: \hat{\fucF}_n\mapsto {\bf h}^n_{z_n(\fucF)}$
	is a homomorphism from
	\[\begin{array}{lr}
	\{\hat{\fucF}_n:&\fucF\in
	\End(\modM)\mbox{ and
	}(\Pr 1)^n_{\alpha_n(\fucF),z_n(\fucF)}\mbox{ holds for some }\ \\
	&\alpha_n(\fucF)<\lambda,\ z_n(\fucF)\in \modL^\tr_n\}
	\end{array}\]
	into $\ringdE^{\mathfrak e}_n$ with kernel included in
	$\End^{\mathfrak e,n}_{<\lambda}(\modM)$.
	Let
	$$\pi:\End^{{\mathfrak e},n}(\modM)\to\frac{\End^{{\mathfrak e},n}(\modM)}{\End^{{\mathfrak e},n}_{<\lambda}(\modM)}$$
	be the canonical map and let   $g_n=\pi({\bf h}^n_{z_n(\fucF)})$.
	Applying $\pi \circ \varrho_n$ to both sides of $(\ast)_3$, we get
	\begin{enumerate}
		\item[$(\ast)_4$] \qquad\qquad\qquad \qquad\qquad\qquad	$\sum_js_{ij}g^{j}_n=0.$
	\end{enumerate}
	Since there is an embedding
	$$\rho_n:	\frac{\End^{{\mathfrak e},n}(\modM)}{\End^{{\mathfrak e},n}_{<\lambda}(\modM)}\hookrightarrow\ringdE^{\mathfrak e}_n,$$
	by setting $e_n:=\rho_n(g_{n}),$ we have
	\begin{enumerate}
		\item[$(\ast)_5$] \qquad\qquad\qquad \qquad\qquad\qquad	$\sum_js_{ij}e^{j}_n=0$.
	\end{enumerate}

In view of (4) we see that	$\ringdE^{\mathfrak e}_n=\ringde^{\mathfrak e}_n\mathop{{\otimes}}\limits_\ringT \ringS$.
	Let $$\sigma:\ringde^{\mathfrak e}_n\mathop{{\otimes}}\limits_\ringT \ringS\to   {\bf D}_n\mathop{{\otimes}}\limits_{\ringT'} \ringS'$$
	be the natural map induced by $\ringde^{\mathfrak e}_n\twoheadrightarrow{\bf D}_n$ and $\ringS\twoheadrightarrow\ringS'$.
	Set $t_n:=\sigma(e_n)$. By applying $\sigma$ to $(\ast)_5$
	we have
	\begin{enumerate}
		\item[$(\ast)_6$] \qquad\qquad\qquad \qquad\qquad\qquad	$\sum_js_{ij}t^{j}_n=0$.
	\end{enumerate}
	This essentially says that the polynomial  equations from $(\ast)_1$ with parameters in $S$
	have a solution in ${\bf D}_n\mathop{{\otimes}}\limits_{\ringT'} \ringS'$ as well.
	This is what we want to prove.
\end{proof}

In what follows we will use the following two consequences
of Lemma \ref{solfree}:
\begin{corollary}
	\label{3.5} Suppose that the following three items holds:
	\begin{enumerate}
		\item[(a)]   $\ringR$ is a ring   which is not pure semisimple and let $\ringT$ be the subring of  $\ringR$  generated by $1$,
i.e.,  $\ringT\cong {\mathbb Z}/ n{\mathbb Z}$, where $n:=\Char(\ringR)$  which is not
		necessarily prime.
		\item[(b)]  $\ringS$ is a ring containing  $\ringT$ such that $(\ringS,+)$ is a free
		$\ringT$-module and suppose that for every $s\in \ringS\setminus \{0_\ringS\}$
		for some $\modN\in {\cal K}\cup \{\modM_*\}$  we have
		$\modN s \not= \{0_\modN\}$, and
		\item[(c)] $\lambda=\cf (\lambda)>||\ringR||+||\ringS||+\aleph_0$
		and $\alpha<\lambda\Rightarrow |\alpha|^{\aleph_0}<\lambda$.
	\end{enumerate}
	Then  we can find an $\ringR$-module $\modM$ of \power\
	$\lambda$, and a
	homomorphism $\bf{h}$ from $\ringS$ into $\End(\modM)$ such that:
	\begin{enumerate}
		\item[(d)] $\Ker({\bf h})=\{0\}$.
		\item[(e)] If $\Sigma$  is a set of equations with parameters in
		$\ringS$ such that ${\bf h}(\Sigma)$ is solvable in
		$\End_\ringR(\modM)$,  then
		%\[\begin{array}{l}
		%\modP>0\quad\Rightarrow\quad \fieldD
		%\mbox{ of characteristic a prime dividing }\modP,\\
		%\modP=0\quad\Rightarrow\quad \fieldD
		%\mbox{ of characteristic zero, or prime,}
		% \end{array}\]
 $\Sigma$ is solvable in ${\bf D}\otimes \ringS$ for some field ${\bf D}$.
		\item[(f)] If $s\in \ringS\setminus\{0_{\ringS}\}$ and
		 $\modN\in{\cal K}$ is such that $\modN s \not= \{0_\modN\}$, then
		the image of $\modM$ under
		${\bf h}(s)$  has cardinality $\lambda$.
	\end{enumerate}
\end{corollary}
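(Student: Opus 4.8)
The plan is to realise $\ringS$ concretely as the right multiplications on a sufficiently generic $(\ringR,\ringS)$-bimodule and then to extract clauses (d)--(f) from the structure theory of the previous sections. \emph{Setting up the context.} Put $\ringT=\langle 1\rangle_\ringR$, so that $\ringT\subseteq\Cent(\ringR)\cap\Cent(\ringS)$ since $1_\ringS\in\Cent(\ringS)$. Let $\cal K$ be the family of finitely generated, finitely presented $(\ringR,\ringS)$-bimodules of cardinality $\le\|\ringR\|+\|\ringS\|+\aleph_0$, let $\modM_\ast$ be as in hypothesis (b) (for instance the zero bimodule, noting that $\ringS$ being $\ringT$-free forces the free bimodule $\ringR\otimes_\ringT\ringS\in\cal K$ to witness (b) for every $s\ne 0$), and let $\callE$ consist of the simple non-trivial $\mathfrak e\in\callE_{(\ringR,\ringS)}$ all of whose $\modN^{\mathfrak e}_n$ lie in $\cal K$. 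Since $\ringR$ is not pure semisimple, Theorem \ref{shelahpure} together with Lemma \ref{c.2} yields a very nice sequence $\bar\varphi$, whence $\mathfrak e(\bar\varphi)\in\callE$ and $\callE\ne\emptyset$; arguing exactly as in Lemma \ref{3.2}, only now with the given $\ringS$ in the place of $\ringT$, $\frakss=(\cal K,\modM_\ast,\callE,\ringR,\ringS,\ringT)$ is a non-trivial, simple $\lambda$-context. Keeping $\ringS$ a \emph{free} $\ringT$-module is precisely what keeps us inside the hypotheses of Lemma \ref{solfree}.

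\emph{Building $\modM$.} Hypotheses (c) give $\lambda=\cf(\lambda)$ regular and uncountable with $|\alpha|^{\aleph_0}<\lambda$ for all $\alpha<\lambda$, which forces $\lambda^{\aleph_0}=\lambda$, and $\lambda>\|\ringR\|+\|\ringS\|+\aleph_0+\|\modM_\ast\|\ge\|\frakss\|$. Hence, as in the proof of Lemma \ref{3.1} (resting on Theorem \ref{nice construction lemma}), there is a strongly semi-nice construction $\langle\modM_\alpha:\alpha\le\lambda\rangle$ for $\frakss$ over a stationary $S\subseteq S^\lambda_{\aleph_0}$ with stationary complement; set $\modM:=\modM_\lambda$. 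Then $\modM\geq_{\aleph_0}\modM_\ast$ and $\|\modM\|=\lambda$; moreover, by clause (E)(e) of Definition \ref{w.s. nice definition} together with Lemma \ref{properties of w.s.n.c}(1), for every $\modN\in\cal K$ the module $\modM$ contains, as a direct summand, a direct sum $\bigoplus_{i<\lambda}\modN_i$ in which each $\modN_i$ is a copy of $\modN$ as a bimodule. Define $\mathbf h\colon\ringS\to\End_\ringR(\modM)$ by $\mathbf h(s)(x)=xs$; the bimodule identity $(rx)s=r(xs)$ makes each $\mathbf h(s)$ an $\ringR$-endomorphism, and $\mathbf h$ is additive with $\mathbf h(1_\ringS)=\mathrm{id}$, so $\mathbf h$ is a ring homomorphism (reading $\End_\ringR(\modM)$ with the opposite composition if required).

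\emph{Verifying (d)--(f).} For (d), if $\mathbf h(s)=0$ then $\modM s=\{0\}$; since $\modM_\ast=\modM_0\subseteq\modM$ and $\modM$ contains an isomorphic copy of every $\modN\in\cal K$, this forces $\modM_\ast s=\{0\}$ and $\modN s=\{0\}$ for all $\modN\in\cal K$, so $s=0$ by hypothesis (b). For (e), a system $\Sigma$ of equations over $\ringS$ whose image under $\mathbf h$ is solvable in $\End_\ringR(\modM)$ is, by Lemma \ref{solfree}(6) applied to the $\modM$ just built, solvable in $\mathbf D_n\otimes_{\ringT'}\ringS'$, where $\mathbf D_n=\ringde^{\mathfrak e}_n/\mathcal I_n$ is a field (Lemma \ref{solfree}(5)) and $\ringT',\ringS'$ are the indicated quotients; since $\ringS$ is $\ringT$-free and $\ringT=\langle1\rangle_\ringR$, the ring $\mathbf D_n\otimes_{\ringT'}\ringS'$ is of the form $\mathbf D\otimes\ringS$ for the field $\mathbf D:=\mathbf D_n$, which is (e). For (f), let $s\ne 0$ and $\modN\in\cal K$ with $\modN s\ne\{0\}$; right multiplication by $s$ maps the direct summand $\bigoplus_{i<\lambda}\modN_i\subseteq\modM$ onto $\bigoplus_{i<\lambda}(\modN_i s)$, again a direct sum, and each $\modN_i s\ne\{0\}$, so $|\mathbf h(s)(\modM)|\ge\lambda$; since also $|\mathbf h(s)(\modM)|\le\|\modM\|=\lambda$, it is exactly $\lambda$.

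The main obstacle is the first step: verifying that the concrete $\frakss$ above really satisfies every requirement needed to invoke Theorem \ref{nice construction lemma}, Lemma \ref{3.1}, Lemma \ref{2.16} and Lemma \ref{solfree} --- non-triviality and simplicity of $\frakss$, the $\ringT$-freeness of $\ringS$ so that Lemma \ref{solfree}'s analysis of $\ringdE^{\mathfrak e}_n=\ringde^{\mathfrak e}_n\otimes_\ringT\ringS$ applies, and that the strongly semi-nice construction genuinely produces the $\lambda$ mutually independent copies of each $\cal K$-bimodule used in (f). A secondary bookkeeping matter is the identification in (e) of $\mathbf D_n\otimes_{\ringT'}\ringS'$ with the form $\mathbf D\otimes\ringS$ as stated. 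Once the context is fitted correctly, each of (d)--(f) takes only a few lines.
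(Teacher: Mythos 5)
Your proof is correct and follows the route the paper intends: the paper gives no separate proof of Corollary \ref{3.5}, presenting it as an immediate consequence of Lemma \ref{solfree} applied to a strongly semi-nice construction for a context built from a very nice $\bar\varphi$, with $\mathbf h$ the right-multiplication map; clauses (d) and (f) come from hypothesis (b) and clause (E)(e) of Definition \ref{w.s. nice definition} exactly as you argue, and clause (e) is Lemma \ref{solfree}(5)--(6). Your filling-in of the omitted details (non-triviality of the context via the $\ringT$-freeness of $\ringS$ as in Lemma \ref{solfree}(1), the $\lambda$-many independent copies of each $\modN\in\mathcal K$, and the identification $\mathbf D_n\otimes_{\ringT'}\ringS'\cong\mathbf D\otimes\ringS$) matches how the paper uses the corollary in the proofs of Theorems \ref{3.7} and \ref{4.7}.
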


\begin{corollary}
	\label{3.6}
	Suppose $\ringS$ is a ring extending $\mathbb Z$ such that $(\ringS,+)$ is free,
and let $\ringR$ be a ring which is not pure semisimple. Let   ${\bf D}$ be a
field such that  $p:=\Char({\bf D})|\Char(\ringR)$  and set
${\mathbb Z}_p:={\mathbb Z}/ p{\mathbb Z}$ if $p>0$,
and  ${\mathbb Z}_p:={\mathbb Z}$ otherwise.
Let $\Sigma$ be a set of
equations over $\ringS$ which is not solvable in ${\bf D}\mathop{{\otimes}}
_{{\mathbb Z}_p}(\ringS/ p\ringS)$.	Then for $\modM$ which is strongly nicely constructed,
$\Sigma$ is not
solvable in $\End(\modM)$.
\end{corollary}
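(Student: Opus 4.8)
The plan is to reduce Corollary \ref{3.6} to Corollary \ref{3.5} (equivalently, directly to Lemma \ref{solfree}(6)), using $\ringT=\langle 1\rangle_{\ringZ}=\ringZ$ and the freeness of $(\ringS,+)$ as a $\ringZ$-module as the hypothesis that makes clause (b) of Corollary \ref{3.5} available. First I would set up the context: let $\ringR$ be the given ring which is not pure semisimple, put $\ringT=\ringZ$ (the subring generated by $1$ -- note $\Char(\ringR)$ may be $0$ here, which is the case $\ringT\cong\ringZ$), and let $\ringS$ be as given, so that $(\ringS,+)$ is a free $\ringT$-module. By Lemma \ref{3.2}, the data ${\cal K}={\cal K}[\ringR,\mu]$, $\modM_\ast$, and $\callE=\callE_{\ringR,\mu}$ (for $\mu$ chosen large enough, e.g.\ $\mu=\lambda$ where $\lambda$ is a regular cardinal with $\lambda=\lambda^{\aleph_0}>\|\ringR\|+\|\ringS\|+\aleph_0$ and $|\alpha|^{\aleph_0}<\lambda$ for $\alpha<\lambda$) give a non-trivial simple context $\frakss=({\cal K},\modM_\ast,\callE,\ringR,\ringS,\ringT)$; fix a very nice $\bar\varphi$ and the associated simple ${\mathfrak e}={\mathfrak e}(\bar\varphi)\in\callE^\frakss$ via Lemma \ref{3.2}(a) (and (b)). Then invoke Theorem \ref{nice construction lemma}(2) to obtain a nice construction for $(\lambda,\frakss,S,\kappa)$ with $\kappa=\lambda$; in fact I would want a \emph{strongly} semi-nice construction, so I would combine this with Lemma \ref{3.1} (whose hypotheses are met by the choice of $\lambda$), producing the bimodule $\modM=\modM_\kappa$ of cardinality $\lambda$ which is ``strongly nicely constructed'' in the sense needed by Lemma \ref{solfree}(6).

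The heart of the argument is then an application of Lemma \ref{solfree}(6). Suppose, for contradiction, that the set of equations $\Sigma$ over $\ringS$ \emph{is} solvable in $\End_{\ringR}(\modM)$. Since ${\mathfrak e}$ is simple and $\modM$ is strongly nicely constructed, Lemma \ref{solfree}(6) applies: fixing any maximal ideal ${\cal I}_n$ of $\ringde^{\mathfrak e}_n$ and setting ${\bf D}_n=\ringde^{\mathfrak e}_n/{\cal I}_n$ (a field, by Lemma \ref{solfree}(5), using commutativity of $\ringde^{\mathfrak e}_n$ from Lemma \ref{pr rings}(2)), $\ringT'=\ringT/({\cal I}_n\cap\ringT)$, $\ringS'=\ringS/({\cal I}_n\cap\ringT)\ringS$, we conclude that $\Sigma$ has a solution in ${\bf D}_n\otimes_{\ringT'}\ringS'$. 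The remaining work is to identify this ring with $({\bf D}_n)\otimes_{\ringZ_p}(\ringS/p\ringS)$ for the appropriate characteristic $p$. Here $p:=\Char({\bf D}_n)$; since ${\bf D}_n$ is a quotient of $\ringde^{\mathfrak e}_n$ which is a $\ringT=\ringZ$-algebra, the image of $\ringZ$ in ${\bf D}_n$ is $\ringZ/p\ringZ$ if $p>0$ and $\ringZ$ if $p=0$, i.e.\ exactly $\ringZ_p$ in the notation of the statement; moreover $p\mid\Char(\ringR)$ because $\ringde^{\mathfrak e}_n$ is built from $\ringR$-modules so $\Char(\ringR)\cdot 1=0$ in $\ringde^{\mathfrak e}_n$ and hence in ${\bf D}_n$. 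Then $\ringT'=\ringZ_p$ and $\ringS'=\ringS/({\cal I}_n\cap\ringZ)\ringS$; I would check that $({\cal I}_n\cap\ringZ)\ringS=p\ringS$ (using that $(\ringS,+)$ is $\ringZ$-free, so $\ringS/p\ringS$ is the honest quotient and no extra relations creep in), so ${\bf D}_n\otimes_{\ringT'}\ringS'={\bf D}_n\otimes_{\ringZ_p}(\ringS/p\ringS)$. Taking ${\bf D}={\bf D}_n$ gives a field of characteristic $p$ with $p\mid\Char(\ringR)$ in which $\Sigma$ is solvable -- directly contradicting the hypothesis of Corollary \ref{3.6} that $\Sigma$ is \emph{not} solvable in ${\bf D}\otimes_{\ringZ_p}(\ringS/p\ringS)$ for such ${\bf D}$. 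This contradiction shows $\Sigma$ is not solvable in $\End(\modM)$.

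The step I expect to be the main obstacle is the bookkeeping in the previous paragraph: verifying that ${\bf D}_n\otimes_{\ringT'}\ringS'$ is \emph{literally} of the form ${\bf D}\otimes_{\ringZ_p}(\ringS/p\ringS)$ rather than merely a quotient or a related ring. Two points need care. First, one must be sure the freeness of $(\ringS,+)$ over $\ringZ$ is genuinely used to control $\ringS'$: without it, $({\cal I}_n\cap\ringZ)\ringS$ could differ from $p\ringS$ or the tensor product could acquire torsion phenomena; with freeness, $\ringS/p\ringS\cong\bigoplus_{\beta}(\ringZ/p\ringZ)c^\ast_\beta$ cleanly, matching the decomposition $\ringdE^{\mathfrak e}_n=\ringde^{\mathfrak e}_n\otimes_\ringT\ringS$ from Lemma \ref{solfree}(4). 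Second, I must make sure the characteristic divisibility $p\mid\Char(\ringR)$ really does hold in all cases, including $\Char(\ringR)=0$ (then the condition $p\mid 0$ is vacuous, consistent with $\ringZ_p=\ringZ$ being allowed and also with $p>0$ being allowed since $\ringde^{\mathfrak e}_n$ may have a maximal ideal of positive residue characteristic). A secondary, more routine obstacle is checking that the hypotheses of all the cited lemmas -- in particular the cardinal arithmetic needed for Theorem \ref{nice construction lemma}, Lemma \ref{3.1}, and Lemma \ref{3.2} -- can be met simultaneously by a single choice of $\lambda$ (and $\mu$); this is standard since the conditions $\lambda=\lambda^{\aleph_0}$, $\lambda$ regular, $|\alpha|^{\aleph_0}<\lambda$, and $\lambda>\|\ringR\|+\|\ringS\|+\aleph_0$ are jointly satisfiable (e.g.\ $\lambda=(2^{\theta})^+$ for $\theta\geq\|\ringR\|+\|\ringS\|+\aleph_0$ with $\theta^{\aleph_0}=\theta$, or any $\lambda=\beth_{\delta+1}$ of suitable cofinality).
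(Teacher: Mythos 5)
Your proposal is correct and follows exactly the route the paper intends: the paper states Corollary \ref{3.6} (together with Corollary \ref{3.5}) as an immediate consequence of Lemma \ref{solfree}, and your argument is precisely the unpacking of Lemma \ref{solfree}(6) plus the identification $\ringT'=\mathbb{Z}_p$, $\ringS'=\ringS/p\ringS$ and ${\bf D}_n\otimes_{\ringT'}\ringS'={\bf D}_n\otimes_{\mathbb{Z}_p}(\ringS/p\ringS)$, reading the hypothesis (as the paper's later applications do) as "not solvable for any admissible field ${\bf D}$." The only caveats are cosmetic: Lemma \ref{3.2} as stated takes $\ringS=\ringT$, so one needs the evident variant of the context with the given $\ringS$, and when $\Char(\ringR)=n>0$ one works with $\ringT\cong\mathbb{Z}/n\mathbb{Z}$ and $\ringS/n\ringS$ rather than $\mathbb{Z}$ and $\ringS$ themselves.
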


Recall that a module is $\aleph_0$-free if each of its finitely
generated submodules are free. This yields the following statement:

\begin{Remark}
	\label{3.6A}
	{\rm
		In Corollary   \ref{3.6}, if $(\ringS,+)$ is  an $\aleph_0$-free
		$\ringT$-module,  the similar conclusions are hold.
	}
\end{Remark}

\section{All things together: Kaplansky test problems}
\label{Kaplansky test problems}
We are now ready to answer Kaplansky test problems.

\begin{notation}For the rest of this section we assume that
 $\ringR$ is a ring which  is not pure semisimple.
\end{notation}

\begin{theorem}
	\label{3.3}
	Let $\frakss$ be a  non trivial  $\lambda$-context with
	$\ringS=\ringT=\langle 1_\ringR\rangle_\ringR$ where
	$\lambda=\cf(\lambda)>|\alpha|^{\aleph_0}+||\frakss||$
	for $\alpha<\lambda$. Then there are
		$\ringR$-modules $\modM$,
		$\modM_1$ and $\modM_2$ of \power\ $\lambda$ such that:
	\begin{enumerate}
		\item
		$\modM\oplus \modM_1\cong \modM\oplus \modM_2$, and
		$\modM_1\not\cong \modM_2$.
\item   $\modM_1$, $\modM_2$, $\modM_1
		\oplus \modM$ and $\modM_2\oplus \modM$ are $\leq_{\aleph_0}$--extensions of
		$\modM^\ast$.
		\item $\modM_1\equiv_{\mathcal{L}_{\infty,
				\lambda}} \modM_2$.
	\end{enumerate}
\end{theorem}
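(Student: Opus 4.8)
The plan is to feed the machinery of the earlier sections with the concrete choice $\ringS=\ringT=\langle 1_\ringR\rangle_\ringR$ and the ring $\ringS_0$ built in the outline section, and then realize $\modM_1$, $\modM_2$ and $\modM$ as appropriately chosen direct sums of ``copies'' of pieces of a strongly nicely constructed bimodule $\modP$. First I would invoke Theorem~\ref{nice construction lemma}, which is applicable precisely under the cardinal hypotheses imposed on $\lambda$ in the statement (namely $\lambda=\cf(\lambda)>|\alpha|^{\aleph_0}+\|\frakss\|$ for all $\alpha<\lambda$), to obtain a (strongly) nice construction $\bar\modM=\langle\modM_\alpha:\alpha\le\lambda\rangle$ for $(\lambda,\frakss,S,\lambda)$ with $S\subseteq S^\lambda_{\aleph_0}$ chosen so that $S^\lambda_{\aleph_0}\setminus S$ is also stationary. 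Set $\modM^\ast:=\modM^\frakss_\ast$ and let $\modP:=\modM_\lambda$; by construction $\modP$ is $\leq_{\aleph_0}$ over $\modM^\ast$, and by Lemma~\ref{3.1} it has few direct decompositions and controlled endomorphism ring modulo small endomorphisms. The key idea is to decompose $\modP$ (or rather a direct sum of copies of the basic modules $\modN^{\mathfrak e}_n$ indexed along a suitable tree, exactly as in Lemma~\ref{Pe bimodules defined} and in clause (F) of Definition~\ref{w.s. nice definition}) so that by a shift of indices one obtains a ``Corner–style'' isomorphism $\modM\oplus\modM_1\cong\modM\oplus\modM_2$ witnessed by an explicit bimodule isomorphism, while the obstruction to $\modM_1\cong\modM_2$ is read off from the non-existence of a suitable endomorphism of $\modP$.

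\textbf{Carrying it out.} The steps, in order, are: (1) Choose $\modM_1$, $\modM_2$, $\modM$ as explicit summands of a big direct sum of $\cal K$-bimodules living inside the nice construction; the splitting of indices should be arranged so that there is a set-bijection between the index set of $\modM\oplus\modM_1$ and that of $\modM\oplus\modM_2$ respecting the isomorphism types of summands, giving (1)'s first half, $\modM\oplus\modM_1\cong\modM\oplus\modM_2$, and simultaneously giving clause (2): each of $\modM_1$, $\modM_2$, $\modM_1\oplus\modM$, $\modM_2\oplus\modM$ is a direct sum over $\modM^\ast$ of members of $\cl_{\rm is}(\cal K)$, hence $\leq_{\aleph_0}$-extends $\modM^\ast$ by Claim~\ref{pr properties}(2) together with Lemma~\ref{comparing two orders}. (2) Prove $\modM_1\not\cong\modM_2$: suppose $g:\modM_1\xrightarrow{\ \cong\ }\modM_2$; extend/combine with the ambient structure to produce an $\ringR$-endomorphism $\fucF$ of $\modP$ (or of a $\leq_{\aleph_0}$-related module), then apply Lemma~\ref{2.4a} / Lemma~\ref{2.11A} to conclude $\fucF$ is, modulo a small error term and modulo $\varphi^{\mathfrak e}_\omega$, multiplication by some ${\bf h}^{{\mathfrak e},n}_z$ with $z\in\modL^{{\mathfrak e},\ast}_n$; the shift between $\modM_1$ and $\modM_2$ is designed so that no such $z$ can exist (this is where the combinatorial identities $\ideallI_\ell$ from the outline section, or their simpler analogue in the present $\ringS=\ringT$ case, produce the contradiction). (3) Prove clause (3), $\modM_1\equiv_{\mathcal{L}_{\infty,\lambda}}\modM_2$: since each $\modM_i$ is a direct sum over $\modM^\ast$ of $\cal K$-bimodules, and since $\mathcal{L}_{\infty,\lambda}$-equivalence of such direct sums is governed by the multiset of isomorphism types of summands together with their multiplicities modulo the relevant back-and-forth, use Lemma~\ref{formulas and direct sum} and Lemma~\ref{pr properties}(5) to reduce $\mathcal{L}_{\infty,\lambda}(\tau_\ringR)$-equivalence to a counting of summand types; arrange in step (1) that $\modM_1$ and $\modM_2$ have the \emph{same} decomposition data when the small ($<\lambda$-sized) discrepancy is absorbed — because each type occurs $\lambda$ times, a cardinality-$<\lambda$ difference is invisible to $\mathcal{L}_{\infty,\lambda}$ — and then a standard Ehrenfeucht–Fra\"{\i}ss\'e argument on these direct sums gives (3).

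\textbf{Main obstacle.} The hard part will be step (2): showing $\modM_1\not\cong\modM_2$, i.e.\ verifying that the prescribed index-shift really has no solution at the level of the rings $\ringdE^{\mathfrak e}_n$ / $\ringde^{\mathfrak e}_n$. Concretely, one must show that an isomorphism $\modM_1\cong\modM_2$ would force the existence of a ``partial unit'' in $\End^{{\mathfrak e},\omega}(\modP)/\End^{{\mathfrak e},\omega}_{<\lambda}(\modP)$ implementing the shift, and then that the explicit structure of $\modL^{\mathfrak e}_n=\modL^{\tr,\bar\varphi}_n$ (Lemma~\ref{c.4}) and of $\ringdE^{\mathfrak e}_n$ rules this out — this is the ZFC analogue of Corner's pathology and is exactly where the careful bookkeeping of the construction pays off. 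A secondary technical point is making sure the isomorphism exhibited in clause (1) genuinely lives in ZFC (it will, being an explicit map between explicitly described direct sums), and that the ``small error term'' in Lemma~\ref{2.11A} does not interfere — handled, as Remark~\ref{2.5} notes, by dividing by a larger ideal, which is harmless since we only need a \emph{non}-isomorphism statement.
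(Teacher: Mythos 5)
There is a genuine gap, and it lies at the very heart of the construction. Your step (1) proposes to obtain $\modM\oplus\modM_1\cong\modM\oplus\modM_2$ from a set-bijection between index sets of direct sums of ${\cal K}$-bimodules, and your step (3) then asks that $\modM_1$ and $\modM_2$ ``have the same decomposition data'' up to a discrepancy of size $<\lambda$, with each summand type occurring $\lambda$ times. But under exactly those hypotheses $\modM_1\cong\modM_2$ outright (a $<\lambda$-sized discrepancy among $\lambda$-fold repeated types is absorbed by an actual isomorphism, not merely by $\mathcal{L}_{\infty,\lambda}$-equivalence), so the plan refutes its own clause (1). The modules $\modM_1$, $\modM_2$ cannot be explicit direct sums of small pieces; they must be the ``rigid'' halves of a module with almost no endomorphisms. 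The paper's proof works by introducing an \emph{auxiliary} ring $\ringS$ (replacing the trivial $\ringS=\ringT$ of the hypothesis) generated over $\ringT$ by ${\cal X},{\cal W}_1,{\cal Y},{\cal W}_2$ subject to relations making ${\cal X},{\cal Y}$ idempotents and ${\cal W}_1,{\cal W}_2$ mutually inverse between $\modP{\cal X}$ and $\modP{\cal Y}$; one then takes a strongly semi-nice $(\ringR,\ringS)$-bimodule $\modP$ and sets $\modM:=\modP{\cal X}\cong\modP{\cal Y}$, $\modM_1:=\modP(1-{\cal X})$, $\modM_2:=\modP(1-{\cal Y})$, so that $\modM\oplus\modM_1=\modP=\modM\oplus\modM_2$ literally. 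This ``choose $\ringS$ to solve the test problem'' step is the central idea of the whole paper and is absent from your proposal.

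Your step (2) is closer in spirit but still off target. The paper does, as you say, convert a hypothetical isomorphism $f_1:\modM_1\to\modM_2$ into $\ringR$-endomorphisms ${\cal Z}_1,{\cal Z}_2$ of $\modP$ and then uses the strong semi-niceness (via Lemma \ref{groupsGn} and Lemma \ref{2.16}) to replace them by elements ${\bf h}^{{\mathfrak e},n(\ast)}_{\modM,z_i}$ satisfying the same equations over $\ringS$ in every bimodule. The contradiction, however, does not come from the $\ideallI_\ell$ identities (those belong to the proof of Theorem \ref{3.7}, the Corner-pathology result) but from evaluating the equations on a test bimodule $\modM^\otimes=\bigoplus_{i<\gamma}\modM^\otimes_i$ built from additively indecomposable ordinals $\alpha<\beta<\gamma$: the equations would force an isomorphism between $\varphi^{\mathfrak e}_{n}\bigl(\sum_{i<\beta}\modM^\otimes_i\bigr)/\varphi^{\mathfrak e}_\omega(\cdot)$ and the corresponding quotient over $\sum_{i<\alpha}\modM^\otimes_i$, which is impossible once $|\alpha|<|\beta|$. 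Finally, clause (3) of the theorem is obtained in the paper directly from clause (2) together with Lemma \ref{comparing two orders} (both $\modM_1$ and $\modM_2$ are $\leq_{\aleph_0}$-extensions of $\modM^\ast$), not by an Ehrenfeucht--Fra\"{\i}ss\'e analysis of decomposition data.
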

\begin{remark}Adopt the notation of Theorem \ref{3.3}.
\begin{enumerate}
\item [(a)] Note that (1) becomes trivial if we remove the ``of \power\ $\lambda$''. To see this, take
 $\modM$,
		$\modM_1$ and $\modM_2$ to be free $\ringR$-modules with $$\| \modM \| > \| \modM_1 \| > \| \modM_2 \| \geq |\ringR|+\aleph_0.$$
\item [(b)]  Recall that $\modM_1\equiv_{\mathcal{L}_{\infty,
				\lambda}} \modM_2$ means for every sentence $\sigma \in \mathcal{L}_{\infty,
				\lambda},$
\[
\modM_1 \models \sigma \iff \modM_2 \models \sigma.
\]

\end{enumerate}
\end{remark}

\begin{proof}
	$(1)$. Let $\ringT$ be the subring of $\ringR$
	which 1 (the unit) generates.
	
		\textbf{Step A)}:\quad Here, we  introduce the auxiliary ring $\ringS$:
	
	Let $\ringS:=\frac{\ringT\langle{\cal X},{\cal W}_1,
		{\cal Y},{\cal W}_2\rangle}{I}$ where $\ringT\langle{\cal X},{\cal W}_1,
	{\cal Y},{\cal W}_2\rangle$  is the skew polynomial ring  in  noncommuting variables $\{{\cal X},{\cal W}_1,
	{\cal Y},{\cal W}_2\}$  with coefficients in the commutative ring $\ringT$, and $I$ is its  two-sided ideal
	generated by:
\begin{description}
\item[$(\ast)$]
${\cal X}{\cal X}={\cal X}$,\\
${\cal Y}{\cal Y}={\cal Y}$,\\
${\cal X}{\cal W}_1{\cal W}_2={\cal X} $,\\
${\cal Y}{\cal W}_2{\cal W}_1={\cal Y}$,\\
${\cal X}{\cal W}_1{\cal Y}={\cal X}{\cal W}_1$,\\
$(1-{\cal X})(1-{\cal Y})=1-{\cal X}$,\\
${\cal Y}{\cal X}={\cal Y}$,\\
${\cal Y}{\cal W}_2{\cal X}={\cal Y}{\cal W}_2	$.
\end{description}
In other words, $\ringS$ is the ring generated by $\ringT\cup \{{\cal X},{\cal W}_1,
{\cal Y},{\cal W}_2\}$ extending $\ringT$  freely except equations $(\ast)$
(to understand these equations see the definition of $\modM^\otimes$
as a bimodule below).

\textbf{Step B)}:\quad Let $\alpha<\beta<\gamma$ be
additively indecomposable ordinals\footnote{Recall that an ordinal $\gamma$ is additively indecomposable if for all ordinals
$\alpha, \beta < \gamma$ we have $\alpha+\beta < \gamma.$} and let $\modM$ be  an $\ringR$-module. We define a new
 bimodule $\modM^\otimes$ related to $\modM$ and ordinals $\alpha, \beta, \gamma$.

	For $i<\gamma$, let $\modM\stackrel{h_i}{\cong}
	\modM^\otimes_i$  (where $\modM_i^\otimes$ is an
	$\ringR$-module) and set $\modM^\otimes:=\mathop{{\bigoplus}}
	\limits_{i<\gamma} \modM_i^\otimes$.
	We expand $\modM^\otimes$ to an $(\ringR,\ringS)$-bimodule. To this end, we take $x\in \modM^\otimes_i$.
	Due to the axioms of bimodules, it is enough  to define $\{h_i(x){\cal X},h_i(x){\cal Y},h_i(x){\cal W}_1,h_i(x){\cal W}_2\}$. We define these via the following rules:
	$$
h_i(x){\cal X}:=\left\{\begin{array}{ll}
	h_i(x) &\qquad i\geq\alpha,\\
	0    &\qquad i<\alpha,
	\end{array} \right.
	$$
	$$
	h_i(x){\cal Y}:=\left\{\begin{array}{ll}
h_i(x) &\qquad i\geq\beta,\\
	0    &\qquad i<\beta,
	\end{array}\right.
	$$
	$$
h_i(x){\cal W}_1:=\left\{\begin{array}{ll}
h_j(x) &\mbox{ if for some }\epsilon,\ i=\alpha+\epsilon<\gamma,\
	j=\beta+\epsilon<\gamma,\\
	0    &\mbox{ otherwise,}
	\end{array}\right.
	$$and
	$$
h_i(x){\cal W}_2:=\left\{\begin{array}{ll}
h_j(x) &\mbox{ if for some }\epsilon,\ i=\beta+\epsilon<\gamma,\
	j=\alpha+\epsilon<\gamma,\\
	0    &\mbox{ otherwise.}
	\end{array}\right.
	$$
	
	%Let $\frak c \in \callE^\frakss$ hence non trivial.
	% let $\frakss'=\frakss[\ringS]$ see Definition \ref{3.3K}, so by
	%claim \ref{3.3M} apply easily $\modM^\otimes$ is essentially
	%$\modM\otimes_\ringT \ringS$\smallbox{seder?}, and $\frakss'$ is a
	%$\lambda$-context (non trivial etc.). So there is a
	
	Let $\bar \modM=\langle
	\modM_\alpha:\alpha\leq\kappa \rangle$ be a strongly semi-nice
	construction. Recall that semi-nice
	construction is a consequence of Section 3, and its strong form
	was constructed in Section 4.
	
	 Let
	$\modP:=\modM_\kappa$ and let
	${}_\ringR\modP$ be $\modP$ as an $\ringR$-module.
	\medskip

		\textbf{Step C)}:\quad
	Here, we define the
	$\ringR$-modules $\modM$,
	$\modM_1$ and $\modM_2$ of \power\ $\lambda$ such that
	$\modM\oplus \modM_1\cong \modM\oplus \modM_2$.
	
	To this end, recall from the second step that every element of $\ringS$ may be considered as an
	endomorphism of ${}_\ringR\modP$.
	Set ${}_\ringR\modM^1:=({}_\ringR\modP){\cal X}$  and ${}_\ringR\modM_1:=
	({}_\ringR\modP)(1-{\cal X})$.
	We conclude from the formula ${\cal X}{\cal X}-{\cal X}=0$
	that  ${}_\ringR\modM^1\cap{}_\ringR\modM_1=0$. Let us  use from the formula
	${\cal X}(1-{\cal X})=1$ that
	$${}_\ringR\modP={}_\ringR\modP({\cal X}+(1-{\cal X}))={}_\ringR\modP {\cal X}+(1-{\cal X}){}_\ringR\modP={}_\ringR\modM^1+{}_\ringR\modM_1={}_\ringR\modM^1
	\oplus {}_\ringR\modM_1.$$
	Let  $_\ringR\modM^2:=
	({}_\ringR\modP){\cal Y}$ and $_\ringR\modM_2:=
	({}_\ringR\modP)(1-{\cal Y})$.
In the same vein, the above formulas lead us to the following decomposition $${}_\ringR\modP={}_\ringR\modM^2
	\oplus{}_\ringR\modM_2.$$

	In view of the equation ${\cal X}{\cal W}_1
	{\cal Y}={\cal X}{\cal W}_1$, we have
	$$\modM^1{\cal W}_1={}_\ringR\modP {\cal X}  {\cal W}_1 ={}_\ringR\modP  {\cal X}{\cal W}_1
	{\cal Y}   \subset{}_\ringR\modP
	{\cal Y}=\modM^2.$$
This yields   a homomorphism from $\modM^1$ to $\modM^2$, defined by the help of the following assignment
\[
a \mapsto a {\cal W}_1.
\]
	Similarly,  ${\cal W}_2$ provides a
	homomorphism from $\modM^2$ onto $\modM^1$, defined via
\[
b \mapsto b {\cal W}_2.
\]	
Thanks to the equations ${\cal Y}{\cal W}_2{\cal W}_1={\cal Y}$ and ${\cal Y}{\cal X}={\cal Y}$,
	it is easily seen that the multiplication maps by ${\cal W}_1$ and ${\cal W}_2$
	are inverse to each  other. This provides an
	isomorphism from $\modM^1$ onto $\modM^2$, so let ${}_\ringR\modM:={}_\ringR \modM^1\cong {}_\ringR
	\modM^2$.

	\medskip
	
	\textbf{Step D)}:\quad One has ${}_\ringR\modM_1\not\cong {}_\ringR\modM_2$.

	Assume towards  contradiction that ${}_\ringR\modM_1\cong {}_\ringR\modM_2$. Thus there are $f_1:{}_\ringR\modM_1\to {}_\ringR\modM_2$
	and  $f_2:{}_\ringR\modM_2\to {}_\ringR\modM_1$ such that $f_1f_2=1$ and $f_2f_1=1$.
Recall that ${}_\ringR\modP={}_\ringR\modM^1
	\oplus {}_\ringR\modM_1 = {}_\ringR\modM^2
	\oplus {}_\ringR\modM_2$. Define ${\cal Z}_1\in\End_\ringR({}_\ringR\modP)$ by applying the following assignment
\[
(a,b)\in{}_\ringR\modM^1
	\oplus {}_\ringR\modM_1 \mapsto (a {\cal W}_1, f_1(b)) \in {}_\ringR\modM^2
	\oplus {}_\ringR\modM_2.
\]
In the same vein,  define ${\cal Z}_2\in\End_\ringR({}_\ringR\modP)$ via
\[
(a,b)\in{}_\ringR\modM^2
	\oplus {}_\ringR\modM_2 \mapsto (a {\cal W}_2, f_2(b)) \in {}_\ringR\modM^1
	\oplus {}_\ringR\modM_1.
\]
Clearly, ${\cal Z}_1 {\cal Z}_2
	={\cal Z}_2 {\cal Z}_1=1=\id_\modP$.
	It is also easy to check that:
	\[\begin{array}{ll}
	{\cal X}{\cal Z}_1={\cal X}{\cal Z}_1{\cal Y},\\   (1-{\cal X}){\cal Z}_1
	=(1-{\cal X}){\cal Z}_1(1-{\cal Y}),\\
	{\cal Y}{\cal Z}_2={\cal Y}{\cal Z}_2{\cal X},\\  (1-{\cal Y}){\cal Z}_2
	=(1-{\cal Y}){\cal Z}_2(1-{\cal X}).
	\end{array}\]
	We use just one very simple
	non trivial ${\mathfrak e}\in {\callE}^{\frakss}$. According to Lemma \ref{groupsGn}, there are
	$n(\ast)<\omega$, $z_1$ and $z_2\in \modL^{{\mathfrak e},\ast}_{n(\ast)}$
	such that the
	equations above hold in the endomorphism ring of Abelian group
	$\varphi^{\mathfrak e}_{n(\ast)}(\modM)/\varphi^{\mathfrak e}_\omega(\modM)$
	for any
	bimodule $\modM$,\footnote{Recall from Lemma \ref{simplephiequalpsi} that $\varphi^{\mathfrak e}_n(\modM)\equiv \psi^{\mathfrak e}_n(\modM)$ holds for all $n<\omega.$} when we replace ${\cal Z}_1$ (resp. ${\cal Z}_2$)
	by ${\bf h}^{{\mathfrak e},n(\ast)}_{\modM,z_1}$ (resp.
	${\bf h}^{{\mathfrak e},n(\ast)}_{\modM,z_2}$) and interpret
	${\cal X},{\cal Y},{\cal W}_1,{\cal W}_2 \in \ringS$ naturally. This holds in particular
	for the bimodule $\modM^\otimes$
	we  defined in Step B).
%The map 	${\bf h}^{{\mathfrak e},n}_{\modM^\otimes,x}$ (resp. ${\bf h}^{{\mathfrak e},n}_{\modM^\otimes,y}$) induces via multiplication by $\mathcal{X}$ (resp. $\mathcal{Y}$).
So, the following equations hold:
\[\begin{array}{ll}
  \mathcal{X}{\bf h}^{{\mathfrak e},n}_{\modM^\otimes,z_1}=\mathcal{X}{\bf h}^{{\mathfrak e},n}_{\modM^\otimes,z_1}\mathcal{Y}, \\
 (1-\mathcal{X}){\bf h}^{{\mathfrak e},n}_{\modM^\otimes,z_1}
	=(1-\mathcal{X}){\bf h}^{{\mathfrak e},n}_{\modM^\otimes,z_1}(1-\mathcal{Y}), \\
\mathcal{Y}{\bf h}^{{\mathfrak e},n}_{\modM^\otimes,z_2}=	\mathcal{Y}{\bf h}^{{\mathfrak e},n}_{\modM^\otimes,z_2}\mathcal{X},\\
	(1-\mathcal{Y}){\bf h}^{{\mathfrak e},n}_{\modM^\otimes,z_2}
	=(1-\mathcal{Y}){\bf h}^{{\mathfrak e},n}_{\modM^\otimes,z_2}(1-\mathcal{X}).
\end{array}\]

These equations in turn define  certain decompositions of $\varphi^{\mathfrak e}_{n}(\modM^\otimes)/\varphi^{\mathfrak e}_\omega(\modM^\otimes)$ which yield to the following isomorphism
	$$
	\frac{\varphi^{\mathfrak e}_{n}\bigl(\sum_{i<\beta}
		\modM^\otimes_i\bigl)}{\varphi^{\mathfrak
			e}_\omega \bigl(\sum_{i<\beta}\modM^\otimes_i\bigr)}
	\stackrel{\cong}\longrightarrow
	\frac{\varphi^{\mathfrak e}_{n}\bigl(\sum_{i<\alpha}\modM^\otimes_i\bigl)}{
		\varphi^{\mathfrak e}_\omega\bigl(\sum_{i<\alpha}\modM^\otimes_i\bigr)}.
	$$
	The cardinality of left (resp. right) hand side is $|\beta| $ (resp. $|\alpha| $). Thus if we choose   $|\beta| >|\alpha| $, we get a contradiction that we searched for it.
	
$(2)$. This is in the proof of $(1)$.

$ (3)$. This follows from (2) and Lemma \ref{comparing two orders}.
\end{proof}

We now prove the existence of $\ringR$-modules with the Corner pathology. We present such a thing by applying Corollary  \ref{3.5}.

\begin{theorem}
	\label{3.7}	Let $m(\ast)>2$ and let $\lambda>|\ringR|$ be a cardinal of the form
	$\lambda=\left(\mu^{\aleph_0}
	\right)^+$. 	 Then  there is an $\ringR$-module $\modM$ of cardinality
	$\lambda$
	such that:
	$$
	\modM^n\cong \modM\quad\emph{ if and only if }\quad m(\ast)-1\mbox{ divides } n-1.
	$$
\end{theorem}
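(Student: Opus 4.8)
The plan is to realize $\modM$ as a distinguished $\ringR$-direct summand of a strongly nicely constructed $(\ringR,\ringS)$-bimodule of cardinality $\lambda$ whose $\ringR$-endomorphism ring is rigidly controlled — up to small and compact error terms — by a carefully chosen auxiliary ring $\ringS$, exactly along the blueprint of Section \ref{outlineofproof} and using the machinery of Sections \ref{Developing the framework of the construction}--\ref{More specific rings and families}. First I would introduce the auxiliary ring. Let $\ringT:=\langle 1_\ringR\rangle_\ringR$. Following Section \ref{outlineofproof}, let $\ringS$ be the ring extending $\ringT$ generated by finitely many orthogonal idempotents ${\cal X}_0,{\cal X}_1,\dots$ summing to $1$, a ``cyclic shift'' ${\cal W}$ of order equal to the number of idempotents which cyclically permutes them (so ${\cal X}_\ell{\cal W}{\cal X}_{\ell'}=0$ unless ${\cal X}_{\ell'}$ is the successor of ${\cal X}_\ell$ in the cycle), and an involution ${\cal Z}$ with ${\cal Z}^2=1$, ${\cal X}_0{\cal Z}(1-{\cal X}_0)={\cal X}_0{\cal Z}$, $(1-{\cal X}_0){\cal Z}{\cal X}_0=(1-{\cal X}_0){\cal Z}$, freely except for these relations, and then pass to the quotient by the ideal $\{\sigma:{}_{\bf D}\modM^\ast\,\sigma=0$ for every field ${\bf D}$ and every explicit $({\bf D},\ringS)$-bimodule ${}_{\bf D}\modM^\ast$ of Section \ref{outlineofproof}$\}$. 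The purpose of these relations is that a right $\ringS$-action on an $\ringR$-module $\modP$ splits $\modP=\bigoplus_\ell\modP{\cal X}_\ell$ with all summands mutually $\ringR$-isomorphic (via ${\cal W}$) and $\modP{\cal X}_0\cong_\ringR\bigoplus_{\ell\neq 0}\modP{\cal X}_\ell$ (via ${\cal Z}$); the number of idempotent pieces is calibrated to $m(\ast)$ so that the distinguished summand $\modM:={}_\ringR(\modP{\cal X}_0)$ satisfies $\modM\cong\modM^{m(\ast)}$.

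The crucial preparatory fact, which is what permits the use of Corollary \ref{3.5}, is that $(\ringS,+)$ is a \emph{free} $\ringT$-module and that $\ringS$ acts faithfully on the class of bimodules $\{{}_{\bf D}\modM^\ast\}$. I would prove this by exhibiting the explicit $\ringT$-basis of $\ringS$ indexed by the trees $W_0$, $W_1$ and the symbols $x_{\eta,\ell}$ of Section \ref{outlineofproof}: realizing $\ringS$ inside the (visibly $\ringT$-free) ${\bf D}$-modules ${}_{\bf D}\modM^\ast$ exhibits a $\ringT$-spanning set, and a normal-form analysis for words in ${\cal X}_\ell,{\cal W},{\cal Z}$ modulo the defining relations shows this set is $\ringT$-independent and that no nonzero $\sigma\in\ringS$ annihilates all ${}_{\bf D}\modM^\ast$. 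I expect this normal-form/freeness analysis to be the main obstacle of the proof.

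Next, the cardinal $\lambda=(\mu^{\aleph_0})^+$ is regular, satisfies $|\alpha|^{\aleph_0}<\lambda$ for $\alpha<\lambda$, and exceeds $|\ringR|+|\ringS|+\aleph_0$ (since $\ringS$ is at most countable over $\ringT$), so the hypotheses of Corollary \ref{3.5} are met. Invoking it — with $\mathcal K$ taken, as in Lemma \ref{3.2}, to be the finitely generated, finitely presented $(\ringR,\ringS)$-bimodules and $\callE$ the simple non-trivial elements of $\callE_{(\ringR,\ringS)}$ — produces an $\ringR$-module $\modP$ of cardinality $\lambda$, arising from a strongly nice construction (Theorem \ref{nice construction lemma}(2) together with the endomorphism analysis of Lemmas \ref{groupsGn}, \ref{2.9}, \ref{2.16}), and an embedding ${\bf h}\colon\ringS\hookrightarrow\End_\ringR(\modP)$ with the property that any finite system of equations over $\ringS$ solvable in $\End_\ringR(\modP)$ is already solvable in ${\bf D}\otimes_{{\mathbb Z}_p}(\ringS/p\ringS)$ for a suitable field ${\bf D}$ — this is the content of Corollary \ref{3.6}. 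Put $\modM:={}_\ringR(\modP{\cal X}_0)$, which has cardinality $\lambda$. For the \emph{if} direction, the idempotent, shift and involution relations all hold in ${\bf h}(\ringS)\subseteq\End_\ringR(\modP)$, so $\modP$ splits as above and $\modM\cong\modM^{m(\ast)}$; hence the set $\{n\geq 1:\modM^n\cong\modM\}$ — closed under $(a,b)\mapsto a+b-1$ and containing $1$ and $m(\ast)$ — contains every $n$ with $(m(\ast)-1)\mid(n-1)$.

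For the \emph{only if} direction, one first reduces, using $\modM\cong\modM^{m(\ast)}$ (write $n-1=q(m(\ast)-1)+r$ with $0\leq r\leq m(\ast)-2$ and cancel $q$ copies), to showing $\modM^k\not\cong\modM$ for $2\leq k\leq m(\ast)-1$. If $\modM^k\cong\modM$ for such a $k$, then, via the decomposition $\modP\cong\modM^{m(\ast)}$, the pair of mutually inverse $\ringR$-homomorphisms witnessing the isomorphism assembles into an element ${\cal Y}\in\End_\ringR(\modP)$ satisfying the finite system $(*)_2$ over $\ringS$ of Section \ref{outlineofproof}: ${\cal Y}^2=1$, and ${\cal Y}$ restricts to mutually inverse isomorphisms between ${}_{\bf D}\modM^\ast_0$ and $\bigoplus_{\ell=1}^k{}_{\bf D}\modM^\ast_\ell$ and to the identity on the remaining pieces. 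By Corollary \ref{3.6} this system would then be solvable in ${\bf D}\otimes_{{\mathbb Z}_p}(\ringS/p\ringS)$, i.e.\ realized by such a ${\cal Y}$ acting on the explicit bimodule ${}_{\bf D}\modM^\ast$. I would then derive a numerical contradiction exactly as at the end of Section \ref{outlineofproof}: form the ${\bf D}$-dimension invariants ${\bf n}_{\eta,\ell}$ attached to the tree pieces $w^0_{\eta,\ell}$, $w^{[1,k]}_{\eta,\ell}$ and a sufficiently large finite $v\subseteq w_{\eta,\ell}$; show they are well defined (independent of $v$) and independent of the first index $\eta$, so that integers ${\bf n}_\ell$ are defined; verify that they satisfy the linear relations ${\bf n}_\ell=0$ for $1\leq\ell\leq k$ and ${\bf n}_\ell=\sum_{m}{\bf n}_m$ for $\ell=0$ and $\ell>k$; and conclude that a suitable sum of the ${\bf n}_\ell$ is forced to equal a proper fraction (of the form $k/m(\ast)$, say), which is not an integer for $2\leq k\leq m(\ast)-1$, contradicting the integrality of the ${\bf n}_\ell$. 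This rules out $\modM^k\cong\modM$ for all $k$ in the required range and completes the proof.
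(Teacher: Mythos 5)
Your proposal follows essentially the same route as the paper's proof of Theorem \ref{3.7}: the same auxiliary ring $\ringS$ (idempotents ${\cal X}_\ell$, cyclic shift ${\cal W}$, involution ${\cal Z}$, then quotienting by the common annihilator of the explicit bimodules ${}_{\bf D}\modM^\ast$), the same two uses of those bimodules (first to exhibit a free $\ringT$-basis of $\ringS$ by a normal-form analysis, then to transfer a hypothetical isomorphism $\modM^k\cong\modM$ via Corollary \ref{3.6} into an element ${\cal Y}$ of ${\bf D}\otimes_\ringT\ringS$ acting on ${}_{\bf D}\modM^\ast$), and the same dimension invariants ${\bf n}_\ell$ whose linear relations force a non-integral value and yield the contradiction. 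The only divergence is a cosmetic re-indexing: the paper's Step B literally produces $\modM\cong\modM^{m(\ast)-1}$ and excludes $1<k<m(\ast)-1$, giving period $m(\ast)-2$, whereas your calibration ($\modM\cong\modM^{m(\ast)}$, excluding $2\leq k\leq m(\ast)-1$) is the one that actually matches the divisibility condition $m(\ast)-1\mid n-1$ in the statement.
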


\begin{proof}
	We divide the proof into nine steps. 

	\textbf{Step A)}	 We first introduce
	 rings	  $\ringS_0$ and $\ringS$. The ring $\ringS_0$   is incredibly easy compared to  $\ringS$
and $\ringS$ is essentially the ring of endomorphisms we would like.

	To this end, let $\ringT$ be the subring of
	$\ringR$ which 1
	generates. Let $\ringS_0$ be the ring extending $\ringT$
	generated by $\{{\cal X}_0,\ldots, {\cal X}_{m(\ast)-1},{\cal W},
	{\cal Z}\}$ freely except the equations:
	\begin{description}
		\item[$(\ast)_1$] ${\cal X}^2_\ell={\cal X}_\ell$,\\
		${\cal X}_\ell {\cal X}_m=0$ ($\ell\neq m$),\\
		$1={\cal X}_0+\ldots+ {\cal X}_{m(\ast)-1}$,\\
		${\cal X}_\ell {\cal W}{\cal X}_m=0$ for $\ell+1\neq m\ \mod\ m(\ast)$,\\
		${\cal W}^{m(\ast)}=1$,\\
		${\cal Z}^2=1$,\\
		${\cal X}_0{\cal Z}(1-X_0)={\cal X}_0{\cal Z}$,\\
		$(1-{\cal X}_0){\cal Z}{\cal X}_0=(1-{\cal X}_0){\cal Z}$.
	\end{description}
	The meaning of these equations will become clear when we use them.
	
	Similarly, we define $\ringS$, but in addition we require $\sigma=0$, where $\sigma$ is a term in the language of rings,
	when $_{\bf D }\modM^\ast\sigma=0$
	for every field $\bf D$ and where ${}_{\bf D}\modM^*$ is  the $({\bf D},\ringS_0)$-
	bimodule as
	defined below. Now ``$\ringS$
	is a free $\ringT$-module'' will be proved later.
	
	For an integer $m$, the notation $[-\infty,m)$ stands for $\{n:n \mbox{ is an integer }<m\}$ and if
	$\eta\in {}^{[-\infty,m)}\omega$, then we set  $\eta\restriction k:=\eta
	\restriction [-\infty,\Min\{m,k\})$.
	
We look at the following sets:
	\[\begin{array}{lll}
	W_0:=&\big\{\eta:&\eta\mbox{ is a function with domain of the form }[-\infty,
	n)\\
	& &\mbox{and range }\subseteq\{1,\ldots,m(\ast)-1\},\mbox{ and such that}\\
	& &\mbox{for every small enough }m\in {\mathbb Z},\ \eta(m)=1\big\},
	\end{array}\]
	and   $W_1:=W_0\times \{0, \ldots ,m(\ast)-1\}$.

	Let $\bf D$ be a field   such that if $\ringT$  is finite and of cardinality $n$, then
 $\Char({\bf D})$ divides $n$.
	So ${\bf D} \otimes \ringS$ is the ring extending ${\bf D}$ by adding
	${\bf D},{\cal X}_0,\ldots,{\cal X}_{m(\ast)-1}, {\cal W}, {\cal Z}$
	as non commuting variables freely except satisfying the equation in $(\ast)_1$
	and if $||\ringT||$ is finite we divide $\ringS$ by $p
	\ringS$ where $p:=\Char({\bf D})$.
	So there is a homomorphism $g_{\bf D}$ from $\ringS$ to ${\bf D}\otimes\ringS$
	\st\ $\{0_{\ringS}\}= \cap \{ \Ker (g_{\bf D}):{\bf D} \emph{ as above}\}$.

	Let $\modM^\ast={}_{\bf D}\modM^\ast$ be the left
	$\bf D$-module freely generated by
	\[\{x_{\eta,\ell}: \eta\in W_0,\ \ell<m(\ast)\}.\]
	We make ${}_{\bf D}\modM^\ast$ to a right
	$({\bf D}\mathop{{\otimes}} \ringS_0)$-module by defining $x z$ for $x\in
	{}_{{\bf D}}\modM^*$ and $z\in\ringS_0$. It is enough to deal with
	$z\in
	\{{\cal X}_m:m<m(*)\}\cup\{{\cal Z},{\cal W}\}$. Let $x=
	\sum\limits_{\eta,\ell} a_{\eta,\ell} x_{\eta,\ell}$ where
	$(\eta,\ell)$ vary on
	$W_0$, $a_{\eta,\ell}\in{\bf D}$ and $\{(\eta,\ell):a_{\eta,\ell}\neq 0\}$
	is finite. We define
$$(\sum\limits_{\eta,\ell} a_{\eta,\ell}x_{\eta,\ell})z:=\sum\limits_{\eta,
		\ell}a_{\eta,\ell}(x_{\eta,\ell} z).$$
Here, we define the action
	of $\{ {\cal X}_m,{\cal Z}, {\cal W} \}$ on $x_{\eta,\ell}$ as follows:
	\[\begin{array}{rcl}
	x_{\eta,\ell}{\cal X}_m&:=&\left\{\begin{array}{ll}
	x_{\eta,\ell}&\mbox{if }\ell=m,\\
	0 &\mbox{if }\ell\neq m,
	\end{array}\right.\\
	\\
	x_{\eta,\ell}{\cal Z} &:=&\left\{\begin{array}{ll}
	x_{\eta\conc\langle\ell\rangle,0} &\mbox{if }\ell>0,\\
	x_{\eta\restriction [-\infty,n-1),\eta(n-1)}&\mbox{if }\ell=0,\mbox{
		and } (-\infty,n)=\Dom(\eta),
	\end{array}\right.\\
	\\and\\
	x_{\eta,\ell}{\cal W} &:=&x_{\eta,m}\ \mbox{ when }m=\ell+1\ \mod\ m(\ast).
	\end{array}\]
	We get a $({\bf D},\ringS)$-bimodule as the identities in the
	definition of $\ringS_0$ and $\ringS$
	holds. If ${\bf D}=\ringT$, some by inspection (those of $(\ast)_1$), the rest by the
	choice of $\ringS$. If ${\bf D}\neq \ringT$, by the restriction on ${\bf D}$.
	Let
$$_{\bf D}\modM^\ast_\ell:=\{\sum\limits_\eta d_{\eta,\ell} x_{\eta, \ell}: \
	\eta\in W_0 \mbox{ and } d_{\eta, \ell}\in {\bf D}\}.$$
So clearly ${}_{\bf D}\modM^\ast=
	\mathop{{\bigoplus}}\limits_{\ell=0}^{m(\ast)-1} {}_{\bf D} \modM^\ast_\ell$.

	\textbf{Step B)}  In this step we introduce an $\ringR$-module $\modM$  such that
	$
	\modM^{ m(\ast)-1}\cong \modM
	$.

	Let $\modP$ be the $(\ringR, \ringS)$-bimodule  constructed in
Lemma	\ref{prnalphafez} for $\lambda$. Here, we used the existence of seminice construction and its strong version.

  We look at  $\modP_\ell:=\modP{\cal X}_\ell$.   Let $i\neq j$. We use the formula ${\cal X}_i{\cal X}_j=0$  to observe that $\modP_i\cap  \modP_j=0$.
	Since
	$\sum\limits^{m(\ast)-1}_{\ell=0} {\cal X}_\ell=1$ we have
	$${}_{\ringR}
	\modP=\sum\limits^{m(\ast)-1}_{\ell=0} \modP {\cal X}_\ell=\mathop{{\bigoplus}}\limits^{m(\ast)-1}_{\ell=0}
	{}_{\ringR}\modP_\ell.$$
Suppose $\ell+1\neq m\ \mod\ m(\ast)$. We combine the formula   ${\cal X}_\ell {\cal W}{\cal X}_m=0$     with the formula $\sum\limits^{m(\ast)-1}_{\ell=0} {\cal X}_\ell=1 $ to observe that
	$$  \modP_\ell{\cal W}=\modP{\cal X}_\ell{\cal W}({\cal X}_0+\ldots+{\cal X}_{m(\ast)})= \modP_\ell{\cal W}{\cal X}_{\ell+1}\subset \modP_{\ell+1
	},$$
	i.e.,  ${\cal W}:{}_\ringR\modP_\ell\twoheadrightarrow {}_\ringR\modP_{\ell+1
	}$ is surjective.
	Here, we use the  relation ${\cal W}^{m(\ast)}=1$ to equip ${\cal W}$ as  an embedding  from
	$\modP$ onto $\modP$. Hence, ${\cal W}\restriction {}_\ringR\modP_\ell$
	is an isomorphism
	from ${}_\ringR\modP_\ell$ onto ${}_\ringR\modP_{\ell+1}$.
	So $${}_\ringR\modP_{m(\ast)-1}\cong\ldots\cong {}_\ringR\modP_1\cong {}_\ringR\modP_0.$$
	Thanks to the formulas   ${\cal X}_0{\cal Z}(1-X_0)={\cal X}_0{\cal Z}$ and $\sum\limits^{m(\ast)-1}_{\ell=0} {\cal X}_\ell=1 $ we conclude that
	${\cal Z}$ maps ${}_\ringR\modP_0$ into
	$$ \modP {\cal X}_0{\cal Z}=\modP {\cal X}_0{\cal Z}(1-X_0)=\modP {\cal X}_0{\cal Z}( {\cal X}_1+\ldots+{\cal X}_{m(\ast)-1})
	\subset\mathop{{\bigoplus}}\limits^{m(\ast)-1}_{\ell=1} \modP  {\cal X}_\ell
	=\mathop{{\bigoplus}}\limits^{m(\ast)-1}_{\ell=1} \modP_\ell\cong
	\modP_0.
	$$
	By the same vein, ${\cal Z}$ maps  $
	\mathop{{\bigoplus}}\limits^{m(\ast)-1}_{\ell=1} \modP_\ell
	$   into     ${}_\ringR\modP_0$. We are going to use the formula ${\cal Z}^2=1$ to    exemplifies ${\cal Z}$ with the following isomorphism
	$$
	\mathop{{\bigoplus}}\limits^{m(\ast)-1}_{\ell=1}{}_\ringR\modP_\ell\cong
	{}_\ringR\modP_0.
	$$
But, we have just shown
	$$
	\mathop{{\bigoplus}}\limits^{m(\ast)-1}_{\ell=1}{}_\ringR\modP_\ell\cong
	({}_\ringR\modP_0)^{m(\ast)-1}.
	$$This completes the proof
	of Step B).

	So, it is enough to show
	\begin{description}
		\item[$(*)_2$] \qquad\qquad\qquad $1<k<m(\ast)-1\quad\Rightarrow\quad {}_\ringR
		\modP^k_0\not\cong {}_{\ringR}\modP_0$.
	\end{description}
	Assume $k$ is a counterexample. The desired contradiction will be presented in
	Step I), see below. To this end we need some preliminaries.

	\textbf{Step C)} There exist a field ${\bf D}$ and  ${\cal Y}\in
	{\bf D}\mathop{{\otimes}}\limits_\ringT \ringS$ satisfying the
	following equations:
	\begin{description}
		\item[$(*)_3$ ] ${\cal Y}\restriction {}_{\bf D}\modM^\ast_0$ is an
		isomorphism from
		${}_{\bf D}\modM^\ast_0$ onto $\mathop{{\bigoplus}}\limits_{\ell=1}^k
		{}_{\bf D}\modM^\ast_\ell$,
		
		${\cal Y}\restriction \mathop{{\bigoplus}}\limits_{\ell=1}^k
		{}_{\bf D}\modM^\ast_\ell$ is an
		isomorphism from $\mathop{{\bigoplus}}\limits_{\ell=1}^k
		{}_{\bf D}\modM^\ast_\ell$ onto ${}_{\bf D}\modM^\ast_0$,
		
		${\cal Y}\restriction\mathop{{\bigoplus}}\limits^{m(\ast)}_{\ell=k+1}
		{}_{\bf D}\modM^\ast_\ell$ is the identity, and
		
		${\cal Y}^2=1$.
	\end{description}

	Indeed, recall from  ${}_\ringR
	\modP^k_0\cong {}_{\ringR}\modP_0$ that  $_\ringR\modP$  is equipped with an
	endomorphism $\fucF$ such that:
	\begin{description}
		\item[$(*)_4$ ] $\fucF\restriction {}_\ringR\modP_0$ is an
		isomorphism from
		${}_\ringR\modP_0$ onto ${}_\ringR\modP_1\oplus\ldots\oplus
		{}_\ringR\modP_k$,
		
		$\fucF\restriction (_\ringR\modP_1\oplus\ldots\oplus{}_\ringR
		\modP_k)$ is an
		isomorphism from $_\ringR\modP_1\oplus\ldots
		\oplus {}_\ringR\modP_k$ onto $_\ringR\modP_0$,
		
		$\fucF\restriction{}_\ringR\modP_j$, for $k<j<m(\ast)$, is the identity, and
		
		$\fucF^2=1$.
	\end{description}
Assuming $(\ringS,+)$ is a free $\ringT$-module,  according to Corollary \ref{3.5},	there is  a
	field $\bf D$   with the property that $p:=\Char({\bf D})$ divides $n:=\Char(\ringR)$,
	when $n>0$,  and there exists ${\cal Y}\in
	{\bf D}\mathop{{\otimes}}\limits_\ringT \ringS$ satisfying the
	the desired equations.
	This completes the proof of Step C).

	In sum, we are left to show that $(\ringS,+)$ is a free $\ringT$-module.
	Note that each of $\{1_S, {\cal X}_0,\ldots, {\cal X}_{m(\ast)}, {\cal W}, {\cal Z\}}$
	map each generator $x_{\eta,\ell}$
	to another generator or zero, so this applies to any composition of
	them, in fact, in quite specific way for $\rho$, $\nu\in
	\{1,\ldots,m (\ast)-1\}^{< \omega}$, $k$, $m<\omega$.
	 Let
	${\cal Y}^{k,m}_{\rho,\nu}$ be a
	monomial operator in the generators of $\ringS$ such that:
	$$
	x_{\eta,\ell}{\cal Y}^{k,m}_{\rho,\nu}=\left\{\begin{array}{ll}
	x_{\eta_1\conc\nu,m} & \mbox{ if } \ell=k, \mbox{ and } \eta=\eta_1\conc\rho \mbox{  for some }\eta_1
	\in W_0\\
	0 & \mbox{  otherwise.}
	\end{array} \right.
	$$
In order to define these operators,  first we define $Y^{k,k}_{\rho,\langle\rangle}$ and ${\cal Y}^{k,k}_{\langle\rangle,\nu}$
 by induction on $\lg(\rho)$ and $\lg(\nu)$ respectively.  Also, negative powers of ${\cal W}$
	can be defined, because ${\cal W}$ is invertible. Now,
	we let $${\cal Y}^{k,m}_{\rho,\nu}:=Y^{k,k}_{\rho,
		\langle\rangle}
	{\cal Y}_{\langle\rangle,\nu}^{k,k} {\cal W}^{m-k}.$$

	\textbf{Step D)}:  Set $\Omega=\{{\cal Y}^{k,m}_{\rho,\nu}:\ \rho, \nu\in
		[1, m(\ast))^{< \omega},\ k, m \in [0,
		m(\ast))$ and if
		$\lg(\rho)>0$ and $\lg
		(\nu)>0$, then $\rho (0)= \nu(0)]\}.$ Then
	\begin{enumerate}
		\item[$\boxtimes_1$] \qquad\qquad\qquad\qquad $\Omega$ generates $\ringS$ as a $\ringT$-module
		\end{enumerate}	
			First 		note that if ${\cal Y}^{k,m}_{\rho_1,\nu_1}$ is not in the family,
		then for some $(\varrho,\rho, \nu)$
		we have $Y^{k,m}_{\rho ,\nu }$
		belongs to the family
		and $(\rho,\nu)=(\varrho^\frown\rho_1,\varrho\conc\nu_1)$. Hence
		${\cal Y}^{k,m}_{\rho,\nu}={\cal Y}^{k,m}_{\rho_1,\nu_1}$. So, $\Omega$ generates all of ${\cal Y}^{k,m}_{\rho,\nu}$'s and thus we can use them.

		Let $\ringS'$ be the sub-$\ringT$-module of $\ringS$ generated by
		$\ringT$ and the family above.
		Now $\ringT\subseteq \ringS'$ because
\begin{enumerate}
\item[$(\dagger)_1$] \qquad\qquad\qquad  $1=\sum\{Y_{\langle\rangle,\langle\rangle}^{m,m}:
		m =0,\ldots,m(*)-1\}.$
\end{enumerate}
To see this, we evaluate  both sides at $x_{\eta,\ell}$.
		The right hand side of   $(\dagger)_1$
		is equal to
		
		\[\begin{array}{ll}
		x_{\eta,\ell}\sum\{Y_{\langle\rangle,\langle\rangle}^{m,m}:
		m =0,\ldots,m(*)-1\}&=x_{\eta^{\frown}\langle\rangle,\ell}\sum\{Y_{\langle\rangle,\langle\rangle}^{m,m}:
		m =0,\ldots,m(*)-1\}\\
		&=x_{\eta^{\frown}\langle\rangle,\ell}Y_{\langle\rangle,\langle\rangle}^{\ell,\ell}\\
		&=x_{\eta^{\frown}\langle\rangle,\ell}\\
		&=x_{\eta,\ell}.
		\end{array}\]	
		Since the left hand side of $(\dagger)_1$ is the identity, we get the desired equality.
		
		Next we show
		${\cal X}_m\in \ringS'$ for each $m<m(*)$. It suffices to show that
\begin{enumerate}
\item[$(\dagger)_2$] \qquad\qquad\qquad\qquad\qquad\qquad  ${\cal X}_m=
		{\cal Y}^{m,m}_{\langle\rangle,\langle\rangle}.$
\end{enumerate}
	In order to see   $(\dagger)_2$, we  evaluate both sides at $x_{\eta,\ell}$.
		Let $n=\lg(\eta)+1$.  First, assume that $\ell=m$. Then,  the right hand side  of $(\dagger)_2$
		is equal to  $$x_{\eta,\ell}{\cal Y}^{m,m}_{\langle\rangle,\langle\rangle}
		=x_{\eta,m}{\cal Y}^{m,m}_{\langle\rangle,\langle\rangle}=x_{\eta,m},$$ which is equal to $x_{\eta,\ell} {\cal X}_m$. Now, we show the claim when $\ell\neq m$. In this case both sides of $(\dagger)_2$ are equal to zero, e.g., $(\dagger)_2$ is valid.

	In order to show ${\cal W}\in\ringS'$ we bring the following claim:
\begin{enumerate}
\item[$(\dagger)_3$] \qquad\qquad\qquad\qquad ${\cal W}=\sum\{
		{\cal Y}^{\ell,m}_{\langle\rangle,\langle\rangle}:\ell
		=m+1 \mod\ m(*)\}.$
\end{enumerate}
As before, it is enough to  evaluate  both sides of   $(\dagger)_3$ at $x_{\eta,\ell}$.
		Let $m$ be such that $\ell\equiv_{m(\ast)} m+1$.
		The right hand side of $(\dagger)_3$
		is equal to
	\[\begin{array}{ll}
	x_{\eta,\ell}\sum\{
	{\cal Y}^{\ell,m}_{\langle\rangle,\langle\rangle}:\ell
	=m+1 \mod\ m(*)\}
		&=x_{\eta,\ell}{\cal Y}^{\ell,m}\\
		&=x_{\eta,m}.
		\end{array}\]
By definition, this is equal to the left hand side of  $(\dagger)_3$.

		Finally, we claim that ${\cal Z}\in\ringS'$. Indeed,  it suffices to prove that
\begin{enumerate}
\item[$(\dagger)_4$] ${\cal Z}=
		\{{\cal Y}^{0,m}_{\langle m\rangle,\langle\rangle}:m=1,\ldots,m(*)-1\}+
		\sum\{{\cal Y}^{m,0}_{\langle\rangle,\langle m\rangle}:m=1,\ldots, m(*)-1\}.$
\end{enumerate}
 Again we  evaluate  both sides of $(\dagger)_4$ at $x_{\eta,\ell}$.
		Let $n=\lg(\eta)+1$ and first suppose that $\ell=0.$ Then,  the right hand side of $(\dagger)_4$
		is equal to
		\[\begin{array}{ll}
		x_{\eta,0}\{{\cal Y}^{0,m}_{\langle m\rangle,\langle\rangle}:m=1,\ldots,m(*)-1\}
		&+x_{\eta,0}\sum\{{\cal Y}^{m,0}_{\langle\rangle,\langle m\rangle}:m=1,\ldots, m(*)-1\}\\
		&=
		x_{\eta,0}\{{\cal Y}^{0,m}_{\langle m\rangle,\langle\rangle}:m=1,\ldots,m(*)-1\}
		\\
		&=x_{\eta{\restriction}{n-1}\conc\langle  \eta(n)\rangle,0}{\cal Y}^{0,\eta(n)}\\
		&=x_{\eta{\restriction}{n-1}\conc\langle  \eta(n)\rangle,\eta(n)}.
		\end{array}\]
		This is equal to the left hand side of $(\dagger)_4$. Suppose now that $\ell>0$.
		Then the right hand side
		is equal to  $x_{\eta \conc\langle  \ell\rangle,0}$. By definition this is equal to the left hand side  of $(\dagger)_4$.

	So far,  we have proved that the subring $\ringS'$  includes $\ringT,{\cal X}_m (m<m(*)), {\cal Z}$
		and ${\cal W}$ and is a sub $\ringT$-module. To finish this step, it suffices
		to prove that  $\ringS'$ is closed under product. For this, it is enough to prove
		that $\ringS'$ includes the product ${\cal Y}^{k^1,m^1}_{\rho_1,\nu_1}
		{\cal Y}^{k^2,m^2}_{\rho_2,\nu_2}$ of any two members of  $\Omega$.
		
		Now if $m^1\neq k^2$ or no one of $\nu_1,\rho_2$ is an end-segment
		of the other then this is not the case. So easily the product is
		$Y^{k^1,m^2}_{\rho_3,\nu_3}$ where:
\begin{itemize}
\item if	$\nu_1=\rho_2$, then $(\rho_3,\nu_3)=(\rho_1,\nu_2)$,
\item if
		$\nu_1=(\varrho^1)^{\frown} \rho_2$, then $(\rho_3,\nu_3)=(\rho_1,(\varrho^1)^{\frown}
		\nu_2)$,
\item if $\rho_2=(\varrho^1)^{\frown}\nu_1$, then $(\rho_3,\nu_3)=
		((\varrho^1)^{\frown} \nu_1,\nu_2)$.
\end{itemize}
 So $\boxtimes_1$ holds, which completes the proof
		of Step D).

		\textbf{Step E)}  The set $\Omega$ from Step D)
		is a free basis of $\ringS$ as a $\ringT$-module.
		
		Assume that ${\cal X}=\sum\{a^{k,m}_{\nu,\rho}
		{\cal Y}^{k,m}_{\nu,\rho}:\rho,k,\nu,m\}=0$ for some finite set
		$\{(\rho,\nu,k,m):a^{k,m}_{\nu,\rho}\neq 0\}$.
We have to show that $a^{k,m}_{\nu,\rho}=0$, for all such $\rho,k,\nu,m$.

	Let $k^*<\omega$ and
		$\rho^*\in \{0,\ldots,m(*)-1\}^{< \omega}$ be such that $\lg(\rho^*) >\sup\{\lg
		(\rho):a^{k,n}_{\rho,\nu}\neq 0\}$. We look at $x_{\rho^*,
			k^*}{\cal X}$. Let $A(k^*, \rho^*)$ be the set of all $\nu$ in the finite sequence above where $\rho^*=\rho^*_\nu$$^{\frown} \nu$,
for some $\rho^*_\nu.$
Then
		
		\[\begin{array}{ll}
		x_{\rho^*,
			k^*}{\cal X}&=
\sum\limits_{\nu, \rho, k, m} a^{k,m}_{\nu,\rho}(x_{\rho^*,k^*}
		{\cal Y}^{k,m}_{\nu,\rho}) \\
&=\sum\limits_{\nu, \rho, m}\{a^{k^*,m}_{\nu,\rho} (x_{\rho^*,k^*}
		{\cal Y}^{k^*,m}_{\nu,\rho}):\nu \in A(k^*, \rho^*)\}\\
		&=\sum\limits_{\nu, \rho, m} \{a^{k^*,m}_{\nu,\rho}
		x_{(\rho^*_\nu)^{\frown} \rho, m}:\nu \in A(k^*, \rho^*)\}\\
&=0.
		\end{array}\]
Recall that $\modM^\ast$ is the left $\bf D$-module freely generated by $x_{\eta, \ell}$'s, so as we are free in choosing $k^*, \rho^*,$
we can easily show that
 for any
	tuple	$(\rho,\nu,k,n)$ in the finite set above, for some suitable choice of  $\rho^*$,
		$a^{k,n}_{\rho,\nu}$ is the only component of $x_{(\rho^*_\nu)^{\frown} \rho, m}$ in the sum above, and hence $a^{k,n}_{\rho,\nu}=0$. This completes the proof of Step E).
		%	Now let $\varrho\in w,k^*<\omega$ and
		%$\rho^*\in \{0,\ldots,m(*)-1\}^{< \omega}$ be such that $\lg(\rho^*) >\sup\{\lg
%		(\rho):a^{k,n}_{\rho,\nu}\neq 0\}$. We look at $x_{\varrho^{\frown}\nu^*,
%			k^*}{\cal X}$ it is
%		
%		\[\begin{array}{ll}
%		\sum\{ a^{k,m}_{\nu,\rho}(x_{\varrho^{\frown}\rho^*,k^*}
%		{\cal Y}^{k,m}_{\nu,\rho}):\nu,\rho,k,m \}&=\sum\{a^{k^*,m}_{\nu^*,\rho} (x_{\varrho^1,\rho^*,k^*}
%		{\bf Y}^{k^*,m}_{\nu^*,\rho}):\rho,m\}\\
%		&=\sum \{a^{k^*,m}_{\nu^*,\rho}
%		x_{f,(\varrho),\rho^*,\rho}):\rho,m\},
%		\end{array}\]
%		for suitable function $f$, but the \seq\ $\langle x_{f,(\varrho),\nu,\rho}
%		:\rho$ suitable $\rangle$ is with no repetition. Hence for any suitable
%		$(\rho,\nu,k,n)$ is $\rho$ is an end segment of $\rho^*$ then
%		$a^{k^*,n}_{\rho,\nu}=0$. This completes the proof of Step E).

		Before we continue, let us introduce some notations and definitions.
	Clearly,  ${\cal Y}$ can be represented as a  finite  sum of the form:
		$$
		{\cal Y}=\sum\{d^{k,m}_{\rho,\nu}
		{\cal Y}^{k,m}_{\rho,\nu}: k,m,\rho,\nu\}.
		$$
	 Choose $n(*)$
		large enough such that $n(*)>\Max\{\lg(\rho),\lg(\nu):d^{k,m}_{\rho,\nu}
		\neq 0\}$.
		For any $u\subseteq \{(\eta,\ell):\eta\in W_0,\ \ell<m(\ast)\}$, let $\modN_u$ be
		the sub ${\bf D}$-module of ${}_{\bf D}\modM^\ast$ generated by
		$\{x_{\eta,\ell}:(\eta,\ell)
		\in u\}$.
		
		For $\eta\in w_0$ and $\ell=1,\ldots,m(\ast)$ set
		${}_{\bf D} \modM^\ast_{[1,k]}:=
		\mathop{{\bigoplus}}\limits^k_{j=1} \modM^\ast_\ell$,
		and let

		\[\begin{array}{ll}
		w_{\eta,\ell}&:=\{(\nu,m)|(\nu,m)=(\eta,\ell)\mbox{ or }
		\eta^\frown\langle\ell\rangle\trianglelefteq\nu \mbox{ and } m<m(*)\},
		\\u_\ell
		&:=\{(\eta,m)| m=\ell,\ \eta\in w_0\},
		\\w^m_{\eta,\ell},
		&:=\ w_{\eta,\ell}\cap u_m, \\
		w^{[1,n]}_{\eta,\ell}
		&:=w_{\eta,\ell}\cap\bigcup\limits_{m\in [1,n]}u_m.
		\end{array}\]

	\textbf{Step F)}
	For any $\eta\in w_0$ and $\ell<m(\ast)$, there is a finite subset $u\subseteq
	w^0_{\eta,\ell}$ such that
	\begin{enumerate}
		\item[$(\alpha)$] ${\cal Y}$ maps $\modN_{w^0_{\eta,\ell}\setminus u}$
		into $\modN_{w_{\eta,\ell}}$
		in fact into $\modN_{w^{[1,k]}_{\eta,\ell}}$
		\item[$(\beta)$] if $v$ is finite and $u\subseteq v\subseteq
		w^0_{\eta,\ell}$ then
		$\modN_{w^0_{\eta,\ell}\setminus v} {\cal Y}$ is a ${\bf D}$-vector subspace of $\modN_{w^0_{\eta,\ell}} {\cal Y}$
		of cofinite dimension.
		\item[$(\gamma)$] for any finite subsets
		$v_1$, $v_2$ of $w^0_{\eta,\ell}$
		extending $u$  the following equalities are true:
		\[\begin{array}{l}
		\dim(\modN_{w^0_{\eta,\ell}}/\modN_{w^0_{\eta,\ell}\setminus v_1})-
		\dim(\modN_{w^{[1,k]}_{\eta,\ell}}/(\modN_{w^0_{\eta,\ell}
			\setminus v_1}{\cal Y}))\\
		=\dim (\modN_{w^0_{\eta,\ell}}/\modN_{w^0_{\eta,\ell}\setminus v_2})-\dim
		(\modN_{w^{[1,k]}_{\eta,\ell}}/(\modN_{w^0_{\eta,\ell}
			\setminus v_2}{\cal Y})),
		\end{array}\] where the dimensions are computed as ${\bf D}$-vector spaces.
	\end{enumerate}

	Indeed, the case $\ell=0$ is easy.
	Recalling the representation of ${\cal Y}$ and the choice of
	$n(\ast)<\omega$,   if we set
	$$
	u=\{(\nu,m)\in w_{\eta,\ell}:\ |\Dom(\nu)\setminus\Dom(\eta)|<n(\ast)\},
	$$
	then we have
	$$
	(\nu,m)\in w_{\eta,\ell}\setminus u\quad\Rightarrow \quad x_{\nu,m} {\cal Y}\in
	\modN_{w_{\eta,\ell}}.
	$$
	Note that $u$ is finite and clause $(\alpha)$ holds.
	As ${\cal Y} {\cal Y}=1$
	etc., we can show
	that $(\beta)$ holds. For checking the
	equalities in clause $(\gamma)$, let $v_3:=v_1\cup v_2$. Due to the transitivity of
	equality, it is enough to
	prove the required equality for  pairs $(v_1,  v_3)$ and  $(v_2, v_3)$. This enable us
	to assume
	without loss of generality  that  $v_1\subseteq v_2$. Also, we stipulate $v_0=\emptyset$.
	
	Now
	$$
	\modN_{w^0_{\eta,\ell}\setminus v_2}\subseteq \modN_{w^0_{\eta,\ell}
		\setminus v_1}\subseteq \modN_{w^0_{\eta,\ell}\setminus w_0}=
	\modN_{w^0_{\eta,\ell}},
	$$
	and
	$$
	\dim (\modN_{w^0_{\eta,\ell}}/\modN_{w^0_{\eta,\ell}
		\setminus v_i})=|v_i|.
	$$
	Manipulating the equalities in clause $(\alpha)$, they are equivalent to
	\begin{enumerate}
	\item[(*)]\qquad\qquad\qquad	 $\dim(\modN_{w^0_{\eta,\ell}}/
		\modN_{w^0_{\eta,\ell}\setminus
			v_1})-\dim( \modN_{w^0_{\eta,\ell}}/
		\modN_{w^0_{\eta,\ell}\setminus v_2})=$
		
		\qquad\qquad$\dim(\modN_{w^{[1,k]}_{\eta,\ell}}/(\modN_{w^0_{\eta,\ell}
			\setminus v_1}{\cal Y}))-
		\dim(\modN_{w^{[1,k]}_{\eta,\ell}}/(
		\modN_{w^0_{\eta,\ell}\setminus v_2}{\cal Y})).$
	\end{enumerate}
	Since $\modN_{w^0_{\eta,\ell}\setminus v_2}\subseteq
	\modN_{w^0_{\eta,\ell}
		\setminus v_1} \subseteq \modN_{w^0_{\eta,\ell}}$,  the left hand side
of $(*)$ is
	equal to $$\dim(\modN_{w^0_{\eta,\ell}\setminus v_1}/
	\modN_{w^0_{\eta,\ell} \setminus
		v_2}).$$ Also, $\modN_{w^0_{\eta,\ell}\setminus v_2}{\cal Y}\subseteq
	\modN_{w^0_{\eta,\ell}
		\setminus v_1}{\cal Y} \subseteq \modN_{w^{[1,k]}_{\eta,\ell}}$
	(the first inclusion holds
	as $\modN_{w^0_{\eta,\ell}\setminus v_2}\subseteq
	\modN_{w^0_{\eta,\ell}\setminus
		v_1}$ and the second one holds by clause $(\beta)$ and $u\subseteq v_1\cap v_2$).
	Hence the right hand side of
	$(\ast)$ is equal to $$\dim(\modN_{w^0_{\eta,\ell}\setminus v_1}
	{\cal Y}/ \modN_{w^0_{\eta,
			\ell}\setminus v_2}{\cal Y}).$$ Hence  $(\ast)$ is reduced in proving
	\begin{enumerate}
		\item[$(\ast)'$] \qquad\qquad $\dim(\modN_{w^0_{\eta,\ell}\setminus v_1}/
		\modN_{w^0_{\eta,\ell}
			\setminus v_2})=\dim(\modN_{w^0_{\eta,\ell}\setminus v_1}/
		(\modN_{w^0_{\eta,\ell}
			\setminus v_2}{\cal Y}))$.
	\end{enumerate}
	Since ${\cal Y}$ is an isomorphism from $\modN_{w^0_{\eta,\ell}
		\setminus v_2}$ onto $\modN_{w^0_{\eta,\ell}
		\setminus v_2}{\cal Y}$,
	it implies the validity of $(\ast)'$.
	This completes the proof.
	\medskip

 Let ${\bf n}_{\eta,\ell}$
	be the natural number so
	that for every large enough finite subset $v\subseteq w_{\eta,\ell}$
	$$
	{\bf n}_{\eta,\ell}:=\dim\left(\frac{\modN_{w^0_{\eta,\ell}}}{\modN_{w^0_{\eta,\ell}\setminus v}}\right)-
	\dim\left(\frac{\modN_{w^{[1,
				k]}_{\eta,\ell}}}{\modN_{w^0_{\eta,\ell}\setminus v} {\cal Y} }\right).
	$$
In view of Step F), this is a well-defined notion, which does not depend on the choice of $v$.	
	\medskip
	
	\textbf{Step G)} If $\eta$, $\nu\in w_0$ and $\ell\in
	\{0,1,\ldots,m(\ast)-
	1\}$, then ${\bf n}_{\eta,\ell}={\bf n}_{\nu,\ell}$.
	
	To see this, we  define a function $f:w_{\eta,\ell}\longrightarrow w_{\nu,\ell}$ by
	$f(\eta \conc\rho,m)=
	(\nu\conc\rho,m)$ for any
	$\rho\in\{0,\ldots,m(*)-1\}^{<\omega}$.
	This function is one-to-one and onto,
	and it induces an isomorphism from
	$\modN_{w^{[0,m(\ast))}_{\eta,\ell}}$ onto
	$\modN_{w_{\nu,\ell}^{[0,m(\ast))}}$. It almost commutes with all of our
	operations (the problems are
	``near'' $\eta$ and $\nu$). So choose $v_1\subseteq
	w^0_{\eta,\ell}$ large enough, as
	required in the definition of ${\bf n}_{\eta,\ell}$, ${\bf n}_{\nu,\ell}$
	and to
	make $\modN_{w^0_{\eta,\ell}\setminus v_1}
	{\cal Y}\subseteq \modN_{w^{[1,k]}_{\eta,\ell}}$, and let
	$v_2=f(v_1)$. Now check the desired claim.
	
	\medskip
	So,  we shall write
	${\bf n}_\ell$ instead of ${\bf n}_{\nu,\ell}$. By Step G), this is well-defined.
	
	\textbf{Step H)} The following equations are valid:

	$$
	\ideallI_\ell =\left\{\begin{array}{ll}
	0 &\mbox{if }  \ell\in [1,k]  \\
	\ideallI_0+\ideallI_1+\ldots+\ideallI_{m(\ast)-1} &\mbox{if }  \ell\in[k+1,m(\ast))\emph{  or  }\ell=0
	\end{array}\right.
	$$

	\medskip
	
To prove this, first note that $w_{\eta,\ell}=\{(\eta,\ell)\}\cup
	\bigcup\limits_{m<m(*)} w_{\eta\conc\langle\ell\rangle,m}$. In order to apply Step F), we fix a pair 	$(\eta,\ell)$
	and  we choose
	a finite  subset $v \subseteq w^0_{\eta,\ell}$ large enough as there.
Let $(\eta\conc\langle\ell\rangle,m)$ be
 \st
	$$
	\{(\eta,\ell)\}\cup \{(\eta\conc\langle\ell\rangle,m):\ m<m(\ast)\}
	\subseteq v.
	$$
	Now we compute $\ideallI_{\eta,\ell}$  which is equal to $\ideallI_\ell$,
	and $\ideallI_{\eta\conc\langle\ell \rangle,m}$   which is equal to $=\ideallI_m$
	for $m<m(*)$ and we shall get the equality.

Now set
$${\bf n}^1_{v,\eta,\ell}:=\dim\left(\frac{\modN_{w^0_{\eta,\ell}}}{
	\modN_{w^0_{\eta,\ell}\setminus v}}\right)-\sum\limits_{m<m(*)}\dim\left(\frac{
	\modN_{w^0_{\eta\conc \langle\ell\rangle,m}}}{\modN_{w^0_{\eta\conc
			\langle\ell\rangle,m}\setminus v}}\right).$$
By definition, we have $\modN_{w^0_{\eta,\ell}}=\bigoplus\limits_{m<m(*)}
\modN_{w^0_{\eta\conc
		\langle\ell\rangle,m}}\oplus {\bf D} x_{\eta,\ell}$. Using this equality, we can easily see that	
	\begin{enumerate}
		\item[$(\otimes_1)$] ${\bf n}^1_{v,\eta,\ell}$ is equal to $1$
		when $\eta\in w_0, \ell<m(\ast)$
		and $v\subseteq w_0$ is finite large enough (we can replace $v$ by
		$v\cap w^0_{\eta,\ell}$ of course).
	\end{enumerate}

In particular,  the following definition makes sense:
	\begin{enumerate}
		\item[$(\otimes_2)$] $\ideallI^2_{v,\eta,\ell}:=\dim
		\left(\frac{\modN_{w^{[1,k]}_{\eta,\ell}}}
		{\modN_{w^{[1,k]}_{\eta,\ell}\setminus v}}\right)-\sum\limits_{m<m(*)}\dim
		\left(\frac{\modN_{w^{[1,k]}_{\eta\conc
					\langle\ell\rangle,m}}}{\modN_{w^{[1,k]}_{\eta\conc
					\langle\ell\rangle,m}\setminus v}}\right).$
	\end{enumerate}
Suppose  $\eta\in w_0,\ell<m(\ast)$ and  $v\subseteq w_0$
is finite and large enough.
	In view of definition, and as  the argument of $(\otimes_1)$ we present the following three  implications:
	\par\noindent
	$(\otimes_{2.1})\quad\ell=0\ \Rightarrow\  {\bf n}^2_{v,\eta,\ell}=0$,
	\par\noindent
	$(\otimes_{2.2})\quad\ell\in [1,k]\ \Rightarrow\  {\bf n}^2_{v,\eta,\ell}=1$,
	\par\noindent
	$(\otimes_{2.3})\quad\ell\in [k+1,m(*)) \Rightarrow\ {\bf n}^2_{v,\eta,\ell}=0$.
	
	Next, we show that
	\begin{enumerate}
		\item[$(\otimes_3)$] ${\bf n}^1_{v,\eta,\ell}-{\bf n}^2_{v,\eta,\ell}
		=\ideallI_\ell-\sum\limits_{m<m(*)}\ideallI_m$.
	\end{enumerate}
Indeed, we have
	\[\begin{array}{ll}
	{\bf n}^1_{v,\eta,\ell}-{\bf n}^2_{\nu,\eta,\ell}&= \rdim(\frac{ \modN_{w^0_{\eta,\ell}}}{\modN_{w^0_{\eta,\ell}\setminus v}})-
	\rdim (\frac{\modN_{w^{[1,k]}_{\eta,\ell}}}{w^{[1,k]}_{\eta,\ell}\setminus v})\\
	&-\sum\limits_{m<n(*)}   \rdim
	(\frac{\modN_{w^0_{\eta\conc\langle\ell\rangle,m}}}
	{\modN_{w^0_{\eta^{\conc} {\langle\ell\rangle,m}}\smallsetminus v}})\\
	&+\sum\limits_{m<n(*)}  \rdim
	(\frac{\modN_{w^{0}_{\eta^{\conc} {\langle\ell\rangle,m}}   }}{\modN_{w^{[1,k]}_{\eta^{\conc} {\langle\ell\rangle,m}}\smallsetminus v}})\\
&={\bf n}_{\eta,\ell}
	-\sum\limits_{\ell<m(*)} {\bf n}_{\eta^{\conc} {\langle\ell\rangle,m}}\\
&={\bf n}_\ell-\sum\limits_{m<m(*)}{\bf n}_m.
	\end{array}\]

	Now we combine $(\otimes_1)$, $(\otimes_{2.1})$ and $(\otimes_{2.2})$
  along with $(\otimes_{2.3})$ to deduce the following formula.

\begin{enumerate}	\item[$(\otimes_4)$]$
{\bf n}^1_{v,\eta,\ell}-{\bf n}^2_{v,\eta,\ell}=\left\{\begin{array}{ll}
0 &\mbox{if }  \ell\in [1,k]  \\
1&\mbox{otherwise }
\end{array}\right.
$
	\end{enumerate}
	Step H) follows from   $(\otimes_3)+(\otimes_4)$.

	\medskip

	\textbf{Step I)}  In this  step, we drive our desired contradiction.
To see this, recall from Step H) that
	\[\begin{array}{ll}
	\sum\limits^{m(\ast)-1}_{\ell=0}\ideallI_\ell&=({\bf n}_0+\ldots+
	{\bf n}_{m(*)-1})\\
&+\sum\limits^k_{\ell=1}(\ideallI_0+
	\ldots+\ideallI_{m(\ast)-1}-1)+\sum\limits^{m(\ast)-1}_{\ell=k+1}
	(\ideallI_0+\ldots+
	\ideallI_{m(\ast)-1})\\
	&=m(\ast)\sum\limits^{m(\ast)-1}_{\ell=0} \ideallI_\ell-k.
	\end{array}\]
	
	\noindent So,
	$$
	-(m(\ast)-1)\left(\sum^{m(\ast)-1}_{\ell=1} \ideallI_\ell\right)=-k,
	$$i.e.,
	$$
	\sum^{m(\ast)-1}_{\ell=1} \ideallI_\ell= \frac{k}{m(\ast)-1}.
	$$
	Now, the left hand side is an integer, while as $1<k<m(\ast)-1$,
	 the right hand  side is not an integer, a contradiction.

	This completes the proof.
\end{proof}

An additional outcome is a slight improvement of the above pathological property.

\begin{corollary}
	\label{3.8}
	Assume $\lambda=(\mu^{\aleph_0})^+ > |\ringR|$  and let $0<m_1<m_2-1$. Then there is an $\ringR$-module $\modM$ of cardinality $\lambda$ such
	that:
	$$
	\modM^{n^1}\cong \modM^{n^2}\mbox{ if and only if   } m_1<n^1\ \&\ m_1\leq n^2\ \&\ [(m_2-m_1)|
	(n^1-n^2)]
	$$
	( $|$ means divides).
\end{corollary}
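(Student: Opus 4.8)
The plan is to prove Corollary~\ref{3.8} by the method of Theorem~\ref{3.7}, replacing its single cyclic block of length $m(\ast)-1$ by a pattern that freezes the first $m_1$ powers and then cycles with period $m_2-m_1$. Write $r:=m_2-m_1\ge 2$ (this is the content of the hypothesis $m_1<m_2-1$), and let $\ringT$ be the subring of $\ringR$ generated by $1$. First I would set up the auxiliary ring: take $\ringS_0$ to be the $\ringT$-algebra on generators ${\cal X}_0,\dots,{\cal X}_{m_2-1},{\cal W},{\cal Z}_1,{\cal Z}_2$ subject to (i) orthogonal-idempotent relations ${\cal X}_\ell^2={\cal X}_\ell$, ${\cal X}_\ell{\cal X}_m=0$ for $\ell\ne m$, $\sum_{\ell<m_2}{\cal X}_\ell=1$; (ii) cyclic-shift relations ${\cal W}^{m_2}=1$, ${\cal X}_\ell{\cal W}{\cal X}_m=0$ unless $m\equiv\ell+1\pmod{m_2}$ (forcing $\modP_\ell\cong\modP_m$ for all $\ell,m$); and (iii) relations of the same shape as the ${\cal W}_1,{\cal W}_2$ relations in the proof of Theorem~\ref{3.3}, making ${\cal Z}_1,{\cal Z}_2$ a mutually inverse pair that identifies $\modP_0\oplus\cdots\oplus\modP_{m_2-1}$ with its sub-bimodule $\modP_0\oplus\cdots\oplus\modP_{m_1-1}$. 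Then let $\ringS$ be the further quotient in which a ring-term vanishes whenever it annihilates ${}_{\bf D}\modM^\ast$ for every field $\bf D$, where ${}_{\bf D}\modM^\ast$ is the concrete $({\bf D},\ringS_0)$-bimodule built, exactly as in Step~A) of Theorem~\ref{3.7}, from tree-indexed generators. As there, I would check that $(\ringS,+)$ is a free $\ringT$-module by exhibiting the explicit basis of monomial tree-operators; this is the analogue of Steps~D)--E) of that proof with $m(\ast)$ replaced by $m_2$.

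Next, form the context $\frakss$ with $\ringS$ as above, invoke the strongly semi-nice construction (Theorem~\ref{nice construction lemma} together with Lemma~\ref{2.16}) to get an $\ringR$-module $\modM:=\modP_0$ of cardinality $\lambda$ into whose endomorphism ring $\ringS$ embeds, and record, via Corollary~\ref{3.5}, that every system of equations over $\ringS$ solvable in $\End_\ringR(\modM)$ is already solvable in $\fieldD\otimes_\ringT\ringS$ for some field $\fieldD$. The positive half of the corollary is then immediate from the defining relations: (ii) gives $\modM\cong\modP_\ell$ for every $\ell$, hence $\modM^{\,m_2}\cong{}_\ringR\modP$, and (iii) gives ${}_\ringR\modP\cong\modP_0\oplus\cdots\oplus\modP_{m_1-1}\cong\modM^{\,m_1}$, so $\modM^{\,m_1+r}\cong\modM^{\,m_1}$; iterating this single isomorphism --- adding $j\ge 0$ further copies of $\modM$ to each side --- yields $\modM^{\,n^1}\cong\modM^{\,n^2}$ whenever $n^1,n^2\ge m_1$ and $r\mid n^1-n^2$, which is exactly the asserted sufficiency. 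Note that only this one nontrivial isomorphism is produced, so nothing forces any identification among $\modM,\dots,\modM^{\,m_1-1}$.

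The hard part will be the converse --- that no further isomorphisms exist, i.e.\ ruling out $\modM^{n^1}\cong\modM^{n^2}$ when $r\nmid n^1-n^2$, and ruling out $\modM^{n^1}\cong\modM^{n^2}$ with $n^1\ne n^2$ and $\min(n^1,n^2)<m_1$. Given such an isomorphism, one turns it into a solution of the corresponding polynomial system in $\End_\ringR(\modM)$, pulls it back through Corollary~\ref{3.5} to a solution ${\cal Y}\in\fieldD\otimes_\ringT\ringS$, and thereby to an honest $\fieldD$-linear isometry of the concrete bimodule ${}_{\bf D}\modM^\ast=\bigoplus_{\ell<m_2}{}_{\bf D}\modM^\ast_\ell$ of the predicted shape. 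Then one reruns the dimension bookkeeping of Steps~F)--I) of Theorem~\ref{3.7}: for $\eta\in W_0$, $\ell<m_2$ and a large enough finite $v\subseteq w^0_{\eta,\ell}$ one forms integers ${\bf n}_{\eta,\ell}$ as the difference between $\dim(\modN_{w^0_{\eta,\ell}}/\modN_{w^0_{\eta,\ell}\setminus v})$ and the dimension of the corresponding quotient after applying ${\cal Y}$, shows these are independent of the base $\eta$ (yielding ${\bf n}_0,\dots,{\bf n}_{m_2-1}$), extracts from the tree structure of $W_0$ a linear recursion among the ${\bf n}_\ell$, and concludes that a fixed rational combination of them would have to be a non-integer, a contradiction --- the same endgame as Step~I), now parametrised by the interval $[m_1,m_2)$ in place of $[1,m(\ast))$. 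The two points needing genuine care are re-indexing this recursion to the shifted block, and separately treating the ``low'' exponents: on the frozen blocks $\modP_0,\dots,\modP_{m_1-1}$ the pulled-back map must act as the identity and the dimension count degenerates to the statement that the two exponents coincide, which is what forbids $\modM^{n}\cong\modM^{n'}$ for distinct $n,n'$ once $\min(n,n')<m_1$. Modulo these adjustments the contradiction is produced exactly as in Theorem~\ref{3.7}.
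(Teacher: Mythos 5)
Your overall strategy diverges from the paper's, and the divergence matters because the part you defer is precisely where your plan has no mechanism to succeed. The paper does \emph{not} build a new ring with $m_2$ idempotents and a Schr\"oder--Bernstein pair ${\cal Z}_1,{\cal Z}_2$; it keeps the ring $\ringS_1$ of Theorem \ref{3.7} for $m(\ast)=m_2-m_1$ unchanged, and encodes the threshold $m_1$ by a separate device: it chooses strictly decreasing infinite cardinals $\lambda_{m_1}>\cdots>\lambda_0$, forms the tree $I=\bigcup_{m\le m_1}\prod_{\ell<m}\lambda_\ell$ of depth $m_1$, sets $\modM^\otimes=\bigoplus_{\eta\in I}\modM^\otimes_\eta$ with each $\modM^\otimes_\eta\cong\modM^\ast$, and lets each operator act by sending the $\eta$-component into the $f(\eta)$-component, where $f$ maps a node to its parent (and fixes the root). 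The distinct cardinals at the successive levels of $I$ are what make the powers below the threshold distinguishable. Your proposal contains no analogue of this ingredient.

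Concretely, the gap is in your treatment of the ``low exponents''. In your ring all $m_2$ blocks $\modP_0,\dots,\modP_{m_2-1}$ are mutually isomorphic via ${\cal W}$ (you impose ${\cal W}^{m_2}=1$ and the full cyclic-shift relations), so nothing is ``frozen'': the assertion that on the first $m_1$ blocks ``the pulled-back map must act as the identity'' is not supported by, and is in tension with, your own relations. What you must rule out is, e.g., $\modM\cong\modM^{1+r}$ when $m_1>1$; this does not follow formally from $\modM^{m_1}\cong\modM^{m_1+r}$ (it would be a cancellation), so it has to be excluded by the concrete bimodule ${}_{\fieldD}\modM^\ast$ through the dimension count. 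But you never construct that bimodule: the $W_0$-indexed module of Step A) of Theorem \ref{3.7} is tailored to a single involution ${\cal Z}$ identifying $\modM^\ast_0$ with $\bigoplus_{\ell=1}^{m(\ast)-1}\modM^\ast_\ell$, and there is no reason the same generators with the same action realize your pair ${\cal Z}_1,{\cal Z}_2$ collapsing $m_2$ blocks onto $m_1$ blocks, nor that the resulting $\ringS$ is $\ringT$-free, nor that the invariants ${\bf n}_\ell$ remain well defined and produce a non-integer. Until the bimodule, the freeness proof, and the recursion for the new relations are actually exhibited --- including whatever replaces the paper's decreasing cardinals $\lambda_\ell$ as the invariant separating $\modM^{n}$ from $\modM^{n+r}$ for $n<m_1$ --- the converse direction, which is the whole content of the corollary beyond Theorem \ref{3.7}, is unproven.
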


\begin{proof}
	Let $m(\ast):=m_2-m_1$ and let $\ringS_1$ be the ring constructed in the proof of
	Theorem \ref{3.7}  with respect to $m(\ast)$. Let also $\modM^\ast$ be the
	$(\ringR,\ringS_1)$-bimodule constructed there. In particular, it is equipped with a structure of bimodule over $(\ringR,\ringS_1)$. Choose cardinals
	$$
	\lambda_{m_1}>\lambda_{m_1-1}>\ldots >\lambda_0>
	\|\modM^*\|+\|\ringR\|+\|\ringS_1\|+\aleph_0.
	$$
	Let $I:=\bigcup\limits_{m\leq m_1}\prod\limits_{\ell=0}^{m-1}\lambda_\ell$
	and define a function
	$f:I\longrightarrow I$ by the following rules:
$$
	f(\eta):=\left\{\begin{array}{ll}
	\eta\restriction k & \mbox{ if } \eta \in I, \mbox{ and } \lg(\eta)=k+1\\
	\eta & \mbox{ if } \eta=\langle\rangle.
	\end{array} \right.
	$$

	\noindent Let $\modM^\otimes$ be a bimodule which is
	$\mathop{\oplus}\limits_{\eta\in I}
	\modM^\otimes_\eta$ with $\modM^\otimes_\eta\cong \modM^\ast$
	for each $\eta\in I$ and  let $h_\eta: \modM^\ast
	\longrightarrow \modM^\otimes_\eta$ be such an isomorphism.
	
	For every endomorphism ${\cal Y}$ of $\modM^\ast$
	we define an endomorphism ${\cal Y}^\otimes\in\End_{\ringR}(\modM^\otimes)$
	of $\modM^\otimes$ as an $\ringR$-module as follows. We are going  to define the action of
	${\cal Y}^\otimes$ on each $\modM^\otimes_\eta$ ($\eta\in I$). To this end, we take $x\in
	\modM^\otimes_\eta$ and define the action via:
	$$
	x {\cal Y}^\otimes:=h_{f(\eta)}\left((h^{-1}_\eta(x){\cal Y})\right).
	$$
	Let $\ringS^\otimes$ be the subring of the ring of endomorphisms of
	$\modM^\otimes$ as an $\ringR$-module generated by
	$$
	\{1^\otimes, {\cal X}^\otimes_0, {\cal X}^\otimes_1, \ldots , {\cal X}^
	\otimes_{m(\ast)}, {\cal W}^\otimes, {\cal Z}^\otimes\}.\footnote{Each one is a member of $\ringS$ and as is
	such an endomorphism of $\modM^\otimes$ as
	an $\ringR$-module.}
	$$
	 Note also that $1^\otimes$ is not a unit in $\ringS^\otimes$.
	Now we continue as in the proof of Theorem \ref{3.3}, with $\modM^\otimes$
	here instead of $\modM$ there, and $\ringS^\otimes$ here instead of $\ringS$ there. We leave the details to the reader.
\end{proof}

\begin{theorem}
	\label{4.7} Let $\lambda=(\mu^{\aleph_0})^+>|\ringR|$ be a regular cardinal.
	Then there are $\ringR$-modules $\modM_1$
		and $\modM_2$ of cardinality $\lambda$ such that:
		\begin{enumerate}
	\item	$\modM_1$, $\modM_2$ are not isomorphic,
		
	\item	$\modM_1$ is isomorphic to a direct summand of $\modM_2$,
		
	\item	$\modM_2$ is isomorphic to a direct summand of $\modM_1$.
	\end{enumerate}
\end{theorem}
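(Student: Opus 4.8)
The plan is to follow the proof of Theorem~\ref{3.3} almost verbatim, replacing the auxiliary ring tailored there to the square problem by one designed for the Schr\"oder--Bernstein problem, and then to obtain the failure of $\modM_1\cong\modM_2$ from the ``somewhat definable'' behaviour of $\ringR$-endomorphisms together with a cardinality computation on a bimodule built over ordinals. First I would fix $\ringT:=\langle 1_\ringR\rangle_\ringR$ and note that $\lambda=(\mu^{\aleph_0})^+$ is regular with $|\alpha|^{\aleph_0}\leq\mu^{\aleph_0}<\lambda$ for every $\alpha<\lambda$, so that Theorem~\ref{nice construction lemma}(2) applies to a context $\frakss=({\cal K},\modM_\ast,\callE,\ringR,\ringS,\ringT)$ of the type furnished by Lemma~\ref{3.2} --- with $\ringS$ the auxiliary ring below, $\modM_\ast$ small, and $\callE$ consisting of simple non-trivial ${\mathfrak e}$ (such exist by Lemma~\ref{3.2}(a)) --- and produces, as in the proof of Theorem~\ref{3.3}, a strongly semi-nice construction $\bar\modM=\langle\modM_\xi:\xi\leq\lambda\rangle$. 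Put $\modP:=\modM_\lambda$ and let ${}_\ringR\modP$ denote $\modP$ viewed as a left $\ringR$-module.

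The auxiliary ring $\ringS$ would be presented just as in Step~A) of Theorem~\ref{3.3}: the ring extending $\ringT$ generated by two idempotents $e_1,e_2$ together with elements $u_1,v_1,u_2,v_2$ subject to a list of relations $(\ast)$ chosen so that, for the natural action of $\ringS$ on $\modP$, the $e_\ell$ cut out summands ${}_\ringR\modM_\ell:={}_\ringR\modP e_\ell$ of ${}_\ringR\modP$, and so that $v_1u_1,u_1v_1$ exhibit ${}_\ringR\modM_1$ as a direct summand of ${}_\ringR\modM_2$ while $v_2u_2,u_2v_2$ exhibit the reverse inclusion --- without forcing either composite to be an isomorphism. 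As in Theorem~\ref{3.3}, the precise list $(\ast)$ is dictated by the requirement that it be satisfied by the auxiliary bimodule $\modM^\otimes$ constructed below, and writing $(\ast)$ down and verifying it on $\modM^\otimes$ is exactly what makes $\modM^\otimes$ into an $(\ringR,\ringS)$-bimodule. With these definitions, clauses (2) and (3) of the theorem are immediate from the relations $(\ast)$, since they assert only positive facts about the $\ringS$-action on $\modP$; no appeal to the black box is needed for them, just as in Step~C) of Theorem~\ref{3.3}. One then checks, by a monomial normal-form argument parallel to Steps~D)--E) of Theorem~\ref{3.7}, that $(\ringS,+)$ is a free $\ringT$-module, which makes Lemma~\ref{solfree}, Corollary~\ref{3.5} and Lemma~\ref{2.16} available.

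For clause (1), suppose toward a contradiction that there is an isomorphism $f:{}_\ringR\modM_1\cong{}_\ringR\modM_2$. As in Step~D) of Theorem~\ref{3.3}, I would glue $f$ together with $u_1,v_1,u_2,v_2$ into a single $\ringR$-endomorphism $\fucF$ of ${}_\ringR\modP$ which, with the images of the $e_i$, satisfies a finite system of ring-type equations --- the ``$\fucF$-analogue'' of $(\ast)$. Fixing one simple non-trivial ${\mathfrak e}\in\callE^\frakss$, Lemma~\ref{groupsGn} (using $\kappa=\lambda$ and $\|\modM_\alpha\|<\lambda$ for some $\alpha$, which holds since $\modM_0=\modM_\ast$ is small) produces $n(\ast)<\omega$ and $z\in\modL^{{\mathfrak e},\ast}_{n(\ast)}$ such that this system holds, modulo $\varphi^{\mathfrak e}_\omega$ and a ``small'' ($<\lambda$) error term, in $\End\bigl({\mathfrak B}^{\mathfrak e}_{n(\ast)}(\modM)\bigr)$ for \emph{every} bimodule $\modM$ once $\fucF$ is replaced by ${\bf h}^{{\mathfrak e},n(\ast)}_{\modM,z}$ and $e_i,u_i,v_i$ are interpreted naturally; here one also invokes Lemma~\ref{simplephiequalpsi} to identify $\psi^{\mathfrak e}_n$ with $\varphi^{\mathfrak e}_n$. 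Extracting the clean equations satisfied \emph{universally} by ${\bf h}^{{\mathfrak e},n(\ast)}_{\cdot,z}$ (no error term survives this extraction, as in Remark~\ref{2.5}) sets up the cardinality argument.

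Finally, I would build the bimodule $\modM^\otimes=\bigoplus_{i<\gamma}\modM^\otimes_i$ exactly as $\modM^\otimes$ was built in Step~B) of Theorem~\ref{3.3}, over two additively indecomposable ordinals $\alpha<\beta\leq\gamma$ with $|\alpha|<|\beta|$, the action of $e_1,e_2,u_1,v_1,u_2,v_2$ being given by ordinal shifts and truncations so that $(\ast)$ holds. Running the universal equations on $\modM=\modM^\otimes$, the system forces an isomorphism of Abelian groups
\[
\frac{\varphi^{\mathfrak e}_{n(\ast)}\bigl(\sum_{i<\beta}\modM^\otimes_i\bigr)}{\varphi^{\mathfrak e}_\omega\bigl(\sum_{i<\beta}\modM^\otimes_i\bigr)}\ \stackrel{\cong}{\longrightarrow}\ \frac{\varphi^{\mathfrak e}_{n(\ast)}\bigl(\sum_{i<\alpha}\modM^\otimes_i\bigr)}{\varphi^{\mathfrak e}_\omega\bigl(\sum_{i<\alpha}\modM^\otimes_i\bigr)},
\]
whose two sides, by non-triviality of ${\mathfrak e}$ (so each $\varphi^{\mathfrak e}_{n(\ast)}(\modM^\otimes_i)/\varphi^{\mathfrak e}_\omega(\modM^\otimes_i)\neq 0$) together with Claim~\ref{pr properties}(5) for the direct-sum decomposition, have cardinalities $|\beta|$ and $|\alpha|$; choosing $|\beta|>|\alpha|$ yields the contradiction, so ${}_\ringR\modM_1\not\cong{}_\ringR\modM_2$, and $\|\modM_1\|=\|\modM_2\|=\lambda$ by construction. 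The main obstacle --- precisely as in Theorem~\ref{3.3} --- is choosing the relations $(\ast)$ and the action on $\modM^\otimes$ so that simultaneously $(\ast)$ genuinely holds on $\modM^\otimes$, clauses (2)--(3) are forced on $\modP$, an isomorphism ${}_\ringR\modM_1\cong{}_\ringR\modM_2$ produces exactly the extra equation needed, and that equation, transferred to ${\bf h}^{{\mathfrak e},n(\ast)}_{\modM^\otimes,z}$, produces a size mismatch rather than a consistent identity.
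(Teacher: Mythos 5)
Your setup is on track: fixing $\ringT=\langle 1_\ringR\rangle_\ringR$, taking a context as in Lemma \ref{3.2}, running a strongly semi-nice construction, reading off clauses (2) and (3) directly from the relations of an auxiliary ring $\ringS$, proving $(\ringS,+)$ is a free $\ringT$-module by a monomial normal form, and reducing clause (1) via Lemma \ref{2.16} and Corollary \ref{3.5} to the unsolvability of a finite system of equations in ${\bf D}\otimes_\ringT\ringS$ --- all of this matches the paper's proof. The difficulty is concentrated in the last step, and there your plan breaks down.

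The final contradiction you propose --- transplanting Step B) and Step D) of Theorem \ref{3.3}, building $\modM^\otimes=\bigoplus_{i<\gamma}\modM^\otimes_i$ over additively indecomposable ordinals and deriving a mismatch between the cardinalities $|\beta|$ and $|\alpha|$ of $\varphi^{\mathfrak e}_{n(\ast)}/\varphi^{\mathfrak e}_\omega$ of the two pieces --- cannot work for the Schr\"oder--Bernstein problem. The invariant $\modN\mapsto |\varphi^{\mathfrak e}_{n}(\modN)/\varphi^{\mathfrak e}_\omega(\modN)|$ is monotone under direct summands (a summand of $\modN$ contributes a direct summand of the quotient group, by Claim \ref{pr properties}(5)). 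But your relations $(\ast)$ must, by design, force clauses (2) and (3): on \emph{every} bimodule satisfying $(\ast)$, including your test bimodule $\modM^\otimes$, the piece cut out by $e_1$ is a summand of the piece cut out by $e_2$ and vice versa. Hence the two cardinalities you want to be $|\beta|$ and $|\alpha|$ are automatically equal, for any choice of ordinals, before the hypothetical isomorphism is even used; no infinite-cardinal invariant can separate two modules that are mutually direct summands of one another. This is precisely why the cancellation problem (Theorem \ref{3.3}), where $\modM_1$ and $\modM_2$ need not embed in each other at all, admits the ordinal-cardinality argument, while Theorem \ref{4.7} does not. The paper instead takes the concrete $\mathbb{Z}$-indexed bimodule $\modM^\ast_{\bf D}=\sum_{i\in\mathbb{Z}}{\bf D}x_i$ with the shift $f(i)=i\pm 1$, where the defect between the even-indexed and odd-indexed parts is a \emph{single} generator, and the contradiction (Step G)) is a codimension-one computation: a putative ${\cal Y}$ with ${\cal Y}^2=1$ would make $\modM^{\rm stp}_1/\modN$ and $\modM^{\rm pos}_1/\modN$ isomorphic free ${\bf D}$-modules whose numbers of generators differ by $1$. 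You need a finite-index invariant of this kind; replacing your last step by such a computation on a concretely presented $\ringS_0$-module is unavoidable.
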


\begin{proof} Let $\ringT$ be the subring of $\ringR$ which $1$ generates.
	As before, we need to choose a ring $\ringS$
	(essentially
	the ring of
	endomorphisms
	we would like).

\textbf{Step A)}: To make things  easier, we first introduce
a ring	  $\ringS_0$ which is easy compared to  $\ringS$.

Let $A_1$ (resp. $A_{-1})$  be the set of even
	(resp. odd) integers and let $f$ be the following function:
	$$
	f(i):=\left\{\begin{array}{ll}
	i+1 & \mbox{if }i\geq 0,\\
	i-1 & \mbox{if }i<0.
	\end{array}\right.
	$$
	Thus, $f$ maps $A_1$ (resp. $A_{-1}$) into $A_{-1}$ (resp. $A_1$). Also, $A_1\setminus
	\Rang (f\restriction A_{-1})=\{0\}$ and $A_{-1}\setminus\Rang (f\restriction
	A_1)=\{-1\}$. Let $i$  vary on the
	integers. Let $\ringS_0$ be the ring extending $\ringT$
	generated freely by $\{{\cal X}_1, {\cal X}_{-1}, {\cal W}_1,
	{\cal W}_{-1}, {\cal Z}_1, {\cal Z}_{-1}\}$. Let
	${\bf D}$ be a field \st\ if $\|\ringT\|$ is finite, then the
	characteristic of ${\bf D}$ is finite and divides $\|\ringT\|$. Then, we set
	$\ringS_{\bf D}^*={\bf D}\mathop{{\otimes}}\limits_\ringT \ringS_0$ (see Theorem \ref{3.7}).
	%We  denote the
	%ring $\ringS^*$  by $\fieldD\otimes \ringS$ (see \ref{3.7}).
	
	We are going to define a right $({\bf D}\mathop{{\otimes}}\limits_\ringT \ringS_0)$--
	module  $\modM^\ast_{{\bf D}^\ast}$.
	We first define a left ${\bf D}$-module structure over $\modM:=\sum
	\{{\bf D} x_i:i\in{\mathbb Z}\}$ with
	$b(\sum\limits_i a_ix_i)=\sum\limits_i(ba_i)x_i$ where $a_i$ and  $b$ are in ${\bf D}$. As mentioned, we
	like to make $\modM$ a right $({\bf D}\mathop{{\otimes}}\limits_\ringT \ringS_0)$--module which
	will be called $\modM^*_{\bf D}$.
	Let  $x\in \modM$ and $c\in
	{\bf D}\mathop{{\otimes}}\limits_\ringT \ringS_0$.
	To define  $xc$, as
	${\bf D}$ and $\ringS_0$ commute, it is enough to define
	it for $x=x_i$ and $c\in\{{\cal X}_1,{\cal X}_{-1},{\cal W}_1,
	{\cal W}_{-1}, {\cal Z}_1, {\cal Z}_{-1}\}$.
	So let
	$$
	x_i{\cal X}_1:=\left\{\begin{array}{ll}
	x_i &\mbox{if } i\in A_1,\\
	0 &\mbox{if } i\in A_{-1},
	\end{array} \right.
	$$
	$$
	x_i{\cal X}_{-1}:=\left\{\begin{array}{ll}
	0 &\mbox{if } i\in A_1,\\
	x_i &\mbox{if } i\in A_{-1},
	\end{array}\right.
	$$
	$$
	x_i{\cal W}_1:=x_{f(i)},
	$$
	$$
	x_i{\cal W}_{-1}:=\left\{\begin{array}{ll}
	x_{f^{-1}(i)} &\mbox{if } i\in{\rang(f)}  ,\\
	0 &\mbox{otherwise,}
	\end{array}\right.
	$$
	$$
	x_i{\cal Z}_1:=\left\{\begin{array}{ll}
	x_i &\mbox{if } i\in A_1\cap ({\mathbb Z}\setminus \{0\}),\\
	0 &\mbox{otherwise,}
	\end{array}\right.
	$$and
	$$
	x_i{\cal Z}_{-1}:=\left\{\begin{array}{ll}
	x_i &\mbox{if } i\in A_{-1}\cap ({\mathbb Z}\setminus\{0\}),\\
	0 &\mbox{otherwise.}
	\end{array}\right.
	$$
	These make $\modM^\ast_{\bf D}$ a right
	$\ringS^*_{\bf D}$--module. Let $g^\ast_{{\bf D}}$ be the natural ring \homo\
	from $\ringS_0$  into $\ringS^*_{\bf D}$. By using
	$g^\ast_{{\bf D}}$, the $\ringS^*_{\bf D}$--module $\modM^\ast_{\bf D}$ becomes a right $\ringS_0$-module.
	\medskip

\textbf{Step B)}:\quad In this step we define the auxiliary ring $\ringS$.

Let $\ringS$ be the ring
	(with 1, associative
	but not necessarily commutative) extending $\ringT$
	generated by ${\cal X}_1$, ${\cal X}_{-1}$,
	${\cal W}_1$, ${\cal W}_{-1}$, ${\cal Z}_1$, ${\cal Z}_{-1}$
	freely except the following equations (to
	understand them see below):
	\begin{enumerate}
		\item[$(\ast)$]
		$\sigma=0$ if $\sigma$ is a term\footnote{i.e. in the language
			of rings, in the variables ${\cal X}_1,{\cal X}_{-1},{\cal W}_1,{\cal W}_{-1},
			{\cal Z}_1,{\cal Z}_{-1}$},
		$\modM^\ast_{\bf D}\sigma=0$ for $\modM^\ast_{\bf D}$ as defined in Step A),
		for every field ${\bf D}$ such that if  $\ringT$ is of finite cardinality $n$, then
	 $\Char({\bf D})|n$.
	\end{enumerate}
	
	Let us state some of the relations, which follow from $(\ast)$, and we will use them more in the rest of our argument.
	
	\begin{description}
		\item[$(\star)_1 $] ${\cal X}^2_1={\cal X}_1$,\\
		${\cal X}^2_{-1}={\cal X}_{-1}$,\\ ${\cal X}_1+{\cal X}_{-1}=1$,\\
		${\cal X}_1{\cal X}_{-1}= {\cal X}_{-1}{\cal X}_1=0$  \\
		${\cal Z}^2_1={\cal Z}_1$, ${\cal Z}_1{\cal X}_1={\cal Z}_1=
		{\cal X}_1{\cal Z}_1$ \\
		${\cal X}_{-1}{\cal W}_1=
		({\cal X}_{-1}{\cal W}_1)
		{\cal X}_1{\cal Z}_1$
		\\${\cal X}_1{\cal Z}_1{\cal W}_{-1}=
		({\cal X}_1{\cal Z}_1{\cal W}_{-1}){\cal X}_{-1}$
		\\${\cal X}_{-1}{\cal W}_1{\cal W}_{-1}={\cal X}_{-1}$ and\\
		${\cal X}_1{\cal Z}_1{\cal W}_{-1}{\cal W}_1
		={\cal Z}_1={\cal X}_1{\cal Z}_1$.
	\end{description}
	
 Let us show, for example, that
	${\cal X}_1+{\cal X}_{-1}=1$. We need to show the left hand side is the identity
	map when we consider it as an endomorphism of $\modM^\ast_{\bf D}$. To
	this end, we evaluate ${\cal X}_1+{\cal X}_{-1}$  at any generator of $\modM^\ast_{\bf D}$, say  $ x_i$. Recall that
	$$
	x_i{\cal X}_1=\left\{\begin{array}{ll}
	x_i &\mbox{if } i\in A_1,\\
	0 &\mbox{if } i\in A_{-1},
	\end{array} \right.
	$$
	$$
	x_i{\cal X}_{-1}=\left\{\begin{array}{ll}
	0 &\mbox{if } i\in A_1,\\
	x_i &\mbox{if } i\in A_{-1},
	\end{array}\right.
	$$Thus, $x_i({\cal X}_1+{\cal X}_{-1})=x_i$, as claimed. The other relations will follow in the same way.

	\textbf{Step C)}: Here, we introduce the $\ringR$-modules $\modM_1$
	and $\modM_2$ of cardinality $\lambda$ such that:\begin{enumerate}
		\item[(i)] 	$\modM_1$ is isomorphic to a direct summand of $\modM_2$ and
		\item[(ii)]	$\modM_2$ is isomorphic to a direct summand of $\modM_1$.
	\end{enumerate}

	Let $\langle \modM_\alpha:\alpha\leq\lambda\rangle$ be a strongly semi-nice construction for
	$(\lambda,{\frakss},S, \lambda)$ and let $\modM:=\modM_\lambda$. Recall that such a thing exists.
	Let $\modM_1:=\modM {\cal X}_1$ and
	$\modM_{-1}:=\modM   {\cal X}_{-1}$.
	Since ${\cal X}_1{\cal X}_{-1}= {\cal X}_{-1}{\cal X}_1=0$, we have
	$\modM_1\cap\modM_{-1}=0$.
	Thanks to the formula ${\cal X}_1+{\cal X}_{-1}=1$,
	$$\modM=\modM({\cal X}_1+{\cal X}_{-1})=\modM {\cal X}_1+\modM{\cal X}_{-1}=\modM_1+ \modM_{-1}=\modM_1\oplus \modM_{-1}.$$
	Then
	$\modM_1$ and $\modM_{-1}$ are equipped with $\ringR$-module structure, and there is the identification  $\modM=\modM_1\oplus \modM_{-1}$.
	We
	shall show that $\modM_1$, $\modM_{-1}$ are as required in
	Theorem \ref{4.7} (with respect to $\modM_1$ and $\modM_2$).

The relations ${\cal Z}^2_1={\cal Z}_1$ and ${\cal Z}_1{\cal X}_1={\cal Z}_1=
	{\cal X}_1{\cal Z}_1$ imply that $$\modM_1=\modM_1(1-{\cal Z}_1)\oplus
	\modM_1{\cal Z}_1,$$
	i.e., $\modM_1{\cal Z}_1$
	is a direct summand of $\modM_1$.
	Since ${\cal X}_{-1}{\cal W}_1=
	({\cal X}_{-1}{\cal W}_1)
	{\cal X}_1{\cal Z}_1$, we have
	$$\modM_{-1}{\cal W}_1=\modM   {\cal X}_{-1}{\cal W}_1=\modM({\cal X}_{-1}{\cal W}_1){\cal X}_1{\cal Z}_1\subset\modM {\cal X}_1{\cal Z}_1=\modM_1{\cal Z}_1.$$Thus,
	${\cal W}_{1}$ maps $\modM_{-1}$
	into $\modM_1{\cal Z}_1$.
	By using the formula ${\cal X}_1{\cal Z}_1{\cal W}_{-1}=
	({\cal X}_1{\cal Z}_1{\cal W}_{-1}){\cal X}_{-1}$ we observe that
	$$ \modM_1{\cal Z}_1{\cal W}_{-1}=\modM {\cal X}_1{\cal Z}_1{\cal W}_{-1}=\modM {\cal X}_1{\cal Z}_1{\cal W}_{-1} {\cal X}_{-1}\subset\modM   {\cal X}_{-1}=\modM_{-1}.$$
	In other words,
	${\cal W}_{-1}$ maps $\modM_1{\cal Z}_1$
	into $\modM_{-1}$. We are going to combine the formula ${\cal X}_{-1}{\cal W}_1{\cal W}_{-1}={\cal X}_{-1}$ along with 
	${\cal X}_1{\cal Z}_1{\cal W}_{-1}{\cal W}_1
	={\cal Z}_1={\cal X}_1{\cal Z}_1$ to deduce that
	the multiplication maps by ${\cal W}_{1}$  and ${\cal W}_{-1}$  are the inverse of each other. This implies that $\modM_{-1}$ is
	isomorphic to a direct summand of $\modM_1$ (as left $\ringR$-modules).
		Similarly, we obtain: $$\modM_{-1}=\modM_{-1}(1-{\cal Z}_{-1})\oplus \modM_{-1}
	{\cal Z}_{-1}.$$So, $\modM_{-1}{\cal Z}_{-1}$ is
	a direct summand of $\modM_{-1}$ and $\modM_{-1}{\cal Z}_{-1}$
	is isomorphic to $\modM_1$. By the same argument, $\modM_1$ is isomorphic
	to a direct summand of $\modM_2$  as left $\ringR$-modules. In summary, we showed that
\begin{enumerate}
		\item[$\bullet$] 		 $\modM_1\cong \modM_1(1-{\cal Z}_1)\oplus \modM_{-1}$ and
		\item[$\bullet$]	$\modM_{-1}\cong \modM_{-1}(1-{\cal Z}_{-1})\oplus
		\modM_1.$
	\end{enumerate}
	This completes the proof of Step C).

	It remains to show that $\modM_1\not\cong \modM_{-1}$. Suppose on the contrary
	that  they are isomorphic, and we  get a contradiction, which is presented at Step H)  below.
	Let $\mathfrak e \in \calE^{\frakss}$ be simple.

		\textbf{Step D)}:	There is a solution     ${\cal Y}\in\ringdE^{\mathfrak e}_n$ to  the following equations:
		\begin{description}
			\item[$(\ast)_2$]\qquad ${\cal X}_1{\cal Y}{\cal X}_{-1}={\cal X}_1{\cal Y}$,
			\quad ${\cal X}_{-1}{\cal Y}{\cal X}_1={\cal X}_{-1}{\cal Y}$,
			\quad ${\cal Y}{\cal Y}=1$.
		\end{description}

To see this, recall that  $\modM_1\cong \modM_{-1}$
and $\modM=\modM_1\oplus \modM_{-1}$.
Let $h$ be an isomorphism from $\modM_1$ onto $\modM_{-1}$.
	 Define $\fucF: \modM \to \modM$ by
	$$(a,b) \in \modM=\modM_1\oplus \modM_{-1} \implies \fucF(a,b)=(h^{-1}(b),h(a)).$$
	So,
	$\fucF\in
	\End_\ringR(\modM)$ and it satisfies $\fucF\restriction
	\modM_1=h$ and $\fucF\restriction \modM_{-1}=
	h^{-1}$. The member in $\ringdE^{\mathfrak e}_n$ which $\fucF$ induces   solves the equations in $(\ast)_2$.  This completes the proof of Step D).
	\medskip
	
In the light of Corollary \ref{3.5}, it is enough to prove
	the following two items:
	\begin{enumerate}
		\item[(a)] in ${\bf D}\mathop{{\otimes}}\limits_\ringT \ringS_0$ there is
		no solution to
		$(\ast)_2$, in particular, there is no such ${\cal Y}$. Note that $\ringS^*_{{\bf D}}$
		have the same characteristic as ${\bf D}$.
		\item[(b)] $\ringS_0$ is a free $\ringT$--module.
	\end{enumerate}
	Clearly $\ringS$ is a $\ringT$-module, generated by the set of monomials in
	\[\{{\cal X}_1, {\cal X}_{-1}, {\cal W}_1, {\cal W}_{-1}, {\cal Z}_1,
	{\cal Z}_{-1}\}.\]
	Our aim  is to show that $\ringS$ is a free $\ringT$-module;
	in fact we shall exhibit explicitly
	a free basis. For $\ell\in\{1,-1\}$, $k\in {\mathbb Z}$, $n\geq 0$, $n\geq
	-k$, we define an endomorphism $\fucF^\ell_{k,n}$
	of $\modM^\ast_{\bf D}$ by
	$$
 \fucF^\ell_{k,n}(	x_i):=\left\{\begin{array}{ll}
	x_{f^k(i)}&\mbox{  if } f^{-n}(i)\mbox{ is well defined and }
	x_i\in A_\ell,\\
	0 &\mbox{{otherwise}}
	\end{array}\right.
	$$
It is easy to see that it is an endomorphism of
	$\modM^\ast_{\bf D}$ as a left ${\bf D}$-module. We will
	define   a monomial
	${\cal Y}^\ell_{k,n}$, in the following way. For every monomial $\sigma$
	 let $\sigma^0$
	 be $1=\id_{\modM^\ast_{\bf D}}$ and remember $n\geq -k$ so $n+k\geq 0$. Now set
	$$
	{\cal Y}^\ell_{k,n}={\cal X}_\ell({\cal W}_{-1})^n{\cal W}^{n+k}_1.
	$$
	It is easy to see that the operation of ${\cal Y}^\ell_{k,n}$
	on $\modM^\ast_{\bf D}$ by right multiplication,
	is equal to $\fucF^\ell_{k,n}$.
	
	Let
$$\mathcal{G}:=\{{\cal Y}^\ell_{k,n}:(\ell, k, n)\in w\}$$
	where
	$
	w=\{(\ell,k,n):\ \ell\in \{1,-1\},\ k\in {\mathbb Z},\ n\geq 0,\ k+n\geq
	0\}.
	$
	
In the next step, we show that
	$
	\mathcal{G}
	$
	generates $\ringS$ as a $\ringT$-module.
	\medskip
	
\textbf{Step E)}:\quad The set
	$ \mathcal{G}$
	generates $\ringS$ as a $\ringT$-module.
	
	It is enough to show that for every monomial $\sigma$, some equation $\sigma=
	\sum a^\ell_{n,k}{\cal Y}^\ell_{k,n}$ holds in $\ringS$,
	 where $\{(\ell,n,k):a^\ell_{n,
		k}\neq 0\}$ is finite and $a^\ell_{k,n}\in \ringT$, i.e., it holds in the ring of
	endomorphism of $\modM^\ast_{\bf D}$. We prove this by induction on the length of the monomial $\sigma$.
	
	If the length is zero, $\sigma$ is 1. Recall from $(\star)_1$ that $1={\cal X}_1+{\cal X}_{-1}$.
	By definition, ${\cal X}_\ell={\cal Y}^\ell_{0,0}$; so $1={\cal Y}^1_{0,0}+{\cal Y}^{-1}_
	{0,0}$ as required.
	
	If the length is $>0$, by the induction hypothesis it is enough to prove the following:
	\begin{enumerate}
		\item[$(\ast)$] \quad Let $\tau\in\{{\cal X}_1,{\cal X}_{-1},
		{\cal W}_1,{\cal W}_{-1},{\cal Z}_1, {\cal Z}_{-1}\}$. Then
	 ${\cal Y}^{\ell(\ast)}_{k(\ast),n(\ast)}\tau$
		is equal to some
		$\sum\limits_{\ell,k,n} a^\ell_{k,n}{\cal Y}^\ell_{k,n}$.
	\end{enumerate}
	
	Indeed, it is enough to check equality on the generators of
	$\modM^\ast_{\bf D}$, that is the $x_i$'s. The proof     of $(\ast)$ is  divided
	into three cases:

	\noindent\underline{Case 1:}\quad ${\cal Y}^{\ell(\ast)}_{k(\ast),n
		(\ast)}{\cal W}_\ell$ is:
	\[\begin{array}{rcl}
	{\cal Y}^{\ell(\ast)}_{k(\ast)+1,n(\ast)}& \mbox{ if }  &\ell=1\\
	{\cal Y}^{\ell(\ast)}_{k(\ast)-1,n(\ast)}& \mbox{ if }  &\ell=-1 \mbox{ and }\
	k(\ast)+ n(\ast)>0,\\
	{\cal Y}^{\ell(\ast)}_{k(\ast)-1,n(\ast)+1}&  \mbox{ if  }  &\ell=-1 \mbox{ and }\
	k(\ast)+ n(\ast)=0.
	\end{array}\]
	
	First assume that $\ell=1$. Then  \[\begin{array}{ll}
	{\cal Y}^{\ell(\ast)}_{k(\ast),n
		(\ast)}{\cal W}_\ell&={\cal X}_{1}({\cal W}_{-1})^n{\cal W}^{n+k}_1{\cal W}_1\\
	&={\cal X}_{1}({\cal W}_{-1})^n{\cal W}^{n+k+1}\\
	&={\cal Y}^{\ell(\ast)}_{k(\ast)+1,n(\ast)}.
	\end{array}\]
	Now assume that $\ell=-1$ and $k(\ast)+ n(\ast)>0$.  If $i\in\rang(f)$,
	then $x_i{\cal W}_1{\cal W}_{-1}=x_i$. From
	this, $$x_i{\cal Y}^{\ell(\ast)}_{k(\ast),n
		(\ast)}{\cal W}_\ell=x_i{\cal Y}^{\ell(\ast)}_{k(\ast)-1,n(\ast)}.$$
	If $i\notin\rang(f)$, then $i\in\{0,1\}$. It is easy to see that
	$$x_0{\cal Y}^{\ell(\ast)}_{k(\ast),n
		(\ast)}{\cal W}_{-1}=x_0{\cal Y}^{\ell(\ast)}_{k(\ast)-1,n(\ast)}$$
	and $$x_{-1}{\cal Y}^{\ell(\ast)}_{k(\ast),n
		(\ast)}{\cal W}_{-1}=x{-1}{\cal Y}^{\ell(\ast)}_{k(\ast)-1,n(\ast)}.$$
	Thus, the functions  ${\cal Y}^{\ell(\ast)}_{k(\ast),n
		(\ast)}{\cal W}_{-1}$ and ${\cal Y}^{\ell(\ast)}_{k(\ast)-1,n(\ast)}$ are the same.
	Finally, assume that $\ell=-1$ and $k(\ast)+ n(\ast)=0$. Then
	\[\begin{array}{ll}
	{\cal Y}^{\ell(\ast)}_{k(\ast),n
		(\ast)}{\cal W}_{-1}&={\cal X}_{-1}({\cal W}_{-1})^n{\cal W}^{n+k}_1{\cal W}_{-1}\\
	&={\cal X}_{-1}({\cal W}_{-1})^n {\cal W}_{-1}\\
	&={\cal X}_{-1}({\cal W}_{-1})^{n+1}\\
	&={\cal Y}^{\ell(\ast)}_{k(\ast)-1,n(\ast)+1}.
	\end{array}\]
	This completes the argument.
	\medskip

	\noindent\underline{Case 2:}\quad ${\cal Y}^{\ell(\ast)}_{k(\ast),n(\ast)}
	{\cal X}_\ell$ is:
	$$\mbox{zero }\quad
	\mbox{  if }\quad
	[\ell(\ast)=\ell  \iff  k(\ast)\mbox{ odd}],
	$$
	$$
	{\cal Y}^{\ell(\ast)}_{k(\ast), n(\ast)}
	\mbox{if }\quad
	[\ell(\ast)=\ell  \iff   k(\ast)\mbox{ even}].
	$$
	The proof of this is similar to Case 1.
	\medskip	

	\noindent\underline{Case 3:}\quad ${\cal Y}^{\ell(\ast)}_{k(\ast),n(\ast)}
	{\cal Z}_\ell$ is:
	\[\begin{array}{rcl}
	{\cal Y}^{\ell(\ast)}_{k(\ast),n(\ast)}& \mbox{ if }   &n(\ast)+k
	(\ast)>0\mbox{
		and }[\ell(\ast)=\ell  \iff    k(\ast)\mbox{ even}],\\
	Y^{\ell(\ast)}_{k(\ast),n(\ast)+1}&  \mbox{ if  }   &n(\ast)+k(\ast)=0
	\mbox{ and }[\ell(\ast)=\ell  \iff    k(\ast)\mbox{ even}],\\
	\mbox{zero}&  \mbox{ if  }  &[\ell(\ast)=\ell \iff   k(\ast)\mbox{ odd}].
	\end{array}\]
  It is enough to check equality on the generators of
	$\modM^\ast_\fieldD$, that is the $x_i$'s. The proof is again similar to the proof of Case 1.
	\medskip
	
\textbf{Step F)}:\quad The set $\mathcal{G}=\{{\cal Y}^\ell_{k,n}:(\ell, k, n)\in w\}$
	generates $\ringS$ freely as a $\ringT$-module.
	
	In the light of Step E) we see that $\mathcal{G}$ generates $\ringS$ as a $\ringT$-module. Towards contradiction
	suppose that $0=\sum\{a^\ell_{k,n}{\cal Y}^\ell_{k,n}:(\ell,k,n)\in w\}$ in $\ringS$
	where $w\subseteq w^\ast$ is finite, $a^\ell_{k,n} \in \ringT$ and not
	all of them are zero. If $n=|\ringT|$ is finite, we take the field $\bf D$ such that  $\Char({\bf D})|n$, and  some $a^\ell_{k,n}$
	is not zero in ${\bf D}$. Hence, $0=\sum\{a^\ell_{k,n}:
	(\ell,k,n)\in w\}$  as an endomorphism of $\modM^\ast_{\bf D}$,
	a left ${\bf D}$-module where $a^\ell_{k,n}\in
	{\bf D}$ and $ w\subseteq w^\ast$ is  finite. We
	shall prove that $a^\ell_{k,n}=0$ for every $(\ell,k,n)\in w$.
	
	If $i\in A_1$ and $i\geq 0$, then
	\[\begin{array}{rlc}
	0&=x_i(\sum\limits_{(\ell,k,n)\in w} a^\ell_{k,n}{\cal Y}^\ell_{k,n})\\
	&=\sum\limits_{(\ell,k,n)\in w} a^\ell_{k,n}(x_i
	{\cal Y}^\ell_{k,n})\\&=
	\sum\limits_{ (\ell,k,n)\in w}\{a^\ell_{k,n}x_{i+k}:\ell=1,\ \mbox{ and }n\leq i\}
	\\
	&=\sum\limits_{j\geq 0}(\sum\limits_{(1,k,n)\in w}\{a^1_{k,n}:\  \ i\geq n,\
	i+k=j\}) x_j\\&=
	\sum\limits_{j\geq 0}(\sum\limits_{(1,j-i,n)\in w}\{a^1_{j-i, n}:\ i\geq n,\
	\})x_j.
	\end{array}\]
	Hence for every $i\in A_1$, $i\geq 0$ and $j\geq 0$ we have
	\begin{description}
		\item[$(\ast)^a_{i,j}$] \qquad\qquad $0=\sum\{a^1_{j-i, n}:\ n\geq 0,\ n\leq i \mbox{ and
		}n+(j-i)\geq 0\}$.
	\end{description}
	Similarly, for $i\in A_{-1}$, $i\geq 0$ (equivalently, $i>0$ as $i\in
	A_{-1}\ \Rightarrow\ i\neq 0$) and $j\geq 0$ we can prove
	\begin{description}
		\item[$(\ast)^b_{i,j}$] \qquad\qquad $0=\sum\{a^{-1}_{j-i,n}:\ n\geq 0,\ n\leq i\mbox{ and
		}n+(j-i)\geq 0\}$.
	\end{description}
	Similarly, for $i\in A_1$, $i<0$,
	\[\begin{array}{rlc}
	0&=x_i(\sum\limits_{(\ell,k,n)\in w}a^\ell_{k,n}
	{\cal Y}^\ell_{k,n})\\&=
	\sum\limits_{(\ell,k,n)\in w} a^\ell_{k,n}(x_i {\cal Y}^\ell_{k,n})\\
	&=\sum\limits_{(1,k,n)\in w}\{a^1_{k,n}x_{i-k}: \mbox{  } -i>n\}\\
	&=\sum\limits_{j<0}(\sum_{ (1,i-j,n)\in w}\{a^1_{i-j,n}: \mbox{  }
	n<-i\})x_j.
	\end{array}\]
	Hence for every $i\in A_1$, $i<0$ and $j<0$ we have
	\begin{description}
		\item[$(\ast)^c_{i,j}$] \qquad $0=\sum\{a^1_{i-j,n}: n\geq 0$ and $n+(i-j)\geq 0$
		and $n<-i\}$.
	\end{description}
	Similarly, for every $i\in A_{-1}$, $i<0$ and $j<0$
	\begin{description}
		\item[$(\ast)^d_{i,j}$] \qquad $0=\sum\{a^{-1}_{i-j,n}:\ n\geq 0\mbox{ and }n+(i-j)
		\geq 0\mbox{ and } n<-i\}$.
	\end{description}
	Choose, if possible, $(k,m)$ such that:
	\begin{enumerate}
		\item $(1,k,m)$ belongs to $w$,
		\item $a^1_{k,m}\neq 0$,
		\item $m$ is minimal under (1)+(2).
	\end{enumerate}
	Note that $m\geq 0$ by the definition of
	$w$. First assume that $m$ is even.  Let $i=m$ and $j=i+k$. So $i\in A_1$ (being even), $i\geq 0$ and
	$j=m+k$ is $\geq 0$ as $(1,k,m)\in w$. In the equation $(\ast)^a_{i,j}$
	the term $a^1_{k,m}$ appears in the sum, and for every other term $a^1_{k_1,
		m_1}$ which appears in the sum, we have $m_1< m$ (and $k_1=k$), and hence by
	(3) above is zero.
	So it follows that $a^1_{k,m}$ is zero, a contradiction.
	
	If $m$ is odd, we get a similar contradiction using $(\ast)^c_{i,j}$. Let
	$i=-m-1$ and $j=i-k$. Note that $m\geq 0$, hence $i<0$ and $i$ is even as $m$ is odd,
	so $i\in A_1$. Also $j=i-k=-m-1-k\leq -1<0$ (recalling $k+m\geq 0$ as
	$(1,k,m)\in w^*$). In the equation
	$(\ast)^c_{i,j}$, the term $a^1_{i-j,n}=a^1_{k,n}$ appears in the sum if and only if
	$0\leq n< -i=m+1$, and $n+(i-j)=n+k\geq 0$ (but if the later fails,
	$a^1_{k,m}$ is not defined) so $a^1_{k,m}$ appears, and if another term
	$a^1_{k_1,m_1}$ occurs then $m_1\leq m$ (and $k_1=k$). Hence $m_1<m$ and so
	$a^1_{k_1,m_1}=0$. Necessarily $a^1_{k,m}$ is zero, a contradiction.
	
	So $a^1_{k,n}=0$ whenever it is defined.
	
	Similarly $a^{-1}_{k,n}=0$ whenever it is defined (use $(\ast)^b_{i,j}+
	(\ast)^d_{i,j}$). So, $\ringS_0$ is a free module over $\ringT$, as required.
	\medskip
	
\textbf{Step G)}:\quad
	In this step we get a contradiction that
	we searched for it.  This will show that $\modM_1\not\cong \modM_{-1}$.
	
	To this end, recall
	   that there are    finitely  many nonzero $a^\ell_{k,n}\in
	{\bf D}$
	such that
	\begin{description}
		\item[$\boxtimes$]\qquad ${\cal Y}=\sum\left\{a^\ell_{k,n}
		{\cal Y}^\ell_{k,n}:\ n\geq
		0\mbox{ and }k+n\geq 0\mbox{ and }\ell\in \{1,-1\}\right\}.$
	\end{description}
	Let $n(\ast)<\omega$ be such that
	$$
	a^\ell_{k,n}\neq 0\quad\Rightarrow\quad |k|,n<n(\ast).
	$$
	For $\ell=1,-1$ let
	\[\begin{array}{lr}
	\modM^\pos_\ell:=\big\{\sum\limits_{i\geq 0} d_i x_i:&d_i\in {\bf D},\mbox{ all but finitely many of } d_i's \mbox{  are zero }\ \ \\
	&\mbox{ and }d_i\neq 0\ \Rightarrow\ i\in A_\ell\big\},\\
	\modM^\nega_\ell:=\big\{\sum\limits_{i<0}d_i x_i:& d_i\in
	{\bf D},\mbox{ all but finitely many of } d_i's \mbox{  are zero }\ \ \\
	&\mbox{ and }d_i\neq 0\ \Rightarrow\ i\in A_\ell\big\}.
	\end{array}\]
	Clearly as a ${\bf D}$-module
	$$
	\modM^\ast_{\bf D}=\modM^\pos_1\oplus \modM^\pos_{-1}\oplus \modM^\nega_1\oplus \modM^\nega_{-1}.
	$$
	Let ${\cal Y}^\pos_\ell:= {\cal Y}\restriction \modM^\pos_\ell$ and
	${\cal Y}^\nega_\ell:= {\cal Y}\restriction
	\modM^\nega_\ell$ for $\ell\in \{1,-1\}$.
	
	Now each ${\cal Y}^\ell_{k,n}$ maps $\modM^\pos=\modM^\pos_1\oplus
	\modM^\pos_{-1}$ to itself,
	and $\modM^\nega=\modM^\nega_1\oplus \modM^\nega_{-1}$ to itself, and hence by
	$\boxtimes$ above also ${\cal Y}$ does it. According to $(\ast)_2$ from Step B) we have
	${\cal X}_1{\cal Y}{\cal X}_{-1}={\cal X}_1{\cal Y}$. Hence
	$$\modM_1{\cal Y}=\modM{\cal X}_1{\cal Y}=\modM{\cal X}_1{\cal Y}{\cal X}_{-1}\subset\modM{\cal X}_{-1}=\modM_{-1},$$
	i.e., ${\cal Y}$ maps $\modM_1$ to $\modM_{-1}$. Thus, by the
	previous sentence, ${\cal Y}$ maps $\modM^\pos_1$ into
	$\modM^\pos_{-1}$, and $\modM^\nega_1$
	into $\modM^\nega_{-1}$, i.e., ${\cal Y}^\pos_1$ (resp. ${\cal Y}^\nega_1$) is into
	$\modM^\pos_{-1}$  (resp. $\modM^\nega_{-1}$).
	
	Similarly by $(\ast)_2$ we have ${\cal X}_{-1}{\cal Y}{\cal X}_1=
	{\cal X}_{-1}{\cal Y}$, hence ${\cal Y}$ maps
	$\modM^\pos_{-1}$ into $\modM^\pos_1$ and $\modM^\nega_{-1}$
	into $\modM^\nega_1$. Also, the
	mappings ${\cal Y}^\pos_1$, ${\cal Y}^\pos_{-1}$, ${\cal Y}^\nega_1$,
	${\cal Y}^\nega_{-1}$ are
	endomorphisms of ${\bf D}$-modules. As ${\cal Y}^2=1$ (again by $(\ast)_2$)
	we conclude that
	${\cal Y}^\pos_1$ and ${\cal Y}^\pos_{-1}$ are the inverse of each
	other, so both of them are
	isomorphisms. Similarly for ${\cal Y}^\nega_1$ and ${\cal Y}^\nega_{-1}$.
	
	Let
	\[\begin{array}{lr}
	\modM^\stp_1:=\{\sum\limits_{i>0}d_ix_i:&d_i\in {\bf D},\mbox{ all
		but finitely many $d_i$'s are zero }\ \\
	&\mbox{and }d_i\neq 0\ \Rightarrow\ i\in A_1\}.
	\end{array}\]
	Clearly $\modM^\stp_1$ is a sub ${\bf D}$-module of $\modM^\pos_1$.
Note that  $x_0\in \modM^\pos_1\setminus \modM^\stp_1$. This yields the
 	difference between $\modM^\stp_1$ and $\modM^\pos_1$.
	
	Let
	\[\begin{array}{lr}
	\modN:=\{\sum\limits_{i>n(\ast)} d_ix_i:&d_i\in {\bf D},
	\mbox{ all but finitely many of } d_i's \mbox{  are zero }\ \\
	&\mbox{and }d_i\neq 0\ \Rightarrow\ i\in A_1\}.
	\end{array}\]
	
	Let $H^\pos:\modM_1^\stp\longrightarrow \modM_1^\nega$
	be defined by $x_i H^\pos=
	x_{-i}$ and $H^\nega:\modM^\nega_{-1}\longrightarrow \modM^\stp_{-1}$
	be defined by
	$x_iH^\nega=x_{-i}$. Both of them are isomorphisms  of
	${\bf D}$-modules.  Note that ${\cal Y}^\pos_1$ is an isomorphism from
	$\modM^\pos_1$ onto $\modM^\pos_{-1}$ and
	$H^\pos {\cal Y}^\nega_1H^\nega$ is an
	isomorphism from $\modM^\stp_1$ onto $\modM^\pos_{-1}$. Note that $$\modM^\stp_1
	\stackrel{H^\pos}{\longrightarrow}\modM^\nega_1
	\stackrel{{\cal Y}^\nega_1}{\longrightarrow}\modM^\nega_{-1}
	\stackrel{H^\nega}{\longrightarrow} \modM^\pos_{-1}.$$
	We show that
	$$
	{\cal Y}^\pos_1\restriction \modN= (H^\pos {\cal Y}^\nega_1 H^\nega)
	\restriction \modN.
	$$
	To see this, it is enough to check equality on the generators of
	$\modN$, that is over the $x_i$'s where $i$ is even and it is larger than $n(\ast)$.
	In particular, by choosing  $n(\ast)$ large enough, we may assume that $i\gg 0$.
	Recall that $${\cal Y}=\sum\left\{a^\ell_{k,n}
	{\cal Y}^\ell_{k,n}:\ n\geq
	0\mbox{ and }k+n\geq 0\mbox{ and }\ell\in \{1,-1\}\right\}.$$
	By  definition $x_i{\cal X}_{-1}=0$.  Then
	\[\begin{array}{ll}
	x_i(H^\pos {\cal Y}^\nega_1 H^\nega)&=x_{-i}(\sum\{a^\ell_{k,n}
	{\cal Y}^\ell_{k,n}\})H^\nega \\
	&=\sum a^\ell_{k,n}x_{-i}(
	{\cal X}_1 {\cal W}_{-1} ^n{\cal W}^{n+k}_1 )H^\nega \\
	&= \sum a^1_{k,n}( x_{-i}{\cal W}_{-1} ^n{\cal W}^{n+k}_1 )H^\nega\\
	&= \sum a^1_{k,n}( x_{f^{-n}(-i)} {\cal W}^{n+k}_1 )H^\nega \\
	&=\sum a^1_{k,n}( x_{f^{n+k}(f^{-n}(-i))} )H^\nega \\
	&=\sum a^1_{k,n}( x_{f^{k}(-i)} )H^\nega \\
	&=  \sum a^1_{k,n}( x_{-i-k)} )H^\nega  \\
	&= \sum a^1_{k,n}  x_{i+k}.
	\end{array}\]
	On
	the other hand,
	$(x_i){\cal Y}^\pos_1\restriction\modN=\sum a^1_{k,n}  x_{i+k}$. Thus we have
	${\cal Y}^\pos_1\restriction \modN= (H^\pos {\cal Y}^\nega_1 H^\nega)
	\restriction \modN$, as claimed.
	
	Let $\modN^\ast:=\rang(  {\cal Y}^\pos_1\restriction \modN)$. Then
	$\modN^\ast=\rang( (H^\pos {\cal Y}^\nega_1H^\nega)\restriction \modN)$.
	So, as ${\cal Y}^\pos_1$ is an
	isomorphism from $\modM^\pos_1$ onto $\modM^\pos_{-1}$,
	and $\modN\subseteq \modM^\pos_1$ we
	have that $\modN^\ast$ is a sub $\bf D$-module of
	$\modM^\pos_{-1}$. This means that $\modM^\pos_{-1}/
	\modN^\ast$ is isomorphic to $\modM^\pos_1/\modN$ (as $\bf D$-modules).
	
	But $H^\pos {\cal Y}^\nega_1H^\nega$ is an isomorphism from
	$\modM^\stp_1$ onto
	$\modM^\pos_{-1}$ and $\modN\subseteq \modM^\stp_1$,
	and it maps $\modN$ onto $\modN^\ast$
	(see above), so $\modM^\stp_1/\modN$ is isomorphic to
	$\modM^\pos_{-1}/\modN^\ast$. By
	the previous paragraph we get $$\modM^\stp_1/ \modN\cong\modM^\pos_{-1}/
	\modN^\ast
	\cong\modM^\pos_1/\modN.$$
	
	On the one hand, $\modM^\pos_1/\modN$ is a free
	$\bf D$-module, as $\{x_{2i}+\modN
	:0\leq 2i\leq n(\ast)
	\}$ is a free basis for it and also $\modM^\stp_1/\modN$
	is a free $\bf D$-module with the base $\{x_{2i}+\modN:
	0<2i\leq n(\ast)\}$. On the other hand, the number of their generators  differ by $1$. This is a contradiction that we searched for it.

The theorem follows.
\end{proof}

\end{document}